%%%%%%%%%%%%%%%%%%%%%%%%%%%%%%%%%%%%%%%%%
% Masters/Doctoral Thesis 
% LaTeX Template
% Version 1.43 (17/5/14)
%
% This template has been downloaded from:
% http://www.LaTeXTemplates.com
%
% Original authors:
% Steven Gunn 
% http://users.ecs.soton.ac.uk/srg/softwaretools/document/templates/
% and
% Sunil Patel
% http://www.sunilpatel.co.uk/thesis-template/
%
% License:
% CC BY-NC-SA 3.0 (http://creativecommons.org/licenses/by-nc-sa/3.0/)
%
% Note:
% Make sure to edit document variables in the Thesis.cls file
%
%%%%%%%%%%%%%%%%%%%%%%%%%%%%%%%%%%%%%%%%%

%----------------------------------------------------------------------------------------
%	PACKAGES AND OTHER DOCUMENT CONFIGURATIONS
%----------------------------------------------------------------------------------------

\documentclass[11pt, twoside]{Thesis} % The default font size and one-sided printing (no margin offsets)

\graphicspath{{Pictures/}} % Specifies the directory where pictures are stored

\pdfoutput=1

\PassOptionsToPackage{linktocpage}{hyperref}

\usepackage[square, numbers, comma, sort&compress]{natbib} % Use the natbib reference package - read up on this to edit the reference style; if you want text (e.g. Smith et al., 2012) for the in-text references (instead of numbers), remove 'numbers' 
\hypersetup{urlcolor=blue, colorlinks=true} % Colors hyperlinks in blue - change to black if annoying
\title{\ttitle} % Defines the thesis title - don't touch this

\begin{document}

\frontmatter % Use roman page numbering style (i, ii, iii, iv...) for the pre-content pages

%\setstretch{1.3} % Line spacing of 1.3
\doublespacing
% Define the page headers using the FancyHdr package and set up for one-sided printing
\fancyhead{} % Clears all page headers and footers
\rhead{\thepage} % Sets the right side header to show the page number
\lhead{} % Clears the left side page header

\pagestyle{fancy} % Finally, use the "fancy" page style to implement the FancyHdr headers

\newcommand{\HRule}{\rule{\linewidth}{0.5mm}} % New command to make the lines in the title page

% PDF meta-data
\hypersetup{pdftitle={\ttitle}}
\hypersetup{pdfsubject=\subjectname}
\hypersetup{pdfauthor=\authornames}
\hypersetup{pdfkeywords=\keywordnames}

%----------------------------------------------------------------------------------------
%	TITLE PAGE
%----------------------------------------------------------------------------------------

\begin{titlepage}
\begin{center}

\textsc{\LARGE University of Oxford}\\[1.5cm] % University name
%\textsc{\Large Transfer Thesis}\\[0.5cm] % Thesis type

\HRule \\[0.4cm] % Horizontal line
{\huge \bfseries Higgs bundles, Lagrangians \\and mirror symmetry}\\[0.4cm] % Thesis title
\HRule \\[1.5cm] % Horizontal line
 
\begin{minipage}{0.4\textwidth}
\begin{flushleft} \large
\emph{Author:}\\
{Lucas Castello Branco} % Author name - remove the \href bracket to remove the link
\end{flushleft}
\end{minipage}
\begin{minipage}{0.4\textwidth}
\begin{flushright} \large
\emph{Supervisors:} \\
{Nigel Hitchin\\ Frances Kirwan} % Supervisor name - remove the \href bracket to remove the link  
\end{flushright}
\end{minipage}\\[3cm]
 
\includegraphics{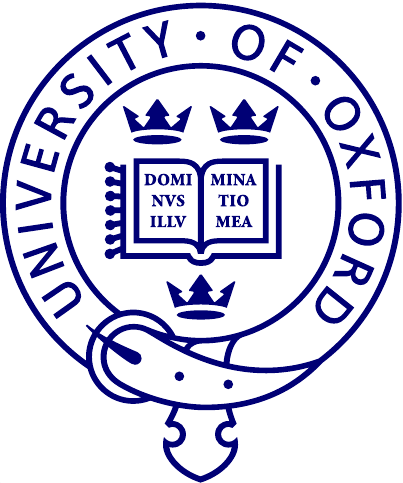} % University/department logo - uncomment to place it

\vfill\vfill\vfill\vfill\vfill\vfill\null
\textsc{\large A thesis submitted for the degree of}\\
\textit{\large Doctor of Philosophy}\\[0.5cm]
Jesus College % Research group name and department name
 
{\large 2017} % Date
 
\vfill
\end{center}

\end{titlepage}

%_________________________________blank page__________________
\newpage
\thispagestyle{empty}
\mbox{}
\pagebreak
%______________________________________________________________

%----------------------------------------------------------------------------------------
%	QUOTATION PAGE
%----------------------------------------------------------------------------------------

\pagestyle{empty} % No headers or footers for the following pages

\null\vfill % Add some space to move the quote down the page a bit

\textit{To Kl\'{e}bia, Jos\'{e} and Levi}

%\begin{flushright}
%Dave Barry
%\end{flushright}

\vfill\vfill\vfill\vfill\vfill\vfill\null % Add some space at the bottom to position the quote just right

\clearpage % Start a new page

%_________________________________blank page__________________
\newpage
\thispagestyle{empty}
\mbox{}
\pagebreak
%______________________________________________________________
%----------------------------------------------------------------------------------------
%	ABSTRACT
%--------------------------------------------------------------------------

\abstract{This thesis is dedicated to the study of certain loci of the Higgs bundle moduli space on a compact Riemann surface. After recalling basic facts about $G$-Higgs bundles for a reductive group $G$, we begin the first part of the work, which deals with Higgs bundles for the real forms $G_0 = SU^*(2m)$, $SO^*(4m)$, and $Sp(m,m)$ of $G = SL(2m,\C)$, $SO(4m,\C)$ and $Sp(4m,\C)$, respectively. The second part of the thesis deals with Gaiotto Lagrangian subvarieties.  

Motivated by mirror symmetry, we give a detailed description of the fibres of the $G$-Hitchin fibration containing $G_0$-Higgs bundles for the real groups $G_0 = SU^*(2m)$, $SO^*(4m)$ and $Sp(m,m)$. The spectral curves associated to these fibres are examples of ribbons and our description is done in two different ways, one in term of objects on the reduced scheme associated to the spectral curve, while the other in terms of the (non-reduced) spectral curve. A link is also provided between the two approaches. We use this description to give a proposal for the support of the dual BBB-brane inside the moduli space of Higgs bundles for the Langlands dual group of $G$, corroborating the conjectural picture. 

In the second part of the thesis we discuss Gaiotto Lagrangian subvarieties inside the moduli spaces of $G$-Higgs bundles, where $G$ is a complex reductive group. These Lagrangians are obtained from a symplectic representation of $G$ and we discuss some of its general properties. We then focus our attention to the Gaiotto Lagrangian for the standard representation of the symplectic group. This is an irreducible component of the nilpotent cone for the symplectic Hitchin fibration. We describe this component by using the usual Morse function on the Higgs bundle moduli space, which is the norm squared of the Higgs field restricted to the Lagrangian in question. 

Lastly, we discuss natural questions and applications of the ideas developed in this thesis. In particular, we say a few words about the hyperholomorphic bundle, how to generalize the Gaiotto Lagrangian to vector bundles which admit many sections and give an analogue of the Gaiotto Lagrangian for the orthogonal group.  

}
\newpage
\mbox{}
\pagebreak
%______________________________________________________________

%----------------------------------------------------------------------------------------
%	ACKNOWLEDGEMENTS
%----------------------------------------------------------------------------------------

%\setstretch{1.3} % Reset the line-spacing to 1.3 for body text (if it has changed)

\acknowledgements{\addtocontents{toc}{\vspace{1em}} % Add a gap in the Contents, for aesthetics

First and foremost, I wish to thank my supervisor Nigel Hitchin for his inspiration, encouragement, time and patience and for introducing me to some beautiful mathematics. I am deeply grateful and honoured for the opportunity to work with him and it has been a remarkable experience, one which I will always treasure. 

The Mathematical Institute has been my home for the last four years and I warmly thank all its members for creating such a pleasant and stimulating environment. A special thanks to my College advisor Andrew Dancer and my second supervisor Frances Kirwan. 

I am very grateful to my Master’s supervisor Marcos Jardim, who has been particularly important in my formation as a mathematician, for all the support through the years. Also, a warm thanks to Laura Schaposnik for all the encouragement and willingness to help during the course of my postgraduate studies.

I am indebted to all mathematicians who have generously shared their expertise with me. In particular, Sergei Gukov, Steven Bradlow, David Baraglia and Oscar Garc\'{i}a-Prada. I am also very thankful to my colleagues and friends Brent Pym and Jakob Blaavand for the many illuminating discussions.
  
A big thanks is due to Marcelo de Martino, for proofreading this thesis, for the many interesting conversations over coffee, for helping me with my various questions on representation theory, and for being such a good friend. Also, a warm thanks to my officemates Matthias Wink and Renee Hoekzema. 

A special thanks goes to Alex Duarte, S\'{e}rgio Kogikoski and Carolina Matt\'{e} Gregory for their friendship and for being part of so many amazing moments which made these four years in Oxford so dear to me.   

My sincere thanks for the generous financial support I received from CNPq.
 
Last but not least, the greatest thanks of all to my parents Jos\'{e} and Kl\'{e}bia and to my brother Levi. I owe them everything I am, everything I have and everything I will ever achieve in life. This thesis is dedicated to you.  

}
%\clearpage % Start a new page

%_________________________________blank page__________________
\newpage
\mbox{}
\pagebreak
%______________________________________________________________

%\chapter*{\huge \bfseries Introduction}
%\thispagestyle{plain}
%\label{Introduction}
%\addcontentsline{toc}{chapter}{Introduction}

%\pagebreak

%----------------------------------------------------------------------------------------
%	LIST OF CONTENTS/FIGURES/TABLES PAGES
%----------------------------------------------------------------------------------------

\pagestyle{fancy} % The page style headers have been "empty" all this time, now use the "fancy" headers as defined before to bring them back

\lhead{\emph{Contents}} % Set the left side page header to "Contents"
\tableofcontents % Write out the Table of Contents

%----------------------------------------------------------------------------------------
%	THESIS CONTENT - CHAPTERS
%----------------------------------------------------------------------------------------

\mainmatter % Begin numeric (1,2,3...) page numbering

\pagestyle{fancy} % Return the page headers back to the "fancy" style

% Include the chapters of the thesis as separate files from the Chapters folder
% Uncomment the lines as you write the chapters
%\addcontentsline{toc}{chapter}{Notations and conventions}
%\chapter*{\huge \bfseries Notations and conventions}
%\thispagestyle{plain}
%\label{Notations and conventions}

%______________________________________________________________

\chapter*{\huge \bfseries Introduction}
\thispagestyle{plain}
\label{Introduction}
\addcontentsline{toc}{chapter}{Introduction}

Classically, a Higgs bundle over a compact Riemann surface is a pair consisting of a holomorphic vector bundle $V$ and a holomorphic section of the endomorphism bundle $\End (V)$ twisted by the canonical bundle of the curve, the so-called Higgs field. These objects emerged in the late 1980’s in Hitchin's study of dimensionally reduced self-duality equations of Yang-Mills gauge theory \cite{hitchin1987self} and in Simpson's work on nonabelian Hodge theory \cite{si1}. Since then, Higgs bundles have become central objects of study in K\"{a}hler and hyperk\"{a}hler geometry, nonabelian Hodge theory, representation theory, integrable systems and, most recently, mirror symmetry and Langlands duality.

When $G$ is a complex reductive group, it is well known that the smooth locus of the moduli space $\calM (G)$ of $G$-Higgs bundles on a compact Riemann surface $\Sigma$ of genus $g \geq 2$ has the structure of a hyperk\"ahler manifold. Furthermore, it comes equipped with an algebraically completely integrable Hamiltonian system through the Hitchin map; a proper map to a vector space of half dimension of the moduli space, whose generic fibre is an abelian variety (\cite{hitchin1987stable, falt, donagi1993decomposition}). As pointed out by Kapustin and Witten \cite{kapustin2006electric}, $\calM (G)$ and $\calM (\lan{G})$ are a mirror pair (in the sense of Strominger, Yau, and Zaslow), with the fibrations being the Hitchin fibrations. This manifests itself, for example, in the statement that the non-singular fibres of $\calM (G)$ and $\calM (\lan{G})$ are dual abelian varieties. This was first proven for the special linear group by Hausel and Thaddeus \cite{hausel2003mirror}, who also showed, by calculating Hodge theoretical invariants, that topological mirror symmetry holds for these pairs (see also \cite{MR2927840, gothen2017topological} for an extension of these results for parabolic Higgs bundles). The case of $G_2$ was treated in \cite{hitchin2007langlands} and, in greater generality, Donagi and Pantev \cite{donagi2012langlands} extended the picture for semisimple groups. 

Since the moduli space of Higgs bundles for a complex group has a hyperk\"{a}hler structure, one can speak of branes of type A or B with respect to any of the structures. Typical examples of BAA-branes are moduli spaces of flat connections on a Riemann surface with holonomy in a real form $G_0$ of a complex group $G$, whereas hyperholomorphic bundles supported on a hyperk\"ahler subvariety provide examples of BBB-branes. A hyperholomorphic bundle is a bundle with connection whose curvature is of type $(1,1)$ with respect to all complex structures. These are generalizations of instantons in four dimensions and are very interesting mathematical objects. 

This thesis is essentially divided into two parts. In the first part, motivated by mirror symmetry, we describe certain singular fibres of the Hitchin map associated to the semisimple non-compact connected (real) Lie groups $G_0 = SU^*(2m)$, $SO^*(4m)$ and $Sp(m, m)$. Higgs bundles for $G_0 = SU^*(2m)$, $SO^*(4m)$ and $Sp(m,m)$ viewed inside the Higgs bundle moduli space for $G = SL(2m,\C)$, $SO(4m,\C)$ and $Sp(4m,\C)$, respectively, provide examples of BAA-branes which lie entirely over the discriminant locus of the base of the Hitchin fibration. After describing the whole fibre of the integrable system for the complex group containing Higgs bundles for the real form, we give a proposal for the support of the dual BBB-brane. In particular, our proposal is in accordance with the conjectural picture (see, e.g., \cite{baraglia2013real}) that the dual brane is supported on the moduli space of Higgs bundles for a complex group $\lan{G_0} $ associated to the real form. This group is a complex subgroup of the Langlands dual group $\lan{G}$ and was obtained by Nadler \cite{nadler2005perverse} in the study of perverse sheaves on real loop Grassmannians. In the second part of the thesis we discuss how to construct BAA-branes from symplectic representations of a complex group $G$. This construction has recently appeared in the work of Gaiotto \cite{gaiotto2016s} and Hitchin \cite{spinors}. Also, these so-called Gaiotto Lagrangians have appeared in the literature in the context of derived geometry \cite{ginzburg2017gaiotto} and loop groups \cite{li2017gaiotto}. We finish the second part by describing concretely the Gaiotto Lagrangian for the standard representation of the symplectic group. The following is a more detailed description of each chapter. 

We begin Chapter \ref{Chapter1} by recalling some basic facts about $G$-Higgs bundles for a reductive group $G$. After giving examples of these objects and recalling the main properties of the Higgs bundle moduli space, we discuss Simpson's compactified Jacobian. We then explain the isomorphism between fibres of the Hitchin fibration and the Simpson moduli space of semi-stable sheaves of rank $1$ on the associated spectral curve (which, in ascending order of generality, is due to \cite{hitchin1987stable, bnr, schaub, si3}).   

In Chapter \ref{Chapter2}, we introduce extensions of Higgs bundles as short exact sequences in the abelian category of Higgs bundles and show how these are related to the first hypercohomology group of a certain complex of sheaves. Generalizing the result for vector bundles in \cite[Criterion 2.1]{rmks} we obtain the following (which corresponds to Proposition \ref{themext}). 

\begin{prop} An extension $(V, \Phi)$ of $(W^*, -\trans{\phi})$ by $(W, \phi)$ has an orthogonal (respectively, symplectic) structure turning it into an orthogonal (respectively, symplectic) Higgs bundle and with respect to which $W$ is a maximally isotropic subbundle if and only if it is strongly equivalent to an extension of Higgs bundles whose hypercohomology class belongs to $\K^1 (\Sigma, \Lambda^2\mathsf{W})$ (respectively, $\K^1 (\Sigma, \Sym^2\mathsf{W})$). 
\end{prop}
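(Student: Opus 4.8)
The plan is to mimic the classical argument for vector bundles from \cite[Criterion 2.1]{rmks}, but carried out in the abelian category of Higgs bundles, using the identification of extensions with hypercohomology classes established earlier in the chapter. Recall that an extension $0\to (W,\phi)\to (V,\Phi)\to (W^*,-\trans{\phi})\to 0$ is classified by a class in $\K^1$ of the complex $\mathsf{W}^\bullet = [\, \sHom(W^*,W) \xrightarrow{\ \mathrm{ad}\ } \sHom(W^*,W)\otimes K\,]$, which I will abbreviate $\K^1(\Sigma,\mathsf{W})$. The canonical ``transpose'' involution on $\sHom(W^*,W)\cong W\otimes W$ splits this complex of sheaves into symmetric and antisymmetric parts, $\mathsf{W} = \Sym^2\mathsf{W}\oplus\Lambda^2\mathsf{W}$ (as complexes, since the Higgs field differentials are compatible with transposition up to the relevant sign), and correspondingly $\K^1(\Sigma,\mathsf{W}) = \K^1(\Sigma,\Sym^2\mathsf{W})\oplus\K^1(\Sigma,\Lambda^2\mathsf{W})$. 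The orthogonal case will land in the $\Lambda^2$ summand and the symplectic case in the $\Sym^2$ summand; I will treat, say, the symplectic case in detail and indicate the sign changes needed for the orthogonal case.

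The two implications go as follows. For the forward direction, suppose $(V,\Phi)$ carries a symplectic form $\omega$ (i.e.\ a Higgs bundle isomorphism $V\xrightarrow{\sim} V^*$, antisymmetric, with $\Phi$ skew-adjoint for $\omega$) for which $W\subset V$ is Lagrangian. Then $\omega$ identifies $V/W \cong W^*$ and the quotient Higgs field with $-\trans{\phi}$, so the given extension is \emph{strongly equivalent} (in the sense of the chapter's equivalence of extensions, i.e.\ up to automorphisms of $(W,\phi)$ and $(W^*,-\trans{\phi})$ covering the identity, which is exactly the freedom that does not change the $\K^1$-class up to the splitting) to the extension determined by the symplectic structure; and a Čech/Dolbeault cocycle computation of the latter — pick local splittings of $V\to W^*$ and measure the failure to be $\omega$-isotropic — shows the resulting $1$-cocycle with values in $\mathsf{W}^\bullet$ is valued in the symmetric subcomplex $\Sym^2\mathsf{W}$, hence the class lies in $\K^1(\Sigma,\Sym^2\mathsf{W})$. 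Conversely, given an extension whose class is represented by a cocycle valued in $\Sym^2\mathsf{W}^\bullet$, one builds $\omega$ on $V$ by the standard recipe: on local pieces $W\oplus W^*$ it is the tautological pairing plus a correction term built from the symmetric cocycle, and the cocycle condition in $\K^1$ (both the sheaf-cocycle identity and the compatibility with the differential into the $K$-twisted term) is precisely what is needed for these local forms to glue to a global $\omega$ that is a Higgs bundle morphism, i.e.\ $\Phi^t\omega + \omega\Phi = 0$; symmetry of the cocycle forces $\omega$ to be antisymmetric, and $W$ is Lagrangian by construction.

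For the actual write-up I would first record the decomposition of the complex $\mathsf{W}^\bullet$ under transposition as a short lemma, check that the differential $\mathrm{ad}(\phi)$ (really the map induced by $\phi$ on $W\otimes W$, respectively $W^*\otimes W$) respects the $\Sym^2/\Lambda^2$ splitting — this uses $\trans{(\trans{\phi})}=\phi$ and the sign in the definition of the induced Higgs field on $W^*$, and it is the one spot where the orthogonal versus symplectic distinction is fixed — and then reduce the proposition to the evident statement ``a cocycle represents a class coming from an $\omega$-isotropic-splitting exactly when it is symmetric,'' which is the Higgs-bundle enhancement of \cite[Criterion 2.1]{rmks}. The main obstacle, and the place to be careful, is the bookkeeping of the hypercohomology: one must verify that \emph{strong equivalence} of extensions corresponds exactly to the ambiguity in the $\K^1$-cocycle that preserves membership in the symmetric (resp.\ antisymmetric) subcomplex, so that the statement ``is strongly equivalent to an extension whose class belongs to $\K^1(\Sigma,\Sym^2\mathsf{W})$'' is well posed and matches the geometric condition. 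Everything else — the local construction of $\omega$, the translation of the cocycle identity into $\Phi^t\omega+\omega\Phi=0$, and the antisymmetry/Lagrangian bookkeeping — is a routine, if slightly lengthy, Čech computation that I would relegate to the proof proper and not belabor here.
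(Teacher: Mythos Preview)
Your proposal is correct and follows essentially the same approach as the paper. The paper does the forward direction in the Dolbeault setting in the discussion preceding the proposition (choosing a smooth splitting and showing that both $\beta$ and $\psi$ land in $\Lambda^2 W$ or $\Sym^2 W$), and for the converse simply cites \cite[Criterion 2.1]{rmks} together with the Section~\ref{Sect2.1} correspondence between extensions and hypercohomology classes, whereas you plan to redo the Hitching argument explicitly in the Higgs category; but the underlying computation is identical. One small terminological slip: strong equivalence in this chapter means the maps on $(W,\phi)$ and $(W^*,-\trans{\phi})$ are the \emph{identity}, not arbitrary automorphisms --- this is exactly what makes the $\K^1$-class (and hence membership in the $\Lambda^2$ or $\Sym^2$ summand) well defined, so your concern at the end dissolves once you use the correct definition.
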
   

In Chapter \ref{Chapter3} we give a concrete description of the fibre of the $SL(2m,\C)$-Hitchin fibration containing $SU^*(2m)$-Higgs bundles. In particular, these fibres have associated non-reduced spectral covers. Our description, which can be found in Theorem \ref{fibreforsu*}, is given in terms of objects on the reduced spectral curve\footnote{The reader is referred to Chapter \ref{Chapter3} for a more detailed description of the objects contained in the theorem.}.
 
\begin{thm} Let $p(\lambda) = \lambda^m + \pi^*a_2\lambda^{m-2} + \ldots + \pi^*a_m$ be a section of the line bundle $\pi^*K^m$ on the total space of the cotangent bundle of $\Sigma$ whose divisor is a smooth curve $S$. The fibre $h^{-1}(p^2)$ of the $SL(2m,\C)$-Hitchin fibration is a disjoint union 
$$h^{-1}(p^2) \cong N \cup \bigcup_{d=1}^{g_{_S}-1} A_d$$
where 
\begin{itemize}
\item $N$ consists of semi-stable rank $2$ vector bundles on $S$ with the property that the determinant bundle of $\pi_*E$ is trivial.
\item $A_d$ is an affine bundle on 
$$Z_d = \{ (D,L) \in S^{(\bar{d})} \times \Pic^{d_2}(S) \ | \ L^2(D) \pi^*K^{1-2m} \in \Prym \}$$
modeled on the vector bundle $E_d \to Z_d$, whose fibre at $(D,L) \in Z_d$ is isomorphic to $H^1(S,K_S^{-1}(D))$. Here, $\bar{d} = -2d + 2 (g_{_S} -1)$.
\item The natural map $Z_d \to S^{(\bar{d})}$ is a fibration, whose fibres are modeled on $2^{2g}$ copies of the Prym variety $\Prym$.
\item Each stratum has dimension $(4m^2 - 1)(g-1)$ and the irreducible components of $h^{-1}(p^2) $ are precisely the Zariski closures of $A_d$, $1 \leqslant d \leqslant g_{_S} -1$, and $N$.
\end{itemize} 
\end{thm}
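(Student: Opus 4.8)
The plan is to run the spectral correspondence and then stratify the fibre according to how degenerate the associated sheaf on the ribbon is. By the isomorphism recalled in Chapter~\ref{Chapter1} between Hitchin fibres and Simpson's moduli space, a point of $h^{-1}(p^2)$ is a semistable purely one-dimensional sheaf $\mathcal{F}$ on the ribbon $\bar S=\{p(\lambda)^2=0\}$ in the total space of $K$, with $\pi_*\mathcal{F}$ locally free of rank $2m$, of the relevant Euler characteristic, and with $\det(\pi_*\mathcal{F})\cong\mathcal{O}_\Sigma$; equivalently, a semistable $SL(2m,\C)$-Higgs bundle $(V,\Phi)$ with $\det(\lambda-\Phi)=p(\lambda)^2$, so that $p(\Phi)^2=0$. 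First I would record the numerology: adjunction on $S$ inside the (holomorphically symplectic) total space of $K$ gives $g_{_S}=m^2(g-1)+1$ and identifies the conormal bundle of $S$ in $\bar S$ with $K_S^{-1}$, and Riemann--Roch on $\Sigma$ then pins down the degrees of the line bundles appearing below.

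The key dichotomy is whether $p(\Phi)$ vanishes. If $p(\Phi)=0$ then $\Phi$ makes $V$ a module over $\pi_*\mathcal{O}_S$, so the spectral correspondence for the \emph{smooth} curve $S$ gives $V\cong\pi_*E$ for a rank $2$ bundle $E$ on $S$ with $\Phi$ the tautological Higgs field; semistability of $(V,\Phi)$ is equivalent to that of $E$, and the $SL$-condition becomes $\det(\pi_*E)\cong\mathcal{O}_\Sigma$. This is the stratum $N$; it is closed (cut out by $p(\Phi)=0$) and irreducible, since the moduli space of semistable rank $2$ bundles on $S$ is irreducible and $\{E:\det(\pi_*E)\cong\mathcal{O}_\Sigma\}$ fibres over $\Prym$ with irreducible fixed-determinant fibres.

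If $p(\Phi)\neq0$, put $W:=\ker p(\Phi)\subset V$; it is $\Phi$-invariant and $p(\Phi)$ factors through an $\mathcal{O}_S$-linear injection $V/W\hookrightarrow W\otimes K^m$. As $p$ is irreducible of degree $m$, both $W$ and $V/W$ are torsion-free of generic rank $m$ with characteristic polynomial exactly $p$, so the correspondence for $S$ yields $W\cong\pi_*M_1$, $V/W\cong\pi_*M_2$ for line bundles $M_i$ on $S$, and the injection is the pushforward of an $\mathcal{O}_S$-linear map $M_2\hookrightarrow M_1\otimes\pi^*K^m$ with cokernel $\mathcal{O}_D$, $D$ effective on $S$; a degree count gives $\deg D=\bar d=2(g_{_S}-1)-2d$ with $2d=\deg M_2-\deg M_1$ and $\deg(\pi_*M_1)=-d$. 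Conversely $(V,\Phi)$ is an extension of $(\pi_*M_2,\Phi_2)$ by $(\pi_*M_1,\Phi_1)$ in the category of Higgs bundles realising the chosen $M_2\hookrightarrow M_1\otimes\pi^*K^m$, and every such extension lies in $h^{-1}(p^2)$; here I would combine the hypercohomology description of Higgs extensions from Chapter~\ref{Chapter2} (in the spirit of Proposition~\ref{themext}) with the local classification of rank-one torsion-free modules on the ribbon to show that, with $D$ and $M_1$ fixed (hence $M_2:=M_1\otimes\pi^*K^m(-D)$), the set of these extensions is non-empty and is a torsor under $\mathrm{Ext}^1_{\mathcal{O}_S}(M_2,M_1)\cong H^1(S,K_S^{-1}(D))$. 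This realises $A_d$ as an affine bundle over $Z_d$ modeled on the bundle $E_d$ with fibre $H^1(S,K_S^{-1}(D))$ — a genuine bundle because $\deg K_S^{-1}(D)=-2d<0$ forces $h^0=0$, so $h^1$ is constant — the constraint cutting out $Z_d$ being the $SL$-condition rewritten, after eliminating $M_2$, as $L^2(D)\,\pi^*K^{1-2m}\in\Prym$ for $L$ the appropriately normalised twist of $M_1$, and the fibres of $Z_d\to S^{(\bar d)}$ being torsors under $\{M\in\Pic^0(S):\mathrm{Nm}(M)\in\Pic(\Sigma)[2]\}$, i.e.\ $2^{2g}$ copies of $\Prym$. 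The value $d=0$ contributes nothing new: there $\deg(\pi_*M_1)=0=\mu(V)$, so $W$ destabilises $(V,\Phi)$ of equal slope and $(V,\Phi)$ is $S$-equivalent to $(\pi_*M_1,\Phi_1)\oplus(\pi_*M_2,\Phi_2)$, which lies in $N$; hence $d$ runs over $1\leqslant d\leqslant g_{_S}-1$, the upper bound being $\bar d\geqslant0$.

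For the last bullet a Riemann--Roch computation on $S$ gives $\dim A_d=\bar d+\dim\Prym+h^1(S,K_S^{-1}(D))=4g_{_S}-3-g=(4m^2-1)(g-1)$, and similarly $\dim N=4(g_{_S}-1)+1-g=(4m^2-1)(g-1)$. Since $h^{-1}(p^2)$ is the disjoint union of the equidimensional irreducible strata $N,A_1,\ldots,A_{g_{_S}-1}$, none of them can lie in the closure of another (such a closure, being irreducible of the same dimension, would coincide with it, contradicting disjointness), so $\overline N$ and the $\overline{A_d}$ are exactly the irreducible components. The step I expect to be hardest is the case $p(\Phi)\neq0$: obtaining the affine-bundle structure needs both the local structure theory of rank-one torsion-free sheaves on the ribbon (to identify $(D,M_1)$ as the correct local-to-global datum) and the non-emptiness and torsor statements for Higgs extensions from Chapter~\ref{Chapter2}; a secondary point is the connectedness of $Z_d$ — transitivity of the $\pi_1(S^{(\bar d)})$-monodromy on the $2^{2g}$ sheets — which is what upgrades each $\overline{A_d}$ from equidimensional to genuinely irreducible.
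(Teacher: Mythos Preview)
Your overall architecture matches the paper's: the dichotomy on $p(\Phi)$, the identification of $N$ via the spectral correspondence on the smooth curve $S$, the extension analysis when $p(\Phi)\neq 0$, and the irreducibility of $N$ via its fibration over $\Prym$ with fixed-determinant fibres. The substantive difference is how you extract the affine-bundle structure of $A_d$. The paper's Chapter~3 proof never touches the ribbon: it parametrizes Higgs extensions by $\mathbb{H}^1(\Sigma,\mathsf{W}_2^*\otimes\mathsf{W}_1)$ and proves a lemma computing $\ker\phi_{21}\cong\pi_*(L_2^*L_1)$ and $\coker\phi_{21}\cong\pi_*(L_2^*L_1K_S)$ (using that smoothness of $S$ makes $\Phi_1,\Phi_2$ everywhere regular), yielding
\[
0\to H^1(S,L_2^*L_1)\to\mathbb{H}^1\xrightarrow{\zeta} H^0(S,L_2^*L_1K_S)\to 0.
\]
Two further lemmas show that $\zeta(\delta)=0$ iff $p(\Phi)=0$, and that the extension is stable iff $d>0$. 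Your route---invoking the local classification of rank-one torsion-free sheaves on the ribbon and identifying the torsor directly as $\mathrm{Ext}^1_{\mathcal{O}_S}(M_2,M_1)$---is the Chapter~5 ``alternative description'', which the paper develops separately and then links back to Chapter~3. Both work; the Chapter~3 argument is more elementary (only $S$ and $\Sigma$ are needed), while yours is more geometric but front-loads the ribbon machinery.

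Two minor points. Your citation of Proposition~\ref{themext} is off: that is the orthogonal/symplectic refinement used in Chapter~4; what you need here is just the general bijection between Higgs extensions and $\mathbb{H}^1$ from Section~\ref{Sect2.1}. You also skip the stability check for $d>0$; the paper handles this by showing that when $p(\Phi)\neq 0$ the subbundle $W_1$ is (up to isomorphism) the \emph{unique} proper $\Phi$-invariant subbundle, since the image of $p(\Phi)$ lands in $W_1\otimes K^m$. Finally, your monodromy remark is precisely Lemma~\ref{compzd}, but note that it fails at $d=g_{_S}-1$ (there $\bar d=0$ and $S^{(0)}$ is a point), so $Z_{g_{_S}-1}$ genuinely has $2^{2g}$ components---the paper records this, and the theorem's phrasing of ``irreducible components'' is correspondingly a little loose at that endpoint.
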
 

Chapter \ref{Chapter4} deals with the description of the fibres of the $G$-Hitchin fibration containing $G_0$-Higgs bundles, for the real forms $G_0 = SO^*(4m)$ and $Sp(m,m)$ of $G=SO(4m,\C)$ and $Sp(4m,\C)$, respectively. In particular, both these real forms are subgroups of $SU^*(4m)$ and now we have an involution $\sigma$ on the reduced spectral curve. Denote by $\rho$ the ramified $2$-fold covering from the reduced spectral curve $S$ to the quotient curve $\bar{S} = S/\sigma$. We adapt the ideas from the last chapter to obtain the following characterizations (corresponding to Theorems \ref{descriptionofthefibreforso} and \ref{descriptionofthesibreforsp}). 

\begin{thm} Let $p(\lambda) = \lambda^{2m} + \pi^*b_2\lambda^{2m-2} + \ldots + \pi^*b_{2m}$ be a section of the line bundle $\pi^*K^{2m}$ on the total space of the cotangent bundle of $\Sigma$ whose divisor is a smooth curve $S$. The fibre $h^{-1}(p^2)$ of the $SO(4m,\mathbb{C})$-Hitchin fibration is a disjoint union 
$$h^{-1}(p^2) \cong N \cup \bigcup_{d=1}^{2(g_{_{\bar{S}}}-1)} A_d$$
where 
\begin{itemize}
\item $N$ has $2$ connected components and it corresponds to the locus of rank $2$ vector bundles on $S$ (of degree $4m(2m-1)(g-1)$) which admit an isomorphism $\psi : \sigma^*E \to E^* \otimes \pi^*K^{2m-1}$ satisfying $\tp{(\sigma^*\psi)} = - \psi$. 
\item $A_d$ is an affine bundle on 
$$Z_d  =  \{ (\bar{D}, L) \in \bar{S}^{(\bar{d})} \times \Pic^{d^\prime}(S) \ | \ L \sigma^*L \cong \rho^* (K_{\bar{S}}^2(-\bar{D}))  \} $$
modeled on the vector bundle $E_d \to Z_d$, whose fibre at $(\bar{D}, L)$ is isomorphic to $H^1(\bar{S}, K_{\bar{S}} (-\bar{D}) )$. Here, $d^\prime = d + 2 (g_{_{\bar{S}}}-1)$ and $\bar{d} = -d + 2 (g_{_{\bar{S}}}-1)$.
\item The natural map $Z_d \to S^{(\bar{d})}$ is a fibration, whose fibres are modeled on the Prym variety $\PS$.
\item Each stratum has dimension $\dim SO(4m,\mathbb{C})(g-1)$ and the irreducible components of $h^{-1}(p^2) $ are precisely the Zariski closures of $A_d$, $1 \leqslant d \leqslant 2(g_{_{\bar{S}}}-1)$, and $N$.
\end{itemize} 
\end{thm}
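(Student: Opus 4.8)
The plan is to mimic the structure used for $SU^*(2m)$ in Theorem \ref{fibreforsu*}, but now tracking the extra datum of the involution $\sigma$ and the descent to $\bar S = S/\sigma$. First I would recall that $h^{-1}(p^2)$, as a fibre of the $SO(4m,\mathbb{C})$-Hitchin map with reducible (non-reduced) spectral curve $2S$ whose reduction $S = \operatorname{div}(p)$ is smooth, is identified via Simpson's theorem with a moduli space of rank-$1$ semistable sheaves on the ribbon $2S$. As in Chapter \ref{Chapter3} such a sheaf is built from an extension in the abelian category of sheaves on $S$: a subsheaf supported (scheme-theoretically) on the reduced curve together with its quotient, so that the total object is either a rank-$2$ bundle $E$ on $S$ (the ``generic'' stratum $N$) or is genuinely non-locally-free along a divisor $D \subset S$, giving the strata $A_d$ indexed by $d$. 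The $SO(4m,\mathbb{C})$-condition then imposes a quadratic form on the total object, which by Proposition \ref{themext} (the extension criterion, case $\operatorname{Sym}^2$ versus $\Lambda^2$) translates into the self-duality constraint on the defining extension class; for the orthogonal group one gets precisely the requirement $\tp{(\sigma^*\psi)} = -\psi$ on the isomorphism $\psi : \sigma^*E \to E^*\otimes \pi^*K^{2m-1}$ on the $N$-stratum, and the corresponding descent condition $L\,\sigma^*L \cong \rho^*(K_{\bar S}^2(-\bar D))$ on the line-bundle datum over $\bar S$ in the $A_d$-stratum.

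Next I would analyse each stratum separately. For $N$: the data is a rank-$2$ bundle $E$ on $S$ with an isomorphism to a twisted dual of $\sigma^*E$ of the stated antisymmetry type; I would compute its dimension by a deformation argument (the tangent space is a suitable hypercohomology/$\sigma$-equivariant $H^1$) and check it equals $\dim SO(4m,\mathbb{C})\,(g-1)$, and then identify the two connected components by a $\mathbb{Z}/2$ topological invariant — here the natural candidate is a Stiefel–Whitney/Pfaffian-type class distinguishing the two ways the orthogonal structure can be realised, matching the two components of the regular locus over the discriminant. For $A_d$: following Chapter \ref{Chapter3}, the non-locally-free sheaves over a fixed divisor are an extension of a torsion sheaf on $D$ by a line bundle, so they fibre over the base $Z_d$ — a symmetric-product times Picard variety cut out by the norm/descent relation $L\sigma^*L \cong \rho^*(K_{\bar S}^2(-\bar D))$ — with affine fibres modelled on $E_d$, whose fibre at $(\bar D, L)$ is $H^1(\bar S, K_{\bar S}(-\bar D))$ (the $\sigma$-equivariant part of the corresponding $H^1$ on $S$). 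The bookkeeping of twists — getting $d' = d + 2(g_{\bar S}-1)$ and $\bar d = -d + 2(g_{\bar S}-1)$ — comes from Riemann–Hurwitz for $\rho$ (the ramification divisor contributes $K_S = \rho^*K_{\bar S}(R)$) together with the degree constraint $\deg E = 4m(2m-1)(g-1)$ forced by triviality of $\det$ for $SO$. The fibration $Z_d \to \bar S^{(\bar d)}$ and the identification of its fibres with the Prym variety $\PS$ is then the statement that the solutions $L$ of the norm equation form a torsor under $\ker(1+\sigma^*) = \PS$, i.e. the classical description of the fibre of the norm map $\operatorname{Nm}: \operatorname{Pic}(S) \to \operatorname{Pic}(\bar S)$ restricted appropriately.

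Finally I would assemble the strata: openness of $N$ and the stratification of its complement by the ``defect'' divisor $D$ give the disjoint-union decomposition, the range $1 \le d \le 2(g_{\bar S}-1)$ is exactly the range in which $Z_d$ is non-empty (semistability plus the degree of $\bar D$ being between $0$ and $2(g_{\bar S}-1) = \deg K_{\bar S}$), and the dimension count above shows every stratum has dimension $\dim SO(4m,\mathbb{C})(g-1)$, so the irreducible components are the Zariski closures of the $A_d$ and of $N$ (one must check each $A_d$ and $N$ is irreducible — $N$ via irreducibility of the relevant moduli of $\sigma$-equivariant bundles, $A_d$ because it is an affine bundle over the irreducible $Z_d$). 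The main obstacle, I expect, is the orthogonal-structure analysis on the $N$-stratum: correctly deriving the antisymmetry condition $\tp{(\sigma^*\psi)} = -\psi$ from Proposition \ref{themext} in the presence of the involution (the sign is subtle — it is the opposite of what naïvely descends, because $\sigma$ interchanges the two components of the ribbon), and then proving that this locus has exactly two connected components rather than one or four. The $Sp(m,m)$ case, Theorem \ref{descriptionofthesibreforsp}, will run in parallel with the roles of $\operatorname{Sym}^2$ and $\Lambda^2$ exchanged, giving a symmetric condition on $\psi$ and a single connected component for the generic stratum.
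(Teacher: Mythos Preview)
Your overall strategy is the paper's strategy, but several details are off in ways that matter.

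First, the sign condition on $N$. You attribute $\tp{(\sigma^*\psi)}=-\psi$ to Proposition~\ref{themext}, but that proposition governs the \emph{extension} strata $A_d$ (it tells you the class lives in $\K^1(\Sigma,\Lambda^2\mathsf{W})$ in the orthogonal case). On $N$ there is no extension: $p(\Phi)=0$, the sheaf on the ribbon is supported on $S$, and one must analyse the rank-$2$ bundle $E$ directly. The paper (Proposition~\ref{pairingn}) builds the orthogonal form on $V=\pi_*E$ by the trace-type pairing $q(s,t)=\sum_{y\in\pi^{-1}(x)}\langle\psi(\sigma^*s),t\rangle_y/d\pi_y$, and the minus sign comes from the fact that $d\pi$ is \emph{anti}-invariant under $\sigma$ (not from any ``two components of the ribbon'' picture). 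Conversely, starting from an orthogonal $(V,\Phi)$ one compares the exact sequence for $\pi^*\Phi-\lambda$ with its $\sigma$-pullback and its dual, again using that $d\pi$ changes sign. This is the actual mechanism you should implement.

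Second, you have the topology of the stratification reversed. $N$ is \emph{not} the generic or open stratum: it is the closed locus $\{p(\Phi)=0\}$, the most degenerate case where the rank-$1$ sheaf on the ribbon collapses to a rank-$2$ bundle on the reduced curve. The open pieces are the $A_d$, consisting of genuine generalized line bundles. Relatedly, $N$ is not irreducible --- it has two components --- so your final ``check each $A_d$ and $N$ is irreducible'' needs adjustment.

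Third, the two-component count for $N$ is not a Stiefel--Whitney argument. The paper (Proposition~\ref{connectedcompo}) passes to degree $0$ via a choice of $K^{1/2}$, observes $N^s$ sits in the $\tau$-fixed locus of $\calU_S(2,0)$, cites Zelaci for the dimension, and then shows the tensor-product map $N_0^s\times\PS\to N^s$ is surjective. Since $N_0^s$ has $2^{4m(g-1)-1}$ components (distinguished by the action of $\sigma$ at its fixed points) and $\PS[2]/\rho^*\Jac(\bar S)[2]$ has order $2^{4m(g-1)-2}$ and acts freely on them, one is left with exactly two components. You should expect to need this explicit counting rather than an abstract topological invariant.

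With these corrections your outline becomes the paper's argument; the final theorem proof is then just the Riemann--Roch dimension count $\bar d + h^1(\bar S,\bar L K_{\bar S}^{-1}) + \dim\PS = (8m^2-2m)(g-1)$.
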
 

\begin{thm} Let $p(\lambda) = \lambda^{2m} + \pi^*b_2\lambda^{2m-2} + \ldots + \pi^*b_{2m}$ be a section of the line bundle $\pi^*K^{2m}$ on the total space of the cotangent bundle of $\Sigma$ whose divisor is a smooth curve $S$. The fibre $h^{-1}(p^2)$ of the $Sp(4m,\mathbb{C})$-Hitchin fibration is a disjoint union 
$$h^{-1}(p^2) \cong N \cup \bigcup_{d=1}^{g_{_{S}}-1} A_d$$
where 
\begin{itemize}
\item $N$ is connected and it corresponds to the locus of rank $2$ semi-stable vector bundles on $S$ (of degree $4m(2m-1)(g-1)$) which admit an isomorphism $\psi : \sigma^*E \to E^* \otimes \pi^*K^{2m-1}$ satisfying $\tp{(\sigma^*\psi)} =  \psi$. 
\item $A_d$ is an affine bundle on 
$$Z_d  =  \{ (\bar{D}, L) \in \bar{S}^{(\bar{d})} \times \Pic^{d^\prime}(S) \ | \ L \sigma^*L \rho^*\mathcal{O}_{\bar{S}}(\bar{D}) \cong \pi^*K^{4m-1} \}$$ 
modeled on the vector bundle $E_d \to Z_d$, whose fibre at $(\bar{D}, L)$ is isomorphic to $H^1(\bar{S}, K_{\bar{S}} (-\bar{D}) )$. Here, $d^\prime = d + 2 (g_{_{\bar{S}}}-1)$ and $\bar{d} = -d + 4m^2 (g-1)$.
\item The natural map $Z_d \to S^{(\bar{d})}$ is a fibration, whose fibres are modeled on the Prym variety $\PS$.
\item Each stratum has dimension $\dim Sp(4m,\mathbb{C})(g-1)$ and the irreducible components of $h^{-1}(p^2) $ are precisely the Zariski closures of $A_d$, $1 \leqslant d \leqslant g_{_{S}}-1$, and $N$.
\end{itemize} 
\end{thm}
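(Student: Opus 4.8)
The plan is to realise $h^{-1}(p^{2})$ for $Sp(4m,\mathbb{C})$ as the locus of symplectically self-dual points of the corresponding fibre of the $SL(4m,\mathbb{C})$-Hitchin fibration and to run the stratification of Chapter \ref{Chapter3} in the presence of this extra structure. First one notes that an $Sp(4m,\mathbb{C})$-Higgs bundle over $p^{2}$ has reduced spectral curve $S=\{p(\lambda)=0\}\subset T^{*}\Sigma$; since $p$ contains only even powers of $\lambda$, $S$ carries the involution $\sigma\colon\lambda\mapsto-\lambda$, with quotient $\rho\colon S\to\bar{S}=S/\sigma$ and $\pi=\bar\pi\circ\rho$. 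By the isomorphism between Hitchin fibres and Simpson moduli of rank-$1$ torsion-free sheaves on the spectral curve, a point of the $SL(4m,\mathbb{C})$-fibre over $p^{2}$ is such a sheaf $\mathcal{F}$ on the ribbon $2S=\{p(\lambda)^{2}=0\}$ with $\det\pi_{*}\mathcal{F}\cong\mathcal{O}_{\Sigma}$; the symplectic form on $V$ together with $\sigma$-antisymmetry of $\Phi$ endows $\mathcal{F}$ with an isomorphism onto $\sigma^{*}(\mathcal{F}^{\vee}\otimes\pi^{*}K^{4m-1})$, where $\vee$ denotes the $\mathcal{O}_{2S}$-dual and $\pi^{*}K^{4m-1}|_{2S}$ is the relative dualising sheaf of $2S\to\Sigma$, computed by adjunction in the holomorphic-symplectic surface $T^{*}\Sigma$. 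This is the origin of the twist $\pi^{*}K^{4m-1}$ appearing in $Z_d$; and, in contrast with $SO(4m,\mathbb{C})$, the alternating character of $\omega$ will be recorded after push-forward by the symmetry condition $\tp{(\sigma^{*}\psi)}=\psi$, the sign being flipped by the canonical equivariant structure on the relative dualising sheaf of $\rho$.

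The second step is to run the two-stratum analysis of Theorem \ref{fibreforsu*} (applied with spectral data of degree $2m$) together with the self-duality. Pushing $\mathcal{F}$ down to $S$ and using the ribbon sequence $0\to\pi^{*}K^{-2m}|_{S}\to\mathcal{O}_{2S}\to\mathcal{O}_{S}\to0$, on the dense locus where the resulting extension is non-degenerate one gets a rank-$2$ vector bundle $E$ on $S$; Grothendieck--Riemann--Roch and $\det\pi_{*}E\cong\mathcal{O}_{\Sigma}$ force $\deg E=4m(2m-1)(g-1)$, and semistability of $(V,\Phi)$ is equivalent to semistability of $E$. Relative duality for $2S\to S$ turns the self-duality of $\mathcal{F}$ into an isomorphism $\psi\colon\sigma^{*}E\to E^{*}\otimes\pi^{*}K^{2m-1}$ with $\tp{(\sigma^{*}\psi)}=\psi$, and conversely every such $(E,\psi)$ occurs; this identifies $N$. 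On the remaining strata $\mathcal{F}$ acquires torsion supported on a $\sigma$-invariant divisor, whose image on $\bar{S}$ is $\bar{D}\in\bar{S}^{(\bar{d})}$, the residual continuous datum being a line bundle $L\in\Pic^{d'}(S)$ recording the reduced part, and the self-duality becomes exactly the determinantal constraint defining $Z_d$. Reconstructing $(V,\Phi)$ from $(\bar{D},L)$ amounts to choosing an extension of Higgs bundles; by the symplectic case of Proposition \ref{themext} (hypercohomology class in $\K^{1}(\Sigma,\Sym^{2}\mathsf{W})$) the relevant group is computed, via the Leray spectral sequence for $\bar\pi$, to be $H^{1}(\bar{S},K_{\bar{S}}(-\bar{D}))$, so that $A_d$ is a torsor over the vector bundle $E_d\to Z_d$ with this fibre.

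It remains to analyse $Z_d$ and to assemble the statement. Forgetting $L$ gives $Z_d\to\bar{S}^{(\bar{d})}$ whose fibre over $\bar{D}$ consists of those $L$ with $L\,\sigma^{*}L\cong\pi^{*}K^{4m-1}\rho^{*}\mathcal{O}_{\bar{S}}(-\bar{D})$; since $\rho$ is ramified the norm map $\mathrm{Nm}_{S/\bar{S}}$ is surjective with connected kernel $\PS$, so this map is surjective with fibres modelled on $\PS$ and $Z_d$ is irreducible. An Euler-characteristic count — which along the way fixes $g_{_{\bar{S}}}$ consistently with Riemann--Hurwitz for $\rho$ and $\bar\pi$ — yields $\dim Z_d+\dim H^{1}(\bar{S},K_{\bar{S}}(-\bar{D}))=\dim Sp(4m,\mathbb{C})(g-1)$ on each $A_d$, and the same bookkeeping gives $\dim N$. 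As $A_d$ is an affine bundle over the irreducible $Z_d$ it is irreducible, the strata are disjoint since $d$ is the intrinsic length of the torsion of $\mathcal{F}$, and each is locally closed of the maximal dimension $\dim Sp(4m,\mathbb{C})(g-1)=\dim h^{-1}(p^{2})$; hence the irreducible components are exactly the Zariski closures of $N$ and the $A_d$. Connectedness of $N$ I would obtain by fibering it over the Prym of $\rho$ with connected fibres and checking that, unlike in the orthogonal case where a discrete invariant of the symmetric form produces two components, the alternating form carries no such invariant. The main obstacle is the sheaf-theoretic analysis on the non-reduced curve $2S$: controlling all degenerations of $\mathcal{F}$, matching them precisely to $(\bar{D},L)$ and the class in $H^{1}(\bar{S},K_{\bar{S}}(-\bar{D}))$, and verifying that the alternating self-duality descends to the stated conditions with the correct twists and sign; making the dimension of each stratum come out uniformly equal to $\dim Sp(4m,\mathbb{C})(g-1)$ is the delicate quantitative point, and the identification of the extension group through Proposition \ref{themext} is its technical heart.
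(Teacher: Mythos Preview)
Your proposal is essentially correct and converges on the same technical core as the paper: parametrising the non-rank-$2$ strata by extensions via Proposition~\ref{themext} (symplectic case, $\K^{1}(\Sigma,\Sym^{2}\mathsf{W})$) and computing this hypercohomology in terms of line bundles on $\bar{S}$. The framing differs, though. The paper's direct proof (Section~\ref{Sp(m,m)}) does not start from sheaves on the ribbon $2S$; it works entirely on the reduced curve, extracting the maximally isotropic Higgs subbundle $W\subset V$ from $\ker p(\Phi)$ and then computing $\ker\hat\phi$ and $\coker\hat\phi$ on $\Sym^{2}W$ explicitly as $M_{-}$ and $M_{+}\otimes K^{2m}$, where $M=L\sigma^{*}L\,\pi^{*}K^{1-2m}$. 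The ribbon viewpoint you take as your starting point is precisely the content of Chapter~\ref{Chapter5}, which the thesis presents afterwards as an alternative, equivalent description. Your route makes the origin of the twist $\pi^{*}K^{4m-1}$ and the self-duality more conceptual (relative duality on $2S$); the paper's route avoids non-reduced sheaf theory altogether for this theorem, at the cost of a more hands-on computation of the $\pm$-decomposition.

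One place your outline is too loose: you say the torsion is supported on a $\sigma$-invariant divisor whose image in $\bar{S}$ is $\bar{D}$, but $\sigma$-invariance alone allows odd multiplicities at fixed points of $\sigma$, i.e.\ divisors not of the form $\rho^{*}\bar{D}$. In the paper this is exactly where the $\Sym^{2}$ versus $\Lambda^{2}$ distinction bites: the class $\zeta(\delta)$ lands in $H^{0}(\bar{S},\bar{M}_{+}\otimes\bar\pi^{*}K^{2m})=H^{0}(\bar{S},\bar{L}\bar\pi^{*}K)$, which directly produces a divisor on $\bar{S}$ with no ramification correction --- in contrast with $SO(4m,\mathbb{C})$, where the anti-invariant part forces the inclusion of $R_{\rho}$. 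In your ribbon language you must verify that the \emph{symmetric} self-duality on $\mathcal{F}$ forces this particular equivariant structure on the torsion; that verification is the substance hidden behind your phrase ``the sign being flipped by the canonical equivariant structure on the relative dualising sheaf of $\rho$''.
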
 

In Chapter \ref{Chapter5} we look at these fibres as sub-loci of semi-stable sheaves of rank $1$ on a ribbon. After providing a dictionary between the two points-of-view, we obtain the following alternative characterizations in terms of Prym varieties of non-reduced curves. For the $SU^*(2m)$ case we obtain\footnote{Proposition \ref{ch5su} is a combination of Propositions \ref{osdoad}, \ref{basesch5}, \ref{ch5seg} and \ref{ch5ter}}:  
\begin{prop}\label{ch5su} Let $1 \leq d \leq g_{_S}-1$ and consider the locus $\calA_d$ inside the fibre $h^{-1}(p^2)$ of the $SL(2m,\C)$-Hitchin fibration consisting of (stable) generalized line bundles of index $\bar{d} = -2d + 2 (g_{_{S}}-1)$. The locus $\calA_d$ is isomorphic to the total space of the affine bundle $A_d$ and the natural map
\begin{align*}
\upsilon : \calA_d & \to \upsilon (\calA_d) \subset \Pic (S) \times S^{(\bar{d})}\\
\calE & \mapsto (D_{\calE}, \bar{\calE})
\end{align*} 
sending a generalized line bundle $\calE$ to its associated effective divisor $D_{\calE}$ together with the line bundle $\bar{\calE} = \calE |_{S}/\text{torsion}$ on $S$ corresponds precisely to the affine bundle $A_d \to Z_d$. Moreover, the fibre of the projection $\calA_d \to S^{(\bar{d})}$ at an effective divisor $D$ is a torsor for the Prym variety $\PXp$, where $X$ is the (non-reduced) spectral curve and $X^\prime$ is the blow-up of $X$ at $D$ (where the Cartier divisor $D$ is considered as a closed subscheme of $X$). In particular, the Prym variety $\PX$ is isomorphic to $\calA_{g_{_S}-1}$ and it has $2^{2g}$ connected components. 
\end{prop}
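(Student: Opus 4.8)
The plan is to move between two descriptions of the fibre $h^{-1}(p^2)$: the parametrization by data on the \emph{reduced} curve $S$ obtained in Chapter~\ref{Chapter3} (the affine bundles $A_d\to Z_d$), and the intrinsic description of the same fibre as a space of rank-one torsion-free sheaves (generalized line bundles) on the \emph{non-reduced} spectral curve $X=\{p^2=0\}\subset\mathrm{Tot}(K)$, via the spectral correspondence recalled in Chapter~\ref{Chapter1}. First I would record that $X$ is a ribbon over $S$ with conormal bundle $\mathcal{O}_{\mathrm{Tot}(K)}(-S)\big|_S\cong\pi^*K^{-m}\cong K_S^{-1}$ (since $\mathrm{Tot}(K)$ has trivial canonical bundle and $p$ is a section of $\pi^*K^m$), so that $K_S\cong\pi^*K^m$ and $\det\pi_*\mathcal{O}_S\cong K^{-m(m-1)/2}$. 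To a generalized line bundle $\calE$ on $X$ that is not locally free one attaches its effective index divisor $D_\calE$ on $S$ (supported where $\calE$ fails to be locally free; $\bar d:=\deg D_\calE$ is its index) and the line bundle $\bar\calE:=\calE|_S/(\mathrm{torsion})$ on $S$, which sit in natural exact sequences of $\mathcal{O}_X$-modules relating $\calE$, $\bar\calE$ and $\bar\calE\otimes K_S^{-1}(-D_\calE)$. Pushing such a sequence down to $\Sigma$ and taking determinants, the $SL$-condition $\det(\pi_*\calE)\cong\mathcal{O}_\Sigma$ defining $h^{-1}(p^2)$ becomes $\mathrm{Nm}_{S/\Sigma}\!\bigl(\bar\calE^2(D_\calE)\otimes\pi^*K^{1-2m}\bigr)\cong\mathcal{O}_\Sigma$, i.e.\ $\bar\calE^2(D_\calE)\otimes\pi^*K^{1-2m}\in\Prym$; so, with $d$ defined by $\bar d=-2d+2(g_{_S}-1)$, the pair $(D_\calE,\bar\calE)$ lies in $Z_d$ and $\upsilon$ is well defined.

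Next I would fix $(D,L)\in Z_d$ and classify the generalized line bundles $\calE$ with $D_\calE=D$ and $\bar\calE=L$: up to isomorphism these are governed by the gluing datum needed to lift the $\mathcal{O}_S$-module $L$ to an $\mathcal{O}_X$-module along $D$, and a standard ribbon computation identifies the set of such classes with a torsor for $H^1(S,K_S^{-1}(D))$. This construction works in families and so identifies $\upsilon\colon\calA_d\to Z_d$ with the affine bundle $A_d\to Z_d$ modeled on $E_d$ of Theorem~\ref{fibreforsu*}, once one also checks that stability of $\calE$ on $X$ corresponds to the (semi)stability conditions defining the strata in Chapter~\ref{Chapter3}; in particular $\calA_d$ is the total space of $A_d$. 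To reach the Prym picture I would use that a generalized line bundle with index divisor exactly $D$ is the pushforward of a genuine line bundle on the blow-up $X'=\mathrm{Bl}_D X$ (with $D$ regarded as a closed subscheme of $X$), and conversely, and that $X'$ is again a ribbon over $S$ whose conormal bundle is now $K_S^{-1}(D)$. Hence $0\to H^1(S,K_S^{-1}(D))\to\Pic(X')\to\Pic(S)\to 0$, and imposing on $\mathrm{Nm}_{X'/\Sigma}$ the value forced by the $SL$-condition cuts out $\PXp$, which therefore sits in $0\to H^1(S,K_S^{-1}(D))\to\PXp\to\Prym\to 0$ --- exactly the extension whose torsors are, by the previous step, the fibres of $\calA_d\to S^{(\bar d)}$. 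Thus the fibre over $D$ is a torsor for $\PXp$.

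For the last assertion, when $d=g_{_S}-1$ one has $\bar d=0$, so $D_\calE=\emptyset$ and every $\calE\in\calA_{g_{_S}-1}$ is a genuine line bundle on $X$ (so $X'=X$); since by the identification above the locus of line bundles in $h^{-1}(p^2)$ is $\PX$, we obtain $\PX\cong\calA_{g_{_S}-1}$. Finally $S^{(0)}$ is a point and $Z_0=\{\,L\in\Pic^{d_2}(S):\mathrm{Nm}_{S/\Sigma}(L)^2\cong K^{m(2m-1)}\,\}$; composing with $\mathrm{Nm}_{S/\Sigma}$, the solutions $M\in\Pic(\Sigma)$ of $M^2\cong K^{m(2m-1)}$ form a torsor for $\Pic^0(\Sigma)[2]\cong(\mathbb{Z}/2)^{2g}$, while each fibre of $\mathrm{Nm}_{S/\Sigma}$ over such an $M$ is a coset of the connected abelian variety $\Prym$ (connectedness holding because $S$ is an irreducible ramified cover of $\Sigma$). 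Hence $Z_0$, and so the affine bundle $\calA_{g_{_S}-1}=\PX$ over it, has $2^{2g}$ connected components.

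The main obstacle will be the dictionary in the second paragraph: matching the Chapter~\ref{Chapter3} data on $S$ with sheaf theory on the ribbon $X$, and identifying the torsor controlling the fibres of $\upsilon$ simultaneously with the model bundle $E_d$ and with $\ker(\Pic(X')\to\Pic(S))$ for the blow-up. One has to be careful with the precise exact sequences attached to a generalized line bundle, with the effect on the conormal bundle of blowing up a Cartier divisor of $S$ viewed inside $X$, and with the compatibility of the stability notions so that the \emph{strata} --- not merely the underlying sets --- correspond; the determinant and degree bookkeeping in the first paragraph is routine but must be done carefully to land exactly on $\pi^*K^{1-2m}$ and on $\Prym$.
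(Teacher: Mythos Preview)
Your approach is essentially the same as the paper's: both set up the dictionary between generalized line bundles on the ribbon $X$ and the Chapter~\ref{Chapter3} data on $S$, pass through the blow-up $X'=\Bl_D X$ to turn generalized line bundles of index divisor $D$ into genuine line bundles, and read off the Prym torsor from the norm map for $\pi':X'\to\Sigma$. The determinant computation landing on $\bar\calE^{\,2}(D_\calE)\otimes\pi^*K^{1-2m}\in\Prym$ is exactly Proposition~\ref{basesch5}, and the torsor argument for the fibre over $D$ is Proposition~\ref{ch5seg}.

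One correction: the short exact sequence you write for $\PXp$ is not quite right. Since $\Nm_{\pi'}(\calL')=\Nm_{\pi^{\mathrm{red}}}\bigl((j')^*\calL'\bigr)^{2}$ (this is Lemma~\ref{normvs} in the paper), the image of $\PXp\to\Pic^0(S)$ is
\[
F=\{M\in\Pic^0(S)\ :\ M^2\in\Prym\},
\]
not $\Prym$ itself; the correct sequence is $0\to H^1(S,K_S^{-1}(D))\to\PXp\to F\to 0$. This is more than cosmetic: it is precisely the $2^{2g}$-to-$1$ discrepancy between $F$ and $\Prym$ that produces the $2^{2g}$ connected components of $Z_{g_{_S}-1}$ (which you call $Z_0$) and hence of $\PX$. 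Your separate count via $\{M:M^2\cong K^{m(2m-1)}\}$ is fine and gives the same answer, but the exact sequence as you stated it would have forced $\PXp$ to be connected.
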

For the other two cases we have the following (which can be found in Propositions \ref{ribbonsp} and \ref{ribbonso}):   
\begin{prop}\label{ribbonsp1} Let $1 \leq d \leq g_{_S}-1$ and consider the locus $\calA_d$ inside the fibre $h^{-1}(p^2)$ of the $Sp(4m,\C)$-Hitchin fibration consisting of (stable) generalized line bundles of index $\bar{d} = -d + 4m^2(g-1)$. The locus $\calA_d$ is isomorphic to $A_d$ (as defined in Section \ref{Sp(m,m)}) and the natural projection
$$\calA_d \to \bar{S}^{(\bar{d})}$$ 
is a fibration, whose fibre at a divisor $\bar{D}$ is modeled on the Prym variety $\PBl$, where $X$ is the (non-reduced) spectral curve, $X^\prime = \Bl_DX$ and $\bar{X}^\prime$ is a ribbon on the quotient (non-singular) curve $\bar{S}$. In particular, choosing a square root of the canonical bundle of $\Sigma$, the locus $\calA_{g_{_S}-1}$ can be identified with $\PRib \subset \PX$, where $\bar{X}$ is the ribbon $\bar{X}^\prime$ corresponding to $\bar{D}=0$. 
\end{prop}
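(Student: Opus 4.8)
The plan is to run the argument of Proposition~\ref{ch5su} in the presence of the involution $\sigma$, carrying along the extra symmetry that the symplectic structure imposes. Recall the dictionary set up in Chapter~\ref{Chapter5}: a generalized line bundle $\calE$ on the ribbon $X$ carries a canonical effective divisor $D_\calE$ on the reduced curve $S$, its \emph{singularity divisor}, and is equivalent to the datum of $D_\calE$ together with an honest line bundle $L^{\prime}$ on the blow-up $X^{\prime}=\Bl_{D_\calE}X$, itself a ribbon on $S$; writing $\mathcal{N}:=\mathcal{N}_{S/X}(D_\calE)$ for the conormal bundle of $S$ in $X^{\prime}$, the group $\Pic(X^{\prime})$ sits in an extension of $\Pic(S)$ by $H^1(S,\mathcal{N})$, the projection to $\Pic(S)$ being $L^{\prime}\mapsto\bar\calE:=L^{\prime}|_S$. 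Restriction of scalars along $\mathcal{O}_S\hookrightarrow\mathcal{O}_{X^{\prime}}$ turns $L^{\prime}$ into a rank-$2$ bundle $E$ on $S$, an extension $0\to\bar\calE\otimes\mathcal{N}\to E\to\bar\calE\to 0$, whose pushforward $\pi_*E$ to $\Sigma$ is the underlying vector bundle. The goal is then to (i) identify $\calA_d$ with the affine bundle $A_d\to Z_d$, (ii) exhibit $\calA_d\to\bar S^{(\bar d)}$ as a fibration with fibres torsors over $\PBl$, and (iii) treat the top stratum $d=g_{_{S}}-1$.

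For (i), I would first show that the symmetric self-duality $\tp{(\sigma^*\psi)}=\psi$ of Theorem~\ref{descriptionofthesibreforsp} --- at the level of $E$ the isomorphism $\psi\colon\sigma^*E\to E^*\otimes\pi^*K^{2m-1}$ --- forces $D_\calE$ to be $\sigma$-invariant, so $D_\calE=\rho^*\bar D$ for a unique $\bar D\in\bar S^{(\bar d)}$ with $\bar d=-d+4m^2(g-1)$; the blow-up centre being $\sigma$-stable, $\psi$ then lifts to $X^{\prime}$ and $\bar X^{\prime}:=X^{\prime}/\sigma$ is a ribbon on $\bar S$. Comparing the canonical extension $0\to\bar\calE\otimes\mathcal{N}\to E\to\bar\calE\to 0$ with the one produced by $\psi$ shows that $L:=\bar\calE$ must satisfy $L\,\sigma^*L\,\rho^*\mathcal{O}_{\bar S}(\bar D)\cong\pi^*K^{4m-1}$, i.e.\ $(\bar D,L)\in Z_d$ with $d^{\prime}=d+2(g_{_{\bar{S}}}-1)$, the degrees emerging from a routine count. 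Conversely, given $(\bar D,L)\in Z_d$, the generalized line bundles inducing $(\bar D,L)$ and compatible with a symmetric self-duality form a torsor over the part of $H^1(S,\mathcal{N})$ selected by the symmetry condition, which descent along $\rho$ together with Serre duality identifies with $H^1(\bar S,K_{\bar S}(-\bar D))$. That the symmetric structure actually propagates through the extension is the line-bundle-on-$S$, $\sigma$-twisted incarnation of Proposition~\ref{themext}, with $\mathsf{W}$ of rank one. This realizes $\calA_d$ as the affine bundle $A_d\to Z_d$ modeled on $E_d$ and $\upsilon$ as its structure map.

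For (ii), fix $\bar D$, hence $D=\rho^*\bar D$, the ribbon $X^{\prime}=\Bl_DX$ and its quotient ribbon $\bar X^{\prime}=X^{\prime}/\sigma$ on $\bar S$. By the dictionary the fibre of $\calA_d\to\bar S^{(\bar d)}$ over $\bar D$ is the locus of $L^{\prime}\in\Pic(X^{\prime})$ carrying the induced symmetric self-duality, and such a locus is a torsor over the generalized Prym variety $\PBl$ of the double cover $X^{\prime}\to\bar X^{\prime}$ --- the fixed locus of the composition of $\sigma^*$ with dualization-and-twist, up to its component group. Gluing over $\bar D\in\bar S^{(\bar d)}$ gives the fibration, and its compatibility with (i) is the statement that $\PBl$ is an extension of the ordinary Prym $\PS$ of $S\to\bar S$ by $H^1(\bar S,K_{\bar S}(-\bar D))$, matching the descriptions of $Z_d$ and of the affine bundle in Theorem~\ref{descriptionofthesibreforsp}. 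For (iii), the adjunction computation in the total space of the cotangent bundle gives $2g_{_{S}}-2=8m^2(g-1)$, so $g_{_{S}}-1=4m^2(g-1)$ and hence $\bar d=0$ when $d=g_{_{S}}-1$; then $\bar S^{(\bar d)}$ is a point, $X^{\prime}=X$, and $\calA_{g_{_{S}}-1}$ is a torsor over $\PBl$ with $X^{\prime}=X$, i.e.\ over the Prym of the ribbon $\bar X=\bar X^{\prime}|_{\bar D=0}$. A choice of square root of the canonical bundle of $\Sigma$ trivializes the twist $\pi^*K^{4m-1}$ in the self-duality isomorphism, rigidifying this torsor to the subvariety $\PRib$, which lies inside $\PX$ because $Sp(m,m)\subset SU^*(4m)$.

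I expect the main obstacle to be the $\sigma$-equivariant bookkeeping: proving carefully that the self-duality datum really descends from $X$ to the blow-up $X^{\prime}$, and that, once descended, the symmetry condition $\tp{(\sigma^*\psi)}=\psi$ --- as opposed to the skew condition $\tp{(\sigma^*\psi)}=-\psi$, which produces the $SO^*(4m)$ case --- singles out exactly the component $\PBl$ with the correct $\pi^*K$-twist; in particular, identifying which eigenspace of $\sigma$ on the ambient $H^1$ is relevant and how the ramification of $\rho$ enters the twists. A secondary, more mechanical difficulty is that the extension analysis and the Prym-torsor structure behind $\calA_d\cong A_d$ must be redone from the $SU^*$ template of Proposition~\ref{ch5su} with the symmetric form in place, keeping track of the $2$-torsion ambiguities governing the component groups of the Pryms.
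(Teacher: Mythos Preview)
Your overall strategy is sound, but you take a substantially more circuitous route than the paper, and there is one step that does not go through as written.

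The paper's proof is very short and works \emph{directly on the ribbon}. Earlier in Chapter~\ref{Chapter5} the involution $\sigma$ is extended to an involution $\tilde\sigma$ on $X$ (and on any $\sigma$-symmetric blow-up $X'$), with quotient ribbon $\bar X$ (resp.\ $\bar X'$) on $\bar S$. For the top stratum $d=g_{_S}-1$, an element of $\calA_{g_{_S}-1}$ is an honest line bundle $\calL$ on $X$, and one writes the BNR sequence
\[
0\to \calL\otimes\pi^*K^{1-4m}\to \pi^*V \xrightarrow{\pi^*\Phi-\lambda}\pi^*(V\otimes K)\to \calL\otimes\pi^*K\to 0
\]
on $X$. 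Applying $\tilde\sigma^*$ (which sends $\lambda\mapsto -\lambda$), dualizing, and using the symplectic form $\omega:V\to V^*$ together with skew-symmetry of $\Phi$ yields immediately $\tilde\sigma^*\calL\cong\calL^{-1}\otimes\pi^*K^{4m-1}$; after twisting by $\pi^*K^{(1-4m)/2}$ this is exactly membership in $\PRib$. For general $d$ one simply compares two elements in the fibre over $\bar D$ via their line bundles $\calL_1',\calL_2'$ on $X'=\Bl_{\rho^*\bar D}X$: the same BNR-plus-dualize argument on $X'$ shows $\calL_1'(\calL_2')^{-1}\in\PBl$. The isomorphism $\calA_d\cong A_d$ is not re-proved here; it is inherited from the dictionary already set up (Proposition~\ref{osdoad} and its analogues) combined with the embedding $Z_d(Sp)\hookrightarrow Z_d(SL(4m,\C))$ from Section~\ref{Sp(m,m)}.

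Your plan instead pushes $L'$ down to a rank-$2$ bundle $E$ on $S$ by restriction of scalars and then tries to invoke the isomorphism $\psi:\sigma^*E\to E^*\otimes\pi^*K^{2m-1}$ with $\tp{(\sigma^*\psi)}=\psi$ from Theorem~\ref{descriptionofthesibreforsp}. The gap is that this characterization is proved for the locus $N$, where the Higgs field arises as $\pi^{\mathrm{red}}_*(E,\lambda)$ with $\lambda$ acting $\calO_S$-linearly on a genuine rank-$2$ bundle on $S$; the derivation of $\psi$ in Proposition~\ref{pairingn} uses exactly that $E$ is the cokernel of $\pi^*\Phi-\lambda$ on $S$, which requires $p(\Phi)=0$. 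For $\calA_d$ one has $p(\Phi)\neq 0$, and while your auxiliary $E$ does satisfy $\pi^{\mathrm{red}}_*E\cong V$, the multiplication-by-$\lambda$ map on $E$ is \emph{not} $\calO_S$-linear (it is only $\calO_{X'}$-linear), so the argument producing $\psi$ does not transfer. You would have to re-derive the self-duality directly on $L'$ from $\omega$ --- and once you do that, you have essentially reproduced the paper's argument on the ribbon, with the detour through $E$ contributing nothing.

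In short: drop the rank-$2$ bundle on $S$ entirely and run the BNR exact sequence on $X$ (or $X'$) as the paper does; the $\tilde\sigma$-equivariance you flag as the main obstacle becomes a two-line comparison of exact sequences rather than an elaborate descent-and-extension argument.
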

\begin{prop} Let $1 \leqslant d \leqslant 2(g_{_{\bar{S}}}-1)$ and consider the locus $\calA_d$ inside the fibre $h^{-1}(p^2)$ of the $SO(4m,\C)$-Hitchin fibration consisting of (stable) generalized line bundles of index $\bar{d} = -d + 2 (g_{_{\bar{S}}}-1)$. The locus $\calA_d$ is isomorphic to $A_d$ (as defined in Section \ref{sectiononso}) and the natural projection 
$$\calA_d \to \bar{S}^{(\bar{d})}$$ 
is a fibration, whose fibre at a divisor $\bar{D}$ is modeled on the Prym variety $\PBl$, where again $X$, $X^\prime$ and $\bar{X}^\prime$ are defined as in Proposition \ref{ribbonsp1}. 
\end{prop}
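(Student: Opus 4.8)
The plan is to run the $SO$-case through the same machinery developed for the $Sp$-case in Proposition~\ref{ribbonsp1}, carefully tracking how the orthogonal rather than symplectic symmetry changes the relevant cohomology. First I would invoke the dictionary of Chapter~\ref{Chapter5} (Proposition~\ref{ch5su} and the propositions leading to it): a stable generalized line bundle $\calE$ of index $\bar{d}$ on the non-reduced spectral curve $X$ is equivalent to the data of its associated effective Cartier divisor $D_{\calE}$ --- which we may regard as an effective divisor on the reduced curve $S$ --- together with the line bundle $\bar{\calE}=\calE|_{S}/\text{torsion}$ on $S$ and a gluing class living in $H^1$ of an appropriate line bundle on $S$. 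By Theorem~\ref{descriptionofthefibreforso} and the extension-theoretic results of Chapter~\ref{Chapter2}, the orthogonal structure on the corresponding $SO(4m,\mathbb{C})$-Higgs bundle is encoded by an isomorphism $\psi:\sigma^*E\to E^*\otimes\pi^*K^{2m-1}$ with $\tp{(\sigma^*\psi)}=-\psi$. Transported through the dictionary, the involution $\sigma$ on $S$ lifted from the spectral data forces $D_{\calE}$ to be $\sigma$-invariant, hence of the form $\rho^*\bar{D}$ for a unique effective divisor $\bar{D}$ on $\bar{S}$; counting degrees shows $\bar{D}\in\bar{S}^{(\bar{d})}$ with $\bar{d}=-d+2(g_{\bar{S}}-1)$. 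This yields the natural projection $\calA_d\to\bar{S}^{(\bar{d})}$.

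Next I would identify $\calA_d$ with $A_d$. Twisting $\bar{\calE}$ appropriately to obtain $L\in\Pic^{d'}(S)$ with $d'=d+2(g_{\bar{S}}-1)$, the $\sigma$-anti-invariance of $\psi$ becomes, after this twist, the condition $L\,\sigma^*L\cong\rho^*\bigl(K_{\bar{S}}^2(-\bar{D})\bigr)$, i.e. $(\bar{D},L)\in Z_d$; and the same anti-invariance forces the gluing class of $\calE$ to lie not in the full group $H^1(S,K_S^{-1}(D))$ but in its $\sigma$-anti-invariant subspace, which by the projection formula for $\rho$ together with the description of the ramification divisor is precisely $H^1\bigl(\bar{S},K_{\bar{S}}(-\bar{D})\bigr)$. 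Matching these against the construction of $A_d$ as an affine bundle over $Z_d$ with fibre $H^1(\bar{S},K_{\bar{S}}(-\bar{D}))$ in Section~\ref{sectiononso} gives $\calA_d\cong A_d$ over $\bar{S}^{(\bar{d})}$. This is the one place where the orthogonal and symplectic cases genuinely diverge: the sign $\tp{(\sigma^*\psi)}=-\psi$ selects the anti-invariant part of the cohomology and the corresponding Prym, whereas $\tp{(\sigma^*\psi)}=+\psi$ in Proposition~\ref{ribbonsp1} selected the invariant part.

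To describe the fibre over a fixed $\bar{D}$, I would blow up $X$ along the $\sigma$-invariant Cartier divisor $D=\rho^*\bar{D}$ to get the ribbon $X'=\Bl_DX$ on $S$, observe that $\sigma$ lifts to an involution of $X'$ whose quotient is exactly the ribbon $\bar{X}'$ on $\bar{S}$ already appearing in Proposition~\ref{ribbonsp1}, and note, as in the $Sp$-case, that stable generalized line bundles with fixed divisor $D$ correspond to line bundles on $X'$, the orthogonal symmetry cutting out the locus where the norm map $\mathrm{Nm}:\Pic(X')\to\Pic(\bar{X}')$ takes a fixed value --- a torsor for a connected component of $\PBl$. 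To make this precise I would compare the two exponential-type exact sequences
$$0\to H^1(S,\mathcal{I})\to\Pic(X')\to\Pic(S)\to 0,\qquad 0\to H^1(\bar{S},\bar{\mathcal{I}})\to\Pic(\bar{X}')\to\Pic(\bar{S})\to 0,$$
where $\mathcal{I}$ and $\bar{\mathcal{I}}$ are the nilpotent ideal sheaves of the two ribbons, and apply the snake lemma to the norm maps: this exhibits $\PBl$ as an extension of $\PS$ by $\ker\bigl(H^1(S,\mathcal{I})\to H^1(\bar{S},\bar{\mathcal{I}})\bigr)$, and the ramification computation of the previous paragraph identifies this kernel with $H^1(\bar{S},K_{\bar{S}}(-\bar{D}))$, reconciling the $\PBl$-description with the affine-bundle description of $A_d$ fibrewise over $\bar{S}^{(\bar{d})}$.

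The main obstacle I anticipate is the bookkeeping of the non-reduced structure under the involution. One must check that $\sigma$ genuinely lifts to $X'=\Bl_DX$ with quotient exactly the ribbon $\bar{X}'$ of Proposition~\ref{ribbonsp1} (rather than some other thickening of $\bar{S}$), that $\rho^*$ remains injective and $\ker\mathrm{Nm}$ remains connected on these thickened curves, and that the index and degree shifts $d'=d+2(g_{\bar{S}}-1)$, $\bar{d}=-d+2(g_{\bar{S}}-1)$ are consistent with the Riemann--Roch computations on $S$ and $\bar{S}$. Once the involution is seen to descend cleanly and the anti-symmetry is confirmed to pick out precisely $H^1(\bar{S},K_{\bar{S}}(-\bar{D}))$, the remainder is a matter of transporting the $Sp$-case argument across with the sign changed.
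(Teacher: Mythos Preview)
Your overall strategy---transport the $Sp$-argument to the $SO$-case---is exactly what the paper does, and your snake-lemma description of $\PBl$ is a reasonable elaboration of what the paper leaves implicit. However there is a genuine error in how you identify the divisor $D_{\calE}$ on $S$.

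You write that $\sigma$-invariance forces $D_{\calE}=\rho^*\bar{D}$ for some $\bar{D}\in\bar{S}^{(\bar{d})}$. This is the $Sp$-prescription, not the $SO$-one. In the orthogonal case the correct relation (worked out in Section~\ref{sectiononso}, see the remark describing the embedding $Z_d\hookrightarrow Z_d(SL(4m,\C))$) is
\[
D \;=\; R_\rho + \rho^*\bar{D},
\]
where $R_\rho$ is the ramification divisor of $\rho:S\to\bar{S}$. A $\sigma$-invariant effective divisor on $S$ is not automatically pulled back from $\bar{S}$: it decomposes as a multiple of $R_\rho$ plus a pullback, and the orthogonal symmetry $\tp{(\sigma^*\psi)}=-\psi$ forces exactly one copy of $R_\rho$ to appear. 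Concretely, the cokernel in the $SO$-case is $M_-\otimes K^{2m}$ rather than $M_+\otimes K^{2m}$, and passing from the invariant to the anti-invariant piece of $\rho_*$ introduces an extra factor of $\bar{\pi}^*K^{-1}$, which on $S$ is precisely $\calO_S(-R_\rho)$. So the dichotomy you locate in ``anti-invariant versus invariant cohomology'' is really the dichotomy ``$R_\rho$ present versus absent'' in the divisor on $S$.

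This has downstream consequences. Your blow-up should be $X'=\Bl_{R_\rho+\rho^*\bar{D}}X$, not $\Bl_{\rho^*\bar{D}}X$; the degree count $\deg D=2\bar{d}+4m(g-1)$ (not $2\bar{d}$) is what makes the dimension of $\PBl$ come out to $\dim SO(4m,\C)(g-1)-\bar{d}$; and, as the paper notes right after the proposition, the presence of $R_\rho$ is exactly why the $SO$-locus never meets $\calA_{g_S-1}(SL(4m,\C))\cong\PX$, i.e.\ why there are no genuine line bundles on $X$ in the orthogonal fibre. Once you correct the divisor, the rest of your outline goes through as written.
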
 
In the last chapter of the first part we observe the following\footnote{This is contained in the text, but it is not explicitly stated as a proposition. For further details the reader is referred to Chapter \ref{Chapter6}.}: 
\begin{prop} Let $G= SL(2m, \C)$, $SO(4m,\C)$ or $Sp(4m,\C)$. Consider the fibre $h^{-1}_G(p^2)$ of the $G$-Hitchin fibration, where $p(x) = x^m + a_2x^{m-2} + \ldots + a_m$, $a_i \in H^0(\Sigma , K^i)$, if $G = SL(2m,\C)$, and $p(x) = x^{2m} + b_2x^{2m-2} + \ldots + b_{2m}$, $b_{2i} \in H^0(\Sigma , K^{2i})$, in the other two cases. Also, assume that the reduced curve $S$ inside the total space of the canonical bundle of $\Sigma$ given by $\zeros (p(\lambda)) $ is non-singular. There exists a natural surjective map from the singular fibre to an abelian variety $Ab(G_0)$, such that the inverse image of $\calO_S$ intersects the irreducible component of $h_G^{-1}(p^2)$ consisting of rank $2$ semi-stable  vector bundles on $S$ in $N_0 (G)$. When $G = SL(2m,\C)$ the abelian variety $Ab(SU^*(2m))$ is the Prym variety $\Prym$. For the groups $G = SO(4m,\C)$ and $Sp(4m,\C)$ the abelian varieties $Ab(SO^*(4m))$ and $Ab (Sp(m,m))$ are the same and isomorphic to $\PS / \rho^*\Jac (\bar{S})[2]$, where $\rho : S \to \bar{S} $ is the ramified $2$-covering from $S$ to the quotient curve $\bar{S}$.
\end{prop}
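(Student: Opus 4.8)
The plan is to read off the statement from the stratified descriptions of the singular fibre obtained in Theorems~\ref{fibreforsu*}, \ref{descriptionofthefibreforso} and \ref{descriptionofthesibreforsp}, together with the ribbon reformulations of Propositions~\ref{ch5su} and~\ref{ribbonsp1}. In each of the three cases one has $h^{-1}_G(p^2) = N \cup \bigcup_d A_d$, where $A_d$ is an affine bundle over a space $Z_d$ which sits inside $S^{(\bar{d})} \times \Pic(S)$ (respectively $\bar{S}^{(\bar{d})} \times \Pic(S)$) and is cut out by a single condition on the $\Pic(S)$-coordinate, while $N$ is the irreducible component parametrising rank~$2$ bundles on $S$. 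I would build the map one stratum at a time and then glue.

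On $A_d$ I would compose the affine-bundle projection $A_d \to Z_d$ with the second projection $Z_d \to \Pic(S)$. For $G = SL(2m,\C)$ the defining relation of $Z_d$ reads $L^2(D)\,\pi^*K^{1-2m} \in \Prym$, so $(D,L) \mapsto L^2(D)\,\pi^*K^{1-2m}$ lands in the Prym variety $\Prym = \Prym(S/\Sigma)$; in the notation of Proposition~\ref{ch5su} this is the assignment $\calE \mapsto \bar{\calE}^{2}(D_{\calE})\,\pi^*K^{1-2m}$. On $N$ I would send a rank~$2$ bundle $E$ to $\det E$ twisted by the fixed line bundle compensating $\det\pi_*\calO_S$: since the condition defining $N$ is that $\det\pi_*E$ be trivial, $\mathrm{Nm}_{S/\Sigma}(\det E)$ is constant and the image again lies in $\Prym(S/\Sigma)$. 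For $G = SO(4m,\C)$ and $Sp(4m,\C)$ the same recipe applies, except that the coordinate $L$ now satisfies only $L\,\sigma^*L \cong \rho^*(\cdots)$; this fixes $\mathrm{Nm}_{S/\bar{S}}(L)$ but pins $L$ itself down only up to $\rho^*\Jac(\bar{S})[2]$ (because $\rho^*\eta \cdot \sigma^*\rho^*\eta = \rho^*(\eta^2)$ is trivial for a two-torsion class $\eta$), so composing with the quotient by $\rho^*\Jac(\bar{S})[2]$ produces a map to $\PS/\rho^*\Jac(\bar{S})[2]$; on $N$ one uses the $\psi$-compatibility of Theorems~\ref{descriptionofthefibreforso} and~\ref{descriptionofthesibreforsp} --- whose determinant pins down $\mathrm{Nm}_{S/\bar{S}}(\det E)$ --- in place of the $SL$ determinant condition.

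Then I would verify that these stratum-wise maps glue to a single surjective map on $h^{-1}_G(p^2)$ and identify the target. Surjectivity is transparent already on the open stratum: for $SL(2m,\C)$, given $\xi \in \Prym(S/\Sigma)$ one solves $L^2 \cong \xi \otimes \pi^*K^{2m-1}(-D)$ for a suitable effective divisor $D$ and $L \in \Pic(S)$ (possible because symmetric products surject onto line bundles of large degree and $\Pic(S)$ is divisible), and similarly for the other two groups after descending modulo $\rho^*\Jac(\bar{S})[2]$. The target is $\Prym(S/\Sigma)$ by construction in the first case; in the other two one must reconcile the data coming from $SO(4m,\C)$ and from $Sp(4m,\C)$ and confirm that both yield $\PS/\rho^*\Jac(\bar{S})[2]$ --- the two $\psi$-conditions differ only by a sign and, on the $\Pic(S)$-coordinate, by the twist responsible for the two-torsion ambiguity, so the two quotients coincide; the passage to the quotient is itself forced by the dependence of the ribbon correspondence on a chosen square root of $K_\Sigma$, i.e.\ a theta-characteristic on $\bar{S}$, compare Proposition~\ref{ribbonsp1}.

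Finally, $\calO_S$ represents the identity of $Ab(G_0)$ (its norm is trivial, so it lies in the relevant Prym) and is hit by the map, and its preimage inside $N$ is the set of rank~$2$ bundles $E$ on $S$ whose determinant equals the normalising twist (respectively which carry the normalised $\psi$); using the extension criterion of Proposition~\ref{themext}, or equivalently the arguments used to prove the fibre theorems, one checks that this locus is precisely $N_0(G)$. I expect the gluing step to be the main obstacle: proving that the stratum-wise maps fit together into one morphism on the whole singular fibre requires controlling how the generalized line bundles of the strata $A_d$ degenerate to sheaves supported scheme-theoretically on $S$, so that in a one-parameter degeneration the determinant of the limiting rank~$2$ bundle is recovered as the limit of $\bar{\calE}^{2}(D_{\calE})$ as the divisor $D_{\calE}$ collapses into the thickening. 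Everything else is a direct consequence of the fibre descriptions established in the previous chapters.
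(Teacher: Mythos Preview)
Your overall architecture --- define the map on each stratum and glue --- is exactly what the paper does, and for $G=SL(2m,\C)$ your formulas are the paper's (on $A_d$ send $(D,L)\mapsto L^2(D)\pi^*K^{1-2m}$, on $N$ send $E\mapsto\det(E)\pi^*K^{1-m}$). However, for $SO(4m,\C)$ and $Sp(4m,\C)$ your justification for the target $\PS/\rho^*\Jac(\bar S)[2]$ is wrong. The element $L$ in $Z_d$ is part of the data and is not ``pinned down only up to $\rho^*\Jac(\bar S)[2]$'': the condition $L\sigma^*L\cong\rho^*(\cdots)$ fixes $\Nm_{S/\bar S}(L)$, so the ambiguity in $L$ for fixed $\bar D$ is all of $\PS$ (indeed, this is exactly why the fibres of $Z_d\to\bar S^{(\bar d)}$ are modeled on $\PS$), not the finite group you claim. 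On $A_d$ the map is simply $\delta\mapsto[L\sigma^*L^{-1}]\in\PS$; the quotient is needed not for well-definedness here but so that the preimage of zero on $N$ is exactly $N_0(G)$. The argument for that lives on $N$: writing $E=E_0\otimes U$ with $E_0\in N_0(G)$ and $U\in\PS$ such that $U^2\cong\det(E)\pi^*K^{1-2m}$, the a priori ambiguity in $U$ is $\PS[2]$, but the requirement that $E_0$ still carry a $\psi$ of the \emph{same} sign forces $U'U^{-1}$ to act trivially on the fibres at every fixed point of $\sigma$, hence to descend to $\bar S$, i.e.\ $U'U^{-1}\in\rho^*\Jac(\bar S)[2]$. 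Your parenthetical only proves one containment, and the theta-characteristic remark is a red herring.

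The gluing is also much simpler than you expect. The closure $\bar A_d$ meets $N$ only in strictly semi-stable points, and such a point is $S$-equivalent to the split bundle $L(D)K_S^{-1}\oplus L$; one then just computes $\det(L(D)K_S^{-1}\oplus L)\pi^*K^{1-m}=L^2(D)\pi^*K^{1-2m}$ (using $K_S\cong\pi^*K^m$) in the $SL$ case, and for the other two groups one uses the inclusion $Z_d(G)\hookrightarrow Z_d(SL(4m,\C))$ from Chapter~\ref{Chapter4} to check directly that $L^2(D)\pi^*K^{1-4m}=L\sigma^*L^{-1}$. No analysis of how generalized line bundles degenerate is required, and Proposition~\ref{themext} plays no role in identifying the fibre over $\calO_S$.
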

Then, we argue that the dual brane is supported on the distinguished subvariety of the dual fibre $h^{-1}_{\lan{G}}(p^2)$ corresponding to the abelian varieties dual to $Ab(G_0)$. These are precisely the spectral data associated to Higgs bundles for the Nadler group $\lan{G_0}$ and our proposal is in accordance with the conjectural picture. 

In the second part of the thesis, we introduce an isotropic locus $X = X_{(\rho , K^{1/2})}$ inside $\calM (G)$ associated to any symplectic representation $\rho$ of a complex semisimple Lie group $G$ and a choice of theta-charactreistic $K^{1/2}$. To study these loci we introduce an auxiliary moduli space of $(\rho, K^{1/2})$-pairs (whose construction follows from Schmitt \cite{schmitt}; see also \cite{hk}). In particular, we have the following (which is the content of Proposition \ref{stability}).
\begin{prop} Let $P$ be a holomorphic principal $G$-bundle on $\Sigma$ and $\psi$ a global section of $P(\V)\otimes K^{1/2}$. If $(P,\mu(\psi))$ is a (semi-)stable Higgs bundle, the pair $(P,\psi)$ is (semi-)stable. 
\end{prop}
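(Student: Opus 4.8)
The plan is to compare the two (semi-)stability conditions directly, exploiting the fact that both are conditions of the same shape: one tests against pairs $(\sigma,\chi)$, where $\sigma$ is a reduction of the structure group of $P$ to a parabolic subgroup $Q\subseteq G$ (equivalently, a section over a Zariski-open subset of $P/Q$) and $\chi$ is an antidominant character of $Q$, and one asks for a degree inequality $\deg P_\sigma(\chi)\geq 0$ (resp.\ $>0$). For the $G$-Higgs bundle $(P,\mu(\psi))$ this inequality is imposed for every $(\sigma,\chi)$ that is \emph{compatible} with the Higgs field, i.e.\ for which $\mu(\psi)$ is a holomorphic section of the subbundle $P_\sigma(\mathfrak{g}^{\leq 0}_\chi)\otimes K\subseteq P_\sigma(\mathfrak{g})\otimes K$ determined by the non-positive weight spaces of the cocharacter dual to $\chi$; for the $(\rho,K^{1/2})$-pair $(P,\psi)$ the \emph{same} degree inequality is imposed for every $(\sigma,\chi)$ compatible with $\psi$, now in the sense that $\psi$ is a holomorphic section of $P_\sigma(\V^{\leq 0}_\chi)\otimes K^{1/2}$. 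Since the term $\deg P_\sigma(\chi)$ is literally the same quantity in both conditions, it is enough to prove the inclusion of test data: every $(\sigma,\chi)$ compatible with $\psi$ is also compatible with $\mu(\psi)$. Granting this, the inequalities required for $(P,\psi)$ follow word for word from those already known for $(P,\mu(\psi))$.

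The key step is an elementary remark about the moment map. After identifying $\mathfrak{g}\cong\mathfrak{g}^{*}$ by an invariant pairing, the moment map $\mu\colon\V\to\mathfrak{g}$ of the symplectic representation $\rho$ is homogeneous of degree $2$ and $G$-equivariant; polarising it yields a $G$-equivariant symmetric bilinear map $\widetilde{\mu}\colon\V\otimes\V\to\mathfrak{g}$. Fix a one-parameter subgroup $\lambda\colon\C^{*}\to G$ corresponding to $\chi$ and split $\V=\bigoplus_k\V_k$, $\mathfrak{g}=\bigoplus_k\mathfrak{g}_k$ into $\lambda$-weight spaces. Equivariance and bilinearity force $\widetilde{\mu}(\V_j\otimes\V_k)\subseteq\mathfrak{g}_{j+k}$, so that $\widetilde{\mu}$ carries $\bigl(\bigoplus_{k\leq 0}\V_k\bigr)^{\otimes 2}$ into $\bigoplus_{l\leq 0}\mathfrak{g}_l$. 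Passing to the bundles associated with the reduction $\sigma$, and using that $\mu(\psi)$ --- being quadratic in $\psi$ --- is naturally a section of $P_\sigma(\mathfrak{g})\otimes(K^{1/2})^{\otimes 2}=P_\sigma(\mathfrak{g})\otimes K$ (this is exactly why twisting by a theta-characteristic is the correct choice), one gets: if $\psi\in H^0(\Sigma,P_\sigma(\V^{\leq 0}_\chi)\otimes K^{1/2})$ then $\mu(\psi)\in H^0(\Sigma,P_\sigma(\mathfrak{g}^{\leq 0}_\chi)\otimes K)$. This is precisely the claim that $(\sigma,\chi)$ is compatible with the Higgs field $\mu(\psi)$ whenever it is compatible with $\psi$, which is the inclusion needed in the previous paragraph.

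I expect the main obstacle to be bookkeeping rather than substance. One must commit to a single set of sign conventions --- antidominant versus dominant characters, and whether ``compatible'' refers to the non-positive or the non-negative weight spaces --- so that the degree term really does appear with the same sign in both conditions and the additivity $j,k\leq 0\Rightarrow j+k\leq 0$ (or its mirror image) is the one that applies; the weight argument is robust under this choice, but the exposition must be consistent. Furthermore, if the definition of (semi-)stability for $(\rho,K^{1/2})$-pairs taken from Schmitt \cite{schmitt} is phrased via a Hilbert--Mumford numerical function, possibly carrying a stability parameter, rather than via the ``compatible reductions'' reformulation used here, one should first check --- as is standard for decorated principal bundles --- that the two formulations agree, and then note that the numerical function attached to $(P,\psi)$ is dominated, in the relevant sense, by the one attached to $(P,\mu(\psi))$; this domination is again a direct consequence of $\mu$ being quadratic and $G$-equivariant. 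No separate treatment of the strictly semistable locus is required, since the argument settles the open condition ($>0$) and the closed one ($\geq 0$) simultaneously.
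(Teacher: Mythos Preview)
Your proposal is correct and follows essentially the same approach as the paper: both arguments reduce to showing that any reduction--character pair $(\sigma,\chi)$ compatible with $\psi$ is automatically compatible with $\mu(\psi)$, and then invoke the degree inequality from Higgs (semi-)stability. The only cosmetic difference is in how the key implication $v\in\V^{\leq 0}_\chi\Rightarrow\mu(v)\in\mathfrak{g}^{\leq 0}_\chi$ is verified: the paper uses the characterisation of $\V^-_\chi$ as vectors with bounded $\rho(e^{ts_\chi})$-orbit and applies equivariance $\Ad(e^{ts_\chi})\mu(v)=\mu(\rho(e^{ts_\chi})v)$ together with continuity of $\mu$, whereas you decompose into $\lambda$-weight spaces and use that the polarised bilinear map sends $\V_j\otimes\V_k$ into $\mathfrak{g}_{j+k}$; these are two phrasings of the same fact.
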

The moduli space of $(\rho, K^{1/2})$-pairs $\calS^d (\rho , K^{1/2})$ (of type $d\in \pi_1(G)$) is a quasi-projective varity and it comes equipped with a Hitchin-type map. This map is a projective morphism and from this property we give one example where $\calS^d (\rho , K^{1/2})$ is projective and another where it is not. Then, we reprove, by using a \v{C}ech version of the proof given by Hitchin in \cite{spinors}, the following (which is the content of Proposition \ref{p712}): 
\begin{prop} The subvariety $X$ of $\calM (G)$ is isotropic. Moreover, the $1$-form $\theta$, whose differential corresponds to the natural holomorphic symplectic form on $\calM (G)$, vanishes on $X$.   
\end{prop}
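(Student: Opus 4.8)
The plan is to exhibit the locus $X = X_{(\rho, K^{1/2})}$ explicitly as the image of the moduli space $\calS^d(\rho, K^{1/2})$ of $(\rho, K^{1/2})$-pairs under the map sending a pair $(P,\psi)$ to the Higgs bundle $(P, \mu(\psi))$, where $\mu$ is the moment map for the symplectic $G$-action on $\V$ composed with the natural pairing identifying $\mathfrak{g}^* \cong \mathfrak{g}$; here $\mu(\psi)$ is a section of $P(\mathfrak{g}) \otimes K$ because $\psi$ is a section of $P(\V) \otimes K^{1/2}$ and $\mu$ is quadratic. By Proposition \ref{stability}, semistability of $(P,\psi)$ guarantees semistability of $(P,\mu(\psi))$, so the image genuinely lands in $\calM(G)$. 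The strategy is then to compute the pullback of the canonical Liouville one-form $\theta$ on $\calM(G)$ (whose exterior derivative is the holomorphic symplectic form $\omega$) along this map, and show it is identically zero; isotropy of $X$ follows immediately since $\omega$ restricted to $X$ is the pullback of $d\theta = $ the restriction of $\theta$ vanishes.

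First I would set up \v{C}ech representatives: cover $\Sigma$ by open sets $\{U_\alpha\}$ trivializing $P$, so that $\psi$ is given by $\V$-valued functions $\psi_\alpha$ with $\psi_\alpha = g_{\alpha\beta}\cdot \psi_\beta$ on overlaps, and a tangent vector to $\calS^d(\rho, K^{1/2})$ at $(P,\psi)$ is represented by a \v{C}ech cocycle consisting of infinitesimal deformations $(\dot g_{\alpha\beta}, \dot\psi_\alpha)$ of the transition data, valued in the hypercohomology of the deformation complex $P(\mathfrak{g}) \to P(\V)\otimes K^{1/2}$. I would then write down the one-form $\theta$ on $\calM(G)$ in these local terms: on the cotangent-bundle-like part of the moduli space, $\theta$ evaluated on a deformation $(\dot g_{\alpha\beta}, \dot\Phi_\alpha)$ of a Higgs bundle $(P,\Phi)$ is given by the Serre-duality pairing $\sum \oint \langle \Phi_\alpha, \dot g_{\alpha\beta}\rangle$ (a sum of residues of the appropriate $K$-valued quantities), i.e. the tautological pairing of the Higgs field against the bundle-direction part of the deformation. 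The key computation is to substitute $\Phi = \mu(\psi)$ and the induced deformation $\dot\Phi = d\mu_\psi(\dot\psi) + (\text{action of }\dot g)$, and use the defining property of the moment map — that $\langle \mu(\psi), \xi\rangle = \tfrac12 \omega_\V(\xi\cdot\psi, \psi)$ for $\xi \in \mathfrak{g}$, together with $G$-invariance of the symplectic form $\omega_\V$ on $\V$ — to see that the pairing $\langle \mu(\psi), \dot g_{\alpha\beta}\rangle$ assembles into an exact \v{C}ech expression, hence integrates to zero by Stokes/residue theorem once the cocycle condition on $(\dot g_{\alpha\beta},\dot\psi_\alpha)$ is invoked.

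The main obstacle I expect is bookkeeping the \v{C}ech coboundary terms correctly: one must be careful that $\theta$ is only defined up to the choice of a reference point / a global trivialization issue, and that the expression $\sum_{\alpha\beta}\oint \langle \mu(\psi_\alpha), \dot g_{\alpha\beta}\rangle$ is manipulated using the $\mathfrak{g}$-equivariance of $\mu$ and the fact that $\dot\psi$ satisfies the linearized compatibility $\dot\psi_\alpha - \dot\psi_\beta = \dot g_{\alpha\beta}\cdot\psi_\beta$ (at the infinitesimal level) so that the moment-map identity converts $\langle\mu(\psi),\dot g\rangle$ into something of the form $\delta(\text{a }1\text{-cochain})$ paired against $\psi$. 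Concretely, the term $\omega_\V(\dot g_{\alpha\beta}\cdot\psi_\beta, \psi_\beta)$ must be recognized as a \v{C}ech differential of the $0$-cochain $U_\alpha \mapsto$ (a suitable quadratic-in-$\psi$, linear-in-$\dot\psi$ expression), which then vanishes upon summing and integrating. The skew-symmetry of $\omega_\V$ and the symmetry of the second derivative $d^2\mu$ are what make the cross-terms cancel in pairs; getting the factors of $\tfrac12$ and the signs consistent with the conventions of \cite{spinors} is the only genuinely delicate point. Once this cancellation is carried out, $\theta$ pulls back to $0$ on all of $\calS^d(\rho,K^{1/2})$, hence vanishes on the image locus $X$, and therefore $\omega|_X = (d\theta)|_X = d(\theta|_X) = 0$, proving $X$ is isotropic.
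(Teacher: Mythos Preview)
Your proposal is correct and follows essentially the same \v{C}ech approach as the paper: write $\theta_{(P,\mu(\psi))}(v)$ as the cocycle $\{B(s_{ij},\mu(\psi))\}$, use the moment-map identity to rewrite this as $\{\tfrac12\omega(\rho(s_{ij})\psi,\psi)\}$, and then invoke the cocycle condition $u_i - u_j + \rho(s_{ij})\psi = 0$ on the spinor deformation together with $\omega(\psi,\psi)=0$ to recognize the result as the \v{C}ech coboundary $\{\tfrac12\omega(u_i,\psi)-\tfrac12\omega(u_j,\psi)\}$, hence zero in $H^1(\Sigma,K)$. The only minor difference is that the paper also verifies $\Omega_I(v,v')=0$ by a separate direct computation (yielding the coboundary $\{\omega(u_j,u_j')-\omega(u_i,u_i')\}$), whereas you deduce isotropy from $\theta|_X=0$ via $\Omega_I = d\theta$; both routes are valid, and the paper in fact remarks that one can pass between them using the $\C^\times$-action.
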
   
We finish the chapter (Proposition \ref{p716}) by giving conditions for a certain class of symplectic representations, which we call almost-saturated, to give a Lagrangian subvariety of the Higgs bundle moduli space.  
\begin{prop} Assume $\rho$ is almost-saturated. If the Higgs bundle $(P,\mu(\psi))$ is stable and simple, then $(P,\mu(\psi))$ is a smooth point of $\calM^d (G)$ and $(P,\psi)$ is a smooth point of $\calS^d (\rho, K^{1/2})$. Furthermore, if the Petri-type map 
$$d\mu_\psi : H^0(\Sigma, P(\mathbb{V}) \otimes K^{1/2}) \to H^0(\Sigma, \Ad (P) \otimes K)$$
is injective for all points in the smooth locus of the moduli space of pairs, the subvariety $X \subseteq \calM^d (G)$ is Lagrangian.
\end{prop}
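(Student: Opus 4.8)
The plan is to establish the two assertions of the proposition in order: first smoothness of the relevant points, then the Lagrangian property, the latter building on the already-proved isotropy (Proposition \ref{p712}).

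\textbf{Smoothness.} I would begin with the deformation theory of the two moduli problems. For the Higgs bundle $(P,\mu(\psi))$, the tangent space at a stable and simple point is the first hypercohomology $\mathbb{H}^1$ of the deformation complex $\Ad(P) \xrightarrow{[\mu(\psi),-]} \Ad(P)\otimes K$, and stability plus simplicity forces $\mathbb{H}^0=\mathbb{H}^2=0$, so $(P,\mu(\psi))$ is a smooth point of $\calM^d(G)$ of the expected dimension $\dim G\,(2g-2)$. For the pair $(P,\psi)$, the analogous deformation complex has the shape $\Ad(P)\to P(\mathbb{V})\otimes K^{1/2}$ (the map being the infinitesimal action of the gauge group on the section), and one checks, again using stability of $(P,\psi)$ — which holds by Proposition \ref{stability} since $(P,\mu(\psi))$ is semistable — and simplicity that the obstruction space vanishes, so $(P,\psi)$ is a smooth point of $\calS^d(\rho,K^{1/2})$. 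The key mechanism throughout is that the Higgs field $\mu(\psi)$ is the moment-map image of $\psi$, so the moment-map/Petri map $d\mu_\psi$ relates the two deformation complexes via a commutative diagram, and the almost-saturated hypothesis on $\rho$ is exactly what is needed to control the cokernels of the maps in these complexes (this is where I expect the definition of ``almost-saturated'' to be used in an essential way — it should guarantee that the relevant representation-theoretic pieces match up dimension-wise).

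\textbf{Lagrangian property.} By Proposition \ref{p712} we already know $X$ is isotropic, so it remains to show that $\dim X = \tfrac{1}{2}\dim\calM^d(G) = \tfrac{1}{2}\dim G\,(2g-2)$. Here is where injectivity of the Petri-type map $d\mu_\psi$ enters: $X$ is (the image of) the locus of pairs $(P,\psi)$ inside $\calS^d(\rho,K^{1/2})$, and its dimension at a smooth point is computed as the rank of the map sending a section $\psi$ (and its deformations) to the Higgs field $\mu(\psi)$ (and its deformations); injectivity of $d\mu_\psi$ on $H^0(\Sigma, P(\mathbb{V})\otimes K^{1/2})$ ensures there is no collapsing, so that $\dim X$ equals $\dim H^0(\Sigma, P(\mathbb{V})\otimes K^{1/2})$ plus the dimension of the bundle-moduli directions, and a Riemann–Roch count of $h^0 - h^1$ for $P(\mathbb{V})\otimes K^{1/2}$ — together with the fact that $K^{1/2}$ is a theta-characteristic, which makes $P(\mathbb{V})\otimes K^{1/2}$ self-dual under Serre duality and hence $h^0 = h^1$ up to the Euler characteristic — yields exactly half of $\dim G\,(2g-2)$. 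Combining ``isotropic'' with ``half-dimensional'' gives ``Lagrangian''.

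\textbf{Main obstacle.} The routine parts are the deformation-complex bookkeeping and the Riemann–Roch count. The genuinely delicate step is the dimension count for $X$: one must verify that the smooth locus of the pair moduli space maps \emph{generically finitely} (indeed, essentially injectively up to the data that is forgotten) onto $X$, so that $\dim X$ really equals the dimension predicted by $d\mu_\psi$ being injective, rather than something smaller. This requires knowing that the gauge-theoretic quotient identifications on the pair side and the Higgs side are compatible, and that no extra automorphisms appear — which is where simplicity of $(P,\mu(\psi))$ is doing hidden work. I would also need to be careful that ``almost-saturated'' is strong enough to make the cokernel of $d\mu_\psi$ have precisely the expected dimension at a generic point; if the paper's definition only gives an inequality, one would argue the reverse inequality from the isotropy already established (an isotropic subvariety of dimension $\geq \tfrac12\dim\calM$ must be Lagrangian), which neatly closes the argument.
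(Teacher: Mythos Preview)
Your overall architecture matches the paper's: establish smoothness via vanishing of $\mathbb{H}^0$ and $\mathbb{H}^2$ of the relevant deformation complexes, then combine isotropy (Proposition \ref{p712}) with half-dimensionality. But two of the steps you outline would not go through as written.

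\textbf{The role of ``almost-saturated''.} You guess that this hypothesis controls cokernels in the deformation complexes. In the paper it does something different and more specific: it is used (Lemma \ref{simplic}) to deduce that the \emph{pair} $(P,\psi)$ is simple once the Higgs bundle $(P,\mu(\psi))$ is simple. Without simplicity of the pair you cannot conclude $\mathbb{H}^0(\Sigma, C^\bullet_{pair}(P,\psi))=0$, and without that the Euler-characteristic count does not pin down $\dim\mathbb{H}^1_{pair}$. The vanishing of $\mathbb{H}^2_{pair}$ is then read off from the map of long exact sequences between the two deformation complexes, using that $\mathbb{H}^2$ for the Higgs complex already vanishes.

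\textbf{The Lagrangian step.} Your proposed dimension count, ``$\dim X = h^0(P(\mathbb{V})\otimes K^{1/2})$ plus bundle-moduli directions'', does not hold in general: the tangent space $\mathbb{H}^1_{pair}$ does not split that way, and the formula gives the wrong number unless extra hypotheses on $P$ are imposed. The paper instead computes $\dim\mathbb{H}^1_{pair}=-\chi(C^\bullet_{pair})=\dim G\,(g-1)$ directly from the long exact sequence and the fact that $\chi(P(\mathbb{V})\otimes K^{1/2})=0$ (this is where the theta-characteristic enters, exactly as you say). The injectivity of the Petri-type map is then fed into the \emph{five lemma} applied to the commutative ladder between the two long exact sequences, yielding an injection $j:\mathbb{H}^1_{pair}\hookrightarrow\mathbb{H}^1_{Higgs}$ (equation (\ref{lagran}) in the paper). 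This is the derivative of the map $\calS^d\to\calM^d(G)$, so the map is an immersion on the smooth locus and $\dim X = \dim\mathbb{H}^1_{pair}=\dim G\,(g-1)$. Your worry about ``generically finite'' is then a red herring: everything is settled at the tangent level, and no separate injectivity argument for the map of moduli spaces is needed.
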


In Chapter \ref{Chapter8} we restrict our attention to the case of the standard representation of $Sp(2m,\C)$. In this case we denote the moduli space of $K^{1/2}$-twisted symplectic pairs simply by $\calS$ and call it the spinor moduli space. The theorem below is the content of Theorem \ref{resu}.  
\begin{thm} Let $(V,\psi)$ be a pair, consisting of a symplectic vector bundle on $\Sigma$ and a spinor $\psi \in H^0(\Sigma , V\otimes K^{1/2})$. 
\begin{enumerate}
\item The pair $(V,\psi)$ is (semi-)stable if and only if its associated $Sp(2n,\C)$-Higgs bundle $(V,\psi \otimes \psi)$ is (semi-)stable. Also, the pair is polystable if and only if the associated Higgs bundle is polystable.
\item The pair $(V,\psi)$ is (semi-)stable if and only if for all isotropic subbundles $0 \neq U  \subset V$ such that $\psi \in H^0(\Sigma , U^\perp \otimes K^{1/2})$ we have $\deg (U) < (\leq) \ 0$. Also, $(V,\psi) \in \calS$ is polystable if it is semi-stable and satisfies the following property. If $0 \neq U \subset V$ is an isotropic (respectively, proper coisotropic) subbundle of degree zero such that $\psi \in H^0(\Sigma , U^\perp \otimes K^{1/2})$ (respectively, $\psi \in H^0(\Sigma , U\otimes K^{1/2})$), then there exists a coisotropic (respectively, proper isotropic) subbundle $U^\prime$ of $V$ such that $\psi \in H^0(\Sigma , U^\prime \otimes K^{1/2})$ (respectively, $\psi \in H^0(\Sigma , (U^\prime)^\perp \otimes K^{1/2}) $) and $V = U \oplus U^\prime$. 
\item The spinor moduli space $\calS$ is a projective variety which embeds in $\calM (Sp(2n,\C))$ and the variety $X $, defined as the Zariski closure of 
$$\{ (V, \Phi) \in \calM^s (Sp(2n,\C)) \ | \ \Phi = \psi \otimes \psi \ \text{for some $0\neq \psi \in H^0(\Sigma, V\otimes K^{1/2})$} \}$$ 
inside the Higgs bundle moduli space $\calM (Sp(2n\C))$, has the property that the canonical $1$-form $\theta$ (see Section \ref{defotheory} for more details) vanishes when restricted to $X$. Moreover, the embedded image of $\calS$ in $\calM(G)$ is the union of $X$ and the moduli space $\calU (Sp(2n,\C))$ of symplectic vector bundles on $\Sigma$, and $\calU (Sp(2n,\C))$ intersects $X$ in the generalized theta-divisor $\Theta \subset \calU (Sp(2n,\C))$.  
\end{enumerate}
\end{thm}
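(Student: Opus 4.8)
The plan is to dispose of the stability statements (1) and (2) first, then derive the global structure in (3) from them together with the general results of the preceding chapters.

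For (1) and (2), I would specialise the stability condition for $(\rho,K^{1/2})$-pairs used in Proposition~\ref{stability} to the standard symplectic representation $\rho\colon Sp(2n,\C)\hookrightarrow GL(2n,\C)$. The destabilising data are parabolic reductions together with antidominant characters, and for the standard representation these are governed by isotropic flags $0\subset U_1\subset U_2\subset\cdots$ in $V$; the weight picked up by $\psi$ under the one-parameter degeneration attached to such a flag is non-negative exactly when $\psi\in H^0(\Sigma,U^\perp\otimes K^{1/2})$ for the relevant isotropic term $U$, so the general condition collapses to the criterion in (2), with the polystability clause recording which of these filtrations split. For (1), one implication is Proposition~\ref{stability} applied to $\mu(\psi)=\psi\otimes\psi$. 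For the converse, write $\Phi=\psi\otimes\psi$ for the endomorphism $v\mapsto\omega(\psi,v)\psi$; then $\Phi^2=0$ since $\omega(\psi,\psi)=0$ (so $(V,\Phi)$ lies in the nilpotent cone), and the image of $\Phi$ is contained in the line subsheaf generated by $\psi$. Hence for an \emph{isotropic} subbundle $U$ one has $\Phi(U)\subseteq U\otimes K$ if and only if $\Phi(U)=0$, i.e.\ if and only if $\psi\in H^0(\Sigma,U^\perp\otimes K^{1/2})$; consequently any isotropic subbundle destabilising the $Sp(2n,\C)$-Higgs bundle $(V,\psi\otimes\psi)$ also destabilises the pair $(V,\psi)$, and conversely by (2). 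The polystable case follows by matching the polystability criterion of (2) with the analogous one for $Sp(2n,\C)$-Higgs bundles.

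Turning to (3): since $Sp(2n,\C)$ acts transitively on $\C^{2n}\setminus\{0\}$, the ring of $Sp(2n,\C)$-invariant polynomials on $\C^{2n}$ is trivial, so the (projective) Hitchin-type morphism on the moduli space of $(\rho,K^{1/2})$-pairs recalled in the text is constant on $\calS$; being projective over a point, $\calS$ is projective. The morphism $\calS\to\calM(Sp(2n,\C))$, $(V,\psi)\mapsto(V,\psi\otimes\psi)$, is injective on closed points because multiplication by $-1\in Sp(2n,\C)$ identifies $(V,\psi)$ with $(V,-\psi)$; being a proper monomorphism that is moreover unramified (by comparing the deformation complexes of a pair and of its associated Higgs bundle), it is a closed immersion. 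Vanishing of $\theta$ on $X$ is Proposition~\ref{p712} applied to $G=Sp(2n,\C)$ with the standard representation, which is symplectic by construction. For the image, split $\calS$ according to whether $\psi=0$ or $\psi\neq0$: by (2), $(V,0)$ is a semistable pair exactly when $V$ is a semistable symplectic bundle, so the locus $\psi=0$ maps isomorphically onto $\calU(Sp(2n,\C))$; the locus where $\psi\neq0$ and $(V,\psi\otimes\psi)$ is stable is dense in $X$ by definition, and strictly semistable points with $\psi\neq0$ are absorbed into $X\cup\calU(Sp(2n,\C))$ by passing to the associated graded (nilpotency of $\psi\otimes\psi$ controls the Higgs fields of the graded pieces). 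As $\calS$ is projective, its image is closed, giving $\mathrm{image}(\calS)=X\cup\calU(Sp(2n,\C))$. Finally, $(V,0)\in X$ forces, by semicontinuity of $h^0$, that $H^0(\Sigma,V\otimes K^{1/2})\neq0$; conversely for $V$ semistable with $0\neq\psi\in H^0(\Sigma,V\otimes K^{1/2})$ the pairs $(V,t\psi)$ degenerate to $(V,0)$ as $t\to 0$, exhibiting $(V,0)$ in the image and, after a density argument, in $X$. Since $\chi(V\otimes K^{1/2})=0$ for $V\in\calU(Sp(2n,\C))$, the locus $\{V : H^0(\Sigma,V\otimes K^{1/2})\neq0\}$ is the generalized theta divisor $\Theta$, and this is $X\cap\calU(Sp(2n,\C))$.

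The main obstacle I anticipate is the final pair of steps: controlling the strictly semistable points of the image inside the nilpotent cone and showing they contribute nothing outside $X\cup\calU(Sp(2n,\C))$ via a careful Jordan--Hölder/S-equivalence argument, and pinning down $X\cap\calU(Sp(2n,\C))$ precisely as $\Theta$ — in particular checking that the degeneration $(V,t\psi)\to(V,0)$ lands in the closure of the \emph{stable} Higgs locus, not merely in the image of $\calS$, which requires density of stable Higgs bundles in the relevant components. A secondary technical point is upgrading the injective morphism $\calS\to\calM(Sp(2n,\C))$ to a closed immersion, which rests on the deformation-theoretic comparison of pairs and Higgs bundles.
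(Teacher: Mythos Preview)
Your proposal is correct and close to the paper's argument, but the logical flow for (1) and (2) is inverted relative to the paper, and the key step for (1) differs. The paper proves (1) directly as Proposition~\ref{p78}: the general implication ``Higgs (semi-)stable $\Rightarrow$ pair (semi-)stable'' is Proposition~\ref{stability}, and the converse is the one-line observation that for the standard symplectic representation $(e^{ts}v)\otimes(e^{ts}v)$ is bounded as $t\to\infty$ if and only if $e^{ts}v$ is, so the obstruction discussed after Proposition~\ref{stability} disappears. Item (2) is then obtained from (1) by invoking the known isotropic-subbundle criterion for $Sp(2n,\C)$-Higgs bundles \cite[Theorem~4.2]{hk}, after checking (as you do) that an isotropic $U$ is $(\psi\otimes\psi)$-invariant exactly when $\psi\in H^0(\Sigma,U^\perp\otimes K^{1/2})$. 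You instead derive (2) first by unpacking the parabolic reductions for $Sp(2n,\C)$ into isotropic flags and then deduce (1) from (2) by comparing destabilising subbundles; this works, but the paper's route avoids analysing the pair stability condition directly and buys (1) without ever writing down isotropic flags. For polystability in (1), the paper simply remarks that $\mu(v)=v\otimes v$ is injective (after the $\pm 1$ identification), which is all that is needed.

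For (3) your outline matches the paper: projectivity is Corollary~\ref{cor711}, vanishing of $\theta$ is Proposition~\ref{p712}, injectivity of $\calS\to\calM(Sp(2n,\C))$ is argued pointwise via $-\Id\in\Aut(V)$, and the $\Theta$ intersection comes from the $\C^\times$-flow $(V,\lambda\psi\otimes\psi)$ limiting to $(V,0)$ when $V$ is semi-stable. The paper does not address your worry about upgrading injectivity to a closed immersion, nor does it carry out a Jordan--H\"older analysis of strictly semi-stable points; it treats ``embeds'' at the level of closed points and absorbs the strictly semi-stable locus into $X\cup\calU(Sp(2n,\C))$ implicitly. So your anticipated obstacles are genuine technical points, but the paper does not resolve them more carefully than you propose.
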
   
In this case, the Gaiotto Lagrangian is a component of the nilpotent cone of the $Sp(2n,\C)$-Hitchin fibration. By studying the Morse function restricted to this Lagrangian we obtain its characterization (see Theorem \ref{compgaiolag}) as follows.
\begin{thm} The Gaiotto Lagrangian $X$ corresponds to the irreducible component of the nilpotent cone $\text{Nilp}(Sp(2n, \C))$ labelled by the  chain   
$$K^{1/2}\overset{1}{\to} K^{-1/2} \overset{0}{\to} U,$$
where $U \in \calU (Sp(2n-2,\C))$.  
\end{thm}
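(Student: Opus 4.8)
The plan is to realise $X$ as an irreducible component of $\text{Nilp}(Sp(2n,\C))$ and then, using the $\C^*$-action and the Morse function $f=\|\Phi\|^2$, to identify the component of the $\C^*$-fixed locus that labels it. That $X\subseteq\text{Nilp}(Sp(2n,\C))$ is immediate: writing $\Phi=\mu(\psi)=\psi\otimes\psi$ as an endomorphism via the symplectic form $\omega$ gives $\Phi(v)=\omega(\psi,v)\psi$, so $\Phi^2(v)=\omega(\psi,v)\,\omega(\psi,\psi)\,\psi=0$ because $\omega$ is alternating, and every Hitchin invariant vanishes on $X$. Since $X$ is irreducible and Lagrangian (by the results already obtained), of dimension $n(2n+1)(g-1)$, and $\text{Nilp}(Sp(2n,\C))$ is equidimensional of that dimension, $X$ is one of its irreducible components. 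The scaling action $(V,\Phi)\mapsto(V,t\Phi)$ on $\calM(Sp(2n,\C))$ lifts along $\calS\hookrightarrow\calM(Sp(2n,\C))$ to $(V,\psi)\mapsto(V,\lambda\psi)$ on the spinor moduli space, so $X$ is $\C^*$-invariant; hence $X$ is the Zariski closure of a single Bialynicki--Birula attracting cell, that of the connected fixed component $\calF_0$ to which a general point of $X$ flows as $t\to 0$ (equivalently, on which $f|_X$ attains its minimum). By the standard labelling of the components of the nilpotent cone by their fixed components, it remains to identify $\calF_0$.

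To do this I would compute the $\C^*$-limit of a general point of $X$. A stable spinor pair $(V,\psi)$ carries a canonical $\psi$-adapted filtration $0\subset M\subset M^{\perp}\subset V$, where $M$ is the saturated line subbundle generated by $\psi$ (automatically isotropic, so $M\subseteq M^{\perp}$) and $M^{\perp}$ is its symplectic orthogonal; the associated graded is $M\oplus U\oplus M^{*}$ with $U\in\calU(Sp(2n-2,\C))$, and the limit $\lim_{\lambda\to 0}(V,\lambda\psi)$ in $\calS$ is this graded object together with the induced (generically rank-one) section, whose image in $\calM(Sp(2n,\C))$ is the chain $M^{*}\to M\to U$. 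Since $\psi\in H^0(M\otimes K^{1/2})$ is nonzero, $M\otimes K^{1/2}$ is effective, i.e.\ $\deg M\geq 1-g$; a stratification argument (a section count) shows that for a \emph{general} point of $X$ this bound is attained, forcing $M\otimes K^{1/2}\cong\calO$, hence $M=K^{-1/2}$ and the induced section a nowhere-zero constant — that is, the limit is exactly $K^{1/2}\overset{1}{\to}K^{-1/2}\overset{0}{\to}U$, the chains with $\deg M>1-g$ arising as the lower Morse strata of $X$. This family of chains does lie in $X$: it is $\mu$ of the tautological constant section of $\calO\cong K^{-1/2}\otimes K^{1/2}\subseteq V\otimes K^{1/2}$, it has nonzero Higgs field, and it is therefore in the image of $\calS$ but not in $\calU(Sp(2n,\C))$, hence in $X$ by the description of that image already established. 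Therefore $\calF_0=\{\,(K^{1/2}\oplus K^{-1/2}\oplus U,\Phi_0):U\in\calU(Sp(2n-2,\C))\,\}$, and $X$ is the component of $\text{Nilp}(Sp(2n,\C))$ it labels.

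A numerical check supports this: at such a Hodge bundle $C$ the deformation complex $S^2V\overset{\mathrm{ad}\,\Phi_0}{\to}S^2V\otimes K$ has locally free kernel and cokernel, the weight-zero part of $\mathbb{H}^1$ is $H^1(S^2U)$ (the tangent space to $\calU(Sp(2n-2,\C))$) and its negative-weight part is $H^1(K^{-1})\oplus H^1(U^{*}\otimes K^{-1/2})$, of total dimension $(4n-1)(g-1)$, so $\dim\calF_0+(4n-1)(g-1)=(n-1)(2n-1)(g-1)+(4n-1)(g-1)=n(2n+1)(g-1)=\dim X$. The main obstacle is the limit computation of the second paragraph: one must make precise that the $\C^*$-limit of a general stable pair is the associated graded of $0\subset M\subset M^{\perp}\subset V$ — a Simpson-type degeneration statement adapted to $(\rho,K^{1/2})$-pairs — and establish the genericity claim $\deg M=1-g$, equivalently that among the chains $M^{*}\to M\to U$ only $\deg M=1-g$ has an attracting cell of full Lagrangian dimension; and in the numerical check one must remember that $\text{Nilp}(Sp(2n,\C))$ is singular at $C$ for $n\geq 2$, so the cell dimension has to be extracted with care. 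Everything else — the nilpotency $\Phi^2=0$, the $\C^*$-equivariance of $\calS\hookrightarrow\calM(Sp(2n,\C))$, the determination of $X\cap\calM(Sp(2n,\C))^{\C^*}$, and the verification that the displayed chain lies in $X$ — is routine given what has already been proved.
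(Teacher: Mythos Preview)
Your argument has the flow direction backwards, and this is not a cosmetic slip. You claim that the fixed component $\calF_0$ labelling $X$ is the one to which a general point flows as $t\to 0$, equivalently where $f|_X$ attains its \emph{minimum}. But for a general point $(V,\psi\otimes\psi)\in X$ the underlying bundle $V$ is semi-stable (it has already been shown that $X$ meets $\calU(Sp(2n,\C))$ in the theta-divisor $\Theta$), so $\lim_{t\to 0}(V,t\,\psi\otimes\psi)=(V,0)\in\Theta$, where $f=0$; your filtration $0\subset M\subset M^\perp\subset V$ plays no role in that limit. The Simpson-type degeneration you describe --- passing to the associated graded of that filtration with its induced nonzero Higgs field --- is precisely the $t\to\infty$ limit, i.e.\ the \emph{maximum} of $f|_X$. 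And that is exactly the convention used here (following \cite{bradlow2017irreducibility}) to label components of the nilpotent cone: the component attached to a fixed locus $\calZ$ is the closure of $\{(V,\Phi):\lim_{|\lambda|\to\infty}(V,\lambda\Phi)\in\calZ\}$. So your limit computation is correct but attached to the wrong arrow; as written, your identification of $\calF_0$ contradicts the fact that $\Theta\subset X$ has zero Higgs field while the displayed chain does not.

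Once the direction is fixed your outline is close to the paper's, but two further points need attention. First, you assume $X$ is irreducible in order to conclude it is a single component of the nilpotent cone; Proposition~\ref{p716} only gives the Lagrangian dimension at smooth points and does not yield irreducibility. The paper proves this separately: it exhibits an open subset $\calZ_1^-\subset\calZ^-$ fibring over an open piece of the (irreducible) theta-divisor with $\C^\times$-fibres (Lemma~\ref{down} and the lemma following it). Second, your genericity claim $\deg M=1-g$ (that $\psi$ is nowhere-vanishing on a dense open set) is left as an unspecified ``stratification argument''; the paper obtains this for free by first identifying $\calZ$ as the maximum of $f$ (Proposition~\ref{maximum}) and noting that non-vanishing of the spinor is an open condition near $\calZ$. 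The paper then reconstructs $\calZ^-$ explicitly from the data $(U,\delta,a)\in\calU(Sp(2n-2,\C))\times H^1(\Sigma,U^*\otimes K^{-1/2})\times H^1(\Sigma,K^{-1})$ and reads off the dimension directly, rather than invoking Bialynicki--Birula abstractly.
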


In the last chapter of the thesis, we mention some interesting questions and points that emerge from the results and ideas in this thesis. For the convenience of the reader, we have also included a list of selected notation and notational conventions frequently used throughout the thesis. 

Moreover, we provide Appendices with some notations and basic results used throughout the text. In Appendix \ref{comoutationnad} we give a detailed description on how to obtain the Nadler group associated to $SU^*(2m)$ and we comment on the Nadler group for the real forms $SO^*(4m)$ and $Sp(m,m)$.  
 
\pagebreak

%---------------------------------------------------------------------
% Chapter 1

\chapter{General background} % Main chapter title

\label{Chapter1} % For referencing the chapter elsewhere, use \ref{Chapter1} 

\lhead{Chapter 1. \emph{Background material}} % This is for the header on each page - perhaps a shortened title

%----------------------------------------------------------------------------------------

\section{Higgs bundles}

Let $G$ be reductive Lie group. Fix a maximal compact subgroup $H \subseteq G$ and let $H^\mathbb{C}$ be its complexification. This choice, together with a non-degenerate $\Ad(G)$-invariant bilinear form $B$ on $\mathfrak{g}$ gives rise to a Cartan decomposition
$$\mathfrak{g} = \mathfrak{h} \oplus \mathfrak{m},$$
where $\mathfrak{h}$ is the Lie algebra of $H$ and $\mathfrak{m}$ its orthogonal complement with respect to $B$. The Cartan decomposition complexifies to 
$$\mathfrak{g}^\mathbb{C} = \mathfrak{h}^\mathbb{C} \oplus \mathfrak{m}^\mathbb{C}$$    
and we have
$$ [\mathfrak{h}^\mathbb{C},\mathfrak{h}^\mathbb{C}] \subseteq \mathfrak{h}^\mathbb{C}, \ \ \ \ [\mathfrak{h}^\mathbb{C},\mathfrak{m}^\mathbb{C}] \subseteq \mathfrak{m}^\mathbb{C}, \ \ \ \ [\mathfrak{m}^\mathbb{C},\mathfrak{m}^\mathbb{C}] \subseteq \mathfrak{h}^\mathbb{C} .$$
The adjoint representation induces an action of $H^\mathbb{C}$ on $\mathfrak{m}^\mathbb{C}$, the so-called \textbf{isotropy representation}
$$\iota : H^\mathbb{C} \to GL(\mathfrak{m}^\mathbb{C}).$$
Let $\Sigma$ be a compact Riemann surface and $K$ its canonical bundle. Given a principal $H^\mathbb{C}$-bundle $P$ on $\Sigma$, we denote by $P(\mathfrak{m}^\mathbb{C}) = P \times_\iota \mathfrak{m}^\mathbb{C}$ the vector bundle with fibre $\mathfrak{m}^\mathbb{C}$ associated to $P$ via the isotropy representation. 

\begin{defin} A \textbf{$G$-Higgs bundle} on a Riemann surface $\Sigma$ is a pair $(P, \Phi)$, where $P$ is a principal holomorphic $H^\mathbb{C}$-bundle on $\Sigma$ and $\Phi \in H^0(\Sigma, P(\mathfrak{m}^\mathbb{C}) \otimes K)$. The holomorphic section $\Phi$ is called the \textbf{Higgs field}.
\end{defin}  

\begin{rmks} 

\noindent 1. When $G$ is compact, a $G$-Higgs bundle is simply a holomorphic $G^\C$-bundle. 

\noindent 2. When $G$ is complex, $H^\mathbb{C} = G$ and $ \mathfrak{m} = i  \mathfrak{h}$, so $\mathfrak{m}^\mathbb{C} = \mathfrak{g}$ and the isotropy representation is just the adjoint representation of $G$. Thus, a $G$-Higgs bundle is a pair $(P, \Phi)$, where $P$ is a principal $G$-bundle and $\Phi \in H^0(\Sigma, \Ad (P) \otimes K)$. Here, $\Ad (P) = P \times_{\Ad} \lie{g}$ is the adjoint bundle of $P$.  
\end{rmks}

For a complex Lie group $G \subseteq GL(n, \mathbb{C})$ it is convenient to work with the holomorphic vector bundle associated to the principal $G$-bundle (via the standard representation). Thus, in this case, it is common to consider $G$-Higgs bundles as classical Higgs bundles\footnote{A classical Higgs bundle is a pair $(V, \Phi)$ where $V$ is a holomorphic vector bundle on $\Sigma$ and $\Phi \in H^0(\Sigma, \End V \otimes K)$.} with extra structure reflecting the group in question. 

\begin{ex} Let $G = SL(n, \mathbb{C})$. Then an $SL(n, \mathbb{C})$-Higgs bundle consists of a pair $(V, \Phi)$, where $V$ is a rank $n$ holomorphic vector bundle on $\Sigma$ with trivial determinant bundle and $\Phi$ is a holomorphic section of $\End_0 (V)$, the bundle of traceless endomorphisms of $V$, twisted by $K$.  
\end{ex}

\begin{ex} Let $G = SO(n,\C)$. A principal holomorphic $SO(n,\C )$-bundle on $\Sigma$ corresponds to a rank $n$ orthogonal vector bundle $V$, i.e., a holomorphic vector bundle $V$ together with a (fibrewise) non-degenerate symmetric bilinear form $q \in H^0(\Sigma, \Sym^2V^*)$. The adjoint bundle can be identified with the subbundle of $\End (V)$ consisting of endomorphisms which are skew-symmetric with respect to $q$. Thus, the corresponding Higgs bundle has the form $(V,\Phi)$, where $(V,q)$ is an orthogonal vector bundle and the Higgs field $\Phi \in H^0(\Sigma, \End_0 (V)\otimes K)$ satisfies $q(\Phi \cdot, \cdot) + q (\cdot ,\Phi \cdot) = 0$. 
\end{ex}

\begin{ex} Let $G = Sp(2n,\C)$ be the complex symplectic group. A principal $Sp(2n,\C)$-bundle on $\Sigma$ corresponds to a rank $2n$ symplectic vector bundle $(V,\omega)$, i.e., a holomorphic vector bundle $V$ together with a (fibrewise) non-degenerate skew-symmetric form $\omega \in H^0(\Sigma, \wedge^2V^*)$. The adjoint bundle can be identified with the subbundle of $\End (V)$ consisting of endomorphisms which are skew-symmetric with respect to $\omega$. Thus, an $Sp(2n,\C)$-Higgs bundle is a pair $(V,\Phi)$, where $(V,\omega)$ is a symplectic vector bundle of rank $2n$ and the Higgs field $\Phi \in H^0(\Sigma, \End_0 (V)\otimes K)$ satisfies $\omega (\Phi \cdot, \cdot) + \omega (\cdot ,\Phi \cdot) = 0$. 
\end{ex}

Next, we discuss Higgs bundles for the real Lie groups $G_0 = SU^*(2m), SO^*(4m)$ and $Sp(m,m)$. These are connected, semi-simple, non-compact real forms\footnote{Recall that a real Lie group $G_0$ is a real form of a complex Lie group $G$ if $G_0$ is a subgroup of the underlying real Lie group of $G$ which is given as the fixed point set of an anti-holomorphic involution of $G$.} of $G = SL(2m, \C)$, $SO(4m,\C)$ and $Sp(4m,\C)$, respectively.

\begin{ex}\label{su*} The group $SU^*(2m)$ is the fixed point set of the anti-holomorphic involution
\begin{align*}
SL(2m,\C)  \to & SL(2m,\C) \\
g  \mapsto & J_m\bar{g}J_m^{-1},
\end{align*}
where $J_m = \left( \begin{matrix} 0& 1_m\\ -1_m&0 \end{matrix} \right)$; here, $1_m$ is the $m \times m$ identity matrix. This group is also denoted by $SL(m, \K)$, where $\K$ stands for the quaternions, and sometimes called the non-compact dual of $SU(2m)$\footnote{In the sense that, in Cartan's classification of symmetric spaces, the non-compact symmetric space $SU^*(2m)/Sp(2m)$ is the dual of the compact symmetric space $SU(2m)/Sp(2m)$.}. Its maximal compact subgroup is $H = Sp(m)$, which complexifies to $H^\C = Sp(2m,\C)$. Thus, $$\mathfrak{m}^\C = \{ x \in \mathfrak{sl}(2m,\C) \ | \ \tp{x}J = Jx \}$$ and an $SU^*(2m)$-Higgs bundle is a pair $(V,\Phi)$ consisting of a symplectic vector bundle $(V,\omega)$ on $\Sigma$ of rank $2m$ and a section $\Phi \in H^0(\Sigma, \End_0 (V) \otimes K)$ which is symmetric with respect to the symplectic form of $V$ (i.e., $\omega (\Phi \cdot, \cdot) = \omega (\cdot, \Phi \cdot))$. 
\end{ex}

\begin{ex}\label{so*} The group $SO^*(2m)$ is the fixed point set of the anti-holomorphic involution
\begin{align*}
SO(2m,\C)  \to & SO(2m,\C)\ \\
g  \mapsto & J_{m}\bar{g}J_{m}^{-1}.
\end{align*}
Its maximal compact is $H = U(m)$, so $H^\C = GL(m,\C)$. This group is the non-compact dual of the real form $SO(2m) \subseteq SO(2m,\C)$ and sometimes denoted by $SO(m,\K)$. As remarked in \cite[Section 3]{bradlow2015higgs}, let $T$ be the complex automorphism of $\C^{2m}$ defined by 
$$T = \left( \begin{matrix} 1_{m}& i1_{m}\\ 1_{m}&-i1_{m} \end{matrix} \right).$$
Then, 
$$\mathfrak{so}(2m,\C) = \mathfrak{gl}(m,\C) \oplus \mathfrak{m}^\C ,$$
where, after conjugating by $T$ on has  
\begin{equation}
\mathfrak{gl}(m,\C) \cong \{  \left( \begin{matrix} x& 0\\ 0&-\tp{x} \end{matrix} \right) \  | \ x \in \text{Mat}_{m}(\C) \} 
\end{equation}
and 
$$\mathfrak{m}^\C  \cong \{  \left( \begin{matrix} 0& \beta \\ \gamma &0 \end{matrix} \right) \  | \ \beta, \gamma \in \mathfrak{so}(m,\C) \}. $$
A principal $GL(m,\C)$-bundle gives a rank $m$ vector bundle and the isotropy representation acts on $\mathfrak{m}^\C $ via conjugation by $ \iota(g)  = \left( \begin{matrix} g& 0\\ 0& \tp{g}^{-1} \end{matrix} \right)$, for $g \in GL(m,\C)$. Thus, a Higgs bundle for $SO^*(2m)$ is given by the data $(W, \beta, \gamma)$, where $W$ is a rank $m$ vector bundle, $\beta \in H^0(\Sigma, \Lambda^2W \otimes K)$ and $\gamma \in H^0(\Sigma, \Lambda^2W^* \otimes K)$. It will be convenient to us to consider such Higgs bundles as $SO(2m,\C)$-Higgs bundles. Explicitly, given an $SO^*(2m)$-Higgs bundle $(W,\Phi=(\beta, \gamma))$, take $V = W \oplus W^*$ with its natural orthogonal form $q ((w_1, \xi_1), (w_2,\xi_2)) = \xi_2(w_1) + \xi_1(w_2)$. Then, $(V, \Phi =  \left( \begin{matrix} 0& \beta \\ \gamma &0 \end{matrix} \right))$ is an $SO(2m,\C)$-Higgs bundle.      
\end{ex}

\begin{ex}\label{spmm} The group $Sp(m,m)$ is the fixed point set of the anti-holomorphic involution
\begin{align*}
Sp(4m,\C)  \to & Sp(4m,\C)\ \\
g  \mapsto & K_{m}\bar{g}K_{m}^{-1}
\end{align*}
where $K_m = \left( \begin{matrix} 0&0&-1_m&0\\0&0&0&1_m\\1_m&0&0&0\\0&-1_m&0&0 \end{matrix} \right)$. Its maximal compact subgroup is $H = Sp(m) \times Sp(m)$, so $H^\C = Sp(2m,\C) \times Sp(2m,\C)$. Similarly to the example above, $\mathfrak{m}^\C$ can be expressed as a subset of certain off-diagonal matrices and a Higgs bundle for $Sp(m,m)$ is of the form $(W_1 \oplus W_2, \Phi = \left( \begin{matrix} 0& \beta \\ -\beta^T & 0 \end{matrix} \right) )$, where $(W_i,\omega_i)$, $i=1,2$, are symplectic vector bundles of rank $2m$ and $\beta^T : W_1 \to W_2 \otimes K$ is the symplectic transpose\footnote{By definition, $\omega_1 (\beta \cdot, \cdot) = \omega_2 (\cdot, \beta^T \cdot)$.} of $\beta : W_2 \to W_1 \otimes K$. Note that, by considering the symplectic form $\omega = (\omega_1,\omega_2)$ on $V = W_1 \oplus W_2$, the pair $(V, \Phi = \left( \begin{matrix} 0& \beta \\ -\beta^T & 0 \end{matrix} \right))$ is naturally an $Sp(4m,\C)$-Higgs bundle. Now, by considering the symplectic form $\omega^\prime = (\omega_1,-\omega_2)$ on $V$, the pair $(V,\Phi)$ is naturally an $SU^*(4m)$-Higgs bundle, reflecting the fact that $Sp(m,m)$ is a subgroup of $SU^*(4m)$.
\end{ex}

The construction of the moduli space of semi-stable\footnote{As usual, the moduli space parametrizes $S$-equivalence classes of semi-stable Higgs bundles or equivalently isomorphism classes of polystable Higgs bundles (for appropriate notions of semi-stability and polystability generalizing the usual one for principal bundles.} $G$-Higgs bundles when $G$ is a complex reductive Lie group goes back to Hitchin \cite{hitchin1987self, hitchin1987stable} and Simpson \cite{si1, si3}. In the general case where $G$ is a reductive Lie group (real or complex), the existence of a moduli space of Higgs bundles $\calM^d (G)$ parametrizing isomorphism classes of polystable $G$-Higgs bundles (of topological type\footnote{The topological type of a Higgs bundle is defined as the topological type of the underlying principal bundle (see Appendix \ref{AppendixD} for more details).} $d\in \pi_1 (G)$) is guaranteed by Schmitt's GIT general construction (see \cite{schmitt}) or Kuranishi slice method (see, e.g., \cite{kob}). In particular, $\calM^d (G)$ is a complex analytic variety, which is algebraic when $G$ is algebraic. When $G$ is a complex reductive Lie group, for each topological type $d\in \pi_1 (G)$, the moduli space $\calM^d (G)$ is non-empty and connected (see \cite{MR1206154, garcia2014connectedness}).

%----------------------------------------------------------------------------------------
\section{Hitchin Equations}\label{hkstructure}

Let $G$ be a complex semisimple\footnote{We restrict to semisimple groups as, apart from the general linear group, all groups considered in this thesis are of this type. Generalizations to the reductive case (and when the group is real) can be found in the given references.} Lie group and $K \subset G$ a maximal compact subgroup defined by an anti-holomorphic involution $\tau$.  Also, let $P$ be a smooth principal $G$-bundle on $\Sigma$ (of topological type $d\in \pi_1 (G)$) and fix a reduction $P_K $ of $P$ to the maximal compact subgroup $K$ (for notation and basic definitions concerning principal bundles the reader is referred to Appendix \ref{AppendixD}). We also denote by $\tau $ the isomorphism 
$$\tau : \Omega^{1,0}(\Sigma , \Ad (P)) \to \Omega^{0,1}(\Sigma , \Ad (P))$$  
given by the combination of complex conjugation on forms and $\tau$. 

Now, let $\calA$ be the set of $K$-connections on $P_K$. This is an affine space modeled on $\Omega^1 (\Sigma , \Ad (P_K))$ and, by the Chern correspondence, is in bijection with the set $\calC$ of holomorphic structures on $P$, which in turn is an affine space modeled on $\Omega^1 (\Sigma , \Ad (P))$ (see e.g. \cite{aticonn}). Thus, $T^*\calA \cong \calA \times \Omega^{1,0}(\Sigma , \Ad (P))$ is naturally an infinite-dimensional flat hyperk\"{a}hler space. Following Hitchin \cite{hitchin1987self}, given a tangent vector $(\beta , \varphi) \in \Omega^{0,1}(\Sigma , \Ad (P)) \oplus \Omega^{1,0}(\Sigma , \Ad (P))$ of $T^*\calA$, we define the complex structures 
\begin{align*}
\textsf{I}(\beta , \varphi) & = (i\beta , i\varphi)\\ 
\textsf{J}(\beta , \varphi) & = (-i\tau (\varphi), i \tau (\beta))\\
\textsf{K}(\beta , \varphi) & = (\tau (\varphi), -\tau (\beta)),
\end{align*} 
where $\textsf{K} = \textsf{IJ}$, and a Riemannian metric
\begin{equation*}
g( (\beta, \varphi) , (\beta, \varphi) ) = 2i \int_{\Sigma} B( \tau (\varphi), \varphi) - B(\tau (\beta), \beta),
\end{equation*}
which is a hyperk\"{a}hler metric.

The gauge group $\calG_K$ (i.e., the group of automorphisms of $P_K$) acts naturally on $T^*\calA$ preserving the hyperk\"{a}hler structure defined above and the associated hyperk\"{a}hler moment map $\mu = (\mu_1, \mu_2, \mu_3)$ is given by 
\[
\left\{
                \begin{array}{ll}
                  \mu_1 (A, \Phi)  = F_A - [\Phi , \tau (\Phi)]\\
                  (\mu_2+ i \mu_3)(A,\Phi)  = \bar{\partial}_A\Phi.          
                \end{array}
              \right.
\]
The equations 
\[
\left\{
                \begin{array}{ll}
                  F_A - [\Phi , \tau (\Phi)] = 0\\
                  \bar{\partial}_A\Phi = 0          
                \end{array}
              \right.
\]
are usually called \textbf{Hitchin equations} (or Higgs bundle equations). Let $\calM_{gauge}^d (G) = \mu^{-1}(0)/\calG_K$ be the moduli space of solutions to Hitchin equations. Thus we have the following Hitchin-Kobayashi theorem\footnote{When $G$ is a reductive group, a \textit{Hitchin-Kobayashi correspondence} for $G$-Higgs bundles was obtained in \cite{hk}.}. 

\begin{thm}[\cite{hitchin1987self, si1}]\label{hkhiggs} A $G$-Higgs bundle $(P,\Phi)$ (of topological type $d\in \pi_1 (G)$) admits a reduction $h$ of structure group to the maximal compact subgroup $K$ (i.e., a metric) whose curvature $F_h$ of the associated Chern connection satisfies the equation 
$$ F_h - [\Phi , \tau (\Phi)] = 0 $$
if and only if it is a polystable Higgs bundle. Moreover, there is a homeomorphism 
$$\calM^d (G) \cong \calM_{\text{gauge}}^d (G)$$ 
taking irreducible solutions in $\calM_{gauge}^d (G)$ to stable and simple Higgs bundles in $\calM^d (G)$.
\end{thm}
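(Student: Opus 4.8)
The statement has three ingredients: (i) a solution to Hitchin's equations forces polystability of the underlying Higgs bundle; (ii) conversely, polystability produces such a solution; and (iii) the resulting bijection between $\calM^d (G)$ and $\calM_{\text{gauge}}^d (G)$ is a homeomorphism, compatibly with the stable-and-simple versus irreducible loci. For (i), the plan is a Chern--Weil computation: given a metric $h$ solving $F_h - [\Phi, \tau(\Phi)] = 0$ and a $\Phi$-invariant reduction to a parabolic subgroup (in the vector bundle picture, a $\Phi$-invariant subbundle), pair $B$ with the curvature and integrate over $\Sigma$. The contribution of $[\Phi, \tau(\Phi)]$ is manifestly non-negative, a sum of norm-squared terms involving the second fundamental form and the off-diagonal part of $\Phi$, so the defining degree inequality for (semi)stability falls out, and its vanishing in the equality case produces the holomorphic splitting needed for polystability. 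This is the formal direction.

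For (ii), the analytic heart, I would follow the variational approach of Donaldson and Simpson. Fix a background metric $h_0$, write $h = h_0 e^{s}$ with $s$ a Hermitian endomorphism, and consider the Donaldson-type functional on metrics whose Euler--Lagrange equation is exactly $F_h - [\Phi,\tau(\Phi)] = 0$. One then checks: (a) the functional is convex along geodesics in the space of metrics, so a critical point is an absolute minimum, unique up to the evident symmetries; (b) the Donaldson heat flow $\partial_t h = -(F_h - [\Phi,\tau(\Phi)])$ has a solution for all time, and a uniform $C^0$ bound on $s$ along the flow forces subsequential convergence to a genuine solution; and (c) stability is precisely what guarantees that $C^0$ bound --- were $\sup|s|$ to blow up, an Uhlenbeck--Yau type argument would extract a weakly holomorphic, $\Phi$-invariant subbundle destabilizing $(P,\Phi)$. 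The polystable case is reduced to the stable case by splitting into stable factors.

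For (iii), both $\calM^d (G)$ and $\calM_{\text{gauge}}^d (G)$ carry natural topologies, the former as a complex-analytic or GIT quotient and the latter as $\mu^{-1}(0)/\calG_K$ in the $C^\infty$ topology. By (i)--(ii) the map sending a solution to its underlying polystable Higgs bundle is a bijection; continuity follows from elliptic regularity and from the uniqueness in (ii), which makes the solving metric depend continuously on the Higgs data, while an Uhlenbeck-type compactness (equivalently, properness of the Hitchin map) upgrades the continuous bijection to a homeomorphism. Finally, an irreducible solution has stabilizer the centre of $G$; this translates into simplicity of the associated Higgs bundle, and simplicity of a polystable object forces stability, matching the two distinguished loci.

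The main obstacle is the analytic input in (ii): the $C^0$ a priori estimate for the conformal factor along the Donaldson flow, and the extraction of a destabilizing subobject from its failure. Once this is in place, everything else --- the Chern--Weil inequality, convexity, and the topological comparison --- is comparatively soft. In practice, for every group appearing in this thesis one can instead pass through a faithful linear representation and invoke the $GL(n,\mathbb{C})$ statements of Hitchin and Simpson directly, which is the route I would take.
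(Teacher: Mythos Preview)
The paper does not prove this theorem: it is stated with attribution to \cite{hitchin1987self, si1} and used as a black box, so there is no ``paper's own proof'' to compare against. Your proposal is a reasonable high-level sketch of the standard Donaldson--Simpson argument underlying those references, and the closing remark about passing through a faithful linear representation is in the spirit of how such results are typically applied in practice. Since the thesis treats this as background, a citation suffices; if you do want to include a sketch, what you have written is an accurate outline of the classical approach, though the hard analytic step in (ii) --- the $C^0$ estimate and the extraction of a destabilizing subobject via an Uhlenbeck--Yau type weak-limit argument --- is where all the actual work lies and cannot be waved through in a self-contained account.
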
  

In particular, $\calM_{gauge}^d (G)$ is hyperk\"{a}hler and we identify it in complex structure $\textsf{I}$ (which is complex structure coming from $\Sigma$) with the moduli space of Higgs bundles\footnote{Note that $\calM^d (G)$ can be seen as the quotient by the gauge group $\calG$ of polystable elements $(\bar{\partial}_A, \Phi) \in T^*\calA$ satisfying $\bar{\partial}_A \Phi$, where we have implicitly used the bijection between $\calC$ and $\calA$. Thus, the isomorphism stated in Theorem \ref{hkhiggs} takes $(A,\Phi)$ in complex structure $\textsf{I}$ to $(\bar{\partial}_A, \Phi)$ in the natural complex structure on the Higgs bundle moduli space coming from the curve.}. Given a solution $(A,\Phi)$ to Hitchin equations, the connection $$\nabla \coloneqq \nabla_A +\phi - \tau(\phi)$$ is a flat $G$-connection\footnote{Since $\nabla_A$ is compatible with $K$ and $\tau (\Phi - \tau(\phi)) = - (\Phi - \tau(\phi))$ (i.e., $\Phi - \tau(\Phi)$ is a 1-form taking value in $i\lie{k}$). Moreover, from the equations, we see that $\nabla$ is flat.}, so we obtain a map from $\calM_{gauge}^d (G)$ to the moduli space $\calM_{dR}^d(G)$ of flat $G$-connections (which is sometimes called the \textbf{de Rham moduli space}). This gives a homeomorphism between the moduli space of Higgs bundles and the moduli space of flat connections, which is usually referred to as the \textbf{non-abelian Hodge correspondence}\footnote{For the non-abelian Hodge correspondence when $G$ is a real reductive Lie group see \cite{hk, relativehk}.} (the proof relies heavily on two existence theorems for gauge-theoretic equations due to Donaldson \cite{do} and Corlette \cite{corlette}). Moreover, 
the map 
$$(A,\Phi) \mapsto \nabla_A +\phi - \tau(\phi)$$
sends $\calM_{gauge}^d (G)$ in complex structure $\textsf{J}$ to $\calM_{dR}^d(G)$ in the natural complex structure induced from $G$.

%----------------------------------------------------------------------------------------
%----------------------------------------------------------------------------------------

%----------------------------------------------------------------------------------------
\section{Simpson compactified Jacobian}\label{Simpson}

Recall that for a non-singular projective curve over $\C$ of genus $g$, the Jacobian variety is a $g$-dimensional abelian variety that can be described as the moduli space of degree $0$ line bundles on the curve. More generally, let $X$ be any projective pure-dimensional $\C$-scheme of finite type and dimension $1$, hereinafter simply called a projective curve. The Picard group $\Pic (X)$ is the set of isomorphism classes of invertible sheaves on $X$, i.e., the moduli space of line bundles on $X$, and it is naturally an abelian group isomorphic to $H^1(X, \calO_X^\times)$. We denote by $\Jac (X)$ the connected component of the identity of the Picard scheme and call it the \textbf{Jacobian of $X$} (see e.g. \cite{kleim}). Clearly, in this generality, $\Jac (X)$ is not a proper $\C$-scheme (see e.g. \cite{bosch2012neron} for a very good overview on the subject). The problem of compactifying the Jacobian of a singular curve has been addressed by many authors using different methods and in various levels of generality. Some references include \cite{alt, odases, ses, est}. In this section we focus on Simpson moduli space of semi-stable sheaves, which generalizes many of the past constructions, and is intimately connected, as we will see, to the fibres of the Hitchin fibration.

Let us recall some basic definitions about sheaves. If $\calE$ is any coherent sheaf on a (Noetherian) scheme, one defines its \textbf{dimension} $d(\calE)$ as the dimension of the closed set $\supp{ (\calE)} = \{ x \in X \ | \ \calE_x \neq 0 \}$. Moreover, $\calE$ is said to be \textbf{pure of dimension $d$} if for all non-trivial subsheaves $\calF \subset \calE$, $d (\calF) = d$. Let us return to the case where $X$ is a projective curve. Generalizing the usual notion for integral curves, a coherent sheaf $\calE$ on $X$ will be called \textbf{torsion-free} if it is pure of dimension one. 

\begin{ex} If $X$ is a projective smooth curve, a coherent sheaf $\calE$ is torsion-free if and only if it is locally-free. Thus, any coherent sheaf on $X$ splits as a direct sum of a torsion sheaf (a finite collection of points, and so a sheaf of dimension $0$) and a locally free-sheaf (a sheaf of dimension $1$).  
\end{ex}   

Let $\calO_X (1)$ be an ample invertible sheaf on $X$ and $\delta$ the degree of the associated polarization. Then, for every coherent sheaf $\calE$ on $X$ there exists a polynomial, called the \textbf{Hilbert polynomial of} $\calE$, with rational coefficients and degree $d(\calE)$ defined by 
$$P(\calE , t) = \chi (X, \calE (t) ), \qquad \text{for $t>> 0$},$$
where $\calE (t) = \calE \otimes \calO_X(1)^t$. Let $\calE$ be a coherent sheaf on $X$ of dimension $d (\calE) = 1$. Then, the associated Hilbert polynomial is of the form
$$P(\calE , t) = \delta \rk_P (\calE) t + \deg_P (\calE) + \rk_P (\calE) \chi (\calO_X),$$
for some rational numbers $\rk_P (\calE)$ and $\deg_P (\calE)$, called the \textbf{(polarized) rank} and \textbf{degree} of $\calE$, resp., with respect to $\calO_X(1)$. We also denote define the \textbf{(polarized) slope} of $\calE$ by $\mu_P (\calE) = \deg_P (\calE)/ \rk_P (\calE)$. Note in particular that considering some positive power of $\calO_X(1)$ as the ample line bundle, does not alter the polarized slope. 

\begin{rmk} If $X$ is integral, the usual notions of degree and rank coincide with the ones given above (for any polarization). In particular, if $\calE$ is a $1$-dimensional coherent sheaf on $X$, there exists an open dense subset $U \subset X$, such that $\calE|_U$ is locally free and the rank of $\calE$ is the rank of the locally free sheaf $\calE|_U$. In general, however, the polarized rank and degree of a torsion-free sheaf need not be integers and might depend on the polarization.  
\end{rmk} 

\begin{ex} Let $X = \Sigma_1 \cup \Sigma_2$ be a nodal curve, where $\Sigma_1 $ and $\Sigma_2$ are smooth curves (of genus $g_1$ and $g_2$, respectively) meeting transversely at $x_0 \in X$. A polarization on $X$ can be seen as a collection of rational numbers, one for each irreducible component, whose sum is an integer. Let $\calO_X (1)$ be an ample line bundle whose corresponding polarization is given by $\{1/2 , 1/2 \}$. Also, let $E$ be a locally-free sheaf of rank $r$ and degree $d$ on $\Sigma_1$, and consider the coherent sheaf $\calE = j_* E$ on $X$, where $j: \Sigma_1 \to X$ is the natural inclusion. The sheaf $\calE$ is torsion-free and its polarized rank and degree are given by 
\begin{align*}
P(\calE, t) & = \chi (X, \calE (t)) \\
  & = \rk_P (\calE) t + \deg_P (\calE) + \rk_P (\calE) \chi (\calO_X) \\
  & = \rk_P (\calE) t + \deg_P (\calE) + \rk_P (\calE) (1-g_1-g_2).
\end{align*}
But, 
\begin{align*}
\chi (X, \calE (t) ) & = \chi (\Sigma_1, E \otimes j^* \calO_X(t)) \\
& =  \frac{r}{2}t + d + r(1-g_1),
\end{align*}
by Riemann-Roch. Thus $\rkp (\calE) = r/2$. This example illustrates two points. Taking $E$ to be an invertible sheaf on $\Sigma_1$, we obtain that the polarized rank of $ j_* E $ is not integral. Also, note that if a torsion-free sheaf has rank $1$ on each irreducible component, it will also have rank $1$ on $X$. The converse, however, does not hold. Take the rank of $E$ to be $2$. Then, $\rkp (j_* E) = 1$, but $j_*E$ restricted to $\Sigma_2$ is a torsion-sheaf supported at $x_0$.    
\end{ex}

Recall that any semi-stable sheaf $\calE$ admits a (non-unique) \textbf{Jordan-H\"{o}lder} filtration, i.e., a filtration
$$0 = \calE_0 \subset \calE_1 \subset \ldots \subset \calE_l = \calE,$$
where the factors $\calE_i / \calE_{i-1}$ are stable and have polarized slope $\mu_P (\calE)$, for $i=1, \ldots , l$. Moreover, the graded object 
$$Gr(\calE)  \coloneqq \bigoplus_{i = 1}^l \calE_i/\calE_{i-1},$$
up to isomorphism, does not depend on the choice of the Jordan-H\"{o}lder filtration (see e.g. \cite[Proposition 1.5.2]{huygeo}).  

\begin{defin} A coherent sheaf $\calE$ on $X$ is \textbf{semi-stable} (resp. \textbf{stable}) if it is torsion-free and for any proper subsheaf $\calF \subset \calE$, one has $\mu_P (\calF) \leq \mu_P (\calE)$ (resp. $<$). Also, two semi-stable sheaves $\calE_1$ and $\calE_2$ are called \textbf{S-equivalent} if $Gr (\calE_1) \cong Gr (\calE_2)$. 
\end{defin}

\begin{rmk} The stability condition defined above is equivalent to Simpson's definition \cite{si2}, where a torsion-free sheaf $\calE$ is called $\mu$-(semi-)stable if for all proper subsheaves $\calF \subset \calE$ one has 
$$\frac{\deg_P (\calF) + \rk_P (\calF) \chi (\calO_X)}{\delta \rk_P (\calF)}  \underset{(\leq)}{<}  \frac{\deg_P (\calE) + \rk_P (\calE) \chi (\calO_X)}{\delta \rk_P (\calE)}.$$
Also, for stable sheaves, $S$-equivalence is the same as isomorphism.
\end{rmk}

Simpson constructed in \textit{loc. cit.} a coarse moduli space $\calM (\calO_X (1), P)$, hereinafter called \textbf{Simpson moduli space}, for semi-stable sheaves on a polarized curve\footnote{In \textit{loc. cit.}, Simpson actually constructs a moduli space of semi-stable sheaves on any polarized projective $\C$-scheme.} $(X, \calO_X(1))$  with Hilbert polynomial 
$$P(t) = \delta t + k + \chi (\calO_X),$$
where $\delta$, as before, is the degree of the polarization. He shows in \cite[Theorem 1.21]{si2} that $\calM (\calO_X (1), P)$ is a projective scheme whose points correspond to $S$-equivalence classes of semi-stable (polarized) rank $1$ and (polarized) degree $k$ torsion-free sheaves on $X$. Moreover, the locus $\calM^s (\calO_X (1), P)$ corresponding to stable sheaves is the fine moduli space of stable sheaves, an open dense set in $\calM (\calO_X (1), P)$.

\begin{rmk} As pointed out in the beginning of this section, the Simpson moduli space is a compactification of $\Pic^0(X)$ which generalizes prior constructions. In particular, if $X$ is integral (i.e., reduced and irreducible), every rank $1$ torsion-free sheaf is stable and $\calM (\calO_X (1), P)$ coincides with the compactification constructed by Altman and Kleiman \cite{alt}. Also, when $X$ is reduced but reducible, the (semi-)stability condition introduced by Seshadri in \cite{ses} is known to be equivalent to the notion introduced in this section. Moreover, Seshadri's compactification corresponds to the connected component of Simpson moduli space consisting of sheaves which are rank $1$ on every irreducible component of the curve (for more details and a good guide to the literature see \cite{ana}).   
\end{rmk}  
%------------------------------------------------------------------------

\section{Hitchin fibration}\label{Section1.4}

Let $G$ be an (affine) reductive group over $\C$ with Lie algebra $\lie{g}$ and fix a topological type $d\in \pi_1 (G)$. Consider a homogeneous basis $\{ p_1, \ldots , p_r \}$ for the algebra $\C [\lie{g}]^G$ of polynomials invariant under the adjoint action of $G$. Here, $r$ is the rank of $\lie{g}$ and let $d_i$ be the degree of the invariant polynomial $p_i$, for $i=1, \ldots , r$. The \textbf{$G$-Hitchin fibration}, or simply \textbf{Hitchin map}, is given by 
\begin{align*}
h_G : \calM^d (G) & \to \calA (G) = \bigoplus_{i=1}^r H^0(\Sigma , K^{d_i}) \\
(P,\Phi) & \mapsto (p_1 (\Phi), \ldots , p_r (\Phi)), 
\end{align*} 
where $\calA (G) $ is a vector space of half of the dimension of $\calM^d (G)$ called \textbf{Hitchin base}. If no confusion is possible, we will drop the subscript and denote the morphism above simply by $h$. The Hitchin map is a surjective proper morphism for any choice of basis, with complex Lagrangian fibres. Moreover it turns the moduli space of $G$-Higgs bundles into an algebraically completely integrable Hamiltonian system\footnote{Recall that an algebraically completely integrable Hamiltonian system consists of a $2N$-dimensional symplectic complex algebraic variety $Y$ with a map $f = (f_1, \ldots , f_N): Y \to \C^N$ such that $df_1 \wedge \ldots \wedge f_N$ is generically non-zero; the Poisson bracket of $f_i$ and $f_j$ vanish for all $i,j = 1, \ldots , N$; for generic $z \in \C^N$, $f^{-1}(c)$ is an open set in an abelian variety (of dimension $N$); the Hamiltonian vector fields $X_{f_i}$, $i=1, \ldots , N$, are linear on $f^{-1}(c)$.} \cite{hitchin1987stable}. To describe the fibres of $h$ for classical groups, Hitchin introduced the notion of spectral curve\footnote{For more general groups, the notion of a cameral cover was introduced by Donagi (see e.g. \cite{donagi}).} \cite{hitchin1987self, hitchin1987stable}, which we will recall in this section.    

Note that given a linear representation $\rho : G \to GL (\V)$, one can associate to any semi-stable $G$-Higgs bundle $(P,\Phi)$ a semi-stable Higgs bundle $(P (\V), \rho (\Phi))$ and then consider a particularly convenient basis of homogeneous polynomials given by the coefficients of the characteristic polynomial (see e.g. \cite{donagi}). As our main concern will be when the complex group $G$ is a classical group, we start by introducing the spectral curve for the group $GL(n,\C)$. In this case, we use the more conventional notation $\calM (n,d)$ for the moduli space $\calM^d (GL(n,\C))$ of Higgs bundles of rank $n$ and degree $d$ on $\Sigma$. Choose the basis of invariant polynomials to be given by the coefficients of the characteristic polynomial and denote the Hitchin fibration, which is a projective fibration, by 
\begin{align*}
h : \calM (n,d) & \to \calA = \bigoplus_{i=1}^n H^0(\Sigma , K^{i}) \\
(V,\Phi) & \mapsto (p_1 (\Phi), \ldots , p_n (\Phi)), 
\end{align*} 
where $p_i (\Phi) = (-1)^i \tr (\Lambda^i \Phi)$, for $i=1, \ldots , n$. Compactify $\pi : K \to \Sigma$ to the ruled surface 
$$\bar{\pi} : Y \to \Sigma ,$$
where $Y = \CP (\calO_\Sigma \oplus K)$, i.e., the total space of $\Proj (\Sym^\bullet (\calO_\Sigma \oplus K^{-1})) $, and let $\calO (1)$ be the relatively ample invertible sheaf on $Y$, so that $\bar{\pi}_* \calO (1) \cong \calO_\Sigma \oplus K^{-1}$. Let $\sigma \in H^0(Y, \calO (1)) = H^0(\Sigma , \calO_\Sigma \oplus K^{-1})$ be the canonical section of $\calO (1)$, called the ``\textbf{infinity section}'', and let $\lambda \in H^0(\Sigma , \bar{\pi}^*K(1))= H^0(\Sigma , K \oplus \calO_\Sigma)$ be the canonical section of $ \bar{\pi}^*K \otimes \calO (1)$, called the ``\textbf{zero section}'' (both corresponding to the respective associated summand $\calO_\Sigma$). 
\begin{rmk} Note that the zero section and the infinity section are disjoint on $Y$. Also, when restricted to the total space $|K| \subset Y$ of the canonical bundle, $\lambda \in H^0(|K|, \bar{\pi}^*K)$ is the tautological section. 
\end{rmk}
Let $a = (a_1, \ldots , a_n) \in \calA$ and consider the section 
$$s_a = \lambda^n + (\bar{\pi}^*a_1)\lambda^{n-1}\sigma + \ldots + (\bar{\pi}^*a_n)\sigma^n \in H^0(Y, \bar{\pi}^*(K)^n\otimes \calO (n)).$$
We denote by $X_a$ the zeros of the section $s_a$ with the natural scheme structure and call it the \textbf{spectral curve} associated to $a\in \calA$. In particular we denote by 
$$\pi_a : X_a \to \Sigma ,$$
the restriction of $\pi : K \to \Sigma $ to $X_a$, and call it the \textbf{spectral cover} associated to $X_a$.  
\begin{rmks} 

\noindent 1. Note that $\sigma$ restricted to $X_a$ is everywhere non-zero and so trivializes $\calO (1)|_{X_a}$. Thus, $\lambda$ restricted to the spectral curve is a section of 
$$(\bar{\pi}^* (K) \otimes \calO (1))|_{X_a} \cong \pi_a^* (K).$$ 
In particular, the spectral curve $X_a$ can be seen as the zero scheme corresponding to the section
$$\lambda^n + \pi^*a_1\lambda^{n-1} + \ldots + \pi^*a_n \in H^0(|K|, \pi^*K^n).$$
 
\noindent 2. It follows from the remark above that the spectral curve $X_a$ is an element of the linear system $\CP (H^0(Y, \bar{\pi}^*(K)^n\otimes \calO (n)))$ which is contained in $|K| \subset Y$. As explained in \cite{hitchin1987stable}, this linear system is base-point free. This follows from the fact that $\lambda^n $ is a point of $\CP (H^0(Y, \bar{\pi}^*(K)^n\otimes \calO (n)))$, but the linear system $\CP H^0(\Sigma , K^n)$ is base-point free for $n \ge 2$, which we may assume as the case $n=1$ is trivial. In particular, it follows from Bertini's theorem that the generic spectral curve is a non-singular projective curve.

\end{rmks}

Since we will be dealing with non-reduced spectral curves, it is instructive to get a clearer picture of the scheme structure of $X_a$. For this, set $a_0 = 1 \in H^0(\Sigma , \calO_\Sigma)$. We may see the sections $a_i \in H^0(\Sigma , K^i)$ as embeddings  
$$a_i : K^{-n} \to K^{-n+i},$$ 
for each $i=0, \ldots , n$. Consider the ideal sheaf 
$$\calI_a = \left( \bigoplus_{i=0}^{n} a_i(K^{-n}) \right) \subset \Sym^\bullet K^{-1}$$
generated by the images of $a_i$. Then, 
$$X_a =  \Spec (\Sym^\bullet K^{-1}/\calI_a)  \subset |K| =  \Spec (\Sym^\bullet K^{-1}) .$$
In particular, the spectral cover
$$\pi_a : X_a \to \Sigma $$
is a finite morphism of degree $n$. 

Note that\footnote{This is standard (see e.g. \cite[Ex. 5.17]{hart}).}  
$$\pi_{a,*} (\calO_{X_a}) = \Sym^\bullet K^{-1}/\calI_a = \calO_\Sigma \oplus K^{-1} \oplus \ldots \oplus K^{- (n-1)}$$
and from this we can calculate the arithmetic genus $g_{_{X_a}} = p_a (X_a)$ of $X_a$ by using the fact that the Euler characteristic is invariant under direct images by finite morphisms. So,  
$$\chi (X_a ,  \calO_{X_a}) = \chi (\Sigma , \pi_{a,*} \calO_{X_a})$$
gives
\begin{equation}
g_{_{X_a}} = 1 + n^2(g-1).
\label{genus}
\end{equation}
A finite morphism is in particular affine. Let us make this explicit. Let $U  $ be an open affine of $\Sigma$ and consider a nowhere vanishing section $u \in H^0(U, K^{-1})$, so that $| K|_U | = \Spec (\calO_\Sigma (U)[u])$. Denote by 
$$\bar{a}_i = \inn{a_i}{u^i} \in \calO_\Sigma (U)$$
the local function on $U$ obtained by the natural pairing between $K^i$ and $K^{-i}$, for $i=1, \ldots , n$. Then, we obtain the affine open set 
$$\pi_a^{-1}(U) = \Spec \left(\frac{\calO_\Sigma (U)[u]}{(u^n + \bar{a}_1u^{n-1} + \ldots + \bar{a}_n)} \right),$$
on $X_a$ and the map $\pi_a$ restricted to this affine is simply the natural projection of $\calO_\Sigma (U)[u]$ onto its quotient by the ideal $(u^n + \bar{a}_1u^{n-1} + \ldots + \bar{a}_n) \subset \calO_\Sigma (U)[u]$. 

\subsection{The BNR correspondence}

Let $\calO_\Sigma (1)$ be an ample line bundle on $\Sigma$ of degree $\delta$. Given a point $a \in \calA$ of the Hitchin base for the Hitchin fibration 
$$h : \calM (n,d)  \to \calA = \bigoplus_{i=1}^n H^0(\Sigma , K^{i}),$$ 
let $\pi_a : X_a \to \Sigma$ be the spectral cover associated to the spectral curve $X_a$, as in the last section. We denote by $\calM (a; k) $ the Simpson moduli space $\calM (\calO_{X_a}(1), P)$, given by the polarization $\calO_{X_a}(1)$, obtained by the pullback $\pi_a^* \calO_\Sigma (1)$ of the ample line bundle $\calO_\Sigma (1)$ by the spectral cover, and Hilbert polynomial  
$$P (t) = n\delta t + k + \chi (\calO_{X_a}).$$
In other words, $\calM (a; k) $ is the Simpson moduli space for semi-stable rank $1$ sheaves on $X_a$ (of polarized degree $k$).  

Denote by $\calB$ the $\calO_\Sigma$-algebra $\pi_*\calO_{X_a}$. Since $\pi_a$ is an affine morphism, the direct image $\pi_{a,*}$ induces an equivalence of categories between the category of quasi-coherent $\calO_{X_a}$-modules and the category of $\calB$-modules (i.e., quasi-coherent $\calO_\Sigma$-modules with a $\calB$-module structure), which preserves the subcategories of torsion-free sheaves. If $\calE$ is a torsion-free sheaf on $X_a$ of (polarized) rank $1$ and degree $k$, then $V = \pi_{a,*} (\calE)$ is a vector bundle\footnote{We will avoid differentiating a vector bundle from its associated locally-free sheaf whenever notation becomes too cumbersome.}. Since the Euler characteristic is invariant under direct image by a finite map, by the projection formula we have $\chi (X_a , \calE \otimes \calO_{X_a}(t)) = \chi (\Sigma , \pi_{a,*} (\calE) \otimes \calO_\Sigma (t))$ and thus we obtain, as pointed out in \cite[Remark 3.8]{hausel2012prym}, the relations
\begin{align*}
n \rkp (\calE) & =  \rk (\pi_{a,*} (\calE))\\
\degp (\calE) + \rkp (\calE)\chi (\calO_{X_a}) & = \deg (\pi_{a,*} (\calE)) + \rk (\pi_{a,*} (\calE))\chi (\calO_\Sigma).
\end{align*} 
Thus, by the above and (\ref{genus}), $V$ is a vector bundle of rank $n$ and degree $k+ n(n-1)(1-g)$. A $\calB$-module structure on $V$ corresponds to an algebra homomorphism 
$$\Sym^\bullet K^{-1}/ \calI_a \to \End V,$$
which is equivalent to an $\calO_\Sigma$-module $K^{-1} \to \End V$, i.e., a Higgs field $\Phi$. Equivalently, we may see $\Phi$ as the twisted endomorphism of $V$ obtained from multiplication by the tautological section 
$$\lambda : \calE \to \calE \otimes \pi_a^*K.$$ 
In particular, we denote the Higgs bundle obtained from $\calE$ through this construction by 
$$\pi_{a,*}(\calE, \lambda) = (V,\Phi),$$
and sometimes say that $(V,\Phi)$ was obtained by the direct image construction of $\calE$.
It follows from \cite[Corollary 6.9]{si3} that $\calE$ is (semi-)stable (resp. polystable) if and only if $\pi_{a,*}(\calE, \lambda)$ is (semi-)stable (resp. polystable). Thus, since $h (V,\Phi) = a$ by \cite[Lemma 6.2]{hausel2012prym}, the fibre $h^{-1}(a)$ can be characterized in the following way. 
\begin{thm}\label{bnrcorrespondence} Let $a \in \calA$ be any point of the Hitchin base. Then, the fibre $h^{-1}(a)$ of the $GL(n,\C)$-Hitchin fibration is isomorphic to the Simpson moduli space $\calM (a; k)$, where $k =d+ n(n-1)(g-1)$.  
\end{thm}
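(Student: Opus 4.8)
The plan is to assemble the statement from the pieces already developed in this section, treating it as essentially a bookkeeping result: the hard work has been done in establishing the equivalence of categories between quasi-coherent $\calO_{X_a}$-modules and $\calB$-modules ($\calB = \pi_{a,*}\calO_{X_a}$), and in the Simpson stability correspondence \cite[Corollary 6.9]{si3}. First I would set up the two functors explicitly. In one direction, given a semi-stable torsion-free rank $1$ sheaf $\calE$ on $X_a$ of polarized degree $k$, form $(V,\Phi) = \pi_{a,*}(\calE,\lambda)$ as in the section above; the Euler-characteristic/projection-formula computation recorded just before the statement, combined with the genus formula \eqref{genus}, shows $V$ has rank $n$ and degree $k + n(n-1)(1-g)$, and \cite[Lemma 6.2]{hausel2012prym} gives $h(V,\Phi) = a$. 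In the other direction, given $(V,\Phi) \in h^{-1}(a)$, the condition $h(V,\Phi)=a$ means $\Phi$ satisfies its characteristic equation $\Phi^n + \pi^*a_1\Phi^{n-1} + \ldots + \pi^*a_n = 0$ (Cayley–Hamilton applied with the prescribed invariants), which is exactly the data of an action of $\Sym^\bullet K^{-1}/\calI_a = \calB$ on $V$; pulling back along the equivalence of categories produces a torsion-free sheaf $\calE$ on $X_a$ with $\pi_{a,*}(\calE,\lambda) \cong (V,\Phi)$, and the rank/degree relations (read backwards) force $\rkp(\calE) = 1$ and $\degp(\calE) = k$.

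Next I would check that these two assignments are mutually inverse and that they match up the (semi-)stability and $S$-equivalence notions. The first is immediate from the equivalence of categories, since the Higgs field $\Phi$ is literally multiplication by the tautological section $\lambda$ under $\pi_{a,*}$, so reconstructing the $\calB$-module from $(V,\Phi)$ and applying the inverse equivalence returns $\calE$ up to canonical isomorphism. For stability: a $\Phi$-invariant subbundle $V' \subset V$ corresponds precisely to a $\calB$-submodule, hence to a subsheaf $\calE' \subset \calE$, and the rank/degree dictionary converts the slope inequality $\mu_P(\calE') \le \mu_P(\calE)$ into $\mu(V') \le \mu(V)$ — this is the content of \cite[Corollary 6.9]{si3}, which I would simply cite. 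Since both $\calM(a;k)$ and $h^{-1}(a)$ parametrize $S$-equivalence classes (equivalently, isomorphism classes of polystable objects) and the graded objects of Jordan–Hölder filtrations correspond under $\pi_{a,*}$, the bijection descends to a morphism of moduli spaces; functoriality in families (the equivalence of categories and $\pi_{a,*}$ both work in families over an arbitrary base scheme) upgrades this bijection on points to an isomorphism of schemes.

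The main obstacle, and the only place real care is needed, is the surjectivity/well-posedness of the construction going from $(V,\Phi)$ to $\calE$: one must be sure that every Higgs bundle in the fibre — including those whose underlying bundle is unstable, or whose Higgs field is non-regular at some points of $\Sigma$ — really does give a \emph{torsion-free} sheaf of polarized rank exactly $1$ on $X_a$, and not, say, a sheaf with embedded points or with rank dropping on a component. When $X_a$ is integral this is automatic (rank $1$ torsion-free on an integral curve is unambiguous), but the statement is asserted for \emph{any} $a \in \calA$, so $X_a$ may be non-reduced or reducible. Here the point is that $V = \pi_{a,*}\calE$ being locally free over $\Sigma$ forces $\calE$ to be torsion-free over $X_a$ (a torsion subsheaf would push down into a torsion subsheaf of a vector bundle), and the rank computation $n\rkp(\calE) = \rk V = n$ pins down $\rkp(\calE) = 1$ on the nose; this is exactly why the section was careful to record the polarized rank/degree formulas in advance. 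So the proof is: invoke the equivalence of categories, transport stability via \cite[Corollary 6.9]{si3}, match invariants via the displayed relations and \eqref{genus}, and note everything is compatible with families to conclude the isomorphism of moduli spaces, with $k = d + n(n-1)(g-1)$ obtained by solving $d = k + n(n-1)(1-g)$.
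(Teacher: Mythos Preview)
Your proposal is correct and follows essentially the same approach as the paper, which does not give a self-contained proof but rather assembles exactly the pieces you use (the affine equivalence of categories, the rank/degree relations and \eqref{genus}, \cite[Corollary 6.9]{si3} for stability, \cite[Lemma 6.2]{hausel2012prym} for $h(V,\Phi)=a$) in the discussion preceding the statement, and then attributes the full result to \cite{bnr} for integral spectral curves and to Schaub \cite{schaub} (with corrections noted in \cite{chaud, catal, hausel2012prym}) in general. The one cosmetic difference is that the paper describes the inverse construction via the cokernel of $\pi_a^*\Phi - \lambda\,\Id$ on $X_a$, whereas you recover $\calE$ by invoking Cayley--Hamilton to get the $\calB$-module structure on $V$ and then applying the categorical equivalence; these are equivalent and your version has the advantage of making the torsion-freeness and rank-$1$ claims transparent for arbitrary (possibly non-reduced or reducible) $X_a$, which is exactly the delicate point you flag.
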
    
We will refer to the correspondence above as the \textbf{\textit{BNR} correspondence}, or spectral correspondence. The name is justified by the fact that when the spectral curve is an integral scheme, the characterization above is a theorem by Beauville, Narasimhan and Ramanan \cite[Proposition 3.6]{bnr} (in this case, Simpson moduli space simply parametrizes isomorphism classes of rank $1$ torsion-free sheaves on the spectral curve). Explicitly, given a Higgs bundle $(V,\Phi) \in h^{-1}(a)$, the cokernel of 
$$\pi_a^*\Phi - \lambda \Id: \pi_a^*V \to \pi_a^* (V\otimes K)$$ 
defines a torsion-free sheaf $\calE \otimes \pi_a^*K$ and one recovers the Higgs bundle as $\pi_{a,*}(\calE, \lambda) = (V,\Phi)$. In particular, the torsion-free sheaf fits into the exact sequence 
$$0 \to \calE \otimes  \pi_a^*K^{n-1} \to \pi_a^*V  \xrightarrow{\pi_a^*\Phi - \lambda \Id}  \pi_a^*(V\otimes K) \to \calE \otimes \pi_a^*K \to 0.$$
For general spectral curves, the characterization of the fibres is due to Schaub \cite{schaub}, with minor corrections, as pointed out in \cite{chaud, catal, hausel2012prym}.  

\begin{rmks}\label{123} 

\noindent 1. As noted, the generic spectral curve $X_a$ is non-singular (by Bertini's theorem) and the Simpson moduli space $\calM (a ; k)$ is isomorphic to $\Pic^k( \Sigma)$. In particular, these fibres are torsors for the abelian variety $\Pic^0(\Sigma)$.

\noindent 2. Note that, given a Higgs bundle $(V,\Phi)$, any proper $\Phi$-invariant subbundle $W$ defines a Higgs bundle $(W, \Phi|_W)$ and the characteristic polynomial of $\Phi_W$ must divide the characteristic polynomial of $\Phi$. In other words, the spectral curve of $(W,\Phi|_W)$ is a component of the spectral curve of $(V,\Phi)$. In particular, if $X_a$ is integral, all Higgs bundles in the fibre $h^{-1}(a)$ must be stable.

\noindent 3. The spectral correspondence for classical groups was achieved by Hitchin in \cite{hitchin1987stable} (see also \cite{schaposnik2013spectral} for spectral data for Higgs bundles for real groups). In particular, these can be seen as special loci of the Simpson moduli space. 

\end{rmks}

\part{Higgs bundles and non-reduced spectral curves}
% Chapter 2

\chapter{Extensions of Higgs bundles} % Main chapter title

\label{Chapter2} % For referencing the chapter elsewhere, use \ref{Chapter1} 

\lhead{Chapter 2. \emph{Extensions of Higgs bundles}} % This is for the header on each page - perhaps a shortened title

In this chapter we introduce extensions of Higgs bundles as short exact sequences in the abelian category of Higgs bundles and show how these are related to the first hypercohomology group of a certain complex of sheaves, which can be seen as a Higgs bundle in its own right. After recalling the long exact sequences which will be used in subsequent chapters to obtain information about the hypercohomology groups of the Higgs bundles governing the extensions, we give a parametrization of certain symplectic and orthogonal Higgs bundles which appear as special extensions by maximally isotropic Higgs subbundles. The results will be used in chapters \ref{Chapter3} and \ref{Chapter4} when describing the singular fibres related to the real groups $SU^*(2m)$, $SO^*(4m)$ and $Sp(m,m)$.

\section{Classical Higgs bundles} \label{Sect2.1} 

A Higgs bundle on $\Sigma$ can be viewed as a complex of locally-free sheaves concentrated in degrees $0$ and $1$. Then, a homomorphism between two Higgs bundles $\mathsf{E} = (E \overset{\varphi}{\to} E\otimes K )$ and $\mathsf{F} = (F \overset{\psi}{\to} F\otimes K )$ is a homomorphism of complexes of the form
%\footnote{The category of stable Higgs bundles does not form a full subcategory of $D^b(\Sigma)$, the bounded derived category of coherent sheaves on $\Sigma$. This is clear as rescaling the Higgs field by a non-zero scalar provides a quasi-isomorphism between complexes, but not always an isomorphisms between the Higgs bundles.}
\[\xymatrix@M=0.08in{
E \ar[r]^--{\varphi} \ar[d]^{\alpha} & E\otimes K  \ar[d]^{\alpha \otimes 1} \\    
F \ar[r]^-{\psi} & F\otimes K ,}\]   
for some homomorphism $\alpha : E \to F$ of vector bundles.

Let $\mathsf{W}_i = (W_i \overset{\Phi_i}{\to} W_i\otimes K )$, $i=1,2$, be Higgs bundles on $\Sigma$. An \textbf{extension} of $\mathsf{W}_2 $ by $\mathsf{W}_1 $ is a short exact sequence (of complexes)  
\begin{equation}
0 \to \mathsf{W}_1 \to \mathsf{V} \to \mathsf{W}_2 \to 0,
\label{extensionsofhiggs0}
\end{equation} 
where $\mathsf{V} = (V \overset{\Phi}{\to} V\otimes K )$ is a Higgs bundle and the maps are homomorphisms of Higgs bundles. In particular, these can be seen as short exact sequences in the abelian category $\mathsf{Higgs}(\Sigma)$ of Higgs sheaves. We will sometimes denote such an extension by  
\begin{equation}
0 \to (W_1, \Phi_1) \to (V, \Phi) \to (W_2, \Phi_2) \to 0
\label{extensionsofhiggs}
\end{equation}
and say that $(V,\Phi)$ is an extension of $ (W_2, \Phi_2)$ by $(W_1, \Phi_1)$. This means, of course, that the vector bundle $V$ is an extension of $W_2$ by $W_1$ and $\Phi$ is a Higgs field on $V$ which projects to $\Phi_2$ and satisfies $\Phi|_{W_1} = \Phi_1$. In particular, $(W_1, \Phi_1)$ is a $\Phi$-invariant subbundle of $(V,\Phi)$. The split extension $\mathsf{W}_1 \oplus \mathsf{W}_2 = (W_1 \oplus W_2 \overset{\Phi_1 \oplus \Phi_2}{\longrightarrow} (W_1 \oplus W_2)\otimes K )$ will be called the \textbf{trivial extension}.

\begin{defin} Two extensions of $\mathsf{W}_2$ by $\mathsf{W}_1$ are \textbf{strongly equivalent} if we have an isomorphism of exact sequences
\[\xymatrix@M=0.08in{
0 \ar[r] & (W_1, \Phi_1) \ar[r] \ar[d]^{\Id} & (V,\Phi) \ar[r] \ar[d]^{\cong} & (W_2,\Phi_2) \ar[r] \ar[d]^{\Id} & 0\\    
0 \ar[r] & (W_1, \Phi_1) \ar[r] & (V^\prime,\Phi^\prime) \ar[r] & (W_2,\Phi_2) \ar[r] & 0,}\] 
where the vertical map in the middle is an isomorphism of Higgs bundles. If we allow $\Id_{W_i}$, $i=1,2$, to be more general automorphisms of Higgs bundles, we say that the extensions are \textbf{weakly equivalent} (i.e., two extensions are weakly equivalent if they are isomorphic short exact sequences in $\mathsf{Higgs}(\Sigma)$).      
\end{defin}
Extensions of Higgs bundles, modulo equivalence, are parametrized by the first hypercohomology group of a certain complex of sheaves. To see this, recall that given two Higgs bundles $\mathsf{V}_1 = (V_1 \overset{\Phi_1}{\to} V_1\otimes K )$ and $\mathsf{V}_2 = (V_2 \overset{\Phi_2}{\to} V_2\otimes K )$, their tensor product $\mathsf{V}_1 \otimes \mathsf{V}_2$ is defined as the Higgs bundle whose underlying vector bundle is $V_1\otimes V_2$ and Higgs field is $\Phi_1\otimes 1 + 1 \otimes \Phi_2$. We denote by $\mathsf{O} = (\calO_\Sigma \overset{0}{\to} K)$ the neutral Higgs bundle. The dual of a Higgs bundle $\mathsf{V} = (V \overset{\Phi}{\to} V\otimes K )$ is defined as the Higgs bundle $\mathsf{V}^*$ whose Higgs field is $-\trans{\Phi} : V^* \to V^*\otimes K$, so that the natural morphisms $\mathsf{O} \to \mathsf{V}^*\otimes \mathsf{V} \to  \mathsf{O}$ are Higgs bundle morphisms. Note also that the set of homomorphisms between two Higgs bundles $\mathsf{V}_1 $ and $\mathsf{V}_2$ corresponds precisely to the zeroth hypercohomology group of $\mathsf{V}_1^*\otimes \mathsf{V}_2$ (seen as a complex of sheaves).     

Tensoring (\ref{extensionsofhiggs0}) by $\mathsf{W_2}^*$ we obtain the short exact sequence (since $W_2$ is locally-free) 
$$0 \to \mathsf{W_2}^*\otimes \mathsf{W}_1 \to \mathsf{W_2}^*\otimes \mathsf{V} \to \mathsf{W_2}^*\otimes \mathsf{W}_2 \to 0.$$      
The exact sequence above is in particular a short exact sequence of two-term complex of sheaves. 
%recall that the dual object to $\mathsf{W}_2$ in $\mathsf{Higgs}(\Sigma)$ corresponds to the Serre dual complex (i.e. the dual complex of $\mathsf{W}_2$ twisted by $K$, where the Higgs field is $-\trans{\Phi_2} : W_2^* \to    W_2^* \otimes K$.      
%of the complex $\mathsf{W}_2$ twisted by the canonical bundle is $\mathsf{W}_2^*\otimes K = (W_2^* \to    W_2^* \otimes K)$, where the map is given by the associated Higgs field $-\trans{\Phi_2}$, so, by applying the functor $(\mathsf{W}_2^*\otimes K) \otimes_{\calO_\Sigma} \-- $ to (\ref{extensionsofhiggs0}), we obtain a short exact sequence %$$0 \to \Hom (W_2, W_1)^\bullet \to \Hom (W_2, V)^\bullet \to \End(W_2)^\bullet \to 0$$ of (two-term) complexes of sheaves on $\Sigma$. These complexes are 
%\[\xymatrix@M=0.08in{
% 0\ar[d]   & 0\ar[d] & 0\ar[d]   \\
%\Hom (W_2, W_1)^\bullet: \ar[d] & \calO(\Hom (W_2, W_1)) \ar[r]^-{\phi_{12}} \ar[d] & \calO(\Hom (W_2, W_1) \otimes K) \ar[d]  \\
%  \Hom (W_2, V)^\bullet : \ar[d]  & \calO(\Hom (W_2, V)) \ar[r]^-{\tilde{\phi}_{12}} \ar[d] & \calO(\Hom (W_2, V)\otimes K) \ar[d]   \\
% \End(W_2)^\bullet : \ar[d]  & \calO(\End(W_2)) \ar[r]^-{[-, \Phi_2]} \ar[d]  & \calO(\End(W_2)\otimes K) \ar[d]   \\
%0  & 0   & 0     }\] 
%of (two-term) complexes of sheaves on $\Sigma$, where 
%\begin{align*}
%\phi_{12} : f & \mapsto f  \circ  \Phi_2 - \Phi_1 \circ f, \\
%\tilde{\phi}_{12} : g & \mapsto  g \circ \Phi_2 - \Phi \circ g, \\
%[-, \Phi_2] : h & \mapsto h \circ \Phi_2 - \Phi_2 \circ h,
%\end{align*}
%for local sections $f,g$ and $h$ of $\Hom(W_2, W_1)$, $\Hom(W_2, V)$ and %$\Hom(W_2, W_2)$, respectively.
Considering the corresponding long exact sequence in hypercohomology (for more details, see Appendix \ref{AppendixB}) we get the connecting homomorphism  
\begin{equation}
\K^0 (\Sigma,\mathsf{W}_2^*\otimes \mathsf{W}_2) \to \K^1 (\Sigma, \mathsf{W}_2^*\otimes \mathsf{W}_1).
\label{delta}
\end{equation} 
Note that 
$$ \Id_{W_2} \in \K^0 (\Sigma, \mathsf{W}_2^*\otimes \mathsf{W}_2) \cong \ker (H^0(\Sigma, \End(W_2)) \to H^0(\Sigma, \End(W_2) \otimes K)).$$ 
Then, we define $\delta (V,\Phi)$, or $\delta (\mathsf{V})$, as the element in $\K^1 (\Sigma, \mathsf{W}_2^*\otimes \mathsf{W}_1 )$ given by the image of the identity endomorphism of $W_2$ via the map (\ref{delta}).  

Adapting Atiyah's argument \cite{aticonn} we can give a more precise description of $\delta (V,\Phi)$. One can always find an open covering $\calU = \{ U_i \}$ of $\Sigma$ such that the restriction to $U_i$ of the underlying extension of vector bundles
$0 \to W_1 \to V \overset{p}{\to} W_2 \to 0$
splits. Let $h_i : W_{2,i} \to V_i$ be the local splittings, where the subscript $i$ denotes restriction to $U_i$. Thus, $\{ h_i \}$ is a $0$-cochain of $\calO (\Hom(W_2,V))$ which lifts the $0$-cocycle $\Id_{W_2}$ of $\calO(\End(W_2))$ and $ \{ h_j - h_i \}$ is a representative for the image of $\Id_{W_2}$ under the map 
$$H^0(\Sigma , \End (W_2)) \to H^1(\Sigma, \Hom (W_2,W_1)),$$ 
which corresponds to the extension class of $V$. Applying the Higgs field of the Higgs bundle $\mathsf{W}_2^*\otimes \mathsf{V}$ to $h_i$ gives 
$$\psi_i = \Phi_i\circ h_i - h_i \circ \Phi_{2,i}$$ 
and this projects to zero under $p_i$. Thus, $\{ \psi_i \} \in C^0 (\mathcal{U}, \Hom(W_2, W_1) \otimes K) $ (since $p_i\circ \Phi_i\circ h_i = \Phi_{2,i}$) and 
$$(\{ h_j - h_i\}, \{\psi_i \}) \in C^1(\calU , \Hom (W_2,W_1)) \oplus C^0(\calU, \Hom (W_2,W_1) \otimes K)$$
is a representative for $\delta (V,\Phi) \in \K^1 (\Sigma, \mathsf{W}_2^*\otimes \mathsf{W}_1)$. Indeed, the hypercohomology groups of the complex $\mathsf{W}_2^*\otimes \mathsf{W}_1$ are calculated by the cohomology of the complex $(C^\bullet , d)$ 
\begin{eqnarray}
C^0  & = & C^{0, 0} = C^0 (\mathcal{U}, \Hom(W_2, W_1)) \nonumber \\
C^1 & = & C^{1, 0} \oplus  C^{0, 1}= C^1 (\mathcal{U}, \Hom(W_2, W_1)) \oplus C^0 (\mathcal{U}, \Hom(W_2, W_1) \otimes K) \nonumber \\
C^2 & = & C^{1, 1} = C^1 (\mathcal{U}, \Hom(W_2, W_1) \otimes K) \nonumber
\end{eqnarray}
whose differential is a combination between the \v{C}ech differential $\delta$ and the Higgs field $\phi_{21}$ of $\mathsf{W}_2^*\otimes \mathsf{W}_1$:
$$d^n \coloneqq \sum_{p+q = n}^{} \  \delta + (-1)^{q} \phi_{21} : C^n \to C^{n+1}.$$     
\begin{rmk} If we had considered local splittings $h_i : \WW_{2,i} \to \VV_i$ of the Higgs bundles, i.e., local splittings $h_i : W_{2,i} \to V_i$ such that the diagram 
\begin{equation}\label{cechdiag} 
\begin{gathered} 
\xymatrix{
W_{2,i} \ar[r]^{h_i} \ar[d]^{\Phi_{2,i}} & V_i  \ar[d]^{\Phi_i} \\    
(W_{2}\otimes K)_i \ar[r]^{h_i \otimes 1} & (V\otimes K)_i }
\end{gathered}
\end{equation}   
commutes, then $(\{ h_j - h_i \}, 0)$ would be a representative for $\delta(V,\Phi)$. In particular, the trivial extension corresponds to $0 \in \K^1 (\Sigma, \mathsf{W}_2^*\otimes \mathsf{W}_1)$. 
\end{rmk}
Let $\mathsf{V} = (V \overset{\Phi}{\to} V\otimes K )$ and $\mathsf{V}^\prime = (V^\prime \overset{\Phi^\prime}{\to} V^\prime \otimes K )$ be two weakly equivalent extensions of $\WW_2 $ by $\WW_1 $. In our applications, $\WW_1 $ and $\WW_2 $ are always stable Higgs bundles. In particular, their only automorphisms are non-zero scalars (see e.g. \cite[Thm 5.1.2]{hausel2001geometry}). In this case, we have 
\[\xymatrix@M=0.08in{
0 \ar[r] & \WW_1 \ar[r] \ar[d]^{\lambda_1} & \VV \ar[r] \ar[d]^{\cong} & \WW_2 \ar[r] \ar[d]^{\lambda_2} & 0\\    
0 \ar[r] & \WW_1 \ar[r] & \VV^\prime \ar[r] & \WW_2 \ar[r] & 0,}\] 
for some scalars $\lambda_1$, $\lambda_2 \in \C^\times$. Thus, $\lambda_2 \delta (V^\prime, \Phi^\prime) = \lambda_1\delta (V,\Phi)$. In particular, two strongly equivalent extensions define the same element in $\K^1 (\Sigma, \mathsf{W}_2^*\otimes \mathsf{W}_1)$, whereas weakly equivalent extensions define only the same class in $\mathbb{P} (\K^1 (\Sigma, \mathsf{W}_2^*\otimes \mathsf{W}_1))$.  
 
Conversely, given an element $\delta \in \K^1 (\Sigma, \mathsf{W}_2^*\otimes \mathsf{W}_1)$, its image under the map $\K^1 (\Sigma, \mathsf{W}_2^*\otimes \mathsf{W}_1) \to H^1(\Sigma , W_2^*\otimes W_1)$ (for more details, see (\ref{les1}) in the end of the section) defines an extension 
$$0 \to W_1 \to V \overset{p}{\to} W_2 \to 0$$
of $W_2$ by $W_1$. We may take an open covering $\calU = \{ U_i \}$ of $\Sigma$ such that $W_{1,i} \oplus W_{2,i}$ is isomorphic to $V_i$ via $(w_1, w_2) \mapsto w_1 + h_i(w_2)$, for holomorphic maps $h_i : W_{2,i}\to V_i$, so that $\{ h_j - h_i\}$ is a representative for $V$ in $H^1(\Sigma , W_2^*\otimes W_1)$. Then $\delta$ is represented by the cocycle $(\{h_j-h_i\}, \{\psi_i\})$, for some $\{\psi_i\} \in C^0(\calU , \Hom (W_2, W_1)\otimes K)$. We may then define $$\Phi_i  = \Phi_{1,i}\circ \Id - h_i\circ p_i + \psi_i\circ p_i + h_i\circ \Phi_{2,i}\circ p_i $$ 
and this is a Higgs field on $V$, which gives the extension of Higgs bundles. It is straightforward to check that by choosing another representative of $\delta$ we obtain a weakly equivalent extension. Thus we have proven the well-known\footnote{See e.g. \cite[Section 3]{thaddeus2000variation}.} result below.  
\begin{prop} The set of extensions of a Higgs bundle $(W_1, \Phi_1)$ by a Higgs bundle $(W_2, \Phi_2)$, modulo strong equivalence, is in bijection with $\mathbb{H}^1(\Sigma, \mathsf{W_2}^*\otimes \mathsf{W}_1)$. Moreover, if $(W_1, \Phi_1)$ and $(W_2, \Phi_2)$ are stable Higgs bundles, weak equivalence classes of such extensions are parametrized by $\mathbb{P} (\K^1 (\Sigma, \mathsf{W}_2^*\otimes \mathsf{W}_1))$. 
\end{prop}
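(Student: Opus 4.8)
The plan is to establish the bijection in two directions, essentially by bookkeeping the \v{C}ech-hypercohomology description already set up in the text, and then to pass to the projectivization in the stable case. First I would recall that the two-term complex $\mathsf{W}_2^*\otimes\mathsf{W}_1 = (\Hom(W_2,W_1)\xrightarrow{\phi_{21}}\Hom(W_2,W_1)\otimes K)$ has hypercohomology computed by the total complex $(C^\bullet,d)$ displayed above, with $d^n = \delta + (-1)^q\phi_{21}$. So a class in $\mathbb{H}^1(\Sigma,\mathsf{W}_2^*\otimes\mathsf{W}_1)$ is represented by a pair $(\{g_{ij}\},\{\psi_i\})\in C^{1,0}\oplus C^{0,1}$ with $\delta g = 0$ and $\delta\psi = \phi_{21}(g)$ up to sign, modulo coboundaries $(\delta f, \phi_{21}(f))$. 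The assignment $(V,\Phi)\mapsto\delta(V,\Phi)$ was already constructed in the text: choosing local vector-bundle splittings $h_i$ of $0\to W_1\to V\xrightarrow{p} W_2\to 0$, one gets the representative $(\{h_j-h_i\},\{\psi_i\})$ with $\psi_i = \Phi_i\circ h_i - h_i\circ\Phi_{2,i}$. I would first check this is well-defined: a different choice of local splittings $h_i' = h_i + \eta_i$ with $\eta_i\in C^0(\mathcal U,\Hom(W_2,W_1))$ changes the representative by exactly the coboundary $d^0(\{\eta_i\}) = (\{\eta_j-\eta_i\},\{\phi_{21}(\eta_i)\})$ (unwinding that $\phi_{21}(\eta_i)$ is precisely $\Phi_{1,i}\circ\eta_i - \eta_i\circ\Phi_{2,i}$), so the hypercohomology class is independent of the choices. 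Strongly equivalent extensions give the same splittings up to this ambiguity (an isomorphism $V\cong V'$ fixing $W_1$ and $W_2$ transports splittings to splittings), hence the same class.

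Next I would construct the inverse. Given a class represented by $(\{g_{ij}\},\{\psi_i\})$ as above, its image under the natural map $\mathbb{H}^1(\Sigma,\mathsf{W}_2^*\otimes\mathsf{W}_1)\to H^1(\Sigma,W_2^*\otimes W_1)$ (the map (\ref{les1}) referenced in the excerpt) is the cocycle $\{g_{ij}\}$, which defines a vector bundle extension $0\to W_1\to V\xrightarrow{p} W_2\to 0$ with transition data glued via $h_i$ on $U_i$ satisfying $h_j - h_i = g_{ij}$. Then I would define, on each $U_i$, the endomorphism
$$\Phi_i = \Phi_{1,i}\circ(\Id - h_i\circ p_i) + \psi_i\circ p_i + h_i\circ\Phi_{2,i}\circ p_i,$$
as in the text. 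I would verify three things: (i) these glue to a global $\Phi\in H^0(\Sigma,\End V\otimes K)$ — this uses the cocycle condition $\delta\psi = \pm\phi_{21}(g)$ on the overlaps; (ii) $\Phi$ preserves $W_1$ and restricts to $\Phi_1$ there, and projects to $\Phi_2$ on $W_2$ — immediate from the formula since $p_i$ kills $W_1$ and $\Id-h_i p_i$ is the projection onto $W_1$; (iii) $\Phi$ is genuinely a Higgs field, i.e.\ $\Phi\wedge\Phi = 0$, which on a curve is automatic since $K\otimes K$-valued $2$-forms vanish — so (iii) is vacuous. Then I would show that replacing $(\{g_{ij}\},\{\psi_i\})$ by a cohomologous cocycle yields a strongly equivalent extension: a coboundary shift $g\mapsto g + \delta f$, $\psi\mapsto\psi + \phi_{21}(f)$ corresponds to conjugating $V$ by the automorphism $\Id + f\circ p$, which fixes $W_1$ and $W_2$; and one checks this automorphism intertwines the two Higgs fields. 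This shows the two constructions are mutually inverse, establishing the bijection with $\mathbb{H}^1(\Sigma,\mathsf{W}_2^*\otimes\mathsf{W}_1)$.

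For the second assertion, I would invoke that when $(W_1,\Phi_1)$ and $(W_2,\Phi_2)$ are stable, their only automorphisms as Higgs bundles are nonzero scalars (as cited from \cite[Thm 5.1.2]{hausel2001geometry}). The excerpt already observes that a weak equivalence between extensions $\mathsf{V},\mathsf{V}'$ fits into a diagram with $\lambda_1\in\Aut(\WW_1) = \C^\times$ on the left and $\lambda_2\in\Aut(\WW_2) = \C^\times$ on the right, forcing $\lambda_2\,\delta(\mathsf{V}') = \lambda_1\,\delta(\mathsf{V})$, so $\delta(\mathsf{V})$ and $\delta(\mathsf{V}')$ differ by the scalar $\lambda_1/\lambda_2$ and hence define the same point of $\mathbb{P}(\mathbb{H}^1(\Sigma,\mathsf{W}_2^*\otimes\mathsf{W}_1))$. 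Conversely, given two classes differing by a scalar $c\in\C^\times$, I would exhibit an explicit weak equivalence: rescaling the inclusion $W_1\hookrightarrow V$ by $c$ (equivalently, acting by the automorphism that is $c$ on $W_1$ and $\Id$ on a chosen local complement) carries one extension to an extension with class scaled by $c$, and this map is an isomorphism of short exact sequences in $\mathsf{Higgs}(\Sigma)$ with $\lambda_1 = c$, $\lambda_2 = 1$. Together with the first part this gives the bijection between weak equivalence classes and $\mathbb{P}(\mathbb{H}^1(\Sigma,\mathsf{W}_2^*\otimes\mathsf{W}_1))$, with the zero class excluded as it corresponds to the trivial (split) extension.

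The routine but slightly fiddly heart of the argument is step (i) above: checking that the locally-defined $\Phi_i$ patch to a global Higgs field, i.e.\ that $\Phi_j$ and $\Phi_i$ agree after conjugating by the transition function of $V$ on $U_i\cap U_j$. This is where the precise compatibility between the \v{C}ech part $\{g_{ij}\}$ and the Higgs-twist part $\{\psi_i\}$ of the hypercohomology cocycle — namely the relation imposed by $d^1(\{g_{ij}\},\{\psi_i\}) = 0$ in $C^{1,1}$ — gets used, and it is the main place one must be careful with signs (the $(-1)^q$ in $d^n$) and with the noncommutativity of composition of the $\Hom$-bundle sections. Everything else is formal: well-definedness under change of splitting, strong versus weak equivalence, and the projectivization in the stable case all reduce to tracking coboundaries and the scalar automorphism group.
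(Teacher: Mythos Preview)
Your proposal is correct and follows essentially the same approach as the paper: the paper builds $\delta(V,\Phi)$ from local splittings exactly as you do, constructs the inverse via the same formula $\Phi_i = \Phi_{1,i}\circ(\Id - h_i\circ p_i) + \psi_i\circ p_i + h_i\circ\Phi_{2,i}\circ p_i$, and handles the stable/projectivization part via the scalar-automorphism argument you reproduce. You are more explicit than the paper about the gluing check (your step (i)) and the coboundary-vs-automorphism correspondence, which the paper dismisses as ``straightforward to check'', but the underlying argument is identical.
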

 
It is convenient to also understand this correspondence from the Dolbeault point of view. Denote by $\bar{\partial}_V$, $\bar{\partial}_1$, $\bar{\partial}_2$ and $\bar{\partial}_{\Hom(W_2, W_1)}$ (or simply $\bar{\partial}_{\Hom}$ if no confusion is possible) the Dolbeault operator that gives the holomorphic structure on $V$, $W_1$, $W_2$ and $\Hom(W_2, W_1)$, respectively. By the Dolbeault lemma the following resolutions are acyclic 
\begin{align*}
& \calO (\Hom(W_2, W_1)) \to \Omega^{0}( \Hom(W_2, W_1)) \overset{\bar{\partial}_{\Hom}}{\longrightarrow} \Omega^{0,1}( \Hom(W_2, W_1)) \to 0,\\
&\calO (\Hom(W_2, W_1) \otimes K) \to \Omega^{1,0}( \Hom(W_2, W_1)) \overset{\bar{\partial}_{\Hom}}{\longrightarrow} \Omega^{1,1}( \Hom(W_2, W_1)) \to 0.
\end{align*}
Moreover we can define the commutative diagram
\[\xymatrix@M=0.08in{
\calO (\Hom(W_2, W_1)) \ar[r] \ar[d]^{\phi_{21}} & \Omega^{0}(\Hom(W_2, W_1)) \ar[r]^{\bar{\partial}_{\Hom}} \ar[d]^{\phi_{21}} & \Omega^{0,1}(\Hom(W_2, W_1)) \ar[r]\ar[d]^{\phi_{21}} & 0\\
\calO (\Hom(W_2, W_1) \otimes K)  \ar[r] & \Omega^{1,0}(\Hom(W_2, W_1)) \ar[r]^{\bar{\partial}_{\Hom}} & \Omega^{1,1}(\Hom(W_2, W_1)) \ar[r] & 0,}\] 
where we extend $\phi_{21}$ in the natural way
%\footnote{More precisely, let $\alpha$ be a local (0,1)-forms with values in $\Hom(W_2, W_1)$. Locally, we can write $\Phi_1 = b_1 \otimes s_1$, $\Phi_2 = b_2 \otimes s_2$ and $\alpha = a \otimes f$, where $b_1$ and $b_2$ are local (1,0)-forms on $\Sigma$, $a$ is a local (0,1)-form on $\Sigma$, and $s_1, s_2$ and $f$ are local sections of $\End (W_1)$, $\End (W_2)$ and $\Hom (W_2, W_1)$, respectively (or linear combinations of such elements). Then, locally $\phi_{21} (\alpha) = (a \wedge b_2)\otimes f \circ s_2 - (a \wedge b_1)\otimes s_1 \circ f = \alpha \wedge \Phi_2 + \Phi_1 \wedge \alpha $.}
to a (0,1)-form $\alpha$ with value in $\Hom(W_2, W_1)$. 
%, i.e. $\phi_{21} (\alpha) = \alpha \wedge \Phi_2 + \Phi_1 \wedge \alpha $, where the wedge is the usual wedge product on forms and composition on local sections.   

The hypercohomology groups of the complex $\mathsf{W_2}^*\otimes \mathsf{W}_1$ can be computed via the cohomology of the associated simple complex $(C^\bullet, d)$   
\begin{eqnarray}
C^n(X) & \coloneqq & \bigoplus_{p+q = n} C^{p,q} \nonumber \\
d^n & \coloneqq & \sum_{p+q = n}^{} \  \bar{\partial}_{\Hom} + (-1)^{q} \phi_{21} \nonumber 
\end{eqnarray} 
where $C^{p,q} \coloneqq \Omega^{p,q}(\Sigma, \Hom(W_2,W_1))$.
In particular, given $\beta \in \Omega^{0,1}(\Sigma, \Hom(W_2,W_1))$ and $\psi \in \Omega^{1,0}(\Sigma, \Hom(W_2,W_1))$ satisfying 
$$\bar{\partial}_{\Hom} (\psi) + \phi_{21} (\beta) = 0, $$
the pair $(\beta, \psi)$ is a representative for a class in the first hypercohomology group of $\mathsf{W_2}^*\otimes \mathsf{W}_1$. Moreover, any other representative in the class of $(\beta, \psi)$ is of the form $(\beta, \psi) = (\beta + \bar{\partial}_{\Hom} \theta, \psi + \Phi_{21}(\theta)) \in \mathbb{H}^1(\Sigma,\mathsf{W_2}^*\otimes \mathsf{W}_1)$, where $\theta \in \Omega^0(\Sigma, \Hom(W_1,W_1))$.

In the smooth category, every short exact sequence splits (due to the existence of a  partition of unity) and so choose a smooth splitting
$$V \cong W_1 \oplus W_2 .$$
Note that this is only an isomorphism between the underlying smooth vector bundles and with respect to this $C^\infty$-splitting we can write
\[ \bar{\partial}_V = \left( \begin{array}{cc}
\bar{\partial}_1 & \beta \\
0 & \bar{\partial}_2
\end{array} \right)
\ \ \ \ \text{and} \ \ \ \  
\Phi = 
\left( \begin{array}{cc}
\Phi_1 & \psi \\
0 & \Phi_2
\end{array} \right),
\] 
where $\beta \in \Omega^{0,1}(\Sigma, \Hom(W_2, W_1))$ (second fundamental form) and $\psi \in \Omega^{1,0}(\Sigma, \Hom(W_2, W_1))$.

\begin{rmk} Any other smooth splitting differs from this one by a smooth section of $\Hom (W_2, W_1)$. Indeed, the short exact sequence 
$$0 \to W_1 \overset{i}{\to} V \overset{p}{\to} W_2 \to 0$$ 
splits if and only if there exists a map $V \to W_1$, such that composition with $i$ equals $\Id_{W_1}$. Let $f_k : V \to W_1$, $k = 1, 2$, be the maps corresponding to two different splittings. Given $w_2 \in W_2$, define $f (w_2) \coloneqq (f_1 - f_2) (v)$, for some $v \in V$ such that $p(v) = w_2$ ($p$ is surjective). This is clearly well-defined because any other $v^\prime \in V$ such that $p(v^\prime) = w_2$ satisfies $v - v^\prime \in \ker p = \im i$, but $(f_1 - f_2) \circ i = 0$.
\end{rmk}     

Now, the fact that $\Phi$ is holomorphic is translated into the following compatibility condition
\begin{equation}
\bar{\partial}_{\Hom} (\psi) + \phi_{21}(\beta) = 0.
\label{eqnhyper}
\end{equation}
Note that we chose a $C^\infty$-splitting $V = W_1 \oplus W_2$, associated to some $f_1 : V \to W_1$. Consider now another splitting $V^\prime = W_1 \oplus W_2$ given by $f_2 : V \to W_1$ and let $f \coloneqq f_1 - f_2 \in \Omega^0(\Sigma, \Hom (W_2,W_1))$. Here, we use $V^\prime$ to distinguish the second identification $V = W_1 \oplus W_2$ via $f_2$ with the first one given by $f_1$. With respect to this new splitting we write $\bar{\partial}_{V^\prime} = \left( \begin{matrix} \bar{\partial_1}& \beta^\prime\\ 0&\bar{\partial_2} \end{matrix} \right)$ and $\Phi^\prime = \left( \begin{matrix} \Phi_1& \psi^\prime\\ 0&\Phi_2 \end{matrix} \right)$. An isomorphism $F : V \to V^\prime$ is given by $$(a,b) \mapsto (a - f(a), b)$$ and we must have $(F \otimes \Id_K) \circ \Phi = \Phi^\prime \circ F$, which gives $\psi^\prime = \psi + \phi_{21} (f)$. Similarly, from the Dolbeault operators, we obtain $\beta^\prime = \beta + \bar{\partial}_{\Hom} (f)$.  
This means that $(\beta, \psi) \in \mathbb{H}^1(\Sigma,\mathsf{W_2}^*\otimes \mathsf{W}_1)$.

Conversely, given $(\beta, \psi) \in \mathbb{H}^1(\Sigma,\mathsf{W_2}^*\otimes \mathsf{W}_1)$, the element $\beta \in \Omega^{0,1}(\Sigma, \Hom (W_2, W_1)) \cong H^1(\Sigma, \Hom (W_2, W_1))$ gives an extension of bundles
$$0 \to W_1 \to V \to W_2 \to 0.$$
Choose a smooth splitting $V= W_1 \oplus W_2$ with 
$$\bar{\partial}_V = \left( \begin{matrix} \bar{\partial_1}& \beta\\ 0&\bar{\partial_2} \end{matrix} \right)$$
and define 
$$\Phi \coloneqq \left( \begin{matrix} \Phi_1& \psi\\ 0&\Phi_2 \end{matrix} \right).$$
We finish this section with the following remarks. Both spectral sequences associated to the double complex that calculates hypercohomology of the complex of sheaves $\mathsf{W_2}^*\otimes \mathsf{W}_1$ satisfy $E_2^{p,q} = 0$, unless $p,q \in \{0,1 \}$. Therefore, as discussed in Appendix \ref{AppendixB}, we have the following short exact sequence
$$0 \rightarrow E_2^{1,0} \rightarrow \mathbb{H}^1(\Sigma, \mathsf{W_2}^*\otimes \mathsf{W}_1) \rightarrow E_2^{0,1} \rightarrow 0.$$
The first spectral sequence gives
\begin{align*}
0 & \rightarrow \coker (H^0(\Sigma, \Hom (W_2, W_1) \rightarrow H^0(\Sigma, \Hom (W_2, W_1) \otimes K))) \rightarrow \mathbb{H}^1(\Sigma, \mathsf{W_2}^*\otimes \mathsf{W}_1) \\
& \rightarrow \ker (H^1(\Sigma, \Hom (W_2, W_1) \rightarrow H^1(\Sigma, \Hom (W_2, W_1) \otimes K))) \rightarrow 0.
\end{align*}
This can be written as 
\begin{align}
0 & \rightarrow \mathbb{H}^0(\Sigma,\mathsf{W_2}^*\otimes \mathsf{W}_1) \rightarrow H^0(\Sigma, \Hom (W_2, W_1) \rightarrow H^0(\Sigma, \Hom (W_2, W_1) \otimes K)) \nonumber \\
& \rightarrow \mathbb{H}^1(\Sigma,\mathsf{W_2}^*\otimes \mathsf{W}_1) \rightarrow H^1(\Sigma, \Hom (W_2, W_1) \rightarrow H^1(\Sigma, \Hom (W_2, W_1) \otimes K)) \nonumber \\
& \rightarrow \mathbb{H}^2(\Sigma,\mathsf{W_2}^*\otimes \mathsf{W}_1) \rightarrow 0. 
\label{les1}
\end{align}

%\begin{rmk} Notice that the exact sequence above is precisely the one obtained by considering the long exact sequence in hypercohomology associated to the natural short exact sequence $0 \to \mathcal{G}^\bullet \to ^\prime\mathcal{G}^\bullet \to ^{\prime   \prime}\mathcal{G}^\bullet \to 0 $ of complexes of sheaves, where 
%\begin{align*}
%\mathcal{G}^\bullet & : \ 0 \to \Hom (W_2, W_1) \otimes K, \\
%^\prime\mathcal{G}^\bullet  &: \Hom(W_2, W_1)^\bullet, \\
%^{\prime   \prime}\mathcal{G}^\bullet & : \Hom(W_2,W_1) \to 0. 
%\end{align*}
%\end{rmk}

Now, the second spectral sequence, which will be important for us further on, gives
\begin{equation}
0 \to H^1(\Sigma, \ker \phi_{21}) \to \mathbb{H}^1 \to H^0(\Sigma, \coker \phi_{21}) \to 0.
\label{les2}
\end{equation}
%The maps are the obvious ones. For $H^1(\Sigma, \ker \phi_{21}) \to \mathbb{H}^1$, in Dolbeault terms, one considers a representative $\eta \in \Omega^{0,1}(\Sigma, \ker \phi_{21})$ of $[\eta] \in H^{0,1}(\Sigma, \ker \phi_{21}) \cong H^1(\Sigma, \ker \phi_{21})$ and the map $[\eta] \mapsto (\eta, 0) \in \mathbb{H}^1(\Hom(W_2, W_1)^\bullet)$ is well-defined. Now, given $(\beta, \psi) \in \mathbb{H}^1(\Hom(W_2, W_1)^\bullet)$, one can project the section $\psi$ to $\coker \phi_{21}$ and from (\ref{eqnhyper}) this section is holomorphic. The description of these maps in \v{C}ech terms is completely analogous.  
%\begin{rmk} In the cases we will consider in this text, $ \ker \phi_{21}$ and $ \coker \phi_{21}$ will always be vector bundles.
%\end{rmk}  

\section{The orthogonal and symplectic cases}

Let $(V,\Phi)$ be a Higgs bundle of rank $2n$ on $\Sigma$ and $\alpha$ a bilinear non-degenerate form on $V$ with respect to which the Higgs field $\Phi$ is skew-symmetric. Of course, when $\alpha$ is symmetric (respectively, skew-symmetric), $((V,\alpha),\Phi)$ is an orthogonal (respectively, symplectic) Higgs bundle.  

For any subbundle $W \subset V$, the orthogonal complement $W^{\perp_\alpha}$ of $W$ with respect to $\alpha$ is defined as the kernel of the map 
\begin{align*}
V &\to W^* \\
v & \mapsto \alpha (v, \cdot)|_W,
\end{align*}
so we have a short exact sequence
$$0 \to W^{\perp_\alpha} \to V \to W^* \to 0.$$
If $W$ is a $\Phi$-invariant maximally isotropic (i.e., $W^{\perp_\alpha} = W$) subbundle, we obtain an exact sequence of Higgs bundles of the form\footnote{Note that the Higgs field of $W^*$, say $\phi^\prime$ for now, is obtained via the map $V\to W^*$. More precisely, given $\delta \in W^*$, $\delta = \alpha (v, \cdot)$, for some (non-unique) $v \in V$, we obtain $\phi^\prime (\delta) = \alpha (\Phi v, {}\cdot{})=- \alpha (v, \phi {}\cdot{}) = - \delta (\phi {}\cdot {}) = -\tp{\phi}(\delta)$. So, as explained in the last section, this is the natural Higgs field in $W^*$ associated to the Higgs bundle $(W,\phi)$.}
$$0 \to (W, \phi) \to (V, \Phi) \to (W^*, -\trans{\phi}) \to 0,$$
where $\phi \coloneqq \Phi|_W$.
%
%\begin{rmk} Recall that the Higgs field of $W^*$, say $\phi^\prime$ for now, was obtained via the map $V\to W^*$. More precisely, given $\delta \in W^*$, $\delta = \alpha (v, \cdot)$, for some (non-unique) $v \in V$, we obtain $\phi^\prime (\delta) = \alpha (\Phi v, \cdot)=-\alpha (v, \phi \cdot) = - \delta (\phi \cdot) = -\phi^t(\delta)$. 
%\end{rmk}
%
Choose a smooth splitting $V \cong W \oplus W^*$ so that $\alpha$ becomes
\begin{align*}
q = &\left( \begin{matrix} 0& 1\\ 1&0 \end{matrix} \right),
\end{align*}  
if the Higgs bundle is orthogonal and
\begin{align*}
\omega = &\left( \begin{matrix} 0& 1\\ -1&0 \end{matrix} \right),
\end{align*} 
if the Higgs bundle is symplectic. Thus, with respect to this smooth splitting,
\[ \bar{\partial}_V = \left( \begin{array}{cc}
\bar{\partial}_W & \beta \\
0 & -\trans{\bar{\partial}_W}
\end{array} \right)
\ \ \ \ \text{and} \ \ \ \  
\Phi = 
\left( \begin{array}{cc}
\phi & \psi \\
0 & -\trans{\phi}
\end{array} \right).
\] 
Since $\Phi$ is skew-symmetric with respect to $\alpha$ we have $\trans{\Phi}\alpha + \alpha \Phi = 0$. Thus, $\psi \in \Omega^{1,0}(\Sigma, W\otimes W)$ is an element of $\Omega^{1,0}(\Sigma, \Lambda^2 W)$ in the orthogonal case and of $\Omega^{1,0}(\Sigma, \Sym^2 W)$ in the symplectic case. The form $\alpha$ can be seen as a global holomorphic section of $\Hom (V,V^*)$, so
\begin{equation}
0 = (\bar{\partial}_{\Hom(V,V^*)}\alpha)v = -\trans{\bar{\partial}}_V (\alpha(v)) - \alpha(\bar{\partial}_V(v)),
\label{calculation}
\end{equation}
for any smooth section $v = (w,\xi)$ of $V \cong W\oplus W^*$. The condition (\ref{calculation}) is equivalent to $\beta \in \Omega^{0,1}(\Sigma, W\otimes W)$ being skew-symmetric (respectively, symmetric) when $\alpha$ is symmetric (respectively, skew-symmetric). This means that $(\psi, \beta)$ is a representative for a class in the first hypercohomology group of a subcomplex of the complex of sheaves $\mathsf{W}\otimes \mathsf{W}$. In the orthogonal case the two-term complex of sheaves is the Higgs bundle $\Lambda^2\mathsf{W}$, which corresponds to the complex 
\begin{align*}
\hat{\phi} : \calO(\Lambda^2 W) & \to \calO(\Lambda^2 W \otimes K )\\
w_1\wedge w_2 & \mapsto \phi (w_1)\wedge w_2 + w_1\wedge \phi (w_2)
\end{align*}  
while in the symplectic case the complex is 
\begin{align*}
\hat{\phi} : \calO (\Sym^2 W) & \to \calO (\Sym^2 W \otimes K) \\
w_1\odot w_2 & \mapsto  \phi (w_1)\odot w_2 + w_1\odot \phi (w_2),
\end{align*}
corresponding to the Higgs bundle $\Sym^2\mathsf{W}$. 

The class $\delta (V,\Phi) $ is thus an element of $\K^1 (\Sigma, \Lambda^2\mathsf{W})$ in the orthogonal case and of $\K^1 (\Sigma, \Sym^2\mathsf{W})$ in the symplectic case.  

\begin{prop}\label{themext} An extension $(V, \Phi)$ of $(W^*, -\trans{\phi})$ by $(W, \phi)$ has an orthogonal (resp., symplectic) structure turning it into an orthogonal (resp., symplectic) Higgs bundle and with respect to which $W$ is a maximally isotropic subbundle if and only if it is strongly equivalent to an extension of Higgs bundles whose hypercohomology class belongs to $\K^1 (\Sigma, \Lambda^2\mathsf{W})$ (resp., $\K^1 (\Sigma, \Sym^2\mathsf{W})$). 
\end{prop}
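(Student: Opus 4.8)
The plan is to reduce the statement entirely to the Dolbeault bookkeeping carried out in the paragraphs immediately preceding it, where holomorphicity of a constant bilinear form $\alpha$ on $W\oplus W^*$ was identified, via (\ref{calculation}), with a symmetry condition on the second fundamental form $\beta$, and where $\alpha$-skew-symmetry of the Higgs field will likewise correspond to a symmetry condition on the off-diagonal component $\psi$. The one preparatory remark concerns the precise meaning of ``the hypercohomology class belongs to $\K^1(\Sigma,\Lambda^2\mathsf{W})$''. Here $\mathsf{W}_2^*\otimes\mathsf{W}_1=\mathsf{W}\otimes\mathsf{W}$, and since we work over $\C$ the Higgs field $\phi\otimes 1+1\otimes\phi$ acts as a derivation and hence preserves the decomposition $W\otimes W=\Lambda^2 W\oplus\Sym^2 W$; therefore $\mathsf{W}\otimes\mathsf{W}=\Lambda^2\mathsf{W}\oplus\Sym^2\mathsf{W}$ as complexes of sheaves and $\K^1(\Sigma,\mathsf{W}\otimes\mathsf{W})=\K^1(\Sigma,\Lambda^2\mathsf{W})\oplus\K^1(\Sigma,\Sym^2\mathsf{W})$, so the condition on $\delta(\mathsf{V})$ is the vanishing of its $\Sym^2$-component (resp. of its $\Lambda^2$-component). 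Since strongly equivalent extensions carry the same class $\delta(\mathsf{V})$ and admitting the structure in question is invariant under isomorphism of Higgs bundles, the ``strongly equivalent to'' in the statement is automatic, and it suffices to prove the equivalence with the condition placed on $\delta(\mathsf{V})$ itself.

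For the forward implication, suppose $(V,\Phi)$ carries a non-degenerate form $\alpha$, symmetric (resp. skew), with respect to which $\Phi$ is skew and $W$ is maximally isotropic and $\Phi$-invariant. First I would choose a smooth splitting $V\cong W\oplus W^*$ in which $\alpha$ is the standard form $q$ (resp. $\omega$) introduced above; then $\bar{\partial}_V$ and $\Phi$ take the triangular shape displayed above, the discussion around (\ref{calculation}) shows that the resulting $\beta$ lies in $\Omega^{0,1}(\Sigma,\Lambda^2 W)$ (resp. $\Omega^{0,1}(\Sigma,\Sym^2 W)$), and the identity $\trans{\Phi}\alpha+\alpha\Phi=0$ — a short $2\times 2$-block computation — forces $\psi$ into $\Omega^{1,0}(\Sigma,\Lambda^2 W)$ (resp. $\Omega^{1,0}(\Sigma,\Sym^2 W)$). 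As recalled above, $(\beta,\psi)$ is precisely a Dolbeault representative of $\delta(\mathsf{V})$, so this class is represented by a cocycle valued in $\Lambda^2 W$ (resp. $\Sym^2 W$), hence lies in $\K^1(\Sigma,\Lambda^2\mathsf{W})$ (resp. $\K^1(\Sigma,\Sym^2\mathsf{W})$).

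For the converse I treat the orthogonal case, the symplectic one being identical with $\Sym^2$ replacing $\Lambda^2$. Assuming $\delta(\mathsf{V})\in\K^1(\Sigma,\Lambda^2\mathsf{W})$, pick a Dolbeault cocycle $(\beta,\psi)$ representing it with $\beta\in\Omega^{0,1}(\Sigma,\Lambda^2 W)$ and $\psi\in\Omega^{1,0}(\Sigma,\Lambda^2 W)$ satisfying the compatibility (\ref{eqnhyper}); build an extension $\mathsf{V}_0$ by taking a smooth splitting $W\oplus W^*$ with Dolbeault operator and Higgs field in the triangular forms determined by $(\beta,\psi)$. By the parametrization of extensions proved above, $\mathsf{V}_0$ is strongly equivalent to $\mathsf{V}$. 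On $\mathsf{V}_0$ set $\alpha:=q$: it is a non-degenerate symmetric form, $W$ is maximally isotropic and $\Phi$-invariant by construction, $\alpha$ is holomorphic because $\beta$ is skew (this is exactly (\ref{calculation})), and the block computation gives $\trans{\Phi}\alpha+\alpha\Phi$ block-diagonal with blocks $0$ and $\psi+\trans{\psi}$, which vanishes since $\psi$ is skew. Thus $\mathsf{V}_0$, and hence $\mathsf{V}$ via the strong equivalence, is an orthogonal Higgs bundle with $W$ maximally isotropic, as required.

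I do not expect a genuine obstacle here: the analytic content is already contained in the preparatory computation (\ref{calculation}) and in the block identity for $\trans{\Phi}\alpha+\alpha\Phi$. The two points that need to be spelled out carefully, rather than being hard, are (i) the identification of ``$\delta(\mathsf{V})$ belongs to $\K^1(\Sigma,\Lambda^2\mathsf{W})$'' through the canonical splitting $\mathsf{W}\otimes\mathsf{W}=\Lambda^2\mathsf{W}\oplus\Sym^2\mathsf{W}$, which is what makes the ``strongly equivalent'' clause harmless; and (ii) the fact, implicit in the Dolbeault description of $\delta(\mathsf{V})$ above, that the pair $(\beta,\psi)$ read off from a smooth splitting represents $\delta(\mathsf{V})$ in $\K^1(\Sigma,\mathsf{W}\otimes\mathsf{W})$ and not merely its image in $H^1(\Sigma,W\otimes W)$.
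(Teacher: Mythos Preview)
Your proposal is correct and follows essentially the same route as the paper: the forward direction is exactly the Dolbeault computation carried out in the paragraphs preceding the proposition, and your converse is a direct, self-contained verification that the standard form $q$ (resp. $\omega$) on the model $W\oplus W^*$ built from a $\Lambda^2$- (resp. $\Sym^2$-)valued cocycle $(\beta,\psi)$ is holomorphic and makes $\Phi$ skew. The paper's own proof simply notes that the forward direction was already done and cites \cite[Criterion 2.1]{rmks} (Hitching's criterion for the case without Higgs field) together with Section~\ref{Sect2.1} for the converse, so your argument is the spelled-out version of what the paper leaves to the reference.
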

\begin{proof}
It remains only to prove the converse, but this follows directly from the considerations of Section \ref{Sect2.1} and \cite[Criterion 2.1]{rmks}, where the case when there is no Higgs fields is dealt with by Hitching.
\end{proof}

% Chapter 4 

\chapter{The group $SU^*(2m)$} % Main chapter title

\label{Chapter3} % For referencing the chapter elsewhere, use \ref{Chapter1} 

\lhead{Chapter 3. \emph{The group $SU^*(2m)$}} % This is for the header on each page - perhaps a shortened title

%----------------------------------------------------------------------------------------
Let $U$ be a complex vector space of dimension $2m$ endowed with a non-degenerate skew-form $\omega$ and $A \in \End(U)$ symmetric with respect to $\omega$. Given eigenvectors $u_i, u_j$ of $A$ with corresponding eigenvalues $c_i, c_j$, respectively, one has 
\begin{align*}
c_i \omega (u_i, u_j) & = \omega (Au_i, u_j) \\
& = \omega (u_i, Au_j) \\
& = c_j \omega (u_i, u_j).
\end{align*}   
By the non-degeneracy of $\omega$ it follows that the eigenspaces must be even-dimensional and the characteristic polynomial of $A$ is of the form $\det (x \Id - A) = p(x)^2$. In particular, as explained in Hitchin-Schaposnik \cite{hitchin2014}, the polynomial $p(x)$ may be seen as the Pfaffian polynomial of $x\Id - A$ (i.e., $p(x)$ is the polynomial defined by $p(x)\omega^m = (x\omega - \alpha)^m$, where $\alpha \in \Lambda^2U^*$ is the element corresponding to $A$ via the isomorphism $\End (U) \cong U^* \otimes U^*$ given by $\omega$). 

Let $(V,\Phi)$ be an $SU^*(2m)$-Higgs bundle (see Example \ref{su*}). The Higgs field $\Phi$ is symmetric with respect to the symplectic form $\omega$ of $V$, so the image of $(V,\Phi)$ under the $SL(2m,\C)$-Hitchin fibration 
$$h : \mathcal{M} (SL(2m, \mathbb{C})) \to \calA (SL(2m, \mathbb{C})) = \bigoplus_{i=2}^{2m}H^0(\Sigma, K^i) $$
is given by the coefficients of a polynomial of the form $p(x)^2$, where $p(x) = x^m + a_2 x^{m-2} + \ldots + a_m$, $a_i \in H^0(\Sigma, K^i)$. By abuse of notation, we will refer to such a fibre as $h^{-1}(p^2)$.  

Denote the projection of the canonical bundle by $\pi : K \to \Sigma$, its total space by $|K|$ and let $\lambda \in H^0(|K|, \pi^*K)$ be the tautological section. Note that the spectral curve corresponding to $(V,\Phi)$ is non-reduced. However, by Bertini's theorem, the reduced curve $S = \zeros (p(\lambda))  \subseteq |K|$ is generically non-singular and we assume that is the case. In particular, restricting $\pi$ to $S$ gives a ramified $m$-fold covering $$\pi : S \to \Sigma .$$
Also, by the adjunction formula, $K_S \cong \pi^*K^m$, which tells us that the genus of $S$ is 
$$g_{_S} = m^2(g-1)+1.$$
Note that from our definition of the polynomial $p(x)$, any $SU^*(2m)$-Higgs bundle $(V,\Phi)$ satisfies $p(\Phi) = 0$ and so the cokernel of the map 
$$\tilde{\pi}^*\Phi - \lambda \Id : \tilde{\pi}^*V  \to  \tilde{\pi}^* (V\otimes K)$$
is a torsion-free sheaf supported on the reduced curve $S$, where $\tilde{\pi} : X \to \Sigma$ is the spectral cover associated to the spectral curve $X = \zeros (p(\lambda)^2)$. Since $S$ is non-singular and it has miltiplicity $2$ in $X$, this sheaf is a vector bundle of rank $2$ on $S$. One then recovers the $SU^*(2m)$-Higgs bundle as $(V,\Phi) = \pi_*(E, \lambda)$, i.e., the Higgs bundle obtained by the direct image construction of $E$. According to \cite{hitchin2014} we have the following.
\begin{prop} \label{prop1} \cite{hitchin2014} Let $p(\lambda) =\lambda^m+ \pi^*a_2\lambda^{m-2}+\dots+\pi^*a_m$ be a section of the line bundle  $\pi^*K^m$ on the cotangent bundle of $\Sigma$ whose divisor is a smooth curve $S$, and let $E$ be a rank $2$ vector bundle on $S$. Then the direct image of $ \lambda :E\rightarrow E\otimes \pi^*K$ defines a semi-stable Higgs bundle on $\Sigma$ (with underlying vector bundle $\pi_*E$) for the group $SU^*(2m)$ if and only if $E$ is semi-stable with determinant line bundle $\Lambda^2E\cong \pi^*K^{m-1}$.
    \end{prop}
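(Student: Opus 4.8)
The plan is to relate stability of the $SU^*(2m)$-Higgs bundle $(\pi_*E, \lambda)$ directly to stability of $E$ as a rank-$2$ bundle on $S$, using the spectral correspondence (Theorem~\ref{bnrcorrespondence}) together with the well-known behaviour of stability under the direct-image construction. First I would settle the determinant condition: since $p(\Phi)=0$, the cokernel sheaf $\mathcal{E}\otimes\pi^*K$ of $\pi^*\Phi-\lambda\,\mathrm{Id}$ is supported on $S$, and because $S$ is smooth of multiplicity $2$ in $X=\mathrm{zeros}(p(\lambda)^2)$ it is locally free of rank $2$ there; write $E$ for this bundle on $S$, so $(V,\Phi)=\pi_*(E,\lambda)$ with $\pi:S\to\Sigma$ the induced $m$-fold cover. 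Computing $\det\pi_*E$ via the standard formula $\det\pi_*E\cong \mathrm{Nm}(\det E)\otimes(\det\pi_*\mathcal{O}_S)$ and using $\pi_*\mathcal{O}_S=\mathcal{O}_\Sigma\oplus K^{-1}\oplus\cdots\oplus K^{-(m-1)}$, one finds $\det\pi_*\mathcal{O}_S\cong K^{-m(m-1)/2}$; requiring $\det\pi_*E\cong\mathcal{O}_\Sigma$ (the $SL$-condition) forces $\mathrm{Nm}(\Lambda^2E)\cong K^{m(m-1)/2}$, and a short check (e.g.\ by looking at the unramified case or using that the pullback is injective on Picard groups here) pins this down to $\Lambda^2E\cong\pi^*K^{m-1}$, whose norm is indeed $K^{m(m-1)}\otimes$(correction from ramification)$=K^{m(m-1)/2}$ after accounting for the ramification divisor $R$ with $\mathcal{O}_S(R)\cong\pi^*K^{m-1}$ via $K_S\cong\pi^*K^m$. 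So I would phrase the determinant normalization as: the underlying $SL(2m,\C)$-structure is equivalent to $\Lambda^2E\cong\pi^*K^{m-1}$.

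Next, for the stability equivalence, the key point is a dimension/subsheaf argument on the spectral side. A Higgs subbundle $(W,\Phi|_W)\subset(\pi_*E,\lambda)$ corresponds, via the equivalence between $\pi_*\mathcal{O}_S$-modules and sheaves on $S$, to a $\lambda$-invariant (hence $\mathcal{O}_S$-submodule) subsheaf $\mathcal{F}\subset E$; conversely any subsheaf of $E$ pushes forward to a $\Phi$-invariant subsheaf of $\pi_*E$. Using the two relations recorded before Theorem~\ref{bnrcorrespondence} — namely $m\cdot\mathrm{rk}_P(\mathcal{F})=\mathrm{rk}(\pi_*\mathcal{F})$ and $\deg_P(\mathcal{F})+\mathrm{rk}_P(\mathcal{F})\chi(\mathcal{O}_S)=\deg(\pi_*\mathcal{F})+\mathrm{rk}(\pi_*\mathcal{F})\chi(\mathcal{O}_\Sigma)$ — one checks that the slope inequality $\mu(\pi_*\mathcal{F})\le\mu(\pi_*E)$ is equivalent to $\mu_P(\mathcal{F})\le\mu_P(E)$ (the genus terms cancel in the comparison because they are proportional to the ranks). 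Since $E$ has rank $2$ on the integral curve $S$, its only destabilizing subsheaves to test are line subbundles, i.e.\ rank-$1$ saturated subsheaves, and these are exactly what give $\mathrm{rk}_P=1/2$ on the spectral side; so $(\pi_*E,\lambda)$ is semistable as an $SL(2m,\C)$-Higgs bundle precisely when $E$ is semistable on $S$. Polystability matches up the same way by comparing Jordan–Hölder factors under $\pi_*$.

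Finally I would observe that semistability of $(\pi_*E,\lambda)$ as an $SL(2m,\C)$-Higgs bundle is the same as semistability as an $SU^*(2m)$-Higgs bundle: an $SU^*(2m)$-Higgs bundle is just an $SL(2m,\C)$-Higgs bundle $(V,\Phi)$ with a compatible symplectic form making $\Phi$ symmetric, and stability of $G$-Higgs bundles for $G=SU^*(2m)$ (a subgroup of $SL(2m,\C)$) is checked against parabolic reductions, which in this $\C^*$-free situation reduce to the isotropic $\Phi$-invariant subbundles; since the symplectic form on $V=\pi_*E$ is induced from the rank-$2$ structure on $E$ (the form $\langle\cdot,\cdot\rangle$ coming from $\Lambda^2E\cong\pi^*K^{m-1}$ together with the trace pairing), testing against isotropic $\Phi$-invariant subbundles is implied by testing against all $\Phi$-invariant subbundles, so the two semistability notions coincide (cf.\ \cite{hitchin2014}). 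The main obstacle I anticipate is the determinant bookkeeping: getting the ramification contribution to $\mathrm{Nm}(\Lambda^2E)$ and $\det\pi_*\mathcal{O}_S$ exactly right so that the condition lands on $\Lambda^2E\cong\pi^*K^{m-1}$ on the nose, rather than off by a $2$-torsion line bundle or a power of $K$; the stability equivalence itself is a routine consequence of the spectral correspondence once the rank/degree dictionary is in place.
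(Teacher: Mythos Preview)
The paper does not supply its own proof of this proposition: it is quoted verbatim from Hitchin--Schaposnik \cite{hitchin2014}, so there is no argument in the thesis to compare against. That said, your plan has a genuine gap, and it is exactly at the point you flag as ``determinant bookkeeping''.

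You claim that the $SL(2m,\C)$-condition $\det(\pi_*E)\cong\mathcal{O}_\Sigma$ ``pins down'' $\Lambda^2E\cong\pi^*K^{m-1}$. It does not. Using Lemma~\ref{nmlm}(b), the condition $\det(\pi_*E)\cong\mathcal{O}_\Sigma$ is equivalent to $\Nm(\Lambda^2E)\cong K^{m(m-1)}$, i.e.\ $\Lambda^2E\otimes\pi^*K^{1-m}\in\Prym$. This determines $\Lambda^2E$ only up to the entire Prym variety, not up to $2$-torsion or a power of $K$. The thesis itself makes this explicit: Proposition~\ref{N} identifies the $SL$-locus with $N=\{E\in\calU_S(2,e)\mid\det(\pi_*E)\cong\calO_\Sigma\}$, and the map $f:N\to\Prym$, $E\mapsto\det(E)\pi^*K^{1-m}$, is shown to be \emph{surjective} with $N_0=\calU_S(2,\pi^*K^{m-1})=f^{-1}(\calO_S)$ strictly smaller than $N$. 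So the step ``$SL$-structure $\Leftrightarrow$ $\Lambda^2E\cong\pi^*K^{m-1}$'' is false.

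What actually forces $\Lambda^2E\cong\pi^*K^{m-1}$ is the $SU^*(2m)$-structure itself, namely the symplectic form $\omega$ on $V=\pi_*E$ with respect to which $\Phi$ is \emph{symmetric}. You mention this structure only in passing at the end, as if it were a separate add-on; in fact it is the heart of the proposition. The argument (as in \cite{hitchin2014}, and carried out in the thesis for the analogous $SO^*(4m)$ and $Sp(m,m)$ cases in Proposition~\ref{pairingn}) runs roughly as follows: the symmetry $\omega\Phi=\tp{\Phi}\omega$ means that $\omega:V\to V^*$ intertwines $(\pi^*\Phi-\lambda)$ with $-(\pi^*\tp{\Phi}-\lambda)$, so on the spectral side it induces an isomorphism $E\cong E^*\otimes\pi^*K^{m-1}$ (relative duality contributes the twist $K_S\pi^*K^{-1}\cong\pi^*K^{m-1}$). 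Taking determinants gives $\Lambda^2E\cong\pi^*K^{m-1}$. Conversely, a trivialisation $\Lambda^2E\cong\pi^*K^{m-1}$ is a non-degenerate skew pairing $E\otimes E\to\pi^*K^{m-1}$, and pushing this forward via the trace pairing of $\pi$ produces the symplectic form on $V$ for which $\Phi=\pi_*\lambda$ is symmetric. Your stability argument via the rank/degree dictionary is fine once the structural part is in place, but as written the proof misidentifies where the determinant constraint comes from.
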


From the proposition above, we can identify the locus $N_0 = N_0(SL(2m,\C))$ in $h^{-1}(p^2)$ corresponding to $SU^*(2m)$-Higgs bundles with 
$$N_0 = \calU_S (2, \pi^*K^{m-1}).$$ 

Our goal is to describe the rest of the fibre $h^{-1}(p^2)$ using data from the reduced non-singular curve $S$. Let $(V,\Phi) \in h^{-1}(p^2)$. We have two cases depending on the vanishing of $p(\Phi) : V \to V \otimes K^m$.   
\begin{prop}\label{N} The locus 
$$  \{ (V,\Phi) \in h^{-1}(p^2) \ | \ p(\Phi) = 0 \}$$
is isomorphic to 
$$N = \{ E \in \calU_S(2,e) \ | \ \det(\pi_*E) \cong \calO_\Sigma\},$$ 
where $\calU_S(2, e)$ is the moduli space of semi-stable rank $2$ bundles on $S$ of degree $e = 2m(m-1)(g-1)$. 
\end{prop}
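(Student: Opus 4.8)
The plan is to characterize the locus $\{(V,\Phi)\in h^{-1}(p^2)\ |\ p(\Phi)=0\}$ via the BNR-type correspondence for the non-reduced spectral curve $X=\zeros(p(\lambda)^2)$. Since $p(\Phi)=0$, the cokernel torsion-free sheaf $\calE\otimes\tilde\pi^*K$ is supported on the reduced curve $S=\zeros(p(\lambda))$, and because $S$ is non-singular of multiplicity $2$ in $X$, this cokernel is (up to the twist) a genuine rank $2$ vector bundle $E$ on $S$. Conversely, given a rank $2$ vector bundle $E$ on $S$, the direct image construction $\pi_*(E,\lambda)$ produces a Higgs bundle on $\Sigma$ lying in $h^{-1}(p^2)$ and satisfying $p(\Phi)=0$. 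So the first step is to make this dictionary precise: the locus in question is in bijection with isomorphism classes of rank $2$ vector bundles $E$ on $S$ such that $\pi_*(E,\lambda)$ is a semistable $SL(2m,\C)$-Higgs bundle, i.e. such that $V=\pi_*E$ is a rank $2m$ bundle with $\Lambda^{2m}V\cong\calO_\Sigma$ and $\Phi$ traceless, together with the semistability of the Higgs bundle.

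Next I would translate the $SL(2m,\C)$-conditions on $(\pi_*E,\lambda)$ into conditions on $E$. The determinant condition $\det(\pi_*E)\cong\calO_\Sigma$ is exactly the cutting-out equation for $N$. (The tracelessness of $\lambda$ as an endomorphism of $V$ is automatic from the shape of $p(x)$, which has no $x^{m-1}$ term — this is the same normalization used to define the fibre $h^{-1}(p^2)$.) For the semistability, I would invoke the $GL$-version of the BNR/Simpson correspondence (Theorem \ref{bnrcorrespondence}) together with \cite[Corollary 6.9]{si3}: the Higgs bundle $\pi_*(E,\lambda)$ is semistable if and only if the corresponding sheaf on $X$ is semistable. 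One then checks that, because $E$ is a vector bundle supported on the \emph{reduced} non-singular curve $S$ sitting with multiplicity two in $X$, the $\mu_P$-semistability on $X$ of the pushforward of $E$ to $X$ is equivalent to ordinary slope-semistability of $E$ as a rank $2$ bundle on $S$ — this is essentially the content of Proposition \ref{prop1}, whose proof handles exactly the subsheaf comparison between $\Phi$-invariant subsheaves of $V$ and subsheaves of $E$. Combining, the locus is isomorphic to $\{E\in\calU_S(2,e)\ |\ \det(\pi_*E)\cong\calO_\Sigma\}$, and a Grothendieck--Riemann--Roch (or the genus formula \eqref{genus} applied to $S$, giving $g_{_S}=m^2(g-1)+1$) computation fixes the degree: $\deg(\pi_*E)=\deg E + \mathrm{rk}(E)(1-g_{_S}) = \deg E + 2(1-g_{_S})$ together with the determinant normalization of Proposition \ref{prop1}, $\Lambda^2 E\cong\pi^*K^{m-1}$, forces $\deg E = 2m(m-1)(g-1) = e$. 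Hence $N=\calU_S(2,e)\cap\{\det(\pi_*E)\cong\calO_\Sigma\}$ as claimed.

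The main obstacle I anticipate is the careful comparison of stability notions across the non-reduced curve: a priori a destabilizing $\Phi$-invariant subsheaf of $V=\pi_*E$ could correspond to a subsheaf of the pushforward to $X$ that is \emph{not} the pushforward of a subbundle of $E$ on $S$ (for instance a sheaf with some torsion or lower generic rank along $S$), so one must argue that such subsheaves do not give a more restrictive condition than slope-semistability of $E$ itself, or that they can be saturated to ones that do. This is precisely where one leans on the structure of $S$ as smooth and reduced and on the argument already carried out in the proof of Proposition \ref{prop1} in \cite{hitchin2014}; I would cite that proof and only indicate the modification needed to also record the determinant condition. A secondary, purely bookkeeping point is to verify the compatibility of the two determinant conditions ($\Lambda^2E\cong\pi^*K^{m-1}$ from semistability for $SU^*(2m)$ versus $\det(\pi_*E)\cong\calO_\Sigma$ from the $SL$-structure) — one uses the standard formula $\det(\pi_*E)\cong \mathrm{Nm}(\det E)\otimes(\det\pi_*\calO_S)^{\otimes 2}$ and $\det\pi_*\calO_S\cong K^{-m(m-1)/2}$ to see these are consistent and pin down $e$.
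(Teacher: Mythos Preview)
Your plan is correct in outline and would prove the proposition, but it takes a different and heavier route than the paper. The paper does \emph{not} pass through the Simpson moduli space on the non-reduced curve $X$ at all here; it works entirely on the smooth reduced curve $S$ and handles both stability implications by a direct elementary argument. In the forward direction, a destabilizing line subbundle $L\subset E$ pushes forward to a $\Phi$-invariant rank-$m$ subbundle $W=\pi_*L\subset V$ of positive degree, contradicting semistability of $(V,\Phi)$. In the backward direction, any proper $\Phi$-invariant subbundle $W\subset V$ has characteristic polynomial properly dividing $p(x)^2$, hence equal to $p(x)$ by irreducibility of $S$, so $W$ arises from a line subbundle of $E$ and $\deg W\le 0$ follows from semistability of $E$. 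This sidesteps entirely the obstacle you anticipate (comparing $\mu_P$-stability on $X$ with slope-stability on $S$) and needs neither Simpson's corollary nor the proof of Proposition~\ref{prop1}. Your route via sheaves on the ribbon $X$ is in fact the one the thesis adopts later, in Chapter~\ref{Chapter5}, when it revisits these fibres from the viewpoint of generalized line bundles; so what you propose is the ``alternative description'' rather than the first one. Both buy you the same result; the paper's version is self-contained and avoids the Simpson machinery, while yours is more conceptual but defers the key stability comparison to cited results.

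One genuine slip in your final paragraph: you conflate $N$ with the strictly smaller locus $N_0$. The condition $\Lambda^2E\cong\pi^*K^{m-1}$ is the \emph{extra} constraint cutting out the $SU^*(2m)$-Higgs bundles (Proposition~\ref{prop1}) inside $N$; a general point of $N$ does not satisfy it, so there are no ``two determinant conditions'' to reconcile. The degree $e=2m(m-1)(g-1)$ already follows from $\det(\pi_*E)\cong\calO_\Sigma$ together with $\chi(S,E)=\chi(\Sigma,\pi_*E)$ and $g_{_S}=m^2(g-1)+1$, without any reference to $\Lambda^2E$.
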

\begin{proof}
Since $S$ is smooth and $\lambda$ is a well-defined eigenvalue of $\Phi$ on $S$, the cokernel of $\pi^*\Phi - \lambda$ defines a rank $2$ vector bundle $E$ on $S$
$$0 \to E \otimes \calO_S(- R_\pi) \to \pi^*V \xrightarrow{\pi^*\Phi - \lambda \Id} \pi^*(V \otimes K) \to E \otimes \pi^*K \to 0,$$ 
where $R_\pi$ is the ramification divisor of $\pi$ (in particular, $\calO_S( R_\pi) \cong \pi^*K^{m -1}$ since we see $d\pi$ as a holomorphic section from $K_S^{-1}$ to $\pi^*K^{-1}$). We recover $(V,\Phi)$ by the direct image construction. That is, we take $V = \pi_*E$ and push-forward the map multiplication by the tautological section $\lambda : E \to E \otimes \pi^*K$, obtaining $\Phi = \pi_* \lambda : \pi_*E \to \pi_* (E \otimes \pi^*K) = \pi_* E \otimes K$. Since the Euler characteristic is invariant under direct images by finite maps, we must have $\deg (E) = 2m(m-1)(g-1)$. Suppose $L$ is a line subbundle of $E$ with $\deg(L) > e/2$. Then, $W \coloneqq \pi_* L $ is a $\Phi$-variant subbundle of $(V,\Phi)$ of rank $m$ and degree $\deg (L) - m(m-1)(g-1)$, so $\deg (W) > 0$, contradicting the stability of $(V,\Phi)$. Thus, the rank $2$ bundle $E$ must be semi-stable.

Start now with $E \in N$ and let $(V,\Phi) = \pi_*(E,\lambda)$ be the Higgs bundle obtained by the direct image construction of $E$. The constraint on the determinant makes $(V,\Phi)$ into a $SL(2m,\C)$-Higgs bundle. Note that $p(\lambda) = 0$ holds on $S$ and if we push-forward this equation to $\Sigma$ we obtain $p(\Phi) = 0$. Now, by the Cayley-Hamilton theorem, the characteristic polynomial of $\Phi$ must divide $p$, which is irreducible, therefore $\det (x - \Phi ) = p(x)$. Let us check that $(V,\Phi) = \pi_*(E,\lambda)$ is semi-stable. Let $W$ be a proper $\Phi$-invariant subbundle of $V$. Then, its characteristic polynomial $p_1(x) = \det (x\Id - \Phi|_W)$ properly divides $p(x)^2$ and, since $S$ is smooth, and in particular irreducible, it must be $p(x)$. Semi-stability of $E$ implies $\deg (W) \leq 0$ and $(V,\Phi)$ is semi-stable.
\end{proof}
 
Before we proceed we recall the definition of the \textbf{norm map}  
\begin{align}
\Nm_\pi : \Pic^0 (S) & \to \Pic^0 (\Sigma) \label{normmapsmt} \\ 
\mathcal{O}(\Sigma n_i p_i) & \mapsto \mathcal{O}(\Sigma n_i \pi (p_i)) \nonumber
\end{align}
associated to the covering $\pi : S \to \Sigma$. Here, $n_i \in \mathbb{Z}$, $p_i \in S$ and the map is well-defined as it only depends on the linear equivalence class of the divisor $\sum n_i p_i$. We will sometimes omit the subscript and denote the norm map simply by $\Nm$ if the covering to which it is related is clear. This is clearly a group homomorphism. Also, it satisfies (and can be characterized by - see e.g. \cite{arb}) 
\begin{align}
\Nm (L) &= \det (\pi_* (L)) \otimes (\det (\pi_*\calO_S))^{-1} \label{directimage}\\
&= \det (\pi_* (L)) \otimes K^{m(m-1)/2} \nonumber
\end{align}
for any line bundle $L$ on $S$. If the pullback $\pi^* : \Pic^0 (\Sigma) \to \Pic^0 (S)$ is injective, then the norm map homomorphism can also be identified with the transpose of $\pi^*$ (and hence its kernel is connected). The \textbf{Prym variety} $\Prym$ associated to the covering $\pi: S \to \Sigma$ is defined as the kernel of $\Nm_\pi : \Pic^0(S) \to \Pic^0(\Sigma)$. It is an abelian variety, which, in our case, is connected as the pullback map $\pi^* : \Pic^0(\Sigma) \to \Pic^0 (S)$ is injective. We state a technical lemma which will be needed later on in the text.

\begin{lemma}\label{nmlm} Let $M$, $E$ be vector bundles on $S$ of rank $r$ and $2$, respectively, and $L$ a line bundle on $S$. We have
\begin{enumerate}[label=\alph*),ref=\alph*]
\item $\det (\pi_* (M \otimes L)) \cong \det(\pi_*M) \otimes \Nm (L)^r$.
\item $\det (\pi_* (\det E)) \cong \det (\pi_*E)\otimes K^{m(m-1)/2}$\\
(or, equivalently, $\Nm (\det E)  \cong \det (\pi_* E)   \otimes K^{m(m-1)}$).
\end{enumerate}
\end{lemma}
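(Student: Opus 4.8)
The plan is to deduce part (a) from the projection formula together with the characterization \eqref{directimage} of the norm map, and then obtain part (b) as a special case combined with a direct determinant computation. For part (a), first I would recall that for a finite flat morphism $\pi : S \to \Sigma$ of degree $m$ and a line bundle $L$ on $S$, there is no general splitting of $\pi_*(M\otimes L)$; instead one proceeds by filtering $M$ or by a standard Chern-class/determinant argument. The cleanest route: since $\det$ is multiplicative in short exact sequences and the formation of $\pi_*$ is exact on vector bundles over a curve (all sheaves involved are locally free, $\pi$ being finite flat), a filtration of $M$ with line-bundle quotients $M_1, \ldots, M_r$ reduces the claim to the rank-one case, namely $\det(\pi_*(M_i \otimes L)) \cong \det(\pi_* M_i)\otimes \Nm(L)$. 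This last isomorphism is exactly \eqref{directimage} applied twice: $\Nm(M_i\otimes L) = \det(\pi_*(M_i\otimes L))\otimes K^{m(m-1)/2}$ and $\Nm(M_i) = \det(\pi_* M_i)\otimes K^{m(m-1)/2}$, so since $\Nm$ is a group homomorphism, $\Nm(M_i\otimes L) = \Nm(M_i)\otimes\Nm(L)$, and the $K^{m(m-1)/2}$ factors cancel. Multiplying over $i = 1, \ldots, r$ gives $\det(\pi_*(M\otimes L)) \cong \det(\pi_* M)\otimes\Nm(L)^r$.

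For part (b), I would first note that it suffices to prove $\det(\pi_*(\det E))\cong\det(\pi_* E)\otimes K^{m(m-1)/2}$, since the parenthetical reformulation follows by applying \eqref{directimage} to the line bundle $\det E$: $\Nm(\det E) = \det(\pi_*(\det E))\otimes K^{m(m-1)/2}$, and substituting the first isomorphism yields $\Nm(\det E)\cong\det(\pi_* E)\otimes K^{m(m-1)}$. To prove the first isomorphism, filter $E$ locally (or globally, up to a twist, via a standard argument) by a line subbundle $L_1\subset E$ with line-bundle quotient $L_2$, so that $\det E \cong L_1\otimes L_2$ and the sequence $0\to\pi_* L_1\to\pi_* E\to\pi_* L_2\to 0$ gives $\det(\pi_* E)\cong\det(\pi_* L_1)\otimes\det(\pi_* L_2)$. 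Then $\det(\pi_*(\det E)) = \det(\pi_*(L_1\otimes L_2))$, and by part (a) with $M = L_1$, $L = L_2$, $r = 1$ this equals $\det(\pi_* L_1)\otimes\Nm(L_2)$; using \eqref{directimage} on $L_2$ this is $\det(\pi_* L_1)\otimes\det(\pi_* L_2)\otimes K^{m(m-1)/2} \cong \det(\pi_* E)\otimes K^{m(m-1)/2}$, as desired.

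The main obstacle is the passage to a global line-bundle filtration: a vector bundle on a curve need not have a global line subbundle splitting off cleanly, and $\det(\pi_*(-))$ is not additive in exact sequences without knowing the relevant higher direct images vanish. The honest fix is that $\pi$ is finite, hence affine, so $\pi_*$ is exact on quasi-coherent sheaves and preserves local freeness of vector bundles on the smooth curve $S$; thus $\det(\pi_*(-))$ \emph{is} multiplicative along short exact sequences of vector bundles, and every vector bundle on a curve admits a filtration by subbundles with line-bundle successive quotients (choosing generically a line subbundle and saturating). This makes the dévissage legitimate, and the rest is the bookkeeping of the $K$-twists recorded above.
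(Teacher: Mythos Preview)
Your proposal is correct and follows essentially the same route as the paper's proof: both reduce to the rank-one case by filtering the vector bundle with line-bundle subquotients (the paper phrases this as induction on the rank, constructing one line subbundle at a time), use exactness of $\pi_*$ (since $\pi$ is finite, $R^i\pi_* = 0$ for $i>0$) to make $\det(\pi_*(-))$ multiplicative, and then invoke the characterization \eqref{directimage} of $\Nm$ together with its homomorphism property. Your treatment of part (b) via part (a) is a minor repackaging of the paper's direct computation through $\Nm(L_1L_2)$, but the substance is identical.
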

\begin{proof}
a) The statement follows from induction on the rank $r$. Indeed, for $r=1$ this follows directly from the characterization of the direct image (\ref{directimage}) and the fact that the norm map is a group homomorphism. Assume this is true for rank $r-1$ and let $M$ and $L$ be as in the lemma. The locally-free sheaf $\calE = \calO (M)$ fits into a short exact sequence
$$0 \to \calL^\prime \to \calE \to \calE^\prime \to 0,$$
where $\calL^\prime$ is an invertible sheaf and $\calE^\prime$ a locally free sheaf of rank $k-1$. Note that if $M$ has a section, the statement is clear. Otherwise, choose an effective divisor on $S$ of degree $d$ such that $rd > h^1(M)$. Part of the exact sequence in cohomology associated to the short exact sequence
$$0 \to \calE \to \calE (D) \to \calE \otimes \calO_D \to 0 $$
is 
$$H^0(S, \calE (D)) \to H^0(S, \calE \otimes \calO_D) \to H^1(S, \calE).$$
As $rd > h^1(M)$, the map $H^0(S, \calE \otimes \calO_D) \to H^1(S, \calE)$ cannot be injective, thus we can find a non-zero section of $\calE (D)$, or, in other words, a map $\calO (-D) \to \calE$ whose image is an invertible sheaf. The degree of an invertible subsheaf of $\calE$ is bounded above by Riemann-Roch formula, so we take a maximal degree invertible subsheaf. This guarantees that the cokernel is also locally-free. Let $\calL$ be the invertible sheaf corresponding to the line bundle $L$. As $R^i\pi_* = 0$, $i>0$, we obtain 
$$0 \to \pi_*(\calL \calL^\prime) \to \pi_* (\calE \otimes \calL) \to \pi_*(\calE^\prime \otimes \calL) \to 0.$$ 
Thus, $\det (\pi_* (\calE \otimes \calL)) \cong \det (\pi_* (\calL \calL^\prime)) \det (\pi_* (\calE^\prime \otimes \calL))$, but $\det (\pi_* (\calL \calL^\prime)) \cong \det (\pi_* \calL^\prime) \Nm (L)$ and $\det (\pi_* (\calE^\prime \otimes \calL)) \cong \det (\pi_* E^\prime) \Nm(\calL)^{r-1}$ (by the induction hypothesis). Since $$\det (\pi_* \calL^\prime) \det (\pi_* \calE^\prime) \cong \det (\pi_* \calE) $$ the result follows.\\
b) Every rank $2$ vector bundle $E$ on a curve fits into a short exact sequence
$$0 \to L_1 \to E \to L_2 \to 0,$$
for some line bundles $L_1$ and $L_2$. Thus, $\det (E) \cong L_1L_2$ and    
\begin{align*}
\det (\pi_* (\det E)) & \cong \det (\pi_* (L_1L_2))\\
& \cong \Nm (L_1L_2) K^{-m(m-1)/2}\\
& \cong \det (\pi_* L_1) K^{m(m-1)/2} \det (\pi_* L_2) K^{m(m-1)/2}K^{-m(m-1)/2}\\
& \cong \det (\pi_* L_1)  \det (\pi_* L_2) K^{m(m-1)/2}.
\end{align*}
But $\det (\pi_*E) \cong \det (\pi_* L_1)  \det (\pi_* L_2)$ since $R^i\pi_* = 0$, $i>0$, proving the statement. 
\end{proof}

Note that since elements of $N_0$ are rank $2$ vector bundles on $S$ and they define $SU^*(2m)$-Higgs bundles in the fibre $h^{-1}(p^2)$, from Proposition \ref{N}, $N_0$ must be inside $N$. More directly, one may use the lemma above to see this. Indeed, let $E \in N_0$. Then, 
\begin{align*}
\det (\pi_* (\det E)) & = \det (\pi_* (\pi^* K^{m-1}))\\
& = \det ((\calO_\Sigma \oplus K^{-1} \oplus \ldots \oplus K^{-(m-1)})\otimes K^{m-1}) \\
& = \det (\calO_\Sigma \oplus K^{1} \oplus \ldots \oplus K^{(m-1)})\\
& = K^{m(m-1)/2}.
\end{align*}
From item b) of Lemma \ref{nmlm}, it follows that $\det (\pi_*E)\cong \calO_\Sigma $, which shows that $N_0 \subset N$.  

Consider now $(V,\Phi) \in h^{-1}(p^2)$ such that $p(\Phi) \in H^0(\Sigma, \End V \otimes K^m)$ is not identically zero. The kernel of the map 
$$p(\Phi) : \mathcal{O}(V) \to \mathcal{O}(V \otimes K^m)$$
is a locally free sheaf which corresponds to a proper $\Phi$-invariant subbundle\footnote{More precisely, the kernel of $p(\phi)$ is a locally free sheaf of the form $\calO (W)$, where $W$ is a vector bundle on $\Sigma$. Denote by $\calG$ the quotient of $\calO(V)$ by $\calO(W)$ and let $\calT$ be its torsion. Then, the kernel of the natural map $\calO(V) \to \calG /\calT$ is isomorphic to $\calO(W_1)$, where $W_1$ is a subbundle of $V$ of same rank as $W$ (and possibly higher degree). This is usually called the vector bundle generically generated by $\ker p(\Phi)$.} $W_1 \subseteq V$. Now, the characteristic polynomial of $\Phi_1 = \Phi|_{W_1}$ divides the characteristic polynomial of $\Phi$, which is $p(x)^2$. The smoothness of $S$ implies again that the characteristic polynomial of $\Phi_1$ must be $p(x)$ (by assumption, it cannot be $p(x)^2$). In particular, $W_1$ has rank $m$ and by stability $\deg (W_1) = -d \leqslant 0$. Thus, we have the following extension of Higgs bundles
$$0 \to (W_1, \Phi_1) \to (V, \Phi) \to (W_2, \Phi_2) \to 0,$$
where $W_2$ is the quotient bundle $V/W_1$ and $\Phi$ projects to $W_2$ giving the Higgs field $\Phi_2$ (this projection being well-defined thanks to the $\Phi$-invariance of $W_1$). Notice that the characteristic polynomial of $\Phi_2$ is also $p(x)$. In particular, since $S$ is smooth, $(W_1, \Phi_1)$ and $(W_2, \Phi_2)$ are stable Higgs bundles (see the second item of Remark \ref{123}). 

Thus, to analyze points $\mathsf{V} = (V,\Phi) \in h^{-1}(p^2) \setminus N$ we consider extensions of Higgs bundles  
\begin{equation}
0 \to \mathsf{W}_1 \to \mathsf{V} \to \mathsf{W}_2 \to 0,
\label{ext2}
\end{equation}
where 
\begin{enumerate}
\item $\mathsf{W}_1 = (W_1, \Phi_1)$ and $ \mathsf{W}_2 = (W_2, \Phi_2)$ are stable Higgs bundles having the same spectral curve $S$, with $\deg(W_2) = - \deg (W_1) = d \geq 0$ and  
\item $\det(W_1) \otimes \det(W_2) \cong \mathcal{O}_{\Sigma}$.
\end{enumerate}

Let us rephrase these conditions in terms of $S$. The first condition is equivalent, by the \textit{BNR} correspondence, to the existence of line bundles $L_i \in \Pic^{d_i}(S)$, $i=1,2$, on $S$ of degree
\begin{align*}
d_1 & = -d + m(m-1)(g-1) \\
d_2 & = d + m(m-1)(g-1)
\end{align*}
such that $\mathsf{W}_i = \pi_*(L_i, \lambda)$. The degrees can be computed by the Riemann-Roch theorem using that the Euler characteristic is preserved by direct image of finite maps. 

Using the norm map, the condition $\det(W_1) \otimes \det(W_2) \cong \mathcal{O}_{\Sigma}$ becomes
$$\Nm (L_1L_2) = K^{m(m-1)},$$  
or equivalently 
$$L_1L_2\pi^*K^{1-m} \in \Prym .$$
\begin{rmk}\label{d>0} If $d = 0$, $(V,\Phi)$ is strictly semi-stable and we can replace it by the corresponding polystable object, which in this case is isomorphic to $(W_1, \Phi_2) \oplus (W_2, \Phi_2)$. But $(W_1, \Phi_2) \oplus (W_2, \Phi_2) \cong \pi_*(L_1 \oplus L_2, \lambda)$ and we are back to the first case. %Thus, the strictly semi-stable Higgs bundles in this fibre correspond to points in the singular locus of $N$ (and we are back to the first case). 
Note also that, by stability, the extension never splits if $d>0$.% and $(V,\Phi) $ is stable (see the commentaries below).
\end{rmk}

So, Higgs bundles $(V,\Phi)$ in this fibre lying outside the locus $N$ are non-trivial extensions of $(W_2, \Phi_2) = \pi_*(L_2, \lambda)$ by $(W_1, \Phi_1) = \pi_*(L_1, \lambda)$, for some line bundles $L_i \in \Pic^{d_i} (S)$, $i=1,2$, with $2d_2 > e$ and $L_1L_2\pi^*K^{1-m} \in \Prym$. Note that the condition $2d_2 > e$ is equivalent to $d>0$ (the integer $e$, as defined in Proposition \ref{N}, is equal to $2m(m-1)(g-1)$). 

As discussed in the previous chapter, the space of extensions of the form (\ref{ext2}) is parametrized by the first hypercohomology group of the induced Higgs sheaf $\mathsf{W_2}^*\otimes \mathsf{W}_1$. In other words, we look at the  first hypercohomology group of the two-term complex of sheaves
\begin{align*}
\phi_{21}: \calO (\Hom(W_2, W_1)) & \to \calO ( \Hom(W_2, W_1) \otimes K)\\
s & \mapsto - (s \otimes 1) \circ \Phi_2 + \Phi_1 \circ s,
\end{align*}
where $s$ is a local section of $\Hom(W_2, W_1)$. To simplify notation, if no confusion arises, we will denote $\mathbb{H}^1(\Sigma, \mathsf{W_2}^*\otimes \mathsf{W}_1)$ by $\mathbb{H}^1$.  %By the last remark, $\tilde{A_d}$ corresponds to $\mathbb{P}(\K^1 (\Sigma, \mathsf{W_2}^*\otimes \mathsf{W}_1))$, where $\mathsf{W_i} = \pi_*(L_i, \lambda)$, $i=1,2$, for some line bundles $L_1$ and $L_2$ on $S$ as in Definition \ref{l1l2}. To simplify notation we will denote $\mathbb{H}^1(\Sigma, \mathsf{W_2}^*\otimes \mathsf{W}_1)$ by $\mathbb{H}^1$ if no confusion arises. 
One of the spectral sequences associated to the double complex that calculates hypercohomology give us the following short exact sequence (see Section \ref{Sect2.1} for more details) 
\begin{equation}
0 \to H^1(\Sigma, \ker \phi_{21}) \to \mathbb{H}^1 \to H^0(\Sigma, \coker \phi_{21}) \to 0.
\label{ses2}
\end{equation} 

From the smoothness of the curve $S$, $\ker \phi_{21}$ and $\coker \phi_{21}$ turn out to be vector bundles on $\Sigma$ and we can characterize them as the direct image of some line bundles on $S$ as the next lemma shows. 
\begin{lemma} The $\calO_\Sigma$-modules $\ker \phi_{21}$ and $\coker \phi_{21}$ are locally-free sheaves of rank $m$. Moreover,
\begin{enumerate}[label=\alph*),ref=\alph*]
\item $\ker \phi_{21} \cong \pi_*(L_2^*L_1)$.
\item $\coker \phi_{21} \cong \pi_*(L_2^*L_1K_S)$.
\end{enumerate}
\end{lemma}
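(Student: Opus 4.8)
The plan is to compute the two cohomology sheaves of the two–term complex $\mathsf{W}_2^*\otimes\mathsf{W}_1=\big(\mathcal{H}om(W_2,W_1)\xrightarrow{\phi_{21}}\mathcal{H}om(W_2,W_1)\otimes K\big)$ directly, working locally over $\Sigma$. Over an open set $U$ trivialising $K$ with $S\cap\pi^{-1}(U)\cong\Spec R$, $R=\calO_\Sigma(U)[\lambda]/(p(\lambda))$ a free rank–$m$ module over $A:=\calO_\Sigma(U)$, the bundle $W_i|_U$ is the underlying free $A$–module of the invertible $R$–module $L_i$, and the Higgs field $\Phi_i$ is multiplication by $\lambda$; hence $\phi_{21}(s)=\Phi_1 s-s\Phi_2$ is the commutator $[\lambda,s]$. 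Once each cohomology sheaf is identified with $\pi_*$ of a line bundle on $S$, local freeness of rank $m$ is automatic, $\pi\colon S\to\Sigma$ being finite flat of degree $m$.

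For (a): a local section of $\ker\phi_{21}$ is an $\calO_\Sigma$–linear map $W_2\to W_1$ commuting with the Higgs fields, i.e. a morphism of Higgs bundles $\mathsf{W}_2\to\mathsf{W}_1$; since $\pi$ is affine, the equivalence between $\calO_S$–modules and $\pi_*\calO_S$–modules (already used in the \textit{BNR} correspondence) identifies these with the $\calO_S$–linear maps $L_2\to L_1$. Therefore $\ker\phi_{21}\cong\pi_*\mathcal{H}om_{\calO_S}(L_2,L_1)=\pi_*(L_2^*L_1)$.

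Part (b) is the real content. Choosing also trivialisations of $L_1,L_2$ over $\pi^{-1}(U)$ turns the complex into $\End_A(R)\xrightarrow{\,[C,\,\cdot\,]\,}\End_A(R)\otimes K$, where $C$ is the companion matrix of $p(\lambda)$ acting on $R\cong A^m$. The operator $\mathrm{ad}_C$ is skew–self–adjoint for the trace pairing $(X,Y)\mapsto\mathrm{tr}(XY)$ on $\End_A(R)\cong M_m(A)$, which is perfect; since $C$ is regular (being a companion matrix) its centraliser is $A[C]\cong R$, free of rank $m$, so $\mathrm{ad}_C$ has locally free cokernel of rank $m$, canonically isomorphic via the pairing to the $A$–dual $\Hom_A(R,A)$ of its kernel — the relative dualising module, i.e. $\omega_{S/\Sigma}|_U$. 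Reinstating the twists by $L_1$, $L_2$ and $K$ (bookkeeping only, since the trace–pairing identification is canonical and glues) gives $\coker\phi_{21}\cong\pi_*\!\big(L_2^*L_1\otimes\omega_{S/\Sigma}\big)\otimes K=\pi_*\!\big(L_2^*L_1\otimes\omega_{S/\Sigma}\otimes\pi^*K\big)$, and since $\omega_{S/\Sigma}=K_S\otimes\pi^*K^{-1}=\pi^*K^{m-1}$ this equals $\pi_*(L_2^*L_1\otimes\pi^*K^m)=\pi_*(L_2^*L_1\otimes K_S)$. Equivalently, and more conceptually, one reads $\mathsf{W}_2^*\otimes\mathsf{W}_1$ as $\pi_*R\mathcal{H}om_{\calO_{|K|}}(\iota_{S*}L_2,\iota_{S*}L_1)$ (with $\iota_S\colon S\hookrightarrow|K|$ the closed embedding) and resolves $\iota_{S*}L_2$ by the Koszul complex of the smooth Cartier divisor $S=\zeros(p(\lambda))\subset|K|$: the connecting map of the resulting two–term complex is multiplication by $p(\lambda)$, which vanishes on $S$, so $\mathcal{E}xt^1_{\calO_{|K|}}(\iota_{S*}L_2,\iota_{S*}L_1)\cong\iota_{S*}\!\big(L_2^*L_1\otimes N_{S/|K|}\big)$, and $N_{S/|K|}=\calO_{|K|}(S)|_S=\pi^*K^m|_S=K_S$.

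The only genuine obstacle I expect is in (b): identifying the cokernel of $\mathrm{ad}_C$ and checking that the local isomorphisms are natural enough to glue, while keeping the twists straight so the answer is exactly $\pi_*(L_2^*L_1\otimes K_S)$ and not off by a power of $K$. A useful sanity check is that the ranks ($m$ and $m$) and the degrees of $\pi_*(L_2^*L_1)$ and $\pi_*(L_2^*L_1\otimes K_S)$ are forced: they must be compatible with $\deg\mathcal{H}om(W_2,W_1)$, $\deg\big(\mathcal{H}om(W_2,W_1)\otimes K\big)$ and $g_{_S}=m^2(g-1)+1$ through the Euler characteristic of the complex $\mathsf{W}_2^*\otimes\mathsf{W}_1$.
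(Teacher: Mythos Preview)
Your argument is correct. Part (a) is essentially identical to the paper's: both identify $\ker\phi_{21}$ with $\calO_S$-linear maps $L_2\to L_1$ via the equivalence between $\pi_*\calO_S$-modules and $\calO_S$-modules.

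For (b) the approaches diverge. Your first route (trace pairing, skew-self-adjointness of $\mathrm{ad}_C$, then $\coker\cong(\ker)^\vee\cong\omega_{S/\Sigma}$ locally) is correct and self-contained, but the paper does something slicker: it simply dualises the four-term exact sequence
\[
0\to\ker\phi_{21}\to\Hom(W_2,W_1)\to\Hom(W_2,W_1)\otimes K\to\coker\phi_{21}\to 0
\]
and tensors by $K$. The middle map becomes $\phi_{12}$, i.e.\ the same operator with the roles of $W_1,W_2$ swapped, so $(\coker\phi_{21})^*\otimes K$ is identified with $\ker\phi_{12}\cong\pi_*(L_1^*L_2)$ \emph{by part (a) again}. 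One application of relative duality $(\pi_*M)^*\cong\pi_*(M^*\otimes K_S)\otimes K^{-1}$ then finishes. This avoids any local computation of $\coker(\mathrm{ad}_C)$ and the gluing check you flagged as the main worry; the price is that one must invoke relative duality for $\pi$, whereas your trace-pairing argument is effectively proving the piece of relative duality that is needed. Your second (Koszul/$\mathcal{E}xt^1$ on $|K|$) approach is conceptually attractive and gives the right answer, but as stated it presupposes that the two-term complex $\mathsf{W}_2^*\otimes\mathsf{W}_1$ on $\Sigma$ really represents $R\pi_*R\mathcal{H}om_{\calO_{|K|}}(\iota_{S*}L_2,\iota_{S*}L_1)$; this is true but is an extra identification the paper does not need.
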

\begin{proof}
a) We can cover $\Sigma$ by open sets $U \subset \Sigma$ which trivialize both $W_1$ and $W_2$. Denote the trivial bundle of rank $m$ on $U$ by $W \cong W_1|_U \cong W_2|_U$ and note that $L_1$ and $L_2$ will be isomorphic on $U$, so $\phi \coloneqq \Phi_1|_U = \Phi_2|_U$. Then,
$$\phi_{21}|_U = [ \phi , -] : \End (W) \to \End (W) \otimes K|_U.$$
Now, $S$ smooth implies that $(W_i,\Phi_i)$, $i=1,2$, are everywhere regular\footnote{A classical Higgs bundle $(V,\Phi)$ of rank $n$ is said to be \textbf{everywhere regular} if, for every $x\in \Sigma$, the dimension of the centralizer of $\Phi_x$ is equal to $n$. The \textit{BNR} correspondence gives a bijection between (isomorphism classes of) rank $1$ torsion-free sheaves on an integral spectral curve $X$ and (isomorphism classes of) Higgs bundles with spectral curve $X$. Under this correspondence, line bundles on $X$ give everywhere regular Higgs bundles \cite{B} (see also Remark 2.4 in \cite{mark}). In our case $S$ is assumed to be smooth, so every torsion-free sheaf of rank $1$ is a line bundle and the Higgs bundles obtained under this correspondence are all stable and everywhere regular.}. So, kernel and cokernel of $\phi$ are locally free of rank $m$. Concretely, the kernel is the sheaf of sections of $\pi_*\calO_S \cong \calO_\Sigma \oplus K^{-1} \oplus \ldots \oplus K^{-(m-1)}$ restricted to $U$. Thus, 
$$\phi_{21}: \Hom (W_2,W_1) \to \Hom (W_2,W_1) \otimes K$$   
has kernel and cokernel vector bundles of rank $m$. The global picture corresponds to our claim that the kernel subbundle is $\pi_*(L_2^*L_1)$. 

Consider the natural map $t: \pi_*(L_2^*L_1) \to \calO (\Hom(W_2, W_1))$ given by multiplication\footnote{We use the same symbol for the vector bundle $\pi_*L$, where $L$ is a line bundle on a smooth spectral curve $S$, and its locally-free sheaf of sections.}. So, for local sections $s\in H^0(U,\pi_*(L_2^*L_1) ) = H^0(\pi^{-1} (U),L_2^*L_1 )$ and $s_2 \in H^0(U,\pi_*(L_2) ) = H^0(\pi^{-1}(U), L_2)$, $t(s)(s_2) = ss_2$. The map t is clearly injective. Moreover, we have the following diagram
\[\xymatrix@M=0.13in{
H^0(\pi^{-1}(U), L_2) \ar[r]^{s} \ar[d]^{\lambda} & H^0(\pi^{-1}(U), L_1) \ar[d]^{\lambda} \\
H^0(\pi^{-1}(U), L_2 \otimes \pi^*K) \ar[r]^{s\otimes id} & H^0(\pi^{-1}(U), L_1 \otimes \pi^*K)}\] 
which commutes since we are at the level of line bundles. Therefore we have the following commutative diagram
\[\xymatrix@M=0.13in{
H^0(U, W_2) \ar[r]^{t(s)} \ar[d]^{\Phi_2} & H^0(U, W_1) \ar[d]^{\Phi_1} \\
H^0(U, W_2 \otimes K) \ar[r]^{t(s) \otimes id} & H^0(U, W_1 \otimes K).}\]
So, $t: \pi_*(L_2^*L_1) \to \ker \phi_{21}$. Since this is not only an injective morphism of sheaves, but also an injective morphism of vector bundles of same the rank, it follows that $\ker \phi_{21} \cong \pi_*(L_2^*L_1)$.

b) Take the dual of the exact sequence 
$$0 \to \ker \phi_{21} \to \Hom(W_2,W_1) \to \Hom(W_2,W_1) \otimes K \to \coker \phi_{21} \to 0$$ 
and tensor it by $K$. This yields the exact sequence
$$0 \to (\coker \phi_{21})^* \otimes K \to \Hom(W_1,W_2) \to \Hom(W_1,W_2) \otimes K \to (\ker \phi_{21})^* \otimes K \to 0.$$
Since we are interchanging the roles of $W_1$ and $W_2$ in this sequence, by (a), we obtain $(\coker \phi_{21})^* \otimes K \cong \pi_*(L_1^*L_2)$. Now, the relative duality theorem gives $(\pi_*(M))^* \cong \pi_*(M^* \otimes K_S) \otimes K^{-1}$ for a vector bundle $M$ on $S$. In our case, $$(\pi_*(L_1^* \otimes L_2))^* \cong \pi_*(L_2^*L_1 \pi^*K^m) \otimes K^{-1} \cong \pi_*(L_2^*L_1) \otimes K^{m-1}$$ where we use that $K_S \cong \pi^*K^m$. Thus, it follows that $\coker \phi_{21} \cong \pi_*(L_2^*L_1K_S)$.    
\end{proof}  

Since the fibres of $\pi$ consist of a finite number of points, by the Leray spectral sequence, the short exact sequence (\ref{ses2}) can be rewritten as 
\begin{equation}
0 \to H^1(S, L_2^*L_1) \to \mathbb{H}^1 \overset{\zeta}{\to} H^0(S, L_2^*L_1K_S) \to 0.
\label{jk}
\end{equation}

\begin{defin}\label{l1l2} Let $d>0$. We denote by $A_d$ the space of Higgs bundles $(V,\Phi)$ given by (non-split) extensions of Higgs bundles of the form 
$$0 \to \pi_*(L_1, \lambda) \to (V,\Phi) \to \pi_*(L_2, \lambda) \to 0,$$
for some $L_i \in \Pic^{d_i}(S)$, $i=1,2$, such that $L_1L_2\pi^*K^{1-m} \in \Prym$ and $\zeta (\delta) \neq 0 \in H^0(S, L_2^*L_1K_S)$, where $\delta = \delta (V,\Phi)$ is the element in $\mathbb{H}^1$ corresponding to $(V,\Phi)$.
\end{defin}

As discussed, starting with $(V,\Phi) \in h^{-1}(p^2) \setminus N$ we get an element of $A_d$ for some $d>0$. Conversely, given an element $(V,\Phi) \in 
A_d$, clearly $\det (x - \Phi) = p(x)^2$. As we will show in the next lemma, $p(\Phi) \neq 0$ for any element $(V,\Phi) \in A_d$.   

\begin{lemma} Let $(V,\Phi)$ be an extension of $(W_2, \Phi_2) = \pi_*(L_2, \lambda)$ by $(W_2, \Phi_2) = \pi_*(L_1, \lambda)$ represented by $\delta \in \mathbb{H}^1$, where $L_1$ and $L_2$ are line bundles on $S$. Then, $p(\Phi) = 0$ if and only if $\zeta (\delta) = 0$.
\end{lemma}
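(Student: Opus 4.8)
The plan is to compute $p(\Phi)$ explicitly using the \v{C}ech description of $\delta(V,\Phi)$ developed in Chapter~\ref{Chapter2}, and to identify the resulting obstruction with the image of $\delta$ under the map $\zeta$ in the sequence $(\ref{jk})$. First I would recall the description of the extension in the smooth (Dolbeault) picture: choosing a $C^\infty$-splitting $V \cong W_1 \oplus W_2$ we may write
\[
\bar\partial_V = \begin{pmatrix} \bar\partial_1 & \beta \\ 0 & \bar\partial_2 \end{pmatrix}, \qquad
\Phi = \begin{pmatrix} \Phi_1 & \psi \\ 0 & \Phi_2 \end{pmatrix},
\]
where $(\beta,\psi)$ represents $\delta \in \mathbb{H}^1$. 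Since $\Phi$ is block upper-triangular, $p(\Phi)$ is block upper-triangular as well, with diagonal blocks $p(\Phi_1)$ and $p(\Phi_2)$. But $(W_1,\Phi_1) = \pi_*(L_1,\lambda)$ and $(W_2,\Phi_2) = \pi_*(L_2,\lambda)$ have characteristic polynomial $p(x)$ (by Cayley--Hamilton, since $S$ is integral), so $p(\Phi_1) = 0$ and $p(\Phi_2) = 0$. Hence $p(\Phi)$ is the strictly upper-triangular endomorphism
\[
p(\Phi) = \begin{pmatrix} 0 & \Psi \\ 0 & 0 \end{pmatrix}, \qquad \Psi \in H^0(\Sigma, \Hom(W_2,W_1)\otimes K^m),
\]
and the content of the lemma is that $\Psi$ vanishes if and only if $\zeta(\delta) = 0$.

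The main computation is to express $\Psi$ in terms of $\psi$. Expanding $p(\Phi) = \Phi^m + \pi^*a_2 \Phi^{m-2} + \dots + \pi^*a_m$ (pulled back appropriately; really $a_i \in H^0(\Sigma,K^i)$ act as coefficients) and collecting the off-diagonal entries, one finds
\[
\Psi = \sum_{j} (\text{terms of the form } \Phi_1^{?}\,\psi\,\Phi_2^{?} \text{ with coefficients } a_i),
\]
i.e. $\Psi = p'(\Phi_1,\Phi_2)\cdot\psi$ where $p'$ is the ``divided-difference'' operator: on the blocks, $\Psi$ is obtained from $\psi$ by the $\calO_\Sigma$-linear map induced by $\lambda\mapsto p(\lambda)$ acting through the difference of the two copies of $\lambda$ on the two factors. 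Concretely, pulling back to $S$, the operator $s\mapsto p(\Phi_1)s - s\,p(\Phi_2)$ on $\Hom(W_2,W_1)$ corresponds — via the identification $\ker\phi_{21}\cong \pi_*(L_2^*L_1)$ and $\coker\phi_{21}\cong\pi_*(L_2^*L_1 K_S)$ from the previous lemma — precisely to the boundary/connecting structure of the complex $\phi_{21}$. I would make this precise by working locally over an open $U\subset\Sigma$ trivializing $W_1,W_2$: there $\phi_{21}|_U = [\phi,-]$ and $p(\phi) = 0$, so the Taylor expansion $p(\phi + \epsilon) = p'(\phi)\epsilon + O(\epsilon^2)$ together with $p(\phi)=0$ shows that $p(\Phi)$ off-diagonal block equals the composite of $\psi$ with multiplication by $p'(\lambda)$, which on $S$ is exactly the nonzero section of $K_S = \pi^*K^m$ giving the ramification (indeed $p'(\lambda)$ cuts out $R_\pi$). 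Thus $\Psi$, read as a section over $S$, is $\zeta(\delta)$ (an element of $H^0(S,L_2^*L_1 K_S)$) up to the canonical identification, and in particular $\Psi = 0 \iff \zeta(\delta) = 0$.

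The step I expect to be the main obstacle is making rigorous the identification of the off-diagonal block of $p(\Phi)$ with $\zeta(\delta)$ rather than merely with ``$\psi$ composed with something'': one must check that the ``something'' is an isomorphism onto $\coker\phi_{21}\cong\pi_*(L_2^*L_1 K_S)$, so that no information is lost or created. This comes down to the fact that $p'(\lambda)$ is a nowhere-zero section of $K_S$ over the \emph{smooth} curve $S$ — equivalently, that $S$ appears in $X = \zeros(p(\lambda)^2)$ with multiplicity exactly two and reduced-locus smooth — so multiplication by $p'(\lambda)$ is an isomorphism $L_2^*L_1 \xrightarrow{\sim} L_2^*L_1 K_S$ after the appropriate twist, hence induces an isomorphism on $\pi_*$. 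An alternative, perhaps cleaner, route avoids the explicit divided-difference computation: interpret $(V,\Phi)$ via the \textit{BNR}-type correspondence as a rank-$1$ torsion-free sheaf (generalized line bundle) $\calE$ on the ribbon $X$, note that $p(\Phi) = 0$ is equivalent to $\calE$ being scheme-theoretically supported on the reduced curve $S$ (i.e. $\calE$ is a sheaf on $S$, not genuinely on $X$), and observe that the locus of such sheaves inside the space of extensions is cut out precisely by $\zeta = 0$ — but I would still need the local computation above to pin down that ``genuinely supported on $X$'' is detected by the same linear functional $\zeta$ appearing in $(\ref{jk})$, so the first approach is the one I would write out.
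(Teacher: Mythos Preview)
Your forward direction ($\zeta(\delta)=0 \Rightarrow p(\Phi)=0$) is correct and is exactly what the paper does: write $\Phi$ in block upper-triangular form, note that the diagonal blocks of $p(\Phi)$ vanish, and show by direct telescoping that if $\psi = \Phi_1\theta - \theta\Phi_2$ then the off-diagonal block $q(\Phi)$ equals $p(\Phi_1)\theta - \theta p(\Phi_2) = 0$.

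The converse is where your argument has a genuine gap. You want to show that the induced map $\bar q : \coker\phi_{21} \to \Hom(W_2,W_1)\otimes K^m$ is injective, and you justify this by claiming that $p'(\lambda)$ is a nowhere-zero section on $S$. But you yourself write in the same sentence that $p'(\lambda)$ cuts out $R_\pi$ --- these two claims are contradictory. Indeed $p'(\lambda)\in H^0(S,\pi^*K^{m-1})$ (not $K_S=\pi^*K^m$) and it \emph{does} vanish, precisely on the ramification divisor of $\pi$. At such a point $\phi$ is regular but not semisimple, and the natural map $\ker[\phi,-]\to\coker[\phi,-]$ is not an isomorphism, so you cannot identify $\bar q$ with ``multiplication by $p'(\phi)$'' on the centralizer. (One can check, e.g.\ for $m=2$ with $\phi$ a nilpotent Jordan block, that $\bar q$ \emph{is} still an isomorphism --- so your conclusion is salvageable --- but the mechanism is different from the one you describe and needs a separate argument at ramification points.)

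The paper avoids this entirely by taking, for the converse, exactly the BNR route you mention and then set aside. If $p(\Phi)=0$ then the rank-$1$ torsion-free sheaf on the spectral curve is supported on $S$, hence is a rank $2$ bundle $E$ there, fitting in $0\to L_1\to E\to L_2\to 0$. Choosing a smooth splitting $E=L_1\oplus L_2$ on $S$, the Higgs field (direct image of multiplication by $\lambda$) is block-diagonal with respect to the induced splitting $V=W_1\oplus W_2$, so $\psi=0$ in that Dolbeault representative and $\zeta(\delta)=0$. This requires no analysis of $\bar q$ at all --- the ``local computation'' you thought would still be needed is unnecessary, because the BNR side gives you a representative with $\psi=0$ for free.
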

\begin{proof}
Choose a $C^\infty$-splitting $V = W_1 \oplus W_2$ and write 
\[ \bar{\partial}_V = \left( \begin{array}{cc}
\bar{\partial}_1 & \beta \\
0 & \bar{\partial}_2
\end{array} \right),
\ \   
\Phi = 
\left( \begin{array}{cc}
\Phi_1 & \psi \\
0 & \Phi_2
\end{array} \right),
\] 
where $\beta \in \Omega^{0,1}(\Sigma, \Hom(W_2, W_1))$ and $\psi \in \Omega^{1,0}(\Sigma, \Hom(W_2, W_1))$. As we saw in Chapter \ref{Chapter2}, $\delta$ is represented by the pair $(\beta, \psi )$ and $\zeta (\delta)$ is simply the class of $\psi$ inside $\coker \phi_{21} $ (this makes sense since $\dbar \psi + \phi_{21} \beta = 0$). If $\zeta (\delta)=0$, the element $\psi \in \Omega^{1,0}(\Sigma, \Hom (W_2,W_1))$ is of the form $\phi_{21} (\theta)$ for some smooth section $\theta$ of $\Hom (W_2,W_1)$. This means we can find another representative in the same class of $(\beta, \psi )$ of the form $(\beta^\prime, 0)$ (corresponding to another smooth splitting $V = W_1 \oplus W_2$). The condition $(\beta^\prime, 0) \in \K^1$ implies $\beta^\prime \circ \Phi_2 = \Phi_1 \circ \beta^\prime$. Thus, since the spectral curve is the same for $(W_1, \Phi_1)$ and $(W_2, \Phi_2)$, an eigenvalue of $\Phi_1$ is also an eigenvalue of $\Phi_2$, which means that the generic eigenspace is two dimensional. It follows that $(V,\Phi) \cong \pi_*(E,\lambda)$, for some rank $2$ vector bundle $E$ on $S$, so $p(\Phi) = 0$. This, of course, can also be seen by a simple direct computation. With respect to the splitting $V = W_1 \oplus W_2$ we have 
\[
\Phi^k = 
\left( \begin{array}{cc}
\Phi_1^{k-1} & q_k(\Phi) \\
0 & \Phi_2^{k-1}
\end{array} \right),
\] 
for every integer $k > 0$, where 
$$q_k(\Phi) = \sum_{i=0}^{k-1} \Phi_1^{k-1-i}\psi \Phi_2^i.$$
If $\psi = \Phi_1 \theta - \theta \Phi_2$ for some $\theta \in \Omega^0(\Sigma, \Hom (W_2,W_1))$, 
\begin{align*}
q_k(\Phi) & = \sum_{i=0}^{k-1} \Phi_1^{k-1-i}(\Phi_1 \theta - \theta \Phi_2) \Phi_2^i\\
& = \Phi_1^{k}\theta - \Phi_1^{k-1}\theta\Phi_2
 + \Phi_1\theta \Phi_2^{k-1} - \theta\Phi_2^k + \sum_{i=1}^{k-2} \Phi_1^{k-i}\theta\Phi_2^i - \Phi_1^{k-1-i}\theta\Phi_2^{i+1} \\
 & = \Phi_1^{k}\theta - \theta\Phi_2^k + \sum_{i=1}^{k-1} \Phi_1^{k-i}\theta\Phi_2^i - \sum_{j=1}^{k-1}  \Phi_1^{k-j}\theta\Phi_2^{j} \qquad \qquad (j=i+1)\\
 & = \Phi_1^{k}\theta - \theta\Phi_2^k. 
 \end{align*}  
Thus, 
\[ p(\Phi) = \left( \begin{array}{cc}
p(\Phi_1) & q (\Phi) \\
0 & p (\Phi_2)
\end{array} \right),
\] 
where 
\begin{align*}
q(\Phi) & = q_m(\Phi) + a_2q_{m-2}(\Phi)+\ldots + a_{m-1}q_1(\Phi) + a_m\theta-a_m\theta \\
&= p(\Phi_1)\theta - \theta p(\Phi_2).
\end{align*}
Since the spectral curve of $(W_1,\Phi_1)$ and $(W_2,\Phi_2)$ is $S$, $p(\Phi_1) = 0 = p(\Phi_2)$ and $p(\Phi) = 0$.  

Conversely, suppose $p(\Phi) = 0$. Then, the cokernel of $\pi^*\Phi - \lambda \Id$ is a holomorphic vector bundle of rank $2$ on $S$. Equivalently stated, the rank $1$ torsion-free sheaf on the spectral curve whose direct image construction corresponds to $(V,\Phi)$ is supported on the reduced scheme $S$, thus it must be a locally-free sheaf on $S$ of rank $2$. Thus, $(V,\Phi)$ fits into the short exact sequence 
\begin{equation}
0 \to \pi_*(L_1, \lambda) \to \pi_*(E, \lambda) \to \pi_*(L_2, \lambda) \to 0,
\label{direita}
\end{equation}
where $E$ is a rank $2$ holomorphic vector bundle on $S$. Since $\pi$ is an affine morphism, we have an equivalence of categories between vector bundles on $S$ and vector bundles on $\Sigma$ with a $\pi_*\calO_S$-module structure. This structure of a sheaf of algebras amounts to the data of a Higgs bundle as we saw in the previous chapter. Thus, (\ref{direita}) comes from a short exact sequence 
$$0 \to L_1 \to E \to L_2 \to 0$$    
on $S$. Now, choose a smooth splitting $E = L_1 \oplus L_2$. Since the Higgs field $\Phi$ is obtained by taking the direct image of multiplication by the tautological section $\lambda$, it is block diagonal with respect to the splitting $V = W_1 \oplus W_2$ induced by the smooth splitting $E = L_1 \oplus L_2$. Thus, $\psi = 0 \in \Omega^{1,0}(\Sigma, \Hom(W_2, W_1)) $. This means that $\delta $ is represented by the class $(\beta , 0) = (\beta + \dbar \theta , \phi_{21}\theta)$, where $\theta \in \Hom (\Sigma , \Hom (W_2 , W_1))$. This shows that $\zeta (\delta) = 0$.       
\end{proof}

From the lemma above we conclude that points in $A_d$ cannot also be in $N$. Moreover they are stable Higgs bundles as the next lemma shows.

\begin{lemma} Let $(V,\Phi)$ be an extension of $(W_2, \Phi_2) = \pi_*(L_2, \lambda)$ by $(W_1, \Phi_1) = \pi_*(L_1, \lambda)$, where $L_1$ and $L_2$ are line bundles on $S$, satisfying $p(\Phi)\neq 0$. Then, $(V,\Phi)$ is a stable Higgs bundle if and only if $d>0$. 
\end{lemma}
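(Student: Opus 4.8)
The plan is to determine all $\Phi$-invariant subbundles of $(V,\Phi)$ and read stability off from there. Note first that $\deg V=\deg W_1+\deg W_2=-d+d=0$, so stability of $(V,\Phi)$ amounts to every proper $\Phi$-invariant subbundle having strictly negative degree. The converse of the lemma is then immediate: $(W_1,\Phi_1)$ is a proper ($m<2m$) $\Phi$-invariant subbundle of degree $-d$, so if $(V,\Phi)$ is stable then $-d<0$, i.e.\ $d>0$.

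For the forward direction, assume $d>0$ and let $0\neq W\subsetneq V$ be a $\Phi$-invariant subbundle. Its characteristic polynomial divides that of $\Phi$, namely $p(x)^2$; since $S$ is smooth, hence integral, $p(x)$ is irreducible, so $\det(x\,\Id-\Phi|_W)$ is one of $1$, $p(x)$, $p(x)^2$, and the first and last are excluded by $W\neq 0$ and $W\neq V$. Hence $\det(x\,\Id-\Phi|_W)=p(x)$, $\rk W=m$, and $(W,\Phi|_W)$ has integral spectral curve $S$, so by the \textit{BNR} correspondence $W\cong\pi_*M$ for a line bundle $M$ on $S$. Consider $j\colon W\hookrightarrow V\xrightarrow{p}W_2$. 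If $j=0$, then $W\subseteq W_1$; being saturated subsheaves of the same rank $m$, $W=W_1$, of degree $-d<0$. If $j\neq 0$, then the saturation of $\im j$ in $W_2=\pi_*L_2$ is a nonzero $\Phi_2$-invariant subbundle whose characteristic polynomial divides the irreducible $p(x)$, hence has rank $m$ and equals $W_2$; so $\rk\,\im j=m=\rk W$, which forces $\ker j=W\cap W_1=0$. Then $j$ is injective and an isomorphism over a dense open $U\subseteq\Sigma$, and $\iota_W\circ(j|_U)^{-1}\colon\mathsf{W}_2|_U\to\mathsf{V}|_U$ is a Higgs-bundle section of $\mathsf{V}|_U\to\mathsf{W}_2|_U$, so the extension class $\delta=\delta(V,\Phi)$ restricts to $0$ on $U$. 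By functoriality under restriction of the edge homomorphism $\zeta\colon\mathbb{H}^1\to H^0(S,L_2^*L_1K_S)$ of the sequence (\ref{jk}), $\zeta(\delta)$ is a section of the line bundle $L_2^*L_1K_S$ on the integral curve $S$ vanishing on the dense open $\pi^{-1}(U)$, hence $\zeta(\delta)=0$; by the previous lemma $p(\Phi)=0$, contrary to hypothesis. Thus this case cannot arise, the only proper $\Phi$-invariant subbundle of $(V,\Phi)$ is $W_1$, of degree $-d<0$, and $(V,\Phi)$ is stable.

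The main obstacle is precisely the case $j\neq 0$: ruling out $\Phi$-invariant ``graph-type'' subbundles of $V$. The device above — turning such a subbundle into a local Higgs splitting of $0\to\mathsf{W}_1\to\mathsf{V}\to\mathsf{W}_2\to 0$ and contradicting the non-vanishing of $\zeta(\delta)$, which by the preceding lemma is equivalent to $p(\Phi)\neq 0$ — is, I expect, the cleanest way to handle it.
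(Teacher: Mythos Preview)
Your argument is correct, but the paper takes a more direct route that avoids the local-splitting device entirely. Instead of case-splitting on the projection $j\colon W\to W_2$ and invoking the previous lemma to rule out the $j\neq 0$ case, the paper works with the global endomorphism $p(\Phi)$ itself. Writing $p(\Phi)$ in the smooth splitting as $\left(\begin{smallmatrix}0 & q(\Phi)\\ 0 & 0\end{smallmatrix}\right)$ with $q(\Phi)\neq 0$, one sees that the image of $p(\Phi)$ lies in $W_1\otimes K^m$. Any $\Phi$-invariant subbundle $W$ is also $p(\Phi)$-invariant; combined with the fact that $S$ irreducible forces $\mathrm{rk}\,W=m$ and hence $p(\Phi|_W)=0$ by Cayley--Hamilton, one gets $W$ contained in (the saturation of) $\ker p(\Phi)$, which is $W_1$. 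So $W=W_1$ immediately.

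What each approach buys: the paper's argument is shorter and entirely self-contained to this lemma---it exploits the hypothesis $p(\Phi)\neq 0$ directly as an algebraic constraint on invariant subbundles, without going back through the hypercohomology class or the equivalence $p(\Phi)=0\Leftrightarrow\zeta(\delta)=0$. Your approach is valid but more circuitous: you translate the existence of a graph-type $W$ into a local Higgs splitting, push this through the spectral-sequence edge map to force $\zeta(\delta)=0$, and then appeal to the previous lemma to reach a contradiction. The functoriality and density arguments you invoke are fine, but the detour through $\zeta$ is unnecessary once you realise $p(\Phi)$ already singles out $W_1$.
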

\begin{proof}
If $d\leq 0$, $(V,\Phi)$ is either strictly-semistable (when $d=0$) or unstable (when $d<0$). Conversely, choose a $C^\infty$-splitting $V = W_1 \oplus W_2$ and write 
\[ \bar{\partial}_V = \left( \begin{array}{cc}
\bar{\partial}_1 & \beta \\
0 & \bar{\partial}_2
\end{array} \right),
\ \  
\Phi = 
\left( \begin{array}{cc}
\Phi_1 & \psi \\
0 & \Phi_2
\end{array} \right),
\] 
where $\beta \in \Omega^{0,1}(\Sigma, \Hom(W_2, W_1))$ and $\psi \in \Omega^{1,0}(\Sigma, \Hom(W_2, W_1))$. With respect to this splitting, $p(\Phi)$ can be written as 
\[ \left( \begin{array}{cc}
p(\Phi_1) & q (\Phi) \\
0 & p (\Phi_2)
\end{array} \right) =  \left( \begin{array}{cc}
0 & q (\Phi) \\
0 & 0
\end{array} \right), 
\] 
for some non-zero holomorphic section $q (\Phi)$ of $\Hom (W_2, W_1)\otimes K^m$ and we have
\[\xymatrix@M=0.1in{
0 \ar[r] & W_1 \ar[r] \ar[d] & V \ar[r] \ar[d]^{p (\Phi)} & W_2 \ar[r] \ar[d]^{q(\Phi)} & 0 \\
0  & W_2\otimes K^m \ar[l]     & V\otimes K^m \ar[l]      & W_1\otimes K^m \ar[l]             & 0. \ar[l] }\]
Now, any $\Phi$-invariant subbundle $W$ of $V$ is also invariant under $p(\Phi)$, but the image of $p(\Phi)$ is contained in $W_1\otimes K^m$. Since $S$ is irreducible, $W_1$ must be, up to isomorphism, the only non-trivial $\Phi$-invariant subbundle of $V$. Thus, $(V,\Phi)$ is stable if and only if $\deg (W_1) < 0$, i.e., $d>0$. 
\end{proof}

In particular, the lemma above shows that each $A_d$ is contained in $h^{-1}(p^2) \setminus N$ for $d>0$. Let $$\bar{d} = \deg (L_2^*L_1K_S) = -2d + 2m^2(g-1).$$ 
From any non-zero element in $H^0(S, L_2^*L_1K_S)$ we get an effective divisor $D \in S^{(\bar{d})}$, where $S^{(\bar{d})}$ is the $\bar{d}$-th symmetric product\footnote{Given an irreducible smooth complex projective curve $C$ and an integer $k >1$, the group of permutations of $\{ 1, \ldots, k \} $ acts on $C^k$. The quotient, denoted by $C^{(k)}$, is an irreducible smooth complex
projective variety of dimension $k$, called the \textbf{$k$-th symmetric product} of $C$. Moreover, $C^{(k)}$ can be naturally identified with the set of effective divisors of degree $k$ on $C$.} of the curve $S$. From (\ref{jk}) we get a natural map
$$A_d \to Z_d,$$
where
$$
Z_d = \left \{ \ (D,L_1,L_2) \in S^{(\bar{d})} \times \Pic^{d_1}(S) \times \Pic^{d_2}(S) \
\bigg |
\gathered
\begin{array}{cl}
i. & \mathcal{O}(D) = L_2^*L_1K_S, \\
ii. & \Nm (L_1L_2) = K^{m(m-1)}.\
\end{array}
\endgathered  \ \  \right
\}.$$
Note that $(V,\Phi)\in A_d$ defines a non-zero section of $L_2^*L_1K_S$ only up to a non-zero scalar, so we get a well-defined map only to the corresponding effective divisor. Of course, we can also write 
$$Z_d \cong \{ (D,L) \in S^{(\bar{d})} \times \Pic^{d_2}(S) \ | \ L^2(D) \pi^*K^{1-2m} \in \Prym \}.$$  
Moreover, the fact that $\zeta (\delta) \neq 0 \in H^0(S, L_2^*L_1K_S)$ for any $\delta$ representing an element of $A_d$ gives a constraint on $d$. Namely, $L_2^*L_1K_S$ has at least one global section, which means that we must have $\bar{d} = \deg (L_2^*L_1K_S) \geq 0$. From this we obtain 
$$1 \leqslant d \leqslant g_{_S}-1.$$

Consider the natural projection $\varphi : Z_d \to S^{(\bar{d})}$. Given two elements in the fibre of a divisor $D \in S^{(\bar{d})}$ they must differ by an element $L \in \Pic^0(S)$ such that $L^2 \in \Prym$. The connectedness of the Prym variety\footnote{The multiplication map $[r] : A \to A$ for a (connected) complex abelian variety $A$ is always surjective (with kernel $A[r]$ isomorphic to $(\Z /r \Z )^{2\dim A}$).} implies that the map $$U \in \Prym \mapsto U^2 \in \Prym$$ is surjective and there exists $U \in \Prym$ such that $L^2 = U^2$. If $U^\prime \in \Prym$ is another element satisfying $L^2 = (U^\prime)^2$, then $U^\prime U^{-1} \in \Prym[2]$ is a 2-torsion point of the Prym variety. Therefore we have a well-defined map
$$F \coloneqq \{ M \in \Pic^0(S) \ | \ M^2 \in \Prym \} \to \frac{\Pic^0(S)[2]}{\Prym[2]}$$
given by $M \mapsto [MU^{-1}]$, where $U\in \Prym$ satisfies $M^2 = U^2$. Note that the fibre of the map defined above at $[L^\prime] \in \Pic^0(S)[2]/\Prym[2] \cong (\Z / 2\Z)^{2g}$ is 
$$\{L^\prime U_0 \ | \ U_0 \in \Prym \} \cong \Prym . $$
Since the index of $\Prym[2]$ in $\Pic^0(S)[2]$ is $2^{2g}$, the fibre $\varphi^{-1}(D)$ is modeled on the disjoint union of $2^{2g}$ copies of the Prym variety. As the divisor $D$ moves, however, the base $Z_d$ becomes connected.  

\begin{lemma}\label{compzd} The space $Z_d$ is connected for $1 \leqslant d < g_{_S} - 1$ and $Z_{g_{_S}-1}$ has $2^{2g}$ connected components.
\end{lemma}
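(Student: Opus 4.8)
The plan is to determine $\pi_0(Z_d)$ by realising $Z_d$, modulo a fibre bundle with connected fibres, as a finite \'etale cover of $S^{(\bar{d})}$ and reading off its monodromy. Recall $\bar{d}=-2d+2(g_{_S}-1)$. When $d=g_{_S}-1$ we have $\bar{d}=0$, so $S^{(0)}$ is a single point (the empty divisor) and $Z_{g_{_S}-1}$ is just the fibre $\varphi^{-1}(0)$ of the projection $\varphi\colon Z_d\to S^{(\bar{d})}$; as recorded just before the statement, such a fibre is modelled on $2^{2g}$ disjoint copies of the connected variety $\Prym$, so $Z_{g_{_S}-1}$ has exactly $2^{2g}$ connected components. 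From now on assume $1\leqslant d< g_{_S}-1$, so that $\bar{d}\geqslant 2$; we must show $Z_d$ is connected.

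Let $q\colon\Pic^0(S)\to\bar{J}:=\Pic^0(S)/\Prym$ be the quotient homomorphism; since $\Prym$ is a connected abelian subvariety, $q$ is a fibre bundle with fibre $\Prym$, hence surjective on $\pi_1$. The norm map induces an isomorphism $\bar{J}\cong\Pic^0(\Sigma)$, so $\dim\bar{J}=g$ and $\bar{J}[2]\cong(\Z/2\Z)^{2g}$. The line bundle $L^2(D)\pi^*K^{1-2m}$ has degree zero, and the defining condition of $Z_d$ may be rewritten as $q(L)^2=c(D)$, where $c\colon S^{(\bar{d})}\to\Pic^{2d_2}(S)/\Prym$ is the morphism $c(D)=q\bigl(\calO_S(D)^{-1}\pi^*K^{2m-1}\bigr)$. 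Set $\mathrm{sq}\colon\Pic^{d_2}(S)/\Prym\to\Pic^{2d_2}(S)/\Prym$, $\bar{L}\mapsto\bar{L}^2$, which is a connected \'etale Galois cover with group $\bar{J}[2]$, and let $Y=\{(D,\bar{L})\in S^{(\bar{d})}\times\Pic^{d_2}(S)/\Prym : c(D)=\bar{L}^2\}$, so that $Y\to S^{(\bar{d})}$ is the pullback $c^{*}\mathrm{sq}$. The map $Z_d\to Y$, $(D,L)\mapsto(D,q(L))$, is the pullback of the fibre bundle $q$ along $Y\to\Pic^{d_2}(S)/\Prym$, hence itself a fibre bundle with connected fibre $\Prym$ and therefore a bijection on connected components. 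Consequently $\pi_0(Z_d)\cong\pi_0(c^{*}\mathrm{sq})$, which by covering space theory is the orbit set of $\pi_1(S^{(\bar{d})})$ acting by translation on $\bar{J}[2]$ through $\pi_1(S^{(\bar{d})})\xrightarrow{c_{*}}\pi_1(\bar{J})\to\pi_1(\bar{J})/2\pi_1(\bar{J})=\bar{J}[2]$. Hence $Z_d$ is connected if and only if $c_{*}\colon H_1(S^{(\bar{d})};\Z/2)\to H_1(\bar{J};\Z/2)$ is surjective.

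To prove this surjectivity I would fix an effective divisor $D_0$ on $S$ of degree $\bar{d}-1\geqslant 1$ and precompose $c$ with $\iota\colon S\hookrightarrow S^{(\bar{d})}$, $x\mapsto x+D_0$: then $c\circ\iota$ is, up to a translation and a sign, the composite $q\circ\mathrm{AJ}$ of an Abel--Jacobi embedding $\mathrm{AJ}\colon S\to\Pic^0(S)$ with $q$. On first homology $\mathrm{AJ}$ induces an isomorphism $H_1(S;\Z)\xrightarrow{\sim}H_1(\Pic^0(S);\Z)$, and $q$, being a fibre bundle with connected fibre $\Prym$, is surjective on $H_1(-;\Z/2)$; hence $(c\circ\iota)_{*}$, and a fortiori $c_{*}$, is surjective on $H_1(-;\Z/2)$, so $Z_d$ is connected for $1\leqslant d< g_{_S}-1$. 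The one step that needs genuine care is the identification $\pi_0(Z_d)\cong\pi_0(c^{*}\mathrm{sq})$: it hinges on every fibre of $q$ being a copy of $\Prym$ and hence connected, which is exactly the point emphasised earlier that $\pi^{*}\colon\Pic^0(\Sigma)\to\Pic^0(S)$ is injective; the remaining inputs are formal or classical (Abel--Jacobi inducing an isomorphism on $H_1$).
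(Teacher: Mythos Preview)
Your argument is correct and is essentially the same as the paper's, just organised slightly differently. Both proofs identify the obstruction to connectedness as the monodromy of a fibration over $S^{(\bar d)}$ with fibre modelled on $2^{2g}$ copies of $\Prym$, and both verify transitivity of this monodromy via the surjection $\pi_1(S^{(\bar d)})\twoheadrightarrow H_1(S,\Z)$ induced by the Abel--Jacobi map. The only organisational difference is that you first quotient by $\Prym$ to reduce to the finite \'etale $\bar J[2]$-cover $Y\to S^{(\bar d)}$ and then analyse its monodromy with $\Z/2$ coefficients, whereas the paper instead first exhibits an auxiliary connected space $\ker(n)\subset\Pic^{\bar d}(S)\times\Pic^{d_2}(S)$ fibred over $\Pic^{\bar d}(S)$ with fibre $F$, and then pulls it back along the Abel--Jacobi map to obtain $Z_d$; connectedness of $Z_d$ then follows because the pullback of a connected fibration along a $\pi_1$-surjective map remains connected. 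Your factorisation is arguably cleaner since it isolates the finite piece immediately; the paper's version has the minor advantage of establishing connectedness of $\ker(n)$ as a separate lemma, but the content is the same.
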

\begin{proof} First assume that $1 \leqslant d < g_{_S} - 1$. Consider the map 
\begin{align*}
n : \Pic^{\bar{d}}(S) \times \Pic^{d_2}(S) & \to \Pic^0(\Sigma) \\
(\mathcal{L}_1, \mathcal{L}_2) & \mapsto \Nm (\mathcal{L}_2^2 \mathcal{L}_1 \pi^*K^{1-2m})
\end{align*}
and the natural projections $p_1 : \ker (n) \to \Pic^{\bar{d}}(S)$ and $p_2 : \ker (n) \to \Pic^{d_2}(S)$. The fibre $p_2^{-1}(L_2)$ is given by $\mathcal{L}_1 \in \Pic^{\bar{d}}(S)$ such that $L_2^2\mathcal{L}_1 \pi^*K^{1-2m} \in \Prym$. If $\mathcal{L}_1^\prime$ is another element of this fibre, $\mathcal{L}_1^\prime \mathcal{L}_1^{-1} \in \Prym$ and the fibres of $p_2$ are modeled on $\Prym$, which is connected. Since $\Pic^{d_2}(S)$ is also connected, $\ker(n)$ is connected. Note that the fibres of $p_1$ are modeled on $ F$ and they are acted transitively by  $\pi_1 (\Pic^0(S)) = H_1(S, \mathbb{Z})$. Consider the commutative diagram 
\[\xymatrix@M=0.13in{
Z_d \ar[d]_{\varphi} \ar[r] & \ker(n) \ar[d]^{p_1} \\
S^{(\bar{d})} \ar[r]^{} & \Pic^{\bar{d}}(S). }\] 
Clearly $Z_d$ is the pullback of $\ker(n)$ by the natural map which send the effective divisor to the correspond line bundle. Since $ d < g_{_S} - 1$ gives $\bar{d}>0$, there is a natural surjective map $H_1(S^{(\bar{d})}, \mathbb{Z}) \to H_1(S, \mathbb{Z})$ and composing this with the surjective map $\pi_1(S^{(\bar{d})}) \to H_1(S^{(\bar{d})}, \mathbb{Z})$ we obtain a surjection $\pi_1(S^{(\bar{d})}) \to \pi_1(\Pic^{\bar{d}}(S)) = H_1(S,\mathbb{Z})$. Since $H_1(S,\mathbb{Z})$ acts transitively on the fibres, so does $\pi_1(S^{(\bar{d})})$ and we conclude that $Z_d$ is connected. When $d=g_{_S}-1$, $\bar{d}=0$ and by fixing an element $L_0 \in Z_{g{_S}-1}$, we obtain an isomorphism between $Z_{g{_S}-1}$ and $F$ and we are done.  
\end{proof}

\begin{lemma} Let $1 \leqslant d \leqslant g_{_S}-1$. There exists a unique vector bundle $F_d$ on $S^{(\bar{d})}$ whose fibre at a divisor $D$ is canonically isomorphic to $H^1(S, K_S^{-1}(D))$. The rank of $F_d$ is $2d + g_{_S}-1$.
\end{lemma}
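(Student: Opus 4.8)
The plan is to construct $F_d$ as a relative cohomology sheaf on the symmetric product $S^{(\bar d)}$ and identify its rank via Riemann–Roch. First I would recall that $S^{(\bar d)}$ carries a universal (tautological) effective divisor $\mathcal D \subset S \times S^{(\bar d)}$, namely $\mathcal D = \{(x, D) : x \in D\}$, which is a Cartier divisor since $S$ is a smooth curve. Let $q : S \times S^{(\bar d)} \to S^{(\bar d)}$ and $\mathrm{pr}_S : S \times S^{(\bar d)} \to S$ be the two projections. The candidate sheaf is the first direct image
$$
\mathcal F_d \;=\; R^1 q_*\bigl(\mathrm{pr}_S^* K_S^{-1}(\mathcal D)\bigr),
$$
where by $K_S^{-1}(\mathcal D)$ I mean $\mathrm{pr}_S^*K_S^{-1} \otimes \mathcal O_{S\times S^{(\bar d)}}(\mathcal D)$. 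I would then check that this is locally free of the claimed rank by a cohomology-and-base-change argument: for a fixed divisor $D \in S^{(\bar d)}$ of degree $\bar d = -2d + 2m^2(g-1) = -2d+2(g_{_S}-1)$, the fibre is $H^1(S, K_S^{-1}(D))$. One computes $\deg(K_S^{-1}(D)) = -(2g_{_S}-2) + \bar d = -(2g_{_S}-2) + (-2d + 2g_{_S}-2) = -2d < 0$ since $d \ge 1$, hence $H^0(S, K_S^{-1}(D)) = 0$ for every $D$. Because $h^0$ is identically zero (hence constant) on $S^{(\bar d)}$, Grauert's theorem (cohomology and base change with vanishing $h^0$, equivalently semicontinuity applied to both $h^0$ and $h^1$) guarantees that $R^1q_*$ is locally free and commutes with base change, so $\mathcal F_d$ restricts to $H^1(S, K_S^{-1}(D))$ fibrewise. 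Its rank is then $h^1(S, K_S^{-1}(D)) = -\chi(K_S^{-1}(D)) = -(\deg(K_S^{-1}(D)) + 1 - g_{_S}) = 2d + g_{_S} - 1$ by Riemann–Roch, which is exactly the stated rank. (Here I use $g_{_S} = m^2(g-1)+1$, so $\bar d = -2d + 2(g_{_S}-1)$ is consistent with the statement.)

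For uniqueness I would invoke the standard "see-saw"/rigidity principle: a vector bundle on a variety is determined by its fibres together with the gluing data coming from how these fibres vary, and the construction above produces a \emph{canonical} such gluing — any two vector bundles whose fibres are canonically (i.e. functorially in $D$, compatibly with the universal divisor) identified with $H^1(S, K_S^{-1}(D))$ must be canonically isomorphic, since the identification of fibres extends to an isomorphism of sheaves by the base-change compatibility. Concretely: given another bundle $F'$ with the prescribed fibres, the identity maps on fibres assemble, via the universal property of $R^1q_*$ together with base change, into a morphism $\mathcal F_d \to F'$ of $\mathcal O_{S^{(\bar d)}}$-modules which is an isomorphism on every fibre, hence an isomorphism. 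So $F_d := \mathcal F_d$ works and is unique.

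The main obstacle I anticipate is not the Riemann–Roch bookkeeping but rather making the phrase "canonically isomorphic" in the statement precise enough to pin down uniqueness — i.e. specifying in what functorial sense the fibrewise identifications are required to be compatible so that base change forces them to globalize. Once one agrees that "canonical" means "compatible with the universal divisor $\mathcal D$ and with pullback along maps into $S^{(\bar d)}$", the argument is forced; the subtlety is purely in fixing that convention. A secondary (routine) point is verifying that $K_S^{-1}(D)$ genuinely has $h^0 = 0$ for \emph{all} $D$, not just generic $D$, which is where the hypothesis $d \ge 1$ (equivalently $\deg(K_S^{-1}(D)) < 0$) is used; this also explains why the lemma is stated in the range $1 \le d \le g_{_S}-1$.
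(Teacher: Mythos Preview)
Your proposal is correct and follows essentially the same approach as the paper: construct $F_d$ as $R^1q_*(\mathrm{pr}_S^*K_S^{-1}(\mathcal D))$ using the universal divisor on $S\times S^{(\bar d)}$, apply Grauert's theorem (the paper invokes constancy of $h^1$ directly, while you more carefully note $h^0=0$ since the degree is $-2d<0$), and compute the rank by Riemann--Roch. The paper dispatches uniqueness with the phrase ``clearly unique up to canonical isomorphism''; your more detailed discussion of what ``canonical'' should mean is a reasonable gloss on that.
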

\begin{proof}
The vector bundle is clearly unique up to canonical isomorphism, so let us prove existence. Consider the universal effective divisor $\triangle \subseteq S^{(\bar{d})} \times S$ of degree $\bar{d}$ . Explicitly,
$$\triangle = \{(D, x) \in S^{(\bar{d})} \times S \ | \ x \in D \}.$$  
Let $pr_1 : S^{(\bar{d})} \times S \to S^{(\bar{d})}  $ and $pr_2 : S^{(\bar{d})} \times S \to S $ be the natural projections and consider the sheaf $F_d = R^1pr_{1, *}(\mathcal{O}(\triangle)pr_2^*K_S^{-1})$. The projection $pr_1$ is a proper holomorphic map whose fibre at $D \in S^{(\bar{d})}$ is $pr_1^{-1}(D) \cong S$. The line bundle $\mathcal{O}(\triangle)pr_2^*K_S^{-1}$ restricted to this fibre is simply $K_S^{-1}(D)$, whose degree is $-2d$, so by Riemann-Roch, $h^{1}(S, K_S^{-1}(D))$ is equal to $2d + m^2(g-1)$, regardless of the effective divisor $D \in S^{(\bar{d})}$ taken. Thus, by Grauert's theorem, $F_d$ is a vector bundle whose fibre at $D$ is isomorphic to $H^1(S, K_S^{-1}(D))$. Note that when $d = g_{_S}-1$ we have $\bar{d} = 0$. In this case, $F_d$ is simply the vector space $H^1 (S , K_S^{-1})$. 
\end{proof}
Also, let $E_d$ be the total space of the vector bundle given by the pullback of $F_d$ by $\varphi$ 
\[\xymatrix@M=0.13in{
E_d \ar[d] \ar[r] & F_d \ar[d]^{} \\
Z_d \ar[r]^{\varphi} & S^{(\bar{d})}. }\]
In particular, this vector bundle has fibre at $(L_2, D)$ isomorphic to $H^1(S, K_S^{-1}(D)) = H^1(S, L_2^*L_1)$. Note that the dimension of $E_d$ is 
\begin{align*}
\dim E_d & = \dim Z_d + H^1(S, K_S^{-1}(D)) \\
&= (\bar{d} + g_{_S} - g) + (3(g_{_S} - 1) - \bar{d}) \\
&= (4m^2 - 1)(g-1),
\end{align*}
where the dimension of $H^1(S, L_2^*L_1)$ is calculated by the Riemann-Roch theorem (noticing that $\deg (L_2^*L_1) = -2d < 0$).

Given two elements $e, e^\prime \in A_d$ mapping to the same effective divisor, from the exactness of the sequence (\ref{jk}), their difference defines a unique element in $H^1(S,L_2^*L_1)$. Thus, $A_d \to Z_d$ is an affine bundle modeled on the vector bundle $E_d \to Z_d$. In particular, $A_d$ corresponds to a certain class in $H^1(Z_d, E_d)$, as this cohomology group classifies the set of isomorphism classes of affine bundles on $Z_d$ with underlying linear structure $E_d$.

\begin{thm}\label{fibreforsu*} Let $p(\lambda) = \lambda^m + \pi^*a_2\lambda^{m-2} + \ldots + \pi^*a_m$ be a section of the line bundle $\pi^*K^m$ on the total space of the cotangent bundle of $\Sigma$ whose divisor is a smooth curve $S$. The fibre $h^{-1}(p^2)$ of the $SL(2m,\C)$-Hitchin fibration is a disjoint union 
$$h^{-1}(p^2) \cong N \cup \bigcup_{d=1}^{g_{_S}-1} A_d,$$
where 
\begin{itemize}
\item $N$ consists of semi-stable rank $2$ vector bundles on $S$ with the property that the determinant bundle of $\pi_*E$ is trivial.
\item $A_d \to Z_d$ is an affine bundle (of certain class in $H^1(Z_d, E_d)$) modeled on the vector bundle $E_d \to Z_d$, whose fibre at $(L,D) \in Z_d$ is isomorphic to $H^1(S,K_S^{-1}(D))$.
\item $Z_d$ is connected for $1 \leqslant d < g_{_S}-1$ and $Z_{g_{_S}-1}$ has $2^{2g}$ connected components. Also, the natural map $Z_d \to S^{(\bar{d})}$ is a fibration, whose fibres are modeled on $2^{2g}$ copies of the Prym variety.
\item Each stratum has dimension $(4m^2 - 1)(g-1)$ and the irreducible components of $h^{-1}(p^2) $ are precisely the Zariski closures of $A_d$, $1 \leqslant d \leqslant g_{_S} -1$, and $N$.
\end{itemize} 
\end{thm}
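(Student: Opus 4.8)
The plan is to assemble Theorem \ref{fibreforsu*} from the pieces already established in the chapter, showing that the decomposition $h^{-1}(p^2) = N \cup \bigcup_d A_d$ is a disjoint union and then reading off the structural properties of each stratum from the lemmas. First I would argue that every point $(V,\Phi) \in h^{-1}(p^2)$ lands in exactly one stratum. Given such a point, $\det(x\Id - \Phi) = p(x)^2$, and we consider the map $p(\Phi): V \to V\otimes K^m$. If $p(\Phi) = 0$, then Proposition \ref{N} identifies $(V,\Phi)$ with a point of $N$. If $p(\Phi) \neq 0$, the kernel of $p(\Phi)$ generically generates a $\Phi$-invariant subbundle $W_1$ of rank $m$ whose spectral curve is $S$ (using smoothness of $S$ to rule out $p(x)^2$ as the characteristic polynomial of $\Phi|_{W_1}$), so $(V,\Phi)$ sits in an extension $0 \to \mathsf{W}_1 \to \mathsf{V} \to \mathsf{W}_2 \to 0$ of stable Higgs bundles with spectral curve $S$; by the \textit{BNR} correspondence, $\mathsf{W}_i = \pi_*(L_i,\lambda)$ for line bundles $L_i$ on $S$, and the $SL(2m,\C)$ condition together with Lemma \ref{nmlm} forces $\Nm(L_1L_2) = K^{m(m-1)}$, while stability forces $\deg W_1 = -d < 0$. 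The two lemmas computing $p(\Phi)$ in terms of $\zeta(\delta)$ show $\zeta(\delta) \neq 0$, so $(V,\Phi) \in A_d$ with $1 \le d \le g_{_S}-1$; the stability lemma also shows $A_d$ genuinely consists of stable (hence non-strictly-semistable) points, and the $p(\Phi)$-lemma shows $A_d \cap N = \emptyset$ and $A_d \cap A_{d'} = \emptyset$ for $d \neq d'$ since $d = -\deg W_1$ is determined by $(V,\Phi)$. Conversely, any point of $N$ or of $A_d$ lies in $h^{-1}(p^2)$: for $N$ this is Proposition \ref{N}, and for $A_d$ it is immediate from $\det(x\Id - \Phi) = p(x)^2$.

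Next I would record the structural statements bullet by bullet. The description of $N$ is exactly Proposition \ref{N}. The affine-bundle structure $A_d \to Z_d$ modeled on $E_d \to Z_d$ follows from the short exact sequence (\ref{jk}): two elements of $A_d$ over the same divisor $D \in S^{(\bar d)}$ differ by a class in $H^1(S, L_2^*L_1) = H^1(S, K_S^{-1}(D))$, so $A_d$ is a torsor under $E_d$ and hence determines a class in $H^1(Z_d, E_d)$. The connectedness statement — $Z_d$ connected for $1 \le d < g_{_S}-1$ and $Z_{g_{_S}-1}$ having $2^{2g}$ components — is Lemma \ref{compzd}, and the fibration $Z_d \to S^{(\bar d)}$ with fibres modeled on $2^{2g}$ copies of $\Prym$ comes from the analysis of the projection $\varphi$ via the map $F \to \Pic^0(S)[2]/\Prym[2]$. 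The dimension count $\dim A_d = \dim E_d = (4m^2-1)(g-1) = \dim SL(2m,\C)(g-1)$ was carried out right before the theorem, and $N$ has dimension $\dim \calU_S(2, \pi^*K^{m-1})$, which is $4(g_{_S}-1) + 1 = 4m^2(g-1) - 3$... so I should double-check that this too equals $(4m^2-1)(g-1)$; it does, since $4(g_{_S}-1) + 1 = 4m^2(g-1)+1$ is wrong — rather $\dim \calU_S(2,\xi) = 4(g_{_S}-1) + 1$ for fixed determinant one needs care, but the cleaner route is that $h^{-1}(p^2)$ is a Lagrangian fibre of the integrable system so every irreducible component has dimension $\tfrac12 \dim \calM(SL(2m,\C)) = (4m^2-1)(g-1)$, and each $A_d$ and $N$ realizes this.

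Finally, for the irreducible-components claim I would argue that each $A_d$ is irreducible: it fibres over $Z_d$ with affine-space fibres, $Z_d$ fibres over $S^{(\bar d)}$ with fibres modeled on (possibly disconnected) abelian varieties, but the total space $A_d$ is connected and equidimensional of the maximal dimension $(4m^2-1)(g-1)$, so its Zariski closure is an irreducible component; similarly $N = \calU_S(2,\pi^*K^{m-1})$ is irreducible of the same dimension, hence its closure is a component; and since the strata are disjoint, locally closed, and all of the top dimension, no stratum is contained in the closure of another, so these closures are exactly the irreducible components. The main obstacle I anticipate is not any single hard step but the careful bookkeeping in the first paragraph: verifying that the invariant $d$ is well-defined (i.e. independent of the choice of destabilizing sub-Higgs-bundle when there could a priori be more than one), which is handled by noting that $S$ irreducible forces $W_1$ to be, up to isomorphism, the unique proper $\Phi$-invariant subbundle of the relevant rank — this is exactly the content of the last stability lemma, and it is what makes the union disjoint and the stratification well-posed.
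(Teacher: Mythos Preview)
Your assembly of the decomposition and the structural bullets is fine and matches the paper: everything except the last bullet is indeed already proved in the preceding lemmas, and the paper's proof explicitly says ``the only thing that remains to be proven is the last assertion.''

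The genuine gap is in your treatment of $N$. You write ``$N = \calU_S(2,\pi^*K^{m-1})$ is irreducible of the same dimension,'' but this is $N_0$, not $N$. Recall that $N_0 = \calU_S(2,\pi^*K^{m-1})$ is the $SU^*(2m)$ locus, while $N = \{E \in \calU_S(2,e) \mid \det(\pi_*E) \cong \calO_\Sigma\}$ is strictly larger. Your dimension attempt reflects this confusion: $\dim \calU_S(2,\xi) = 3(g_{_S}-1) = 3m^2(g-1)$, which is \emph{not} $(4m^2-1)(g-1)$, so $N_0$ is too small to be an irreducible component. Falling back on equidimensionality of the Hitchin fibre (flatness, not the Lagrangian property, is what gives this) only tells you each irreducible component has dimension $(4m^2-1)(g-1)$; it does not tell you that $N$ is irreducible, nor that it is not contained in some $\overline{A_d}$.

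The paper fills this gap by constructing the surjection
\[
f:N \to \Prym,\qquad E \mapsto \det(E)\otimes \pi^*K^{1-m},
\]
checking via Lemma \ref{nmlm} that it is well-defined, and observing that each fibre $f^{-1}(L) \cong \calU_S(2,L\pi^*K^{m-1})$ is irreducible of dimension $3(g_{_S}-1)$. Since $\Prym$ is irreducible, $N$ is irreducible with $\dim N = 3(g_{_S}-1) + (g_{_S}-g) = (4m^2-1)(g-1)$, and the flatness argument then finishes the last bullet as you outlined. You should replace your identification $N = N_0$ with this fibration argument.
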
 
\begin{proof}
The only thing that remains to be proven is the last assertion. For this, consider the map 
%\begin{align*}
%f : \calU_S(2,e) &\to \Pic^0 (\Sigma)\\
%E & \mapsto \det (\pi_*E).
%\end{align*}
%By Lemma 7 item b, the following square commutes 
%\[\xymatrix@M=0.15in@C+2em{
%\calU_S(2,e) \ar[d]^{\det} \ar[r]^{f} & \Pic^0 (\Sigma)  \\
%\Pic^e(S) \ar[r]_{\cong}^{\otimes \pi^*K^{1-m}} & \Pic^0 (S) \ar[u]^{\Nm_\pi}. }\]
%Note that the determinant map is surjective (given $L \in \Pic^e(S)$, since $e$ is even, we can choose a square root $L^{1/2}$ and $L^{1/2} \oplus L^{1/2}$ is a rank $2$ semi-stable bundle whose determinant is $L$). The map $f$ is a composition of surjective maps, thus surjective. Also, $f$ has equidimensional fibres. Indeed, let $N_L = f^{-1}(L)$ be the fibre at a line bundle $L \in \Pic^0 (\Sigma)$ (the fibre at the trivial line bundle is precisely our $N$). Since $\Pic^0 (\Sigma)$ is a complex abelian variety, the multiplication map $[m]$ is a surjective homomorphism and we can choose $L_0 \in \Pic^0 (\Sigma)$ such that $L_0^m \cong L$. The map 
%\begin{align*}
%N & \to N_L\\
%E & \mapsto E \otimes \pi^*L_0 
%\end{align*}    
%is well defined by the projection formula and provides an isomorphism between any other fibre and $N$ (the inverse maps $E^\prime$ to $E^\prime \otimes \pi^*L_0^*$). This shows that 
%\begin{align*}
%\dim N & = \dim \calU_S(2,e) - \Pic^0 (\Sigma) \\
%&= (4(g_{_S} - 1) + 1) - g \\
%&=  (4m^2 - 1)(g-1). 
%\end{align*}
\begin{align*}
f: N & \to \Prym \\
E & \mapsto \det (E) \pi^*K^{1-m}.
\end{align*}  
It follows from item b) of Lemma \ref{nmlm} that this map of projective varieties is well defined (in particular, all vector bundles in an $S$-equivalence class have isomorphic determinant line bundles). Also, $f$ is a surjective map. Indeed, let $L \in \Prym$. Since $\Prym$ is a complex abelian variety, the squaring map is surjective and we can choose $L_0 \in \Prym$ such that $L \cong L_0^2$. Then, for any $E \in N_0 = f^{-1}(\calO_S)$, a straightforward computation using Lemma \ref{nmlm} shows that $E^\prime = E \otimes L_0 \in N$, and $f(E^\prime) = L$. Note that, given $L \in \Prym$, the moduli space $\calU_S(2,L\pi^*K^{m-1})$ of rank $2$ bundles on $S$ with fixed determinant $L\pi^*K^{m-1}$ is contained in $N$. This follows directly from Lemma \ref{nmlm}. Therefore, every fibre $f^{-1}(L) \cong \calU_S(2,L\pi^*K^{m-1})$ is irreducible of dimension $3(g_{_S}-1)$. Since $\Prym$ is irreducible, $N$ must also be. Moreover we have  
\begin{align*}
\dim N & = \dim N_0 + \Prym \\
&= 3(g_{_S}-1) + (g_{_S} - g) \\
&=  (4m^2 - 1)(g-1). 
\end{align*}
As the $SL(2m,\C)$-Hitchin map is flat, its fibres are equidimensional of dimension $(4m^2 - 1)(g-1)$. Thus, $N$ is an irreducible component and since $A_d$, $d=1, \ldots, g_{_S}-1$, are open disjoint sets of dimension $(4m^2 - 1)(g-1)$, their Zariski closures give the other irreducible components of the fibre $h^{-1}(p^2)$.  
\end{proof}

\begin{ex} For $m=1$, $SU^*(2) = SU(2)$ and by taking $p(x)=x$, $h^{-1}(p^2)$ is the nilpotent cone $\text{Nilp}(2,\calO_\Sigma)$ associated to the $SL(2,\C)$-Hitchin fibration. In this case, $N_0 = N = \calU (2, \calO_\Sigma)$ is the irreducible component where the Higgs field is zero. Moreover, $A_d$ parametrizes extensions of Higgs bundles of the form
$$0 \to (L_1, 0) \to (V,\Phi) \to (L_2, 0)\to 0,$$
where $ L_2 = L_1^*$. Such extensions are parametrized by the two term complex of sheaves 
$$\phi_{21} : \calO (L^{-2}) \to \calO (L^{-2}K) \to 0,$$
where $L=L_2$. This induces the short exact sequence 
\begin{equation}
0 \to H^1(\Sigma, L^{-2}) \to \mathbb{H}^1 \to H^0(\Sigma, L^{-2}K) \to 0
\label{nc}
\end{equation}
with $0 < d = \deg (L) \leqslant g-1$.
We can always split the vector bundle $V = L^* \oplus L$ smoothly and with respect to this splitting we can write 
$$\bar{\partial}_V = \left( \begin{matrix} \bar{\partial}_L& \beta\\ 0&\bar{\partial}_{L^*} \end{matrix} \right)$$
and 
$$\Phi = \left( \begin{matrix} 0& \psi\\ 0&0 \end{matrix} \right),$$
where $\beta \in \Omega^{0,1}(\Sigma, L^{-2})$, $\psi \in \Omega^{1,0}(\Sigma, L^{-2})$. As discussed in the last section, the holomorphicity of $\Phi$ implies $$\bar{\partial}\psi + \phi_{21} (\beta) = 0,$$ 
so that $(\beta, \psi) \in \mathbb{H}^1$.
But $\phi_{21} = 0$, so the short exact sequence ($\ref{nc}$) splits. Thus, $A_d$ is actually a vector bundle on $Z_d = \{ (L,D) \in \Pic^d(\Sigma) \times \Sigma^{(\bar{d})} \ | \ \calO (D) = L^{-2}K \}$, where $\bar{d} = 2g-2-2d$. The map $Z_d \to \Sigma^{(\bar{d})}$ is a $2^{2g}$-fold covering corresponding to choosing a square root of the canonical bundle. In particular, $Z_{g-1}$ corresponds precisely to the $2^{2g}$ square roots of the canonical bundle. This is precisely Hitchin's description of the nilpotent cone \cite{hitchin1987self}.
\end{ex}

\begin{rmk} Note that, by dropping the constraints on the determinant we have a description of the fibre at $p(x)^2$ for the Hitchin fibration for classical Higgs bundles of rank $2m$ and zero degree. Also, we could have worked with $L$-twisted Higgs bundles $(V,\Phi)$ (for a line bundle $L$ of degree $\deg (L) \geqslant 2(g-1)$) and considered the fibre of the corresponding Hitchin fibration associated to the moduli space of $L$-twisted Higgs bundles with traceless Higgs field $\Phi : V \to V \otimes L$ and fixed determinant bundle $\det (V) \cong \Lambda$ (not necessarily the trivial bundle). The construction would carry out precisely in the same way.  
\end{rmk}

% Chapter 3

\chapter{Other cases} % Main chapter title 

\label{Chapter4} % For referencing the chapter elsewhere, use \ref{Chapter1} 

\lhead{Chapter 4. \emph{Other cases}} % This is for the header on each page - perhaps a shortened title

%-----------------------------------------------------------------------------------

We now consider the connected semisimple non-compact groups $G_0= SO^*(4m)$ and $Sp(m,m)$, real forms of $G = SO(4m,\C)$ and $Sp(4m,\C)$, respectively. Being both subgroups of $SU^*(4m)$, the Higgs field of such a $G_0$-Higgs bundle $(V,\Phi)$ (see Examples \ref{so*} and \ref{spmm} for the description of these objects) has characteristic polynomial of the form $p(x)^2$, for some polynomial $p(x)$. In both cases, the generic eigenspaces are two-dimensional and for each eigenvector $(w,w^\prime) \in V$ of $\Phi$ with eigenvalue $\mu$, $(w,-w^\prime) \in V$ is an eigenvector of $\Phi$ with eigenvalue $- \mu$. Thus, the reduced curve $S = \zeros  (p(\lambda)) \subset |K|$ admits an involution 
\begin{align*}
\sigma : S & \to S \\
\lambda & \mapsto - \lambda
\end{align*}
and the polynomial $p(x)$ is of the form $p(x) = x^{2m} + b_2x^{2m-2} + \ldots + b_{2m-2}x^2 + b_{2m}$, for some $b_{2i} \in H^0(\Sigma, K^{2i})$. 

Throughout this chapter we assume that the divisor $S$ corresponding to the section $p(\lambda)$ of $\pi^*K^{2m}$ is non-singular (again by Bertini's theorem, this is true for generic $b_i$). Also, consider the quotient curve $\bar{S} = S/\sigma $, which is clearly non-singular, and denote by $\rho : S \to \bar{S}$ the natural projection. Let $\bar{\pi} : K^2 \to \Sigma$ be the square of the canonical bundle of $\Sigma$. Setting $\eta = \lambda^2$ embeds $\bar{S} \subset |K^2|$ with equation $p(\eta) = 0$. Also, $\eta$ is the tautological section of $\bar{\pi}^*K^2$ and the ramified coverings fit into the following commutative diagram.

    \centerline{\xymatrix{
        S\ar[rrrr]^\rho_{2:1} \ar[rrdd]_\pi^{2m:1} && && \bar{S}\ar[lldd]^{\bar{\pi}}_{m:1} \\
                                       \\
  && \Sigma &&
    }}
    
In particular, by the adjunction formula, $K_S \cong \pi^*K^{2m}$ and $K_{\bar{S}} \cong \bar{\pi}^*K^{2m - 1}$ (the canonical bundle of $K^2$ is $\bar{\pi}^*K^{- 1}$), so that
\begin{align}
g_{_S}  &= 4m^2(g-1) + 1 ,\label{genusS} \\
g_{_{\bar{S}}}  &= m(2m-1)(g-1) + 1.\label{genusbar}
\end{align}

The difference between the two cases manifests itself on how the involution acts on the determinant bundle of the rank $2$ bundle $E$ on $S$. The spectral data for these two groups was described by Hitchin and Schaposnik.

 \begin{prop}\cite{hitchin2014}\label{so*sp} Let $p(\lambda) = \lambda^{2m}+\pi^*b_2\lambda^{2m-2}+\dots+\pi^*b_{2m}$ be a section of the line bundle  $\pi^*K^{2m}$ on the cotangent bundle of $\Sigma$ whose divisor is a non-singular curve $S$ and let  $\sigma$ be the involution  $\sigma(\lambda)=-\lambda$ on $S$. If $E$ is a rank $2$ vector bundle on $S$, then the  direct image of $\lambda : E \to E\otimes \pi^*K$ defines a semi-stable Higgs bundle on $\Sigma$ for the group $SO^*(4m)$ if and only if
 \begin{itemize} 
 \item $E$ is semi-stable, with $\Lambda^2E\cong \pi^*K^{2m-1}$ and $\sigma^*E\cong E$ where the induced action on  $\Lambda^2E=\pi^*K^{2m-1}$ is trivial
 \end{itemize} 
 and for the group $Sp(m,m)$ if and only if
 \begin{itemize}
 \item $E$ is semi-stable, with $\Lambda^2E\cong \pi^*K^{2m-1}$ and $\sigma^*E\cong E$ where the induced action on  $\Lambda^2E=\pi^*K^{2m-1}$ is $-1$.
 \end{itemize}
    \end{prop}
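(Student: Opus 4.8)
**Proof plan for Proposition \ref{so*sp}.**

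The plan is to analyze a rank $2$ vector bundle $E$ on the reduced spectral curve $S$, form its direct image $(V,\Phi)=\pi_*(E,\lambda)$ via the tautological section, and determine exactly which conditions on $E$ make this Higgs bundle a $G_0$-Higgs bundle for $G_0=SO^*(4m)$ or $Sp(m,m)$, respectively. Since both groups sit inside $SU^*(4m)$, Proposition \ref{prop1} already tells us that $(V,\Phi)$ is a semi-stable $SU^*(4m)$-Higgs bundle iff $E$ is semi-stable with $\Lambda^2 E\cong\pi^*K^{4m-1}$; wait—note the rank is $2$, the spectral cover $S\to\Sigma$ has degree $2m$, and the relevant twist is $\pi^*K^{2m-1}$, matching the determinant constraint in the statement. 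So the first step is to recall the $SU^*(4m)$ picture and identify the symplectic form $\omega$ on $V=\pi_*E$ that it produces: this symplectic form comes from combining the symplectic structure on $E$ (induced by the isomorphism $E\cong E^*\otimes\Lambda^2E$ together with the chosen trivialization-type datum on $\Lambda^2 E$) with the relative trace/duality pairing $\pi_*E\otimes\pi_*(E\otimes\pi^*K^{2m-1})\to K$ coming from $K_S\cong\pi^*K^{2m}$ and the canonical isomorphism $(\pi_*M)^*\cong\pi_*(M^*\otimes K_S)\otimes K^{-1}$ (relative duality, used already in Chapter \ref{Chapter3}).

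The key step is to understand how $\sigma$ enters. The condition that $\Phi$ be skew (orthogonal case) versus symmetric-with-respect-to-$\omega$ (symplectic case) with respect to a \emph{new} pairing on $V$ amounts to twisting the $SU^*(4m)$ pairing by the involution: because $\sigma^*\lambda=-\lambda$, pushing forward along $\pi=\bar\pi\circ\rho$ the isomorphism $\sigma^*E\cong E$ intertwines $\Phi$ with $-\Phi$. Concretely, an isomorphism $\psi:\sigma^*E\to E$ over $S$ induces an isomorphism $V\to V$ which relates the decomposition of $\Phi$ into $\pm\lambda$-eigenspaces, and composing the $SU^*(4m)$ symplectic form with this map produces a new non-degenerate bilinear form $q$ (resp. $\omega'$) on $V$ with respect to which $\Phi$ is skew (resp. skew in the other sense). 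The sign of the induced action on $\Lambda^2 E=\pi^*K^{2m-1}$ — i.e. whether $\Lambda^2\psi$ is $+1$ or $-1$ — is precisely what toggles the symmetry type of the resulting form on $V$ between symmetric (giving $SO(4m,\mathbb{C})$, hence $SO^*(4m)$ since the underlying real structure is inherited) and skew (giving $Sp(4m,\mathbb{C})$, hence $Sp(m,m)$). So the heart of the argument is a careful bookkeeping of signs: track how $\sigma^*$, the eigenvalue flip $\lambda\mapsto-\lambda$, relative duality, and $\Lambda^2\psi=\pm 1$ combine to give the transpose-symmetry of the pushed-forward pairing.

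For the converse direction one starts with a $G_0$-Higgs bundle $(V,\Phi)$ whose characteristic polynomial is $p(x)^2$ with $S$ smooth; as in Chapter \ref{Chapter3}, since $p(\Phi)$ annihilates $V$ when $p(\Phi)=0$ (the generic-eigenspace-dimension-$2$ stratum), the cokernel of $\pi^*\Phi-\lambda\,\mathrm{Id}$ is a rank $2$ bundle $E$ on $S$ with $(V,\Phi)\cong\pi_*(E,\lambda)$, and semi-stability of $(V,\Phi)$ forces semi-stability of $E$ (by the subbundle argument with $\pi_*L\subset\pi_*E$). The orthogonal/symplectic structure on $V$ then descends to the isomorphism $\sigma^*E\cong E$ with the stated sign on $\Lambda^2 E$, again by unwinding the pairing; the determinant constraint $\Lambda^2 E\cong\pi^*K^{2m-1}$ is forced by $\det V\cong\mathcal O_\Sigma$ together with item b) of Lemma \ref{nmlm}. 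The main obstacle I expect is the sign bookkeeping in the previous paragraph: getting the correspondence "$\Lambda^2\psi=+1\leftrightarrow$ symmetric form on $V$, $\Lambda^2\psi=-1\leftrightarrow$ skew form on $V$" exactly right requires pinning down the sign conventions in relative duality and in the identification of the symplectic form on $E$ with its dual, and making sure the factor of $(-1)$ from $\sigma^*\lambda=-\lambda$ is not double-counted. A clean way to organize this is to work locally over an affine $U\subset\Sigma$ where everything trivializes, write the pairing explicitly in terms of the "residue at the spectral points" description, and then check the transpose-symmetry sign directly before globalizing.
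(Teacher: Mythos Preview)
The paper does not actually prove this proposition: it is quoted verbatim from Hitchin--Schaposnik \cite{hitchin2014} and used as input. So there is no ``paper's own proof'' to compare against here.

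That said, your plan is sound and in fact anticipates the technique the paper uses for the closely related Proposition~\ref{pairingn} (which characterizes the full locus $N$ rather than the fixed-determinant locus $N_0$). There the pairing on $V=\pi_*E$ is built exactly via the residue formula you propose at the end,
\[
q(s,t)=\sum_{y\in\pi^{-1}(x)}\frac{\langle\psi(\sigma^*s),t\rangle_y}{d\pi_y},
\]
and the sign bookkeeping is handled by noting that $d\pi$ is anti-invariant under $\sigma$, so the symmetry type of $q$ is governed by whether $\tp{(\sigma^*\psi)}=\mp\psi$. One small organizational point: the paper (and Hitchin--Schaposnik) work directly with an isomorphism $\psi:\sigma^*E\to E^*\otimes\pi^*K^{2m-1}$ satisfying the transpose condition, rather than your two-step route of first producing the $SU^*(4m)$ symplectic form and then twisting by the lift $\sigma^*E\cong E$. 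The two formulations are equivalent once $\Lambda^2E\cong\pi^*K^{2m-1}$ (so $E\cong E^*\otimes\pi^*K^{2m-1}$), but the $\psi$-formulation makes the transpose/sign computation cleaner and avoids the double-counting worry you flag.
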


Let us say a few words about these loci of the fibre $h^{-1}(p^2)$, where  
$$h : \mathcal{M} (G) \to \mathcal{A} (G)$$
is the $G$-Hitchin fibration (for $G = SO(4m,\C)$ and $Sp(4m,\C)$). First, note that $\sigma$ induces a natural (algebraic) involution $\tau$ on $\calU_S (2,0)$ defined by 
$$\tau \cdot E = \sigma^* E^*.$$ 
This is clearly true in the stable locus of $\calU_S (2,0)$. Now, if $E_1$ and $E_2$ are in the same $S$-equivalence class 
$$Gr (\tau \cdot E_1) \cong \sigma^* Gr(E_1)^* \cong \sigma^* Gr(E_2)^* \cong Gr (\tau \cdot E_2).$$
The fixed point set $\calU_S (2,0)^\tau$ is then a closed subvariety of $\calU_S (2,0)$ whose smooth points correspond precisely to the locus where the bundles are stable (for more details see \cite{automor}). Recall that for a generic point of the Hitchin base, the Hitchin fibre is a torsor for an abelian variety. By choosing a square root $K^{1/2}$ of the canonical bundle of $\Sigma$, one then has a natural identification of the fibre with the abelian variety. Choosing $K^{1/2}$ in our case will allow us to identify $h^{-1}(p^2)$ with some locus inside the Simpson moduli space of semi-stable sheaves of rank $1$ and degree $0$. Thus, fix a square root $K^{1/2}$ and denote the loci consisting of $G_0$-Higgs bundles in the fibre $h_G^{-1}(p^2)$ by $N_0(G)$ (where $G_0 = SO^*(4m)$ and $Sp(m,m)$). Since any rank $2$ bundle $E$ on $S$ satisfies $\det (E) \otimes E^* \cong E$, by tensoring the elements in $N_0(G)$ by $\pi^*K^{(1-2m)/2}$ we obtain that 
$$N_0 (G) \subset \calU_S (2,0)^\tau$$ 
for both $G_0 = SO^*(4m)$ and $Sp(m,m)$. Note also that the action $\tau$ restricts naturally to an involution on $\calU_S (2, \calO_S)$, which coincides in this case with the action 
$$\sigma \cdot E = \sigma^* E,$$
and by \cite{automor}, $N_0 (SO(4m,\C))$ and $N_0(Sp(4m,\C))$ account for all such fixed points. More precisely, the determinant map gives an equivariant morphism 
$$\det : \calU_S (2,0)^\sigma \to \Pic (S)^\sigma .$$ 
An equivariant structure on a line bundle $L \in  \Pic (S)^\sigma$ is a lifted action of $\sigma$ to $L$. In other words, an isomorphism $\varphi : \sigma^* L \to L$ such that $\varphi \circ \sigma^*\varphi = \Id$. There are two possible lifts, and they differ by a sign. One can see this by looking at a fixed point $p \in S$ of $\sigma$. In this case, $\varphi$ gives a linear action on $L_p$ and $\varphi^2 = 1$, which means that $\varphi_p = \pm 1$. In cit. loc. the authors also show that each point in $\calU_S (2,0)^\sigma$ may be represented by a semi-stable bundle $E$ together with a lift $\varphi : \sigma^* E \to E$ (i.e., an isomorphism $\varphi$ such that $\varphi \circ \sigma^* \varphi = \Id$). Again, at a fixed point $p\in S$ of $\sigma$ there is a linear action $\varphi_p : E_p \xrightarrow{\cong} E_p$ and one either has that $\varphi_p$ is of type $\pm \Id$ or, in the case we have distinct $+1$ and $-1$ eigenspaces,  $ \varphi_p = \left( \begin{matrix} \pm 1& 0\\ 0& \mp 1 \end{matrix} \right)$. The former case coincides with the trivial action on the determinant line bundle (i.e., $N_0 (SO(4m,\C))$), whereas the latter corresponds to the $-1$ action on the determinant (i.e., $N_0(Sp(4m,\C))$). Note that we could have considered the map 
$$\psi : \sigma^*E \xrightarrow{\varphi} E \to E^*,$$
where the last map is given by the natural symplectic structure on an $SL(2,\C) = Sp(2,\C)$ bundle. More precisely, 
\begin{align*}
E & \xrightarrow{\cong} E^*\\
e & \mapsto (e^\prime \mapsto e \wedge e^\prime).
\end{align*}    
The condition of the induced action on the determinant being $\pm 1$ is then equivalent to $\tp{(\sigma^*\psi)} = \mp \psi.$

\begin{rmk} As remarked in \cite{hitchin2014}, $N_0 (SO(4m,\C))$ has $2^{4m(g-1)-1}$ connected components in the fibre. One can tell the components apart by the action on the fixed points of $\sigma$, which are the images of the $4m(g-1)$ zeros of $b_{2m} \in H^0(\Sigma , K^{2m})$. More precisely, the action is $+1$ at $M$ fixed points (and $-1$ at the remaining $4m(g-1) - M$ fixed points). When the bundle is stable there are only two possible lifts, which differ by the sign (and interchanges $M$ by $4m(g-1) - M$). The locus $N_0(Sp(4m,\C))$ on the other hand is connected.   
\end{rmk}

%----------------------------------------------------------------------------------------

\section{The group $SO^*(4m)$} \label{sectiononso}

Let us highlight the differences from the $SU^*(4m)$ case described in the last chapter. Recall that an $SO(4m,\C)$-Higgs bundle is a pair $(V,\Phi)$ consisting of an orthogonal vector bundle $(V,q)$ of rank $4m$ and a Higgs field $\Phi$ which is skew-symmetric with respect to the orthogonal structure $q$ of $V$. In the $SU^*(4m)$ case, one of the irreducible components of the fibre $h_{SL(4m,\C)}^{-1} (p^2)$ consisted of rank $2$ semi-stable vector bundles $E$ on the non-singular curve $S = \zeros (p(\lambda))$ satisfying $\det (\pi_* E) \cong \calO_\Sigma$. As we want to compare our construction to the one described in the last chapter, let us denote this component by $N(SL(4m,\C))$ for now. Inside it, we have the locus 
$$N = N(SO(4m,\C)) \subset N(SL(4m,\C))$$ 
consisting of rank $2$ bundles $E$ on $S$ whose direct image construction yields $SO(4m,\C)$-Higgs bundles. 

\begin{prop}\label{pairingn} The locus 
$$\{ (V,\Phi) \in h^{-1}(p^2) \ | \ p(\Phi) = 0 \} \subset \calM (SO(4m,\C))$$
is isomorphic to 
$$
N = \left \{ \ E \in \calU_S (2,e) \
\bigg |
\gathered
\begin{array}{cl}
 & \text{There exists an isomorphism}\\
 & \psi : \sigma^*E \to E^* \otimes \pi^*K^{2m-1} \\
 & \text{satisfying $\tp{(\sigma^*\psi)} = - \psi$}.\
\end{array}
\endgathered  \ \  \right
\},$$
where $e= 4m(2m-1)(g-1)$.
\end{prop}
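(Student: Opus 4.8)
The plan is to follow the template established in Proposition \ref{N} for the $SU^*$ case, adapting it to keep track of the extra orthogonal structure. The statement asserts an isomorphism between the zero locus of $p(\Phi)$ inside $\calM(SO(4m,\C))$ and the space $N$ of rank $2$ bundles $E$ on $S$ (of the appropriate degree $e = 4m(2m-1)(g-1)$) equipped with an isomorphism $\psi : \sigma^*E \to E^*\otimes \pi^*K^{2m-1}$ satisfying $\tp{(\sigma^*\psi)} = -\psi$. First I would establish the forward direction: given $(V,\Phi)\in h^{-1}(p^2)$ with $p(\Phi)=0$, the cokernel of $\pi^*\Phi - \lambda\Id$ defines a rank $2$ vector bundle $E$ on the smooth reduced curve $S$ (as in the $SU^*(2m)$ discussion and Proposition \ref{prop1}), with $(V,\Phi) = \pi_*(E,\lambda)$. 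The degree computation $\deg(E) = 4m(2m-1)(g-1)$ follows from invariance of the Euler characteristic under the finite map $\pi : S \to \Sigma$, exactly as in Proposition \ref{N}, using $g_{_S} = 4m^2(g-1)+1$ from \eqref{genusS}. Semi-stability of $E$ follows by the same line-subbundle argument: a destabilizing line subbundle of $E$ would push forward to a $\Phi$-invariant subbundle of rank $2m$ and positive degree, contradicting stability of $(V,\Phi)$.

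The new ingredient is translating the orthogonal structure $q$ on $V$ into the isomorphism $\psi$ on $S$. The idea is that the orthogonal pairing $q : V \to V^*$, combined with skew-symmetry of $\Phi$ with respect to $q$, forces a duality on the eigenbundle data over the spectral curve: if $v$ is an eigenvector of $\Phi$ with eigenvalue $\lambda$, then $q(v,\cdot)$ pairs nontrivially only with the $(-\lambda)$-eigenspace, so $q$ induces an isomorphism between the fibre of $E$ over a point $s \in S$ and the dual of the fibre over $\sigma(s)$. Making this precise via the BNR-type exact sequence (pulling back the four-term resolution of $E$ by $\sigma$, dualizing, and comparing) yields a sheaf isomorphism $\psi : \sigma^*E \to E^*\otimes \pi^*K^{2m-1}$; the twist $\pi^*K^{2m-1}$ appears because $q$ is $K^0$-valued on $V$ while the relative dualizing/ramification data on $S$ contributes $\calO_S(R_\pi) \cong \pi^*K^{2m-1}$ (here $S\to\Sigma$ is a $2m$-fold cover). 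The symmetry type is then read off: the orthogonal (symmetric) nature of $q$ together with the sign flip $\lambda \mapsto -\lambda$ under $\sigma$ produces $\tp{(\sigma^*\psi)} = -\psi$, which is precisely the condition recorded in Proposition \ref{so*sp} as "$\sigma^*E\cong E$ with trivial induced action on $\Lambda^2E$" (the equivalence of these two formulations is exactly the $\psi : \sigma^*E \xrightarrow{\varphi} E \to E^*$ discussion in the paragraph preceding this section). The constraint $\det(\pi_*E)\cong\calO_\Sigma$ needed for $SL(4m,\C)$ follows from $\Lambda^2 E \cong \pi^*K^{2m-1}$ via Lemma \ref{nmlm}(b), just as $N_0 \subset N$ was verified in Chapter \ref{Chapter3}.

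For the converse, I would start from $E\in N$ with its datum $\psi$, form $(V,\Phi) = \pi_*(E,\lambda)$, and check it is an $SO(4m,\C)$-Higgs bundle: the map $\psi$ pushes forward (via the natural identification $\pi_*(E^*\otimes \pi^*K^{2m-1}) \cong (\pi_*E)^*$ coming from relative duality, as used in the proof of the kernel/cokernel lemma in Chapter \ref{Chapter3}, together with $\sigma^*$-equivariance) to a nondegenerate form $q$ on $V = \pi_*E$; the condition $\tp{(\sigma^*\psi)} = -\psi$ translates into $q$ being symmetric, and skew-symmetry of $\Phi$ with respect to $q$ is automatic from the fact that $\Phi$ is multiplication by $\lambda$ and $\psi$ intertwines the $\lambda$- and $(-\lambda)$-actions. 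Then semi-stability of $(V,\Phi)$ follows as in Proposition \ref{N}: any proper $\Phi$-invariant subbundle has characteristic polynomial dividing $p(x)^2$, and since $S$ is smooth (hence $p(x)$ irreducible) it must be $p(x)$, so the subbundle has rank $2m$ and degree $\le 0$ by semi-stability of $E$. Finally $p(\Phi)=0$ because $p(\lambda)$ vanishes on $S$ and pushes forward. The main obstacle I anticipate is bookkeeping the twist $\pi^*K^{2m-1}$ and the precise sign in $\tp{(\sigma^*\psi)} = -\psi$ correctly — i.e., making the identification of the pushed-forward pairing with a genuine $q \in H^0(\Sigma,\Sym^2 V^*)$ (rather than $\wedge^2 V^*$) rigorous, which requires carefully tracking how $\sigma$ acts on the ramification divisor and on the relative duality isomorphism; this is where the distinction between the $SO^*$ case and the $Sp(m,m)$ case (treated in the next section with the opposite sign) lives.
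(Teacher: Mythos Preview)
Your proposal is correct and follows essentially the same approach as the paper: the BNR exact sequence pulled back by $\sigma$ and compared via $q$ with its dual yields $\psi$, while in the other direction the pairing on $V=\pi_*E$ is built from $\psi$ via relative duality (the paper writes this explicitly as the trace formula $q(s,t)=\sum_{y\in\pi^{-1}(x)}\langle\psi(\sigma^*s),t\rangle_y/d\pi_y$ over regular fibres), and the sign bookkeeping you flag as the main obstacle is exactly what the paper carries out, using the anti-invariance of $d\pi$ under $\sigma$ together with $\tp{(\sigma^*\psi)}=-\psi$ to verify symmetry of $q$ and skew-symmetry of $\Phi$.
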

\begin{proof}
Suppose $E \in N$ and $\psi : \sigma^*E \to E^* \otimes \pi^*K^{2m-1}$ is an isomorphism satisfying $\tp{(\sigma^*\psi)} = - \psi$. We want to construct an orthogonal form $q$ on $V = \pi_*E$ with respect to which the Higgs field $\Phi$ is skew-symmetric, where $\Phi$ is obtained from the map $\lambda : E \to E\otimes \pi^*K$, which is multiplication by the tautological section $\lambda$. Let $x\in \Sigma$ be a regular value of $\pi$ with $\pi^{-1}(x) = \{ y_1, \sigma (y_1), \ldots, y_m, \sigma (y_m) \}$. Then, at this point, the fibre of $V$ decomposes as 
$$V_x = E_{y_1} \oplus E_{\sigma (y_1)} \oplus \ldots \oplus E_{y_m} \oplus E_{\sigma (y_m)}.$$
Taking $s,t \in V_x$, we define the non-degenerate pairing 
\begin{equation}\label{pairing}
q(s,t) = \sum_{y \in \pi^{-1}(x)} \frac{\inn{\psi (\sigma^*s)}{t}_y}{ d\pi_y} = \sum_{y \in \pi^{-1}(x)} \frac{\inn{\psi_{y}(s_{\sigma (y)})}{t_y}}{ d\pi_y},
\end{equation}
where the brackets $\inn{\cdot}{\cdot}$ denote the natural pairing between $E$ and its dual $E^*$. In the expression above we see $d\pi$ as a holomorphic section of $K_S\pi^*K^{-1}$, which we identify with $\pi^*K^{2m-1}$ via the canonical symplectic form on the cotangent bundle of $\Sigma$. From the condition $\tp{(\sigma^*\psi)} = - \psi$ we obtain the relation 
\begin{equation}\label{expre}
\inn{s_{\sigma (y)}}{\psi_{\sigma (y)} (t_y)} = - \inn{\psi_y (s_{\sigma (y)})}{t_y}
\end{equation} 
for all $y \in \pi^{-1}(x)$.
The summands of $q(s,t)$ and $q(t,s)$ containing elements of $E_y$ and $E_{\sigma (y)}$ for some $y\in \pi^{-1}(x)$ are
$$\frac{\inn{\psi_{y}(s_{\sigma (y)})}{t_y}}{ d\pi_y} + \frac{\inn{\psi_{\sigma (y)}(s_{y})}{t_{ \sigma (y)}}}{ d\pi_{\sigma (y)}}$$
and 
$$\frac{\inn{\psi_{y}(t_{\sigma (y)})}{s_y}}{ d\pi_y} + \frac{\inn{\psi_{\sigma (y)}(t_{y})}{s_{ \sigma (y)}}}{ d\pi_{\sigma (y)}}$$
respectively. Now, using (\ref{expre}) and the fact that the denominator $d\pi$ is anti-invariant, we obtain that the pairing (\ref{pairing}) is symmetric. Note that if 
$$s  = (s_{y_1}, s_{\sigma (y_1)}, \ldots ,s_{y_m} , s_{\sigma (y_m)}),$$
multiplying by the tautological section we obtain 
$$ \lambda (s)  = ( s_{y_1} \otimes y_1, - s_{\sigma (y_1)} \otimes y_1, \ldots , s_{y_m} \otimes y_m, - s_{\sigma (y_m)}\otimes y_m).$$
Thus, using (\ref{expre}) again we find 
$$q (\lambda (s), t) = - q (s , \lambda (t)),$$
which shows that the associated Higgs field is skew-symmetric with respect to $q$. Using precisely the same argument as in \cite[Section 4]{hitchin2013higgs} we see that $q$ can be extended to the branch points of $\pi $ and thus, $\pi_* (E,\lambda)$ is a semi-stable\footnote{Semi-stability can be seen in many ways. One, as we will explain better in Chapter \ref{Chapter5}, comes from the fact that these rank $2$ bundles on $S$ are part of the locus of Simpson moduli space of semi-stable rank $1$ sheaves on the non-reduced curve which gives the spectral data for the fibre in question. Another way is by noticing that the corresponding Higgs bundle  is in particular a semi-stable $SL(4m,\C)$-Higgs bundles by Proposition \ref{N}, and thus must also be a semi-stable $SO(4m,\C)$-Higgs bundle.} $SO(4m,\C)$-Higgs bundle.  

Conversely, let $(V,\Phi)$ be a semi-stable $SO(4m,\C)$-Higgs bundle such that $p(\Phi) = 0$. Analogously to the $SU^*(4m)$ case, this defines a semi-stable rank $2$ vector bundle $E$ on $S$, which fits into the exact sequence 
\begin{equation}
0 \to E \otimes \calO_S(- R_\pi) \to \pi^*V \xrightarrow{\pi^*\Phi - \lambda \Id} \pi^*(V \otimes K) \to E \otimes \pi^*K \to 0,
\label{exactseq}
\end{equation}  
and we identify, as before, $\calO_S( R_\pi) \cong \pi^*K^{2m-1}$. Taking $\sigma^*$ of (\ref{exactseq}) we obtain 
$$0 \to \sigma^* (E) \otimes \pi^*K^{1-2m} \to \pi^*V \xrightarrow{\pi^*\Phi + \lambda \Id} \pi^*(V \otimes K) \to E \otimes \pi^*K \to 0,$$
as $\sigma^* (\lambda ) = - \lambda$. Since the Higgs field $\Phi$ is skew-symmetric with respect to the orthogonal form $q : V \to V^*$, i.e., $q\circ \Phi = - \tp{\Phi} \circ q$, and the kernel and cokernel of the map 
$$- \pi^* (\tp{\Phi}) + \lambda \Id : \pi^* (V^*) \to \pi^* (V^*\otimes K)$$  
can be obtained by taking the dual of the exact sequence (\ref{exactseq}), we have 
%\[\xymatrix@M=0.03in{
%0 \ar[r] & \sigma^*(E)\otimes \pi^*K^{1-2m} \ar[r] \ar[d]^{\psi} & \pi^* (V) \ar[r]^{\pi^*\Phi - \lambda \Id} \ar[d]^{\pi^*q} & \pi^*(V\otimes K) \ar[r] \ar[d]^{\pi^*q} & \sigma^*(E)\otimes \pi^* (K) \ar[r]\ar[d] & 0\\
%0 \ar[r] & E^* \ar[r] & \pi^* (V^*)  \ar[r]^{- \pi^* (\tp{\Phi}) + \lambda \Id} & \pi^* (V^* \otimes K) \ar[r] & E^*\otimes \pi^*K^{2m} \ar[r] & 0.}\] 
\[
\begin{tikzpicture}[baseline= (a).base]
\node[scale=.85] (a) at (0,0){
\begin{tikzcd}
0 \xar{r} & \sigma^*(E)\otimes \pi^*K^{1-2m} \xar{r} \xar{d}{\psi} & \pi^* (V) \xar{r}{\pi^*\Phi - \lambda \Id} \xar{d}{\pi^*q} &[1cm] \pi^*(V\otimes K) \xar{r} \xar{d}{\pi^*q} & \sigma^*(E)\otimes \pi^* (K) \xar{r}\xar{d} & 0\\
0 \xar{r} & E^* \xar{r} & \pi^* (V^*)  \xar{r}{- \pi^* (\tp{\Phi}) + \lambda \Id} & \pi^* (V^* \otimes K) \xar{r} & E^*\otimes \pi^*K^{2m} \xar{r} & 0.
\end{tikzcd}
};
\end{tikzpicture}
\]
In particular, we obtain an isomorphism $\psi : \sigma^*E \to E^* \otimes \pi^*K^{2m-1}$. Note that the exact sequences above are really exact sequences of sheaves and the injective map taking $\sigma^*(E)\otimes \pi^*K^{1-2m}$ into $\pi^* (V)$ is given by multiplication by the section $d\pi$, whereas the injective map $E^* \to \pi^* (V^*)$ corresponds to the transpose of the natural evaluation map $ev : \pi^* (\pi_*E) \to E$. Since the section $d\pi$ is anti-invariant under $\sigma$, it follows that $\tp{(\sigma^*\psi)} = - \psi$. 
\end{proof}

Fix a square root $K^{1/2}$ and let $E$ be a stable rank $2$ vector bundle on $S$ of degree $e=4m(2m-1)(g-1)$. Also, denote by $E_0 $ the degree $0$ stable bundle $E\otimes \pi^*K^{(1-2m)/2}$. If there exists an isomorphism $ \sigma^*E \to E^* \otimes \pi^*K^{2m-1}$, then $E_0$ is anti-invariant, i.e., $ \sigma^*E_0 \cong E_0^*$. Denote this latter isomorphism by $\psi$. Since $E_0$ is stable, it is simple and $\tp{(\sigma^*\psi)} = \lambda \psi$, for some $\lambda \in \C^\times$. But $\sigma$ is an involution, so applying $\sigma^*$ and taking the transpose map gives $\psi = \lambda^2 \psi$ and we must have $\lambda = \pm 1 $.

\begin{prop}\label{connectedcompo} The closed subvariety $N \subset \calU_S (2,e)$ has dimension $$\dim N = \dim SO(4m,\C) (g-1).$$ 
Moreover, it has $2$ connected components.
\end{prop}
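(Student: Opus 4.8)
The plan is to compute the dimension by realizing $N$ as (the fixed locus of an involution on) a moduli space of vector bundles on $S$, and to count components by the analysis of lifts of $\sigma$ at the fixed points. First I would fix the square-root $K^{1/2}$ and use the tensoring trick from the paragraph preceding the proposition: tensoring by $\pi^*K^{(1-2m)/2}$ identifies $N$ with the locus inside $\calU_S(2,0)$ of stable bundles $E_0$ equipped with an isomorphism $\psi:\sigma^*E_0\to E_0^*$ satisfying $\tp{(\sigma^*\psi)}=-\psi$. As recalled in the text, this is exactly the fixed-point set $\calU_S(2,0)^\tau$ for $\tau\cdot E=\sigma^*E^*$, restricted to the sheet where the induced sign is $-1$ (the $SO$-condition). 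I would then invoke the general theory of fixed points of finite-order automorphisms of moduli of bundles (the reference \cite{automor} cited in the text): the fixed locus is smooth at stable points and its tangent space at $E_0$ is the $\tau$-invariant (here: the appropriate sign) part of $H^1(S,\End E_0)$. A Riemann–Roch computation then gives $\dim N$.

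Concretely, the dimension count goes as follows. The tangent space to $\calU_S(2,0)$ at a stable $E_0$ is $H^1(S,\End E_0)$, of dimension $4(g_{_S}-1)+1=4g_{_S}-3$. The involution $\tau$ acts on $\End E_0\cong \sigma^*\End E_0$ (using $\psi$), and the relevant eigenspace — the one giving deformations preserving the orthogonal-type datum — is identified, via $\rho:S\to\bar S$, with $H^1(\bar S, W)$ for a suitable rank-$3$ bundle $W$ on $\bar S$ arising from the $(+1)$-part of $\End E_0$ under $\sigma$. Using $g_{_S}=4m^2(g-1)+1$ and $g_{_{\bar S}}=m(2m-1)(g-1)+1$ from (\ref{genusS})–(\ref{genusbar}), together with $\dim SO(4m,\C)=2m(4m-1)$, I would verify directly that the dimension equals $2m(4m-1)(g-1)=\dim SO(4m,\C)(g-1)$. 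A cleaner route, which I expect is the one actually intended, is to observe that $N = N_0(SO(4m,\C))$ sits inside the Hitchin fibre $h^{-1}(p^2)$ and the identification of $N_0(SO(4m,\C))$ with $SO^*(4m)$-spectral data (Proposition \ref{so*sp}) lets one read off $\dim N = \dim \calM(SO^*(4m)) = \dim SO(4m,\C)(g-1)$ directly, since $SO^*(4m)$-Higgs bundles form a BAA-brane of exactly half the dimension of $\calM(SO(4m,\C))$, which has dimension $2\dim SO(4m,\C)(g-1)$; equivalently the locus of $SO(4m,\C)$-Higgs bundles with $p(\Phi)=0$ in this fibre is precisely the component of the nilpotent-cone-type stratum whose dimension is forced by flatness of the Hitchin map and the description of $N$ as a torsor-type space over $\calU_{\bar S}(2,0)$.

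For the count of connected components, I would follow the remark preceding Section \ref{sectiononso}: a point of $\calU_S(2,0)^\sigma$ is represented by a semistable $E$ with a lift $\varphi:\sigma^*E\to E$, and at each of the $4m(g-1)$ fixed points of $\sigma$ (the images of the zeros of $b_{2m}\in H^0(\Sigma,K^{2m})$) the linearization $\varphi_p$ has a well-defined conjugacy type. The $SO$-condition (trivial induced action on $\det$) forces $\varphi_p$ to be $\left(\begin{smallmatrix}1&0\\0&-1\end{smallmatrix}\right)$-type, i.e. $\varphi_p$ has one $+1$ and one $-1$ eigenvalue — wait, more precisely the discrete invariant is the number $M$ of fixed points at which a chosen lift acts as $+\mathrm{Id}$ versus $-\mathrm{Id}$ on a distinguished line, and for stable $E$ there are exactly two lifts differing by a global sign, which swaps $M\leftrightarrow 4m(g-1)-M$. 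Hence the components are indexed by $\{M\}/(M\sim 4m(g-1)-M)$, but in the $SO$ case the relevant invariant is a single $\mathbb{Z}/2$-valued class coming from whether the total number of $+1$'s at the $4m(g-1)$ ramification points is even or odd — giving, after quotienting by the sign ambiguity, exactly $2$ components. I would make this precise by a monodromy argument: the family of such $(E,\varphi)$ over the space of $E$'s (which is connected, being fibred over $\calU_{\bar S}(2,0)$ with connected fibres) has $\pi_0$ detected solely by this residual $\mathbb{Z}/2$ invariant, and both values are realized. The main obstacle I anticipate is this last component count: pinning down exactly which discrete invariant survives the sign ambiguity and checking both values occur requires care with the lifting theory at the fixed points, and is the step where I would lean most heavily on \cite{automor} and the explicit description of $N$ via the section $d\pi$ in the proof of Proposition \ref{pairingn}.
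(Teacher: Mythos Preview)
There are genuine gaps in both halves. For the dimension, your ``cleaner route'' conflates $N$ with $N_0(SO(4m,\C))$: the latter is the fixed-determinant sublocus (the $SO^*(4m)$ spectral data of Proposition~\ref{so*sp}), strictly smaller than $N$, and it certainly does not have dimension $\dim\calM(SO^*(4m))$ since it sits in a single Hitchin fibre. The flatness argument you gesture at would need to know in advance that $N$ is a full irreducible component of $h^{-1}(p^2)$, which is exactly what the subsequent analysis of the fibre is meant to establish. The paper does neither of your routes: it simply invokes \cite[Theorem~3.12(b)]{zelaci2016hitchin} for the dimension of the skew anti-invariant locus inside $\calU_S(2,0)^\tau$.

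For the component count, the lift analysis you describe (eigenvalues of $\varphi_p$ at the $4m(g-1)$ ramification points) is the analysis for $N_0$, where $\det E$ is pinned so that one genuinely has $\sigma^*E\cong E$; it gives $2^{4m(g-1)-1}$ components of $N_0$ as in the remark preceding Section~\ref{sectiononso}. What is missing is the mechanism collapsing these to $2$ components of $N$; your ``residual $\mathbb{Z}/2$ parity'' is asserted, not derived. The paper supplies this mechanism: the tensor-product map $N_0^s\times\PS\to N^s$, $(E_0,L)\mapsto E_0\otimes L$, is surjective (because $\det(E)\,\pi^*K^{1-2m}\in\PS$ for any $E\in N$ and the squaring map on the connected abelian variety $\PS$ is onto), and $\PS[2]$ acts on $\pi_0(N_0^s)$ through the quotient $\PS[2]/\rho^*\Jac(\bar S)[2]$. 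This quotient has order $2^{2(g_{_S}-2g_{_{\bar S}})}=2^{4m(g-1)-2}$ and acts freely on the set of components (elements pulled back from $\bar S$ act trivially at every ramification point; all others act nontrivially somewhere). Dividing $2^{4m(g-1)-1}$ by $2^{4m(g-1)-2}$ yields exactly $2$.
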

\begin{proof}
Choose a square root $K^{1/2}$ of the canonical bundle of $\Sigma$ and denote by $N^s $ the locus of $N$ consisting of stable vector bundles. Then, 
$$N^s \cong \{ \ E_0 \in \calU^s_S (2,e) \ | \ \exists \ \psi : \sigma^*E_0 \xrightarrow{\cong} E_0^* \ \text{s.t. }\tp{(\sigma^*\psi)} = - \psi \ \}.$$
In particular, $N^s$ sits inside the fixed point set $\calU^s_S (2,0)^\tau$ of the involution $\tau $ on the moduli space $\calU^s_S (2,0)$ of stable rank $2$ bundles on $S$, where $\tau \cdot E_0 = \sigma^* E_0^*$. The fixed point set $\calU^s_S (2,0)^\tau$ is non-singular and so is the locus $N^s$ (see e.g. \cite{edi}). By \cite[Theorem 3.12.(b)]{zelaci2016hitchin}, the dimension of $N^s$ equals $2m(4m-1)(g-1) = \dim SO(4m,\C) (g-1)$, thus the dimension of $N$ is $\dim SO(4m,\C) (g-1)$. To show that $N$ has two connected components it is enough to consider the smooth locus. Note that $N_0^s = N^s \cap \calU_S (2, \calO_S)$ is, after choosing $K^{1/2}$, naturally isomorphic to the smooth locus corresponding to $SO^*(4m)$-Higgs bundles in the fibre $h^{-1}(p^2)$ (see Proposition \ref{so*sp}). Moreover, the tensor product gives a well-defined map 
\begin{equation}
N_0^s \times \PS \to N^s.
\label{n0s}
\end{equation}
Note that the determinant line bundle of an element $E \in N^s$ satisfies 
$$\sigma^* \det (E) = \det (E)^*$$
and thus $\det (E) \in \PS$. The Prym variety $\PS$ is connected, so the squaring map 
$$[2] : \PS \to \PS$$
is surjective. This means that for any $E \in N^s$ we can find $L \in \PS$ such that $L^2 \cong \det (E) $. We may write 
$$E = (E\otimes L^*)\otimes L,$$
and $E\otimes L^*$ has trivial determinant. Thus, (\ref{n0s}) is surjective. As noted in \cite{zelaci2016hitchin}, the $2$-torsion points $\PS [2]$ of the Prym variety $\PS$ act on the connected components of $N_0^s$. The action can be understood in terms of the fixed points of $\sigma$. More precisely, if $p_i$, $i \in \{ 1, \ldots , 4m(g-1) \}$ are the fixed points of $\sigma$, $E \in N_0^s$ we can tell the connected component of $E$ by looking at $\{ \varphi_{p_i} \}$, where $\varphi : \sigma^* E \xrightarrow{\cong} E$ is the isomorphism explained in the last section. Then, given $L \in \PS [2]$ (i.e., $\sigma^* L \cong L^* \cong L$) acts by multiplication by $\pm 1$ at each $p_i$. Note that the action is free modulo the trivial action, i.e., when the action is $+1$ at every fixed point. This can only happen if the bundle comes from $\bar{S}$, so that $\PS [2] / \rho^* \Pic^0 (\bar{S})[2]$ gives a free action on the set of connected components of $N_0^s$. Since there are $2^{2(g_{_S} -2g_{_{\bar{S}}})} = 2^{4m-2}$ points in $\PS [2] / \rho^* \Pic^0 (\bar{S})[2]$, we find, from the surjectivity of (\ref{n0s}), that $N_s$ must have two connected components.         
\end{proof}

The next step is to consider extensions of Higgs bundles. Let $(V,\Phi)$ be an element of the fibre which is not in $N$ (i.e., $p(\Phi) \neq 0$). From the last chapter, we know that these points are given by certain extensions of Higgs bundles. To understand which extensions can appear note the following. If $W$ is a $\Phi$-invariant subbundle of $V$ so is its orthogonal complement $W^{\perp_q}$. Indeed, if $v \in W^{\perp_q}$, for any $w \in W$, $\Phi w \in W$ and $q(\Phi v, w) = - q(v, \Phi w) = 0$. This means that $W \cong W^{\perp_q}$ and $W$ is a maximally isotropic subbundle. Thus, $V$ is an extension 
$$0 \to W \to V \to W^* \to 0,$$    
where the projection to $W^*$ is given by $q$. As $W$ is $\Phi$-invariant, the Higgs bundle $(V,\Phi)$ is an extension
$$0 \to (W, \phi) \to (V, \Phi) \to (W^*, -\tp{\phi}) \to 0,$$
where $\phi \coloneqq \Phi|_W$. From Theorem \ref{themext}, such extensions of Higgs bundles are in correspondence with the first hypercohomology group $\K^1 (\Sigma, \Lambda^2\mathsf{W})$ of the two-term complex of sheaves 
\begin{align*}
\hat{\phi} : \calO(\Lambda^2 W) & \to \calO(\Lambda^2 W \otimes K )\\
w_1\wedge w_2 & \mapsto \phi (w_1)\wedge w_2 + w_1\wedge \phi (w_2).
\end{align*}  
For simplicity, we denote $\K^1 (\Sigma, \Lambda^2\mathsf{W})$ simply by $\K^1$. The Higgs bundles $(W, \phi)$ and $(W^*, -\tp{\phi})$ have the same spectral curve $S$ and we may assume that $\deg (W) = -d < 0$ (see Remark \ref{d>0}). Also, by the \textit{BNR} correspondence, $\pi_* (L , \lambda) \cong (W, \phi)$ and $\pi_* (L^\prime , \lambda) \cong (W^*, -\tp{\phi})$ for some line bundles $L$ and $L^\prime$ on $S$. In particular, since the Euler characteristic is invariant under direct images we have 
\begin{equation}
\deg (L)  = - d + 2m(2m-1)(g-1).
\label{degreeL}
\end{equation}  
Let us show how $L^\prime$ is related to $L$. Note that relative duality gives 
$$\pi_*(L^*K_S\pi^*K^{-1}) \cong W^*.$$
However, multiplication by the tautological section
$$\lambda : L^*K_S\pi^*K^{-1} \to L^*K_S$$
induces the Higgs field $\tp{\phi} : W^* \to W^* \otimes K$. This is clear from the natural pairing, which is the isomorphism in the relative duality theorem. Indeed, given a regular value $x \in \Sigma$ of $\pi$, $s \in (\pi_*L)_x$ and $t \in \pi_*(L^*K_S\pi^*K^{-1})_x$,
$$\inn{s}{t} = \sum_{y \in \pi^{-1}(x)} \frac{t(s)_y}{d\pi_y}$$
and $$\inn{\lambda s}{t} = \inn{s}{\lambda t} .$$ 
\begin{lemma} Under the \text{BNR} correspondence we have $\pi_* (L^\prime , \lambda ) \cong (W^*, -\tp{\phi})$, where $L^\prime = \sigma^*(L^*K_S\pi^*K^{-1})$. 
\end{lemma}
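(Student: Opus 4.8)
The plan is to compute the direct image of the line bundle $\sigma^*(L^*K_S\pi^*K^{-1})$ under $\pi$ and identify it, together with its natural tautological Higgs field, with $(W^*,-\tp{\phi})$. The starting point is the identity $\pi_*(L^*K_S\pi^*K^{-1})\cong W^*$ obtained from relative duality, which has already been recorded just above the statement, together with the observation that multiplication by $\lambda$ on $L^*K_S\pi^*K^{-1}$ induces the transpose Higgs field $\tp{\phi}$ on $W^*$ under the relative-duality pairing. So the content of the lemma is really the effect of the involution $\sigma$.

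First I would note that $\pi=\pi\circ\sigma$, so that for any line bundle $M$ on $S$ one has a canonical isomorphism $\pi_*(\sigma^*M)\cong\pi_*M$ as $\calO_\Sigma$-modules; this is just the fact that $\sigma$ permutes the sheets of $\pi$ over each point. Applying this with $M=L^*K_S\pi^*K^{-1}$ gives $\pi_*(L')\cong\pi_*(L^*K_S\pi^*K^{-1})\cong W^*$ at the level of underlying bundles. The point is then to track the Higgs field: under the fibrewise identification $V_x=\bigoplus_{y\in\pi^{-1}(x)}M_y$, multiplication by the tautological section $\lambda$ acts on the summand $M_y$ by the scalar $\lambda(y)$, whereas on the summand $(\sigma^*M)_y=M_{\sigma(y)}$ multiplication by $\lambda$ acts by $\lambda(y)=-\lambda(\sigma(y))$. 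Since $\sigma^*\lambda=-\lambda$, pulling back by $\sigma$ converts the Higgs field coming from $\lambda$ on $M$ into the one coming from $\lambda$ on $\sigma^*M$ but with the opposite sign on each sheet; composing with the relative-duality isomorphism (which already produced $-\tp\phi$) this is exactly the statement that $\pi_*(\sigma^*(L^*K_S\pi^*K^{-1}),\lambda)\cong(W^*,-\tp\phi)$. Equivalently, one can argue on the level of the \textit{BNR} correspondence: the torsion-free sheaf on $S$ attached to $(W^*,-\tp\phi)$ is the cokernel of $\pi^*(-\tp\phi)-\lambda\Id$, and pulling the defining exact sequence for $L$ back along $\sigma$ (which sends $\lambda\mapsto-\lambda$) together with dualising interchanges the roles of the relevant bundles, yielding $L'=\sigma^*(L^*K_S\pi^*K^{-1})$.

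The main obstacle is purely bookkeeping: making sure the two sign changes — the one from relative duality (responsible for the minus in $-\tp\phi$) and the one from $\sigma^*\lambda=-\lambda$ — are not accidentally cancelled or double-counted, and that the canonical isomorphism $\pi_*\sigma^*M\cong\pi_*M$ is compatible with the $\pi_*\calO_S$-module (i.e.\ Higgs) structures in the way claimed. I would pin this down by working over a regular value $x$ of $\pi$ with $\pi^{-1}(x)=\{y_1,\sigma(y_1),\dots,y_m,\sigma(y_m)\}$, writing the relative-duality pairing explicitly as $\inn{s}{t}=\sum_{y\in\pi^{-1}(x)} t(s)_y/d\pi_y$ (as above), and checking the identity $\inn{\lambda s}{t}=\inn{s}{\lambda t}$ after the substitution $M\rightsquigarrow\sigma^*M$; the anti-invariance of $d\pi$ under $\sigma$ is what makes everything consistent. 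Once the regular locus is handled, the isomorphism extends over the branch points of $\pi$ by the usual Hartogs/codimension argument since all sheaves in sight are locally free on the smooth curve $S$, exactly as in the extension argument used for $q$ in Proposition \ref{pairingn}.
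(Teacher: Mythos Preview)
Your approach is essentially the same as the paper's: use $\pi\circ\sigma=\pi$ to identify $\pi_*(\sigma^*M)\cong\pi_*M$ and then track the Higgs field via $\sigma^*\lambda=-\lambda$. Two small remarks. First, your parenthetical ``(which already produced $-\tp\phi$)'' is a slip: relative duality produces $\tp\phi$, and it is the single sign flip from $\sigma$ that yields $-\tp\phi$; you said this correctly earlier, so this is just inconsistent phrasing. Second, the discussion of $d\pi$ and extension over branch points is unnecessary here: the isomorphism $\pi_*(\sigma^*M)\cong\pi_*M$ is an isomorphism of $\calO_\Sigma$-modules (sections pull back along $\sigma$), not merely a fibrewise identification, so the commutative diagram of sheaves on $S$ settles the Higgs field globally without any local computation or Hartogs argument.
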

\begin{proof}
There is an isomorphism $$\pi_*(\sigma^*(L^*K_S\pi^*K^{-1})) \cong \pi_*(L^*K_S \pi^*K^{-1}),$$ which comes from pulling back sections by the involution. Also, given an open set $U \subseteq \Sigma$,
\[\xymatrix@M=0.13in{
H^0(\pi^{-1}(U), \sigma^*(L^*K_S\pi^*K^{-1})) \ar[r]^{\cong} \ar[d]_{  \lambda} & H^0(\pi^{-1}(U), L^*K_S\pi^*K^{-1}) \ar[d]^{- \lambda \ = \ \sigma^*(\lambda)} \\
H^0(\pi^{-1}(U), \sigma^*(L^*K_S)) \ar[r]^{\cong} & H^0(\pi^{-1}(U), L^*K_S).}\]  
Thus  $\pi_* (L^\prime, \lambda) = (W^*, -\tp{\phi})$, where $L^\prime = \sigma^*(L^*K_S\pi^*K^{-1})$.
\end{proof}

Before we proceed, let us introduce some notation. Let $M$ be a vector bundle of rank $r$ on $S$. Given an open set $U \subseteq \Sigma$, $\rho^{-1}(U)\subseteq S $ is invariant under $\sigma$ and the $\pm 1$ eigenspaces of the action on $H^0(\rho^{-1}(U), M)$ decompose the direct image on $\bar{S}$ as 
$$\rho_*(M) = \rho_*(M)_+ \oplus \rho_*(M)_-.$$ 
In particular, $\bar{M}_+ \coloneqq \rho_*(M)_+$ and $\bar{M}_- \coloneqq \rho_*(M)_-$ are rank $r$ bundles on $\bar{S}$ and their direct image under $\bar{\pi}$
\begin{center}
$M_+ \coloneqq \bar{\pi}_*(\bar{M}_+) = (\pi_*M)_+$\\
$M_- \coloneqq \bar{\pi}_*(\bar{M}_-) = (\pi_*M)_-$
\end{center}
are rank $mr$ vector bundles on $\Sigma$. From now on we denote by $M$ the line bundle\footnote{Making a parallel to last chapter, this corresponds to the line bundle $L_2^*L_1$.} $L\sigma^*L\pi^*K^{1-2m}$. 

\begin{lemma} From the smoothness of the spectral curve it follows that $\ker \hat{\phi}$ and $\coker \hat{\phi}$ are rank $m$ vector bundles. Denote by $M$ the line bundle $L\sigma^*L\pi^*K^{1-2m}$. We have:
\begin{enumerate}[label=\alph*),ref=\alph*]
\item $\ker \hat{\phi} \cong \pi_*(L\sigma^*L\pi^*K^{1-2m})_+$ (i.e., $M_+$) and
\item $\coker \hat{\phi} \cong \pi_*(L\sigma^*L\pi^*K^{1-2m})_- \otimes K^{2m}$ (i.e., $M_- \otimes K^{2m}$).
\end{enumerate}
\end{lemma}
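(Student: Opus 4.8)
The plan is to mimic the structure of the corresponding lemma in Chapter 3 (the one identifying $\ker\phi_{21}\cong\pi_*(L_2^*L_1)$ and $\coker\phi_{21}\cong\pi_*(L_2^*L_1K_S)$), but keeping track of the extra $\sigma$-action so as to land on the $\pm1$ eigen-subbundles. First I would note that since $S$ is smooth, the Higgs bundles $(W,\phi)=\pi_*(L,\lambda)$ and $(W^*,-\tp\phi)=\pi_*(L',\lambda)$ with $L'=\sigma^*(L^*K_S\pi^*K^{-1})$ are everywhere regular, so the kernel and cokernel of $\hat\phi$ are locally free; a local computation on an open set $U\subseteq\Sigma$ trivializing $W$ reduces $\hat\phi$ to the map $w_1\wedge w_2\mapsto \phi(w_1)\wedge w_2+w_1\wedge\phi(w_2)$ on $\Lambda^2$ of a trivial rank-$2m$ bundle, whose kernel and cokernel are each locally free of rank $m$ (this is the same regularity/centralizer count as in Chapter 3, just for the $\Lambda^2$ representation rather than $\End$).

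For part (a), the natural guess is that $\ker\hat\phi$ is the pushforward of the line bundle $\mathcal M = L\sigma^*L\pi^*K^{1-2m}$ from $S$, but cut down to its $+1$-eigen-subbundle under $\sigma$. I would construct an explicit map $\pi_*(\mathcal M)_+ \to \mathcal O(\Lambda^2 W)$ analogous to the map $t$ of Chapter 3: using the smooth splitting $V\cong W\oplus W^*$ and the BNR identification, a section of $L\sigma^*L$ over $\pi^{-1}(U)$, together with the duality pairing, produces a homomorphism $W^*\to W$, i.e.\ an element of $W\otimes W$; the $+1$-eigenvalue condition under $\sigma$ (which swaps the fibre $E_y$ over $y$ with $E_{\sigma(y)}$) forces this element to be skew, hence to land in $\Lambda^2 W$; and the same line-bundle-level commutativity diagram as in Chapter 3 (now twisted by the correction $\pi^*K^{1-2m}$ coming from relative duality and the ramification divisor, exactly the factor appearing in $L'$) shows it is $\hat\phi$-invariant, i.e.\ maps into $\ker\hat\phi$. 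Since both sides are vector bundles of rank $m$ and the map is injective, it is an isomorphism. The appearance of $\mathcal M = L\sigma^*L\pi^*K^{1-2m}$ rather than just $L\otimes L'$ is precisely bookkeeping: $L\otimes L' = L\otimes\sigma^*(L^*K_S\pi^*K^{-1})$, and one must carry the $\sigma^*$ through carefully — this is where I expect the sign/eigenspace identification to be the main subtlety, since one has to check that $\sigma$-anti-invariance of the section $d\pi$ (used throughout Chapter 4) interacts correctly with the skew-symmetry of $\psi$ so that the $+$ eigenspace, and not the $-$ eigenspace, is the kernel.

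For part (b), I would proceed as in Chapter 3 by dualizing the four-term exact sequence
\[
0\to\ker\hat\phi\to\Lambda^2 W\xrightarrow{\hat\phi}\Lambda^2 W\otimes K\to\coker\hat\phi\to 0
\]
and tensoring by $K$, which exchanges the roles of $W$ and $W^*$ and identifies $(\coker\hat\phi)^*\otimes K$ with the kernel of the analogous complex for $(W^*,-\tp\phi)$, namely $\pi_*(L'\sigma^*L'\pi^*K^{1-2m})_+$ by part (a). Then applying relative duality $(\pi_*N)^*\cong\pi_*(N^*\otimes K_S)\otimes K^{-1}$ together with $K_S\cong\pi^*K^{2m}$ and the formula for $L'$ converts this into $\pi_*(L\sigma^*L\pi^*K^{1-2m})\otimes K^{2m}$ on the appropriate eigenspace; one must check that relative duality and the $\pm$-decomposition are compatible, i.e.\ that the $+$ eigenspace on one side corresponds to the $-$ eigenspace on the other (this sign flip is expected because relative duality pairs a bundle with its Serre dual and $d\pi$ is anti-invariant, so it switches $\rho_*(\cdot)_+$ with $\rho_*(\cdot)_-$). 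Assembling these gives $\coker\hat\phi\cong M_-\otimes K^{2m}$. The main obstacle, as indicated, is keeping the $\sigma$-equivariance and the various $d\pi$/ramification twists consistent so that the $+$ and $-$ labels come out as stated; everything else is a direct adaptation of the rank-$2$ smooth-spectral-curve arguments already carried out in Chapter 3 and earlier in this section.
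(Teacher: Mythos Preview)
Your proposal is correct and follows essentially the same route as the paper. The only cosmetic difference is that the paper first identifies the kernel of $\hat\phi$ inside the full $\Hom(W^*,W)\cong W\otimes W$ as $\pi_*(L\sigma^*(LK_S^{-1}\pi^*K))$, then passes to $\Lambda^2 W$ by taking the \emph{anti}-invariant part, and finally uses that the canonical identification $K_S\cong\pi^*K^{2m}$ (via the symplectic form on $|K|$) is itself $\sigma$-anti-invariant to flip the $-$ eigenspace to the $+$ eigenspace of $M$; this is exactly the sign mechanism you flagged as the main subtlety. Part (b) in the paper proceeds by the same dualize-and-swap argument you outline, with the eigenspace flip again coming from the anti-invariance of the $K_S\cong\pi^*K^{2m}$ identification under relative duality.
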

\begin{proof} a) We have the following commutative diagram 
\[\xymatrix@M=0.13in{
H^0(\pi^{-1}(U), \sigma^*(L^*K_S\pi^*K^{-1})) \ar[r]^-s \ar[d]^{  \lambda} & H^0(\pi^{-1}(U), L) \ar[d]^{ \lambda} \\
H^0(\pi^{-1}(U), \sigma^*(L^*K_S)) \ar[r]^-{s\otimes 1} & H^0(\pi^{-1}(U), L\pi^*K).}\]  
where $s \in H^0(\pi^{-1}(U), L\sigma^*(LK_S^{*}\pi^*K))$. This tells us that the kernel of $\hat{\phi}$ viewed in $\Hom(W^*, W)$ is $\pi_*(L\sigma^*(LK_S^{*}\pi^*K))$. Since we want the kernel in $\Lambda^2 W$ we take the anti-invariant part of the direct image. We now use the canonical symplectic form of the cotangent bundle of $\Sigma$ to identify $K_S$ with $\pi^*K^{2m}$. Since the symplectic form on the cotangent bundle is anti-invariant under $\sigma$, which is scalar multiplication by $-1$ in the fibres, 
$$\pi_*(L\sigma^*(LK_S^{*}\pi^*K))_- \cong \pi_*(L\sigma^*L\pi^*K^{1-2m})_+.$$
b) Dualizing the sequence 
\begin{equation*}
0 \to \ker \hat{\phi} \to \Lambda^{2} W \to \Lambda^{2} W \otimes K \to \coker \hat{\phi} \to 0 \nonumber 
\end{equation*}
we obtain 
$$0 \to (\coker \hat{\phi})^*\otimes K \to \Lambda^{2} W^* \to \Lambda^{2} W^* \otimes K \to (\ker \hat{\phi})^* \otimes K \to 0.$$
Since we are exchanging the roles between $(W,\phi)$ and $(W^*, -\tp{\phi})$, we get
$$\pi_*(L^*\sigma^*L^*\pi^*K^{2m-1})_+ \cong (\coker \hat{\phi})^*\otimes K.$$

By relative duality, 
\begin{eqnarray}\nonumber
\pi_*(L^*\sigma^*L^*\pi^*K^{2m-1})^*  \cong & \pi_*(L\sigma^*L\pi^*K^{1-2m}K_S)\otimes K^{-1}\\
 \cong & M_+K^{2m-1} \oplus M_-K^{2m-1}.\nonumber
\end{eqnarray} 
To identify $K_S \otimes \pi^*K^{-1}$ with $\pi^*K^{2m - 1}$ we use the canonical symplectic form of the cotangent bundle of $\Sigma$, which is anti-invariant under $\sigma$. This gives
$$\pi_*(L^*\sigma^*L^*\pi^*K^{2m-1})_+ \cong M_-^*K^{1 - 2m}.$$
Thus we obtain $\coker \hat{\phi} \cong M_- \otimes K^{2m}$.
\end{proof}

\begin{rmks} 

\noindent 1. We can write $L\sigma^*L = \rho^*\bar{L}$, for some line bundle $\bar{L}$ on $\bar{S}$. Indeed, if we denote $L\sigma^*L$ by $\calL$, $\sigma^*\calL \cong \calL$ and at any fixed point $x \in S$ of the involution, the fibre $\calL_x$ is of the form $L_x^2$ and the action is always trivial, which means that $\calL$ is pulled-back from a line bundle on $\bar{S}$. Also, $\bar{L}$ is unique as the map $\rho^* : \Jac (\bar{S}) \to \Jac (S)$ is injective. 

\noindent 2. The direct image $\rho_*\mathcal{O}_S$ is naturally isomorphic to $\mathcal{O}_{\bar{S}} \oplus \bar{\pi}^*K^{-1}$. Given $s_0 \in H^0(U, \mathcal{O}_{\bar{S}})$, $s_1 \in H^0(U, \bar{\pi}^*K^{-1})$ we associate $$(s_0, s_1) \mapsto \rho^*s_0 + \lambda\rho^*s_1 \in H^0(\rho^{-1}(U), \mathcal{O}_S) = H^0(U, \rho_*\mathcal{O}_S).$$
By the remark above 
\begin{align*}
\rho_*(L\sigma^*L\pi^*K^{1-2m}) & =  \rho_*(\rho^*(\bar{L}\bar{\pi}^*K^{1-2m}))\\
 & =  (\mathcal{O}_{\bar{S}} \oplus \bar{\pi}^*K^{-1})\otimes \bar{L} \bar{\pi}^*K^{1-2m}. 
\end{align*}
Since the term $\bar{\pi}^*K^{-1}$ comes from multiplication by $\lambda$, we must have 
 
  \[
    \left\{
                \begin{array}{ll}
                  \bar{M}_+ = \bar{L}\bar{\pi}^*K^{1 - 2m}\\
                  \bar{M}_- = \bar{L}\bar{\pi}^*K^{- 2m},
                \end{array}
              \right.
  \]
where, as before, $\rho_*(L\sigma^*L\pi^*K^{1-2m}) = \bar{M}_+ \oplus \bar{M}_-$.

\end{rmks}

The first hypercohomology group $\mathbb{H}^1 = \K^1 (\Sigma, \Lambda^2\mathsf{W})$ fits into the short exact sequence
\begin{equation*}
0 \to H^1(\bar{S}, \bar{M}_+) \to \mathbb{H}^1 \to H^0(\bar{S}, \bar{M}_- \otimes \bar{\pi}^*K^{2m}) \to 0.
\end{equation*}  
Equivalently, 
\begin{equation*}
0 \to H^1(\bar{S}, \bar{L}K_{\bar{S}}^{-1}) \to \mathbb{H}^1 \to H^0(\bar{S}, \bar{L}) \to 0.
\end{equation*} 

Since $\deg (\sigma^* L ) = \deg (L)$, we must have $\deg (\bar{L}) = \deg (L)$, whose value was given in (\ref{degreeL}). Let 
\begin{align*}
\bar{d} & = \deg (\bar{L}) = -d + 2(g_{_{\bar{S}}}-1),\\
d^\prime & = \deg (L^\prime) = d + 2(g_{_{\bar{S}}}-1). 
\end{align*}
From a non-zero element $u \in H^0(S, \bar{L})$ we get a divisor $\bar{D} \in \bar{S}^{(\bar{d})}$. Thus, we define\footnote{Note that we could also have defined $Z_d$ in terms of the line bundle $L$, i.e., \begin{align*}
Z_d & =  \{ (\bar{D}, L) \in \bar{S}^{(\bar{d})} \times \Pic^{d_1}(S) \ | \ \rho^*\mathcal{O}_{\bar{S}}(\bar{D}) \cong L\sigma^*L \}.
\end{align*}
Even though this is more concise, we consider $L^\prime$ as this is compatible with our construction in the $SL(4m, \mathbb{C})$ case.} 
\begin{align*}
Z_d & =  \{ (\bar{D}, A) \in \bar{S}^{(\bar{d})} \times \Pic^{d^\prime}(S) \ | \ A \sigma^*A \cong \rho^* (K_{\bar{S}}^2(-\bar{D}))  \}.
\end{align*}
 
To obtain non-zero global holomorphic sections of $\bar{L}$ we must impose the constraint $\bar{d} \ge 0$. Together with the assumption that $d$ is positive, we have the constraint $0 < d \leq 2(g_{_{\bar{S}}}-1)$.

\begin{rmks} 

\noindent 1. Considering the natural map $\varphi : Z_d \to  \bar{S}^{(\bar{d})}$, its fibres are modeled on the Prym variety $$\PS = \{M \in \Jac(S) \ | \ \sigma^*M \cong M^* \}$$
corresponding to the ramified covering $\rho : S \to \bar{S}$. In particular, $Z_d$ is connected for all $0 < d \leq 2(g_{_{\bar{S}}}-1)$.   

\noindent 2. Since $SO(4m, \mathbb{C}) \subseteq SL(4m, \mathbb{C})$, our fibre is a subscheme of the fibre described in the previous chapter. Let us temporarily denote $Z_d$ for $SL(4m, \mathbb{C})$ by $Z_d(SL(4m,\C))$. Then, 
$$Z_d(SL(4m,\C)) = \{ (D,A) \in S^{(\bar{d}^\prime)} \times \Pic^{d_2}(S) \ | \ A^2(D)\pi^*K^{1-4m} \in \Prym \},$$
where $\bar{d}^\prime = \deg ((L^\prime)^*L K_S) = -2d + 8m^2(g-1)$.
Then, 
$$Z_d \hookrightarrow Z_d(SL(4m,\C)).$$
To define the map note that we have a distinguished divisor on $S$ whose corresponding line bundle is isomorphic to $\pi^*K$. This is the ramification divisor $R_\rho$ of the covering $\rho : S \to \bar{S}$, which is simply the intersection of the zero section of $K$ with $S$. We can define the map above by assigning $(\bar{D}, A) \in Z_d$ to $(D, A) \in Z_d(SL(4m,\C))$, where $D = R_\rho + \rho^*\bar{D}$ (and this is clearly injective). To see that this is indeed well-defined, note that $\calO_S(D) = \pi^*K \rho^*\calO_{\bar{S}}(\bar{D})$. Thus, $A^2(D)\pi^*K^{1-4m} = A\sigma^*A^{-1}$, which is clearly in $\PS \subseteq \Prym$. 

\end{rmks}

The constructions from Chapter \ref{Chapter3} for $SL(4m, \mathbb{C})$ can be restricted to our case. For example, $E_d \to Z_d$ is a vector bundle, whose fibres are isomorphic to $H^1(\bar{S}, \bar{L}  K_{\bar{S}}^{-1} )$.  We thus obtain the following. 

\begin{thm}\label{descriptionofthefibreforso} Let $p(\lambda) = \lambda^{2m} + \pi^*b_2\lambda^{2m-2} + \ldots + \pi^*b_{2m}$ be a section of the line bundle $\pi^*K^{2m}$ on the total space of the cotangent bundle of $\Sigma$ whose divisor is a smooth curve $S$. The fibre $h^{-1}(p^2)$ of the $SO(4m,\mathbb{C})$-Hitchin fibration is a disjoint union 
$$h^{-1}(p^2) \cong N \cup \bigcup_{d=1}^{2(g_{_{\bar{S}}}-1)} A_d$$
where 
\begin{itemize}
\item $N$ has $2$ connected components and it corresponds to the locus of rank $2$ vector bundles on $S$ (of degree $4m(2m-1)(g-1)$) which admit an isomorphism $\psi : \sigma^*E \to E^* \otimes \pi^*K^{2m-1}$ satisfying $\tp{(\sigma^*\psi)} = - \psi$.
\item $A_d \to Z_d$ is an affine bundle modeled on the vector bundle $E_d \to Z_d$, whose fibre at $(\bar{D},L)$ is isomorphic to $H^1(\bar{S}, K_{\bar{S}}(-\bar{D}) )$. 
\item The natural map $Z_d \to S^{(\bar{d})}$ is a fibration, whose fibres are modeled on the Prym variety $\PS$.
\item Each stratum has dimension $\dim SO(4m,\mathbb{C})(g-1)$ and the irreducible components of $h^{-1}(p^2) $ are precisely the Zariski closures of $A_d$, $1 \leqslant d \leqslant 2(g_{_{\bar{S}}}-1)$, and $N$.
\end{itemize} 
\end{thm}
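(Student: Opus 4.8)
The plan is to mirror the structure of the proof of Theorem~\ref{fibreforsu*} from Chapter~\ref{Chapter3}, making the modifications forced by the presence of the involution $\sigma$ and the orthogonal structure. The decomposition $h^{-1}(p^2) \cong N \cup \bigcup_d A_d$ itself is essentially already established by the work preceding the statement: Proposition~\ref{pairingn} identifies the locus $\{p(\Phi)=0\}$ with $N$, and the discussion of extensions of Higgs bundles (together with Theorem~\ref{themext} applied to $\Lambda^2\mathsf{W}$) shows that every $(V,\Phi)$ with $p(\Phi)\neq 0$ arises as a non-split extension $0\to(W,\phi)\to(V,\Phi)\to(W^*,-\tp\phi)\to 0$ with $(W,\phi)=\pi_*(L,\lambda)$, classified by a class in $\K^1(\Sigma,\Lambda^2\mathsf{W})$; the short exact sequence $0\to H^1(\bar S,\bar L K_{\bar S}^{-1})\to\K^1\to H^0(\bar S,\bar L)\to 0$ then produces the affine bundle $A_d\to Z_d$ with the stated fibres, and the range $1\le d\le 2(g_{\bar S}-1)$ comes from $\bar d=\deg\bar L\ge 0$ together with $d>0$. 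So the first step is simply to assemble these ingredients and record that the strata are disjoint (using the lemma-style arguments from Chapter~\ref{Chapter3}: $p(\Phi)=0$ iff $\zeta(\delta)=0$, and such $(V,\Phi)$ are stable exactly when $d>0$), adapting each to the orthogonal setting where $W^\perp=W$ replaces the role of $W_1$.

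Next I would handle the two structural claims about $N$ and about $Z_d$. For $N$, I would cite Proposition~\ref{connectedcompo}, which already gives $\dim N = \dim SO(4m,\C)(g-1)$ and that $N$ has two connected components. For the fibration $Z_d\to \bar S^{(\bar d)}$, I would invoke Remark~1 following the definition of $Z_d$: the fibre over a divisor $\bar D$ consists of line bundles $A$ with $A\sigma^*A$ fixed, so any two differ by an element of $\PS=\ker(\Nm_\rho)$, hence the fibres are $\PS$-torsors; connectedness of $\PS$ gives connectedness of $Z_d$ for all admissible $d$. The vector bundle $E_d\to Z_d$ with fibre $H^1(\bar S,\bar L K_{\bar S}^{-1})$ is constructed exactly as $F_d, E_d$ were in Chapter~\ref{Chapter3}, pulling back a Grauert-type sheaf on $\bar S^{(\bar d)}$; Riemann--Roch on $\bar S$ gives constant rank since $\deg(\bar L K_{\bar S}^{-1}) = -d < 0$.

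The substantive part — and the main obstacle — is the dimension count and the identification of the irreducible components. I would compute $\dim A_d = \dim Z_d + \dim H^1(\bar S,\bar L K_{\bar S}^{-1}) = (\bar d + g_{\bar S} - g_{\bar S} ) + \dim\PS + (\dim H^1(\bar S,\bar L K_{\bar S}^{-1}))$; more carefully, $\dim Z_d = \bar d + \dim\PS$ and by Riemann--Roch on $\bar S$ (with $g_{\bar S}=m(2m-1)(g-1)+1$ from \eqref{genusbar}) the number $h^1(\bar S,\bar L K_{\bar S}^{-1}) = -\deg(\bar L K_{\bar S}^{-1}) + g_{\bar S}-1 = d + 2(g_{\bar S}-1) - \bar d + (g_{\bar S}-1)$, and $\dim\PS = g_{\bar S}^{\mathrm{S}} - g_{\bar S}$ where $g_S = 4m^2(g-1)+1$. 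Summing, the $d$-dependence must cancel and the total should come out to $\dim SO(4m,\C)(g-1) = 2m(4m-1)(g-1)$; I would verify this arithmetic using \eqref{genusS}, \eqref{genusbar}. Then, because the $SO(4m,\C)$-Hitchin map is flat (or at least equidimensional on fibres), every irreducible component of $h^{-1}(p^2)$ has this dimension, so $N$ (irreducible on each of its two components, by Proposition~\ref{connectedcompo} and the structure of $\calU_S(2,\cdot)$) and the Zariski closures $\overline{A_d}$ exhaust the components, none being contained in another since the $A_d$ are disjoint locally closed sets of the full dimension. The delicate point I expect to need care with is checking that $\overline{A_d}$ is genuinely irreducible (which follows once $Z_d$ is irreducible and $A_d\to Z_d$ is an affine bundle) and that no $A_d$ degenerates into $N$ or into another $\overline{A_{d'}}$ — this is controlled by the $\zeta(\delta)\neq 0$ condition and the stability lemmas imported from Chapter~\ref{Chapter3}, so I would state those adaptations explicitly rather than leave them implicit.
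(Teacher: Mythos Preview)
Your proposal is correct and follows essentially the same approach as the paper. The paper's proof is in fact very short: it states that ``the only thing left to be checked is that the dimension of each $A_d$ equals $\dim SO(4m,\mathbb{C})(g-1)$'' and then carries out exactly the Riemann--Roch computation you outline, with $\bar d = -d + 2m(2m-1)(g-1)$, $h^1(\bar S,\bar L K_{\bar S}^{-1}) = d + m(2m-1)(g-1)$, and $\dim\PS = g_S - g_{\bar S} = m(2m+1)(g-1)$, summing to $(8m^2-2m)(g-1)$; your arithmetic in the proposal is slightly garbled (the expression for $h^1$ you wrote does not simplify correctly), but the plan is right and matches the paper. Your concerns about irreducibility of $\overline{A_d}$ are handled exactly as you suggest (connected $Z_d$ plus affine bundle), and the paper does not say more about this than the parallel argument in Theorem~\ref{fibreforsu*}.
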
 
\begin{proof}
The only thing left to be checked is that the dimension of each $A_d$ equals $\dim SO(4m,\mathbb{C})(g-1)$. Note that 
$$\dim A_d = \deg(\bar{L}) + \dim H^1(\bar{S}, \bar{L}K_{\bar{S}}^{-1}) + \dim \PS.$$  
The dimension of the Prym variety is $g_{_S} - g_{_{\bar{S}}}$, so, by (\ref{genusS}) and (\ref{genusbar}), the dimension of the Prym is $m(2m+1)(g-1)$. Now, the degree of $ \bar{L}K_{\bar{S}}^{-1}$ is $-d < 0$. So, by Riemann-Roch 
\begin{align*}
h^1 (\bar{S},  \bar{L}K_{\bar{S}}^{-1}) & = d + g_{_{\bar{S}}} - 1\\
& = d + m(2m-1)(g-1).
\end{align*}
The degree of $\bar{L}$ was given in (\ref{degreeL}) and so we obtain a total of 
\begin{align*}
& \bar{d} + h^1(\bar{S}, \bar{L}K_{\bar{S}}^{-1}) + \dim \PS =  \\
& =  (-d + 2m(2m-1)(g-1)) + (d + m(2m-1)(g-1)) + (m(2m+1)(g-1)) \\ 
& =  (8m^2 - 2m)(g-1) \\
& =  \dim SO(4m,\mathbb{C})(g-1),
\end{align*} 
finishing the proof.
\end{proof}

%______________________________________________________________

\section{The group $Sp(m,m)$} \label{Sp(m,m)} 

This is very similar to the $SO^*(4m)$ case. First we consider the rank $1$ sheaves supported on the reduced curve $S$. 

\begin{prop} The locus 
$$\{ (V,\Phi) \in h^{-1}(p^2) \ | \ p(\Phi) = 0 \} \subset \calM (Sp(4m,\C))$$
is isomorphic to 
$$
N = \left \{ \ E \in \calU_S (2,e) \
\bigg |
\gathered
\begin{array}{cl}
 & \text{There exists an isomorphism}\\
 & \psi : \sigma^*E \to E^* \otimes \pi^*K^{2m-1} \\
 & \text{satisfying $\tp{(\sigma^*\psi)} =  \psi$}.\
\end{array}
\endgathered  \ \  \right
\},$$
where $e= 4m(2m-1)(g-1)$. Moreover, $N$ is connected and of dimension $\dim Sp(4m,\C) (g-1)$.
\end{prop}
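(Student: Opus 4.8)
The plan is to mirror closely the proof of Proposition \ref{pairingn} (the $SO^*(4m)$ case), since the two statements differ only in the sign condition $\tp{(\sigma^*\psi)} = \psi$ versus $\tp{(\sigma^*\psi)} = -\psi$, and the corresponding change in how the lift of $\sigma$ acts on $\det E = \pi^*K^{2m-1}$. First I would take $E \in \calU_S(2,e)$ together with an isomorphism $\psi : \sigma^*E \to E^*\otimes \pi^*K^{2m-1}$ satisfying $\tp{(\sigma^*\psi)} = \psi$, and use it to build a symplectic form $\omega$ on $V = \pi_*E$ with respect to which the push-forward of $\lambda : E \to E\otimes\pi^*K$ is skew-symmetric. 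Concretely, over a regular value $x$ of $\pi$ with $\pi^{-1}(x) = \{y_1,\sigma(y_1),\dots,y_m,\sigma(y_m)\}$, I would set
\[
\omega(s,t) = \sum_{y\in\pi^{-1}(x)} \frac{\inn{\psi_y(s_{\sigma(y)})}{t_y}}{d\pi_y},
\]
exactly as in (\ref{pairing}) but now the symmetry hypothesis on $\psi$ combined with the anti-invariance of $d\pi$ forces $\omega$ to be \emph{skew}-symmetric rather than symmetric (the two contributions pairing $E_y$ with $E_{\sigma(y)}$ now cancel instead of adding). The computation $\omega(\lambda(s),t) = -\omega(s,\lambda(t))$ goes through verbatim, giving skew-symmetry of the Higgs field. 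Extension of $\omega$ over the branch points of $\pi$ is handled by the same argument as in \cite[Section 4]{hitchin2013higgs}, and semi-stability follows from Proposition \ref{N} (the associated $SL(4m,\C)$-Higgs bundle is semi-stable, hence so is the $Sp(4m,\C)$-Higgs bundle), so $\pi_*(E,\lambda)$ is a semi-stable $Sp(4m,\C)$-Higgs bundle with $p(\Phi) = 0$.

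For the converse, I would start with a semi-stable $Sp(4m,\C)$-Higgs bundle $(V,\Phi)$ satisfying $p(\Phi)=0$, which (as in the $SU^*(4m)$ and $SO^*(4m)$ cases) produces a semi-stable rank $2$ bundle $E$ on $S$ fitting into the exact sequence (\ref{exactseq}), with $\calO_S(R_\pi)\cong \pi^*K^{2m-1}$. Pulling back by $\sigma$ and using $\sigma^*\lambda = -\lambda$, together with the skew-symmetry of $\Phi$ with respect to the symplectic form $\omega : V \to V^*$ (i.e. $\omega\circ\Phi = -\tp{\Phi}\circ\omega$), I would set up the same commutative ladder of exact sequences as in the proof of Proposition \ref{pairingn}, with $\pi^*\omega$ replacing $\pi^*q$ in the vertical maps. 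This yields an isomorphism $\psi : \sigma^*E \to E^*\otimes\pi^*K^{2m-1}$; the crucial point is that since $\omega$ is now skew-symmetric (rather than symmetric) and the connecting map $\sigma^*(E)\otimes\pi^*K^{1-2m}\to\pi^*V$ is still multiplication by the anti-invariant section $d\pi$, the induced identity becomes $\tp{(\sigma^*\psi)} = +\psi$ instead of $-\psi$. This matches the $-1$ action of the lift of $\sigma$ on $\det E$ recorded in Proposition \ref{so*sp} for $Sp(m,m)$.

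For the last two assertions (connectedness and dimension), I would choose a square root $K^{1/2}$ and pass to $E_0 = E\otimes\pi^*K^{(1-2m)/2}$, so that $N^s$ sits inside the fixed-point locus $\calU^s_S(2,0)^\tau$ of $\tau\cdot E_0 = \sigma^*E_0^*$, exactly as in Proposition \ref{connectedcompo}. The dimension $\dim N = \dim Sp(4m,\C)(g-1)$ again follows from \cite[Theorem 3.12.(b)]{zelaci2016hitchin} applied to the relevant component of the fixed-point set (the symmetric branch rather than the anti-symmetric one). For connectedness, I would invoke the last remark of the introductory section of this chapter, which states that $N_0(Sp(4m,\C))$ is connected (in contrast to $N_0(SO(4m,\C))$): unlike the orthogonal case, the lift of $\sigma$ at a fixed point $p$ must be of type $\left(\begin{smallmatrix}1&0\\0&-1\end{smallmatrix}\right)$ since the induced action on $\det E_p$ is $-1$, so there is no discrete invariant distinguishing components, and the surjectivity of the map $N_0^s\times\PS\to N^s$ (from squaring on the connected $\PS$) then shows $N$ is connected. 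The main obstacle is keeping careful track of the sign bookkeeping — ensuring that the interchange of symmetric/skew-symmetric bilinear forms on $V$ correctly toggles the sign in $\tp{(\sigma^*\psi)} = \pm\psi$ through all the dualizations — but this is a routine adaptation of the argument already carried out for $SO^*(4m)$.
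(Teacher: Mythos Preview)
Your proposal is correct and follows essentially the same approach as the paper: mirror the proof of Proposition~\ref{pairingn} with the sign in $\tp{(\sigma^*\psi)}=\pm\psi$ flipped, so that the pairing (\ref{pairing}) becomes skew-symmetric rather than symmetric, and then deduce connectedness from the irreducibility of $N_0^s$ together with the surjection $N_0^s\times\PS\to N^s$. One small correction: for the dimension count the paper invokes \cite[Theorem~3.12.(a)]{zelaci2016hitchin} (the symmetric case), not part~(b) as you wrote---part~(b) was used for $SO^*(4m)$.
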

\begin{proof}
The proof for the first assertion follows exactly as in Proposition \ref{pairingn}. The only difference is that the pairing (\ref{pairing}) is now skew-symmetric, which is clear from the condition $\tp{(\sigma^*\psi)} =  \psi$. Note that starting with a symplectic Higgs bundle $(V,\Phi)$ we use the symplectic form to identify the bundle $V$ with its dual and obtain an isomorphism $\psi : \sigma^* E \to E^*$. Since $d\pi$ is anti-invariant under $\sigma$ and the symplectic form is skew-symmetric, we obtain that $\tp{(\sigma^*\psi)} =  \psi$. The dimension follows from e.g. \cite[Theorem 3.12.(a)]{zelaci2016hitchin}. Now, choosing a square root $K^{1/2} $ of the canonical bundle of $\Sigma$ we may identify the stable locus of $N$ as 
$$N^s \cong \{ \ E_0 \in \calU^s_S (2,e) \ | \ \exists \ \psi : \sigma^*E_0 \xrightarrow{\cong} E_0^* \ \text{s.t. }\tp{(\sigma^*\psi)} =  \psi \ \}.$$    
By fixing the determinant we obtain the stable locus $N_0^s$ of Higgs bundles for the real form $Sp(m,m)$. Note that, since the induced action on the determinant bundle is $-1$, at any fixed point $p \in S$ of $\sigma$ we have distinct $+1$ and $-1$ eigenspaces and $N_0^s$ is irreducible (see \cite{hitchin2014} for more details). Moreover, as argued for map (\ref{n0s}) in the $SO^*(4m)$ case, the tensor product gives a surjective map 
$$N_0^s \times \PS \to N^s$$
and thus $N^s$ is irreducible.   
\end{proof}

If $W$ is an $\Phi$-invariant subbundle of $(V,\Phi) \in \calM (Sp(4m,\C))$, then so is its symplectic complement $W^{\perp_\omega}$ since $\Phi$ is skew-symmetric. Suppose $(V,\Phi) \in h^{-1}(p^2)$ is such that $p(\Phi) \neq 0$, then, as we saw, $(V,\Phi)$ is stable and given by a unique non-split extension. Thus, $W \cong W^{\perp_\omega}$ which means it is a maximally isotropic subbundle of $(V,\omega)$. Thus, we consider extensions of Higgs bundles of the form 
$$0 \to (W,\phi) \to (V,\Phi) \to (W^*, -\tp{\phi}) \to 0$$
where $(V,\Phi)$ is a symplectic Higgs bundle and $W$ a maximally isotropic subbundle of $V$. It follows from Proposition \ref{themext} that such extensions are governed by the complex 
\begin{align}
\hat{\phi} : \Sym^2 W & \to \Sym^2 W \otimes K \label{complexs2} \\ 
w_1\odot w_2 & \mapsto  (\phi w_1)\odot w_2 + w_1\odot \phi w_2 \nonumber
\end{align}  
and we have\footnote{The proof is completely analogous to the $SO^*(4m)$ case.}
\begin{enumerate}
\item $\ker \hat{\phi} \cong \pi_*(L\sigma^*L\pi^*K^{1-2m})_-$,  
\item $\coker \hat{\phi} \cong \pi_*(L\sigma^*L\pi^*K^{1-2m})_+ \otimes K^{2m}$, 
\end{enumerate}
where once again $\pi_*(L,\lambda) \cong (W,\phi)$ and $\pi_*(L^\prime, \lambda) \cong (W^*,-\phi^t) $, for $L^\prime \cong \sigma^*L^{-1}\pi^*K^{2m-1}$.

Then, denoting the first hypercohomology group $\K^1 (\Sigma, \Sym^2\mathsf{W})$ of the complex (\ref{complexs2}), the short exact sequence in question is 
\begin{equation*}
0 \to H^1(\bar{S}, \bar{M}_-) \to \mathbb{H}^1 \to H^0(\bar{S}, \bar{M}_+ \otimes \bar{\pi}^*K^{2m}) \to 0.
\end{equation*}  

Or, equivalently, 

\begin{equation*}
0 \to H^1(\bar{S}, \bar{L}\bar{\pi}^*K^{-2m}) \to \mathbb{H}^1 \to H^0(\bar{S}, \bar{L}\bar{\pi}^*K) \to 0,
\end{equation*} 
where again $\bar{L}$ is the line bundle on $\bar{S}$ defined by $L\sigma^*L = \rho^*\bar{L}$.

Set 
$$\bar{d} = \deg (\bar{L}\bar{\pi}^*K) = -d + 4m^2(g-1).$$
Then, 
%\begin{align*}
%Z_d & =  \{ (D, L) \in \bar{S}_{\bar{d}} \times \Pic^{d_1}(S) \ | \ %\rho^*\mathcal{O}_{\bar{S}_{\bar{d}}}(D) \cong L\sigma^*L\pi^*K \}
%\end{align*}
\begin{align*}
Z_d & =  \{ (\bar{D}, A) \in \bar{S}^{(\bar{d})} \times \Pic^{d^\prime}(S) \ | \ A \sigma^*A \rho^*\mathcal{O}_{\bar{S}}(\bar{D}) \cong \pi^*K^{4m-1} \}.
\end{align*}

\begin{rmk} The injective map 
$$Z_d \hookrightarrow Z_d(SL(4m,\C))$$
is now given by assigning $(A,\bar{D}) \in Z_d$ to $(A,D) \in Z_d(SL(4m,\C))$, where $D$ is the effective divisor $\rho^* \bar{D}$. Note that this is well defined because  
\begin{align*}
A^2(D)\pi^*K^{1-4m} = A\sigma^*A^*  
\end{align*} 
and $A\sigma^*A^{*} \in \PS \subseteq \Prym$.
\end{rmk}
The fibres of $\varphi : Z_d \to \bar{S}^{(\bar{d})}$ are again modeled on $\PS = \{M \in \Jac(S) \ | \ \sigma^*M \cong M^* \}$, so $Z_d$ is connected and the dimension of the affine bundle $A_d$ is 
\begin{align*}
& \deg(\bar{L}\bar{\pi}^*K) + \dim H^1(\bar{S}, \bar{L}\bar{\pi}^*K^{-2m}) + \dim \PS =  \\
& =  (-d + 4m^2(g-1)) + (d + m(2m+1)(g-1)) + (m(2m+1)(g-1)) \\ 
& =  (8m^2 + 2m)(g-1)\\
&= Sp(4m,\C) (g-1).
\end{align*}
Note that, since $(\bar{D},L) \in Z_d$ we have $\bar{L}\bar{\pi}^*K^{-2m} =  K_{\bar{S}}(-\bar{D})$. Thus, we obtain the following:

\begin{thm}\label{descriptionofthesibreforsp} Let $p(\lambda) = \lambda^{2m} + \pi^*b_2\lambda^{2m-2} + \ldots + \pi^*b_{2m}$ be a section of the line bundle $\pi^*K^{2m}$ on the total space of the cotangent bundle of $\Sigma$ whose divisor is a smooth curve $S$. The fibre $h^{-1}(p^2)$ of the $Sp(4m,\mathbb{C})$-Hitchin fibration is a disjoint union 
$$h^{-1}(p^2) \cong N \cup \bigcup_{d=1}^{g_{_{S}}-1} A_d$$
where 
\begin{itemize}
\item $N$ corresponds to the locus of rank $2$ semi-stable vector bundles on $S$ (of degree $4m(2m-1)(g-1)$) which admit an isomorphism $\psi : \sigma^*E \to E^* \otimes \pi^*K^{2m-1}$ satisfying $\tp{(\sigma^*\psi)} =  \psi$.
\item $A_d \to Z_d$ is an affine bundle modeled on the vector bundle $E_d \to Z_d$, whose fibre at $(\bar{D}, L)$ is isomorphic to $H^1(\bar{S}, K_{\bar{S}}(-\bar{D}) )$. 
\item The natural map $Z_d \to S^{(\bar{d})}$ is a fibration, whose fibres are modeled on the Prym variety $\PS$.
\item Each stratum has dimension $\dim Sp(4m,\mathbb{C})(g-1)$ and the irreducible components of $h^{-1}(p^2) $ are precisely the Zariski closures of $A_d$, $1 \leqslant d \leqslant g_{_{S}}-1$, and $N$.
\end{itemize} 
\end{thm}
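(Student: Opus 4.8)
The plan is to proceed in close analogy with the $SO^*(4m)$ case treated in Theorem \ref{descriptionofthefibreforso}, exploiting that $Sp(4m,\mathbb{C}) \subseteq SL(4m,\mathbb{C})$ so that the fibre $h^{-1}(p^2)$ sits inside the fibre already described in Theorem \ref{fibreforsu*}, and that the only structural change is that the symplectic form (rather than an orthogonal form) on $V$ forces the relevant complex of sheaves to be $\Sym^2\mathsf{W}$ instead of $\Lambda^2\mathsf{W}$, and imposes $\tp{(\sigma^*\psi)} = \psi$ rather than $\tp{(\sigma^*\psi)} = -\psi$ on the rank $2$ bundles over $S$. First I would split $h^{-1}(p^2)$ according to whether $p(\Phi)=0$ or $p(\Phi)\neq 0$, exactly as in Chapter \ref{Chapter3}: the locus where $p(\Phi)=0$ is the component $N$ consisting of rank $2$ semi-stable bundles on $S$ with the stated symmetry of $\psi$, and I would invoke the preceding proposition in Section \ref{Sp(m,m)} for the identification of $N$ and its connectedness and dimension. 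For $p(\Phi)\neq 0$, symplectic skew-symmetry of $\Phi$ makes every $\Phi$-invariant subbundle $W$ with $\deg W = -d < 0$ maximally isotropic, so $(V,\Phi)$ is a non-split extension of $(W^*,-\tp\phi)$ by $(W,\phi)$ with hypercohomology class lying in $\K^1(\Sigma,\Sym^2\mathsf{W})$ by Proposition \ref{themext}.

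Next I would carry over the explicit identifications of $\ker\hat\phi$ and $\coker\hat\phi$ stated in Section \ref{Sp(m,m)} (namely $\pi_*(L\sigma^*L\pi^*K^{1-2m})_-$ and $\pi_*(L\sigma^*L\pi^*K^{1-2m})_+\otimes K^{2m}$), so that the second spectral sequence gives the short exact sequence
$$0 \to H^1(\bar{S},\bar{L}\bar{\pi}^*K^{-2m}) \to \mathbb{H}^1 \to H^0(\bar{S},\bar{L}\bar{\pi}^*K) \to 0,$$
where $L\sigma^*L = \rho^*\bar{L}$. From a non-zero section of $\bar{L}\bar{\pi}^*K$ one obtains a divisor $\bar{D}\in\bar{S}^{(\bar{d})}$ with $\bar{d} = -d + 4m^2(g-1)$, producing the natural map from the locus $A_d$ of non-split extensions with non-vanishing $\zeta$-component to the base
$$Z_d = \{(\bar{D},A)\in \bar{S}^{(\bar{d})}\times\Pic^{d^\prime}(S)\ |\ A\sigma^*A\,\rho^*\mathcal{O}_{\bar{S}}(\bar{D}) \cong \pi^*K^{4m-1}\}.$$
Exactly as in Chapter \ref{Chapter3}, two extensions over the same divisor differ by an element of $H^1(\bar{S},\bar{L}\bar{\pi}^*K^{-2m}) \cong H^1(\bar{S},K_{\bar{S}}(-\bar{D}))$, so $A_d \to Z_d$ is an affine bundle modelled on the vector bundle $E_d$ with that fibre; the projection $Z_d \to \bar{S}^{(\bar{d})}$ has fibres modelled on $\PS = \{M\in\Jac(S)\ |\ \sigma^*M\cong M^*\}$, giving connectedness of $Z_d$. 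I would also record the injection $Z_d\hookrightarrow Z_d(SL(4m,\mathbb{C}))$ via $D = \rho^*\bar{D}$ to confirm compatibility with Theorem \ref{fibreforsu*} and to deduce stability of the extensions from the $SL(4m,\mathbb{C})$ statement.

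The dimension count is then the routine bookkeeping already displayed in Section \ref{Sp(m,m)}: $\deg(\bar{L}\bar{\pi}^*K) + h^1(\bar{S},\bar{L}\bar{\pi}^*K^{-2m}) + \dim\PS = (-d+4m^2(g-1)) + (d+m(2m+1)(g-1)) + m(2m+1)(g-1) = (8m^2+2m)(g-1) = \dim Sp(4m,\mathbb{C})(g-1)$, and together with flatness of the Hitchin map and the dimension of $N$ this identifies the irreducible components as the Zariski closures of the $A_d$, $1\leqslant d\leqslant g_{_S}-1$, and $N$. The range of $d$ comes from requiring $\bar{d}\geqslant 0$ (a section of $\bar{L}\bar{\pi}^*K$ must exist) together with $d>0$ (positivity needed for stability of the extension), giving $1\leqslant d \leqslant g_{_S}-1$ once one checks $4m^2(g-1) = g_{_S}-1$ by (\ref{genusS}). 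The main obstacle — though it is already essentially dealt with by the cited material — is verifying that the kernel/cokernel identifications for $\hat\phi$ on $\Sym^2\mathsf{W}$ carry through with the $\pm$ eigenspace decomposition swapped relative to the $SO^*$ case, and ensuring the anti-invariance of $d\pi$ and skew-symmetry of $\omega$ combine to give precisely $\tp{(\sigma^*\psi)}=\psi$; the rest is a transcription of the orthogonal argument. I would therefore keep the proof short, stating that all steps are identical to those of Theorem \ref{descriptionofthefibreforso} except for these sign changes, and only spell out the final dimension computation.
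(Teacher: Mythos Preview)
Your proposal is correct and follows essentially the same approach as the paper: the paper develops all the ingredients in the text of Section~\ref{Sp(m,m)} (the characterization of $N$, the $\Sym^2\mathsf{W}$ complex and its kernel/cokernel with the swapped $\pm$ eigenspaces, the resulting short exact sequence, the definition of $Z_d$, the embedding into $Z_d(SL(4m,\mathbb{C}))$, and the dimension count) and then simply states the theorem, so your plan to transcribe the $SO^*(4m)$ argument with the indicated sign changes is exactly what is done.
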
  
% Chapter 5 

\chapter{Alternative description} % Main chapter title

\label{Chapter5} % For referencing the chapter elsewhere, use \ref{Chapter1} 

\lhead{Chapter 5. \emph{Alternative description}} % This is for the header on each page - perhaps a shortened title

%----------------------------------------------------------------------------------------

Consider the Hitchin fibration for $GL(2m,\C)$-Higgs bundles (of topological type $d = 0$)
$$h : \calM (2m,d)  \to \calA = \bigoplus_{i=1}^{2m} H^0(\Sigma , K^{i}).$$
Let $p(x) = x^{2m} + a_1 x^{2m-1} + \ldots + a_{2m}$ be a polynomial with $a_i \in H^0(\Sigma , K^i)$. In particular, $p(x)^2$ defines a point in the Hitchin base, which we denote simply by $p^2 \in \calA$, and by the \textit{BNR} correspondence (see Theorem \ref{bnrcorrespondence}), the fibre $h^{-1}(p^2)$ is isomorphic to Simpson moduli space $\calM (p^2 ; k)$ of semi-stable sheaves of rank $1$ and degree $k = 2m(2m-1)(g-1)$ on the spectral curve
\begin{equation}
X \coloneqq \zeros  (p(\lambda)^2) \subset |K|.
\label{dou}
\end{equation}    
We denote by $\pi: X \to \Sigma$ the natural finite morphism of degree $2m$. Moreover, we will denote the reduced scheme $X_{\text{red}}$ by $S$ and assume that 
$$S = \zeros  (p(\lambda)) \subset |K|$$
is a non-singular curve. We also denote the restriction of $\pi$ to $S$ by  
$$\pi^{\text{red}} : S \seta \Sigma,$$ 
which is a ramified $m$-fold covering of $\Sigma$. As we will see, $X$ is an example of a ribbon and, motivated by the existing literature (e.g., \cite{donagi1995non, chen2011moduli}) we give an alternative description of the fibre $h^{-1}(p^2)$ of the $G$-Hitchin fibration (where $G = SL(2m,\C), SO(4m,\C) $ and $Sp(4m,\C)$). 

\section{Ribbons}

Ribbons can be defined on any reduced connected scheme $S$ of finite type over a fixed field $k$ (see, e.g., \cite{bayer1995ribbons}). For us, however, it will be enough to consider ribbons on a non-singular irreducible projective curve $S$ over the complex numbers. From now on, $S$ will always be a compact Riemann surface.

\begin{defin} A \textbf{ribbon} $X$ on $S$ is a curve (i.e., an irreducible projective $\C$-scheme of dimension $1$) such that:
\begin{enumerate}
\item $X_{\text{red}} \cong S,$
\item The ideal sheaf $\calI$ of $S$ in $X$ satisfies
$$\calI^2 = 0,$$
\item $\calI$ is an invertible sheaf on $S$.
\end{enumerate}
\end{defin} 

\begin{rmks} 

\noindent 1. Recall that the reduced scheme $X_{\text{red}}$ has the same underlying topological space of $X$ and its structure sheaf $\calO_{X_{\text{red}}}$ is given by 
$$0 \seta \calN_X \seta \calO_X \seta \calO_{X_{\text{red}}} \seta 0,$$
where $\calN_X$ is the nilradical of $X$. Since $X_{\text{red}} \cong S$, the ideal sheaf $\calI$ is the nilradical of $X$.

\noindent 2. The condition $\calI^2 = 0$ implies that: 
\begin{itemize}
\item $\calI$ is isomorphic to the conormal sheaf of $S$ in $X$. In particular, we have a short exact sequence of $\calO_S$-modules (namely, the restricted cotangent sequence): 
$$0 \seta \calI \seta \Omega_X|_S \seta \Omega_S \seta 0.$$
\item $\calI$ may be regarded as a sheaf on $S$, so that condition \textit{3} in the definition of a ribbon makes sense. 
\end{itemize}   
\end{rmks}

The restricted cotangent sequence not only gives an element in $\ext^1_S(\Omega_S, \calI)$, but it actually classifies ribbons on $S$. More precisely, given a line bundle $\calI$ on $S$ and a class $e \in \ext^1_S(\Omega_S, \calI)$, there is a unique ribbon $X$ on $S$ whose class is $e$ (\cite{bayer1995ribbons}, Theorem 1.2). This works for any ribbon on a connected reduced $k$-scheme. For us, this tells us that ribbons on $S$ are classified by $H^1(S, K_S^{-1}\calI)$.  

The non-reduced curve $X$ defined in (\ref{dou}) is an example of a ribbon, where the ideal sheaf is isomorphic to $\calO_{|K|}(- S)|_{S} \cong K_S^{-1}$. Thus, these are classified by $H^1(S,K_S^{-2})$ (or more precisely by zero and $\mathbb{P} (H^1(S,K_S^{-2}))$). In particular, the ribbon corresponding to $0 \in H^1(S,H_S^{-2})$ is just the first infinitesimal neighborhood of the zero section $\Sigma \subset |K|$ (e.g., $m=1$, where $p(x)=x$, using our previous notation).

\section{Generalized line bundles}

Let $X$ be a ribbon on $S$ with generic point $\eta$.

\begin{defin} Let $\calE$ be a coherent sheaf on $X$.
\begin{itemize}
%\item Let $U$ be an open set of $X$. A regular function $f \in \calO_X(U)$ is called a \textbf{non-zero divisor} on $\calE(U)$ if the multiplication map $f\cdot \ : \calE|_U \seta \calE|_U$ is injective. $\calE$ is \textbf{torsion-free} if for every (affine) open set $U \subseteq X$, every non-zero divisor on $\calO_X(U)$ is a non-zero divisor on $\calE(U)$. 
\item $\calE$ is a \textbf{generalized line bundle} on $X$ if it is torsion-free and generically free of rank 1. The last condition means that $\calE_\eta \cong \calO_{X,\eta}$ (equivalently, $\calE|_U$ is an invertible sheaf for some open set $U \subseteq X$).    
\item The sheaf $\bar{\calE}$ associated to a generalized line bundle $\calE$ is defined to be the maximal torsion-free quotient of $\calE \otimes \calO_S$ (i.e., $\bar{\calE} = \calE |_S / \text{torsion}$).
\item The \textbf{genus} $g_{_X}$ of $X$ is $g_{_X} = 1 - \chi(\calO_X)$.
\end{itemize}
\end{defin}

%\begin{rmk} The rank of a torsion-free sheaf $\calE$ on the ribbon $X$ can be equivalently defined as half of the length at the generic point, i.e., $l_{\calO_{X,\eta}} (\calE_\eta) = 2 \rk (\calE)$. This is not an integer, but a rational number (e.g., any line bundle on $S$, seen as an $\calO_X$-module, has rank $1/2$ - c.f. the first example in section 7.3).    
%\end{rmk}
\begin{rmk} Clearly, a generalized line bundle has polarized rank equal to $1$.
\end{rmk}
Let us restrict now to our case, where $X = \zeros  (p(\lambda)^2) \subset |K|$ (and we assume that the reduced scheme $S = \zeros  (p(\lambda))$ of $X$ is non-singular). As remarked, this means that the ideal sheaf of $S \subset X$ is $K_S^{-1}$. Given a generalized line bundle $\calE$, we can canonically associate an effective divisor $D$ supported on the set where $\calE$ fails to be locally isomorphic to $\calO_X$. One way to see this is by considering the sheaf $\calG$ defined as the kernel of the natural map from a generalized line bundle to its maximal torsion-free quotient. We have a short exact sequence of $\calO_X$-modules
\begin{equation*}
0 \seta \calG \seta \calE \seta \bar{\calE} \seta 0.
\end{equation*}  
Note that $\calG$ is torsion-free, since it is a subsheaf of the torsion-free sheaf $\calE$. Furthermore, since $\calE$ is generically free of rank $1$, $\calG$ can be seen as an invertible sheaf on $S$. The natural map $\calI \otimes \bar{\calE} \seta \calG$ is injective (recall that $\calI = K_S^{-1}$ is the ideal sheaf), thus $\calG$ is naturally an invertible sheaf on $S$ isomorphic to $K_S^{-1} (D) \otimes \bar{\calE} $, for the effective divisor $D$ on $S$. We shall denote by $b(\calE)$ the degree of the corresponding effective divisor, and call it the \textbf{index} of $\calE$. Note that the index of $\calE$ is zero if and only if $\calE$ is an invertible $\calO_X$-module. 
   
\begin{thm} (\cite{eisenbud1995clifford}, Thm 1.1) Let $\calE$ be a generalized line bundle on a ribbon $X$. There is a unique effective Cartier divisor $D$ on $S$ and a unique line bundle $\calL^\prime$ on the blow-up\footnote{Here, the Cartier divisor $D$ is considered as a closed subscheme of $X$.} $f: X^\prime \coloneqq \Bl_D(X) \seta X$ such that $f_*\calL^\prime \cong \calE$.
\end{thm}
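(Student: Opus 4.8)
The plan is to prove the existence and uniqueness of the blow-up data $(D, \calL')$ for a generalized line bundle $\calE$ on the ribbon $X$, following the structure of Eisenbud--Popescu--Schreyer's Clifford-index paper \cite{eisenbud1995clifford}. First I would recall the local model: since $S$ is smooth and the ideal sheaf $\calI$ of $S$ in $X$ is the invertible sheaf $K_S^{-1}$, at each closed point $x \in S$ the completed local ring $\widehat{\calO}_{X,x}$ is (non-canonically) isomorphic to $\C[[s,t]]/(t^2)$, where $s$ is a uniformizer of $S$ at $x$ and $t$ generates $\calI$. A torsion-free $\widehat{\calO}_{X,x}$-module that is generically free of rank $1$ is, by the classification of such modules over $\C[[s,t]]/(t^2)$, isomorphic to one of the modules $M_n = (s^n, t) \subset \C[[s,t]]/(t^2)$ for a unique integer $n \geq 0$ (with $M_0 = \widehat{\calO}_{X,x}$ itself), and $n$ is precisely the local contribution to the index $b(\calE)$. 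This is the computational heart and I would cite it from \cite{eisenbud1995clifford}.

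Next I would assemble these local invariants into the global Cartier divisor $D$ on $S$: the function $x \mapsto n(x)$ is supported on the finite locus where $\calE$ is not locally free on $X$, and since each $n(x)$ is an honest non-negative integer and $S$ is a smooth curve, $D := \sum_x n(x)\,[x]$ is a well-defined effective Cartier divisor on $S$. Regarding $D$ as a closed subscheme of $X$ (via the closed immersion $S \hookrightarrow X$), I would form the blow-up $f : X' = \Bl_D(X) \to X$. The key geometric fact, again from \cite{eisenbud1995clifford}, is that $X'$ is itself a ribbon on $S$ (its ideal sheaf is $\calI(D)$, i.e. $K_S^{-1}(D)$), and that $f$ restricted over a point $x$ with $n(x) = n$ is exactly the local blow-up that resolves the module $M_n$ — concretely, $f^{-1}(\Spec M_n)$ carries the structure making the pullback module locally free of rank $1$.

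Then comes the main construction. I would show $\calL' := f^*\calE / (\text{torsion})$ is a line bundle on $X'$ and that the natural map $\calE \to f_*\calL'$ is an isomorphism. For this one checks both statements locally: over the locus where $\calE$ is already invertible, $f$ is an isomorphism and there is nothing to prove; over a point contributing $M_n$ one computes directly that the blow-up of $M_n$ becomes free and that pushing it forward recovers $M_n$ — this is the projection-formula / "$f_* \calO_{X'} = \calO_X$ is false but $f_*$ of the right twist gives back $\calE$" type of computation, which I would keep local and cite rather than grind through. For uniqueness: if $f' : Y \to X$ is any partial blow-up along an effective Cartier divisor $D'$ on $S$ with a line bundle $\calM$ on $Y$ and $f'_*\calM \cong \calE$, then computing the local invariant $n(x)$ from $\calM$ recovers the same integer (a line bundle on a ribbon has local index $0$, and push-forward along a length-$n$ blow-up adds exactly $n$ to the index), forcing $D' = D$; given $D' = D$ the blow-up is canonical and $\calM$ is then pinned down as $f^*\calE/\text{torsion}$, giving $\calM \cong \calL'$.

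The main obstacle I anticipate is the purely local module classification over $\C[[s,t]]/(t^2)$ and the verification that blowing up the divisor $D$ (viewed inside $X$, not inside $S$) produces precisely the ribbon that trivializes $\calE$ — these are the two places where one must be careful about the non-reduced scheme structure and where the cited theorem of \cite{eisenbud1995clifford} does the real work. Everything else is a matter of globalizing finitely many local isomorphisms on the smooth curve $S$ and invoking that $f_*$ and the "maximal torsion-free quotient" operations behave well because all the local pictures are explicit. I would therefore present the proof as: (i) recall the local classification and the definition of $D$ via local indices; (ii) identify $X' = \Bl_D X$ and note it is a ribbon; (iii) set $\calL' = f^*\calE/\text{torsion}$, check locally it is invertible and $f_*\calL' \cong \calE$; (iv) deduce uniqueness from the additivity of the index under push-forward.
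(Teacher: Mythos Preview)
Your proposal is a reasonable sketch of the argument from \cite{eisenbud1995clifford}, but note that the paper does not actually prove this theorem: it is quoted directly as Theorem 1.1 of that reference, and the paper contents itself with illustrating the statement via the explicit example of the ideal sheaf $(x^n,y)$ in $\calO_0 = \C[\![x,y]\!]/(y^2)$, showing how the blow-up algebra $\calO_n$ arises and how $f_*$ recovers the original module. So there is nothing to compare your argument against beyond the cited source itself.

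That said, your outline matches the structure of the original proof and is internally sound. One small point worth tightening: in step (iv) you argue uniqueness of $D$ by saying that push-forward along a length-$n$ blow-up ``adds exactly $n$ to the index''. This is correct but you should be explicit that the index is computed pointwise from the local module type $M_n$, and that for a line bundle on the blown-up ribbon the local type is $M_0$ everywhere, so the index of $f'_*\calM$ at $x$ is exactly the length of $D'$ at $x$; this forces $D'=D$ point by point. Also, for the identification $\calL' = f^*\calE/\text{torsion}$ you could equivalently (and perhaps more cleanly, as in \cite{chen2011moduli}) characterize $\calL'$ as the unique line bundle on $X'$ with $f_*\calL' \cong \calE$ using the surjection $f^*:\Pic(X)\to\Pic(X')$ and the description of generalized line bundles as twists of ideal sheaves $\calI_Z$, which the paper recalls in the remark following the example.
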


\begin{rmk} $X^\prime$ is clearly also a ribbon on $S$ and the ideal sheaf of $S$ is $K_S^{-1}(D)$. In particular, using the short exact sequence of $\calO_{X^\prime}$-modules
$$0 \to K_S^{-1}(D) \to \calO_{X^\prime} \to \calO_S \to 0,$$
we find that the genus $g_{_{X^\prime}}$ of $X^\prime$ is $g_{_X} - b = 4g_{_S} -3 - b$, where $b = \deg (D)$.  
\end{rmk}

Let us see how this works through an example (the reader is referred to the paper \cite{chen2011moduli} for more details). Let $p\in X$ and denote by $\hat{\calO}_{X,p}$ the completion of the local ring $\calO_{X,p}$ of $X$ at the point $p\in X$. Thus, 
$$\hat{\calO}_{X,p} \cong \calO_0 \coloneqq \dfrac{\C [\![ x, y ]\!]}{(y^2)},$$
where $\C [\![ x, y ]\!]$ is the ring of formal power series over $\C$. 

Consider the ideal sheaf $\calE$ of the unique closed subscheme $Z$ of length $n > 0$ supported at $p$ and contained in $S$. This is a generalized line bundle, which locally is the ideal $(x^n,y)$, and its associated divisor is $D = np$ (on $S$). Denote by 
$$f: X^\prime \seta X$$ 
the natural morphism associated to the blow-up $X^\prime = \Bl_D(X)$ of $X$ along $D$. The finite morphism $f: X^\prime \seta X$ is a set-theoretical bijection (for any effective Cartier divisor on $S$). Let $q \in X^\prime$ be the point mapping to $p  \in X$ via this map. Then,    
$$\hat{\calO}_{X^\prime,q} \cong \calO_n \coloneqq \dfrac{\C [\![ X, Y ]\!]}{(Y^2)},$$
where $\calO_n$ is the $\calO_0$-algebra via
\begin{align}
\calO_0 & \to \calO_n   \label{algebra} \\
x & \mapsto X, \nonumber \\
y & \mapsto YX^n. \nonumber
\end{align}
Then, the line bundle $\calL^\prime \in \Pic (X^\prime)$ is locally $\calO_n$ (seen as an algebra), and its direct image $\calE$ is locally $\calO_n$ seen as an $\calO_0$-module via the map (\ref{algebra}), which is exactly the map associated to $f$ at the level of stalks (the blow-up of $\Spec (\calO_0)$ along the ideal $(x^n,y)$ is $\Spec (\calO_n) \to \Spec (\calO_0)$ by Lemma 2.12 from \cite{chen2011moduli}).   

Now let $D$ be any effective Cartier divisor on $S$. Since the ideal sheaf of $S$ in $X^\prime$ is $K_S^{-1}(D)$ and 
\begin{equation}
0 \to K_S^{-1} \to K_S^{-1}(D) \to K_S^{-1}(D)\otimes \calO_D \to 0,
\label{divi}
\end{equation}
we have
$$0 \to \calO_X \to f_*\calO_{X^\prime} \to K_S^{-1}(D)\otimes \calO_D \to 0.$$     
Notice that, restricting to the invertible elements\footnote{Note that we can always write $P \in \calO_0$ as $P = p_1(x) + yp_2(x) + (y^2)$, where $p_1,p_2 \in \C [\![ x ]\!] $. Clearly, $P $ is not a zero divisor of $\calO_0$ if and only if $p_1 \neq 0$. Also, $P \in \calO_0^\times$ is an invertible element of $\calO_0$ if and only if $p_1 \in \C [\![ x ]\!]^\times = \C^\times $ is an invertible polynomial in $x$ (i.e., a non-zero scalar). }, we obtain 
$$0 \to \calO^\times_X \to f_*\calO^\times_{X^\prime} \to K_S^{-1}(D)\otimes \calO_D \to 0.$$     
Now, the associated exact sequence in cohomology gives
\begin{equation}
0 \to H^0(D, K_S^{-1}(D)|_D) \to \Pic (X) \to \Pic (X^\prime) \to 0,
\label{picses}
\end{equation}
since $H^0(X, \calO^\times_X) \cong H^0(X^\prime, \calO^\times_{X^\prime}) \cong \C^\times$,  $H^1(X, f_*\calO^\times_{X^\prime}) \cong H^1(X^\prime, \calO^\times_{X^\prime})$ and the first cohomology of sheaves supported on points vanishes. Here, $\Pic (X) = H^1(X, \calO_X^\times)$ is the Picard group of the curve $X$. Note that we have the following commutative diagram 
\[\xymatrix@M=0.08in{
  &  & 0\ar[d] & 0\ar[d] &  \\
0 \ar[r] & H^0(D,K_S^{-1}(D)) \ar[r] \ar[d]^{\Id} & H^1(S,K_S^{-1}) \ar[r] \ar[d] & H^1(S,K_S^{-1}(D)) \ar[r] \ar[d] & 0  \\
 0 \ar[r] & H^0(D,K_S^{-1}(D)) \ar[r]  & \Pic(X) \ar[r]^{f^*} \ar[d]  & \Pic(X^\prime) \ar[r] \ar[d] & 0 \\
  &   & \Pic(S) \ar[r]^{Id} \ar[d]  & \Pic(S)  \ar[d] &  \\
&   & 0   & 0 .  &  }\] 
The top horizontal sequence in the diagram comes from (\ref{divi}) by noting that $\deg K_S^{-1}(D) < 0$ and the middle horizontal one, for example, comes from the exact sequence  
$$0 \to K_S^{-1} \to \calO_X^\times \to \calO_S^\times \to 0.$$ 

\begin{rmk} Let $\calE_1$ and $\calE_2$ be generalized line bundles on $X$ with the same associated effective divisor $D = n_1 p_1 + \ldots + n_k p_k$ on $S$. Then, $\calE_i \cong f_*\calL^\prime_i$, $i=1,2$, for line bundles $\calL^\prime_i$ on the blow-up $X^\prime = \Bl_D (X)$. Since the map $f^* : \Pic (X) \to \Pic (X^\prime)$ is surjective, we can find $\calL \in \Pic(X)$ such that $f^*(\calL) = \calL^\prime_1 (\calL^\prime_2)^{-1}$, which, by the projection formula, implies 
$$\calE_2 \otimes \calL \cong \calE_1.$$
Note also that any two line bundles with this property differ by an element of $H^0(D,K_S^{-1}(D))$. In particular, denoting by $Z$ the unique closed subscheme on $S$ supported at the points $p_i$, $i=1, \ldots, k$, and with lengths $n_i$ at $p_i$, the ideal sheaf $\calI_Z$ is a generalized line bundle and all the other generalized line bundles with corresponding divisor $D$ are given by $\calI_Z \otimes \calL$, where $\calL \in \Pic(X)$ (for more details see \cite[Lemma 2.9]{chen2011moduli}).
\end{rmk}

\section{Stability}

Let us give some examples of rank $1$ torsion-free sheaves on $X$ and discuss stability for these sheaves (as points in Simpson moduli space $\calM (p^2 ; k)$, where $k= 2m(2m-1)(g-1)$). In particular, note that (polarized) slope stability as defined in section \ref{Simpson} is independent of the polarization taken. 

\begin{exs}

\noindent 1. Let $E$ be a rank $2$ vector bundle on $S$ of degree $e = 2m(m-1)(g-1)$. Consider it as an $\calO_X$-module (i.e., $j_*E$, where $j : S \hookrightarrow X$) and let us calculate its Hilbert polynomial.
\begin{align*}
P(j_*E, t) & = \chi(X, j_*E \otimes \pi^*\calO_\Sigma (t)) \\
& = \chi(X, j_*(E \otimes j^*\pi^*\calO_\Sigma (t)))\\
& = \chi(S, E \otimes \pi^{\text{red}}\calO_\Sigma (t)) \\
& = 2m\delta t + e + 2m^2(1-g). \qquad \qquad \text{(by Riemann-Roch)}
\end{align*}
Therefore, $j_*E$ has (polarized) rank $1$ and (polarized) degree $k$. Clearly, $j_*E$ is torsion-free, but not generically free (so it is not a generalized line bundle on the ribbon $X$). Moreover, semi-stability for $j_*E$ is the statement that every subsheaf $\calF$ satisfies 
$$\frac{\deg_P (\calF)}{\rk _P(\calF)} \leq \dfrac{\deg_P (j_*E)}{\rk_P (j_*E)} = k.$$
It is enough to restrict to the case where $\calF$ is of the form $j_*L$, for a line subbundle $L $ of $E$. Calculating the Hilbert polynomial of $j_*L$, we find that its (polarized) rank is $\rk (j_*L) = 1/2$ and its (polarized) degree is $\deg_P (j_*L) = \deg (L) + m^2(g-1)$. Thus, semi-stability for $j_*L$ is equivalent to 
$$\frac{\deg (L) + m^2(g-1)}{1/2} \leq k$$
for every line subbundle $L \subset E$, i.e., 
$$\deg(L)\leq e/2.$$
This proves that (semi-)stability of a rank $2$ bundle $E$ on $S$ is equivalent to (semi-)stability of the corresponding rank $1$ torsion-free sheaf $j_*E$ on $X$.

\noindent 2. Let $\calE$ be a semi-stable generalized line bundle and $b$ the degree of its corresponding effective divisor $D$ on $S$. Calculating degrees via the short exact sequence of $\calO_X$-modules 
$$0 \to j_*(\bar{\calE}(D)\otimes K_S^{-1}) \to \calE \to j_*(\bar{\calE}) \to 0 $$
we obtain 
$$\deg (\bar{\calE}) = \frac{\deg_P (\calE) - b}{2}.$$   
Moreover, $\calF = j_*(\bar{\calE}(D)\otimes K_S^{-1})$ is a subsheaf of $\calE$ and by stability, $\deg(\calF)/\rk (\calF) = \deg (\bar{\calE}) + b - m^2(g-1)/ (1/2) = \deg (\calE) + b - 2m^2(g-1) \leq \deg(\calE)$. Thus we have the constraint 
$$b \leq 2m^2(g-1).$$ 
\end{exs}

The Simpson moduli space $\calM (p^2; k)$ parametrizes semi-stable sheaves on $X$ of (polarized) rank $1$ and (polarized) degree $k$. By \cite[Theorem 3.5]{chen2011moduli} (see also \cite[Theorem 3.2]{donagi1995non}) the ones described above comprise all such sheaves. More precisely, a semi-stable rank $1$ sheaf $\calE$ on $X$ is either   
\begin{itemize}
		\item a generalized line bundle whose index satisfies $b(\calE) \leq 2m^2(g -1)$, or 
		\item a direct image $i_{*}(E)$ of some semi-stable rank $2$ vector bundles $E$ on $S$ of degree $e = 2m(m-1)(g-1)$.
	\end{itemize} 

Similarly, a generalized line bundle is stable if and only if the inequality above is strict. In particular, this means that a generalized line bundle $\calE$ is strictly semi-stable if and only if $b(\calE) = 2m^2(g -1)$. Moreover, (semi-)stability of $E \in \calU_S(2,e)$ is equivalent to (semi-)stability of $j_*E \in \calM (p^2; k)$ as we have checked. 

\section{The singular fibre}

Note that if we disregard the constraint imposed by the determinant, Theorem \ref{fibreforsu*} describes the fibre $h^{-1}(p^2)$ for the Hitchin fibration 
$$h : \calM (2m,0) \to \calA.$$
In particular, from the \textit{BNR} correspondence, 
$$h^{-1}(p^2) \cong \calM (p^2 ; k) \cong \calU_S(2,e) \cup \bigcup_{d=1}^{g_{_S} - 1}A_d .$$

Let $\bar{d} = -2d + 2(g_{_S} -1)$, $d>0$, and consider 
$$\calA_d \coloneqq \{ \calE \in \calM^s (p^2 ; k) \ | \ b(\calE) = \bar{d}  \},$$  
where $k = 2m(2m-1)(g-1)$. 

There is a natural map  
\begin{align*}
\calA_{d} & \to \Pic^{d_2}(S) \times S^{(\bar{d})} \\
\calE & \mapsto (\bar{\calE}, D_\calE),
\end{align*}
where $d_2 = (k-\bar{d})/2$. Let $(L,D) \in \Pic^{d_2}(S) \times S^{(\bar{d})}$. As discussed, every generalized line bundle $\calE$ is characterized by its effective divisor $D_\calE$ and a line bundle $\calL^\prime $ in $X^\prime = \Bl_{D_{\calE}}X$. Thus, the fibre at $(L,D)$ of the map above is given by the line bundles $\calL^\prime \in \Pic (X^\prime)$ such that the restriction $\bar{\calL^\prime}$ to $S$ is isomorphic to $L$. But
$$0 \to H^1(S,K_S^{-1}(D)) \to \Pic (X^\prime) \to \Pic (S) \to 0,$$
so that the fibre at $(L,D)$ is an affine space modeled on $H^1(S,K_S^{-1}(D))$. This makes $\calA_{d}  \to \Pic^{d_2}(S) \times S^{(\bar{d})}$ into an affine bundle modeled on the vector bundle $E_d$ on $\Pic^{d_2}(S) \times S^{(\bar{d})}$ (see Chapter \ref{Chapter3} for the definition and construction of $E_d$).    

\begin{prop}\label{osdoad} The locus $\calA_d$ is isomorphic to the total space of the affine bundle $A_d$ (see Definition \ref{defA_d}) for $ 1\leq d \leq g_{_S} -1$.
\end{prop}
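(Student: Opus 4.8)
The plan is to exhibit mutually inverse morphisms between $\calA_d$ and the total space of the affine bundle $A_d$ described in Chapter \ref{Chapter3}, matching all the structure (the projection to $\Pic^{d_2}(S) \times S^{(\bar d)}$, the affine-bundle modelling over $E_d$, and the twist data). Since the previous chapter's $A_d$ was constructed as a set of non-split extensions of Higgs bundles $0 \to \pi_*(L_1,\lambda) \to (V,\Phi) \to \pi_*(L_2,\lambda) \to 0$ with $\zeta(\delta)\neq 0$, while $\calA_d$ is a set of stable generalized line bundles of index $\bar d$, the natural bridge is the \emph{BNR}/direct-image correspondence $\pi_*(-,\lambda)$ of Theorem \ref{bnrcorrespondence}: a stable rank $1$ torsion-free sheaf $\calE$ on the ribbon $X$ pushes forward to a Higgs bundle $(V,\Phi)=\pi_*(\calE,\lambda)$ lying in $h^{-1}(p^2)$, and the content of the proposition is that ``index of $\calE$ equals $\bar d$'' corresponds exactly to ``$(V,\Phi)$ is an extension of the required type with $\deg(W_1)=-d$''.

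First I would show that for a stable generalized line bundle $\calE$ of index $\bar d>0$, the push-forward $(V,\Phi)=\pi_*(\calE,\lambda)$ satisfies $p(\Phi)\neq 0$, so $(V,\Phi)\notin N$ and hence (by the lemmas of Chapter \ref{Chapter3}) lies in some $A_{d'}$; this is because $\calE$ is not supported on the reduced curve $S$ (its index is positive), so the cokernel of $\pi^*\Phi-\lambda$ is not a rank $2$ bundle on $S$. Then I would identify the invariant: the canonical sub-line-bundle $\calG \cong K_S^{-1}(D_\calE)\otimes\bar\calE$ of $\calE$ (the kernel of $\calE\to\bar\calE$), being an $\calI$-multiple of $\bar\calE$ and hence an $\calO_X$-submodule, push-forwards to the maximal $\Phi$-invariant subbundle $W_1=\pi_*(\calG)$, whose quotient is $\pi_*(\bar\calE)=W_2$; computing $\deg W_1$ via invariance of Euler characteristic under finite direct image gives $\deg(L_1)=\deg(\calG)=d_1$, i.e.\ $d'=d$ with $d=(k-2d_2)/2$ and $\bar d=-2d+2(g_S-1)$. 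Conversely, starting from an extension in $A_d$, the associated rank $1$ torsion-free sheaf $\calE$ on $X$ (from \emph{BNR}) is generically free of rank $1$ since $p(\Phi)\neq 0$, hence a generalized line bundle, and the same degree computation shows $b(\calE)=\bar d$; stability on both sides is equivalent under \emph{BNR} by \cite[Corollary 6.9]{si3}. Finally I would check that these assignments are inverse to each other (immediate from $\pi_*$ being an equivalence of categories, $\pi$ being affine) and that they are compatible with the projections: $\calE\mapsto(\bar\calE,D_\calE)$ goes to $(L_2,D)$ where $L_2=\bar\calE$ and $\calO(D)=L_2^*L_1 K_S$ (using $\calG\cong K_S^{-1}(D)\otimes\bar\calE$), and the fibre of $\calA_d$ over $(L,D)$ — the line bundles $\calL'$ on $X'=\Bl_D X$ restricting to $L$ on $S$ — is, by the exact sequence $0\to H^1(S,K_S^{-1}(D))\to\Pic(X')\to\Pic(S)\to 0$, a torsor for $H^1(S,K_S^{-1}(D))\cong H^1(S,L_2^*L_1)$, which is precisely the fibre of $E_d\to Z_d$; so the affine-bundle structures agree. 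One also has to observe that the constraint $\bar d\geq 0$ forces $1\leq d\leq g_S-1$, and stability forces the strict inequality $b(\calE)<2m^2(g-1)$, i.e.\ $d\geq 1$, matching the index range of $\calA_d$.

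The main obstacle I anticipate is the bookkeeping identifying the canonical subsheaf $\calG$ of the generalized line bundle with the ``kernel of $p(\Phi)$'' subbundle $W_1$ on the $\Sigma$ side, and in particular verifying that $\pi_*$ carries the short exact sequence $0\to\calG\to\calE\to\bar\calE\to 0$ of $\calO_X$-modules precisely to the extension of Higgs bundles $0\to(W_1,\Phi_1)\to(V,\Phi)\to(W_2,\Phi_2)\to 0$ with $\Phi_1,\Phi_2$ the tautological-section multiplications on $L_1=\calG,L_2=\bar\calE$. This requires tracking how the $\calO_X$-module (equivalently, Higgs) structure behaves under the blow-up $f:X'\to X$ and the inclusion $j:S\hookrightarrow X$, and confirming that the line bundles $L_i$ produced this way on $S$ are exactly the $L_i$ appearing in Definition \ref{l1l2}. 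Everything else — the degree computations, the exact sequence for $\Pic(X')$, the matching of fibres with $E_d$ — is routine once this dictionary is nailed down, and indeed much of it was already assembled in Chapters \ref{Chapter3} and the ribbon sections above.
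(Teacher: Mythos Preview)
Your proposal is correct and follows essentially the same route as the paper: the BNR/direct-image correspondence $\pi_*(\cdot,\lambda)$ is used to pass between generalized line bundles on $X$ and Higgs bundles in $h^{-1}(p^2)$, with the short exact sequence $0\to\calG\to\calE\to\bar\calE\to 0$ on $X$ pushing forward to the extension $0\to(W_1,\Phi_1)\to(V,\Phi)\to(W_2,\Phi_2)\to 0$ and the identification $L_1=\calG=\bar\calE\,K_S^{-1}(D_\calE)$, $L_2=\bar\calE$. The paper's converse is phrased slightly differently---it recovers $\calE$ by first passing to the blow-up $X'=\Bl_D X$ and taking the cokernel of $(\pi')^*\Phi-\lambda$ there to get a genuine line bundle $\calL'$, then setting $\calE=f_*\calL'$---but this is equivalent to your BNR-on-$X$ description. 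One small slip: you write ``index $\bar d>0$'' to justify $p(\Phi)\neq 0$, but the range $1\le d\le g_{_S}-1$ includes $d=g_{_S}-1$, where $\bar d=0$; the correct reason (which you implicitly use anyway) is that a generalized line bundle is generically free of rank $1$ on $X$ and hence never of the form $j_*E$ for a rank~$2$ bundle on $S$.
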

\begin{proof} 
Let $\calE \in \calA_{\bar{d}}$. Then, the direct image $\pi_*\calE$ is a rank $2m$ vector bundle on $\Sigma$, as $\calE$ is torsion-free and generically free of rank $1$, and pushing forward multiplication by the tautological section $\lambda : \calE \to \calE \otimes \pi^*K$ we get a Higgs field $\Phi : V \to V \otimes K$. As $\calE$ is a sheaf on $X$, $p(\Phi)^2=0$ and $(V,\Phi) \in h^{-1}(p^2)$. Note that $p(\Phi)$ cannot be zero, as that would correspond to a rank $2$ vector bundle on $S$. Thus, $(V,\Phi)$ is an extension of the form 
$$0 \to (W_1, \Phi_1) \to (V,\Phi) \to (W_2, \Phi_2) \to 0,$$
where $(W_1, \Phi_1) \cong \pi_*(L_i,\lambda)$, $i=1,2$, and
$$0 \to L_1 \to \calE \to L_2 \to 0.$$
In other words, $L_2 = \bar{\calE}$ and $L_1 = \bar{\calE}K_S^{-1}(D_\calE)$. 

Let now $(V,\Phi) \in A_d$ which projects to $(L,D) \in \Pic^{d_2}(S) \times S^{(\bar{d})}$. Consider the blow-up $f : X^\prime \coloneqq \Bl_{D_{\calE}} \to X$ and denote by $\pi^\prime $ the composition $\pi \circ f : X^\prime \to \Sigma$. Then, (see, e.g., \cite{si1, bnr}) the cokernel of the map 
$$\pi^*\Phi - \lambda : \pi^*V \to \pi^*(V \otimes K)$$
is a torsion-free sheaf $\calE \otimes \pi^*K$ of rank $1$ (which must then be a generalized line bundle on $X$) such that $\pi_*\calE \cong V$ and the Higgs field $\Phi$ is obtained by pushing forward multiplication by the tautological section. Alternatively, we could consider the cokernel of the map pulled back to the ribbon $X^\prime$  
$$(\pi^\prime)^*\Phi - \lambda : (\pi^\prime)^*V \to (\pi^\prime)^*(V \otimes K).$$
This gives a line bundle $\calL^\prime \otimes (\pi^\prime)^*K \in \Pic (X^\prime)$. Thus, $f_*\calL^\prime = \calE \in \calA_d$ is the generalized line bundle.
\end{proof}

The proposition above gives a dictionary between our description of the fibre 
$$h^{-1}(p^2) \cong \calU_S(2,e) \cup \bigcup_{d=1}^{g_{_S} - 1}A_d $$
in terms of the reduced curve, and the description 
$$h^{-1}(p^2) \cong \calM^k(X) \cong \calU_S(2,e) \cup \bigcup_{d=1}^{g_{_S} - 1}\calA_d,$$
of the Simpson moduli space in terms of rank $2$ bundles on $S$ and generalized line bundles (see \cite{chen2011moduli} and \cite{donagi1995non} for more on the second description). Note also that the number $\bar{d}$ which appeared in Chapter \ref{Chapter3}, by considering the divisor obtained from the projection in (\ref{jk}) of the extension class of the Higgs bundle in the fibre, has now a simple interpretation. It is the degree of the effective divisor corresponding to the singularities of the generalized line bundle.    

Alternatively, $\calA_d $ can be understood by looking directly at the projection to the corresponding divisor $$\calA_d \to S^{(\bar{d})}.$$The fibre of this map at a divisor $D$ is $\Pic^{k-\bar{d}}(X^\prime)$, where again $X^\prime = \Bl_DX$. The degrees are found using the fact that $f_*\calL^\prime \cong \calE \in \calA_d$, so that, $\chi (X^\prime, \calL^\prime) = \chi (X, \calE)$. In other words, the fibres are torsors for $\Pic^0(X^\prime)$. Note also that for $d= g_{_S} -1$, the corresponding generalized line bundles have no singularities ($\bar{d}=0$) and $\calA_{g_{_S}-1} = \Pic^k (X)$. In this case, $\calA_{g_{_S}-1}$ is an affine bundle on $\Pic^{k/2} (S)$, and the affine structure comes from 
$$0 \to H^1(S,K_S^{-1}) \to \Pic (X) \to \Pic (S) \to 0.$$

\section{The case $SU^*(2m)$ revisited}
\subsection{Norm map}

We have already encountered the norm map associated to a covering between two Riemann surfaces (see (\ref{normmapsmt}) in Chapter \ref{Chapter3} ). We may generalize the construction of the norm map for a finite morphism  
$$\pi: X \to \Sigma $$
where $X$ is a more general curve (e.g., an irreducible, projective $\C$-scheme of dimension 1) by setting 
\begin{align*}
\Nm_\pi : \Pic^0(X) & \to \Pic^0(\Sigma)\\
\calL & \mapsto \det (\pi_*\calL) \otimes \det (\pi_*\calO_X)^{-1}.
\end{align*}
This is a group homomorphism (for more details see \cite[Section 6.5]{grot}) and clearly satisfies 
$$\Nm_\pi (\pi^*L) = L^{\deg (\pi)},$$
as $\det (\pi_*(\pi^*L)) = \det (\pi_*\calO_X \otimes L) = \det (\pi_*\calO_X) \otimes L^{\deg{\pi}}$ using the projection formula. 

The \textbf{Prym variety} $\PX$ is defined as the kernel of this map. Note that in this generality, $\PX$ is neither a proper $\C$-scheme, nor is it connected. The problem of finding the number of connected components of the Prym variety was addressed in \cite{hausel2012prym}.    

We will be particularly interested in the case where $X$ is a ribbon as before. 
%, so that 
%$$0 \to K_S^{-1} \to \calO_X \to \calO_S \to 0,$$
%where we see $K_S^{-1}$ as the ideal sheaf of $X_{\text{red}} \cong S$ in $X$
%and denote the by $j : S \hookrightarrow X$ the natural inclusion. 
In this case, $\Pic^0(X)$ has dimension $g_{_X} = 1 - \chi (\calO_X) = 4g_{_S} - 3$, but it is not a proper $\C$-scheme, so, in particular, it is not projective (see, e.g., \cite{bosch2012neron, chen2011moduli}). Given an effective divisor $D$ on $S$ we have three natural finite morphisms, namely $\pi^{\text{red}} : S \to \Sigma$, $\pi : X \to \Sigma$ and $\pi^\prime = \pi \circ f : X^\prime \to \Sigma$, where $X^\prime = \Bl_D (X)$ and $f$ is the blow-up map. The norms of these maps can be compared, by noting that  
\begin{align}
\det (\pi^{\text{red}}_*\calO_S) & \cong K^{- m(m-1)/2} \label{detR} \\
\det (\pi_*\calO_X) & \cong K^{- m(2m-1)}. \label{detS} 
\end{align}
This follows directly from the fact that $X$ and $S$ are subschemes of the total space of the canonical bundle of $\Sigma$ determined by the zeros of a section of $p(\lambda)^2 \in H^0(|K| , \pi^*K^{2m})$ and $p(\lambda) \in H^0(|K| , \pi^*K^{m})$ (for more details, see Section \ref{Section1.4}). The determinant of the direct image of the structure sheaf of $X^\prime$ by the morphism $\pi^\prime$ can also be computed by the simple lemma below.   

\begin{lemma}\label{xprime} $\det (\pi^\prime_* \calO_{X^\prime}) = \Nm_{\pi^{\text{red}}} (\calO_S(D)) \otimes K^{-m(2m-1)}$. 
\end{lemma}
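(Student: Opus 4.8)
The plan is to compute $\det(\pi'_*\calO_{X'})$ by relating the structure sheaf of the blow-up $X'=\Bl_D(X)$ to the structure sheaf of $X$ and a torsion contribution supported on $D$, then taking determinants. First I would recall that for the blow-up map $f:X'\to X$ there is a short exact sequence of $\calO_X$-modules
$$0 \to \calO_X \to f_*\calO_{X'} \to K_S^{-1}(D)\otimes \calO_D \to 0,$$
which was already established in the excerpt (it comes from restricting the sequence $0\to K_S^{-1}\to K_S^{-1}(D)\to K_S^{-1}(D)\otimes\calO_D\to 0$ to invertible elements, or directly from the ideal-sheaf description). Pushing this forward along the finite morphism $\pi:X\to\Sigma$, and using that $\pi$ is affine so $R^i\pi_*=0$ for $i>0$, gives a short exact sequence of locally free (plus torsion) $\calO_\Sigma$-modules
$$0 \to \pi_*\calO_X \to \pi'_*\calO_{X'} \to \pi_*\big(K_S^{-1}(D)\otimes\calO_D\big) \to 0.$$
Here $\pi_*(K_S^{-1}(D)\otimes\calO_D)$ is a torsion sheaf on $\Sigma$ supported on the image of $D$; since $D$ is an effective Cartier divisor on $S$ of degree $b=\deg(D)$, this torsion sheaf has length $b$.

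Next I would take determinants. For a torsion sheaf $\calT$ of length $b$ on $\Sigma$ supported (with multiplicity) on points $x_1,\dots$ one has a canonical isomorphism relating $\det$ of the total sheaf in the sequence to $\calO_\Sigma(\text{divisor})$; concretely, writing $D=\sum n_j q_j$ on $S$, the torsion sheaf $\pi_*(K_S^{-1}(D)\otimes\calO_D)$ is supported on $\sum n_j \pi^{\text{red}}(q_j)$ and its "determinant" as a $0$-cycle yields $\calO_\Sigma(\sum n_j \pi^{\text{red}}(q_j)) = \Nm_{\pi^{\text{red}}}(\calO_S(D))$. Combining this with $\det(\pi_*\calO_X)\cong K^{-m(2m-1)}$ from (\ref{detS}) in the excerpt, and the multiplicativity of $\det$ in short exact sequences, I obtain
$$\det(\pi'_*\calO_{X'}) \cong \det(\pi_*\calO_X)\otimes \Nm_{\pi^{\text{red}}}(\calO_S(D)) \cong \Nm_{\pi^{\text{red}}}(\calO_S(D))\otimes K^{-m(2m-1)},$$
as claimed. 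An alternative, perhaps cleaner, route would be to observe $\pi'_*\calO_{X'} = \pi^{\text{red}}_*(\calO_S(D)\otimes(\text{something}))$ using the explicit description $f_*\calO_{X'}$ as an $\calO_S$-algebra extension of $\calO_S$ by $K_S^{-1}(D)$, then apply $\det(\pi^{\text{red}}_*(M\oplus N))=\det(\pi^{\text{red}}_*M)\otimes\det(\pi^{\text{red}}_*N)$ together with the norm-map identity (\ref{directimage}) and (\ref{detR}); either way the key inputs are already available.

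The main obstacle I anticipate is making the torsion-sheaf determinant computation precise — specifically, checking that $\det$ of the length-$b$ torsion sheaf $\pi_*(K_S^{-1}(D)\otimes\calO_D)$ really is $\Nm_{\pi^{\text{red}}}(\calO_S(D))$ rather than, say, $\Nm_{\pi^{\text{red}}}(\calO_S(D))$ twisted by some power of $K$ coming from the $K_S^{-1}$ factor or from $\det(\pi^{\text{red}}_*\calO_S)$. Tracking these twists carefully is where sign/degree bookkeeping could go wrong, so I would cross-check the final formula against the known case $D=0$ (where $X'=X$ and the statement reduces to (\ref{detS})) and against the degree: $\deg\det(\pi'_*\calO_{X'})$ should equal $\chi(\calO_\Sigma)-\chi(\calO_{X'}) = -g_{X'}+1 = -(4g_S-3-b)+1$, and indeed $\deg\Nm_{\pi^{\text{red}}}(\calO_S(D)) - m(2m-1)\deg K = b - m(2m-1)(2g-2)$ matches after substituting $g_S = 4m^2(g-1)+1$. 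This consistency check essentially forces the formula and completes the verification.
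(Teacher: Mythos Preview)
Your main argument is correct, and the worry about the torsion determinant is unfounded: since $K_S^{-1}(D)\otimes\calO_D$ is a line bundle restricted to a zero-dimensional subscheme, it is (non-canonically) isomorphic to $\calO_D$, so its pushforward has divisor $\pi^{\text{red}}_*D$ and determinant $\Nm_{\pi^{\text{red}}}(\calO_S(D))$ on the nose. The paper, however, proves the lemma via what you call the ``alternative route'': it uses the ribbon exact sequence $0\to K_S^{-1}(D)\to\calO_{X'}\to\calO_S\to 0$ on $X'$, pushes forward by $\pi'$ (which on the outer terms is $\pi^{\text{red}}_*$), applies the projection formula with $K_S\cong(\pi^{\text{red}})^*K^m$, and then converts $\det(\pi^{\text{red}}_*\calO_S(D))$ into $\Nm_{\pi^{\text{red}}}(\calO_S(D))$ via the defining identity for the norm. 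Your primary approach---comparing $X'$ to $X$ through the torsion quotient $f_*\calO_{X'}/\calO_X$---is in fact the argument the paper uses in the proof of the very next result (Proposition~\ref{basesch5}), so both routes are present in the text, just in reversed order from your presentation.

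One correction to your sanity check: the degree of $\det(\pi'_*\calO_{X'})$ is not $\chi(\calO_\Sigma)-\chi(\calO_{X'})$ but rather $\chi(\calO_{X'})-2m\,\chi(\calO_\Sigma)$, since $\pi'_*\calO_{X'}$ is locally free of rank $2m$ (you effectively set the rank to $1$). Also, in this section the cover $\pi^{\text{red}}:S\to\Sigma$ has degree $m$, so $g_S=m^2(g-1)+1$, not $4m^2(g-1)+1$. With these fixes the check reads $(1-g_{X'})-2m(1-g)=b-2m(2m-1)(g-1)$, which does match $\deg\Nm_{\pi^{\text{red}}}(\calO_S(D))-m(2m-1)\deg K$. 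The slip is harmless for the proof itself.
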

\begin{proof}
Consider the exact sequence of $\calO_{X^\prime}$-modules
$$0 \to K_S^{-1}(D) \to \calO_{X^\prime} \to \calO_S \to 0.$$
Then, we must have 
$$\det (\pi^\prime_* \calO_{X^\prime})   \cong \det (\pi^{\text{red}}_* K_S^{-1}(D)) \otimes \det (\pi^{\text{red}}_* \calO_S),$$
and again by the projection formula
$$\det (\pi^\prime_* \calO_{X^\prime})   \cong \det (\pi^{\text{red}}_* \calO_S(D)) \otimes K^{-m^2} K^{-m(m-1)/2} \cong \det (\pi^{\text{red}}_* \calO_S(D)) \otimes K^{(-3m^2+m)/2}.$$
But, by the definition of the norm map, we find  
\begin{align*}
\det (\pi^\prime_* \calO_{X^\prime}) & \cong \Nm_{\pi^{\text{red}}} (\calO_S(D)) K^{-m(m-1)/2}K^{(-3m^2+m)/2}\\
& \cong \Nm_{\pi^{\text{red}}} (\calO_S (D))K^{-m(2m-1)},
\end{align*}
a required.
\end{proof}

By the comments above we can deduce\footnote{A more general statement can be found in \cite[Lemma 3.6]{hausel2012prym}. For convenience, however we write down a proof for the cases of importance for us.} the following.

\begin{lemma}\label{normvs} Let $j: S \hookrightarrow X$ and $j^\prime : S \hookrightarrow X^\prime$ be the natural inclusion.
\begin{enumerate}[label=\alph*),ref=\alph*]
\item $\Nm_\pi (\calL) =  \Nm_{\pi^{\text{red}}}^2 (j^*\calL)$, for any $\calL \in \Pic(X)$.
\item $\Nm_{\pi^\prime} (\calL^\prime) = \Nm_{\pi^{\text{red}}}^2 ((j^\prime)^*(\calL^\prime))$, for any $\calL^\prime \in \Pic(X^\prime)$. 
\end{enumerate}
\end{lemma}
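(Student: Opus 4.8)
The plan is to reduce the statement for a ribbon to the already-understood statement for the underlying smooth curve $\pi^{\mathrm{red}}:S\to\Sigma$, using the structure sheaf exact sequences and the compatibility of the norm map with composition of finite morphisms. The key algebraic input is the standard fact that for a composition of finite morphisms $X\xrightarrow{g}Y\xrightarrow{f}\Sigma$ one has $\Nm_{f\circ g}=\Nm_f\circ\Nm_g$ on Picard groups, together with $\Nm_f(f^*M)=M^{\deg f}$. I would first record these, then observe that the inclusion $j:S\hookrightarrow X$ is itself a finite morphism (of degree $1$ on the level of reduced schemes but with a nilpotent structure sheaf), so the composition $\pi^{\mathrm{red}}=\pi\circ j$ makes sense; the subtlety is that $j$ is not flat and $j_*$ does not take line bundles to line bundles, so one cannot naively write $\Nm_\pi(\calL)=\Nm_{\pi^{\mathrm{red}}}(j^*\calL)$ — the square that appears is exactly the manifestation of the ramification of multiplicity $2$ along $S$ in $X$.

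For part (a): given $\calL\in\Pic(X)$, I would compute $\det(\pi_*\calL)$ via the exact sequence of $\calO_X$-modules
$$0\to \calL\otimes K_S^{-1}\to \calL\to j_*(j^*\calL)\to 0,$$
which comes from tensoring $0\to K_S^{-1}\to\calO_X\to\calO_S\to 0$ by $\calL$ (here $K_S^{-1}$ is the ideal sheaf of $S$ in $X$, and $\calL\otimes K_S^{-1}$ is a line bundle on $S$). Pushing forward by $\pi$ (which is affine, so $R^{i}\pi_*=0$) gives
$$\det(\pi_*\calL)\cong \det\big(\pi^{\mathrm{red}}_*(j^*\calL\otimes K_S^{-1})\big)\otimes \det\big(\pi^{\mathrm{red}}_*(j^*\calL)\big).$$
Applying the identity $\det(\pi^{\mathrm{red}}_*(M\otimes K_S^{-1}))\cong\det(\pi^{\mathrm{red}}_*M)\otimes\Nm_{\pi^{\mathrm{red}}}(K_S^{-1})$ (which is Lemma \ref{nmlm}(a) from Chapter \ref{Chapter3}, rank $r=1$), the right side becomes $\det(\pi^{\mathrm{red}}_*(j^*\calL))^2\otimes\Nm_{\pi^{\mathrm{red}}}(K_S^{-1})$. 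Subtracting off the corresponding identity with $\calL=\calO_X$, i.e. using $\det(\pi_*\calO_X)\cong\det(\pi^{\mathrm{red}}_*\calO_S)^2\otimes\Nm_{\pi^{\mathrm{red}}}(K_S^{-1})$ (consistent with \eqref{detR} and \eqref{detS}, since $\Nm_{\pi^{\mathrm{red}}}(K_S^{-1})=\Nm_{\pi^{\mathrm{red}}}(\pi^{*}K^{-m})=K^{-m^2}$ and $K^{-m(m-1)}K^{-m^2}=K^{-m(2m-1)}$), yields
$$\Nm_\pi(\calL)=\det(\pi_*\calL)\otimes\det(\pi_*\calO_X)^{-1}=\big(\det(\pi^{\mathrm{red}}_*(j^*\calL))\otimes\det(\pi^{\mathrm{red}}_*\calO_S)^{-1}\big)^2=\Nm_{\pi^{\mathrm{red}}}(j^*\calL)^2,$$
which is (a).

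For part (b): this is identical in form, replacing $X$ by $X'=\Bl_D X$, $\pi$ by $\pi'$, $j$ by $j'$, and $K_S^{-1}$ by $K_S^{-1}(D)$ as the ideal sheaf of $S$ in $X'$; the exact sequence $0\to K_S^{-1}(D)\to\calO_{X'}\to\calO_S\to 0$ and Lemma \ref{xprime} (giving $\det(\pi'_*\calO_{X'})$) feed in exactly as before, and one gets $\Nm_{\pi'}(\calL')=\Nm_{\pi^{\mathrm{red}}}((j')^*\calL')^2$. The only point requiring a little care — and the main obstacle, such as it is — is making sure the determinant-of-direct-image bookkeeping with the twisting line bundle $K_S^{-1}$ (resp. $K_S^{-1}(D)$) is invoked in the correct rank-one form and that the ``constant'' terms $\det(\pi_*\calO_X)$, $\det(\pi'_*\calO_{X'})$ cancel cleanly against the $\Nm_{\pi^{\mathrm{red}}}(K_S^{-1})$ contribution; but this is precisely the content of Lemma \ref{nmlm}(a) and Lemma \ref{xprime}, both already available, so the argument is essentially a bookkeeping exercise once the defining exact sequences are set up. I would also remark that since $j^*$ and $(j')^*$ are group homomorphisms and $\Nm_{\pi^{\mathrm{red}}}$ is a homomorphism, the formulas respect the group structures, which is what is needed for later use in identifying Prym varieties of the ribbons with (isogenous images of) $\Prym$.
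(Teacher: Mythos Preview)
Your proposal is correct and follows essentially the same approach as the paper: both use the short exact sequence $0\to j^*\calL\otimes K_S^{-1}\to\calL\to j_*(j^*\calL)\to 0$ (resp.\ its analogue on $X'$ with $K_S^{-1}(D)$), push forward by $\pi$, and reduce to the rank-one determinant identity together with the known values of $\det(\pi_*\calO_X)$ and $\det(\pi'_*\calO_{X'})$. Your write-up is slightly more explicit about the cancellation of the ``constant'' terms, but the argument is the same.
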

\begin{proof}
From the exact sequence
$$0 \to K_S^{-1}\bar{\calL} \to \calL \to \bar{\calL} \to 0 $$ 
we obtain 
\begin{align*}
\det (\pi_* \calL) & \cong \det (\pi^{\text{red}}_* (\bar{\calL}\otimes K_S^{-1})) \otimes \det (\pi^{\text{red}}_* (\bar{\calL})) \\
& \cong \det (\pi^{\text{red}}_* (\bar{\calL} ) \otimes K^{-m})) \otimes \det (\pi^{\text{red}}_* (\bar{\calL}))\\
& \cong (\det (\pi^{\text{red}}_* (\bar{\calL})))^2 \otimes K^{-m^2},
\end{align*}
and a) follows from the definition of the norm map and (\ref{detS}). Alternatively, we could have used the short exact sequence 
$$0 \to \calL \to j_*j^*\calL \to \calT \to 0,$$
where $\calT$ is a torsion $\calO_X$-module. Item b) is analogous and the result follows, for example, by using the short exact sequence
$$0 \to K_S^{-1}(D)\bar{\calL^\prime} \to \calL^\prime \to \bar{\calL^\prime} \to 0 $$ 
together with the definition of the norm map and Lemma \ref{xprime}.
\end{proof}
In particular, the Prym variety $\PX$ is related to the Prym variety associated to the covering $\pi^{\text{red}} : S \seta \Sigma$ 
\[\xymatrix@M=0.08in{
  & 0\ar[d] & 0\ar[d] &  &  \\
 & H^1(S,K_S^{-1})  \ar[r]^{\Id}\ar[d] & H^1(S,K_S^{-1})  \ar[d] &  &   \\
 0\ar[r] & \PX \ar[r] \ar[d]^{j^*} & \Pic^0(X) \ar[r]^{\Nm_{\pi}} \ar[d]^{j^*}  & \Pic^0(\Sigma) \ar[r] \ar[d]^{\Id} & 0 \\
 0\ar[r]  & F\ar[r] \ar[d]  & \Pic^0(S) \ar[r]^{\Nm_{\pi^{\text{red}}}^2} \ar[d]  & \Pic^0(\Sigma) \ar[r]  & 0. \\
 &  0 & 0   &   &  }\]  
The subvariety $F$ was defined in Chapter \ref{Chapter3} as
$$F = \{ M \in \Pic^0(S) \ | \ M^2 \in \Prym \} $$
and it also appears naturally when comparing the norm maps associated with $\pi^{\text{red}}$ and $\pi^\prime$   
\[\xymatrix@M=0.08in{
  & 0\ar[d] & 0\ar[d] &  &  \\
 & \dfrac{H^1(S,K_S^{-1})}{H^0(D,K_S^{-1}(D))} \ar[r]^{\Id}\ar[d] & H^1(S,K_S^{-1}(D))  \ar[d] &  &   \\
 0\ar[r] & \PXp \ar[r] \ar[d]^{(j^\prime)^*} & \Pic^0(X^\prime) \ar[r]^{\Nm_{\pi^\prime}} \ar[d]^{(j^\prime)^*}  & \Pic^0(\Sigma) \ar[r] \ar[d]^{\Id} & 0 \\
 0\ar[r]  & F\ar[r] \ar[d]  & \Pic^0(S) \ar[r]^{\Nm_{\pi^{\text{red}}}^2} \ar[d]  & \Pic^0(\Sigma) \ar[r]  & 0 \\
 &  0 & 0   &   &  }\]  

\subsection{The fibre}

By \cite[Proposition 6.1]{hausel2012prym}, 
$$h^{-1}(p^2) \cong \{ \calE \in \calM (p^2;k) \ | \ \det(\pi_*\calE) = \calO_\Sigma \},$$
where $k=2m(2m-1)(g-1)$. 
Thus, we define 
$$\calA_d \coloneqq \{ \calE \in \calM^s (p^2;k) \ | \ b(\calE) = \bar{d} \ \text{and} \ \det(\pi_*\calE) = \calO_\Sigma \},$$
where $\bar{d} = -2d + 2(g_{_S} -1)$, $d>0$. Once again we have the natural map
\begin{equation*}
\begin{split}
\calA_d & \to \Pic^{d_2}(S) \times S^{(\bar{d})},\\  
\calE & \mapsto (\bar{\calE}, D_{\calE})
\end{split}
\end{equation*} 
where $d_2 = (k-\bar{d})/2$. This morphism maps $\calA_d$ onto a subvariety $\calZ_d \subseteq \Pic^{d_2}(X) \times S^{(\bar{d})}$. This, as we observe next, is isomorphic to 
$$Z_d \cong \{ (D,L) \in S^{(\bar{d})} \times \Pic^{d_2}(S) \ | \ L^2(D) (\pi^{\text{red}})^*K^{1-2m} \in \Prym \}$$
which was defined in Chapter \ref{Chapter3}.
 
\begin{prop}\label{basesch5} The locus $\calZ_d$ is isomorphic to the base $Z_d$ of the affine bundle $A_d$.  
\end{prop}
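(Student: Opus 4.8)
The claim is a description of the image of the morphism $\calA_d \to \Pic^{d_2}(S) \times S^{(\bar d)}$ sending a stable generalized line bundle $\calE$ (with $b(\calE) = \bar d$ and $\det(\pi_*\calE) \cong \calO_\Sigma$) to the pair $(\bar\calE, D_\calE)$. The plan is to compute the condition cut out on such a pair by the vanishing of $\Nm_\pi(\calE)$ and recognise it as exactly the defining equation of $Z_d$ from Chapter~\ref{Chapter3}. First I would recall that every generalized line bundle $\calE$ of index $\bar d$ on $X$ sits in the short exact sequence of $\calO_X$-modules
\[
0 \to j_*\bigl(\bar\calE(D_\calE)\otimes K_S^{-1}\bigr) \to \calE \to j_*\bar\calE \to 0,
\]
so that, pushing forward by $\pi$ and taking determinants, $\det(\pi_*\calE) \cong \det(\pi^{\text{red}}_*(\bar\calE(D_\calE)\otimes K_S^{-1})) \otimes \det(\pi^{\text{red}}_*\bar\calE)$. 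Using $K_S \cong (\pi^{\text{red}})^*K^m$, the projection formula, and relation~(\ref{detR}), this equals $(\det \pi^{\text{red}}_*\bar\calE)^2 \otimes \Nm_{\pi^{\text{red}}}(\calO_S(D_\calE)) \otimes K^{-m^2}$ (this is essentially part a) of Lemma~\ref{normvs}, applied directly). Hence $\det(\pi_*\calE) \cong \calO_\Sigma$ is equivalent to $\Nm_{\pi^{\text{red}}}(\bar\calE^2(D_\calE)) \cong K^{m^2} \cdot \Nm_{\pi^{\text{red}}}(\calO_S)^{-1}\cdot(\text{correction})$; collecting powers of $K$ via the definition of $\Nm_{\pi^{\text{red}}}$ and (\ref{detR}) this rewrites as $\bar\calE^2(D_\calE)(\pi^{\text{red}})^*K^{1-2m} \in \Prym$, which is precisely the equation defining $Z_d$ once we set $L = \bar\calE$ and $D = D_\calE$.

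Next I would check surjectivity onto $Z_d$ and well-definedness. Given $(D,L) \in Z_d$, I use the blow-up description: form $X' = \Bl_D X$, and note from the exact sequence
\[
0 \to H^0(D, K_S^{-1}(D)|_D) \to \Pic(X') \to \Pic(S) \to 0
\]
(really (\ref{picses}) together with the earlier commutative diagram) that there is a line bundle $\calL'$ on $X'$ with $(j')^*\calL' \cong L$; its direct image $\calE = f_*\calL'$ is a generalized line bundle of index $\bar d$ with $\bar\calE \cong L$ and $D_\calE = D$. The condition $L^2(D)(\pi^{\text{red}})^*K^{1-2m} \in \Prym$ guarantees $\det(\pi_*\calE) \cong \calO_\Sigma$ by the computation just performed (one may need to adjust $\calL'$ by an element of $\Pic(X')$ pulled back appropriately, but the fibre of $\Pic(X') \to \Pic(S)$ is connected so this does not change $\bar\calE$ up to the freedom that keeps the constraint). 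Stability of $\calE$ holds automatically since $b(\calE) = \bar d < 2m^2(g-1)$ for $d \geq 1$, by the stability criterion recalled in the Stability section. Finally, that the map is genuinely independent of choices and agrees as schemes (not just as sets) follows because the torsion-free quotient $\bar\calE$ and the associated Cartier divisor $D_\calE$ are canonically attached to $\calE$ by the Eisenbud--Clifford theorem, and the constraint $\det(\pi_*\calE) \cong \calO_\Sigma$ is closed.

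The identification $Z_d \cong \calZ_d$ then follows by matching the two descriptions term by term: the locus $\calZ_d \subseteq \Pic^{d_2}(S) \times S^{(\bar d)}$ is by definition the image, and we have shown a pair $(L,D)$ lies in the image if and only if $L^2(D)(\pi^{\text{red}})^*K^{1-2m} \in \Prym$, which is the defining condition of $Z_d$.

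\textbf{Main obstacle.} The routine part is the determinant bookkeeping with powers of $K$; the only genuinely delicate point is making sure the map to $\Pic^{d_2}(S)\times S^{(\bar d)}$ is well defined as a morphism of schemes and that its scheme-theoretic image is exactly $Z_d$ with its reduced (or natural) structure — i.e. that the constraint $\det(\pi_*\calE)\cong\calO_\Sigma$ descends correctly through the exact sequences and that no component of $Z_d$ is missed or doubled. This is handled by the exact sequences (\ref{picses}) and (\ref{jk}) together with the connectedness statements for the fibres (modelled on $\Prym$, up to $2$-torsion), all of which are already available from Chapter~\ref{Chapter3} and the preceding sections of this chapter, so the argument reduces to assembling these pieces.
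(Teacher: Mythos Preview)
Your argument is correct and arrives at exactly the same equation as the paper, namely that $\det(\pi_*\calE)\cong\calO_\Sigma$ is equivalent to $\bar\calE^2(D_\calE)(\pi^{\text{red}})^*K^{1-2m}\in\Prym$. The only difference is the route: the paper passes through the blow-up, writing $\calE = f_*\calL'$ with $\calL'\in\Pic(X')$, and then uses Lemma~\ref{xprime} together with Lemma~\ref{normvs}(b) to compute $\det(\pi'_*\calL')$; you instead push forward the short exact sequence $0\to j_*(\bar\calE(D_\calE)\otimes K_S^{-1})\to\calE\to j_*\bar\calE\to 0$ directly and use the projection formula plus~(\ref{detR}). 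Your computation is marginally more direct (it avoids Lemma~\ref{xprime} entirely), while the paper's approach has the small advantage that it keeps the blow-up description in the foreground, which is what is used in the next proposition. Either way the determinant bookkeeping is identical, and your additional discussion of surjectivity (which the paper leaves implicit) is a welcome clarification.
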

\begin{proof}
Let $\calE \in \calA_d$ and $\calL^\prime$ be its corresponding invertible sheaf on the blow-up $X^\prime = \Bl_{D_{\calE}}X$. Then the condition $\det (\pi_*\calE) \cong \calO_\Sigma$ is equivalent to $\det (\pi^\prime_*\calL^\prime) \cong \calO_\Sigma$, since $f_*\calL^\prime \cong \calE$ and $\pi^\prime = \pi \circ f$. But, as noted, $f_*\calO_{X^\prime}/\calO_X $ is supported on $D_\calE$, so $\det (\pi_*f_*\calO_{X^\prime}) = \det (\pi^\prime_*\calO_{X^\prime})$ is isomorphic to $\det (\calO_X)\otimes \det(\pi^{\text{red}}_*\calO_D)$. Since $\det(\pi^{\text{red}}_*\calO_D) \cong \det (\pi^{\text{red}}_* \calO_S(D)) \otimes (\det (\pi^{\text{red}}_*\calO_S))^{-1}$, it follows from the formulas in Lemma \ref{xprime} that 
$$\det (\pi^\prime_*\calO_{X^\prime}) \cong \Nm_{\pi^{\text{red}}} (\calO_S(D))\otimes K^{m(1-2m)}.$$
Thus, $\det (\pi^\prime_*\calL^\prime) \cong \calO_\Sigma$ is equivalent to $\Nm_{\pi^\prime}(\calL^\prime) \otimes \det (\pi^\prime_*\calO_{X^\prime}) \cong \calO_\Sigma$. But, by Lemma \ref{normvs} we have $\Nm_{\pi^\prime}(\calL^\prime) \cong \Nm_{\pi^{\text{red}}} (\bar{\calE }^2)$. Thus, the condition is equivalent to $\bar{\calE}^2(D)(\pi^{\text{red}})^*K^{1-2m}$ being an element in $\Prym$, which is exactly the definition of $Z_d$.    
\end{proof}

An alternative way to understand $\calA_d \cong A_d$ is by looking at the natural projection to the effective divisor
$$\calA_d \to S^{(\bar{d})}.$$

\begin{prop}\label{ch5seg} Let $1 \leq d \leq g_{_S}-1$. The fibre of $\calA_d \to S^{(\bar{d})}$ at an effective divisor $D$ is a torsor for the Prym variety $\PXp$, where $X^\prime = \Bl_DX$. 
\end{prop}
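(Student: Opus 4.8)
The statement to prove is that the fibre of the projection $\calA_d \to S^{(\bar d)}$ over a fixed effective divisor $D \in S^{(\bar d)}$ is a torsor for the Prym variety $\PXp$, where $X' = \Bl_D X$. The natural strategy is to combine the Eisenbud--Popescu blow-up description (every generalized line bundle $\calE$ on $X$ with associated divisor $D$ is $f_* \calL'$ for a unique $\calL' \in \Pic(X')$) with the constraint $\det(\pi_* \calE) \cong \calO_\Sigma$ translated to the blow-up via $\pi' = \pi \circ f$.

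First I would fix $D \in S^{(\bar d)}$ and identify the fibre of $\calA_d \to S^{(\bar d)}$ over $D$ set-theoretically. By the structure theorem for generalized line bundles, the stable generalized line bundles $\calE$ on $X$ with $D_\calE = D$ and $b(\calE) = \bar d = \deg D$ correspond bijectively (via $\calE \mapsto \calL'$, $f_* \calL' = \calE$) to line bundles $\calL'$ on $X'$ of the appropriate degree. Indeed, since $\chi(X,\calE) = \chi(X',\calL')$ because $f$ is finite and $R^i f_* = 0$, the degree of $\calL'$ is determined: it lies in $\Pic^{k - \bar d}(X')$ where $k = 2m(2m-1)(g-1)$. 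I should also recall (or quickly check) that $\calE$ is stable exactly when the corresponding point in the fibre is stable, so that $\calA_d$ is cut out by the stability and determinant conditions uniformly.

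Next I would impose the determinant constraint. As shown in the proof of Proposition \ref{basesch5}, the condition $\det(\pi_* \calE) \cong \calO_\Sigma$ is equivalent to $\det(\pi'_* \calL') \cong \calO_\Sigma$, which in turn, using $\det(\pi'_* \calL') \cong \Nm_{\pi'}(\calL') \otimes \det(\pi'_* \calO_{X'})$ and the identity $\det(\pi'_*\calO_{X'}) \cong \Nm_{\pi^{\text{red}}}(\calO_S(D)) \otimes K^{m(1-2m)}$ from Lemma \ref{xprime}, pins down $\Nm_{\pi'}(\calL')$ to a single line bundle on $\Sigma$ independent of $\calL'$ (depending only on $D$). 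Therefore the fibre over $D$ is precisely the set of $\calL' \in \Pic^{k-\bar d}(X')$ with $\Nm_{\pi'}(\calL')$ equal to that fixed line bundle. If $\calL'_0$ is one such line bundle (nonemptiness follows because $\calA_d$ is nonempty, as established in Chapter \ref{Chapter3}), then $\calL' \mapsto \calL' \otimes (\calL'_0)^{-1}$ sends this set bijectively onto $\ker(\Nm_{\pi'}) = \PXp$. This exhibits the fibre as a torsor for $\PXp$, which is the claim.

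The step I expect to be the main obstacle is verifying carefully that the fibre is a torsor in the \emph{scheme-theoretic / families} sense rather than merely set-theoretically: I need the translation action of $\PXp$ on the fibre to be algebraic and simply transitive, and I need the identifications (Eisenbud--Popescu blow-up, $f_*$, the determinant computation) to work in families so that $\calA_d \to S^{(\bar d)}$ is genuinely a fibration with these fibres. The Eisenbud--Popescu correspondence is canonical and the norm map is a morphism of group schemes, so once the blow-up $X' = \Bl_D X$ is constructed relative to the universal divisor $\triangle \subseteq S^{(\bar d)} \times S$ (as in the construction of $F_d$ in Chapter \ref{Chapter3}), everything should propagate; but writing this relative picture cleanly, and checking that $\Nm_{\pi'}$ is constant along $S^{(\bar d)}$ only up to the correction term $\Nm_{\pi^{\text{red}}}(\calO_S(D))$, is where the real work lies. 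I would lean on \cite{hausel2012prym}, \cite{chen2011moduli} and the comparison diagrams already assembled in the Norm map subsection to handle these points.
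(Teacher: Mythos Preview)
Your proposal is correct and follows essentially the same approach as the paper: pass to line bundles $\calL'$ on the blow-up $X' = \Bl_D X$ via the Eisenbud--Green structure theorem, translate the determinant constraint $\det(\pi_*\calE)\cong\calO_\Sigma$ into the condition $\det(\pi'_*\calL')\cong\calO_\Sigma$, and then observe that the difference of any two such $\calL'$ lies in $\ker(\Nm_{\pi'}) = \PXp$. The paper's proof is in fact much terser than yours---it simply takes two elements $\calE_1,\calE_2$ of the fibre, notes that both satisfy $\det(\pi'_*\calL_i')\cong\calO_\Sigma$, and concludes that $\calU' = (\calL_1')^{-1}\calL_2' \in \PXp$ since the norm map is a group homomorphism; your additional discussion of the relative/scheme-theoretic picture goes beyond what the paper records.
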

\begin{proof}
Let $\calE_i$, $i=1,2$, be two elements of the fibre at $D \in S^{(\bar{d})}$. Consider the corresponding invertible sheaves $\calL_i^\prime $ in the blow-up $X^\prime = \Bl_DX$. As we saw, $\det(\pi^\prime_*\calL_i^\prime) \cong \calO_\Sigma$. Denote by $\calU^\prime = (\calL_1^\prime)^{-1} \calL_2^\prime $. Then, since the norm map is a group homomorphism, 
$$\det(\pi^\prime_*\calL_2^\prime)  \det(\pi^\prime_*\calL_1^\prime) \Nm_{\pi^\prime} (\calU^\prime),$$ 
which means that $\calU^\prime \in \PXp$. 
\end{proof}

Checking dimensions 
\begin{align*}
\dim \calA_d & = \dim S^{(\bar{d})} + \dim \PXp \\
& = \bar{d} + (g_{_{X^\prime}} - g)\\
& = g_{X} - g \\
& = (4m^2 -1)(g-1),
\end{align*} 
as expected. In particular, we have the following: 
\begin{prop}\label{ch5ter} The Prym variety $\PX$ is isomorphic to $\calA_{g_{_S}-1} \cong A_{g_{_S} -1}$. In particular, it has $2^{2g}$ connected components. 
\end{prop}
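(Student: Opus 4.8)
The plan is to deduce Proposition \ref{ch5ter} as the special case $d = g_{_S}-1$ of the identifications already established. When $d = g_{_S}-1$ we have $\bar{d} = -2(g_{_S}-1) + 2(g_{_S}-1) = 0$, so the effective divisor $D_{\calE}$ attached to any $\calE \in \calA_{g_{_S}-1}$ is empty, which means (by the discussion of generalized line bundles of index zero) that every such $\calE$ is an honest invertible sheaf on $X$. Hence $\calA_{g_{_S}-1} = \{ \calL \in \Pic^k(X) \mid \det(\pi_*\calL) \cong \calO_\Sigma \}$, and by the definition of the norm map $\Nm_\pi$ together with the computation $\det(\pi_*\calO_X) \cong K^{-m(2m-1)}$ from \eqref{detS}, the condition $\det(\pi_*\calL) \cong \calO_\Sigma$ becomes $\Nm_\pi(\calL \otimes \pi^*K^{m(2m-1)/k\cdot \ldots}) $— more cleanly, after twisting by a fixed line bundle of the appropriate degree (using the chosen $K^{1/2}$, exactly as in the passage identifying $h^{-1}(p^2)$ with a locus in $\calM(p^2;k)$), $\calA_{g_{_S}-1}$ is a torsor for $\ker(\Nm_\pi : \Pic^0(X) \to \Pic^0(\Sigma)) = \PX$.

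Concretely, the first step is to invoke Proposition \ref{ch5seg} with $D = 0$: the fibre of $\calA_{g_{_S}-1} \to S^{(0)} = \{\text{pt}\}$ over the unique (empty) divisor is a torsor for $\PXp$ with $X^\prime = \Bl_0 X = X$, i.e. a torsor for $\PX$ itself. Since $S^{(\bar d)}$ is a point, the "fibration" is the total space, so $\calA_{g_{_S}-1}$ is a torsor for $\PX$; fixing a base point (again using the theta-characteristic $K^{1/2}$ to produce one canonically, as in the earlier identifications) gives the isomorphism $\calA_{g_{_S}-1} \cong \PX$. Combining with Proposition \ref{osdoad} (which gives $\calA_d \cong A_d$ for all $1 \le d \le g_{_S}-1$) yields $\PX \cong A_{g_{_S}-1}$.

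For the count of connected components, the plan is to read it off from the commutative diagram relating $\PX$ to $\Prym = \Prym(S/\Sigma)$ established just before this subsection:
\[
\xymatrix@M=0.08in{
0 \ar[r] & H^1(S, K_S^{-1}) \ar[r] & \PX \ar[r]^{j^*} & F \ar[r] & 0,
}
\]
where $F = \{ M \in \Pic^0(S) \mid M^2 \in \Prym \}$. Since $H^1(S,K_S^{-1})$ is a vector space, hence connected and contractible, $\PX$ is a fibre bundle over $F$ with connected fibres, so $\pi_0(\PX) \cong \pi_0(F)$. The map $F \to \Pic^0(S)[2]/\Prym[2]$, $M \mapsto [MU^{-1}]$ (with $U \in \Prym$, $U^2 = M^2$) introduced in Chapter \ref{Chapter3} has fibres isomorphic to the connected group $\Prym$, and its target $\Pic^0(S)[2]/\Prym[2] \cong (\Z/2\Z)^{2g}$ has $2^{2g}$ elements, since the index of $\Prym[2]$ in $\Pic^0(S)[2]$ is $2^{2g}$. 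Therefore $F$, and hence $\PX$ and $\calA_{g_{_S}-1}$, has exactly $2^{2g}$ connected components.

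The only genuine subtlety — the "hard part" — is making the torsor-to-isomorphism identification canonical, i.e. checking that the choice of $K^{1/2}$ supplies a well-defined base point of $\calA_{g_{_S}-1}$ compatible with all the earlier normalizations; but this is exactly the same bookkeeping already used to write $h^{-1}(p^2)$ as a subvariety of $\calM(p^2;k)$ via $K^{1/2}$, so no new difficulty arises. Everything else is a direct specialization of Propositions \ref{osdoad}, \ref{basesch5}, \ref{ch5seg} and the norm-map diagrams, together with the elementary component count for $F$.
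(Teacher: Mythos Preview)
Your proposal is correct and follows essentially the same route as the paper. The paper's proof is terser: it directly exhibits the isomorphism by tensoring with $\pi^*K^{(1-2m)/2}$ and invoking $\Nm_\pi(\calL) = \Nm_{\pi^{\text{red}}}^2(\bar{\calL})$ (Lemma \ref{normvs}), whereas you reach the same conclusion via Proposition \ref{ch5seg} at $D=0$ plus a base-point choice; for the component count the paper simply cites Lemma \ref{compzd} (i.e.\ $Z_{g_{_S}-1}\cong F$ has $2^{2g}$ components), which is exactly the content of your argument through the short exact sequence $0\to H^1(S,K_S^{-1})\to \PX \to F \to 0$.
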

\begin{proof}
An element of $\calA_{g_{_S}-1}$ is an invertible sheaf $\calL$ on $X$ satisfying
$$L^2 (\pi^{\text{red}})^*K^{1-2m} \in \Prym,$$
where $L = \bar{\calL}$. Thus, choosing a square root $K^{1/2}$ of the canonical bundle of $\Sigma$, tensoring by $\pi^* K^{(1-2m)/2}$ gives an isomorphism from $\calA_{g_{_S}-1}$ and $\PX$. Indeed, this follows directly from the fact that $\Nm_{\pi} (\calL) = \Nm_{\pi^{\text{red}}}(L^2)$. The number of connected components is a consequence of Lemma \ref{compzd}. 
\end{proof}

\begin{rmk} As observed in the proof above, by choosing a square root $K^{1/2}$, $\calA_{g_{_S}-1}$ can be identified with $\PX$. Also, under this isomorphism, $\calZ_{g_{_S}-1}$ gets identified with $F$. When $m=1$, for example, the spectral curve equals the non-reduced curve with trivial nilpotent structure of order $2$, $F$ equals the $2$-torsion points of $\Pic (\Sigma)$ and the Prym variety $\PX$ is a copy of $2^{2g}$ vector spaces. As proven by Hausel and Pauly \cite[Theorem 1.1(3)]{hausel2012prym}, the group of connected components of $\PX$ is, in this case, isomorphic to $\Pic^0 (\Sigma)[2]$, which is compatible with our description.     
\end{rmk}

Note that taking the tensor product of elements of the fibre with $\pi^*K^{(1-2m)/2}$, identifies $h^{-1}(p^2)$ with $\widetilde{\Nm}_\pi^{-1}(\calO_\Sigma)$, where $\widetilde{\Nm}_\pi$ is defined as
\begin{align*}
\widetilde{\Nm}_\pi : \calM (p^2 ; 0) & \to \Pic^0(\Sigma)\\
\calE & \mapsto \det (\pi_*\calE) \otimes \det (\pi_*\calO_X)^{-1}.
\end{align*}
This is well-defined and our fibre can be seen as a compactification of the Prym variety $\PX$ inside the Simpson moduli space.

\section{The other cases}

Let us give some indications on how to proceed in the other two cases. Consider the polynomial $p(x) = x^{2m} + b_2x^{2m-2} + \ldots + b_{2m}$, where $b_{2i} \in H^0(\Sigma, K^{2i})$, and assume that $S = \zeros (p(\lambda))\subset |K|$ is a non-singular curve. Once again we denote by $\bar{S} = S/\sigma $ the quotient curve, which is also non-singular, and think of it embedded in $|K^2|$ with equation $p(\eta) = 0$, where $\eta = \lambda^2$.  

The non-reduced curve $X = \zeros (p(\lambda)^2) \subset |K| $ is a  ribbon on $S$ with ideal sheaf $K_S^{-1}$. In particular,
$$g_{_X} - 1 = 16m^2(g-1)$$
and the finite map $\pi : X \to \Sigma$ provides a good local picture of the spectral curve $X$. Indeed, let $U \subset \Sigma$ be an open affine set and $u : U \to K^*$ a nowhere vanishing section. Then, $\pi^{-1}(U)$ is the affine open set of $X$ given by 
$$\Spec \left(\frac{\calO_\Sigma (U)[u]}{((u^{2m} + \bar{b}_2u^{2m-2} + \ldots + \bar{b}_{2m}))^2} \right),$$   
where $\bar{a}_i = \inn{b_i}{u^i} \in \calO_\Sigma (U)$. Thus, it is clear that we can extend $\sigma$ to an involution 
$$\tilde{\sigma} : X \to X,$$
which locally is just the map $u \mapsto -u$. From the local description, it is also clear that the categorical quotient $X/\tilde{\sigma}$ is a geometric quotient and it can be identified with 
$$\bar{X} \coloneqq \zeros (p(\eta)^2) \subset |K^2|,$$
which is a ribbon on $\bar{S}$ with ideal sheaf $\bar{\pi}^*K^{-2m}$ and genus 
$$g_{_{\bar{X}}} - 1 = 2m(4m-1)(g-1).$$
Moreover, we have a finite morphism $\bar{\pi} : \bar{X} \to \Sigma$ of degree $2m$ and 
$$\bar{\pi}^{-1}(U) = \Spec \left(\frac{\calO_\Sigma (U)[v]}{((v^{m} + \bar{b}_2v^{m-1} + \ldots + \bar{b}_{2m}))^2} \right),$$ 
where $v = u^2$ is a local nowhere vanishing section of $K^{-2}$. We have 
\[\xymatrix@M=0.13in{
S \ar[d]^{\rho} \ar@{^{(}->}[r]^{j} & X \ar[d]^{\tilde{\rho}}  \\
\bar{S} \ar@{^{(}->}[r]^{\bar{j}} & \bar{X}}\]
where $\tilde{\rho} : X \to \bar{X}$ is the finite morphism\footnote{Denote $\calO_\Sigma (U)[v]/((v^{m} + \bar{b}_2v^{m-1} + \ldots + \bar{b}_{2m}))^2$ and $\calO_\Sigma (U)[u]/((u^{2m} + \bar{b}_2u^{2m-2} + \ldots + \bar{b}_{2m}))^2 $ by $A$ and $B$, respectively. Then, $B$ is finitely generated as an $A$-algebra as $B \cong A \oplus A\cdot u$. Also, $B$ is integral over $A$ (the element $u \in B$ is the zero of the monic polynomial $T^2 - v \in A[T]$). This proves that $\tilde{\rho}$ is a finite morphism.} of degree $2$ locally given by the ring homomorphism 
$$\frac{\calO_\Sigma (U)[v]}{((v^{m} + \bar{b}_2v^{m-1} + \ldots + \bar{b}_{2m}))^2} \to \frac{\calO_\Sigma (U)[u]}{((u^{2m} + \bar{b}_2u^{2m-2} + \ldots + \bar{b}_{2m}))^2} $$ 
defined by sending $v$ to $u^2$. Moreover, just as in the non-singular case (see, e.g., \cite{hitchin1987stable}), the involution $\tilde{\sigma}$ acts on $\Pic^0 (X)$ and the Prym variety $\PRib$ may be defined as 
$$\PRib = \{ \calL \in \Pic^0 (X) \ | \ \tilde{\sigma}^*\calL \cong \calL^{-1} \}.$$

Let us now focus on the symplectic case. Let $\bar{D}$ be an effective divisor on $\bar{S}$ of degree $\bar{d}$ and consider the divisor $D = \rho^*\bar{D}$ on $S$, which, as explained in Section \ref{Sp(m,m)}, is the natural way to embed $Z_d (Sp(4m,\C))$ into $Z_d (SL(4m,\C))$. Let $X^\prime = \Bl_D X$ and $\bar{X}^\prime = \Bl_{\bar{D}} \bar{X}$ be the corresponding blow-ups, which are, in particular, ribbons on $S$ and $\bar{S}$, respectively. Note that $\deg (D) = 2 \deg (\bar{D}) = 2 \bar{d}$. For $1 \leqslant d \leqslant g_{_S} - 1$ we obtain the fibration
$$\calA_d \to \bar{S}^{(\bar{d})}.$$

\begin{prop}\label{ribbonsp} Let $1 \leq d \leq g_{_S}-1$ and consider the locus $A_d$ inside the fibre $h^{-1}(p^2)$ of the $Sp(4m,\C)$-Hitchin fibration (as defined in Section \ref{Sp(m,m)}). Then,
$$\calA_d \to \bar{S}^{(\bar{d})}$$ 
is a fibration, whose fibre at a divisor $\bar{D}$ is modeled on the Prym variety $\PBl$. In particular, choosing a square root of the canonical bundle of $\Sigma$, the locus $\calA_{g_{_S}-1}$ can be identified with $\PRib \subset \PX$. 
\end{prop}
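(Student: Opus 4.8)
The strategy is to mimic exactly the two steps already carried out in the $SU^*(2m)$ case (Propositions \ref{ch5seg} and \ref{ch5ter}), but now keeping track of the $\tilde{\sigma}$-equivariant structure on the blow-ups. First I would establish the affine-bundle structure: given $\calE \in \calA_d$ projecting to a fixed divisor $\bar{D} \in \bar{S}^{(\bar{d})}$, the divisor on $S$ associated to the generalized line bundle is $D = \rho^*\bar{D}$, so $\calE \cong f_*\calL^\prime$ for a unique $\calL^\prime \in \Pic(X^\prime)$ with $X^\prime = \Bl_D X$, by the Eisenbud--Eisenbud-type theorem quoted earlier. Two such sheaves $\calE_1, \calE_2$ in the fibre differ by $\calU^\prime = (\calL_1^\prime)^{-1}\calL_2^\prime$, and the two defining constraints of $A_d$ from Section \ref{Sp(m,m)} — namely the determinant/norm condition $\det(\pi^\prime_*\calL^\prime)\cong\calO_\Sigma$ (equivalently $\calL^\prime\sigma^*(\calL^\prime)$ prescribed) — translate, exactly as in Proposition \ref{basesch5} via Lemma \ref{normvs} and Lemma \ref{xprime}, into the statement that $\calU^\prime$ lies in the fixed locus of the involution $\tilde\sigma$ on $\Pic^0(X^\prime)$ cut out by $\tilde\sigma^*\calU^\prime\cong(\calU^\prime)^{-1}$, i.e. $\calU^\prime \in \PBl$. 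Hence the fibre is a torsor for $\PBl$.

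The second step is the identification of the top stratum. For $d = g_{_S}-1$ we have $\bar{d}=0$, so $X^\prime = X$, $\bar{X}^\prime = \bar{X}$, and the generalized line bundles are genuine invertible sheaves. An element of $\calA_{g_{_S}-1}$ is then $\calL \in \Pic(X)$ satisfying the symplectic analogue of the norm condition, which after choosing a square root $K^{1/2}$ of the canonical bundle of $\Sigma$ and tensoring by $\pi^*K^{(1-2m)/2}$ (a well-defined operation once $K^{1/2}$ is fixed, as in Proposition \ref{ch5ter}) becomes precisely $\tilde\sigma^*\calL\cong\calL^{-1}$, i.e. $\calL \in \PRib$. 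The containment $\PRib \subset \PX$ is immediate since the defining condition of $\PRib$ implies $\Nm_{\tilde\pi}(\calL) = \Nm_{\pi^{\text{red}}}(\bar\calL\,\sigma^*\bar\calL)$, which is trivial when $\tilde\sigma^*\calL\cong\calL^{-1}$ forces $\bar\calL\,\sigma^*\bar\calL$ into the Prym of $\rho$, hence into $\ker\Nm_{\pi^{\text{red}}}$ after the $K^{1/2}$-twist.

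The main obstacle I anticipate is the bookkeeping in the first step: one must check that the involution $\tilde\sigma$ on $X$ descends compatibly to $X^\prime = \Bl_{\rho^*\bar D}X$ (this uses that $\rho^*\bar D$ is $\sigma$-invariant, so the blow-up centre is $\tilde\sigma$-stable and the universal property gives a lift of $\tilde\sigma$ to $X^\prime$), and that the quotient is exactly $\bar X^\prime = \Bl_{\bar D}\bar X$ — i.e. that blowing up and taking the $\tilde\sigma$-quotient commute here. Given the explicit local ring descriptions already set up ($\calO_\Sigma(U)[u]/(p(u))^2$ with $u\mapsto -u$, and $v = u^2$), this should reduce to a direct check at the level of completed local rings $\calO_0 \to \calO_n$ as in \cite{chen2011moduli}, together with the behaviour of the norm maps under $\tilde\rho : X^\prime \to \bar X^\prime$, which is the content already extracted in Lemma \ref{normvs} and its $SO^*/Sp$ analogues. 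Once the equivariant norm-map diagram (the $Sp$ analogue of the commutative diagrams at the end of the norm-map subsection) is in place, the proof is a formal transcription of Propositions \ref{ch5seg}--\ref{ch5ter}, and the dimension count $\dim\calA_d = \bar d + \dim\PBl = \dim Sp(4m,\C)(g-1)$ matches Theorem \ref{descriptionofthesibreforsp}.
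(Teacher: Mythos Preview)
Your approach is correct in outline and arrives at the same conclusion, but the paper takes a more direct route for the top stratum. Rather than translating the $Z_d$ conditions through norm-map combinatorics, the paper starts with $d = g_{_S}-1$ and argues spectrally: for $\calL \in \calA_{g_{_S}-1}$ one has the BNR exact sequence
\[
0 \to \calL \otimes \pi^*K^{1-4m} \to \pi^*V \xrightarrow{\pi^*\Phi - \lambda} \pi^*(V\otimes K) \to \calL\otimes\pi^*K \to 0,
\]
and applying $\tilde\sigma^*$ together with dualizing via the symplectic form on $V$ (which makes $\Phi$ skew-symmetric) yields $\tilde\sigma^*\calL \cong \calL^{-1}\otimes\pi^*K^{4m-1}$ directly. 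This bypasses the norm-map translation you propose and gives the anti-invariance condition without having to first establish the equivariant diagrams. For general $d$ the paper then says one proceeds ``exactly in the same way'' on $X^\prime$, which is close to your plan but still driven by the BNR sequence rather than by Lemmas \ref{normvs}/\ref{xprime}. Your route would work too, once the $\tilde\sigma$-equivariance of the blow-up is checked (a point you correctly flag and which the paper also needs but leaves implicit); the paper's argument buys you the anti-invariance of each $\calL_i^\prime$ individually, so the torsor statement for $\PBl$ follows without tracking how the $Z_d$ constraints lift from $S$ to $X^\prime$.

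One arithmetic slip: the correct twist here is by $\pi^*K^{(1-4m)/2}$, not $\pi^*K^{(1-2m)/2}$ --- the rank is $4m$ in the $Sp(4m,\C)$ case, and the paper's computation gives $\calL^{-1}\cong \calL\otimes\pi^*K^{1-4m}$.
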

\begin{proof}
Let us start with $\calA_{g_{_S}-1}$, whose elements are invertible sheaves on $X$. So, given $\calL \in \calA_{g_{_S}-1}$, it defines the $Sp(4m,\C)$-Higgs bundle $(V,\Phi) = \pi_* (\calL , \lambda)$, where $\lambda \in H^0(|K|, \pi^*K)$ is the tautological section. These fit together in the exact sequence
\begin{equation}
0 \to \calL \otimes \pi^*K^{1-4m} \to \pi^* (V) \xrightarrow{\pi^*\Phi - \lambda \Id} \pi^* (V \otimes K) \to \calL \otimes \pi^*K \to 0.
\label{seilaex}
\end{equation}
Now, applying $\tilde{\sigma}^*$ we obtain 
$$0 \to \tilde{\sigma}^* (\calL) \otimes \pi^*K^{1-4m} \to \pi^* (V) \xrightarrow{\pi^*\Phi + \lambda \Id} \pi^* (V \otimes K) \to \tilde{\sigma}^*(\calL ) \otimes \pi^*K \to 0$$
since $\tilde{\sigma}^* (\lambda) = -\lambda$. Since $\calL$ is an invertible sheaf, we can dualize the exact sequence (\ref{seilaex}) and identify $V $ and its dual using the symplectic form, which comes from the pairing provided by the relative duality. The Higgs field is skew-symmetric with respect to the symplectic form on $V$, so we conclude that 
$$\calL^{-1} \cong \calL \otimes \pi^*K^{1-4m}.$$
In particular, choosing a square root $K^{1/2}$ of $K$, $\calL \otimes \pi^*K^{(1-4m)/2}$ is a point in $\PRib$, and this identifies $\calA_{g_{_S}-1}$ with $\PRib$. Note also that, under this identification, the map which takes the line bundle $\calL$ to its restriction $j^*\calL$ to the reduced curve $S$ has the same content as the natural projection 
$$\PRib \to \PS,$$
which is well-defined since $j\circ \sigma = \tilde{\sigma} \circ j$. For the other cases, let $\calE_i$, $i=1,2$, be two elements of the fibre at $\bar{D} \in \bar{S}^{(\bar{d})}$. Consider the corresponding invertible sheaves $\calL_i^\prime $ in the blow-up $X^\prime = \Bl_DX$. Then we may proceed exactly in the same way as before and we find that $\calL_1^\prime (\calL_2^\prime)^{-1}$ is in the Prym variety $\PBl$.     
\end{proof}
In particular, checking dimensions we obtain:
\begin{align*}
\dim \bar{S}^{(\bar{d})} + \dim \PBl & = \bar{d} + g_{_{X^\prime}} - g_{\bar{X}^\prime} \\
& =  \bar{d} + (g_{_X} - 2\bar{d}) - (g_{_{\bar{X}}} - \bar{d})\\
& = g_{_X} - g_{_{\bar{X}}} \\
&= (8m^2+2m)(g-1) \\
&= \dim Sp(4m,\C)(g-1).
\end{align*} 

The $SO(4m,\C)$ case is almost identical. Let $\bar{D}$ be an effective divisor on $\bar{S}$ of degree $\bar{d}$ and take the corresponding divisor $D = \rho^*\bar{D} + R_\rho$ on $S$ (see Section \ref{sectiononso} for more details). In particular, $\deg (D) = 2 \bar{d} + 4m(g-1)$ and the genus of the ribbon $X^\prime = \Bl_D X$ is $g_{_X} - 2\bar{d} - 4m(g-1)$, so that 
$$\dim \bar{S}^{(\bar{d})} + \dim \PBl = \dim SO(4m,\C).$$
\begin{prop}\label{ribbonso} Let $1 \leqslant d \leqslant 2(g_{_{\bar{S}}}-1)$ and consider the locus $A_d$ inside the fibre $h^{-1}(p^2)$ of the $SO(4m,\C)$-Hitchin fibration (as defined in Section \ref{sectiononso}). Then,
$$\calA_d \to \bar{S}^{(\bar{d})}$$ 
is a fibration, whose fibre at a divisor $\bar{D}$ is modeled on the Prym variety $\PBl$. In particular, choosing a square root of the canonical bundle of $\Sigma$, the locus $\calA_{g_{_S}-1}$ can be identified with $\PRib \subset \PX$. 
\end{prop}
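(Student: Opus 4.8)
\textbf{Proof plan for Proposition \ref{ribbonso}.} The strategy is to mimic verbatim the argument used for the symplectic group in Proposition \ref{ribbonsp}, paying attention only to the two places where the orthogonal case genuinely differs: the sign in the duality relation on the spectral sheaf, and the shape of the divisor $D$ associated to an effective divisor $\bar{D}$ on $\bar{S}$. First I would treat the top stratum $\calA_{g_{_S}-1}$, whose members are honest invertible sheaves $\calL$ on the ribbon $X$. Given such an $\calL$, form the $SO(4m,\C)$-Higgs bundle $(V,\Phi)=\pi_*(\calL,\lambda)$ and write down the defining four-term exact sequence
$$0 \to \calL \otimes \pi^*K^{1-4m} \to \pi^* V \xrightarrow{\pi^*\Phi - \lambda \Id} \pi^*(V\otimes K) \to \calL \otimes \pi^*K \to 0.$$
Applying $\tilde{\sigma}^*$ (which sends $\lambda\mapsto -\lambda$) and dualising, and identifying $V$ with $V^*$ via the orthogonal form $q$ — here one uses $q\circ\Phi=-\tp{\Phi}\circ q$ exactly as in the proof of Proposition \ref{pairingn} — one deduces $\tilde{\sigma}^*\calL \cong \calL^{-1}$ (up to the twist $\pi^*K^{1-4m}$). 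After fixing $K^{1/2}$ and tensoring by $\pi^*K^{(1-4m)/2}$, this exhibits $\calA_{g_{_S}-1}$ as (an open dense subset of) $\PRib$, and the restriction map $\calL\mapsto j^*\calL$ becomes the natural projection $\PRib\to\PS$ (well-defined because $j\circ\sigma=\tilde{\sigma}\circ j$, as recorded just before the statement).

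For the lower strata $1\leqslant d\leqslant 2(g_{_{\bar S}}-1)$, an element of $\calA_d$ is a generalized line bundle $\calE$ whose associated effective Cartier divisor on $S$, by the $SO^*(4m)$-bookkeeping of Section \ref{sectiononso}, has the form $D=\rho^*\bar{D}+R_\rho$ with $\deg D = 2\bar{d}+4m(g-1)$; this is exactly the divisor appearing in the embedding $Z_d\hookrightarrow Z_d(SL(4m,\C))$ in that section. By the Eisenbud–Green–Harris theorem quoted in Chapter \ref{Chapter5}, $\calE\cong f_*\calL'$ for a unique line bundle $\calL'$ on the blow-up $X'=\Bl_D X$. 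Two such $\calE_1,\calE_2$ lying over the same $\bar{D}$ give line bundles $\calL_1',\calL_2'$ on the same ribbon $X'$; repeating the $\tilde{\sigma}^*$-and-dualise computation on $X'$ shows $\calL_1'(\calL_2')^{-1}$ lies in $\PBl$, so the fibre of $\calA_d\to\bar{S}^{(\bar d)}$ over $\bar{D}$ is a torsor for $\PBl$. Finally I would verify the dimension count: using $g_{_{X'}}=g_{_X}-\deg D$ and $g_{_{\bar{X}'}}=g_{_{\bar{X}}}-\bar{d}$ together with $g_{_X}-1=16m^2(g-1)$ and $g_{_{\bar{X}}}-1=2m(4m-1)(g-1)$ (both computed in Section \ref{The other cases}), one gets
$$\dim \bar{S}^{(\bar d)} + \dim \PBl = \bar{d} + g_{_{X'}} - g_{_{\bar{X}'}} = g_{_X} - g_{_{\bar{X}}} - 4m(g-1) = \dim SO(4m,\C)(g-1),$$
matching the stratum dimension from Theorem \ref{descriptionofthefibreforso}.

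The routine parts — the local ring computations for the ribbon, the exact sequences for $\Pic(X')$, the surjectivity of $f^*$ — are all already in place from earlier in Chapter \ref{Chapter5}, so they can simply be cited. The one step that deserves genuine care, and which I expect to be the main obstacle, is checking that the orthogonal structure really is compatible with the $\tilde{\sigma}$-action on the ribbon in the way the symplectic argument needs: one must confirm that the pairing inducing $q$ (coming from relative duality on $\pi$, and extended to the branch locus as in \cite[Section 4]{hitchin2013higgs} / Proposition \ref{pairingn}) is genuinely \emph{symmetric} rather than skew, so that the twist by $d\pi$ — anti-invariant under $\sigma$ — produces the relation $\tp{(\sigma^*\psi)}=-\psi$ on the reduced curve and its ribbon analogue $\tilde{\sigma}^*\calL\cong\calL^{-1}$ with the \emph{correct} sign, as opposed to the $\tp{(\sigma^*\psi)}=\psi$ that governs the symplectic case. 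Once that sign is pinned down, the rest of the proof is essentially a transcription of Proposition \ref{ribbonsp} with $Sp(4m,\C)$ replaced by $SO(4m,\C)$ and the divisor $\rho^*\bar D$ replaced by $\rho^*\bar D + R_\rho$.
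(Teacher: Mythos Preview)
Your overall strategy --- transcribe Proposition~\ref{ribbonsp} with the divisor $D=\rho^*\bar D + R_\rho$ in place of $\rho^*\bar D$ --- is exactly what the paper does; its entire proof consists of the remark ``completely analogous to Proposition~\ref{ribbonsp}, with $R_\rho$ incorporated,'' together with the dimension count you reproduce.

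There is, however, a genuine problem with your handling of the ``top stratum $\calA_{g_{_S}-1}$''. You take its members to be honest invertible sheaves on the ribbon $X$, but in the orthogonal case the effective divisor on $S$ attached to any element of the fibre is $D=\rho^*\bar D + R_\rho$, which is \emph{never} zero: even when $\bar d=0$ one has $D=R_\rho$ of degree $4m(g-1)>0$. Hence the $SO(4m,\C)$-fibre contains no generalized line bundles of index zero at all, and the paper's own proof text says so explicitly: ``$R_\rho$ prevents the fibre $h^{-1}_{SO(4m,\C)}(p^2)$ from intersecting $\calA_{g_{_S}-1}(SL(4m,\C))\cong\PX$.'' The ``In particular'' clause in the proposition statement therefore contradicts the paper's own proof (and is absent from the version of this proposition quoted in the thesis introduction); it is evidently a copy-paste remnant from the symplectic proposition. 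Note also that the hypothesis $1\leqslant d\leqslant 2(g_{_{\bar S}}-1)=2m(2m-1)(g-1)$ does not even reach $g_{_S}-1=4m^2(g-1)$. So your four-term-sequence argument deducing $\tilde\sigma^*\calL\cong\calL^{-1}$ for line bundles on $X$ is aimed at a phantom claim; the genuine top stratum in the orthogonal picture is $d=2(g_{_{\bar S}}-1)$, where one works on $X'=\Bl_{R_\rho}X$ rather than on $X$ itself, and your second paragraph already covers it.
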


The proof of the proposition above is completely analogous to the proof of Proposition \ref{ribbonsp}. Note that in this case the ramification divisor $R_\rho$ of $\rho : S \to \bar{S}$ is incorporated into the picture. In particular, $R_\rho$ prevents the fibre $h^{-1}_{SO(4m,\C)}(p^2)$ from intersecting $\calA_{g_{_S}-1}(SL(4m,\C)) \cong \PX$. 
 
% Chapter 6

\chapter{Mirror symmetry} % Main chapter title

\label{Chapter6} % For referencing the chapter elsewhere, use \ref{Chapter1} 

\lhead{Chapter 6. \emph{Mirror symmetry}} % This is for the header on each page - perhaps a shortened title

%-------------------------------------------------------

One of the most remarkable mathematical predictions of string theory is the phenomenon of mirror symmetry which relates the symplectic geometry (A-\textit{model}) of a Calabi-Yau manifold $X$ to the complex geometry (B-\textit{model}) of its mirror Calabi-Yau manifold $\check{X}$. Among the major mathematical approaches to mirror symmetry are those of Kontsevich \cite{hms}, known as \textit{homological mirror symmetry}, and the proposal of Strominger, Yau and Zaslow \cite{syz}, hereinafter referred to as \textit{SYZ mirror symmetry}. 

In string theory there are natural geometrical objects associated to Dirichlet boundary conditions, called D-\textit{branes}. Kontsevich's formulation relates D-branes on the A-model (A-\textit{branes}) of $X$ to D-branes on the B-model of $\check{X}$ (B-\textit{branes}). Prototypical examples of A-branes are Lagrangian submanifolds equipped with a flat connection, whereas B-branes are holomorphic vector bundles on a complex submanifold (or more generally, coherent sheaves supported on complex subvarieties). The duality is then formulated as an equivalence (of triangulated categories) between the derived Fukaya category on $X$ and the derived category of coherent sheaves on $\check{X}$.  

SYZ mirror symmetry, on the other hand, based on physical reasonings, provides a more geometrical interpretation of mirror symmetry. Loosely stated, it states that if a Calabi-Yau $n$-fold $X$ has a mirror, then there exists a fibration whose general fibres are special $n$-Lagrangian tori and the mirror is obtained by dualizing these tori. Thus, the so-called \textit{SYZ mirror pair} ($X$, $\check{X}$) should admit dual special Lagrangian fibrations (over the same base) and the duality should be realized as a Fourier-type transformation. In \cite{lectslf}, Hitchin incorporated the notion of gerbes (known by physicists as $B$-fields) for describing important features of the SYZ conjecture. 

\section{Mirror symmetry for Higgs bundles}

Let $G$ be a complex reductive Lie group with Lie algebra $\mathfrak{g}$. The Higgs bundle moduli space $\calM (G)$ can be constructed as a hyperk\"{a}hler quotient, thus inheriting a hyperk\"{a}hler structure, with complex structures $\textsf{I}$, $\textsf{J}$ and $\textsf{K}=\textsf{IJ}$. For us, the complex structure $\textsf{I}$ is the one obtained from $\Sigma$. In other words, it corresponds to $G$-Higgs bundles $(P,\Phi)$ (i.e., pairs consisting of a holomorphic principal $G$-bundle $P$ on $\Sigma$ and a holomorphic global section $\Phi$ of the adjoint bundle of $P$ twisted by $K$). The complex structure $\textsf{J}$, on the other hand, refers to flat $G$-bundles. Also, $\calM (G)$ comes equipped with an algebraically integrable system through the Hitchin fibration 
\[ h : \calM (G) \to \calA (G). \]
By fixing the topological type of the underlying principal bundle (i.e., an element of $\pi_1 (G)$), the generic fibres of $h$ are connected complex Lagrangian submanifolds (with respect to $\Omega_\textsf{I} = \omega_\textsf{J} + i\omega_\textsf{K}$), torsors for an abelian variety. Moreover, applying a hyperk\"{a}hler rotation (using complex structure $\textsf{J}$ instead of $\textsf{I}$), the generic fibres become special Lagrangian submanifolds (for more detail see \cite{lagran}). The Hitchin base for $G$ can be identified with the one for its Langlands dual group $\lan{G}$
\[\xymatrix@M=0.13in{
\calM (G)  \ar[d] & \calM (\lan{G})  \ar[d] \\
\calA (G) \ar[r]_{\cong}^{\Xi} & \calA (\lan{G}).}\]
The isomorphism $\Xi$ takes discriminant locus to discriminant locus and the smooth fibres are dual abelian varieties, thus the moduli space of Higgs bundles for Langlands dual groups are examples of SYZ pairs. This was first proven for the special linear group by Hausel and Thaddeus \cite{hausel2003mirror}, who also showed, by calculating Hodge theoretical invariants, that topological mirror symmetry holds for these pairs (see also \cite{MR2927840, gothen2017topological} for an extension of these results for parabolic Higgs bundles). The case of $G_2$ was treated in \cite{hitchin2007langlands} and, in greater generality, Donagi and Pantev \cite{donagi2012langlands} extended the picture for semisimple groups. In particular, the fundamental group $\pi_1 (\lan{G})$ can be identified with $Z(G)^\vee = \Hom (Z(G), \C^\times)$. Under the SYZ duality, a choice of connected component of $\calM (\lan{G})$ corresponds to a $\C^\times$-gerbe on $\calM (G)$ (which is induced from the universal $G/Z(G)$-bundle on $\calM (G)$ after a choice of element in $\pi_1 (\lan{G})\cong Z (G)^\vee$).  

Since the moduli space of Higgs bundles for a complex group has a hyperk\"{a}hler structure one can speak of branes of type A or B with respect to any of the structures. For example, a BAA-brane on a hyperk\"{a}hler space $\calM$ (with complex structures $\textsf{I}, \textsf{J}$ and $\textsf{K}$) is a pair consisting of a complex Lagrangian with respect to the first complex structure $\textsf{I}$ and a flat bundle on it. A BBB-brane on $\calM$, on the other hand, consists of a hyperk\"{a}hler subvariety $Y \subset \calM$ together with a hyperholomorphic bundle on it (i.e., principal bundle equipped with a connection whose curvature is of type $(1,1)$ with respect to $\textsf{I}, \textsf{J}$ and $\textsf{K}$\footnote{Recall that on a complex manifold, if the $(0,2)$ part of the curvature vanishes we can endow the bundle with a holomorphic structure. Thus, we are requiring that the principal bundle has holomorphic structures compatible with $\textsf{I}, \textsf{J}$ and $\textsf{K}$.}. These objects generalize anti-self-dual connections in (real) dimension $4$.

In \cite{kapustin2006electric}, mirror symmetry for the Higgs bundle moduli space is derived by comparing $\calN = 4$ super Yang-Mills theory for the maximal compact subgroups $K \subseteq G$ and $\lan{K} \subseteq \lan{G}$ on a four-manifold $M$, which is a product of $\Sigma$ with another Riemann surface $C$, together with electric-magnetic duality. The theory in $M = \Sigma \times C$ reduces to a two-dimensional sigma model on $C$ with target space $\calM (G)$ and geometric Langlands duality can be understood in terms of electric-magnetic duality in $M$. In two-dimensions this corresponds to mirror symmetry or S-duality between the B-model in $\calM_{dR}(\lan{G^c})$ (i.e., in complex structure $\textsf{I}, \textsf{J}$ and $\textsf{K}$) and the A-model in symplectic structure $\omega_\textsf{K}$ for the gauge group $K$. Moreover, mirror symmetry in complex structure $\textsf{I}$ should be Fourier-Mukai transform \cite{kapustin2006electric, gukov}, so that a brane on $\calM (G)$ corresponds to a brane of the same type in $\calM (\lan{G})$. Thus, according to the physics literature, mirror symmetry suggests that there is a correspondence between BAA-branes on $\calM (G)$ and BBB-branes on $\calM (\lan{G})$ (this particular correspondence has been addressed by Hitchin in \cite{hitchin2013higgs} and more recently in \cite{franco2017mirror, franco2017borel}).   

A smooth fibre of the Hitchin fibration is a BAA-brane and its mirror is simply the skyscraper sheaf of a distinguished point of the corresponding fibre for the Langlands dual group \cite{kapustin2006electric, gukov}. A natural way to obtain more general branes is by considering fixed point sets of involutions (see e.g. \cite{baraglia2014higgs, baraglia2013real, heller2016branes, biswas2014anti, garcia2016involutions}). A particular case of this arises when one considers a real form $G_0$ of $G$. As we will see next, $G_0$-Higgs bundles inside $\calM (G)$ provide an example of a BAA-brane. It is natural to seek the mirror of such geometrical object. From the discussion above, the mirror should be a BBB-brane on $\calM (\lan{G})$. Such branes are interesting in themselves as they are special mathematical objects. For the real groups treated here this is even more intriguing, since the complex Lagrangians never intersect smooth fibres of the Hitchin fibration and general Fourier-type transforms are not available. 

\section{Real forms and branes}

Let $G_0$ be a non-compact real form of the complex semisimple Lie group $G$ given by the fixed point set of an anti-holomorphic involution $\sigma : G \to G$. One can always find another anti-holomorphic involution $\tau$, corresponding to a compact real form of $G$ that commutes with $\sigma$ (see e.g. \cite{cartan, onishchik2004lectures}). The composition of the two $\theta = \tau \circ \sigma$, is a holomorphic involution on $G$ and it induces an involution, which we also denote by $\theta$, on the moduli space of $G$-Higgs bundles 
\begin{align*}
\theta : \mathcal{M} (G) & \to \mathcal{M} (G) \\
(P, \Phi) & \mapsto (\theta (P), - \theta (\Phi)).
\end{align*}
Note that the involution $\theta$ on the Lie algebra of $G$ gives a decomposition into $\pm 1$ eigenspaces 
$$\lie{g} = \lie{h}^\C \oplus \lie{m}^\C,$$
where $\lie{h}$ is the Lie algebra of a maximal compact subgroup of $G_0$. Thus the fixed point set $\calM (G)^\theta$ corresponds to $G_0$-Higgs bundles in $\calM (G)$.   
 
From the expressions for the hyperk\"{a}hler metric and complex structures (see Section \ref{hkstructure}), it is clear that $\theta$ is an isometry with $\theta \circ \textsf{I}= \textsf{I} \circ \theta$ and $\theta \circ \textsf{J}= - \textsf{J} \circ \theta$. Now, let $(\beta_i, \varphi_i) \in T_{(P,\Phi)}\calM (G)^\theta = (T_{(P,\Phi)}\calM (G))^\theta$, $i=1,2$. Equivalently, $\theta (\beta_i) = \beta_i$ and $\theta (\varphi_i) = - \varphi_i$, $i=1,2$. Then, 
\begin{align*}
\omega_\textsf{J}((\beta_1, \varphi_1), (\beta_2, \varphi_2)) & = g(J(\beta_1, \varphi_1), (\beta_2, \varphi_2))\\
& = g((-i \tau (\varphi_1), i \tau (\beta_1)), (\beta_2, \varphi_2)) \\
& = g( (-i \theta (\tau (\varphi_1)), - i \theta (\tau (\beta_1))), (\theta (\beta_2), - \theta (\varphi_2)))\\
& = - \omega_\textsf{J}((\beta_1, \varphi_1), (\beta_2, \varphi_2)).
\end{align*} 
This shows that $\calM (G)^\theta$ is a complex subvariety with respect to complex structure $\textsf{I}$ and a Lagrangian with respect to $\omega_\textsf{J}$ (as the dimension is clearly half the dimension of $\calM (G)$). Thus, this is also a Lagrangian with respect to $\omega_\textsf{K}$ and it provides an example of a BAA-brane on $\calM (G)$. 

\begin{rmk} In general, this fixed point set need not be isomorphic to the moduli space of Higgs bundles for the real group. In terms of representations of the fundamental group of $\Sigma$, for example, $\calM (G)^\theta$ is homeomorphic to the space of reductive\footnote{We say that a representation $\rho$ is \textbf{reductive} if $\Ad \circ \rho$ is completely reducible} representations $\rho : \pi_1(\Sigma) \to G_0$ modulo conjugation by elements of $g\in G$ such that $g \rho g^{-1} : \pi_1(\Sigma) \to G_0$. In other words, we quotient by the normalizer $N_G (G_0)$ of $G_0$ in $G$. As explained in \cite{garcia2016involutions}, the map  
$$\calM (G_0) \to \calM (G)^\theta \subset \calM (G) $$
is a well-defined $n_\theta$-to-$1$ map, where $n_\theta$ is the order of the finite group $N_G (G_0)/G_0$. Also, if $(P,\Phi)$ is a semi-stable (respectively, polystable) $G$-Higgs which reduces to a Higgs bundle for $G_0$, then the corresponding $G_0$-Higgs bundle is (respectively, polystable). By abuse of notation, however, we will refer to the fixed point set $\calM (G_0)^\theta$ as $\calM (G_0) \subset \calM (G) $.       
\end{rmk}

\section{The Nadler group}

Given a connected reductive complex algebraic group $G$, a geometrical way to realize the Langlands dual group $\lan{G}$ is by studying perverse sheaves on the loop Grassmannian $Gr$ of $G$. A certain category of perverse sheaves on $Gr$ is equivalent to the category $Rep(\lan{G})$ of finite-dimensional representations of $\lan{G}$, and from this one can recover the Langlands dual group (see e.g. \cite{geolan}). In \cite{nadler2005perverse}, Nadler extends these ideas for a real form $G_0$ of $G$. It turns out that the natural finite-dimensional stratification of the real loop Grassmannian $Gr_0$ is real even-codimensional and some of the results for perverse sheaves on complex spaces still hold on $Gr_0$. Nadler introduces a certain subcategory of perverse sheaves on $Gr_0$ and shows that it is equivalent to $Rep(\lan{G}_0)$, for some complex subgroup $\lan{G}_0$ of $\lan{G}$, which we call the \textbf{Nadler group} associated to the real form $G_0 \subset G$. Note that the moduli space of $\lan{G}_0$-Higgs bundles sits inside the moduli space of Higgs bundles for the Langlands dual group $\lan{G}$ as a hyperk\"{a}hler subvariety. It is conjectured (see e.g. \cite{baraglia2013real}) that the mirror brane to the moduli space of $G_0$-Higgs bundles is supported on $\calM (\lan{G}_0) \subseteq \calM (\lan{G})$.  

Let us say a few words about why the conjecture is physically reasonable and expected. As explained in \cite[Section 6]{bdrcond}, the construction of the Nadler group can be interpreted from a gauge-theoretical point of view, and thus be linked to mirror symmetry. If $K$ is a maximal compact subgroup of $G$, one studies $K$ gauge theory on a four-dimensional manifold with boundary, where the boundary condition is determined by the involution associated to the real form $G_0$. It turns out that 't Hooft operators supported on the boundary are related to representations of a different compact group. The complexification of this group is the Nadler group. From the physics perspective, BAA-branes constructed from a real form form a special class of boundary conditions and the dual brane is the $S$-dual boundary condition. For real forms of the special linear group the $S$-dual branes are described in \cite[Table 3]{bdrcond} and are supported on the moduli space of Higgs bundles for the Nadler group. Some remarks about the relation of the Nadler group for other real forms and the $S$-dual brane associated the real form are made in Section 7.4 in loc. cit..             

Beyond the support of the BBB-brane, Hitchin also described in \cite{hitchin2013higgs} the hyperholomorphic bundle on the dual brane associated to the real form $U(m,m) \subset GL(2m,\C)$, which arose from the Dirac-Higgs bundle\footnote{For more details on the Dirac-Higgs operator and the Dirac-Higgs bundle see \cite{blaavand2015dirac}.}. Hitchin's description also matches the predictions from physics, as remarked in \cite{gaiotto2016s}. Let us now give some examples that provide a heuristic idea of what duality should give and compare this to the conjecture that the dual brane should be supported on the moduli space of Higgs bundles for the Nadler group.

\begin{ex} Take a generic (smooth) fibre $A$ of the $G$-Hitchin fibration and suppose it intersects $\calM (G_0)$ on an abelian variety $B$. This is the case, for example, when the real form $G_0$ is \textit{quasi-split} (see \cite{peon2013higgs}). The dual fibre in $\calM (\lan{G})$ is the dual abelian variety $\hat{A} = \Pic^0(A)$ and duality should correspond to dualizing the inclusion. In other words, we have a short exact sequence of abelian varieties
$$0 \to B \to A \to C \to 0,$$
and dualizing it 
$$0 \to \hat{C} \to \hat{A} \to \hat{B} \to 0$$
one obtains a distinguished abelian subvariety of the dual fibre which parametrizes line bundles of degree zero on $A$ which are trivial on $B$.

%??Compare this to: ``When $G$ is a quasi-split real form, $\nad{G}$ is the identity component of fixed points of a lift to $\lan{G^c}$ associated to the conjugacy class of $G$.[nadler]''  
%
%?? Compare the heuristic approach to FM transform: $\supp \pi_{2*}(\pi_1^*\calO_B \otimes \calP)$.
\end{ex}

Now, given any subvariety $B = \calM (G_0) \cap A$ (not necessarily an abelian variety) of a smooth fibre $A$ of $\calM_{G^c}$, we still have a distinguished subvariety on the dual fibre $\hat{A}$, namely the annihilator $B^0$ consisting of line bundles on $A$ of zero degree which are trivial on $B$.  

\begin{ex} If $G_0$ is a split real form (e.g. $SL(n,\R) \subset SL(n,\C)$), $\calM (G_0)$ intersects a smooth fibre $A$ of the $G$-Hitchin fibration in the 2-torsion points $A[2]$ of $A$ \cite{schaposnik2013spectral}. Following the idea above we expect the dual brane to intersect the fibre $\hat{A}$ on the whole $\hat{A}$, so this brane should be supported on $\calM (\lan{G})$. This is compatible with the conjecture as in this case $\lan{G_0} = \lan{G}$. On the other extreme, take $G_0$ to be a compact real form of $G$. Higgs bundles for $G_0$ are simply holomorphic principal $G$-bundles, and their moduli space is an algebraic component of the nilpotent cone $\text{Nilp}(G)$ (i.e., the zero section of the $G$-Hitchin fibration). The dual brane should intersect the corresponding nilpotent cone on a single point (in the irreducible component corresponding to principal $\lan{G}$-bundles) which is compatible with $\lan{G_0}$ being trivial.  
\end{ex}

\section{Proposal for the mirror}

We consider the real forms $G_0 = SU^*(2m)$, $SO^*(4m)$ and $Sp(m,m)$ of $G= SL(2m,\C)$, $SO(4m,\C)$ and $Sp(4m,\C)$, respectively. For these cases, $\calM (G_0)$ never intersects the smooth fibres of the Hitchin fibration $h : \calM (G) \to \calA (G)$. We will see, however, that although the singular fibres $F$ of $h$ which have non-empty intersection with $\calM (G_0)$ are complicated and have many irreducible components, they basically depend only on the locus corresponding to $G_0$-Higgs bundles and a certain abelian variety $A(G_0)$ inside the Jacobian of the reduced scheme of the spectral curve. The abelian variety in question appears naturally when considering what is morally the determinant line bundle of the rank $1$ sheaves on the spectral curve, which is non-reduced. This determinant-type map gives a surjective morphism from the fibre (which restricts to a surjective map in each of its algebraic components) to $A(G_0)$. Moreover, just as in the generic case, the fibre of the surjective map at the trivial line bundle is isomorphic to the locus of $G_0$-Higgs bundles inside $F$ (or an affine version of that for the other components which do not contain Higgs bundles for $G_0$). Motivated by the heuristic idea of duality explained in the last section, we are led to consider the dual abelian variety of $A(G_0)$. It turns out that for the three cases considered, $\widehat{A(G_0)}$ corresponds precisely to the spectral data for $\lan{G_0}$-Higgs bundles, as predicted by the conjecture.

\subsection{The $SU^*(2m)$ case}\label{mirrorforsu}

As described in Chapter \ref{Chapter3}, the locus of rank $1$ torsion-free sheaves on the spectral curve $X = \zeros (p(\lambda)^2) \subset |K|$ supported on the reduced curve $S = X_{\text{red}}$ (assumed to be non-singular) is given by the rank $2$ semi-stable vector bundles $E$ on $S$ satisfying $\det (\pi_*E)\cong \calO_\Sigma$, where $\pi : S \to \Sigma$ is the ramified $m$-covering. We called this locus $N$ and inside it we have the locus $N_0 = \calU_S (2, \pi^*K^{m-1})$ corresponding to $SU^*(2m)$-Higgs bundles in $h^{-1}(p^2)$. There is a natural map from the algebraic component $N$ of $h^{-1}(p^2)$ to an abelian variety, namely the Prym variety $\Prym$. The map is given by  
\begin{align*}
n : N & \to \Pic^0 (S)\\
E & \mapsto \det(E) \pi^*K^{1-m}.
\end{align*} 
\begin{lemma} The map $n : N \to \Pic^0 (S)$ is a surjection onto the Prym variety $\Prym$. Also, $n^{-1}(\calO_S) = N_0$. 
\end{lemma}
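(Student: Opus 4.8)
The plan is to verify two things: that the image of $n$ lands in (and surjects onto) the Prym variety $\Prym = \ker(\Nm_\pi)$, and that the fibre over $\calO_S$ is exactly $N_0$. For the first containment, I would take $E \in N$ and compute $\Nm_\pi(\det(E)\,\pi^*K^{1-m})$. Using that $\Nm_\pi$ is a group homomorphism together with the two standard identities already recorded—namely $\Nm_\pi(\pi^*L) = L^m$ (from the degree of the covering) and item (b) of Lemma \ref{nmlm}, which gives $\Nm(\det E) \cong \det(\pi_*E)\otimes K^{m(m-1)}$—one gets
$$\Nm_\pi(\det(E)\pi^*K^{1-m}) \cong \det(\pi_*E)\otimes K^{m(m-1)}\otimes K^{(1-m)m} \cong \det(\pi_*E),$$
which is trivial precisely because $E\in N$. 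Hence $n(E)\in\Prym$, so $n$ indeed maps into $\Prym$.

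Next I would establish surjectivity onto $\Prym$. This is the argument already sketched in the proof of Theorem \ref{fibreforsu*} for the map $f$: given $L\in\Prym$, use that $\Prym$ is a connected abelian variety, so the squaring map $[2]:\Prym\to\Prym$ is surjective, and choose $L_0\in\Prym$ with $L_0^2\cong L$. Then for any $E_0\in N_0$ set $E := E_0\otimes L_0$. A direct application of Lemma \ref{nmlm}(a) shows $\det(\pi_*E) = \det(\pi_*(E_0\otimes L_0)) \cong \det(\pi_*E_0)\otimes\Nm_\pi(L_0)^2 \cong \calO_\Sigma$, since $\det(\pi_*E_0)\cong\calO_\Sigma$ (as $E_0\in N_0\subset N$) and $\Nm_\pi(L_0)$ is trivial. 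So $E\in N$, and $n(E) = \det(E_0)\otimes L_0^2\otimes\pi^*K^{1-m} = (\det(E_0)\pi^*K^{1-m})\otimes L = \calO_S\otimes L = L$, using $\det(E_0)\cong\pi^*K^{m-1}$. This proves $n$ is surjective onto $\Prym$.

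Finally, the identification $n^{-1}(\calO_S) = N_0$. The inclusion $N_0\subseteq n^{-1}(\calO_S)$ is immediate: for $E_0\in N_0 = \calU_S(2,\pi^*K^{m-1})$ we have $\det(E_0)\cong\pi^*K^{m-1}$, hence $n(E_0) = \pi^*K^{m-1}\otimes\pi^*K^{1-m} = \calO_S$. Conversely, if $E\in N$ satisfies $n(E) = \calO_S$, then $\det(E)\pi^*K^{1-m}\cong\calO_S$, i.e.\ $\det(E)\cong\pi^*K^{m-1}$, and together with the semi-stability of $E$ (which is part of the definition of $N$) this says precisely $E\in\calU_S(2,\pi^*K^{m-1}) = N_0$ (using that all bundles in a fixed $S$-equivalence class have the same determinant, so the determinant condition is well defined on $N$). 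The only point requiring a little care—and the mildest obstacle—is making sure the determinant map and the norm map descend consistently to $S$-equivalence classes, but this has already been noted in the proof of Theorem \ref{fibreforsu*}, so it can simply be invoked.
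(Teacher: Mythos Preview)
Your proof is correct and follows essentially the same approach as the paper. The only cosmetic differences are that you compute $\Nm_\pi(n(E))$ directly using the group-homomorphism property of $\Nm_\pi$ (whereas the paper computes $\det(\pi_*(n(E)))$ first and then translates via the norm-map characterization), and that you spell out the identification $n^{-1}(\calO_S)=N_0$ explicitly where the paper simply calls it obvious.
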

\begin{proof}
Firstly, 
\begin{align*}
\det (\pi_* (\det (E)\otimes \pi^*K^{1-m})) = & \det (\pi_* (\det (E))\otimes K^{1-m})) \\
= & \det (\pi_* (\det (E))) \otimes K^{m(1-m)}.
\end{align*}
Using Lemma \ref{nmlm} and the fact that $\det (\pi_* E)$ is trivial, $\det (\pi_* (\det (E))) \otimes K^{m(1-m)} = K^{m(1-m)/2}$, thus $n (E) \in \Prym$. Now, given an element $L \in \Prym$, since the Prym variety is connected, there exists $U\in \Prym$ such that $U^2 \cong L$. Then, $n (E_0\otimes U) = L$, where $E_0$ is any element in $N_0$. Indeed, $E_0\otimes U \in N$ since, by Lemma \ref{nmlm}, $\det (\pi_* (E_0\otimes U)) = \det (\pi_*E_0)\otimes \Nm (L) = \calO_\Sigma$. Moreover, $\det (E_0\otimes U) = \pi^*K^{m-1}\otimes L$. The last assertion is obvious.   
\end{proof}

Before we proceed we remark on the naturality of the map above for our purposes. Given $E \in N$, we can always write it as $E = E_0 \otimes L$, where $E_0 \in N_0$ and $L$ is a line bundle satisfying $L^2 = \det (E)\pi^*K^{1-m} \in \Prym$. This is clearly not unique as we can always tensor $L$ with a 2-torsion point of $\Pic^0 (S)$. Thus, we have a natural map $$N \to F/\Pic^0(S)[2]$$ sending $E = E_0 \otimes L$ to the equivalence class $[L]$ of $L$ in $F/\Pic^0(S)[2]$, where $F$ is the locus of line bundles on $S$ whose squares are in $\Prym$. This map is surjective as, given $[L] \in F/\Pic^0(S)[2]$, we can take any element $E_0 \in N_0$ and $E_0 \otimes L$ maps to $[L]$ (note that $E_0 \otimes L \in N$ as a direct consequence of Lemma \ref{nmlm}). Also, $\Pic^0(S)[2]$ acts on $N_0$ and the fibre of $N \to F/\Pic^0(S)[2]$ over $[\calO_S]$ is precisely $N_0$.   
Consider the map  
\begin{align}
F  & \to  \Prym. \label{wrong}\\
L  & \mapsto  L^2. \nonumber
\end{align}
Connectedness of the Prym variety implies surjectivity of this map (as $\Prym \subseteq F$) and we have 
\begin{equation}
F/\Pic^0(S)[2] \cong \Prym.
\label{F}
\end{equation}   
Under this isomorphism, (\ref{wrong}) is precisely our map $n$. Note also that choosing a square root $K^{1/2}$ of the canonical bundle of $\Sigma$ we can twist the elements of $N$ by $\pi^*K^{(1-m)/2}$ thus obtaining rank $2$ bundles on $S$ of degree $0$. Under this identification, 
$$N \cong \{ E_0 \in \calU_S (2,0) \ | \ \det (E_0) \in \Prym \}$$
and $N_0 \cong \calU_S (2, \calO_S)$. The map $n$ in this case is precisely the determinant map.

Now, given $\delta \in A_d$, let $(L,D)$ be its projection to $Z_d$. We then have a natural map 
\begin{align*}
n :A_d & \to \Prym  \\
\delta & \mapsto L^2(D)\pi^*K^{1-2m}.
\end{align*}

This is clearly surjective. Moreover note that the fibres of $A_d \to Z_d$ are mapped to the same element. Since $A_d$ is an affine bundle, the difference between two elements in the fibre of $(L_2, D)$ will give a unique element in $H^1(S, L_2^*L_1)$, where $L_1 = L_2(D)K_S^{-1}$. In other words, it gives an extension of bundles 
$$0 \to L_1 \to E \to L_2 \to 0$$
and $\det E \cong L_1L_2 = \pi^*K^{m-1}$, if this fibre is mapped to $\calO_S$. This is the analogue of $N_0$ in $A_d$.

Now, we need to check that the map is well defined as a map 
$$n : h^{-1}(p^2) \to \Prym .$$
For this, note that we only need to check how the Zariski closure $\bar{A}_d$ of $A_d$ intersects $N$. This intersection consists of strictly semi-stable Higgs bundle. In other words, let $\delta \in \bar{A}_d$. There is a natural projection 
$$\delta \mapsto (L,D) \in \Pic^{m(m-1)(g-1)}(S) \times S^{(2m^2(g-1))}.$$ 
This element $\delta$ is $Gr$-equivalent to the object  
$$L(D)K_S^{-1} \oplus L$$
in the strictly semi-stable locus of $N$. In both cases, the maps send the point in the intersection to $L^2(D)\pi^*K^{1-2m}$.

Note that the dual abelian variety of $\Prym$ is $\Prym/ \Pic^0(\Sigma)[m]$, which is the spectral data for $PGL(m, \mathbb{C})$-Higgs bundles. In particular, this corresponds to the Nadler group $\lan{SU^*(2m)}$.

The moduli space $\calM (PGL(2m,\C))$ of $PGL(2m,\C)$-Higgs bundles is an orbifold which can be seen as the quotient of $\calM (SL(2m,\C))$ by the $2m$-torsion points of the Jacobian of $\Sigma$ (see, e.g., \cite{hauglob} for more details). Thus, by Theorem \ref{fibreforsu*}, the corresponding fibre on the fibration for $\calM (PGL(2m,\C))$ is just 
$$h^{-1}(p^2)/\Pic^0(\Sigma)[2m] \cong N/ \Pic^0(\Sigma)[2m] \cup \bigcup_{d=1}^{g_S-1} A_d/\Pic^0(\Sigma)[2m],$$ 
where $\Pic^0(\Sigma)[2m]$ acts naturally (via pullback) in each stratum. Since the surjective map $n$ to $\Prym$ involves the square of a line bundle on $S$ and $\widehat{\Prym } = \Pic^0(\Prym) \cong \Prym / \Pic^0(\Sigma)[m]$, it is natural to see the injection of $\Prym / \Pic^0(\Sigma)[m]$ into the component $N/ \Pic^0(\Sigma)[2m]$ as the map sending $[L]$ to $[L\pi^*K^{\frac{m-1}{2}} \oplus L\pi^*K^{\frac{m-1}{2}}]$, which reflects the diagonal embedding of $PGL(m,\C)$ into $PGL(2m,\C)$ (for the computation of the Nadler group, see Appendix \ref{comoutationnad}).

\subsection{The other cases}

We now focus on the cases when $G_0 = SO^*(4m)$ or $Sp(m,m)$. Again we start with the locus $N(G)$ containing the locus $N_0 (G)$ consisting of $G_0$-Higgs bundles in the fibre $h_G^{-1}(p^2)$ (for more details see Chapter \ref{Chapter4}). Then, if we denote by $N_{\pm}$ the locus  
$$
N_{\pm} = \left \{ \ E \in \calU_S (2,e) \
\bigg |
\gathered
\begin{array}{cl}
 & \text{there exists an isomorphism}\\
 & \psi : \sigma^*E \to E^* \otimes \pi^*K^{2m-1} \\
 & \text{satisfying $\tp{(\sigma^*\psi)} = \pm \psi$}.\
\end{array}
\endgathered  \ \  \right
\},$$    
where $e= 4m(2m-1)(g-1)$, then $N(SO(4m,\C)) \cong N_{-}$ and $N(Sp(4m,\C)) \cong N_{+}$. Also, $N_0 (G) \subset N(G)$ is determined by the condition that the determinant of the rank $2$ bundle on $S$ is $\pi^*K^{2m-1}$ (in both cases). In particular, this means that the induced action on the determinant bundle $\pi^*K^{2m-1}$ is trivial when $G_0 = SO^*(4m)$ and $-1$ in the other case. If $E \in N(G)$, from the isomorphism $\sigma^*E \cong E^* \otimes \pi^*K^{2m-1}$ we obtain that 
$$\det (E)\otimes \pi^*K^{1-2m} \in \PS,$$
since the Prym variety $\PS$ is characterized by those line bundles $L$ on $S$ which are isomorphic to $\sigma^* (L^{-1})$. Since $\PS$ is connected, the squaring map on this abelian variety is surjective and we can always find $U \in \PS$ such that $U^2 \cong \det (E)\otimes \pi^*K^{1-2m}$. Now, if we write $E = E_0 \otimes U$, the determinant of $E_0$ is isomorphic to $\pi^*K^{2m-1}$. This means that $E_0 \in N_0(G)$ since $U\in \PS$ (and then we obtain an isomorphism $\sigma^*E_0 \cong E_0^*\otimes \pi^*K^{2m-1}$ of the same type as the one it is induced from). If we had chosen another $U^\prime \in \PS$ satisfying $(U^\prime)^2 \cong \det (E)\otimes \pi^*K^{1-2m}$, then $L = U^\prime U^{-1} \in \PS [2]$. But if $z$ is a fixed point of $\sigma$, the linear action of $\sigma$ on the fibres $U_z$ and $U^\prime_z$ is always the same. This means that at a fixed point, the linear action of $\sigma$ on $L$ is always trivial and thus it must come from the quotient curve $\bar{S}$ (i.e., $L\in \rho^*\Pic^0 (\bar{S})[2]$). Thus we have a well-defined surjective map 
$$h_G^{-1}(p^2) \to \PS/\rho^*\Jac (\bar{S})[2]$$   
whose fibre at zero is precisely $N_0 (G)$. The map is analogous to the previous case. Explicitly, 
\begin{align*}
N (G) & \to \PS/\rho^*\Jac (\bar{S})[2] \\
E & \mapsto [\det(E) \pi^*K^{1-2m}].
\end{align*}
Now, given $\delta \in A_d(G)$, let $(L,\bar{D})$ be its projection to $Z_d(G)$, we then define  
\begin{align*}
A_d & \to \PS/\rho^*\Jac (\bar{S})[2] \\
\delta & \mapsto [L\sigma^*L^{-1}].
\end{align*}
Note that we see $Z_d (G) \subset Z_d(SL(4m,\C))$ by mapping $(L,\bar{D})$ to $(L, R_\rho + \rho^* \bar{D})$ (respectively, $(L, \rho^*\bar{D})$) when $G = SO(4m,\C)$ (res. $Sp(4m,\C)$), where $R_\rho$ is the ramification divisor of the ramified $2$-covering $\rho : S \to \bar{S}$. Then, $L^2(D)\pi^*K^{1-4m} = L\sigma^*L^{-1}$ in both cases and this coincides with our map $n$ defined previously.

The dual abelian variety of $\PS/\rho^*\Jac (\bar{S})[2]$ is $\PS$, which is the spectral data for $Sp(2m, \mathbb{C})$-Higgs bundles (see \cite{hitchin1987stable}). In particular, this corresponds to the Nadler group $\lan{SO^*(4m)} = \lan{Sp(m,m)}$.

\part{Gaiotto's Lagrangian}
% Chapter 7

\chapter{Symplectic Representations} % Main chapter title

\label{Chapter7} % For referencing the chapter elsewhere, use \ref{Chapter1} 

\lhead{Chapter 7. \emph{Symplectic Representations}} % This is for the header on each page - perhaps a shortened title

%--------------------------------------------

\section{General setup}

Let $G$ be a connected complex semi-simple Lie group, with Lie algebra $\lie{g}$, acting linearly on a finite-dimensional complex vector space $\V$. Assume that $\V$ is equipped with a $G$-invariant non-degenerate skew-symmetric bilinear form $\omega$. Then, $\V$ is called a \textbf{symplectic $G$-module} and the corresponding homomorphism from $G$ to $Sp(\V , \omega )$ is a \textbf{symplectic representation} of $G$. Given a symplectic representation 
$$\rho : G \to Sp(\V , \omega )$$
of $G$, the associated moment map $\mu_0 : \V  \to \lie{g}^*$ is the function  
$$\inn{\mu_0 (v)}{\xi}  = \dfrac{1}{2} \omega (\rho (\xi) v, v ), $$
where $v \in \V$, $\xi \in \lie{g}$. In the expression above, the brackets denote the natural pairing between $\lie{g}$ and $\lie{g}^*$ and, by abuse of notation, we also denote by $\rho : \lie{g} \to \lie{sp} (\V, \omega)$ the Lie algebra homomorphism given by the derivative of the representation. Using a non-degenerate $\Ad$-invariant bilinear form $B$ on $\lie{g}$ to identify $\lie{g}$ and $\lie{g}^*$ (take for example the Killing form $B_\lie{g}$), we obtain the quadratic equivariant map $\mu : \V \to \lie{g} $.

\begin{rmk} Using the non-degenerate form $\omega$ we can identify $\End \V $ with $\V\otimes \V$, so that, given $u_1,u_2 \in \V$, $u_1\otimes u_2$ corresponds to the endomorphism $\omega (u_1, \cdot)\otimes u_2$. Under this isomorphism we have $\lie{sp}(\V , \omega) \cong \Sym^2(\V)$. 
\end{rmk}

Fix a symplectic representation $\rho : G \to Sp(\V, \omega)$, where $\V$ has dimension $2n$. Let $P$ be a holomorphic principal $G$-bundle on $\Sigma$ and denote by $V = P(\V)$ the holomorphic vector bundle (of rank $2n$) associated to $P$ via the representation $\rho$. A choice of square root $K^{1/2}$ of the canonical bundle $K$ is the same as a spin structure on $\Sigma$ and the Dirac operator 
$$\bar{\partial} : \Omega^0(\Sigma, K^{1/2}) \to \Omega^{0,1}(\Sigma, K^{1/2})$$ 
is the $\bar{\partial}$-operator (see \cite{harm}). Since $V$ is a holomorphic vector bundle we can form the coupled Dirac operator $\bar{\partial}_A : \Omega^0(\Sigma, V\otimes K^{1/2}) \to \Omega^{0,1}(\Sigma, V\otimes K^{1/2})$. Note that, from the fact that $\mu$ is a quadratic equivariant function, it induces a map 
$$\mu : V\otimes K^{1/2} \to \Ad(P) \otimes K.$$
In particular, its evaluation on any spinor field $\psi \in H^0(\Sigma , V\otimes K^{1/2})$ is a well-defined Higgs field. Note that by the Riemann-Roch theorem, $h^0 (\Sigma , V\otimes K^{1/2}) = h^{1}(\Sigma , V\otimes K^{1/2})$ and for a generic symplectic vector bundle $V$ there may not be any non-zero global holomorphic sections of $V\otimes K^{1/2}$, thus the existence of a non-zero spinor field gives a constraint on the vector bundle. 
\begin{defin}\label{definitionofX} Choose a square root $K^{1/2}$ of the canonical bundle of $\Sigma$ and let $\rho : G \to Sp(\V, \omega)$ be a symplectic representation. We define $X_{(\rho , K^{1/2})} \subset \calM (G)$, or simply $X$, as the Zariski closure of the locus parametrizing 
\begin{center}
$\{ (P,\Phi) \in \calM^s (G) \ | \ \Phi = \mu(\psi) \ \text{for some } 0 \neq \psi \in H^0(\Sigma , V\otimes K^{1/2}) \}.$
\end{center}    
\end{defin}    
\begin{rmk} The $S$-equivalence class of a stable $G$-Higgs bundle corresponds to its isomorphism class. Let $(P,\Phi)$ and $(P^\prime,\Phi^\prime)$ be stable $G$-Higgs bundles corresponding to the same point in $\calM (G)$ and let $\eta : (P, \Phi) \to (P^\prime , \Phi^\prime )$ be an isomorphism between the two (i.e., $\eta$ is an isomorphism between the underlying principal $G$-bundles whose induced isomorphism $\hat{\eta} : \Ad(P) \otimes K \to \Ad(P^\prime) \otimes K$ sends $\Phi$ to $\Phi^\prime$). If $\Phi = \mu (\psi)$ for some spinor field $\psi$, its image under $\hat{\eta}$ is of the form $\mu (\psi^\prime)$, for some $\psi^\prime \in H^0(\Sigma, V^\prime \otimes K^{1/2})$. To see this, choose an open covering $\calU = \{U_\alpha \}$ of $\Sigma$ trivializing both $P$ and $P^\prime$. The isomorphism $\eta$ corresponds to a $0$-cocycle $(\eta_\alpha)$, where $\eta_\alpha : U_\alpha \to G$ is holomorphic. Thus, the induced isomorphisms from $V \to V^\prime$ and $\Ad (P) \to \Ad (P^\prime)$ are given by the $0$-cocycles $(\rho (\eta_\alpha))$ and $(\Ad (\eta_\alpha))$, respectively. If $\psi$ is represented by the cocycle $(\psi_\alpha)$, the section $\hat{\eta}(\Phi)$ is represented by $(\Ad(\eta_\alpha) \mu(\psi_\alpha))$, which, by the equivariance of the moment map is equal to $(\mu(\rho (\eta_\alpha)\psi_\alpha))$, where $(\rho(\eta_\alpha)\psi_\alpha)$ is a $0$-cocycle representing $\psi^\prime$.
%does polystability preserve higgs bundles of this form?    
\end{rmk} 

\begin{ex}\label{ex1} Take $(\V , \omega)$ to be the symplectic direct sum of the symplectic $G$-modules $(\V_i , \omega_i)$, $i=1,\ldots , u$, whose moment maps we denote by $\mu_i : \V_i \to \lie{g}$. The moment map for $\V$ is 
$$\mu (v_1 , \ldots , v_u) = \mu_1 (v_1) + \ldots + \mu_u (v_u),$$
where $v_i \in \V_i$.
\end{ex}

\begin{ex}\label{type1} Let $\rho$ be the standard representation of $Sp(2n,\C) = Sp(\C^{2n}, J)$, where $J = \left( \begin{matrix} 0& 1\\ -1&0 \end{matrix} \right)$ is the standard skew-symmetric form on $\C^{2n}$. Taking the invariant form on $\lie{sp}(2n,\C)$ to be $B(\xi_1, \xi_2) = -\dfrac{1}{2} \tr (\xi_1 \xi_2)$, where $\xi_1,  \xi_2 \in \lie{sp}(2n,\C)$, we have $\mu (v) = v\otimes v \in \Sym^2 (\C^{2n}) \cong \lie{sp}(2n, \C)$. This follows directly from the obvious identities $(u_1\otimes u_2) \circ (v_1\otimes v_2) = \omega (u_1,v_2) v_1\otimes u_2$ and $\tr (v_1\otimes v_2) = \omega (v_1,v_2)$. Note that the associated Higgs fields $\Phi $ are of the form $\psi \otimes \psi$, where $\psi \in H^0(\Sigma , V\otimes K^{1/2})$ for some symplectic vector bundle $(V,\omega)$ of rank $2n$ on $\Sigma$. Then, $\Phi (v) = \omega (\psi , v)\psi$ which shows that such Higgs fields have rank $1$ and satisfy $\Phi^2 = 0$. In particular, the variety $X$ is contained in the nilpotent cone of the Hitchin fibration for the complex symplectic group. We will discuss this further in the next chapter.    
\end{ex}

\begin{ex}\label{type2} Let $\tau : G \to GL(\W )$ be any finite-dimensional representation of $G$. In particular, we may assume that $\tau : G \to SL(\W )$ since the group $G$ is connected semisimple (and thus does not have any non-trivial characters). We also denote by $\tau : \lie{g} \to  \lie{sl}(\W)$ the Lie algebra homomorphism, which is injective since $\lie{g}$ is semisimple. The vector space $\V = \W \oplus \W^*$ carries a natural symplectic form 
$$\omega ((u_1,\delta_1), (u_2,\delta_2)) = \inn{\delta_2}{u_1} - \inn{\delta_1}{u_2},$$
where $\inn{{}\cdot{}}{{}\cdot{}}$ is the natural pairing between $\W^*$ and $\W$. Then, the representation $\rho = \tau \oplus \tau^* : G \to GL (\V) $ is symplectic with moment map satisfying 
$$B (\mu (u,\delta),  {}\cdot{}) = \delta (\tau (\cdot) u),$$
where $u\in \W$ and $\delta \in \W^*$.

In particular, if $\W$ is the standard $SL(n,\C)$-module and $W$ is a vector bundle on $\Sigma$ of rank $n$ with trivial determinant line bundle, a spinor is a pair $\psi = (u, \delta)$, where $u \in H^0(\Sigma , W\otimes K^{1/2})$ and $\delta \in H^0(\Sigma , W^* \otimes K^{1/2})$. Note that, since the degree of $W$ is zero, it follows from Riemann-Roch theorem that $h^0(\Sigma , W^* \otimes K^{1/2}) = h^1 (\Sigma , W^* \otimes K^{1/2})$ and Serre duality gives $h^0(\Sigma , W \otimes K^{1/2}) = h^1(\Sigma , W^* \otimes K^{1/2})$. Taking $B$ to be the trace of the product between two elements of $\lie{sl}(2n,\C)$, we obtain  
$$\mu (\psi) = \delta \otimes u - \dfrac{\inn{\delta}{u}}{n}\Id \in H^0(\Sigma , \End_0 W \otimes K). $$ 
This example (combined with Example \ref{ex1}) is closely related to the $n=2$ case treated by Hitchin in \cite{hitchin2009higgs}. 
\end{ex}

\begin{ex} One can blend the two previous examples by considering the representation $\rho = \tau \oplus \tau^*$ where $\tau : G \to Sp(\V , \omega)$ is itself a symplectic representation. Note that the map 
\begin{align*}
\V & \to \V^*\\
v & \mapsto \omega (\cdot , v) 
\end{align*} 
is an isomorphism between the $G$-module $\V$ and its dual. The last example gives, for the representation $\rho = \tau \oplus \tau $, a moment map satisfying 
$$B (\mu (v_1, v_2),  {}\cdot{}) = \omega (\tau (\cdot) v_1, v_2),$$
where $v_1, v_2 \in \V$. A simple calculation shows that the moment map for $\tau$, which we will also denote by $\mu$, satisfies 
$$B (\mu (v_1 + v_2) - \mu(v_1) - \mu(v_2),  {}\cdot{}) = \omega (\tau (\cdot) v_1, v_2).$$
Thus, by the non-degeneracy of $B$, the moment map for $\rho$ is the bilinear form associated to the quadratic moment map for the representation $\tau$. 

In particular, if $\V = \C^{2n}$ is the standard representation of $Sp(2n,\C)$ we may identify $\mu (v_1,v_2) = v_1 \odot v_2 \in \Sym^2(\C^{2n})$, where $v_1 \odot v_2 = v_1\otimes v_2 + v_2\otimes v_1$. Elements of $X$ are symplectic Higgs bundles of the form $(V, \psi_1 \odot \psi_2)$, where $V$ is a symplectic vector bundle of rank $2n$ and $\psi_1, \psi_2 \in H^0(\Sigma , V \otimes K^{1/2})$. This is closely related to higher-rank Brill-Noether theory\footnote{See, e.g., \cite{brill2} for an overview of the main results of Brill-Noether theory on a curve.}, which has to do, for example, with understanding the geometry of the subvariety of the moduli space of vector bundles consisting of those bundles admitting at least $k$ independent sections. Note also that if the vector bundle $V \otimes K^{1/2}$ has more than $1$ global section, $X$ intersects fibres of the Hitchin fibration other than the nilpotent cone. Take, for example, the locus $X_2 \subset X$ consisting of Higgs bundles whose underlying vector bundle $V$ is such that $h^0 (V\otimes K^{1/2}) = 2$. Fix a basis $\{ \psi_1 , \psi_2 \}$ for $H^0 (\Sigma, V\otimes K^{1/2})$. Then, $\det (x - \psi_1\odot \psi_2) = x^{2n-2}(x - \omega (\psi_1,\psi_2))(x + \omega (\psi_1,\psi_2))$.   
\end{ex}

\begin{ex} Hitchin studied in \cite{spinors} the variety $X$ associated to higher rank irreducible symplectic representations of $SL(2,\C)$, with emphasis on the cases of genus $2$ and $3$. These are Lagrangians in $\calM (SL(2,\C))$ and are quite interesting as they are related to the Brill-Noether problem for semi-stable rank $2$ bundles on a curve and also intersect smooth fibres of the $SL(2,\C)$-Hitchin fibration in the $m$-torsion points of these Prym varieties.     
\end{ex}

\begin{rmk} As we will see later in the chapter, the locus $X$ associated to a symplectic representation of $G$ will, under certain conditions, define a Lagrangian subvariety of $ \calM (G)$. These complex Lagrangians are named after the physicist Gaiotto, who introduced many examples of these objects (BAA-branes) on the Higgs bundle moduli space and sketched in some cases physical arguments related to the dual BBB-brane on $\calM (\lan{G})$ (see \cite{gaiotto2016s}). Given a symplectic representation $\rho$ of $G$, he called the associated Lagrangian a \textit{boundary hyper-multiplet valued in $\rho$}. Some cases treated in \textit{loc. cit.} include Example \ref{type2} (\textit{full hyper-multiplets}) and \textit{matter interfaces}, i.e., Lagrangians of $\calM (G)\times \calM (G)$ defined as 
$$
X = \left \{ \ ((V_1,\Phi_1),(V_2,\Phi_2)) \in \calM (G)\times \calM (G) \
\bigg |
\gathered
\begin{array}{cl}
 & V_1 = V_2=V\text{ and }\Phi_2 - \Phi_1 = \mu (\psi)\\
 & \text{for some }\psi \in H^0(\Sigma , V\otimes K^{1/2})\
\end{array}
\endgathered  \ \  \right
\},$$
In both cases the analysis of the hyperholomorphic brane is related to the Dirac-Higgs bundle.         
\end{rmk}

A symplectic representation is called \textbf{indecomposable} if it is not isomorphic to the sum of two non-trivial symplectic representations. It follows from \cite[Theorem 2.1]{knop1} that if two symplectic representations are isomorphic as $G$-modules, then they are isomorphic as symplectic representations. Moreover, every symplectic representation is a direct sum of finitely many indecomposable symplectic representations, where the summands are unique up to permutation. Every indecomposable symplectic representation is either an irreducible $G$-module or of the form $\V = \W \oplus \W^*$ (see Example \ref{type2}), where $\W$ is an irreducible $G$-module which does not admit a symplectic structure (and the symplectic form on $\V$ is, up to a multiple, the natural one). 

The subvariety $X$ is always isotropic and under some conditions it is Lagrangian (see Section \ref{defotheory}). To obtain information about $X$ we introduce in the next section the moduli space of twisted pairs.

\section{The moduli space of $K^{1/2}$-twisted symplectic pairs} \label{moduli}

Let $L$ be a holomorphic line bundle on $\Sigma$. The notion of an $L$-twisted pair on $\Sigma$ was given in \cite{hk} and it fits into the more general framework of twisted affine $\rho$-bumps, introduced by Schmitt (see \cite{schmitt}), where $\rho : G \to GL(\V)$ is a finite-dimensional representation of $G$. Such an object consists of a pair $(P,\varphi)$, where $P$ is a holomorphic principal $G$-bundle on $\Sigma$ and $\varphi$ is a global section of the vector bundle associated to $P$ via $\rho$ twisted by the line bundle $L$. In particular, by considering $\rho$ to be the adjoint representation of $G$ and $L$ to be the canonical bundle of $\Sigma$, we recover the definition of a $G$-Higgs bundle on $\Sigma$. Fix a square-root $K^{1/2}$ of the canonical bundle $K$. We will mostly be interested in the case where $G$ is a connected semisimple Lie group, $L=K^{1/2}$ and $\V$ is a symplectic $G$-module.    

\begin{defin} Let $\rho : G \to Sp (\V, \omega)$ be a symplectic representation of $G$. A \textbf{$K^{1/2}$-twisted symplectic $\rho$-bump on $\Sigma$}, or simply a \textbf{$(\rho, K^{1/2})$-pair}, is a pair $(P,\psi)$ consisting of a holomorphic principal $G$-bundle and a spinor $\psi \in H^0(\Sigma, P(\V)\otimes K^{1/2})$, where $P(\V)$ is the symplectic vector bundle associated to $P$ via $\rho$. Two $(\rho , K^{1/2})$-pairs $(P_i,\psi_i)$, $i =1,2$, are \textbf{isomorphic} if there is an isomorphism $f : P_1 \to P_2$ such that the induced isomorphism $P(\V_1) \otimes K^{1/2} \to P(\V_2) \otimes K^{1/2}$ sends $\psi_1$ to $\psi_2$. 
\end{defin} 

Since $G$ is connected\footnote{Recall that the category of complex semisimple algebraic groups is equivalent to the category of complex semisimple Lie groups by considering $\C$-points. Moreover, we usually refer to a $G$-bundle as either a holomorphic $G$-principal bundle on the compact Riemann surface or the corresponding object in the algebraic category.}, there is a bijection between isomorphism classes of topological principal $G$-bundles on $\Sigma$ and the fundamental group $\pi_1 (G)$ of $G$ (see, e.g., \cite{ramana}). Thus, fix the topological type $d \in \pi_1 (G)$ of the underlying topological (or equivalently, $C^\infty$) $G$-bundle. Let us now discuss the notion of stability for $(\rho, K^{1/2})$-pairs. 
\begin{rmk} If no confusion is possible, we will, for convenience, refer to a $(\rho, K^{1/2})$-pair simply as a pair. 
\end{rmk}
Given an anti-dominant character $\chi :Q \to \C^\times$ of a parabolic subgroup $Q$ of $G$, with Lie algebra $\lie{q}$, and a reduction $\sigma \in H^0(\Sigma, P/Q)$ of the structure group of the principal $G$-bundle $P$ to $Q$ (for details, see Appendix \ref{Lie theory}), the degree $\deg (P)(\sigma , \chi)$ of $P$ with respect to $\sigma$ and $\chi$ is defined as the degree of the line bundle $P_\sigma \times_{\chi} \C$, where $P_\sigma = \sigma^*P$ is the $Q$-bundle on $\Sigma$ corresponding to the restriction of structure group of $P$ to $Q$. Let $\lie{l} $ be the Lie algebra of the Levi subgroup $L$ corresponding to $Q $. Then, characters of $\lie{q}$ are in bijection with elements of the dual $\lie{z}^*_\lie{l}$ of the center of $\lie{l}$ which in turn may be identified with an element of $\lie{z}_\lie{l}$ via the Killing form. We denote by $s_\chi \in \lie{z}_\lie{l}$ the element corresponding to the derivative of the character $\chi$ through this identification. If $\lie{h}$ is the Lie algebra of a maximal compact subgroup $H$ of $G$, one can check that $\lie{z}_{\lie{l}} \subset i \lie{h}$, so from an anti-dominant character $\chi$ of the parabolic $Q$ we obtain an element $s_\chi \in i \lie{h}$ (for more details see Section $2$ of \cite{mundet}). Using this we can define the complex subspace 
$$\V_\chi^- = \{ v\in \V \ | \ \rho (e^{ts_\chi})v \ \text{is bounded as $t \to \infty$}\} $$ 
of $\V$ which is invariant under $Q$. Note that, putting $V = P(\V)$, the symplectic vector bundle $V$ is naturally isomorphic to $P_\sigma \times_Q \V$ and $V_{\sigma , \chi}^- = P_\sigma \times_Q \V^-_\chi$ is a subbundle of $V$. Similarly, one can define the complex subspace 
$$\V_\chi^0 = \{ v\in \V \ | \ \rho (e^{ts_\chi})v = v \ \text{for any $t\in \R$} \} \subset \V_\chi^-$$ 
of $\V$ which is invariant under the action of $L$ and thus gives rise to a vector subbundle $V_{\sigma_L , \chi}^0 = P_{\sigma_L} \times_L \V_\chi^0 $ of $V_{\sigma,  \chi}^-$, where $\sigma_L \in H^0(\Sigma , P/L)$ is a further reduction of $P$ to $L$. 

\begin{defin} A pair $(P,\psi)$ on $\Sigma$ is 
\begin{itemize}
\item \textbf{semi-stable} if for all parabolic subgroups $Q\subset G$, all reductions $\sigma \in H^0(\Sigma , P/Q)$, and all non-trivial anti-dominant characters $\chi$ of $Q$ such that $\psi$ is a global holomorphic section of $ V_{\sigma , \chi}^-\otimes K^{1/2}$, we have
$$\deg (P)(\sigma , \chi) \geq 0.$$
\item \textbf{stable} if the inequality above is strict (for all non-trivial parabolic subgroups $Q$ of $G$, all reductions $\sigma$ and all non-trivial anti-dominant characters $\chi$ of $Q$).
\item \textbf{polystable} if it is semi-stable and for any parabolic subgroup $Q \subset G$, any reduction $\sigma$ and any non-trivial strictly anti-dominant character $\chi$ of $Q$, such that $\psi \in H^0(\Sigma , V_{\sigma , \chi}^-\otimes K^{1/2})$ and 
$$\deg (P)(\sigma , \chi) =  0, $$
there is a further reduction $\sigma_L \in H^0(\Sigma , P/L)$ to the Levi $L\subset Q$ such that $\psi \in H^0(\Sigma , V_{\sigma_L , \chi}^0\otimes K^{1/2}) \subset H^0(\Sigma , V_{\sigma , \chi}^-\otimes K^{1/2})$.
\end{itemize}
\end{defin}

\begin{rmks}

\noindent 1. The notions of stability above can be generalized in the obvious way to any $L$-twisted pair. Moreover, if one allows the group $G$ to be a reductive Lie group, stability depends on a parameter $\alpha $ in the center of the Lie algebra $\lie{g}$ (see \cite{hk}), or, as in \cite{schmitt}, on a rational character of the reductive algebraic linear group. When $G$ is semisimple, there is only one such $\alpha$ as a semisimple Lie algebra is centreless (and a connected semisimple algebraic group does not have any non-trivial characters).  

\noindent 2. When the spinor is identically zero, stability for the pair coincides with the usual stability conditions for principal bundles on $\Sigma$ due to Ramanathan \cite{ramana} (for the proof see \cite{MR2450609} or \cite[Theorem 2.4.9.3]{schmitt}). 
\end{rmks}

Using Geometric Invariant Theory, Schmitt \cite{schmitt} constructed the moduli scheme $\calS^d (\rho , K^{1/2})$ whose closed points correspond to the $S$-equivalence classes of semi-stable $(\rho, K^{1/2})$-pairs of type $d \in \pi_1 (G)$.

\begin{prop} The moduli space $\calS^d (\rho , K^{1/2})$ is a quasi-projective variety.
\end{prop}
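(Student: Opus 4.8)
The plan is to deduce quasi-projectivity of $\calS^d(\rho, K^{1/2})$ directly from Schmitt's general GIT construction of moduli of decorated principal bundles, by observing that a $(\rho,K^{1/2})$-pair is a special instance of the objects he parametrizes. First I would recall that $\rho : G \to Sp(\V,\omega) \hookrightarrow GL(\V)$ is a finite-dimensional representation, so a pair $(P,\psi)$ with $\psi \in H^0(\Sigma, P(\V)\otimes K^{1/2})$ is exactly a $K^{1/2}$-twisted affine $\rho$-bump in the sense of \cite{schmitt} (equivalently, after dualizing $\V$ if necessary, a decorated bundle of the type treated there). The stability notion defined in this section is precisely the one Schmitt uses — there is no free parameter since $G$ is semisimple — so the moduli functor of $S$-equivalence classes of semi-stable $(\rho,K^{1/2})$-pairs of fixed topological type $d\in\pi_1(G)$ coincides with the one he constructs a coarse moduli scheme for.

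Next I would quote the relevant GIT output: Schmitt realizes the parameter space of semi-stable decorated bundles with fixed numerical invariants as a locally closed subscheme of a suitable Quot-type scheme (after rigidification by choosing a high tensor power and bounding the family, so that all semi-stable objects appear as quotients of a fixed sheaf), on which a reductive group acts linearly with respect to an ample line bundle; the GIT quotient is then a quasi-projective scheme by Mumford's theorem, and it corepresents the moduli functor. Boundedness of the family of underlying semi-stable $G$-bundles of type $d$ (Ramanathan, or Schmitt's own treatment) together with the fact that the extra datum $\psi$ lives in a fixed-dimensional cohomology group $H^0(\Sigma, P(\V)\otimes K^{1/2})$ ensures the whole family of pairs is bounded, which is what is needed to carry out the construction. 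This gives $\calS^d(\rho,K^{1/2})$ as a quasi-projective variety over $\C$ whose closed points are the $S$-equivalence classes of semi-stable pairs.

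The only genuinely delicate point — and the step I expect to be the main obstacle — is checking that the GIT (semi)stability condition arising from Schmitt's linearization matches, on the nose, the Ramanathan-type (semi)stability spelled out above in terms of parabolic reductions, anti-dominant characters $\chi$, and the subbundles $V^-_{\sigma,\chi}$; this Hilbert–Mumford computation is carried out in \cite{schmitt} (and, for the twisted-pair/Hitchin-Kobayashi side, in \cite{hk}), so I would invoke it rather than reproduce it. Once that identification is in hand, quasi-projectivity is immediate. I would close by noting for later use that the construction also produces a tautological family over an open subset and that $\calS^d(\rho,K^{1/2})$ carries a natural forgetful morphism to the moduli space of semi-stable $G$-bundles, recording the underlying $P$, which will be relevant when relating $\calS^d(\rho,K^{1/2})$ to $\calM^d(G)$ via $\psi \mapsto \mu(\psi)$.
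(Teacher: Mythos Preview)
Your proposal is correct and follows the same approach as the paper: both simply identify $(\rho, K^{1/2})$-pairs as a special case of Schmitt's twisted affine $\rho$-bumps and invoke his general GIT construction (specifically \cite[Theorem 2.8.1.2]{schmitt}). The paper's proof is terser --- it just matches notation, writing $\calS^d(\rho, K^{1/2}) = \calM^{\chi\text{-ss}}(\rho, d, \underline{L})$ with $\chi = 0$ and $\underline{L} = (K^{1/2}, \ldots, K^{1/2})$ indexed by the irreducible summands of $\V$ --- while you spell out more of the underlying mechanism (boundedness, the Hilbert--Mumford comparison), but the substance is the same.
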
 
\begin{proof}
This is a special case of \cite[Theorem 2.8.1.2]{schmitt}. Let $\V = \V_1 \oplus \ldots \oplus \V_u$ be the decomposition of the $G$-module $\V$ into its irreducible components. In Schmitt's notation, $\calS^d (\rho , K^{1/2})$ is denoted by $\calM^{\chi-\text{ss}}(\rho, d, \underline{L})$, where $\chi = 0$ and $\underline{L} = (L_1, \ldots , L_u)$, with $L_i = K^{1/2}$, for $i= 1, \ldots , u$.  
\end{proof} 

In \cite{hk} a Hitchin-Kobayashi correspondence is established, where polystable pairs correspond to solutions of a certain gauge-theoretic equation. We will go back to this point of view in the next chapter, where we consider the standard representation of the complex symplectic group.

\subsection{The map between the moduli spaces}

A ($K$-)twisted affine $\Ad_G$-bump is the same as a $G$-Higgs bundle and the usual notion of semi-stability (see, e.g., \cite{dey}) coincides with the one for bumps (see Remark $2.8.2.5$ of \cite{schmitt}), which in turn is equivalent to the one given in \cite{hk}. One has the following relation\footnote{We thank Oscar Garc\'{i}a-Prada for pointing that out to us.} between stability of a pair and stability of the corresponding Higgs bundle.    
%Given an element $s \in i\lie{h}$, we associate to it the connected parabolic subgroup 
%$$Q_s = \{ g \in G \ | \ e^{ts} g e^{-ts} \ \text{is bounded as $t \to %\infty$}\}.$$ 
%In fact, since $G$ is connected, every parabolic $Q \subseteq G$ is of the form $Q_s = Q$ for some $s \in i\lie{h}$. Note that if we define
%$$L_s = \{ g\in G \ | \ \lim_{t \to \infty} e^{ts}ge^{-ts} = g \},$$
%then $L_s \subset Q_s$ is a (connected) Levi subgroup of $Q_s$.
%Also, consider the sub-representation 
%$$\mathbb{V}_s = \{ v \in \mathbb{V} \ | \ \rho(e^{ts}) v \ \text{is bounded as $t \to \infty$}\}.$$   
%
%From $s \in i\lie{h}$ we can also associate a character $\chi_s$ of the Lie algebra $\lie{q}_s$ of $Q_s$. Indeed,  
%$$\lie{q}_s = \{ \xi \in \lie{g} \ | \ \lim_{t\to \infty} \Ad (e^{ts})(\xi)=0 \},$$ 
%and, by noting that $[s, \lie{q}_s] = 0$, we define $\chi_s$ using the Killing form $B$. This is clearly a linear map and, given $\xi_1, \xi_2 \in \lie{q}_s$, $B(s, [\xi_1, \xi_2]) = B([s,\xi_1], \xi_2) = 0$. Note that if $\chi$ is an anti-dominant character of $Q = Q_s$, we have associated to it an element $s_\xi \in i\lie{h}$. It follows that $\lie{q} = \lie{q_s} \subseteq \lie{q_{s_\chi}}$, with equality if $\chi$ is strictly anti-dominant.  

\begin{prop}\label{stability} Let $P$ be a holomorphic principal $G$-bundle on $\Sigma$ and $\psi$ a global section of $P(\V)\otimes K^{1/2}$. If $(P,\mu(\psi))$ is a (semi-)stable Higgs bundle, the pair $(P,\psi)$ is (semi-)stable. 
\end{prop}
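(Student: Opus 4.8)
The plan is to compare, term by term, the numerical conditions defining stability of the pair $(P,\psi)$ with those defining stability of the $G$-Higgs bundle $(P,\mu(\psi))$, and to show that every triple $(Q,\sigma,\chi)$ tested in the former is also tested in the latter, with the same number $\deg(P)(\sigma,\chi)$ attached to it. Both notions follow the same template: for $G$-Higgs bundles the module $\V$ of the general definition of Section \ref{moduli} is replaced by the adjoint module $\lie{g}$, so that $V^-_{\sigma,\chi}=P_\sigma\times_Q\V^-_\chi$ is replaced by $\Ad(P)^-_{\sigma,\chi}:=P_\sigma\times_Q\lie{g}^-_\chi$ and the section $\psi$ of $V\otimes K^{1/2}$ is replaced by the Higgs field $\mu(\psi)$, a section of $\Ad(P)\otimes K$ (that $\mu(\psi)$ is a holomorphic Higgs field was already observed in Section \ref{moduli}). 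The standard definition of stability for $G$-Higgs bundles (see \cite{dey, schmitt}) fits exactly this pattern, so a $G$-Higgs bundle is (semi-)stable iff $\deg(P)(\sigma,\chi)\geq 0$ (resp. $>0$) for all parabolics $Q$ and all non-trivial anti-dominant characters $\chi$ of $Q$ admitting a reduction $\sigma$ with $\mu(\psi)\in H^0(\Sigma,\Ad(P)^-_{\sigma,\chi}\otimes K)$.

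The crux is the pointwise inclusion $\mu(\V^-_\chi)\subseteq\lie{g}^-_\chi$ for every anti-dominant character $\chi$ of every parabolic $Q$. Let $s_\chi\in\lie{g}$ be the element attached to $d\chi$, so that $e^{ts_\chi}\in G$ for $t\in\R$. Equivariance of the moment map gives $\mu(\rho(e^{ts_\chi})v)=\Ad(e^{ts_\chi})\mu(v)$ for all $t$ and all $v\in\V$. Since $\mu\colon\V\to\lie{g}$ is a homogeneous quadratic polynomial map, it carries bounded subsets of $\V$ to bounded subsets of $\lie{g}$; hence, if $v\in\V^-_\chi$, i.e. $t\mapsto\rho(e^{ts_\chi})v$ is bounded as $t\to\infty$, then so is $t\mapsto\Ad(e^{ts_\chi})\mu(v)$, which is exactly the statement $\mu(v)\in\lie{g}^-_\chi$. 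Because $\mu$ is moreover $Q$-equivariant, this inclusion globalizes: for a reduction $\sigma$ of $P$ to $Q$, the bundle map $V\otimes K^{1/2}\to\Ad(P)\otimes K$ induced by $\mu$ restricts to a map $V^-_{\sigma,\chi}\otimes K^{1/2}\to\Ad(P)^-_{\sigma,\chi}\otimes K$. Consequently, whenever $\psi\in H^0(\Sigma,V^-_{\sigma,\chi}\otimes K^{1/2})$ we get $\mu(\psi)\in H^0(\Sigma,\Ad(P)^-_{\sigma,\chi}\otimes K)$.

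With this in hand the proposition follows at once. Assume $(P,\mu(\psi))$ is semi-stable, and let $(Q,\sigma,\chi)$ be any triple constraining the semi-stability of $(P,\psi)$, that is, $Q$ a parabolic subgroup of $G$, $\chi$ a non-trivial anti-dominant character of $Q$, $\sigma\in H^0(\Sigma,P/Q)$ and $\psi\in H^0(\Sigma,V^-_{\sigma,\chi}\otimes K^{1/2})$. By the previous paragraph $\mu(\psi)\in H^0(\Sigma,\Ad(P)^-_{\sigma,\chi}\otimes K)$, so this same triple is tested in the semi-stability condition for the Higgs bundle $(P,\mu(\psi))$, whence $\deg(P)(\sigma,\chi)\geq 0$. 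As $(Q,\sigma,\chi)$ was arbitrary, $(P,\psi)$ is semi-stable. Repeating the argument verbatim with every inequality $\geq 0$ replaced by $>0$ (and $Q$ restricted to the non-trivial parabolics) shows that stability of $(P,\mu(\psi))$ implies stability of $(P,\psi)$.

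I expect the only delicate point to be the bookkeeping in the first paragraph: verifying that the subspace $\lie{g}^-_\chi$ arising as the ``$\rho=\Ad$'' instance of the pair-stability template really is the subalgebra used in the usual Mundet--Schmitt definition of $G$-Higgs bundle stability, i.e. that the sign and anti-dominance conventions on both sides match. Once that is settled, the homogeneity-plus-equivariance argument for $\mu(\V^-_\chi)\subseteq\lie{g}^-_\chi$ and its globalization to associated bundles are routine, and there is nothing further to check.
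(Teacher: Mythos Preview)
Your proof is correct and follows essentially the same approach as the paper's. Both arguments reduce to the pointwise inclusion $\mu(\V^-_\chi)\subseteq\lie{g}^-_\chi$, proved via equivariance of the moment map; you invoke that a quadratic map sends bounded sets to bounded sets, while the paper phrases it as moving a limit through the continuous map $\mu$, but these are the same observation. Your write-up is actually cleaner on the logical direction of the implication (the paper's first sentence states it backwards before proving the correct direction).
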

\begin{proof}
Let $Q\subset G$ be a parabolic subgroup, $\chi$ an anti-dominant character of $Q$ and $\sigma \in H^0(\Sigma , P/Q)$ a reduction. If $\mu (\psi) \in H^0(\Sigma , \Ad (P)_{\chi , \xi }^- \otimes K)$, where $ \Ad (P)_{\chi , \xi }^-$ is the vector bundle associated to $P_\sigma$ via the representation $\Ad : Q \to GL( \lie{g}_{\chi , \xi }^-)$, then $\psi \in H^0(\Sigma , \V_{\chi , \xi }^- \otimes K^{1/2})$. We can see this by showing that if $v \in \V_{\chi , \xi }^-$, we have $\mu(v) \in \lie{g}_{\chi , \xi }^-$. But this follows directly from the $G$-equivariance of the moment map. Indeed, 
$$\Ad (e^{ts_\chi})\mu (v) = \mu (\rho (e^{ts_\chi})v)$$
and $\underset{t \to \infty}{\lim} \Ad (e^{ts_\chi})\mu (v) = \mu (\underset{t \to \infty}{\lim}\rho (e^{ts_\chi})v )$, which is bounded. Now, by (semi-)stability of the Higgs bundle we have that $\deg (P)(\sigma , \chi) \ (\geq) > 0$.
\end{proof}

Note that the obstruction for the stability of the pair and the associated Higgs bundle to be \textit{equivalent} comes from the fact that, given an element $s \in i\lie{h}$ and $v \in \V$ such that $\mu (\rho (e^{ts})v)$ is bounded as $t $ goes to infinity, it does not follow in general that $\rho (e^{ts})v$ is also bounded. An obvious example comes from considering any symplectic vector space with the trivial $G$-module structure. In this case the moment map is zero and the image of the moduli space of pairs inside $\calM(G)$ is just the moduli space of principal $G$-bundles on $\Sigma$. As in the Higgs bundle case, the underlying bundle of a semi-stable pair $(P,\psi)$ need not be semi-stable. Indeed, the pair $(K^{-1/2}\oplus K^{1/2}, 1)$, where $1 \in H^0(\Sigma , \calO_\Sigma)\subset H^0(\Sigma , (K^{-1/2}\oplus K^{1/2})\otimes K^{1/2})$, is semi-stable but $K^{-1/2}\oplus K^{1/2}$ is not. Considering the standard representation of $Sp(2n,\C)$ however, $(e^{ts}v)\otimes (e^{ts}v)$ is bounded if and only if $e^{ts}v$ is bounded (as $t$ goes to infinity). Thus we have shown the following. 

\begin{prop}\label{p78} Let $P$ be a holomorphic principal $Sp(2n,\C)$-bundle on $\Sigma$ and $\psi$ be a global section of $P(\C^{2n})\otimes K^{1/2}$, where $\C^{2n}$ is the standard $Sp(2n,\C)$-module. Then, the pair $(P, \psi )$ is (semi-)stable if and only if $(P,\psi \otimes \psi)$ is a (semi-)stable symplectic Higgs bundle. 
\end{prop}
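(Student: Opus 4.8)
The plan is to reduce Proposition \ref{p78} to Proposition \ref{stability} together with a converse statement obtained by a concrete boundedness analysis adapted to the standard representation of $Sp(2n,\C)$. One implication --- that (semi-)stability of the Higgs bundle $(P,\psi\otimes\psi)$ implies (semi-)stability of the pair $(P,\psi)$ --- is a direct consequence of Proposition \ref{stability} applied to the representation $\rho$ equal to the standard representation, since $\mu(\psi)=\psi\otimes\psi$ in this case (Example \ref{type1}). So the work lies entirely in the converse, and in showing that the relevant weighted-degree inequalities match up on the nose.

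The key observation, already flagged in the paragraph preceding the proposition, is the elementary linear-algebra fact: for $s\in i\lie{h}\subset\lie{sp}(2n,\C)$ (equivalently $s$ a semisimple element whose $e^{ts}$-action on $\C^{2n}$ decomposes into weight spaces) and $v\in\C^{2n}$, the vector $e^{ts}v\otimes e^{ts}v\in\Sym^2\C^{2n}$ stays bounded as $t\to+\infty$ if and only if $e^{ts}v$ stays bounded as $t\to+\infty$. First I would prove this: decompose $v=\sum_\lambda v_\lambda$ into $s$-weight components with real weights $\lambda$; then $e^{ts}v=\sum_\lambda e^{t\lambda}v_\lambda$ is bounded iff $v_\lambda=0$ for all $\lambda>0$, while $e^{ts}v\otimes e^{ts}v=\sum_{\lambda,\mu}e^{t(\lambda+\mu)}v_\lambda\odot v_\mu$, and the maximal exponent appearing is $2\lambda_{\max}$ where $\lambda_{\max}=\max\{\lambda : v_\lambda\neq 0\}$; boundedness of the square thus forces $\lambda_{\max}\le 0$, which is exactly boundedness of $e^{ts}v$. (No cancellation can occur because the $v_{\lambda_{\max}}\odot v_{\lambda_{\max}}$ term is nonzero in $\Sym^2$.) Translating to bundles via a reduction $\sigma\in H^0(\Sigma,P/Q)$ to a parabolic $Q$ with anti-dominant character $\chi$ and associated element $s_\chi\in i\lie{h}$, this says precisely that $\psi\in H^0(\Sigma, V^-_{\sigma,\chi}\otimes K^{1/2})$ if and only if $\psi\otimes\psi = \mu(\psi)\in H^0(\Sigma, \Ad(P)^-_{\sigma,\chi}\otimes K)$, where $V^-_{\sigma,\chi}$ and $\Ad(P)^-_{\sigma,\chi}$ are the subbundles cut out by the respective $\V^-_\chi$ and $\lie{g}^-_\chi$.

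With this equivalence in hand the proof of the converse is immediate: suppose $(P,\psi)$ is (semi-)stable and let $Q$, $\sigma$, $\chi$ be data witnessing the Higgs-bundle stability condition, i.e. with $\mu(\psi)=\psi\otimes\psi\in H^0(\Sigma, \Ad(P)^-_{\sigma,\chi}\otimes K)$; by the equivalence $\psi\in H^0(\Sigma, V^-_{\sigma,\chi}\otimes K^{1/2})$, so the stability of the pair gives $\deg(P)(\sigma,\chi)\ (\ge)>0$, which is exactly what is needed for $(P,\psi\otimes\psi)$ to be (semi-)stable. Combining with the implication from Proposition \ref{stability} yields the "if and only if". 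The main obstacle --- really the only subtle point --- is making sure the boundedness equivalence is stated at the right level of generality: one must check that $s_\chi$ indeed acts on $\C^{2n}$ with real weights (true because $s_\chi\in i\lie{h}$ and $\rho$ is unitary with respect to a compatible Hermitian structure), and that the passage between the one-parameter-subgroup formulation of stability (à la Schmitt/Mundet i Riera) and the subbundle formulation is the one used in the definition of stability for pairs and for Higgs bundles; once the dictionary $V^-_{\sigma,\chi}\otimes K^{1/2}\leftrightarrow \Ad(P)^-_{\sigma,\chi}\otimes K$ is nailed down, everything else is formal. I would also remark, as the excerpt does, that the example $(K^{-1/2}\oplus K^{1/2},1)$ shows semi-stability of a pair does not force semi-stability of the underlying bundle, so no shortcut through bundle semi-stability is available --- the weighted-degree comparison is genuinely necessary.
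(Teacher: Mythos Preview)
Your proposal is correct and follows exactly the paper's own route: one direction is Proposition~\ref{stability}, and the converse hinges on the observation (stated but not proved in the paper) that for the standard representation $(e^{ts}v)\otimes(e^{ts}v)$ is bounded as $t\to\infty$ if and only if $e^{ts}v$ is. Your weight-decomposition argument for this equivalence is a clean justification of what the paper asserts in a single sentence, so your write-up is in fact more detailed than the original.
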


\section{Hitchin-type map}
% = (\bigoplus_{d \in \Z_{\geqslant 0}} \Sym^d (\V^*))^G$$ 
Let $\V$ be a symplectic $G$-module and $\V = \V_1 \oplus \ldots \oplus \V_u$ its decomposition into irreducible components. Just as in the case of Higgs bundles, one may consider a basis of the invariant ring $\C [ \V ]^G = \Sym (\V^*)^G $ and construct a map from the moduli space $\calS^d (\rho , K^{1/2})$ of pairs onto a vector space. We have a splitting of $G$-modules
$$\Sym^e (\V^*) =  \bigoplus_{e_1 + \ldots + e_u = e} \Sym^{e_1} (\V_1^*)\otimes \ldots \otimes \Sym^{e_u} (\V_u^*),$$
so we may write 
$$ \Sym^e (\V^*)^G = \bigoplus_{e_1 + \ldots + e_u = e} S(e_1, \ldots , e_u),$$
where 
$$S(e_1, \ldots , e_u) = (\Sym^{e_1} (\V_1^*)\otimes \ldots \otimes \Sym^{e_u} (\V_u^*))^G.$$
Since $\C [ \V ]^G$ is a finitely generated $\C$-algebra for any reductive complex group $G$, we can find generators $f_1, \ldots , f_q$ of the invariant ring such that $f_i \in S(e_{i,1}, \ldots , e_{i,u})$, for some $e_{i,j}\in \Z_{\geq 0}$, $i = 1, \ldots , q$, $j= 1, \ldots , u$. Evaluation of $f_i$ on a spinor $\psi \in H^0(\Sigma , V \otimes K^{1/2})$ induces a \textbf{Hitchin-type map} 
\begin{equation}
h : \calS^d (\rho , K^{1/2}) \to \calA_{pair} = \bigoplus_{i=1}^q H^0(\Sigma , K^{(e_{i,1} + \ldots + e_{u,i})/2}).
\label{hitype}
\end{equation}  

\begin{prop}(\cite[Prop. 2.8.1.4]{schmitt}) The map (\ref{hitype}) is a projective morphism.
\end{prop}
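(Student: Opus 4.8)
The strategy is to reduce the statement to the corresponding fact for the Hitchin map on a moduli space of honest Higgs bundles, where properness is already known by Hitchin \cite{hitchin1987stable} and Simpson \cite{si1, si3}, via a GIT comparison argument in the spirit of Schmitt's general construction \cite{schmitt}. First I would recall the concrete GIT picture: the moduli space $\calS^d(\rho, K^{1/2})$ is constructed as a good quotient of a suitable parameter space $\mathfrak{R}$ (a locally closed subscheme of a product of a Quot scheme with a projective bundle encoding the spinor data), and the invariant ring generators $f_1,\dots,f_q$ define a $G$-equivariant morphism $\mathfrak{R}\to \bigoplus_i H^0(\Sigma, K^{(e_{i,1}+\dots+e_{i,u})/2})$ which descends to $h$. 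The key point is that this descended map factors through the natural affine structure on the base, so the base is an affine space; to prove $h$ is \emph{projective} it then suffices to show $h$ is proper (a proper morphism with affine target whose source is quasi-projective is projective, being proper plus quasi-projective hence of finite type, separated and universally closed, and one checks directly it is projective over the affine base).

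The main step is the valuative criterion of properness. Given a DVR $R$ with fraction field $F$, an $F$-point of $\calS^d(\rho, K^{1/2})$ — i.e. a family of semistable $(\rho,K^{1/2})$-pairs over $\operatorname{Spec} F$ — together with a lift of the image to an $R$-point of $\calA_{pair}$, I must produce a unique $R$-point of $\calS^d(\rho, K^{1/2})$ extending it. Here I would invoke the analogue of Langton's/Simpson's semistable reduction argument as executed by Schmitt: the family of principal bundles extends after modifying by the Hitchin-base constraint, and the spinor extends because the constraint on the base bounds the relevant sections; semistability of the limit is forced by the properness of the GIT quotient together with the fact that the invariants $f_i(\psi)$ stay bounded (indeed are prescribed by the given $R$-point of the base). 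Concretely, one runs Langton-type modifications of the $G$-bundle to reach a semistable limit; the point is that the Hitchin-type invariants being fixed in $\calA_{pair}(R)$ means the destabilizing reductions cannot persist, exactly as in the Higgs bundle case where fixing the characteristic polynomial makes the Hitchin map proper.

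The hard part will be controlling the spinor $\psi$ under these modifications — unlike the Higgs field, which transforms tensorially under elementary transformations of the bundle, a section of $P(\V)\otimes K^{1/2}$ can acquire or lose poles along the special fibre, and one must check that the boundedness coming from fixing the finitely many invariants $f_i(\psi)$ is enough to guarantee $\psi$ extends to a genuine (possibly zero) section of the limiting bundle. This is precisely the content of the boundedness lemmas underlying \cite[Theorem 2.8.1.2]{schmitt}, so I would cite that machinery rather than reprove it: the decomposition $\V = \bigoplus \V_i$ into irreducibles, together with the fact that the generators $f_i$ generate $\C[\V]^G$, ensures that any unbounded family of spinors would force one of the invariants to blow up, contradicting the fixed $R$-point of the base. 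Once properness is in hand, projectivity over the affine base $\calA_{pair}$ follows formally, completing the proof.
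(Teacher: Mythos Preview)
The paper does not provide its own proof of this proposition: it is stated with a direct citation to \cite[Prop.~2.8.1.4]{schmitt} and no argument is given beyond that reference. So there is nothing to compare your proposal against in the paper itself.

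That said, your sketch is a reasonable outline of the ideas underlying Schmitt's result --- properness via a semistable-reduction argument, then projectivity from proper plus quasi-projective over an affine base --- but it is somewhat circular as written: at the crucial step (controlling the spinor under elementary modifications and establishing boundedness) you explicitly defer to ``the boundedness lemmas underlying \cite[Theorem 2.8.1.2]{schmitt}'' and say you would cite that machinery rather than reprove it. At that point you are effectively doing what the paper does, namely citing Schmitt, just with more scaffolding around the citation. If the goal is an independent proof, the hard content (Langton-type modifications for decorated principal bundles and boundedness from the invariant-ring generators) still needs to be supplied; if the goal is to record the result for use in the thesis, the bare citation is the cleaner option, which is what the author chose.
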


\begin{cor}\label{cor711} The moduli space of pairs $\calS^d (\rho , K^{1/2})$ for the standard representation of the complex symplectic group is a projective variety. 
\end{cor}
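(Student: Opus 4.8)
The plan is to combine the general projectivity result for the Hitchin-type map (the previous Proposition, citing \cite[Prop. 2.8.1.4]{schmitt}) with the fact that for the standard representation of $Sp(2n,\C)$ the Hitchin-type base $\calA_{pair}$ is trivial. First I would identify the invariant ring. For $\rho$ the standard representation $\V = \C^{2n}$ of $G = Sp(2n,\C)$, the representation is irreducible, so $u=1$ and $S(e_1) = (\Sym^{e_1}(\V^*))^G$. Since $Sp(2n,\C)$ acts transitively on $\V \setminus \{0\}$ (indeed on the unit sphere for the symplectic form, and by scaling on all nonzero vectors), there are no non-constant $G$-invariant polynomials on $\V$: $\C[\V]^{Sp(2n,\C)} = \C$. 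Hence in the notation preceding \eqref{hitype} one can take $q=0$ (no generators beyond constants), and the Hitchin-type base $\calA_{pair} = \bigoplus_{i=1}^{q} H^0(\Sigma, K^{\ldots})$ is the zero vector space (a point).

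Next I would invoke the previous Proposition: the Hitchin-type map $h : \calS^d(\rho, K^{1/2}) \to \calA_{pair}$ is a projective morphism. Since $\calA_{pair}$ is a single point, $h$ is the structure morphism $\calS^d(\rho, K^{1/2}) \to \Spec \C$, and a projective morphism to a point is exactly a projective scheme. Combined with the fact (already established in the excerpt) that $\calS^d(\rho, K^{1/2})$ is a quasi-projective variety, we conclude it is a projective variety. One should also note that for the standard symplectic representation the topological type is necessarily $d = 0$ since $\pi_1(Sp(2n,\C)) = 0$, so there is really only one moduli space $\calS(\rho, K^{1/2})$, matching the notation $\calS$ used later in Chapter \ref{Chapter8}.

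The only genuine content to verify carefully is the computation $\C[\C^{2n}]^{Sp(2n,\C)} = \C$; everything else is a direct quotation of Schmitt's projectivity theorem plus the elementary observation that a projective morphism with a point as target has projective source. The transitivity of $Sp(2n,\C)$ on $\V \setminus \{0\}$ is standard: given any two nonzero vectors one extends each to a symplectic basis and uses the resulting symplectic change of basis, and invariant polynomials are then forced to be constant on the Zariski-dense orbit $\V \setminus \{0\}$, hence globally constant. I do not anticipate any real obstacle here — the main point of the corollary is simply to record that, unlike the general case where $\calS^d(\rho, K^{1/2})$ is only quasi-projective, the absence of symplectic invariants for the standard representation forces properness. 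For completeness one could alternatively argue directly that any family of semistable pairs is bounded and the moduli functor is proper via the valuative criterion, but routing through the already-cited projective-morphism statement is the cleanest path.
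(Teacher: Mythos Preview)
Your proposal is correct and follows exactly the same route as the paper: the paper also argues that $Sp(2n,\C)$ acts transitively on $\C^{2n}\setminus\{0\}$ (by completing nonzero vectors to symplectic bases), deduces $\C[\V]^G=\C$, and then invokes the projectivity of the Hitchin-type map. Your additional remarks about $\pi_1(Sp(2n,\C))=0$ and the alternative valuative-criterion approach are extra context not in the paper, but the core argument is identical.
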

\begin{proof}
The standard representation $\V = \C^{2n}$ of $G = Sp(2n,\C)$ is transitive on $\C^{2n} - \{0 \}$ since, given two non-zero vectors, we can always complete them to two symplectic basis. Thus, $\C [ \V ]^G = \C$ and the result follows from the proposition above. 
\end{proof}

Note that since the moment map 
$$\mu : \V \to \lie{g}$$
is equivariant, every element $p\in \C[\lie{g}]^G$ gives rise to an element $\mu^*p \in \C[\V]^G$.

\begin{ex} Consider the Example \ref{type2}, where $\V = \W \oplus \W^*$ and $\W$ is the standard representation of $SL(n,\C)$. There is an obvious invariant function $p$ on $\V$ given by the pairing, i.e., 
$$p(u,\delta) = \inn{\delta}{u}.$$
The function $p$ is clearly invariant for any $G$-module $\W$, where $G$ is a complex group. Thus, in particular, the moduli space of pairs for the representation $\V$ is not projective. As we saw, the moment map is given by 
\begin{align*}
\mu : \V &\to \lie{sl}(n,\C)\\
(u , \delta ) &\mapsto \delta \otimes u - \frac{\inn{\delta}{u}}{n}\Id  .
\end{align*}
The functions $p_i (x) = \tr (x^i)$ form a basis for the ring $\C [\lie{sl}(n,\C)]^{SL(n,\C)}$ and their pullback by $\mu $ is given by 
\begin{align*}
\mu^*p_i (u,\delta) &= \tr\left(\delta \otimes u - \frac{\inn{\delta}{u}}{n}\Id\right)^i \\
& = \tr\left( \sum_{j=0}^{i} (-1)^j \binom{i}{j} (\delta \otimes u)^{i-j} (\frac{\inn{\delta}{u}}{n}\Id)^j\right).
\end{align*}  
But $(\delta \otimes u)^{i-j} = \inn{\delta}{u}^{i-j-1}\delta \otimes u$, for $i-j > 0$, so we obtain 
\begin{align*}
\mu^*p_i (u,\delta) &= \inn{\delta}{u}^{i} \left(\sum_{j=0}^{i-1} (-1)^j \binom{i}{j} \frac{1}{n^j} + (-1)^{i}\frac{1}{n^{i-1}} \right)\\
&= \inn{\delta}{u}^{i} \left((1 - n^{-1})^i + (-1)^{i}(n-1)n^{-i} \ \right).
\end{align*} 
In particular, all these functions are contained in the ideal generated by $p \in \C [\V]^{SL(n,\C)}$. It turns out that the ring of invariants of $\V$ is generated by $\{1, p\}$ (see Tables 1 and 2 in \citep{knop1} for the dimension of $\C [\V]^G$ for a large class of symplectic $G$-modules $\V$).
\end{ex}

\section{The isotropy condition}
\label{defotheory}

Let $(P,\psi)$ be a $(\rho, K^{1/2})$-pair, where again $\rho : G \to Sp(\V , \omega)$. The \textbf{deformation complex} of $(P,\psi)$ is the complex of sheaves $C_{pair}^\bullet (P,\psi)$ given by 
$$ \rho(\cdot)(\psi) : \calO (\Ad (P)))  \to \calO (P(\V ) \otimes K^{1/2}). $$
It follows from \cite{bis} that the space of infinitesimal deformations of a pair $(P,\psi)$ is naturally isomorphic to the first hypercohomology group $\K^1 (\Sigma , C_{pair}^\bullet (P,\psi))$. 

Also, recall that the smooth locus $\calM (G)^{smt}$ of the moduli space of $G$-Higgs bundles is a hyperk\"{a}hler manifold. The tangent space at a smooth point $(P,\Phi)$ is isomorphic to the infinitesimal deformation space of $(P,\Phi)$, which was shown in \cite{bis} to be canonically isomorphic to the first hypercohomology group $\K^1(\Sigma, C^\bullet (P,\Phi))$, where $C^\bullet (P,\Phi)$ is the two-term complex of sheaves 
$$[{}\cdot{}, \Phi] : \calO(\Ad (P)) \to \calO(\Ad (P) \otimes K).$$ 
Using an $\Ad$-invariant form $B$ on $\lie{g}$ to identify $\Ad (P)$ with its dual, the complex $C^\bullet (P,\Phi)$ becomes isomorphic to $C^\bullet (P,\Phi)^* \otimes K$ and by Serre duality for hypercohomology we obtain an isomorphism 
$$\Omega_I : \K^1(\Sigma, C^\bullet (P,\Phi)) \to \K^1(\Sigma, C^\bullet (P,\Phi))^*,$$
which gives rise to the $I$-complex symplectic structure on $\calM(G)$ (where $I$ is the complex structure of the moduli space determined by the complex structure of $\Sigma$, see Section \ref{hkstructure}). 

In \cite{spinors}, Hitchin proves that $X$ (see Definition \ref{definitionofX}) is an isotropic subvariety of $\calM(G)$ using the infinite-dimensional approach to the construction of the Higgs bundle moduli space. Let us show this from the \v{C}ech point of view. To this end we recall an explicit description of $\Omega_I$ (see \cite{bis}).  

Let $(P,\Phi)$ be a $G$-Higgs bundle and $C^\bullet = C^\bullet (P,\Phi)$ its deformation complex. The complex $C^\bullet \otimes C^\bullet $ is given by
$$\Ad (P) \otimes \Ad (P) \overset{\beta}{\to} \Ad (P)\otimes (\Ad (P)\otimes K) \oplus (\Ad (P) \otimes K)\otimes \Ad (P) \overset{\alpha}{\to} (\Ad (P) \otimes K)\otimes (\Ad (P) \otimes K),$$ 
where 
\begin{align*}
\beta (x,y) & = ( x\otimes [y,\Phi] , [x,\Phi]\otimes y), \\
\alpha(x\otimes y, y^\prime \otimes x^\prime) & = [x,\Phi]\otimes y - y^\prime \otimes [x^\prime , \Phi].
\end{align*}
Denote by $K[-1]$ the complex whose degree $1$ entry is $K$ and all others are zero. There is a map of cochain complexes $f^\bullet : C^\bullet \otimes C^\bullet \to K[-1]$, where $f_i = 0$, for $i\neq 1$, and in degree $1$ we have $$f^1 : (x\otimes y, y^\prime \otimes x^\prime) \mapsto B(x, y^\prime) - B(y, x^\prime).$$
Also, there is a natural map $\K^1(\Sigma, C^\bullet ) \otimes \K^1(\Sigma, C^\bullet ) \to \K^2(\Sigma, C^\bullet \otimes C^\bullet)$ and the composition 
$$\K^1(\Sigma, C^\bullet ) \otimes \K^1(\Sigma, C^\bullet) \to \K^2(\Sigma, C^\bullet \otimes C^\bullet) \to \K^2(\Sigma, K[-1]) = H^1(\Sigma, K) \cong \C $$
is precisely the complex symplectic form $\Omega_I$, where the isomorphism $H^1(\Sigma, K) \cong \C$ is given by the residue map. 

Fix an open cover $\calU = \{ U_i \}$ of $\Sigma$. Let $v$, $v^\prime \in \K^1(\Sigma, C^\bullet )$ be infinitesimal deformations represented by the pairs $(\{ s_{ij} \}, \{ t_i \})$ and $(\{ s_{ij}^\prime \}, \{ t_i^\prime \})$, respectively. This means that $s_{ij} \in H^0(U_{ij}, \Ad (P))$ and $t_i \in H^0(U_i, \Ad (P) \otimes K)$ satisfying 
\begin{align}
& s_{ij} + s_{jk}  = s_{ik} \ \ \text{in $U_{ijk}$}, \label{s} \\
& t_i - t_j + [s_{ij}, \Phi]  = 0 \ \ \text{in $U_{ij}$}, \label{t} 
\end{align} 
where $U_{ij} = U_i \cap U_j$ and $U_{ijk} = U_i \cap U_j \cap U_k$ (similarly for $(\{ s_{ij}^\prime \}, \{ t_i^\prime \})$). Let $\Phi = \mu (\psi)$, for some non-zero spinor $\psi \in H^0(\Sigma , V \otimes K^{1/2})$, and take $v$ and $v^\prime$ to be infinitesimal deformations of $X$ in $\K^1(\Sigma, C^\bullet )$ as before. Let $(\{ u_i \})$ and $(\{ u_i^\prime \})$, with $u_i, u_i^\prime  \in H^0(\Sigma, V\otimes K^{1/2})$, represent infinitesimal deformations of $\psi $ (associated to the infinitesimal deformations $v$ and $v^\prime$, respectively). Thus, we have the constraint 
\begin{equation}
u_i - u_j + \rho (s_{ij})\psi = 0 
\label{constraint2}
\end{equation}
in $U_{ij}$. Also, since   
$$B(\mu (\psi), {}\cdot{}) = \dfrac{1}{2} \omega (\rho (\cdot) \psi, \psi ),$$
we have, using the skew-symmetry of $\omega$ and the fact that the representation is symplectic, 
\begin{align}
B(t_i, {}\cdot{}) & = \dfrac{1}{2} (\omega (\rho (\cdot) u_i, \psi ) +  \omega (\rho (\cdot) \psi, u_i )) \label{constraint1}\\
& = \omega (\rho (\cdot) \psi, u_i )
\end{align}
in $U_i$. Analogous statements hold for $(\{ u_i^\prime \} , \{ s_{ij}^\prime \})$.

Thus, the class $\Omega_I(v,v^\prime) \in H^1(\Sigma , K)$ is represented by the following cocycle
\begin{align*}
\Omega_I( (\{ s_{ij} \} , \{ \mu (u_i)\}) , (\{ s_{ij}^\prime \} , \{ \mu (u_i^\prime ) \}) ) & = \{ B(s_{ij}, t_j^\prime) - B(s_{ij}^\prime, t_i) \} & \\
& = \{ \omega (\rho (s_{ij})\psi , u_j^\prime) - \omega (\rho (s_{ij}^\prime)\psi, u_i) \} \qquad \qquad & \text{by (\ref{constraint1}) }\\
& = \{ \omega(u_j, u_j^\prime) - \omega (u_i, u_i^\prime)\} &\text{by (\ref{constraint2})}.
\end{align*}
But $\omega (u_i, u_i^\prime) \in H^0(U_i, K)$, so $\Omega_I(v,v^\prime) = 0 \in H^1(\Sigma, K) \cong \C$.

The complex symplectic form $\Omega_I$ is exact. More precisely, consider the natural map 
$$q : \K^1 (\Sigma, C^\bullet) \to H^1(\Sigma, \Ad (P))$$ sending a cocycle $v = (\{ s_{ij} \}, \{ t_i \})$ to the infinitesimal deformation $\{ s_{ij} \}$ of the bundle $P$. Let 
$$\theta_{(P, \Phi)} (v) = \mathsf{S}\mathsf{D} (q(v)\otimes \Phi),$$
where 
$$\mathsf{S}\mathsf{D} : H^1(\Sigma, \Ad (P)) \otimes H^0(\Sigma, \Ad (P)\otimes K) \to H^1(\Sigma, K) \cong \C$$
is the non-degenerate pairing given by Serre duality. Then, Biswas and Ramanan show \cite[Section 4]{bis} that $\Omega_I = d\theta$ (see also \cite{bohr}) and $\theta$ restricted to the cotangent bundle to the moduli space of stable $G$-bundles corresponds to the tautological $1$-form. If $\Phi = \mu (\psi)$ is the Higgs field of a smooth point of $X$ and $v = (\{ s_{ij} \} , \{ \mu (u_i)\})$ a tangent vector of $X$ in $\calM^{smt}(G)$ as before, we have 
\begin{align*}
\theta_{(P,\mu (\psi))} (v) & = \{ B (s_{ij}, \mu (\psi)) \} &\\
& = \{ \dfrac{1}{2} \omega (\rho (s_{ij}) \psi, \psi )\} \qquad \qquad &  \text{by (\ref{constraint1})}\\
& = \{ \dfrac{1}{2} \omega (u_i, \psi) - \dfrac{1}{2} \omega (u_j, \psi)\} & \text{by (\ref{constraint2})}\\
& = 0 \in H^1(\Sigma , K). &
\end{align*}

Note that the spinor moduli space has a $\C^\times$-action and $X$ is invariant under the $\C^\times$-action on the Higgs bundle moduli space. Denote by $\xi \in H^0(\calM (G), T\calM (G))$ the vector field defined by the $\C^\times$-action on $\calM (G)$. Thus, $\xi$ is tangential to $X$. Since $X$ is isotropic, $i_\xi \Omega_I$ restricted to $X$ is zero. But, by a simple calculation, one obtains $i_\xi d\theta = \theta$ (see, e.g., \cite[Lemma 2.1]{bohr}). Thus, that is another way to see that the $1$-form $\theta$ vanishes when restricted to $X$. 

\begin{prop}\label{p712} The subvariety $X$ of $\calM (G)$ is isotropic. Moreover, the $1$-form $\theta$ defined above vanishes on $X$.   
\end{prop}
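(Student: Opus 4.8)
The plan is to establish Proposition \ref{p712} by a direct \v{C}ech computation, following exactly the strategy already laid out in Section \ref{defotheory}. Concretely, I would fix a finite open cover $\calU = \{U_i\}$ of $\Sigma$ trivializing the relevant bundles, and represent two tangent vectors $v, v^\prime \in \K^1(\Sigma, C^\bullet(P, \mu(\psi)))$ to $X$ at a smooth point $(P, \mu(\psi))$ by cocycles $(\{s_{ij}\}, \{t_i\})$ and $(\{s_{ij}^\prime\}, \{t_i^\prime\})$ satisfying the cocycle conditions (\ref{s}) and (\ref{t}). The key point is that because these deformations stay inside $X$, the Higgs field component $t_i$ is the derivative of $\mu$ at $\psi$ evaluated on an infinitesimal deformation $u_i$ of the spinor, so $t_i = \mu(u_i)$ in the shorthand of the text, with the compatibility constraint (\ref{constraint2}), $u_i - u_j + \rho(s_{ij})\psi = 0$, and the moment-map identity (\ref{constraint1}), $B(t_i, \cdot) = \omega(\rho(\cdot)\psi, u_i)$.

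The first half of the proof is the isotropy statement. Using the explicit Biswas--Ramanan description of $\Omega_I$ as the composition $\K^1 \otimes \K^1 \to \K^2(C^\bullet \otimes C^\bullet) \to H^1(\Sigma, K) \cong \C$ via the pairing $f^1$, the cocycle representing $\Omega_I(v, v^\prime)$ is $\{B(s_{ij}, t_j^\prime) - B(s_{ij}^\prime, t_i)\}$. Substituting (\ref{constraint1}) turns this into $\{\omega(\rho(s_{ij})\psi, u_j^\prime) - \omega(\rho(s_{ij}^\prime)\psi, u_i)\}$, and then (\ref{constraint2}) rewrites it as $\{\omega(u_j, u_j^\prime) - \omega(u_i, u_i^\prime)\}$, which is a \v{C}ech coboundary of the $0$-cochain $\{\omega(u_i, u_i^\prime)\} \in C^0(\calU, K)$. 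Hence $\Omega_I(v, v^\prime) = 0$ in $H^1(\Sigma, K)$, and since $X$ has dimension at most half that of $\calM(G)$ (it is parametrized by pairs, whose tangent space is $\K^1$ of the deformation complex $C^\bullet_{pair}$, of the correct dimension) the subvariety $X$ is isotropic. I would also note that it suffices to check this on the smooth locus $\calM^s(G)$ and then invoke Zariski density and continuity of $\Omega_I$.

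For the second half, vanishing of $\theta$, I would recall that $\theta_{(P, \Phi)}(v) = \mathsf{SD}(q(v) \otimes \Phi)$ with $q(v) = \{s_{ij}\} \in H^1(\Sigma, \Ad(P))$, so $\theta$ is represented by the cocycle $\{B(s_{ij}, \mu(\psi))\}$. Applying $(\ref{constraint1})$ gives $\{\tfrac{1}{2}\omega(\rho(s_{ij})\psi, \psi)\}$, and then $(\ref{constraint2})$ converts this to $\{\tfrac{1}{2}\omega(u_i, \psi) - \tfrac{1}{2}\omega(u_j, \psi)\}$, again a coboundary, so $\theta|_X = 0$ in $H^1(\Sigma, K)$. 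As a consistency remark I would point out the alternative argument: $X$ is invariant under the $\C^\times$-scaling action whose generating vector field $\xi$ is therefore tangent to $X$; since $i_\xi \Omega_I = \theta$ and $\Omega_I|_X = 0$ (isotropy), we get $\theta|_X = 0$ for free. The main obstacle is not any single computation --- each step is a one-line substitution --- but rather being careful that the deformation-theoretic identifications are the right ones: namely that a tangent vector to $X$ (as opposed to an arbitrary tangent vector to $\calM(G)$ at a point of $X$) genuinely lifts to a deformation of the pair, so that the spinor deformations $u_i$ with relations $(\ref{constraint2})$ and $(\ref{constraint1})$ exist. This is where the hypothesis that we work at a \emph{smooth} point of $X$ and use the map from the moduli space of $(\rho, K^{1/2})$-pairs is essential, and I would make that dependence explicit at the start of the argument.
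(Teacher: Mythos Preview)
Your proposal is correct and follows essentially the same approach as the paper: the \v{C}ech computation you describe, substituting constraints (\ref{constraint1}) and (\ref{constraint2}) into the explicit Biswas--Ramanan formula for $\Omega_I$ and for $\theta$ to exhibit each as a coboundary, is exactly the argument carried out in the text of Section~\ref{defotheory} preceding the proposition, and your consistency remark via the $\C^\times$-action and $i_\xi d\theta = \theta$ is also in the paper. The only minor point is that the dimension remark is unnecessary for isotropy (as opposed to the Lagrangian condition).
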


\section{Gaiotto's Lagrangian}

We now relate infinitesimal deformations (and smoothness) of pairs to the associated Higgs bundles and show that under certain conditions the isotropic subvariety $X$ of $ \calM(G)$ is actually Lagrangian.

\begin{lemma}\label{simple} $\Aut (P, \psi) \subseteq \Aut (P, \mu (\psi))$.
\end{lemma}
\begin{proof}
An automorphism of $(P, \psi)$ is given by a holomorphic map $f: P \to G$ satisfying $gf(p\cdot g) = f(p)g$, for all $p\in P$ and $g\in G$, and $\rho (f) (\psi) = \psi$. But, from a direct computation 
\begin{align*}
B(\Ad(f) (\mu (\psi)), \cdot)) & = B(\mu (\psi) , \Ad (f^{-1} \cdot)) \\
& = \dfrac{1}{2} \omega (d\rho (\Ad (f^{-1} \cdot))\psi , \psi) \\
& = \dfrac{1}{2} \omega (\rho (f^{-1}) d\rho (\cdot ) \rho(f)\psi , \psi ) \\
& = B(\mu (\psi), \cdot).
\end{align*}
From the non-degeneracy of the Killing form the result follows. 
\end{proof}

\begin{defin} A pair $(P,\psi)$ is \textbf{simple} if $\Aut (P,\psi) = Z (G) \cap \ker \rho $ and a $G$-Higgs bundle $(P,\Phi)$ is \textbf{simple} if $\Aut (P,\Phi) = Z (G)$, where $Z (G)$ is the center of $G$.
\end{defin}

From the lemma above we conclude that if $(P,\mu (\psi))$ is simple, $$Z(G) \cap \ker \rho \subseteq \Aut (P,\psi) \subseteq Z(G).$$
To obtain simplicity of the pair from simplicity of the associated Higgs bundle we restrict to a certain class of symplectic representations. Let    
$$\V = \bigoplus_{i=1}^u \V_i$$ 
be the decomposition of the symplectic $G$-module $\V$ into indecomposable symplectic modules. We say that $\V$ is \textbf{almost-saturated}\footnote{The nomenclature almost-saturated is inspired by the notion of a saturated representation. In our context, saturated representations are almost-saturated representations which do not admit indecomposable summands of type $\W\oplus \W^*$ (see, e.g., \cite{knop1}).} if $\V_i$ are symplectic $G$-modules pairwise non-isomorphic and if whenever $\V_j = \W \oplus \W^*$ for some $j \in \{ 1, \ldots , u \}$, then $\W \ncong \W^*$.
%saturated representations. A representation $\rho : \lie{g} \to \lie{sp (\V , \omega)}$ is called \textbf{saturated} if $\rho (\lie{g})$ is its own normalizer in $\lie{sp (\V , \omega)}$. Equivalently, 
%$$\V = \bigoplus_{i=1}^u \V_i,$$ 
%where $\V_i$ are irreducible symplectic $G$-modules pairwise non-%isomorphic (cf. [? Knop]). 

\begin{lemma}\label{simplic} Assume $\rho $ is almost-saturated. If $(P,\mu (\psi))$ is a simple Higgs bundle, the pair $(P,\psi)$ is simple.   
\end{lemma}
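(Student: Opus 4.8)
The plan is to unwind the definitions and use the hypothesis that $\rho$ is almost-saturated to show that any automorphism of the pair $(P,\psi)$ — which by Lemma \ref{simple} already lies in $\Aut(P,\mu(\psi)) = Z(G)$ when $(P,\mu(\psi))$ is simple — must in fact fix $\psi$ exactly, hence lies in $Z(G)\cap\ker\rho$. So the real content is: for $z\in Z(G)$, if $\rho(z)$ fixes $\mu(\psi)$ and $\psi$ is a ``generic enough'' spinor (which we may assume on the stable, simple locus whose Zariski closure defines $X$), then $\rho(z)\psi=\psi$.

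First I would reduce to the case where $G$ acts on each indecomposable summand $\V_i$ and analyse the action of a central element $z\in Z(G)$ on $\V_i$. Since $z$ is central and $G$ is connected semisimple, $\rho_i(z)$ acts on the irreducible pieces of $\V_i$ by scalars (Schur's lemma). When $\V_i$ is an irreducible symplectic module, $\rho_i(z)$ is a scalar $\lambda_i$, and invariance of $\omega_i$ forces $\lambda_i^2=1$, so $\lambda_i=\pm1$; the condition that $\mu_i(\psi_i)$ be fixed is automatic (the moment map is quadratic, so $\mu_i(\rho_i(z)\psi_i)=\lambda_i^2\mu_i(\psi_i)=\mu_i(\psi_i)$), and thus fixing $\mu$ gives no information — I must instead use that $\psi_i$ itself, being a nonzero section on a dense open locus, is not killed, and rule out $\lambda_i=-1$ using the interaction between the different summands. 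When $\V_j=\W\oplus\W^*$ with $\W\ncong\W^*$, $\rho_j(z)$ acts by $\lambda$ on $\W$ and $\lambda^{-1}$ on $\W^*$, the spinor is a pair $(u,\delta)$, and $\mu_j(u,\delta)$ fixed together with $\mu_j(u,\delta)\neq 0$ (which holds generically, since $\delta\otimes u$ is a rank-one endomorphism with $\langle\delta\mid u\rangle$ the relevant invariant) pins down $\lambda$: indeed $\mu_j(\lambda u,\lambda^{-1}\delta)=\mu_j(u,\delta)$ is automatic here too, so again I need the finer argument.

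The key mechanism, then, is this: $X$ is the \emph{Zariski closure} of the locus of stable simple Higgs bundles arising as $(P,\mu(\psi))$, and simplicity/stability is an open condition, so it suffices to prove the implication on a dense open subset. On that subset I may assume $\psi$ has all its components $\psi_i$ nonzero and ``in general position''. Given $z\in Z(G)$ with $\rho(z)\psi=\psi$ would be what we want; what we actually know is only $z\in\Aut(P,\mu(\psi))$. Here I use that $\rho(z)$ preserves the decomposition $\V=\bigoplus\V_i$ (the summands are pairwise non-isomorphic, so $\rho(z)$, being $G$-equivariant, is block-diagonal), acting on $\V_i$ by a scalar $\lambda_i$ (for irreducible summands) or by $(\lambda_j,\lambda_j^{-1})$ (for $\W\oplus\W^*$ summands). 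Then $\rho(z)\psi=\psi$ iff every $\lambda_i=1$ on the pieces where $\psi_i\neq0$; since we are on the locus where all $\psi_i\neq 0$, $\rho(z)\psi=\psi$ iff $\rho(z)=\mathrm{id}$ on all of $\V$ (using that $\lambda_i=\pm1$ and that the $\W\oplus\W^*$ blocks are determined by a single scalar, so $\lambda_i=1$ follows once we know $\rho(z)$ fixes a generic $\psi_i$ — this is where ``$\W\ncong\W^*$'' and ``pairwise non-isomorphic'' are used, to avoid a $-1$ scalar surviving undetected). Thus $z\in\ker\rho$, and combined with $z\in Z(G)$ we get $\Aut(P,\psi)\subseteq Z(G)\cap\ker\rho$; the reverse inclusion is trivial, giving simplicity of $(P,\psi)$.

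The main obstacle I anticipate is making precise the ``generic position'' step: one must check that on a Zariski-dense subset of $X$ the spinor $\psi$ genuinely has every indecomposable component nonzero, and that a central scalar $-1$ on some irreducible symplectic summand $\V_i$ is incompatible with $\psi_i\neq0$ and with $z$ fixing $\psi$ on the \emph{other} summands — i.e.\ one needs that a nonzero $\psi_i$ is not fixed by $-\mathrm{id}$, which is obvious, so the subtlety is purely that we only control $\mu(\psi)$, not $\psi$, off this dense locus. The cleanest route is probably to observe that for the stable simple Higgs bundles defining $X$, the reconstruction of $\psi$ from $\mu(\psi)$ is unique up to $Z(G)\cap\ker\rho$ (because two spinors with the same moment map and the same underlying bundle differ by an automorphism of $(P,\mu(\psi))=Z(G)$ acting trivially on $\mu$, which on the nonzero components must be $+1$), so that $\Aut(P,\psi)$ and $\Aut(P,\mu(\psi))$ differ exactly by the subgroup of $Z(G)$ acting trivially on $\psi$, namely $Z(G)\cap\ker\rho$ on the generic locus. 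I expect the written proof to be short once the almost-saturated hypothesis is invoked to force block-diagonality and the $\pm1$ dichotomy.
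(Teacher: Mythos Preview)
You have misread what is given versus what must be shown. An automorphism $z\in\Aut(P,\psi)$ \emph{by definition} satisfies $\rho(z)\psi=\psi$; this is the defining condition for an automorphism of the pair, not something to be deduced. Lemma~\ref{simple} then gives $z\in\Aut(P,\mu(\psi))$, and simplicity of the Higgs bundle forces $z\in Z(G)$. The task is only to show $z\in\ker\rho$, i.e.\ $\rho(z)=\mathrm{id}$ on $\V$. Your sentence ``what we actually know is only $z\in\Aut(P,\mu(\psi))$'' is therefore backwards, and the entire apparatus you build---genericity of $\psi$, reconstructing $\psi$ from $\mu(\psi)$, ruling out $-1$ via interaction between summands---is attacking a problem that is not there.

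The paper's proof is two lines: since $z\in Z(G)$ and the irreducible constituents of $\V$ are pairwise non-isomorphic (this is exactly what almost-saturated buys), Schur's lemma makes $\rho(z)$ a scalar on each irreducible piece; since $\rho(z)\psi=\psi$ is \emph{given}, those scalars are $1$, so $\rho(z)=\mathrm{id}$ and $z\in Z(G)\cap\ker\rho$. You do have these ingredients scattered through your text, so once the logical direction is straightened out the argument collapses to this. The one substantive observation you make that the paper glosses over is that the last step tacitly uses $\psi_i\neq 0$ on every irreducible summand; but this is not resolved by your genericity detour either, since the lemma is stated for an individual pair, not for a dense locus.
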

\begin{proof}
Since $(P,\mu (\psi))$ is simple, given an automorphism $f\in \Aut (P, \psi)$, we may see it as a map $f: P \to Z(G)$. Thus, using Schur's lemma in each component, $\rho (f) : V \otimes K^{1/2} \to V\otimes K^{1/2}$ restricts to a non-zero multiple of the identity at each component $V_i \otimes K^{1/2} \to V_i \otimes K^{1/2}$, where $V_i = P (\V_i)$. The induced map $\rho (f)$ however sends $\psi$ to itself and thus $\rho (f)$ is the identity map.  
\end{proof}

Equivariance of the moment map $\mu(\rho(g)\psi) = \Ad(g)(\mu(\psi))$, for all $g \in G$, gives the following commuting diagram   
\[\xymatrix@M=0.13in{
\calO (\Ad (P))) \ar[r]^-*+{d\rho(\cdot)(\psi)} \ar[d]^{\Id} & \calO (P(\V)) \otimes K^{1/2}) \ar[d]^{d\mu_\psi} \\
\calO (\Ad (P)) \ar[r]^-*+{[\cdot, \mu(\psi)]} & \calO (\Ad (P)\otimes K).}\]

From the map $C^\bullet_{pair}(P,\psi) \to C^\bullet (P,\mu (\psi))$ above we obtain a map between the two associated long exact sequences 
\[\xymatrix@M=0.05in{
H^0(\Sigma, \Ad (P))\ar[d]_{\Id} \ar[r] & H^0(\Sigma, P(\mathbb{V}) \otimes K^{1/2}) \ar[d]^{d\mu_\psi} \ar[r] &  \mathbb{H}^1(\Sigma, C_{\text{pair}}^\bullet(P,\psi)) \ar[d]_{j} \ar[r] & H^1(\Sigma, \Ad (P)) \ar[d]_{\Id} \\
H^0(\Sigma, \Ad (P)) \ar[r] & H^0(\Sigma, \Ad (P) \otimes K) \ar[r] &  \mathbb{H}^1(\Sigma, C^\bullet(P,\mu(\psi)))  \ar[r] & H^1(\Sigma, \Ad (P)). }\]
Motivated by the Petri map, which plays an important role in classical Brill-Noether theory\footnote{Classically, given a complex subspace $\W$ of the space of global sections of a holomorphic vector bundle $E$ on $\Sigma$, the Petri map $\W \otimes H^0(\Sigma , E^*) \to H^0(\Sigma , \End E \otimes K)$ is given by the natural cup-product of sections. The smoothness of the Brill-Noether scheme (respectively, the moduli space of coherent systems) at a stable point $E$ (respectively, $(E,\W)$) is equivalent to injectivity of the Petri map (see, e.g., \cite{brill, brill2}). For the standard representation of $Sp(2n,\C)$ one could take $E = V\otimes K^{1/2}$, where $V$ is a stable symplectic vector bundle, and take $\W$ to be multiples of a given section of $E$ or the whole space of global sections. The Petri map, for $\W = H^0(\Sigma , E)$, for example, is $\Sym^2H^0(\Sigma , E) \to H^0(\Sigma , \Sym^2 E)$. Note that $\lie{sp}(2n,\C) \cong \Sym^2 (\C^{2n})$ and $d\mu_\psi (\dot{\psi}) = \psi\otimes \dot{\psi} + \dot{\psi} \otimes \psi$. The analogy of the classical Petri map with the derivative of the moment map (acting on global sections) motivates calling it a Petri-type map.}, we will refer to the map
$$d\mu_\psi : H^0(\Sigma, P(\mathbb{V}) \otimes K^{1/2}) \to H^0(\Sigma, \Ad (P) \otimes K)$$
as a \textbf{Petri-type map}. Assuming that the Petri-type map above is injective, by the five lemma, we get 
\begin{equation}
j: \K^1(\Sigma , C^\bullet_{pair} (P,\psi)) \hookrightarrow \K^1(\Sigma , C^\bullet (P,\mu(\psi))).
\label{lagran}
\end{equation}  
  
One of the spectral sequences (see (\ref{les1}) for more details) gives the long exact sequence
\begin{align*}
0 &\to \mathbb{H}^0(\Sigma , C^\bullet_{pair} (P,\psi)) \to H^0(\Sigma ,\Ad (P)) \to H^0(\Sigma ,P(\mathbb{V}) \otimes K^{1/2}) \to \mathbb{H}^1(\Sigma , C^\bullet_{pair} (P,\psi)) \to \\
&\to  H^1(\Sigma , \Ad (P)) \to H^1(\Sigma , P(\mathbb{V}) \otimes K^{1/2}) \to \mathbb{H}^2(\Sigma , C^\bullet_{pair} (P,\psi)) \to 0.
\end{align*}
So, 
$$\chi(C^\bullet_{pair} (P,\psi)) - \chi(\Ad (P)) + \chi(P(\mathbb{V}) \otimes K^{1/2}) = 0$$
which implies that 
$$- \chi(C^\bullet_{pair} (P,\psi)) = \dim G (g-1).$$

\begin{prop}\label{p716} Assume $\rho$ is almost-saturated. If the Higgs bundle $(P,\mu(\psi))$ is stable and simple, then $(P,\mu(\psi))$ is a smooth point of $\calM^d (G)$ and $(P,\psi)$ is a smooth point of $\calS^d (\rho, K^{1/2})$. Furthermore, if the Petri-type map is injective for all points\footnote{Injectivity is an open condition and in particular the points where the Petri map is injective will define a Lagrangian on the Higgs bundle moduli space.} in the smooth locus of the moduli space of pairs, the subvariety $X \subseteq \calM^d (G)$ is Lagrangian.
\end{prop}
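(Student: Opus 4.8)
The plan is to prove the three assertions in turn, combining the deformation-theoretic facts already assembled in this section with a dimension count. First I would establish smoothness. By \cite{bis}, the Zariski tangent space to $\calM^d(G)$ at $(P,\mu(\psi))$ is $\K^1(\Sigma, C^\bullet(P,\mu(\psi)))$ and the obstruction space is $\K^2(\Sigma, C^\bullet(P,\mu(\psi)))$; likewise the tangent and obstruction spaces at $(P,\psi)\in\calS^d(\rho,K^{1/2})$ are $\K^1$ and $\K^2$ of $C^\bullet_{pair}(P,\psi)$. Since $(P,\mu(\psi))$ is stable and simple, the standard argument (Serre duality identifying $\K^2(\Sigma,C^\bullet(P,\mu(\psi)))$ with $\K^0(\Sigma,C^\bullet(P,\mu(\psi)))^* = (\mathrm{Lie}\,\Aut(P,\mu(\psi))/\lie{z}(\lie g))^*$, which vanishes by simplicity) shows $\K^2(\Sigma,C^\bullet(P,\mu(\psi)))=0$, so $(P,\mu(\psi))$ is a smooth point of $\calM^d(G)$. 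For the pair, Lemma \ref{simple} gives $\Aut(P,\psi)\subseteq\Aut(P,\mu(\psi))$, and Lemma \ref{simplic} (using that $\rho$ is almost-saturated) gives that $(P,\psi)$ is simple; hence $\K^0(\Sigma,C^\bullet_{pair}(P,\psi))$ reduces to $\mathrm{Lie}(Z(G)\cap\ker\rho)=0$ and, dually, $\K^2(\Sigma,C^\bullet_{pair}(P,\psi))=0$, so $(P,\psi)$ is a smooth point of $\calS^d(\rho,K^{1/2})$.

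Next I would identify the dimensions. From the Euler-characteristic computation carried out just above the proposition, $-\chi(C^\bullet_{pair}(P,\psi))=\dim G\,(g-1)$, and since the obstruction space vanishes at the smooth point $(P,\psi)$, we get $\dim_{(P,\psi)}\calS^d(\rho,K^{1/2}) = \dim\K^1(\Sigma,C^\bullet_{pair}(P,\psi)) = \dim G\,(g-1)$. On the Higgs side, the smooth locus of $\calM^d(G)$ has dimension $2\dim G\,(g-1)$, so the complex symplectic form $\Omega_I$ makes its Lagrangian subvarieties exactly those of dimension $\dim G\,(g-1)$.

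Then I would assemble the Lagrangian statement. The variety $X$ is, by construction (Definition \ref{definitionofX}), the Zariski closure of the image of the map $\calS^d(\rho,K^{1/2})\dashrightarrow\calM^d(G)$ sending $(P,\psi)$ to $(P,\mu(\psi))$; on the locus where both the pair and the associated Higgs bundle are stable and simple, the induced map on tangent spaces is precisely $j:\K^1(\Sigma,C^\bullet_{pair}(P,\psi))\to\K^1(\Sigma,C^\bullet(P,\mu(\psi)))$ from the morphism of deformation complexes. Injectivity of the Petri-type map $d\mu_\psi$ at all points of the smooth locus, via the five lemma applied to the morphism of long exact sequences displayed above, makes $j$ injective there; hence the map $\calS^d(\rho,K^{1/2})\to\calM^d(G)$ is an immersion on a dense open subset of (the relevant component of) $\calS^d(\rho,K^{1/2})$, so $\dim X = \dim G\,(g-1)$. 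Combined with Proposition \ref{p712}, which shows $X$ is isotropic for $\Omega_I$, a half-dimensional isotropic subvariety is Lagrangian, which is the claim.

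The main obstacle I anticipate is the bookkeeping around stability and the component structure: the proposition's hypotheses are pointwise (stable, simple, Petri injective), and one must argue that such points are dense in the component of $\calS^d(\rho,K^{1/2})$ whose image is $X$ — or, more carefully, that $X$ is by definition the closure of the image of exactly the stable-simple locus, so the dimension count on that locus suffices and no global smoothness of $\calS^d(\rho,K^{1/2})$ is needed. One should also be careful that Proposition \ref{stability} guarantees the pair $(P,\psi)$ is (semi-)stable whenever $(P,\mu(\psi))$ is, so the relevant stable Higgs bundles genuinely arise from points of the pair moduli space; and that the identification $\dim\calM^{smt,d}(G)=2\dim G\,(g-1)$ together with the nondegeneracy of $\Omega_I$ is exactly what upgrades "isotropic of half dimension" to "Lagrangian". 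These are all routine given the results already in the excerpt, so the proof is essentially a five-lemma argument plus a Riemann–Roch dimension count.
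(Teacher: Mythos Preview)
Your overall strategy matches the paper's: vanishing of $\K^2$ for the Higgs complex via Serre self-duality and simplicity, then simplicity of the pair via Lemma~\ref{simplic}, the Euler-characteristic dimension count, and finally the five-lemma injection $j$ under Petri injectivity to upgrade isotropic (Proposition~\ref{p712}) to Lagrangian. All of that is correct and essentially identical to the paper's argument.

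There is, however, a genuine gap in your treatment of $\K^2(\Sigma,C^\bullet_{pair}(P,\psi))$. You write that simplicity gives $\K^0_{pair}=0$ ``and, dually, $\K^2_{pair}=0$''. This would be valid for the Higgs complex, which is Serre self-dual because both terms are $\Ad(P)$ (twisted by $K$). But the pair complex
\[
\calO(\Ad(P))\xrightarrow{\ \rho(\cdot)\psi\ }\calO(P(\V)\otimes K^{1/2})
\]
is \emph{not} self-dual: using $B$ on $\Ad(P)$ and $\omega$ on $P(\V)$, its Serre dual is the complex $P(\V)\otimes K^{1/2}\xrightarrow{d\mu_\psi}\Ad(P)\otimes K$. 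Hence Serre duality gives
\[
\K^2(\Sigma,C^\bullet_{pair}(P,\psi))^*\ \cong\ \ker\bigl(d\mu_\psi:H^0(\Sigma,P(\V)\otimes K^{1/2})\to H^0(\Sigma,\Ad(P)\otimes K)\bigr),
\]
which is the kernel of the Petri-type map, \emph{not} $\K^0_{pair}$. So your duality step does not prove $\K^2_{pair}=0$ from simplicity alone.

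The paper avoids this by a different route: it deduces $\K^2_{pair}=0$ from the map of long exact sequences induced by $(\Id,d\mu_\psi):C^\bullet_{pair}\to C^\bullet(P,\mu(\psi))$, together with the already established vanishing $\K^2(\Sigma,C^\bullet(P,\mu(\psi)))=0$. You should replace your ``dually'' step by this comparison argument (which you in any case set up later for the $\K^1$ level).
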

\begin{proof}
First note that $(P,\mu(\psi))$ stable implies $\mathbb{H}^0(\Sigma , C^\bullet_{pair} (P,\mu (\psi))) = 0$ (see, for example, \cite[Proposition 3.11]{hk}). Serre duality for hypercohomology gives $$\mathbb{H}^2(\Sigma , C^\bullet (P,\mu (\psi))) \cong \mathbb{H}^0(\Sigma , C^\bullet (P,\mu (\psi)))^*,$$ so, by \cite[Theorem 3.1]{bis} (see also \cite[Proposition 3.18]{hk}), $(P,\mu(\psi))$ is a smooth point of $\calM^d (G)$. Now, $(P,\mu(\psi))$ is stable, simple and $\mathbb{H}^2(\Sigma , C^\bullet (P,\mu (\psi))) = 0$, so $(P,\psi)$ is stable (\ref{stability}), simple (Lemma \ref{simplic}) and from the map between the two long exact sequences coming from the map between the two infinitesimal deformation complexes, we obtain $\mathbb{H}^2(\Sigma , C^\bullet_{pair} (P,\psi)) = 0$. Note that, in this case, $\mathbb{H}^0(\Sigma , C_{pair}^\bullet (P,\psi))$ is also zero as there are no non-trivial infinitesimal automorphisms of $(P,\psi)$ (see \cite[Proposition 2.14]{hk}) and $- \chi(C^\bullet_{pair} (P,\psi)) = \dim \mathbb{H}^1(\Sigma , C^\bullet_{pair} (P,\psi)) = \dim G (g-1)$. The last assertion follows from (\ref{lagran}). 
\end{proof}
% Chapter 8

\chapter{The standard representation} % Main chapter title

\label{Chapter8} % For referencing the chapter elsewhere, use \ref{Chapter1} 

\lhead{Chapter 8. \emph{The standard representation}} % This is for the header on each page - perhaps a shortened title

%--------------------------------------------

In this chapter we study the Gaiotto Lagrangian $X$, introduced in the last chapter, for the standard representation of the symplectic group. Our primary goal is to identify $X$ with a particular irreducible component of the nilpotent cone of the Hitchin fibration for the symplectic group. It turns out that $X$ behaves in a similar fashion to the moduli space of holomorphic triples on $\Sigma$ \cite{holtri}. A holomorphic triple consists of two holomorphic vector bundles and a map between them (e.g. $(V, K^{-1/2}, \psi)$, where $\psi : K^{-1/2} \to V$). The moduli space for these objects was constructed in \cite{stabletri} (see also \cite{Schmitt2004} for an algebro-geometric construction) and depends on a real parameter $\alpha$, which is bounded when the ranks of the two vector bundles are different. Take an $\alpha$-polystable triple $(V, K^{-1/2}, \psi)$, for example. The authors show (\textit{loc. cit.}) that for $\alpha$ near the lower bound, the vector bundle $V$ is polystable, whereas near the upper bound, $\psi $ is non-vanishing. As we shall see, the norm squared of the spinor, which is a Morse function on our auxiliary moduli space, will play a very similar role to the $\alpha$-parameter.

\section{Spinor moduli space}

Fix a square-root $K^{1/2}$ of the canonical bundle of $\Sigma$. When $\rho $ is the standard representation of $Sp(2n,\C)$ we denote the moduli space of $K^{1/2}$-twisted symplectic pairs $\calS (\rho, K^{1/2})$ (see Section \ref{moduli}) simply by $\calS$ and call it the \textbf{spinor moduli space}. We summarize in the theorem below the results obtained in Chapter \ref{Chapter7} for the spinor moduli space. In particular, we give a precise characterization of stability for pairs $(V,\psi) \in \calS$.   

\begin{thm}\label{resu} Let $(V,\psi)$ be a pair, consisting of a symplectic vector bundle on $\Sigma$ and a spinor $\psi \in H^0(\Sigma , V\otimes K^{1/2})$. 
\begin{enumerate}
\item The pair $(V,\psi)$ is (semi-)stable if and only if its associated $Sp(2n,\C)$-Higgs bundle $(V,\psi \otimes \psi)$ is (semi-)stable. Also, the pair is polystable if and only if the associated Higgs bundle is polystable.
\item The pair $(V,\psi)$ is (semi-)stable if and only if for all isotropic subbundles $0 \neq U  \subset V$ such that $\psi \in H^0(\Sigma , U^\perp \otimes K^{1/2})$ we have $\deg (U) < (\leq) \ 0$. Also, $(V,\psi) \in \calS$ is polystable if it is semi-stable and satisfies the following property. If $0 \neq U \subset V$ is an isotropic (resp., proper coisotropic) subbundle of degree zero such that $\psi \in H^0(\Sigma , U^\perp \otimes K^{1/2})$ (resp., $\psi \in H^0(\Sigma , U\otimes K^{1/2})$), then there exists a coisotropic (resp., proper isotropic) subbundle $U^\prime$ of $V$ such that $\psi \in H^0(\Sigma , U^\prime \otimes K^{1/2})$ (resp., $\psi \in H^0(\Sigma , (U^\prime)^\perp \otimes K^{1/2}) $) and $V = U \oplus U^\prime$. 
\item The spinor moduli space $\calS$ is a projective variety which embeds in $\calM (Sp(2n,\C))$ and the variety $X $, defined as the Zariski closure of 
$$\{ (V, \Phi) \in \calM^s (Sp(2n,\C)) \ | \ \Phi = \psi \otimes \psi \ \text{for some $0\neq \psi \in H^0(\Sigma, V\otimes K^{1/2})$} \}$$ 
inside the Higgs bundle moduli space $\calM (Sp(2n\C))$, has the property that the canonical $1$-form $\theta$ (see Section \ref{defotheory}) vanishes when restricted to $X$. Moreover, the embedded image of $\calS$ in $\calM(G)$ is the union of $X$ and the moduli space $\calU (Sp(2n,\C))$ of symplectic vector bundles on $\Sigma$, and $\calU (Sp(2n,\C))$ intersects $X$ in the generalized theta-divisor $\Theta \subset \calU (Sp(2n,\C))$.  
\end{enumerate}
\end{thm}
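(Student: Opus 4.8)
\textbf{Proof strategy for Theorem \ref{resu}.}

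The plan is to prove the three parts roughly in order, since each builds on the previous one, with the gauge-theoretic Hitchin-Kobayashi correspondence and the explicit structure of the standard symplectic representation doing most of the work. For part (1), I would invoke Proposition \ref{p78} directly: for the standard representation of $Sp(2n,\C)$ the vector $e^{ts}v$ is bounded as $t\to\infty$ if and only if $(e^{ts}v)\otimes(e^{ts}v)$ is, so the parabolic-character stability inequalities for the pair $(V,\psi)$ literally coincide with those for the Higgs bundle $(V,\psi\otimes\psi)$. The polystable statement then follows because the further reduction to a Levi required in the definition of polystability is again detected identically by the representation and its symmetric square; alternatively one can pass through the Hitchin-Kobayashi correspondence of \cite{hk} and observe that a solution of the twisted pair equation for $(V,\psi)$ produces a solution of Hitchin's equations for $(V,\psi\otimes\psi)$ and conversely, since the moment-map terms agree up to the identification $\lie{sp}(2n,\C)\cong\Sym^2(\C^{2n})$.

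For part (2), I would unwind the abstract parabolic-reduction stability condition into the concrete linear-algebra statement. A parabolic subgroup of $Sp(2n,\C)$ together with an antidominant character corresponds to an isotropic flag in $V$; the subspace $\V^-_\chi$ is the annihilator (symplectic perp) of the deepest isotropic subspace in that flag, so the condition ``$\psi\in H^0(\Sigma,V^-_{\sigma,\chi}\otimes K^{1/2})$'' becomes ``$\psi$ is a section of $U^\perp\otimes K^{1/2}$'' for the corresponding isotropic subbundle $U$, and $\deg(P)(\sigma,\chi)$ becomes a positive combination of the $-\deg(U_i)$ along the flag. Hence it suffices to test single isotropic subbundles $U$ with $\psi\in H^0(\Sigma,U^\perp\otimes K^{1/2})$, giving the inequality $\deg(U)\le 0$ (resp. $<0$). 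The polystability refinement is the usual splitting statement: if equality $\deg(U)=0$ holds, the reduction to the Levi $L$ (a product of a $GL$-factor acting on $U$ and an $Sp$-factor) forces a $\psi$-compatible complement, which translates into the direct-sum decomposition $V=U\oplus U'$ with $U'$ coisotropic and $\psi$ a section of the appropriate twist; the coisotropic case is dual, obtained by replacing $U$ by $U^\perp$. I would write this out carefully since it is the one place where a genuine (if routine) translation is needed, and cross-check against the orthogonal/symplectic extension analysis in Chapter \ref{Chapter2}.

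For part (3): projectivity of $\calS$ is exactly Corollary \ref{cor711}, since $\C[\C^{2n}]^{Sp(2n,\C)}=\C$. The embedding $\calS\hookrightarrow\calM(Sp(2n,\C))$ sends $(V,\psi)$ to $(V,\psi\otimes\psi)$; it is well-defined on $S$-equivalence classes by the remark following Definition \ref{definitionofX}, it is injective because $\psi$ is recovered from $\psi\otimes\psi$ up to the sign $\pm\psi$ which gives the same point of the moduli space, and it is a closed immersion because $\calS$ is projective and $\calM(Sp(2n,\C))$ separated. Vanishing of $\theta$ on $X$ is Proposition \ref{p712}. It remains to identify the image: if $\psi\equiv 0$ the pair is just a (semi-stable) symplectic bundle, contributing $\calU(Sp(2n,\C))$; if $\psi\not\equiv 0$ the Higgs field $\psi\otimes\psi$ is a nonzero nilpotent of rank one, and the Zariski closure of the stable such locus is $X$ by definition, so the image is $X\cup\calU(Sp(2n,\C))$. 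For the intersection, a point of $\calU(Sp(2n,\C))$ lies in $X$ iff it is a limit of pairs with $\psi\not\equiv 0$, equivalently iff the symplectic bundle $V$ admits a nonzero section of $V\otimes K^{1/2}$, i.e. $h^0(\Sigma,V\otimes K^{1/2})>0$; by Riemann-Roch $\chi(V\otimes K^{1/2})=0$ for a symplectic bundle, so this is precisely the condition defining the generalized theta divisor $\Theta\subset\calU(Sp(2n,\C))$. The main obstacle I anticipate is the last step — showing that \emph{every} symplectic bundle with a nonzero spinor actually lies in the closure $X$ of the \emph{stable} locus (rather than just in $\calS$), which requires a deformation argument producing nearby stable pairs; I would handle this by deforming $(V,\psi)$ to a stable pair keeping $\psi$ nonzero, using that stability of the pair is an open condition and that a generic small deformation of a semistable symplectic bundle with a section is stable with that section still nonvanishing as a section (one can, for instance, use Hecke-type or elementary-extension modifications along the lines of the constructions in Chapter \ref{Chapter2} to break the destabilizing isotropic subbundles while retaining $\psi\in H^0(U^\perp\otimes K^{1/2})$ for no further $U$).
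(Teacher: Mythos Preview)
Your overall strategy is correct and parts (1) and (3) match the paper's argument closely. The one substantive difference is in part (2).

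For part (2) you propose to unwind the abstract parabolic-reduction stability condition for $(\rho,K^{1/2})$-pairs directly, classifying parabolics of $Sp(2n,\C)$ by isotropic flags and identifying $\V^-_\chi$ with the symplectic perp. The paper takes a quicker route: having already proved part (1), it simply invokes the known characterization of (semi-)stability for $Sp(2n,\C)$-Higgs bundles from \cite[Theorem 4.2]{hk}, which tests on $\Phi$-invariant isotropic subbundles $U$. One then only needs to translate ``$U$ is $(\psi\otimes\psi)$-invariant'' into ``$\psi\in H^0(\Sigma,U^\perp\otimes K^{1/2})$'', which follows from the one-line computation $(\psi\otimes\psi)(u)=\omega(\psi,u)\psi$: either $\omega(\psi,u)=0$ pointwise or $\psi$ lands in $U\otimes K^{1/2}\subset U^\perp\otimes K^{1/2}$. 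The polystability clause is likewise read off from the Higgs-bundle polystability condition rather than re-derived from the Levi reduction. Your approach would work and is self-contained, but the paper's is considerably shorter since the parabolic analysis has already been done once for Higgs bundles.

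On your anticipated obstacle in part (3) --- showing that every $V\in\Theta$ actually lies in $X$ --- the paper does not use a deformation/Hecke argument of the kind you sketch. It instead uses the $\C^\times$-action: given semi-stable $V$ with $0\neq\psi\in H^0(\Sigma,V\otimes K^{1/2})$, the pair $(V,\psi)$ is semi-stable by part (2), hence its image $(V,\psi\otimes\psi)$ lies in the (closed, since $\calS$ is projective) image of $\calS$; then $(V,0)$ is recovered as the limit $\lim_{|\lambda|\to\infty}(V,\lambda^{-1}\psi\otimes\psi)$ inside that closed image. Your deformation idea is not wrong, but the $\C^\times$-flow is the cleaner mechanism here.
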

\begin{proof}
The first assertion is precisely Proposition \ref{p78} (polystability is a direct consequence of the fact that the moment map $\mu (v) = v\otimes v$ is injective). Note that if $U$ is an isotropic $(\psi \otimes \psi)$-invariant subbundle of $V$, for all $u\in U$ we have $$\psi \otimes \psi (u) = \omega (\psi , u) \psi \in U\otimes K,$$
so, at a point of the curve, $\psi \in U\otimes K^{1/2}$ or $\omega (\psi , u) = 0$. In both cases, $\psi \in H^0(\Sigma , U^\perp \otimes K^{1/2})$. Thus, the second item follows from the characterization of stability for $Sp(2n,\C)$-Higgs bundles \cite[Theorem 4.2]{hk}. In particular, an $Sp(2n,\C)$-Higgs bundle $(V,\Phi)$ is polystable if it is semi-stable and for all non-zero isotropic (resp., coisotropic) $\Phi$-invariant subbundles $U \subset V$ of degree $0$, there exists a $\Phi$-invariant coisotropic (resp., isotropic) subbundle $U^\prime \subset V$ such that $V = U\oplus U^\prime$. Thus, when $\Phi = \psi \otimes \psi$, if $U$ is an isotropic subbundle of $V$ of degree zero with $\psi \in H^0(\Sigma , U^\perp \otimes K^{1/2})$, there exists a coisotropic subbundle $U^\prime$ which is $(\psi \otimes \psi)$-invariant and satisfies $V = U\oplus U^\prime$. Since $U^\prime$ is coisotropic, $(\psi \otimes \psi)$-invariance means that $\psi \in H^0(\Sigma , U^\prime \otimes K^{1/2})$. For the last item, $\calS$ is a projective variety from Corollary \ref{cor711}. Also, for any non-zero spinor, the derivative of the (complex) moment map $$d\mu_\psi (\dot{\psi}) = \psi \otimes \dot{\psi} + \dot{\psi}\otimes \psi $$is injective, so, by Proposition \ref{p716}, $X$ is Lagrangian, and $\theta |_X$ vanishes by Proposition \ref{p712}. Note in particular that if the symplectic Higgs bundles $(V_i, \psi_i\otimes \psi_i)$, for $i=1,2$, are isomorphic, there exists an isomorphism $f$ between the symplectic vector bundles $(V_1, \omega_1)$ and $(V_2, \omega_2)$ satisfying $\omega_1 (\psi_1 , \cdot) f(\psi_1) = \omega_2 (\psi_2 , f(\cdot))\psi_2$. The non-degeneracy of the symplectic forms implies that $f$ takes zeros of $\psi_1$ to zeros of $\psi_2$ and for all other points, $\psi_1$ equals to $\psi_2$ (up to a sign). In particular $-\Id $ is an automorphism of $V$ which identifies the pairs $(V,\psi) = (V, -\psi)$ in $\calS$. The rest of the statement follows from the last item and from the fact that $\Theta$ corresponds to the locus of symplectic vector bundles $V$ such that $H^0(\Sigma , V\otimes K^{1/2}) \neq 0$ (c.f. Appendix \ref{thetadivisor}), thus $X$ intersects $\calU (Sp(2n,\C))$ in the generalized theta-divisor. In particular, if $(V,\psi \otimes \psi )$ is not a fixed point of the $\C^\times$-action on $\calM(Sp(2n,\C))$ and $V$ is semi-stable, the pair at infinity over $(V,\psi \otimes \psi )$ (i.e., the point $(V, \lambda^{-1}\psi \otimes \psi)$ as $|\lambda|$ goes to infinity) is $(V,0) \in \Theta$.     
\end{proof}

Note that when $\psi = 0$ the polystability condition above for a pair $(V,0)$ is precisely the polystability condition for the symplectic vector bundle $V$ (see Appendix \ref{assoc}). Now, given a pair $(V,\psi)$, one may see the spinor as a homomorphism $\psi : K^{-1/2} \to V$. We will denote by $L_\psi$ the line bundle generated by the image of $\psi$. For any semi-stable pair $(V, \psi)$, stability gives a constraint on the degree of the line bundle $L_\psi$. More precisely, 
\begin{equation}
1-g \leq \deg (L_\psi) \leq 0.
\label{deg}
\end{equation}
Clearly, the degree of $L_\psi$ is $1-g$ if and only if the spinor is non-vanishing (and so $L_\psi = K^{-1/2}$). For the other extreme, if the pair is polystable and $\deg (L_\psi) = 0$, there exists a coisotropic subbundle $U^\prime$ of $V$ such that $\psi \in H^0(\Sigma , U^\prime \otimes K^{1/2})$ and $V = L_\psi \oplus U^\prime$. This can only happen when $\psi = 0$, since $L_\psi \subset U^\prime$. 

\begin{ex} Let $V = K^{-1/2}\oplus K^{1/2} \oplus U$, where $U$ is a polystable symplectic vector bundle. Then the pair $(V,1)$, where $1 \in H^0(\Sigma , \calO_\Sigma) \subset H^0(\Sigma , V\otimes K^{1/2})$, is polystable. This follows directly from polystability of $U$ and the fact that $L_\psi = K^{-1/2}$ and $L_\psi^\perp = K^{-1/2} \oplus U$. Note that if $U$ is a stable symplectic vector bundle, so is the pair $(V,1)$.  
\end{ex}

The theorem above motivates the question we will address in this chapter. Recall that a \textbf{Bohr–Sommerfeld Lagrangian} on $\calM (Sp(2n,\C))$ is an irreducible compact complex analytic subset of the Higgs bundle moduli space on which the canonical algebraic $1$-form $\theta$ of $\calM (Sp(2n,\C))$ vanishes identically. It is proven\footnote{The statement actually holds for any reductive complex algebraic group \cite[Theorem 3.4]{bohr}.} in \cite{bohr} that Bohr–Sommerfeld Lagrangians on $\calM (Sp(2n,\C))$ are precisely the irreducible components of the zero fibre (nilpotent cone) of the Hitchin fibration for $Sp(2n,\C)$. Thus, we ask which component(s) correspond to $X$.

\subsection{The symplectic vortex equation}\label{vortex}

Let us recall a Hitchin-Kobayashi correspondence for pairs \cite{hk, mundet, relativehk, bansf}. Let $\V$ be a (finite-dimensional) complex vector space equipped with a Hermitian metric $h$. Then, we have a decomposition $h = g - \omega_\R i$, where $g$ is a flat metric and $\omega_\R$ its associated K\"ahler form. 

Let $H$ be a compact Lie group and $\rho : H \to U(\V , h )$ be a unitary representation. We also denote by $\rho$ the associated Lie algebra homomorphism $\lie{h} \to \lie{u}(\V, h)$. The action of $H$ on $\V$ clearly respects the K\"ahler structure and admits a moment map $\mu_0 : \V \to \lie{h}^*$ given by 
\begin{align*}
\inn{\mu_0 (v)}{\xi}  & = \dfrac{1}{2} \omega_\R (\rho (\xi) v, v )\\
& = \dfrac{i}{2} h (\rho (\xi) v, v ),
\end{align*}
where $v \in \V$, $\xi \in \lie{h}$, and the brackets stand for the natural pairing between $\lie{h}$ and $\lie{h}^*$.

\begin{rmk} Alternatively, we can write
$$\mu_0(v) =  \tp{\rho}(\dfrac{i}{2} v\otimes v^*).$$
In the equation above, $\tp{\rho} : \lie{u}(\V , h)^* \to \lie{h}^*$ is the transpose of the map $\rho : \lie{h} \to \lie{u}(\V , h)$ and we denote by $v^{*_h}$, or simply $v^*$, the element $ h(\cdot , v) \in \V^*$. Note that $\V^*$ has a natural induced Hermitian form $h^* (v_1^*,v_2^*) \coloneqq h(v_2,v_1)$, where $v_1,v_2 \in \V$ and we can identify $\lie{u}(\V , h)^* = \lie{u}(\V^* , h^*) \subset V\otimes V^*$. It is straightforward to check that $iv\otimes v^* \in \lie{u}(\V^* , h^*)$. Now, assume, without loss of generality, that $\rho (\xi) \in \lie{u}(\V , h)$ is of the form $u_1^* \otimes u_2$, for some $u_1,u_2 \in \V$. We have 
\begin{align*}
\inn{\tp{\rho}(\dfrac{i}{2} v\otimes v^*)}{\xi} & = \dfrac{i}{2} h(v,u_1)h(u_2,v)\\
& = \dfrac{i}{2} h(u_1^* \otimes u_2 (v), v)\\
& = \dfrac{i}{2} h(\rho (\xi)v , v).
\end{align*}  
\end{rmk}  

Choose an invariant non-degenerate form $B$ on $\lie{h}$ and denote by $\mu : \V \to \lie{h}$ the moment map (after identifying $\lie{h}^* \cong \lie{h}$ through $B$). In this chapter we will restrict to the case where $\rho$ is the standard representation of the quaternionic unitary group. As in Example \ref{type1}, assume that $\V$ is equipped with a complex symplectic form $\omega$. Then, we consider the quaternionic structure $j$ on $\V$ defined by 
\begin{equation}
\omega (u, j (v)) = h(u,v), \label{quatrel}
\end{equation} 
where $u,v \in \V$ (for more details, see Appendix \ref{AppendixC}). Thus, let $H$ be the quaternionic unitary group $Sp(\V, j) = Sp(\V , \omega) \cap U(\V , h)$, and take the invariant form on its Lie algebra $\lie{sp}(\V , j) = \lie{sp}(\V, \omega) \cap \lie{u}(\V , h)$ to be the inner product $B(\xi_1, \xi_2) = -\dfrac{1}{2} \tr (\xi_1 \xi_2)$, where $\xi_1,  \xi_2 \in \lie{sp}(\V , j)$.

\begin{claim}\label{claim} $\mu (v) = \dfrac{i}{2} (v\otimes j(v) + j(v)\otimes v)$.
\end{claim}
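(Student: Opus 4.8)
The plan is to verify Claim \ref{claim} by a direct computation, pairing the proposed expression against an arbitrary $\xi \in \lie{sp}(\V,j)$ and checking it agrees with $\tfrac{i}{2}h(\rho(\xi)v,v)$, using the defining relation $\omega(u,j(v)) = h(u,v)$ to translate between the symplectic form, the Hermitian metric, and the quaternionic structure. Concretely, I would start from the already-established formula $\mu_0(v) = \tp{\rho}(\tfrac{i}{2}\,v\otimes v^*)$ from the preceding remark, but now express $v^* = h(\cdot,v)$ in terms of $\omega$ and $j$: since $h(u,v) = \omega(u,j(v))$, the functional $v^*$ is exactly $\omega(\cdot,j(v))$. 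Under the identification $\End(\V)\cong \V\otimes\V$ furnished by $\omega$ (in which $u_1\otimes u_2$ denotes the endomorphism $\omega(u_1,\cdot)\,u_2$, as fixed in the Remark after the definition of a symplectic $G$-module), the rank-one endomorphism $w\mapsto \omega(w,j(v))\,v$ is precisely $j(v)\otimes v$. Hence $\tfrac{i}{2}\,v\otimes v^*$, viewed as an element of $\End(\V)$, corresponds to $\tfrac{i}{2}\,j(v)\otimes v$.

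The next step is to symmetrize: the moment map for the symplectic (equivalently, quaternionic unitary) group takes values in $\lie{sp}(\V,j)\cong \Sym^2(\V)$, so one must project the endomorphism $\tfrac{i}{2}\,j(v)\otimes v$ onto its symmetric part with respect to $\omega$. Since the symmetric part of $a\otimes b$ in $\Sym^2(\V)$ is $\tfrac12(a\otimes b + b\otimes a)$, this produces $\tfrac{i}{4}(j(v)\otimes v + v\otimes j(v))$; I would then fix the constant by re-deriving the normalization from $B(\xi_1,\xi_2) = -\tfrac12\tr(\xi_1\xi_2)$ and the defining pairing $\inn{\mu(v)}{\xi} = \tfrac{i}{2}h(\rho(\xi)v,v)$, which will pin down the factor as $\tfrac{i}{2}$ and give $\mu(v) = \tfrac{i}{2}(v\otimes j(v) + j(v)\otimes v)$ as claimed. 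The verification that this element indeed lies in $\lie{sp}(\V,j)$ — i.e.\ is $\omega$-symmetric and commutes with (anticommutes with, as appropriate) $j$, or equivalently lies in $\lie{u}(\V,h)$ — should be recorded: $\omega$-symmetry is immediate from the explicit symmetrization, and the unitarity amounts to checking $h$-skew-adjointness, which follows by another application of \eqref{quatrel} together with the $j$-invariance of $h$.

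The main obstacle I anticipate is bookkeeping with the several identifications in play at once: the isomorphism $\End\V\cong\V\otimes\V$ via $\omega$ (whose sign/convention must match the Remark), the identification $\lie{sp}(\V,j)\cong\Sym^2\V$, the identification $\lie{h}^*\cong\lie{h}$ via $B$, and the interplay of $h$, $\omega$, $j$ through \eqref{quatrel}. A sign or factor-of-two slip at any of these points propagates. To control this I would run the pairing computation against a single convenient $\xi$ — say one of the form $\xi = u^*\otimes u$ for $u\in\V$, which already appeared in the Remark — compute both $\inn{\tfrac{i}{2}(v\otimes j(v)+j(v)\otimes v)}{\xi} = -\tfrac14\tr\big((v\otimes j(v)+j(v)\otimes v)\circ\xi\big)\cdot i$ using the identity $(u_1\otimes u_2)\circ(w_1\otimes w_2) = \omega(u_1,w_2)\,w_1\otimes u_2$ and $\tr(w_1\otimes w_2) = \omega(w_1,w_2)$ from Example \ref{type1}, and compare directly with $\tfrac{i}{2}h(\rho(\xi)v,v) = \tfrac{i}{2}h(u,v)\,h(v,u)$, rewriting the $h$-terms via \eqref{quatrel}. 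Matching the two expressions confirms the claim; since $\xi$ of this form span $\lie{sp}(\V,j)^\C$ and both sides are (real-)linear in $\xi$, this suffices.
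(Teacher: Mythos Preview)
Your plan is essentially the paper's own approach: define the candidate $x=\tfrac{i}{2}(v\otimes j(v)+j(v)\otimes v)$, check $x\in\lie{sp}(\V,j)$, and verify $B(x,\xi)=\tfrac{i}{2}h(\xi v,v)$ for arbitrary $\xi\in\lie{sp}(\V,j)$ by expanding with the identities $(u_1\otimes u_2)\circ(v_1\otimes v_2)=\omega(u_1,v_2)\,v_1\otimes u_2$ and $\tr(v_1\otimes v_2)=\omega(v_1,v_2)$. The paper simply posits $x$ and verifies, whereas you first motivate $x$ via $v\otimes v^*$ and \eqref{quatrel}; but the actual verification step is the same computation. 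One small slip to watch: under the convention $u_1\otimes u_2\leftrightarrow \omega(u_1,\cdot)\,u_2$, the endomorphism $w\mapsto \omega(w,j(v))\,v=-\omega(j(v),w)\,v$ corresponds to $-\,j(v)\otimes v$, not $+\,j(v)\otimes v$ --- exactly the kind of sign you flagged, which your final pairing check will correct.
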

\begin{proof}
It follows from (\ref{quatrel}) that $\omega (j (u_1), j(u_2)) = \overline{\omega (u_1, u_2)}$, where $u_1,u_2 \in \V$. From this relation it is straightforward to check that $x \coloneqq \dfrac{i}{2}(v\otimes j(v) + j(v)\otimes v) \in \lie{sp} (\V, j) $. Now, let $\xi \in \lie{sp}(\V , j)$. So, $ \xi =  \sum\limits_{l \in L} u_l^\prime \otimes u_l $, for some finite set $L$. It follows that 
$$\tr (x\xi) = \dfrac{i}{2} (\sum \omega (v,u_l^\prime)\omega (u_l, v) + \omega(j(v), u_l^\prime)\omega(u_l, v) ),$$
but since $\xi \in \lie{sp}(\V,\omega)$, we have $\sum \omega (v,u_l^\prime)\omega (u_l, v) = \sum \omega(j(v), u_l^\prime)\omega(u_l, v)$. Thus,
$$\tr (x\xi) = i \sum \omega (v,u_l^\prime)\omega (u_l, v).$$ 
On the other hand, 
$$h(\xi (v), v) = \omega (\xi , j (v) ) = \sum \omega (u_l, v)\omega(u_l^\prime , jv).$$
From this, we find $- \dfrac{1}{2} \tr (x \xi) = \dfrac{i}{2} h(\xi (v), v)$, which concludes the proof as $x \in \lie{sp} (\V, j)$.   
\end{proof}

The Hitchin-Kobayashi correspondence enables us to describe the spinor moduli space $\calS$ as a gauge-theoretic moduli space. For that, choose a Hermitian metric $h_\Sigma \in \Omega^0(\Sigma  , K\bar{K})$ on $\Sigma$, let $d\text{vol}$ be the associated K\"ahler form and denote by $h_0 = h_\Sigma^{-1/2}$ the induced Hermitian metric on $K^{1/2}$. Let $(V,\psi)$ be a pair consisting of a symplectic vector bundle $(V,\omega)$ on $\Sigma$ and a global holomorphic section $\psi$ of $V \otimes K^{1/2}$. A reduction of structure group from $Sp(2n,\C)$ to the maximal compact subgroup $Sp(n)$ means a quaternion structure $j : V \to V$ and the Hermitian form is $h (u_1, u_2) = \omega (u_1, j(u_2))$ (this is completely analogous to the pointwise notions given in Appendix  \ref{AppendixC}, which justifies also denoting by $h$ and $\omega$ the Hermitian metric and the symplectic form on $V$, resp.). The moment map $\mu_0$ applied to $\psi$ is the restriction of $\dfrac{i}{2} \psi\otimes \psi^{*_{h,h_0}}$ to $\lie{sp}(n)$. Thus, it follows from Claim \ref{claim} that $\mu (\psi) = \dfrac{i}{2} (\psi\otimes j(\psi) + j(\psi)\otimes \psi)h_0$. The Hitchin-Kobayashi theorem relates stability for pairs $(V,\psi)$ to solutions of the \textbf{symplectic vortex equation} 
\begin{equation}
\Lambda (F_h) + \dfrac{i}{2} (\psi\otimes j(\psi) + j(\psi)\otimes \psi)h_0 = 0,
\label{vortexeq}
\end{equation}
where $F_h \in \Omega^{1,1}(\Sigma, \lie{sp}(n) ) \subset \Omega^{2}(\Sigma, \End (V) )$ is the curvature of the Chern connection associated to $h$ and $\Lambda : \Omega^2_\Sigma \to \Omega^0_\Sigma$ is the contraction with the K\"ahler form $d\text{vol}$. The left hand side of the symplectic vortex equation can be interpreted as the moment map for the Hamiltonian natural action of the gauge group on the infinite dimension K\"ahler submanifold $\chi \subset \calA \times \Omega^0(\Sigma , V\otimes K^{1/2})$, which consists of pairs $(A,\psi)$ such that $\psi$ is holomorphic with respect to the holomorphic structure induced by the $Sp(n)$-connection $A$ on $V$ (for more details see \cite{relativehk, mundet}). By the remarks above, the general result in \cite[Theorem 2.24]{hk} applied to our cases gives the following.   

\begin{thm}\label{gauge} Let $(V,\psi)$ be a polystable pair. There is a quaternionic structure $j$ on $V$ such that the Hermitian metric $h(\cdot , \cdot) = \omega (\cdot , j (\cdot))$ on $V$ solves the symplectic vortex equation  
$$\Lambda (F_h) + \dfrac{i}{2} (\psi\otimes j(\psi) + j(\psi)\otimes \psi)h_0 = 0.$$
Furthermore, if $(V,\psi)$ be stable, the quaternionic structure is unique. Conversely, if $(V,\psi)$ admits a solution to the symplectic vortex equation, then $(V,\psi)$ is polystable.
\end{thm}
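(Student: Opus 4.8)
The plan is to deduce this Hitchin--Kobayashi correspondence from the general theorem for $K^{1/2}$-twisted pairs already stated in the excerpt as \cite[Theorem 2.24]{hk}, by specialising the abstract data to the standard symplectic representation. First I would set up the quaternionic picture carefully. A reduction of structure group of the symplectic vector bundle $(V,\omega)$ to the maximal compact $Sp(n) = Sp(2n,\C)\cap U(2n)$ is the same as a quaternionic structure $j$ on $V$ with $h(\cdot,\cdot) = \omega(\cdot,j(\cdot))$ a Hermitian metric; this is the fibrewise content of Appendix \ref{AppendixC}, and one has to note that $F_h$, the curvature of the Chern connection of $h$, lands in $\Omega^{1,1}(\Sigma,\lie{sp}(n))$ precisely because $h$ is compatible with both $\omega$ and $j$. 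Fixing the background Hermitian metric $h_\Sigma$ on $\Sigma$ and the induced metric $h_0 = h_\Sigma^{-1/2}$ on $K^{1/2}$ turns $\psi \in H^0(\Sigma, V\otimes K^{1/2})$ into a section to which the moment map of the representation applies pointwise.

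Next I would identify the moment-map term explicitly. The abstract gauge-theoretic equation in \cite{hk} reads $\Lambda F_h + \mu(\psi) = 0$ where $\mu : \V \to \lie{h}$ is the moment map of the unitary representation $\rho : Sp(\V,j) \to U(\V,h)$ with respect to the invariant form $B(\xi_1,\xi_2) = -\tfrac12\tr(\xi_1\xi_2)$. Claim \ref{claim} computes this moment map to be $\mu(v) = \tfrac{i}{2}\bigl(v\otimes j(v) + j(v)\otimes v\bigr)$, so the twisted version, evaluated on the spinor $\psi$ and corrected by the metric $h_0$ on $K^{1/2}$ (which enters because $\psi$ is a section of $V\otimes K^{1/2}$, not of $V$), becomes exactly $\tfrac{i}{2}\bigl(\psi\otimes j(\psi) + j(\psi)\otimes \psi\bigr)h_0$. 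Substituting this into the abstract equation yields the symplectic vortex equation in the statement. Here one must also invoke Theorem \ref{resu}(1) (equivalently Proposition \ref{p78}): polystability of the pair $(V,\psi)$ is equivalent to polystability of the associated symplectic Higgs bundle $(V,\psi\otimes\psi)$, and the relevant stability parameter $\alpha$ is forced to be zero since $G = Sp(2n,\C)$ is semisimple — this is needed so that the hypotheses of \cite[Theorem 2.24]{hk} match the plain (unparametrised) polystability notion we are using.

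With the dictionary in place, all three assertions follow. The direct implication — polystable pair admits a solution — and the converse — existence of a solution implies polystability — are both immediate transcriptions of \cite[Theorem 2.24]{hk} once the moment-map term has been identified. For the uniqueness of $j$ in the stable case, I would argue that two quaternionic structures solving the equation differ by a complex gauge transformation $g$ with $g^{*}g$ self-adjoint and positive, and the usual Donaldson-type argument (or the uniqueness clause already contained in \cite{hk}) forces $g^{*}g$ to be a parallel endomorphism commuting with $\psi\otimes\psi$; stability of $(V,\psi)$, hence simplicity by Lemma \ref{simplic}, then forces $g^{*}g$ to be a positive scalar, and after rescaling $g \in Sp(\V,j)$, so the two reductions coincide. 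I expect the only genuinely delicate point to be the precise bookkeeping of the metric factor $h_0$ on $K^{1/2}$ and the constant $\tfrac{i}{2}$: one must check that the normalisation of $B$, the identification $\lie{h}^*\cong\lie{h}$, and the twisting convention are all mutually consistent, so that the equation one writes down is literally the specialisation of the general one rather than a rescaled cousin. Everything else is formal given the results already established in Chapter \ref{Chapter7} and Claim \ref{claim}.
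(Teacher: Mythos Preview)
Your proposal is correct and matches the paper's approach exactly: the paper offers no proof beyond the sentence ``the general result in \cite[Theorem 2.24]{hk} applied to our cases gives the following,'' after having identified the moment-map term via Claim \ref{claim} and the reduction-to-$Sp(n)$ discussion you reproduce. Your appeal to Proposition \ref{p78} is an unnecessary detour, since the polystability hypothesis in \cite{hk} is already the pair polystability and need not be translated through the associated Higgs bundle; apart from that, everything you write is either what the paper does or harmless added detail (the uniqueness clause is indeed already part of the cited theorem).
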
  

\begin{rmk} From now on we use the algebro-geometric and gauge-theoretic points of view interchangeably and denote both moduli spaces by $\calS$. 
\end{rmk}

%\subsection{Hodge theory}

\section{Morse theory}

Morse theory has been used as an important tool in the study of the topology of gauge-theoretic moduli spaces, dating back to the work of Atiyah and Bott on the moduli space of holomorphic vector bundles \cite{atbo} and Hitchin on the moduli space of Higgs bundles \cite{hitchin1987self}. Even though our primary goal does not involve computing topological quantities, introducing a natural Morse function on $\calS$ will help us describe the locus $X$ inside the nilpotent cone for the $Sp(2n,\C)$-Hitchin fibration.

There is a natural $\C^\times$-action on $\calS$ given by the map $\lambda \cdot (V,\psi) = (V,\lambda \psi)$. Interpreting the spinor moduli space $\calS$ from the gauge point of view, the natural action is well-defined only when we restrict to complex number of norm $1$, i.e., to $S^1 \cong U(1) \subset \C^\times$, thus we obtain a circle action
\begin{equation}
e^{i\theta} \cdot (A,\psi) = (A,e^{i\theta} \psi),
\label{circle}
\end{equation}  
where $A$ is the Chern connection associated to a metric that solves the symplectic vortex equation (\ref{vortexeq}) for a polystable pair $(V,\psi)$. Completely analogous to the Higgs bundles case \cite[Section 6]{hitchin1987self}, the map 
\begin{align*}
f : \calS & \to \R \\
(A,\psi) & \mapsto \|\psi\|_2^2  
\end{align*}
is a (multiple of the) moment map for the $S^1$-action (\ref{circle}) on the smooth locus of $\calS$, where $\|\psi\|_2^2 = \int_\Sigma |\psi |^2d\text{vol}$ is the $L^2$-norm of $\psi$. The spinor moduli space is a projective variety (see Theorem \ref{resu}), thus the associated analytic space is compact and we have the following result.
\begin{prop} The map $f : \calS \to \R$ is closed and proper.  
\end{prop}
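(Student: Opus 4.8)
The plan is to argue that $f$ is closed and proper as an immediate consequence of the compactness of $\calS$ established in Theorem \ref{resu}. Since $\calS$ is a projective variety over $\C$, the associated complex-analytic space is compact; the function $f(A,\psi) = \|\psi\|_2^2$ is continuous on $\calS$ (it is manifestly continuous in the $L^2$-topology on the space of pairs, and descends to a continuous function on the quotient because the gauge group acts by unitary transformations preserving $\|\psi\|_2^2$). A continuous map from a compact space to $\R$ is automatically closed, since the image of any closed subset is a closed subset of a compact space, hence compact, hence closed in $\R$. Likewise it is proper: the preimage $f^{-1}(C)$ of a compact (equivalently, closed and bounded) subset $C \subseteq \R$ is a closed subset of the compact space $\calS$, hence compact.

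First I would recall explicitly that $\calS$ is compact: this is part of the statement of Theorem \ref{resu}(3), which asserts that $\calS$ is a projective variety, and I would invoke the standard fact that the analytification of a projective $\C$-variety is compact in the classical topology. Next I would verify continuity of $f$ on $\calS$. From the gauge-theoretic description, $f$ is pulled back from the continuous function $(A,\psi)\mapsto \int_\Sigma |\psi|^2 \, d\mathrm{vol}$ on the space of solutions to the symplectic vortex equation (\ref{vortexeq}), and this function is invariant under the gauge group $\calG$ since a unitary change of gauge preserves the pointwise norm $|\psi|^2$ induced by the Hermitian metric $h$; therefore it descends to a continuous function on $\calS$. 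Then I would conclude closedness and properness by the elementary topological arguments above.

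There is essentially no obstacle here: the only point requiring a word of care is to make sure the identification of $f$ as a continuous function on $\calS$ is legitimate, i.e.\ that the value $\|\psi\|_2^2$ is well-defined on isomorphism classes. This follows because, by Theorem \ref{gauge}, a polystable pair carries a quaternionic structure solving the vortex equation which is unique in the stable case (and unique up to the residual automorphisms in the polystable case), and in all cases the resulting pointwise norm $|\psi|^2$ — and hence its integral — is independent of the choice; thus $f$ is genuinely a function on the moduli space. With this in place, the proposition is immediate. (One could alternatively note that $f$ is, up to a positive constant, a moment map for the Hamiltonian $S^1$-action (\ref{circle}) on the smooth locus, but for the closedness and properness statement the compactness of $\calS$ alone suffices and is the cleanest route.)
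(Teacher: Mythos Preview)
Your proposal is correct and follows the same approach as the paper: the paper simply observes that $\calS$ is a projective variety (Theorem \ref{resu}), hence compact, from which closedness and properness of the continuous map $f$ follow immediately. Your additional care in verifying that $f$ is well-defined and continuous on the quotient is a welcome elaboration, but the core argument is identical.
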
 
Let us give an upper bound for the map $f$. Take the Hermitian metric $h_\Sigma$ on $\Sigma$ to have constant constant sectional curvature $-1$ (by the Uniformization theorem such a metric exists) and denote the associated Hermitian metric $h_\Sigma^{-1/2}$ on $K^{1/2}$ by $h_0$. Let $h$ be a Hermitian metric solving the vortex equation for a pair $(V,\psi)$ and denote by $A$ the Chern connection on $V$. Also, let $B$ be the connection on $V\otimes K^{1/2}$ obtained from $A$ and the Chern connection on $K^{1/2}$ associated to $h_0$. Then we have the decomposition $\nabla_B = \partial_B + \bar{\partial}_B$ into first order differential operators and using the metric $\tilde{h} = hh_0$ on $V\otimes K^{1/2}$ and the metric on $\Sigma$ we define the formal adjoints 
\begin{align*}
\partial_B^* : \Omega^{p,q}(V\otimes K^{1/2}) &\to \Omega^{p-1,q}(V\otimes K^{1/2}),\\
\bar{\partial}_B^* : \Omega^{p,q}(V\otimes K^{1/2}) &\to \Omega^{p,q-1}(V\otimes K^{1/2}).
\end{align*}   
The Dolbeault Laplacian $\Delta_{\bar{\partial}} = \bar{\partial}_B^* \bar{\partial}_B + \bar{\partial}_B\bar{\partial}_B^* $ is a second order differential operator on $\Omega^\bullet (V\otimes K^{1/2})$ and its restriction to $\Omega^0(V\otimes K^{1/2})$ is $\bar{\partial}_B^* \bar{\partial}_B$. Using the K\"ahler identities, one establishes the following Weitzenb\"ock formula (see e.g. \cite[Lemma 6.1.7]{donald})
$$\bar{\partial}_B^* \bar{\partial}_B = \dfrac{1}{2}\nabla_B^*\nabla_B - \dfrac{i}{2}F_B.$$ 
Note that 
\begin{align*}
\Lambda F_A\psi &= (-\dfrac{i}{2} (\psi\otimes j(\psi) + j(\psi)\otimes \psi)h_0)\psi \\
& = -\dfrac{i}{2} \omega(j(\psi ) , \psi)h_0\psi \\
& = \dfrac{i}{2} | \psi |^2\psi,
\end{align*} 
where we have used that $\omega(j(\psi ) , \psi) = - h(\psi , \psi)$. But $F_B\psi = F_A\psi - \dfrac{i}{2}\psi d\text{vol}$, so 
$$\tilde{h} (F_B\psi , \psi) = \dfrac{i}{2} (| \psi |^4 - | \psi |^2)d\text{vol} .$$
The Weitzenb\"ock formula for the holomorphic section $\psi$ gives $\nabla_B^*\nabla_B\psi = iF_B\psi$, from which we conclude that 
$$\int_\Sigma | \nabla_B\psi|^2 =  -\dfrac{1}{2}\int_\Sigma (| \psi |^4 - | \psi |^2)d\text{vol}.$$
The left hand side of the equation above is always non-negative. Thus, we have the following relation between the $L^4$ and the $L^2$-norm of $\psi$: 
\begin{equation}
\| \psi \|^4_4 \leq \| \psi \|^2_2.
\label{44}
\end{equation}  
Using the Cauchy-Schwarz inequality we obtain $| \inn{|\psi|^2}{1}_{L^2}  |^2 \leq  \| \psi \|_4^4 \  \| 1 \|^2_2$, i.e.,
\begin{equation}
\| \psi \|_2^4 \leq \text{vol}(\Sigma) \| \psi \|_4^4.
\label{42}
\end{equation}
Since $h_\Sigma$ has constant sectional curvature $-1$, the curvature of the associated Chern connection on $K^{-1}$ is $F_\Sigma = i d\text{vol}$. From this we have
$$\text{vol}(\Sigma) = \int_\Sigma d\text{vol} = 4\pi (g-1).$$ 
\begin{prop} Let $h_\Sigma$ be a Hermitian metric on $\Sigma$ with constant curvature $-1$ and $h_0 = h^{-1/2}$ the induced Hermitian metric on $K^{1/2}$. Then, 
\begin{equation}
0 \leq f (V,\psi) = \| \psi \|_2^2 \leq 4\pi (g-1),
\label{bound}
\end{equation}
for all $(V,\psi) \in \calS$. 
\end{prop}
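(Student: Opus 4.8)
The plan is to establish the bound in (\ref{bound}) by combining the two norm inequalities (\ref{44}) and (\ref{42}) that have just been derived, together with the volume computation for the hyperbolic metric. Everything needed is already in place, so the proof is short.

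First I would recall the chain of estimates obtained above. For any polystable pair $(V,\psi)$, Theorem \ref{gauge} furnishes a Hermitian metric $h$ solving the symplectic vortex equation, and applying the Weitzenb\"ock formula to the holomorphic section $\psi$ (as carried out in the paragraph preceding the proposition) yields
\[
\int_\Sigma |\nabla_B \psi|^2 \, d\mathrm{vol} = -\tfrac{1}{2}\int_\Sigma \big(|\psi|^4 - |\psi|^2\big)\, d\mathrm{vol}.
\]
Since the left-hand side is non-negative, this gives $\|\psi\|_4^4 \leq \|\psi\|_2^2$, which is (\ref{44}). Second, the Cauchy--Schwarz inequality applied to $\langle |\psi|^2, 1\rangle_{L^2}$ gives $\|\psi\|_2^4 \leq \mathrm{vol}(\Sigma)\,\|\psi\|_4^4$, which is (\ref{42}).

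Now I would simply chain these: from (\ref{42}) and (\ref{44}),
\[
\|\psi\|_2^4 \leq \mathrm{vol}(\Sigma)\,\|\psi\|_4^4 \leq \mathrm{vol}(\Sigma)\,\|\psi\|_2^2,
\]
and dividing by $\|\psi\|_2^2$ (the case $\psi = 0$ being trivial, as then $f(V,\psi)=0$) yields $\|\psi\|_2^2 \leq \mathrm{vol}(\Sigma)$. It remains to compute $\mathrm{vol}(\Sigma)$ for the metric $h_\Sigma$ of constant curvature $-1$: the curvature of the induced Chern connection on $K^{-1}$ is $F_\Sigma = i\, d\mathrm{vol}$, so by Gauss--Bonnet (or by $\deg K = 2g-2$ together with $\frac{i}{2\pi}\int_\Sigma F_\Sigma = \deg K^{-1}$) one gets $\mathrm{vol}(\Sigma) = \int_\Sigma d\mathrm{vol} = 4\pi(g-1)$. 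Combined with the obvious lower bound $f(V,\psi) = \|\psi\|_2^2 \geq 0$, this gives $0 \leq f(V,\psi) \leq 4\pi(g-1)$ for all $(V,\psi)\in\calS$, as claimed. Since the moduli space parametrizes polystable pairs, the vortex solution is available for every point, so the estimate holds on all of $\calS$.

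There is no real obstacle here; the only minor point to be careful about is that the estimate presupposes a vortex solution, which is exactly what polystability guarantees via Theorem \ref{gauge}, and that the degenerate case $\|\psi\|_2 = 0$ must be excluded before dividing — both are immediate. The substantive work (the Weitzenb\"ock identity and the two analytic inequalities) has already been done in the lines preceding the statement, so this proof is essentially a bookkeeping step assembling (\ref{44}), (\ref{42}), and the volume formula.
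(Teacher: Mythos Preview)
Your proposal is correct and takes essentially the same approach as the paper: the paper's proof is the single line ``The result follows directly from (\ref{42}) and (\ref{44}),'' and you have simply spelled out that chain of inequalities together with the trivial case $\psi=0$.
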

\begin{proof}
The result follows directly from (\ref{42}) and (\ref{44}). 
\end{proof}
Using the symplectic vortex equation we obtain, from the bound above, a uniform $L^2$ bound on the curvature of the solutions to the symplectic vortex equation. Analogously to the Higgs bundle case \cite{hitchin1987self}, we can apply Uhlenbeck's weak compactness theorem \cite{MR648356}. This gives an analytic proof that $\calS$ is compact.

\subsection{Critical points}

By a general result of Frankel \cite{frank}, a proper moment map for a Hamiltonian $S^1$-action on a K\"ahler manifold is a perfect Morse function. Applying his ideas to the smooth locus $\calS^{smt}$ of $\calS$, we have the following. 

\begin{prop}[{\cite[Proposition 3.3]{rank3}}] The critical points of $f : \calS^{smt} \to \R$ are precisely the fixed points of the circle action. Moreover, the eigenvalue $l$ subspace of the Hessian of $f$ is the same as the weight $-l$ subspace for the infinitesimal circle action on the tangent space. In particular, the Morse index of $f$ at a critical point equals the dimension of the positive weight space of the circle action on the tangent space. 
\end{prop}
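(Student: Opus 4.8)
The statement is a direct application of Frankel's theorem on Hamiltonian circle actions on compact K\"ahler manifolds to the smooth locus $\calS^{smt}$ of the spinor moduli space. Since the function $f$ is (a multiple of) the moment map for the $S^1$-action $e^{i\theta}\cdot(A,\psi) = (A, e^{i\theta}\psi)$ and $\calS^{smt}$ carries a K\"ahler structure for which this action is Hamiltonian and holomorphic, the plan is simply to verify the hypotheses of Frankel's argument in our setting and then quote the conclusion. First I would recall that $\calS$ is projective (Theorem \ref{resu}), so $\calS^{smt}$ is a smooth quasi-projective --- hence K\"ahler --- variety; the $S^1$-action preserves the K\"ahler metric (it comes from a unitary automorphism of $V\otimes K^{1/2}$) and is holomorphic, and $f$ is proper on $\calS$, hence on each piece where it is smooth.

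The core of the argument is the standard local computation at a fixed point. If $(A,\psi)$ is a critical point of $f$, then since $f$ is a moment map its differential vanishes exactly where the generating vector field $\xi_f$ of the $S^1$-action vanishes, i.e. at fixed points of the circle action; conversely fixed points are critical. At such a fixed point $x$, the circle acts linearly on the tangent space $T_x\calS^{smt}$, which decomposes into weight spaces $T_x\calS^{smt} = \bigoplus_l W_l$, where $W_l$ is the subspace on which $e^{i\theta}$ acts by $e^{il\theta}$. Because the action is Hamiltonian and holomorphic and the metric is compatible, the Hessian of $f$ at $x$ is the self-adjoint operator whose restriction to $W_l$ is multiplication by $-l$ (the sign convention is fixed by which way $f$ is normalized as the moment map; one checks it against the explicit formula $f = \|\psi\|_2^2$, noting that the zero section $\psi = 0$ is the minimum and corresponds to $W_0$ together with the positive-weight space being empty there). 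In particular $f$ is a Morse--Bott function, the normal directions to the critical set pair off into $\pm l$ eigenspaces, and the Morse index at $x$ equals $\sum_{l>0}\dim W_l$, the dimension of the positive weight space.

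The step I expect to require the most care is not conceptual but bookkeeping: pinning down the sign/normalization so that the eigenvalue $l$ subspace of the Hessian is the weight $-l$ subspace (and not $+l$), and confirming that the $S^1$-action on $T_x\calS^{smt}$ is genuinely the linearized action coming from the deformation complex of the pair --- i.e. identifying $T_x\calS^{smt} \cong \K^1(\Sigma, C^\bullet_{pair})$ equivariantly, with $S^1$ acting through its action on the spinor factor. Once that identification is in place, the weight decomposition of $\K^1(\Sigma, C^\bullet_{pair})$ under rescaling $\psi$ is immediate from the description of $C^\bullet_{pair}$, and Frankel's lemma gives the result verbatim. I would therefore structure the proof as: (i) verify $\calS^{smt}$ is K\"ahler and the action Hamiltonian/holomorphic with moment map $f$; (ii) identify critical points with fixed points; (iii) identify the tangent space with the hypercohomology of the deformation complex equivariantly and read off the weight decomposition; (iv) invoke Frankel to conclude the Hessian eigenvalue $l$ space is the weight $-l$ space and hence the index is $\dim$ of the positive weight space. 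Steps (i), (ii), (iv) are essentially citations (to \cite{frank} and to the general theory already used for Higgs bundles in \cite{hitchin1987self}); step (iii) is where the actual verification lives.
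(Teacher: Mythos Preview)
Your approach is correct and is precisely the standard Frankel argument. Note, however, that the paper does not supply its own proof of this proposition: it states the result with a citation to \cite[Proposition 3.3]{rank3} and, in the sentence immediately preceding, attributes the underlying mechanism to Frankel \cite{frank}. So there is nothing to compare against beyond that attribution --- your steps (i), (ii), (iv) are exactly what the paper means by ``applying Frankel's ideas to $\calS^{smt}$,'' and your step (iii), the equivariant identification of the tangent space with $\K^1(\Sigma, C^\bullet_{pair})$, is the only piece specific to this moduli problem and is indeed routine once the deformation complex is written down.
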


Let us characterize the fixed points of the circle action on $\calS$. A pair with zero spinor is clearly a fixed point. Let $(V,\psi) \in \calS$, with $\psi \neq 0$, be a fixed point of $S^1$-action on $\calS$ and denote by $A$ the Chern connection on $V$ associated to a metric that solves the symplectic vortex equation, so that the class $(A,\psi)$ represents the pair. Then, there exists a one-parameter family of $Sp(n)$-gauge transformations $\{ g_\theta \}$, $\theta \in \R$, such that 
\begin{equation}
g_\theta \cdot (A,\psi)  = (A, e^{i\theta} \psi).
\label{s1}
\end{equation}
Let 
$$\left.\Upsilon = \frac{ dg_\theta}{d \theta}\right|_{\theta=0}$$
be the infinitesimal gauge transformation generating the family $\{g_\theta \}$. Since $g_\theta \cdot A = g_\theta \circ A \circ g_\theta^{-1}$, differentiating (\ref{s1}) we obtain 
\begin{align}
d_A\Upsilon & = 0,\label{cov}\\ 
\Upsilon \psi &= i\psi.\label{v1}
\end{align} 

\begin{prop} A pair $(V,\psi) \in \calS$ is a fixed point of the circle action if and only if the vector bundle $V$ can be decomposed as a direct sum   
$$V = \bigoplus_{k=1}^{m} (V_k \oplus V_k^*) \oplus V_0,$$
where $V_k$ are the eigenbundles for a covariantly constant infinitesimal gauge transformation $\Upsilon$, with $\Upsilon|_{V_k} = ia_k$, for some $a_k \in \R$. Moreover, 
$$\psi \in H^0(\Sigma ,  V_1 \otimes K^{1/2})$$
and 
$$V_{-k} \cong jV_k \cong V^*_k,$$
where $j$ is the quaternionic structure on $V$ coming from the metric that solves the symplectic vortex equation. Also, $V_0$ and $V_k \oplus V_k^*$, for $k>1$, are polystable. 
\end{prop}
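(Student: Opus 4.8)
The plan is to analyze the fixed-point condition via the infinitesimal gauge transformation $\Upsilon$ obtained by differentiating the one-parameter family $\{g_\theta\}$, as already set up in equations (\ref{cov}) and (\ref{v1}) just above the statement. First I would observe that since $\Upsilon \in \Omega^0(\Sigma, \Ad(V))$ with $\Ad(V) \subset \lie{u}(V,h)$ is covariantly constant (by (\ref{cov})), the endomorphism $\Upsilon$ is in particular skew-Hermitian and $\nabla_A$-parallel; hence its eigenvalues are constant imaginary numbers $ia_k$, $a_k \in \R$, and the eigenbundles $V_k = \ker(\Upsilon - ia_k)$ are $\nabla_A$-invariant holomorphic subbundles giving a smooth (in fact holomorphic, since a parallel endomorphism is holomorphic) orthogonal direct sum decomposition $V = \bigoplus_k V_k$. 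I would then use the compatibility of $\Upsilon$ with the symplectic/quaternionic data: because $\Upsilon \in \lie{sp}(V,j)$, it satisfies $\omega(\Upsilon u_1, u_2) + \omega(u_1, \Upsilon u_2) = 0$, which forces $\omega$ to pair $V_k$ non-trivially only with $V_{-k}$, so $V_{-k} \cong V_k^*$ via $\omega$; and $j\Upsilon = -\Upsilon j$ (since $\Upsilon$ is skew-Hermitian and $j$ is anti-linear with $\omega(\cdot,j\cdot)=h$), so $jV_k = V_{-k}$, giving $V_{-k} \cong jV_k \cong V_k^*$. Grouping the nonzero eigenvalues in pairs $\pm a_k$ (relabelling so $a_1, \ldots, a_m > 0$) and letting $V_0 = \ker \Upsilon$ yields the claimed decomposition $V = \bigoplus_{k=1}^m (V_k \oplus V_k^*) \oplus V_0$.

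Next I would pin down where $\psi$ lives. Writing $\psi = \sum_k \psi_k$ according to the eigenbundle decomposition (tensored with $K^{1/2}$, on which $\Upsilon$ acts trivially), equation (\ref{v1}) reads $\Upsilon \psi = i\psi$, i.e. $ia_k \psi_k = i\psi_k$ for each $k$, so $\psi_k = 0$ unless $a_k = 1$. Hence, after normalizing so that the eigenvalue $1$ occurs as $a_1$ (it occurs since $\psi \neq 0$), we get $\psi = \psi_1 \in H^0(\Sigma, V_1 \otimes K^{1/2})$. For the converse direction, given such a decomposition with $\psi \in H^0(\Sigma, V_1 \otimes K^{1/2})$, one simply defines $g_\theta$ to act as $e^{ia_k\theta}$ on each $V_k$ (and trivially on $K^{1/2}$); this is a one-parameter family of $Sp(n)$-gauge transformations fixing $A$ (block-diagonal with respect to a parallel decomposition) and sending $\psi$ to $e^{i\theta}\psi$, so $(V,\psi)$ is a fixed point.

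Finally I would address the polystability of $V_0$ and of the pieces $V_k \oplus V_k^*$ for $k > 1$. The pair $(V,\psi)$ being a smooth point of $\calS$ is polystable (Theorem \ref{gauge}), hence by Theorem \ref{resu}(1) the associated Higgs bundle $(V, \psi\otimes\psi)$ is polystable; since $\psi \in H^0(\Sigma, V_1 \otimes K^{1/2})$, the Higgs field $\psi\otimes\psi$ is supported in the $V_1 \oplus V_1^*$ block (indeed $\psi\otimes\psi(v) = \omega(\psi,v)\psi$ lands in $V_1 \otimes K$, and vanishes on $V_0$ and on $V_k, V_k^*$ for $k>1$). So the symplectic subbundles $V_0$ and $V_k \oplus V_k^*$ ($k>1$) are $(\psi\otimes\psi)$-invariant with vanishing Higgs field; I would then argue that a polystable symplectic Higgs bundle restricted to a $\Phi$-invariant symplectic subbundle on which $\Phi$ vanishes is a polystable symplectic vector bundle — this follows from the structure of polystable objects (a polystable $G$-Higgs bundle is a direct sum of stable Higgs bundles for reductive subgroups, and the vanishing of $\Phi$ on these blocks forces the corresponding summands to be stable bundles), together with the characterization in Appendix \ref{assoc}.

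The main obstacle I anticipate is the last point: carefully extracting polystability of the individual blocks $V_0$ and $V_k \oplus V_k^*$ from polystability of the whole pair. One must rule out that a destabilizing isotropic subbundle of, say, $V_0$ could fail to destabilize $(V,\psi)$ because of the presence of $\psi$ in the $V_1$-block; the key is that any isotropic subbundle of $V_0$ is automatically contained in $\psi^\perp$ (since $\psi$ has no $V_0$-component and $\omega$ is block-diagonal), so Theorem \ref{resu}(2) applies directly and gives the required degree inequality, and an analogous argument handles the refinement clause in the polystability definition. Threading this through the definitions — rather than any hard analysis — is where the care is needed.
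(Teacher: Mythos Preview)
Your eigenbundle analysis and the placement of $\psi$ in $V_1$ match the paper's argument essentially verbatim. One small slip: you write $j\Upsilon = -\Upsilon j$, but in fact $\Upsilon$ takes values in $\lie{sp}(n)$ and hence \emph{commutes} with $j$ (elements of $\lie{sp}(n)$ are quaternionic-linear). The conclusion $jV_k = V_{-k}$ is nevertheless correct, because $j$ is anti-linear: from $\Upsilon(jv) = j(\Upsilon v) = j(ia_k v) = -ia_k\, jv$ one gets $jv \in V_{-k}$. Had $j$ and $\Upsilon$ genuinely anti-commuted, your computation would have given $jV_k = V_k$ instead.

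The substantive difference is in the polystability of $V_0$ and of $V_k \oplus V_k^*$ for $k>1$. You route this through the algebraic (semi/poly)stability characterization of Theorem~\ref{resu}(2), arguing that isotropic subbundles of $V_0$ (or of $V_k\oplus V_k^*$, $k>1$) lie in $\psi^\perp$ and then chasing the complement clause. This can be made to work, but the paper bypasses it entirely: since $\psi \in H^0(\Sigma, V_1\otimes K^{1/2})$, the moment-map term $\tfrac{i}{2}(\psi\otimes j\psi + j\psi\otimes\psi)h_0$ in the vortex equation lives purely in $\End(V_1\oplus V_1^*)$, so the restriction of $\Lambda F_h$ to each of $V_0$ and $V_k\oplus V_k^*$ ($k>1$) vanishes. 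These summands are therefore \emph{flat} unitary (indeed $Sp$-) bundles, hence polystable by the classical Narasimhan--Seshadri/Ramanathan correspondence. This is a one-line argument and avoids precisely the ``threading through the definitions'' you flagged as the main obstacle.
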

\begin{proof}
As discussed, being a fixed point of the circle action is equivalent to the existence of an infinitesimal gauge transformation $\Upsilon$ satisfying (\ref{cov}) and (\ref{v1}). Since $\Upsilon$ is covariantly constant, $V$ must decompose globally into a direct sum of eigenbundles of the endomorphism $\Upsilon$. An infinitesimal gauge transformation is locally in $\lie{sp}(n)$, so its eigenvalues are of the form $\pm i a_k$, where $a_k \in \R$. Thus, $V = \bigoplus V_k$, where $\Upsilon |_{V_k} = i a_k$, for some $a_k \in \R$. If $v_r \in V_r$ and $v_s \in V_s$   
$$ia_r\omega (v_r, v_s) = \omega (\Upsilon v_r, v_s) = -\omega (v_r, \Upsilon v_s) = -ia_s\omega (v_r, v_s),$$
so $V_r$ and $V_s$ are orthogonal unless $r+s=0$. In that case, the symplectic form $\omega$ on $V$ gives an isomorphism $V_r \cong V^*_{-r}$. Note also that $\Upsilon$ commutes with the quaternionic structure $j$ on $V$, so that if $v \in V_r$, $jv \in V_{-r}$ (since $j$ is anti-linear), which gives $jV_r \cong V_{-r} \cong V_r^*$. Thus, we can write
$$V = \bigoplus_{k=1}^{m} (V_k \oplus V_k^*) \oplus V_0$$
and $\psi \in V_1$, since (\ref{v1}) holds. From this we conclude that $\psi \otimes j\psi + j\psi\otimes \psi$ is an endomorphism of $V_1\oplus V_1^*$. Thus, $V_0$ and $V_k \oplus V_k^*$, for $k>1$, are flat and thus polystable.
\end{proof}

Let $\calF$ be the fixed point set corresponding to the $S^1$-action on $\calS$. Note that any maximum of $f $ is an element in $\calF$. In particular, they form a component of $\calF$ which will play an important role in this chapter. 

\begin{prop}\label{maximum} The maximum locus of $f$ is a connected component of the fixed point set $\calF$ of the circle action, which is singular, irreducible and can be identified with  
$$\calZ = \{ ( K^{-1/2} \oplus K^{1/2} \oplus U, 1) \ | \ U\in \calU(Sp(2n-2,\C))\}, $$ 
where we denote by $1$ the spinor given by the canonical inclusion of $K^{-1/2}$ into $K^{-1/2} \oplus K^{1/2} \oplus U$. 
\end{prop}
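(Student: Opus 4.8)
The plan is to characterize the maximum locus of $f$ by combining the upper bound $f(V,\psi) = \|\psi\|_2^2 \leq 4\pi(g-1)$ with the analysis of when equality holds. First I would revisit the Weitzenb\"ock computation that produced $\int_\Sigma |\nabla_B\psi|^2 = -\tfrac12\int_\Sigma(|\psi|^4 - |\psi|^2)\,d\mathrm{vol}$ together with the two Cauchy--Schwarz-type inequalities $\|\psi\|_4^4 \leq \|\psi\|_2^2$ and $\|\psi\|_2^4 \leq \mathrm{vol}(\Sigma)\|\psi\|_4^4$. The bound $f = 4\pi(g-1)$ is attained exactly when both inequalities are equalities: equality in $\|\psi\|_2^4 \le \mathrm{vol}(\Sigma)\|\psi\|_4^4$ forces $|\psi|^2$ to be a constant function on $\Sigma$, and equality in $\|\psi\|_4^4 \le \|\psi\|_2^2$ forces $\nabla_B\psi = 0$, i.e. $\psi$ is a covariantly constant section of $V\otimes K^{1/2}$ for the connection $B$. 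So the maximum locus consists of pairs $(V,\psi)$ where $\psi$ is a nowhere-vanishing parallel spinor; in particular $L_\psi \cong K^{-1/2}$, the extreme case of the degree bound \eqref{deg}.

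Next I would show that a nowhere-vanishing parallel spinor forces the stated direct-sum decomposition. Since $\psi$ is covariantly constant and nowhere zero, the line subbundle $L_\psi \cong K^{-1/2}$ it spans is parallel, hence a holomorphic (indeed flat, up to the $K^{1/2}$ twist) subbundle; its image under the inclusion $\psi : K^{-1/2}\hookrightarrow V$ gives a parallel isotropic line subbundle of $V$ isomorphic to $K^{-1/2}$ (isotropic because $\omega(\psi,\psi)=0$). Using the quaternionic/Hermitian structure $j$ coming from the solution of the symplectic vortex equation, the orthogonal complement $L_\psi^{\perp_h}$ is also parallel, and $j(L_\psi) \cong L_\psi^* \cong K^{1/2}$ sits inside $V$ as a complementary parallel line subbundle with the symplectic form pairing the two nondegenerately. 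Splitting off this hyperbolic plane $K^{-1/2}\oplus K^{1/2}$ leaves a parallel symplectic complement $U$ of rank $2n-2$, which (being flat) is polystable, so $U \in \calU(Sp(2n-2,\C))$; and the spinor is then precisely the canonical section $1 \in H^0(\Sigma,\calO_\Sigma) \subset H^0(\Sigma, (K^{-1/2}\oplus K^{1/2}\oplus U)\otimes K^{1/2})$. This is exactly the description of $\calZ$, and it identifies $\calZ$ with $\calU(Sp(2n-2,\C))$ as a set. I would then note, via the polystability criterion in Theorem~\ref{resu}, that every such pair $(K^{-1/2}\oplus K^{1/2}\oplus U, 1)$ is indeed polystable (as in the Example preceding Theorem~\ref{resu}'s restatement), so $\calZ$ genuinely lies in $\calS$, and it is a fixed point of the circle action since $\psi = 1$ is fixed up to the gauge transformation acting by the weight decomposition $K^{-1/2}\oplus K^{1/2}\oplus U$ with weights $1, -1, 0$.

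For the topological claims: connectedness and irreducibility of $\calZ$ follow from the isomorphism $\calZ \cong \calU(Sp(2n-2,\C))$, since the moduli space of semistable $Sp(2n-2,\C)$-bundles on $\Sigma$ is irreducible (connectedness of $\calM^d(G)$ for complex reductive $G$, cited in Chapter~1). That $\calZ$ is a connected component of $\calF$ rather than a union of several components follows because $\calZ$ is connected and is the full preimage $f^{-1}(4\pi(g-1))$, which is open and closed in $\calF$ (a level set of a locally constant function on $\calF$, since $f$ is constant on connected components of the fixed-point set of the moment-map circle action). Singularity of $\calZ$ reduces to singularity of $\calU(Sp(2n-2,\C))$: the moduli space of symplectic bundles is singular (for $g\geq 2$, $n\geq 1$) at its strictly semistable points, and one checks that $\calZ$ is not contained in the stable locus. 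The main obstacle I anticipate is the rigidity step --- turning ``$\psi$ parallel and nowhere-vanishing'' into the clean orthogonal decomposition compatibly with the holomorphic structure --- which requires being careful that the metric splitting of $V$ induced by $\nabla_B\psi = 0$ and $j$ is simultaneously a holomorphic splitting, and that the resulting factor $U$ is a genuine symplectic subbundle (not merely a smooth one); this is where the interplay between the vortex equation, the Chern connection, and holomorphicity of $\psi$ must be used rather than just the Morse-theoretic fixed-point description.
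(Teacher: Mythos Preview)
Your approach is correct and genuinely different from the paper's. The paper does not analyze the equality cases of the Weitzenb\"ock and Cauchy--Schwarz inequalities. Instead, it takes the line subbundle $L_\psi \subset V$ generated by the image of $\psi$ (defined even if $\psi$ vanishes somewhere), splits $V = L_\psi \oplus Q$ smoothly via the metric $h$, and writes the Chern connection in block form with second fundamental form $\beta$. A Chern--Weil computation using the vortex equation then gives
\[
d_\psi = \deg L_\psi = -\tfrac{1}{4\pi}\|\psi\|_2^2 - \tfrac{1}{2\pi}\int_\Sigma |\beta|^2\,d\mathrm{vol},
\]
so the bound $d_\psi \ge 1-g$ yields $\|\psi\|_2^2 \le 4\pi(g-1) - 4\pi b$ with $b \ge 0$. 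At a maximum $b=0$, hence $\beta = 0$ and the splitting $V = K^{-1/2}\oplus Q$ is holomorphic; the paper then invokes the fixed-point classification (the preceding proposition) to split off $K^{1/2}$ and identify $Q/K^{1/2} = U$ as polystable.

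Your route --- deducing $\nabla_B\psi = 0$ and $|\psi|^2$ constant directly from sharpness of the inequalities, then building the holomorphic splitting from parallelness of $L_\psi$ and $j(L_\psi)$ --- is more self-contained: you never need to cite the separate fixed-point structure theorem, since the $j$-structure gives you the $K^{1/2}$ factor and the symplectic complement $U$ by hand, and flatness of $U$ (hence polystability) drops out of the vortex equation restricted to $U$. The paper's approach, on the other hand, avoids the ``rigidity step'' you flag: vanishing of $\beta$ is already the statement that the metric splitting is holomorphic, with no need to check compatibility of parallel and holomorphic structures. Your argument that $\calZ$ is a connected component of $\calF$ (via $f$ being locally constant on the fixed set of its own moment-map circle action) is also cleaner than the paper's, which quotes a lemma of Hausel--Thaddeus on local constancy of eigenbundle invariants.
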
 
\begin{proof}
First note that $\calZ $ is contained in $\calS$, since $U$ is a polystable symplectic vector bundle, and by the characterization of fixed points it must lie in $\calF$. Existence of strictly polystable symplectic bundles and irreducibility of $\calU(Sp(2n-2,\C))$ shows that $\calZ $ is a singular irreducible variety. It follows from \cite[Lemma 9.2]{hausel2003mirror} that the ranks and degrees of the eigenbundles which appear in the decomposition of a fixed point are locally constant on the fixed point set. Thus, $\calZ $ is a connected component of $\calF$. Moreover, note that the spinor $1$ has $L^2$-norm $4\pi(g-1)$ and thus, from (\ref{bound}), all pairs in $\calZ $ are maximums. We show that the converse also holds. 

Let $(V,\psi) \in \calS$ be a maximum. The spinor $\psi : K^{-1/2} \to V$ defines a line bundle $L_\psi \subset V$ whose degree $d_\psi$, by (\ref{deg}), satisfies $1-g \leq d_\psi \leq 0$. Let $h$ be the Hermitian metric on $V$ solving the symplectic vortex equation as before. From $h$ we obtain a smooth splitting $V = L_\psi \oplus Q$, where $Q$ is the quotient $V/L_\psi$. We denote by $\beta \in \Omega^{0,1}(\Sigma , \Hom (Q,L_\psi))$ the second fundamental form associated to the smooth splitting of $V$ and by $\pi$ the orthogonal projection operator associated to $L_\psi$. The Hermitian metric $h$ induces in a natural way Hermitian metrics on $L_\psi$ and $Q$, and we can write the Chern connection $A$ on $V$ as
$$A = \left( \begin{matrix} A_\psi& \beta\\ -\beta^*&A_Q \end{matrix} \right),$$
where $A_\psi$ and $A_Q$ are the associated Chern connections on $L_\psi$ and $Q$, resp., and $\beta^* \in \Omega^{1,0}(\Sigma , \Hom (Q,L_\psi))$ is obtained from the metric and conjugation on the form part. From this we find that the curvature associated to $A$ can be written as   
$$F_A = \left( \begin{matrix} F_\psi - \beta \wedge \beta^*& \partial_A \beta\\ - \bar{\partial}_A \beta^*& F_Q - \beta^* \wedge \beta \end{matrix} \right),$$   
where $F_\psi$ and $F_Q$ are the curvatures associated to the connections $A_\psi$ and $A_Q$, respectively. Recall that given any holomorphic vector bundle $E$ on $\Sigma$ with a Hermitian metric, Chern-Weil theory enables one to calculate the degree of $E$ as
$\deg (E) = \frac{i}{2\pi} \int_\Sigma  \tr (F) $, where $F$ is the curvature of the associated Chern connection\footnote{This is standard and can be found in several textbooks, such as \cite{kob} and \cite{wells}.}. In our context, this gives the formula
$$d_\psi = \dfrac{1}{2\pi} \int_\Sigma \tr (\pi i\Lambda F_A)d\text{vol} -  \dfrac{1}{2\pi} \int_\Sigma |\beta |^2 d\text{vol}.$$   
But $i\Lambda F_A = \dfrac{1}{2} (j\psi \otimes \psi + \psi \otimes j\psi)h_0$, thus projecting it onto $L_\psi$ and taking the trace gives $\omega (j\psi , \psi)h_0/2 = -|\psi |^2/2$ and we obtain
$$d_\psi = - \dfrac{1}{4\pi} \| \psi \|_2^2 - b,$$ 
where $b =   \dfrac{1}{2\pi} \int_\Sigma |\beta |^2 d\text{vol} \ge 0$.
Now, the inequality $d_\psi \ge 1-g$ gives 
$$ \| \psi \|_2^2 \leq 4\pi (g-1) - 4\pi b.$$
Thus $(V,\psi)$ is a maximum if and only if $b=0$. In this case, $d_\psi = 1-g$ and $L_\psi$ must be isomorphic to $K^{-1/2}$. The vanishing of the second fundamental form means that the splitting $V = K^{-1/2} \oplus Q$ is holomorphic. Moreover, any maximum is a fixed point of the circle action, which proves that $V$ splits (holomorphically) as a direct sum $K^{-1/2} \oplus K^{1/2} \oplus U$, where $U$ is a polystable symplectic vector bundle, and thus the pair must be in $\calZ $. 
\end{proof}

\begin{rmk} Given a fixed point $(V,\psi)$ of the circle action on $\calS$, the vector bundle $V$ decomposes as a direct sum of eigenbundles for an infinitesimal gauge transformation $\Upsilon$ satisfying (\ref{v1}) and (\ref{cov}). In particular, 
\begin{align*}
[ \Upsilon , \psi \otimes \psi ]v & = \omega (\psi, v)\Upsilon \psi - \omega (\psi, \Upsilon v)\psi \\
& = i\omega (\psi, v)\psi  + \omega (\Upsilon \psi, v)\psi \\
& = 2i \psi \otimes \psi (v).
\end{align*}
Thus, the Higgs bundle $(V,\psi \otimes \psi)$ is fixed by the circle action on $\calM (Sp(2n,\C))$ and the covariantly constant infinitesimal gauge transformation $\frac{1}{2}\Upsilon $ induces the same decomposition as a direct sum of subbundles as before.       
\end{rmk}

\section{The component of the nilpotent cone}

Let $(V,\psi\otimes \psi) \in \calM (Sp(2n,\C))$ be a Higgs bundle such that the norm of the spinor $\psi$ is sufficiently large (see Remark \ref{norms} on the next page). Pairs in $\calZ$ are non-vanishing, which is an open condition. Since the pair $(V,\psi)$ is near a maximum, its spinor must also be non-vanishing. In particular, the image of $\psi$ defines a subbundle of $V$, rather than just a subsheaf. Dualizing the injection $\psi : K^{-1/2} \to V$ and using the symplectic form $\omega : V \to V^*$ to identify $V$ with its dual, we obtain a short exact sequence 
\begin{equation}
0 \to E \to V \to K^{1/2} \to 0.
\label{ext1}
\end{equation} 
Note that the projection $V \to K^{1/2}$ is given by the map $\tp{\psi}\circ \omega$, so $E$ is isomorphic to the symplectic orthogonal $(K^{-1/2})^\perp$ of $K^{-1/2}$. In particular, we have  
\begin{equation}
0 \to K^{-1/2} \to E \to U \to 0
\label{ext22}
\end{equation}
and the skew-form $\omega|_{E\otimes E} $ induces a non-degenerate form on the quotient $U$, which is then a symplectic vector bundle of rank $2n-2$. Recall that a maximum of $f$ has an underlying vector bundle of the form $K^{-1/2} \oplus K^{1/2} \oplus W$, where $W$ is a polystable symplectic vector bundle of rank $2n-2$ (see Proposition \ref{maximum}). Since $(V,\psi)$ is near a maximum, since stability is an open condition, $U$ is also polystable. In particular, for a sufficiently small $\epsilon > 0$, the subset of Higgs bundles of the form $(V, \psi \otimes \psi)$ satisfying $f_{max} - \epsilon < \| \psi \|^2_2 \leq f_{max}$ is contained in  
$$
\calZ^- = \left \{ \ (V, \psi \otimes \psi) \in \calM (Sp(2n,\C)) \
\bigg |
\gathered
\begin{array}{cl}
i. & 0 \neq \psi \in H^0(\Sigma , V\otimes K^{1/2}), \\
ii. & \psi\text{ is non-vanishing},\\   
iii. & (K^{-1/2})^\perp / K^{-1/2}\in \calU (Sp(2n-2, \C)). \
\end{array}
\endgathered  \ \  \right
\},$$
which is thus a non-empty open subset of $\calM (Sp(2n,\C))$. Here, $f_{max}$ denotes the maximum value of the function $f: \calS \to \R$. When we take the Hermitian connection on $K^{1/2}$ associated to a metric of constant negative curvature on $\Sigma$, $f_{max} = 4\pi (g-1)$ (c.f. (\ref{bound})). Note that if $(V, \psi\otimes \psi) \in \calZ^-$, the limit of $(V, z \cdot \psi \otimes \psi)$ as the norm of $z\in \C$ goes to infinity exists (as the nilpotent cone is compact) and is a Higgs bundle in the maximum\footnote{The locus $\calZ$ corresponds to the maximums of $f$. If seen in the Higgs bundles moduli space, it corresponds to the maximum of $\| \Phi \|^2$ on $X$ (see Remark \ref{norms}).} $\calZ$. 

\begin{rmk}\label{norms} The Hitchin equation, differently from the symplectic vortex equation, does not depend on the choice of a Hermitian metric on $\Sigma$. By fixing a Hermitian metric $h_\Sigma$ and denoting the induced metric $h_\Sigma^{-1/2}$ on $K^{1/2}$ by $h_0$, the Hitchin equation 
$$F_h + [\Phi , \Phi^*] = 0$$
can be written (using the identifications from Section \ref{vortex}) as  
$$\Lambda (F_h) + \dfrac{i}{2}|\psi|^2 (\psi\otimes j(\psi) + j(\psi)\otimes \psi)h_0 = 0,$$
since $(\psi \otimes \psi )^*$ can be identified with $- j\psi \otimes j\psi$ and, for example, $(\psi \otimes \psi) \circ (j\psi \otimes j\psi) = |\psi|^2 j\psi \otimes \psi$. Thus, near a maximum of $f$, the metric solving the Hitchin equation for $(V,\psi \otimes \psi)$ is essentially the same as the one solving the symplectic vortex equation for the pair $(V,\psi)$, and so $| \psi|_{pair}^4 \approx |\psi \otimes \psi|^2_{Higgs}$. The same phenomenon, of course, also happens in the other extreme. If $(V,\psi)$ has a spinor whose norm is close to zero, the same is true for the norm of the associated Higgs field (in the Higgs bundle metric). This tells us that the underlying vector bundle is polystable.  
\end{rmk}

\begin{ex}\label{rank2} The simplest example occurs when $n=1$, i.e., for $SL(2,\C) = Sp(2,\C)$, and this has been worked out by Hitchin in \cite[Section 4.3]{spinors}. In particular, in this case $\calZ$ corresponds to a point, where the Higgs bundle has an underlying vector bundle of the form $K^{-1/2}\oplus K^{1/2}$ (i.e., the intersection of the Hitchin section with the nilpotent cone for our particular choice of theta-characteristic). Recall that the irreducible components of the nilpotent cone for the $SL(2,\C)$-Hitchin fibration are the Zariski closures of the total spaces of some vector bundles $E_m$, $1\leq m \leq g-1$, over the $(2g-2-2m)$-symmetric product of $\Sigma$ (and $\calU (2, \calO_\Sigma)$). Given $(V,\Phi) \in \calM (SL(2,\C))$ with a nilpotent Higgs field $\Phi \neq 0$, let $M$ be the subbundle of $V$ generated by the kernel of $\Phi$. Then we have an extension  
$$0 \to M \to V \to M^* \to 0$$
defined by some extension class $\delta \in H^1(\Sigma , M^2)$. Consider $\phi \in H^0(\Sigma , M^2K)$ given by 
\[\xymatrix@M=0.1in{
0 \ar[r] & M \ar[r] \ar[d] & V \ar[r] \ar[d]^{\Phi} & M^* \ar[r] \ar[d]^{\phi} & 0 \\
0  & M^*K \ar[l]     & V\otimes K \ar[l]      & MK \ar[l]             & 0. \ar[l] }\]
Then, $(V,\Phi)$, can be seen as a class $(M, \delta, \phi) = (M, \lambda \delta , \lambda \phi)$, $\lambda \in \C^\times$, and the vector bundle $E_m$ consists of such classes where $m = \deg M^*$. In particular, our choice of theta-characteristic identifies $\calZ^-$ with $ H^1(\Sigma , K^*)$. In other words, we see a point in $\calZ^-$ as a class $(K^{-1/2}, \delta , 1)$, where $\delta \in H^1(\Sigma , K^*)$ is the extension class of the exact sequence determined by $\psi$, which in this case is non-vanishing. The closure of the submanifold $\calZ^-$ is the Gaiotto Lagrangian $X$. 
\end{ex}

To reconstruct this open set we start with a polystable symplectic vector bundle $(U,\theta )$ of rank $2n-2$, where $\theta : U \to U^*$ is the symplectic form\footnote{Recall that any two symplectic forms on a polystable symplectic vector bundle $U$ are related by an automorphism of $U$.}, and an extension class $\delta \in H^1(\Sigma , \Hom (U, K^{-1/2}))$. The extension class gives  
\begin{equation}
0 \to K^{-1/2} \to E \to U \to 0
\label{FF}
\end{equation}
and there exists trivializations of the vector bundle $E$ over an open cover $\{ U_i \}$ of $\Sigma$ whose transition functions are of the form 
$$e_{ij} =  \left( \begin{matrix} l_{ij}& \delta_{ij}\\ & u_{ij} \end{matrix} \right),$$
where $l_{ij}$ and $u_{ij}$ are transition functions for $K^{-1/2}$ and $U$, resp., and the extension class $\delta$ is determined by the $\Hom (U, K^{-1/2})$-valued 1-cocycle $\{ l_{ij}^{-1} \delta_{ij} \} $. Also, the symplectic form $\theta $ on $U$ is given by a cochain $\{ \theta_i \}$ of skew-symmetric and non-degenerate $(2n-2)\times (2n-2)$ matrices which agree on the intersections $U_{ij}$, i.e., 
$$\theta_j = \tp{u}_{ij} \theta_i u_{ij}.$$ 
Since $\Hom (K^{1/2}, U)$ is a semi-stable vector bundle of negative degree, it does not have any non-trivial global section. Thus, applying the functor $\Hom (K^{1/2}, -)$ to (\ref{FF}) and considering the corresponding long exact sequence in cohomology we obtain 
$$0 \to H^1(\Sigma , K^{-1}) \to H^1(\Sigma , \Hom (K^{1/2}, E)) \to H^1(\Sigma , \Hom (K^{1/2}, U)) \to 0.$$   
Then, from the exact sequence above, $\delta$ defines an extension $V$ of $K^{1/2}$ by $E$, which is a rank $2n$ vector bundle. More precisely, to be consistent with (\ref{ext1}) and (\ref{ext22}), we choose an extension class $\tilde{\delta} \in H^1(\Sigma , \Hom (K^{1/2}, E))$ lifting the dual extension class of $\delta$, which, after using the identification $\theta : U \to U^*$, is represented by the $\Hom (K^{1/2}, U)$-valued 1-cocycle $\{ - \theta^{-1}_j  \tp{\delta}_{ij}l_{ij}^{-1} \}$. From such a choice of $\tilde{\delta}$ we obtain 
$$0 \to E \to V \to K^{-1/2} \to 0$$
and the transition functions for $V$ have the form 
$$v_{ij} =  \left( \begin{matrix} e_{ij}& \tilde{\delta}_{ij}\\ & l_{ij}^{-1} \end{matrix} \right),$$
where $\tilde{\delta}$ is represented by the 1-cocycle $\{ e^{-1}_{ij}\tilde{\delta}_{ij} \}$. We may think of $e^{-1}_{ij}\tilde{\delta}_{ij} $ as the $(2n-1)\times 1$ matrix representing the corresponding homomorphism $K^{1/2} \to E$ in the trivialization $U_j$. Thus we may write 
$$e^{-1}_{ij}\tilde{\delta}_{ij} = \left( \begin{matrix} a_{ij} \\ - \theta^{-1}_j  \tp{\delta}_{ij}l_{ij}^{-1}  \end{matrix} \right),$$ 
where $\{ a_{ij} \} \in H^1(\Sigma, K^{-1})$. Note that the symplectic form $\theta$ on $U$ extends naturally to a degenerate skew-form $\tilde{\theta}$ on $E$. Indeed, dualizing (\ref{FF}) we obtain 
$$0 \to U^* \to E^* \to K^{1/2} \to 0,$$
so  
$$0 \to \Lambda^2U^* \to \Lambda^2E^* \to U^*\otimes K^{1/2} \to 0.$$ 
Now we need to further extend the form on $E$ to a symplectic form on $V$. 
\begin{lemma} The skew-form $\tilde{\theta}$ extends uniquely to a symplectic form $\omega$ on $V$.   
\end{lemma}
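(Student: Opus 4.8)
The plan is to extend the degenerate skew-form $\tilde{\theta}$ on $E$ to a form on $V$ using the extension data, and then check non-degeneracy. First I would work locally with the transition functions. We have $V$ built as an extension $0 \to E \to V \to K^{-1/2} \to 0$ with transition functions
\[
v_{ij} = \begin{pmatrix} e_{ij} & \tilde{\delta}_{ij} \\ & l_{ij}^{-1} \end{pmatrix},
\]
and $E$ itself an extension $0 \to K^{-1/2} \to E \to U \to 0$ with $e_{ij}$ upper-triangular as displayed. A skew-form on $V$ compatible with the inclusion $E \hookrightarrow V$ and restricting to $\tilde{\theta}$ on $E$ will be represented, in the trivialization over $U_i$, by a block matrix
\[
\omega_i = \begin{pmatrix} 0 & 0 & 0 \\ 0 & \theta_i & \xi_i \\ 0 & -\tp{\xi_i} & 0 \end{pmatrix}
\]
where the blocks correspond to the splitting (as $C^\infty$ bundles) $V = K^{-1/2} \oplus U \oplus K^{-1/2}$, the $\theta_i$ being the given cocycle for the symplectic form on $U$, and $\xi_i$ a cochain of $(2n-2)\times 1$ matrices to be determined. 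The pairing between the sub-$K^{-1/2}$ (sitting inside $E$) and the quotient $K^{-1/2}$ is forced to be the canonical one (up to scale) — that is exactly the content of $E \cong (K^{-1/2})^\perp$ in the symplectic vector bundle we are reconstructing — and I would normalize it to be the identity entry, which I have suppressed above by writing $0$'s; more honestly the top-right and bottom-left corners carry a $\pm 1$. Let me restate: the ansatz is
\[
\omega_i = \begin{pmatrix} 0 & 0 & 1 \\ 0 & \theta_i & \xi_i \\ -1 & -\tp{\xi_i} & 0 \end{pmatrix}.
\]

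Next I would impose the cocycle-compatibility condition $\tp{v}_{ij}\,\omega_i\, v_{ij} = \omega_j$ on the overlaps $U_{ij}$. Expanding this using the explicit block forms of $v_{ij}$ (and recalling $l_{ij}$, $u_{ij}$ are transition functions for $K^{-1/2}$, $U$, that $\theta_j = \tp{u}_{ij}\theta_i u_{ij}$, and that the relevant entries of $e_{ij}^{-1}\tilde{\delta}_{ij}$ are $a_{ij}$ and $-\theta_j^{-1}\tp{\delta}_{ij}l_{ij}^{-1}$), the equation becomes an affine-linear condition on the cochain $\{\xi_i\}$: the obstruction to solving it is a class in $H^1(\Sigma, \Hom(K^{-1/2}, U))$, and the ambiguity in the solution is a global section in $H^0(\Sigma, \Hom(K^{-1/2}, U))$. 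Since $\Hom(K^{-1/2}, U) = U \otimes K^{1/2}$ with $U$ polystable of degree $0$, one has $H^0 = H^1 = 0$ by stability (a nonzero section or extension would destabilize, exactly as in the arguments used repeatedly in Chapter~\ref{Chapter7} and in Theorem~\ref{resu}); hence $\{\xi_i\}$ exists and is unique. This simultaneously gives existence and uniqueness of $\omega$ as a skew-form restricting to $\tilde{\theta}$.

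Then I would verify that the resulting $\omega$ is non-degenerate, hence genuinely symplectic. This is a pointwise check: the matrix $\omega_i$ above has the block anti-diagonal $1$'s pairing the two copies of $K^{-1/2}$ and the non-degenerate $\theta_i$ on the middle block, so $\det \omega_i = \det \theta_i \neq 0$ at every point — the $\xi_i$ entries do not affect the determinant because they can be cleared by a symplectic row/column operation (adding multiples of the last coordinate, which pairs trivially with the middle block's complement appropriately). It is cleaner to say: conjugating $\omega_i$ by the unipotent block matrix that subtracts suitable multiples of the top/bottom $K^{-1/2}$-rows brings it to the standard block form $\mathrm{diag}(J_1, \theta_i)$ where $J_1 = \begin{pmatrix} 0 & 1 \\ -1 & 0\end{pmatrix}$, which is manifestly non-degenerate. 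Finally I would record the compatibility we actually want for the reconstruction: that under this $\omega$, the image of the composite $K^{-1/2} \hookrightarrow E \hookrightarrow V$ is isotropic with $(K^{-1/2})^\perp = E$ (immediate from the block shape), and that the induced form on $E/K^{-1/2} = U$ is $\theta$ — so the reconstructed datum closes the loop with the short exact sequences (\ref{ext1}) and (\ref{ext22}).

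The main obstacle I anticipate is purely bookkeeping: getting the transition-function cocycle identity written out correctly with all the duals, inverses, and the identification $\theta : U \to U^*$ in the right places, so that the obstruction really does land in $H^1(\Sigma, U\otimes K^{1/2})$ and not in some twist of it where the vanishing argument fails. Once the cohomological placement is pinned down, the vanishing $H^0(\Sigma, U\otimes K^{1/2}) = H^1(\Sigma, U\otimes K^{1/2}) = 0$ for $U$ polystable symplectic of degree zero is immediate (Serre duality plus semistability, exactly as invoked for the theta-divisor in Appendix~\ref{thetadivisor}), and non-degeneracy is a one-line linear-algebra computation. So the proof is short; the care is all in the indices.
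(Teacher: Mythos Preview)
Your approach has the right shape, but the vanishing claim it rests on is false. You assert that $H^0(\Sigma, U\otimes K^{1/2}) = H^1(\Sigma, U\otimes K^{1/2}) = 0$ for every polystable symplectic $U$ of degree zero. By Riemann--Roch and Serre duality (using $U\cong U^*$) one has $h^0 = h^1$, and these are nonzero precisely when $U$ lies on the generalized theta-divisor of $\calU(Sp(2n-2,\C))$. So your argument, as written, breaks exactly on the locus you most care about later in the chapter.

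Part of the trouble is a twist error inherited from a typo: the quotient in $0\to E\to V\to \cdot\to 0$ is $K^{1/2}$, not $K^{-1/2}$ (look at the transition function $l_{ij}^{-1}$). With the correct quotient, your $\xi_i$ lives in $\Hom(K^{1/2},U^*)\cong U\otimes K^{-1/2}$, which has negative degree; hence $H^0=0$ and \emph{uniqueness} follows exactly as you say. But $H^1(\Sigma,U\otimes K^{-1/2})\cong H^0(\Sigma,U\otimes K^{3/2})^*$ is never zero, so you still cannot kill the obstruction by cohomological vanishing.

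The paper's proof avoids this by not framing existence as an obstruction problem at all. Writing $\omega_i = \left(\begin{smallmatrix}\tilde\theta_i & \tilde b_i\\ -\tp{\tilde b}_i & 0\end{smallmatrix}\right)$ with $\tilde b_i = \left(\begin{smallmatrix}b_i^0\\ b_i\end{smallmatrix}\right)$ and expanding $\tp{v}_{ij}\omega_i v_{ij}=\omega_j$ directly, one finds that (i) $b_i^0$ is a global constant, forced to be nonzero by non-degeneracy and normalized to $1$; and (ii) once $b_i^0=1$, the inhomogeneous terms in the equation for $b_i$ \emph{cancel identically}, leaving the homogeneous relation $\tp{u}_{ij}b_i l_{ij}^{-1}=b_j$. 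This cancellation is not an accident: it holds precisely because $\tilde\delta$ was chosen to lift the dual of $\delta$ (under $\theta:U\to U^*$), which is built into the construction of $V$. So the would-be obstruction class is zero by design, not because the receiving $H^1$ vanishes. Then $\{b_i\}\in H^0(\Sigma,U\otimes K^{-1/2})=0$ forces $b_i=0$, giving both existence and uniqueness. Your non-degeneracy check is fine.
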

\begin{proof}
First note that we may represent $\tilde{\theta}$ by $\{ \tilde{\theta}_i \}$, where 
\begin{equation}
\tilde{\theta}_i =  \left( \begin{matrix} 0& \\ & \theta_i \end{matrix} \right).
\label{thetatil}
\end{equation}
Thus, to define a symplectic form on $V$ extending $\tilde{\theta}$ we need to construct $2n\times 2n$ non-degenerate skew-symmetric matrices $\omega_i$ for each open set $U_i$ such that on the overlaps $U_{ij}$, they are compatible, i.e., 
\begin{equation}
\omega_j = \tp{v}_{ij} \omega_i v_{ij}.
\label{omega}
\end{equation} 
Since $\omega$ must extend $\tilde{\theta}$ it must be of the form 
$$\omega_i =  \left( \begin{matrix} \tilde{\theta}_i & \tilde{b}_i\\ - \tp{\tilde{b}}_i & 0 \end{matrix} \right),$$
and we may further decompose the $(2n-1)\times 1$ matrix $\tilde{b}_i$ as 
\begin{equation}
\tilde{b}_i = \left( \begin{matrix} b_i^0 \\ b_i  \end{matrix} \right),
\label{btil}
\end{equation}
where $b_i^0$ represents a local function and $b_i$ a homomorphism $K^{1/2} \to U^*$ on $U_i$. Condition (\ref{omega}) then becomes
$$\tp{e}_{ij}\tilde{\theta}_i\tilde{\delta}_{ij} + \tp{e}_{ij}\tilde{b}_il_{ij}^{-1} = \tilde{b}_j,$$
or equivalently,
\begin{equation}
\tilde{\theta}_j\tp{e}_{ij}^{-1}\tilde{\delta}_{ij} + \tp{e}_{ij}\tilde{b}_il_{ij}^{-1} = \tilde{b}_j.
\label{expr}
\end{equation}   
This is simply the statement that the cup product $\tilde{\theta} \cdot \tilde{\delta} = 0 \in H^1(\Sigma , E^*\otimes K^{-1/2})$. Now, substituting (\ref{thetatil}) and (\ref{btil}) in the expression above gives $b_i^0 = b_j^0$ on $U_{ij}$ and so, $\{ b_i^0 \} \in H^0(\Sigma , \calO_\Sigma) = \C$. Note that the determinant of $\omega_i$ equals $(b_i^0)^2\det \theta_i$. Since we may assume that $\omega_i$ and $\theta_i$ have determinant $1$, without loss of generality\footnote{Minus the identity is an automorphism of the symplectic bundle and it identifies any pair $(V,\psi)$ with $(V,-\psi)$.} we take $b_i^0$ to be $1$. The expression (\ref{expr}) also gives the relation
$$ \tp{u}_{ij}b_il_{ij}^{-1} = b_j$$ 
on $U_{ij}$. But $\tp{u}_{ij}b_il_{ij}^{-1}$ is just the expression of $b_i$ in the coordinate $U_j$, thus $\{ b_i \} $ defines a global holomorphic section of $U\otimes K^{-1/2}$. The vector bundle $U\otimes K^{-1/2}$ is semi-stable of negative degree, so $b_i = 0$, and that is the only way to extend $\tilde{\theta}$ to a symplectic form on $V$.         
\end{proof}
Note that we have reconstructed $\calZ^-$ from a polystable symplectic vector bundle $U$ of rank $2n-2$, an extension class $\delta \in H^1(\Sigma , \Hom (U, K^{-1/2}))$ and a choice of $\{a_k \} \in H^1(\Sigma , K^{-1})$. Thus, using Riemann-Roch, we have that the dimension of the open set is 
\begin{align*}
&\dim \calU (Sp(2n-2, \C) + h^1 (\Hom (U, K^{-1/2})) + h^1(K^{-1}) \\ 
& = (n-1)(2n-1)(g-1) + 4(n-1)(g-1) + 3(g-1) \\
& = n(2n+1)(g-1)\\
& = \dim Sp(2n,\C) (g-1).
\end{align*}
This is half of the dimension of the moduli space of Higgs bundles, so the Zariski closure of this open set corresponds to Gaiotto's Lagrangian $X$. Let us show that this is irreducible and thus identifies $X$ with this particular algebraic component of the nilpotent cone for the $Sp(2n,\C)$-Hitchin fibration. 

\begin{lemma}\label{down} The open set $\calZ^-$ intersects the conormal bundle of the generalized theta-divisor $D_{K^{1/2}}$ of $\calU (Sp(2n,\C))$.  
\end{lemma}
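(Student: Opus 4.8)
The goal is to show that the open set $\calZ^-$ meets the conormal bundle to the generalized theta-divisor $D_{K^{1/2}} \subset \calU(Sp(2n,\C))$. The natural strategy is to exhibit an explicit Higgs bundle $(V,\psi\otimes\psi) \in \calZ^-$ whose underlying symplectic bundle $V$ lies on $\Theta$, whose Higgs field $\psi\otimes\psi$ is non-zero, and whose deformation class — as a cotangent vector to $\calU(Sp(2n,\C))$ at the point $V$ — is conormal to $D_{K^{1/2}}$. Recall from the reconstruction above that a point of $\calZ^-$ is built from a polystable symplectic bundle $(U,\theta)$ of rank $2n-2$, an extension class $\delta \in H^1(\Sigma, \Hom(U,K^{-1/2}))$ and a class $\{a_k\} \in H^1(\Sigma, K^{-1})$, producing the two-step extension $K^{-1/2} \subset E = (K^{-1/2})^\perp \subset V$ together with the canonical spinor $\psi : K^{-1/2} \hookrightarrow V$.

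First I would take the split case: choose $\delta = 0$ and $\{a_k\} = 0$, so that $E \cong K^{-1/2} \oplus U$ and $V \cong K^{-1/2} \oplus K^{1/2} \oplus U$ with $\psi$ the inclusion of the $K^{-1/2}$ summand. This is precisely the maximum locus $\calZ$ of Proposition \ref{maximum}; its underlying bundle has $H^0(\Sigma, V\otimes K^{1/2}) \ni 1 \neq 0$, so $V$ lies on the generalized theta-divisor $\Theta$ (by the characterization of $\Theta$ recalled in the proof of Theorem \ref{resu}, c.f. Appendix \ref{thetadivisor}). Then I would argue that a neighbourhood in $\calZ^-$ of this point, obtained by turning on the parameters $\delta$ and $\{a_k\}$, sweeps out directions transverse to $\Theta$ as well as directions inside the conormal; more precisely, I would show that the limiting Higgs field $\psi\otimes\psi$, viewed via the symplectic form as an element of $H^0(\Sigma, \Sym^2 V \otimes K) \cong H^0(\Sigma, \Ad(P)\otimes K)$, pairs (under Serre duality) with the tangent space $T_V \calU(Sp(2n,\C)) = H^1(\Sigma, \Ad(P))$ in a way that annihilates exactly the tangent space to $D_{K^{1/2}}$. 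Concretely, the endomorphism $\psi\otimes\psi$ factors through the line $L_\psi = K^{-1/2}$ and into $L_\psi^\perp \supset L_\psi$, so its image under the cup-product pairing with a deformation $\eta \in H^1(\Sigma,\Ad(P))$ is the composition $H^1(\Sigma, K^{-1}) \to H^1(\Sigma, K)$ measuring precisely how $\eta$ moves the section $1 \in H^0(\Sigma, V\otimes K^{1/2})$, i.e. the differential of the theta function; this vanishes iff $\eta$ is tangent to $D_{K^{1/2}}$.

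The key computational step is therefore to identify the pairing $\langle \psi\otimes\psi, \eta\rangle$ with $d\Theta(\eta)$ at the point $V \in \Theta$. I would do this by the standard Čech description: represent $V$ by transition functions $v_{ij}$ as in the reconstruction, represent $\eta$ by a cocycle of $\Ad(P)$-valued functions, and compute the Serre-duality pairing of $\eta$ with the Higgs field $\omega(\psi,\cdot)\otimes\psi$, reducing everything to the connecting map in the cohomology of $0 \to K^{-1} \to \Hom(K^{1/2}, V) \to \Hom(K^{1/2}, V/K^{-1/2}) \to 0$, which is exactly how $\eta$ deforms the section $\psi$. Identifying this with the Petri/theta pairing $H^0(\Sigma, V\otimes K^{1/2}) \otimes H^1(\Sigma, \Ad(P)) \to H^1(\Sigma, K) = \C$ is then the content of the computation of the tangent space to the generalized theta-divisor.

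I expect the \textbf{main obstacle} to be the bookkeeping that turns the heuristic ``$\psi\otimes\psi$ is Serre-dual to the differential of $\Theta$'' into a clean statement: one must be careful that $\psi\otimes\psi$ really does annihilate all of $T_V D_{K^{1/2}}$ and is genuinely non-zero as a conormal vector (which uses that $\psi$ is non-vanishing, hence $1$ is not in the kernel of the relevant map, so $V$ is a smooth point of $\Theta$ and the conormal line at $V$ is one-dimensional). A secondary point is to make sure the chosen base point lies in the smooth locus so that ``conormal bundle'' makes sense — this is guaranteed by taking $U$ stable (existence of stable symplectic bundles of rank $2n-2$) and $V$ a smooth point of $\Theta$, which holds because the associated Higgs bundle $(V,\psi\otimes\psi)$ is stable and simple by Theorem \ref{resu}(1) and Proposition \ref{p716}, so Proposition \ref{p716} applies. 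Once the pairing is identified, irreducibility of $\calZ^-$ (and hence of $X$) follows from the fibration structure over $\calU(Sp(2n-2,\C)) \times \mathrm{Jac}$-type data exhibited in the reconstruction, together with the fact that the conormal bundle of an irreducible divisor is irreducible.
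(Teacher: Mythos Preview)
Your proposal has a genuine gap at its starting point. The split bundle
\[
V \;=\; K^{-1/2}\oplus K^{1/2}\oplus U
\]
is \emph{not} a semi-stable symplectic bundle: the line subbundle $K^{1/2}\subset V$ is isotropic of degree $g-1>0$, so $V$ is unstable and therefore does not define a point of $\calU(Sp(2n,\C))$ at all. In particular it makes no sense to speak of the conormal bundle of $D_{K^{1/2}}\subset\calU(Sp(2n,\C))$ at this $V$. Turning on the extension parameters $\delta$ and $\{a_k\}$ does not help without further argument: for small perturbations the underlying bundle remains unstable, and you never exhibit a point of $\calZ^-$ whose underlying bundle lies in $\calU(Sp(2n,\C))$. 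That existence statement is precisely the content of the lemma, and it is what your argument is missing.

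The paper's proof is quite different and entirely explicit. One writes the zero divisor of a nonzero $s\in H^0(\Sigma,K)$ as $D_1+D_2$ with $\deg D_i=g-1$ and $D_1\cap D_2=\varnothing$, sets $\calO(D_1)=LK^{1/2}$, $\calO(D_2)=L^*K^{1/2}$ for a degree-$0$ line bundle $L$, and observes that the canonical sections $s_1,s_2$ give a \emph{nowhere-vanishing} map $\psi=(s_1,s_2):K^{-1/2}\hookrightarrow L\oplus L^*$. Since $L\oplus L^*$ is a polystable symplectic bundle of rank~$2$ and $U$ is polystable of rank $2n-2$, the bundle $V=L\oplus L^*\oplus U$ is genuinely semi-stable, while $\psi$ is non-vanishing and $(K^{-1/2})^\perp/K^{-1/2}\cong U$. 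Thus $(V,\psi\otimes\psi)\in\calZ^-$ and at the same time $V\in D_{K^{1/2}}\subset\calU(Sp(2n,\C))$, which is what the lemma asserts.

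Your discussion of the Petri-type pairing identifying $\psi\otimes\psi$ with the differential of the theta-section is correct in spirit, but it addresses a secondary issue: the paper treats this identification in the paragraph following Theorem~\ref{compgaiolag}, not in the proof of the lemma. The hard point of Lemma~\ref{down} is the construction of a semi-stable $V$ inside $\calZ^-$, and that is where your argument breaks down.
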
 
\begin{proof}
Let $0 \neq s \in H^0(\Sigma , K)$ and write its divisor of zeros $D$ as a sum of two divisors $D_1$ and $D_2$ of degree $g-1$ without common zeros. The line bundles corresponding to these divisors are $\calO_\Sigma (D_1) = LK^{1/2}$ and $\calO_\Sigma (D_2) = L^*K^{1/2}$, for some line bundle $L$ of degree $0$. The line bundles $\calO_\Sigma (D_1)$ and $\calO_\Sigma (D_2)$ come with distinguished (up to a non-zero scalar) sections $s_1$ and $s_2$ vanishing precisely on $D_1$ and $D_2$. From the fact that the supports of $D_1$ and $D_2$ have empty intersection, the map $s_1 + s_2 : K^{-1/2} \to L\oplus L^*$ is injective. Thus, we have an extension 
$$0 \to K^{-1/2} \to L\oplus L^* \to K^{1/2} \to 0.$$
Note that $L\oplus L^*$ is a (strictly) semi-stable rank $2$ vector bundle with trivial determinant, which is non-very-stable, i.e., $L\oplus L^*$ admits a non-zero nilpotent Higgs field $\Phi$ (take the Higgs field corresponding to $s_1^2 \in H^0(\Sigma , L^2K)$ for example). Now, if $K^{-1/2}\oplus K^{1/2}\oplus U$ is the underlying vector bundle of a Higgs bundle in $\calZ$, then $(L\oplus L^* \oplus U, s_1^2)$ is in the conormal bundle of the generalized theta-divisor $D_{K^{1/2}}$ of $\calU (Sp(2n,\C))$ (since the underlying vector bundle is semi-stable). Also, $(K^{-1/2})^\perp \cong K^{-1/2} \oplus U$, where $U \in \calU (Sp(2n,\C))$, thus $(L\oplus L^* \oplus U, s_1^2)$ is a Higgs bundle in the intersection of $\calZ^-$ and the conormal bundle of $D_{K^{1/2}}$.      
\end{proof}

In greater generality, Thaddeus \cite{thad} has proven that the locus of Higgs bundles whose underlying vector bundle is stable is non-empty in every irreducible component of the nilpotent cone for Higgs bundles of rank $2$ and fixed determinant. In any case, there are points in $\calZ^-$ flowing down directly to the generalized theta-divisor. 

\begin{lemma} The Zariski closure of $\calZ^-$ is an irreducible component of the nilpotent cone for the $Sp(2n,\C)$-Hitchin fibration. 
\end{lemma}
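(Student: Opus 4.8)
The plan is to show that $\overline{\calZ^-}$ is irreducible and then invoke the fact (Theorem 3.4 of \cite{bohr}, already quoted in the excerpt) that the irreducible components of the nilpotent cone are precisely the Bohr--Sommerfeld Lagrangians, together with the dimension count just carried out. Since we already know $\dim \calZ^- = \dim Sp(2n,\C)(g-1)$, which is half the dimension of $\calM(Sp(2n,\C))$, and that $\calZ^-$ is an open subset of the nilpotent cone $\mathrm{Nilp}(Sp(2n,\C))$ (it consists of Higgs bundles of the form $(V,\psi\otimes\psi)$, whose Higgs field is nilpotent), it suffices to prove that $\calZ^-$ is irreducible; its Zariski closure is then an irreducible subvariety of the nilpotent cone of the correct dimension, hence a whole irreducible component (the nilpotent cone is equidimensional of this dimension, being a fibre of the flat Hitchin map).

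To establish irreducibility of $\calZ^-$, I would use the reconstruction procedure described in the paragraphs immediately preceding the statement. Every point of $\calZ^-$ is built from the data of a polystable symplectic bundle $(U,\theta)$ of rank $2n-2$, an extension class $\delta \in H^1(\Sigma, \Hom(U,K^{-1/2}))$ producing $E$, and a lift $\tilde\delta$ of the dual class, whose freedom is measured by $\{a_k\} \in H^1(\Sigma, K^{-1})$; the symplectic form on $V$ and the spinor $\psi$ are then uniquely determined (up to the global sign $-\mathrm{Id}$, which is already quotiented out in $\calS$). Concretely, I would assemble this into a morphism from a total space of a (vector-bundle-like) family over $\calU(Sp(2n-2,\C))$ onto $\calZ^-$: the base $\calU(Sp(2n-2,\C))$ is irreducible, the fibres $H^1(\Sigma,\Hom(U,K^{-1/2})) \oplus H^1(\Sigma,K^{-1})$ are affine spaces of constant dimension (the relevant $H^0$'s vanish because $\Hom(K^{1/2},U)$ and $K^{-1}$ are semistable of negative degree), so the total space is irreducible, and its image $\calZ^-$ is therefore irreducible. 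One must be slightly careful that this really is a family, i.e. that the construction works in families over an étale cover or the moduli stack — but the cocycle description given in the text (transition functions $v_{ij}$, the cup-product condition $\tilde\theta\cdot\tilde\delta = 0$) is manifestly functorial, so the family exists at least over the open locus of stable $U$, which is dense, and the closure argument is unaffected.

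The main obstacle I anticipate is a bookkeeping one rather than a conceptual one: making precise that $\calZ^-$ is genuinely the continuous image of an irreducible variety, which requires knowing that the data $(U,\delta,\{a_k\})$ vary algebraically and that no monodromy or choice issue breaks irreducibility. The subtle point is the choice of lift $\tilde\delta$ of the dual extension class; the text shows the space of lifts is a torsor under $H^1(\Sigma,K^{-1})$, so this contributes an affine factor and does not disconnect anything, but one should check that the identification $\theta: U \to U^*$ used to dualize $\delta$ is made consistently (two symplectic forms on a polystable $U$ differ by an automorphism, so the resulting $V$ is independent of this choice up to isomorphism). Finally, I would note that Lemma~\ref{down} (the preceding lemma, showing $\calZ^-$ meets the conormal bundle of the theta-divisor $D_{K^{1/2}}$) is not logically needed for irreducibility but confirms consistency: it exhibits explicit limit points of $\calZ^-$ in $\calU(Sp(2n,\C)) \cap X = \Theta$, matching the description of $X$ in Theorem~\ref{resu}, and shows the closure $\overline{\calZ^-}$ genuinely touches the "downward" strata, so that the gradient flow of $f$ identifies $X$ with this component in the Morse-theoretic picture.
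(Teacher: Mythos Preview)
Your approach is correct but takes a genuinely different route from the paper's. You argue irreducibility via the \emph{upstream} parametrization: the reconstruction data $(U,\delta,\{a_k\})$ form an irreducible parameter space (irreducible base $\calU(Sp(2n-2,\C))$, affine fibres of constant dimension) whose image is $\calZ^-$. The paper instead argues \emph{downstream}: it restricts to the open locus $\calZ_1^- \subset \calZ^-$ where $V$ is semi-stable with $h^0(V\otimes K^{1/2})=1$, and uses the forgetful map $\calZ_1^- \to D_1$ to the corresponding open stratum of the generalized theta-divisor; the fibres are $\C^\times$ (the scaling of $\psi$) and $D_1$ is irreducible because $D_{K^{1/2}}$ is (Theorem~\ref{irred}). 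Contrary to your closing remark, Lemma~\ref{down} is essential to the paper's proof --- it is precisely what guarantees $\calZ_1^- \neq \varnothing$, without which the map to $D_1$ would be vacuous. Your route avoids both Lemma~\ref{down} and the appeal to irreducibility of the theta-divisor, at the cost of the family-existence issue over the coarse moduli space (which you correctly flag and whose \'etale-local workaround is adequate) and of checking that every triple $(U,\delta,\{a_k\})$ yields a semi-stable pair (this does hold: any isotropic $W$ with $\psi \in H^0(W^\perp\otimes K^{1/2})$ lies in $E$, and projecting to $U$ shows $\deg W \le 0$ by polystability of $U$). The paper's route trades these checks for a cleaner appeal to the established structure of the theta-divisor.
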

\begin{proof}
Consider the locus $\calZ_1^-$ of $\calZ^-$ consisting of Higgs bundles $(V,\psi \otimes \psi)$ where the underlying vector bundle $V$ is semi-stable and $H^0(\Sigma , V\otimes K^{1/2})$ has dimension $1$. By Lemma \ref{down}, this is non-empty (as $H^0(\Sigma , U\otimes K^{1/2}) = 0$ for a generic polystable symplectic vector bundle $U$) and it constitutes an open set of $\calZ^-$. Moreover, there is a natural morphism from this open subset to the open set $D_1$ of the generalized theta-divisor $D_{K^{1/2}}$ consisting of those $V$ such that $h^0(\Sigma , V\otimes K^{1/2}) = 1$. Note that $\calZ_1^-$ is pure-dimensional, as every component of the nilpotent cone has the same dimension, and $D_1$ is irreducible, since the generalized theta-divisor is irreducible (c.f. Appendix \ref{irred}). The fibres of the map $\calZ_1^- \to D_1$ are isomorphic to $\C^\times$, which shows that the closure of $\calZ^-$ is irreducible.
\end{proof}

Recall that, when $(n,d)\in \Z_{>0}\times \Z$ are coprime, the moduli space $\calM (n,d)$ of Higgs bundles of rank $n$ and degree $d$ is smooth and the connected components of the fixed point set $\calF$ corresponding to the $\C^\times$-action on $\calM (n,d)$ are in bijection with the irreducible components of the associated nilpotent cone. The connected components of $\calF$ have a modular interpretation, the so-called moduli space of chains (see e.g. \cite{chains, hein}). For general $(n,d)$, or other groups, it is not known if the chains parametrize the components of the nilpotent cone. As explained in \cite[Section 6]{bradlow2017irreducibility}, every connected component $\calZ$ of the fixed point set $\calF$ gives at least one irreducible component, which arises as the closure of the set of Higgs bundles $(V,\Phi)$ such that $\underset{|\lambda| \to \infty}{\lim} (V, \lambda \Phi) \in \calZ$. 

In our case, the locus $\calZ$ corresponding to maximums of $f$ (c.f. Proposition \ref{maximum}), seen inside the Higgs bundle moduli space, is a connected component of the fixed point set of the circle action on $\calM(Sp(2n,\C))$, which, in terms of chains can be written as 
$$K^{1/2}\overset{1}{\to} K^{-1/2} \overset{0}{\to} U,$$
where $U \in \calU (Sp(2n-2,\C))$. As discussed, this is the maximum of $\| \Phi \|_2^2 $ on the Gaiotto Lagrangian. 

\begin{thm}\label{compgaiolag} The Gaiotto Lagrangian $X$ corresponds to the irreducible component of the nilpotent cone $\text{Nilp}(Sp(2n, \C))$ labeled by the  chain   
$$K^{1/2}\overset{1}{\to} K^{-1/2} \overset{0}{\to} U,$$
where $U \in \calU (Sp(2n-2,\C))$.  
\end{thm}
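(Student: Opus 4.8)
\textbf{Proof strategy for Theorem \ref{compgaiolag}.} The plan is to combine the explicit reconstruction of the open set $\calZ^-$ carried out in the preceding pages with the Morse-theoretic description of the fixed-point set $\calF$ of the circle action on $\calS$. The skeleton of the argument has essentially been assembled already: one first observes that $X$, being the Zariski closure of the locus $\{(V,\psi\otimes\psi)\}$ of stable Higgs bundles with rank-$1$ nilpotent Higgs field of the form $\psi\otimes\psi$, is an irreducible subvariety of $\text{Nilp}(Sp(2n,\C))$ of dimension $\dim Sp(2n,\C)(g-1)$ by Proposition \ref{p716} and Theorem \ref{resu}(3), hence a full irreducible component of the nilpotent cone. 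Then one identifies which component by exhibiting a dense open subset of $X$ whose points flow, under the $\C^\times$-action $(V,\Phi)\mapsto (V,\lambda\Phi)$, into the fixed-point component $\calZ$ of Proposition \ref{maximum}, which is precisely the chain $K^{1/2}\overset{1}{\to}K^{-1/2}\overset{0}{\to}U$ with $U\in\calU(Sp(2n-2,\C))$.

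The steps, in order, would be: (i) recall from Proposition \ref{maximum} that the maximum locus of $f:\calS\to\R$ is the connected component $\calZ$ of $\calF$ consisting of pairs $(K^{-1/2}\oplus K^{1/2}\oplus U,\,1)$, and that under the embedding $\calS\hookrightarrow\calM(Sp(2n,\C))$ this maps to the stated chain (using the final Remark, which shows $(V,\psi\otimes\psi)$ is a circle-fixed Higgs bundle with the same eigenbundle decomposition scaled by $\tfrac12$); (ii) invoke the open neighbourhood $\calZ^-$ of $\calZ$ inside $\calM(Sp(2n,\C))$ together with the reconstruction lemma showing that every element of $\calZ^-$ is obtained from a polystable symplectic bundle $U$ of rank $2n-2$, a class $\delta\in H^1(\Sigma,\Hom(U,K^{-1/2}))$, and a class $\{a_k\}\in H^1(\Sigma,K^{-1})$, yielding $\dim\calZ^- = n(2n+1)(g-1) = \dim Sp(2n,\C)(g-1)$; (iii) conclude that $\overline{\calZ^-}$ is an irreducible component of $\text{Nilp}(Sp(2n,\C))$ by the lemma using Lemma \ref{down} (points of $\calZ^-$ flowing directly to the generalized theta-divisor $D_{K^{1/2}}$, whose open locus $D_1$ is irreducible, with $\C^\times$-fibres); (iv) identify $\overline{\calZ^-}$ with $X$: since $X$ is irreducible of the correct dimension and contains the stable locus $\{(V,\psi\otimes\psi)\}$, while $\calZ^-$ is by construction contained in that locus closure and has the same dimension, the two Zariski closures coincide; (v) finally, match $\overline{\calZ^-}=X$ with the component labelled by $\calZ$ by noting that the limit $\lim_{|\lambda|\to\infty}(V,\lambda\psi\otimes\psi)$ lies in $\calZ$ for $(V,\psi\otimes\psi)\in\calZ^-$, and appealing to the general principle (cited from \cite{bradlow2017irreducibility}, Section 6) that the closure of the attracting set of a fixed-point component is an irreducible component of the nilpotent cone.

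The main obstacle I anticipate is step (iii)–(iv): establishing rigorously that $\calZ^-$ is genuinely \emph{dense} in an irreducible component, rather than merely a locally closed subset of the correct dimension sitting inside a possibly larger component. This requires knowing that the nilpotent cone is equidimensional (which follows from flatness of the Hitchin map, as used in Theorems \ref{fibreforsu*}, \ref{descriptionofthefibreforso}, \ref{descriptionofthesibreforsp}) so that a constructible subset of the right dimension whose closure is irreducible must be dense in exactly one component; and it requires checking that the reconstruction recipe actually produces \emph{all} Higgs bundles near the maximum, i.e. that the open condition ``$\psi$ non-vanishing and $(K^{-1/2})^\perp/K^{-1/2}$ polystable'' genuinely cuts out a neighbourhood of $\calZ$ in $X$ — this uses openness of stability and the continuity-of-limits argument already sketched with Remark \ref{norms} relating the pair metric and the Higgs bundle metric near the extremes of $f$. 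A secondary subtlety is verifying that distinct fixed-point components give distinct irreducible components of $\text{Nilp}(Sp(2n,\C))$, so that the chain label $K^{1/2}\overset{1}{\to}K^{-1/2}\overset{0}{\to}U$ unambiguously picks out $X$; this I would handle by the standard Bialynicki-Birula / Morse-index bookkeeping already invoked via \cite{frank} and \cite{rank3}, noting that the attracting sets of distinct components of $\calF$ are disjoint and that $X$ meets only the one through $\calZ$.
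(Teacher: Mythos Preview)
Your proposal is correct and follows essentially the same route as the paper: the paper likewise establishes Proposition \ref{maximum} (the maximum locus is $\calZ$), reconstructs $\calZ^-$ explicitly and computes its dimension to be $n(2n+1)(g-1)$, then proves $\overline{\calZ^-}$ is irreducible via Lemma \ref{down} and the fibration $\calZ_1^- \to D_1$ with $\C^\times$-fibres over the irreducible open locus $D_1$ of the theta-divisor, and finally identifies this component with the chain through $\calZ$ via the $\C^\times$-limit argument citing \cite{bradlow2017irreducibility}. The one small point to tighten is your step (i): irreducibility of $X$ is \emph{not} delivered by Proposition \ref{p716} or Theorem \ref{resu}(3), but only emerges once you have shown $\overline{\calZ^-}$ is irreducible and contains a dense open of $X$ --- which is exactly the mechanism you describe in steps (iii)--(iv) and exactly how the paper proceeds, using equidimensionality of the nilpotent cone just as you anticipate.
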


Note that when the norm of the spinor is sufficiently small, by Hitchin equation, the underlying vector bundle is polystable. The theta-bundle $\Theta$ has a section vanishing on the generalized theta-divisor $D_{K^{1/2}}$ and at a point $V \in D_{K^{1/2}}$, the fibre of $\Theta$ is identified with 
$$\Lambda^{top}H^0(\Sigma , V\otimes K^{1/2})^* \otimes \Lambda^{top}H^1(\Sigma , V\otimes K^{1/2})$$  
(for more details see Appendix \ref{thetadivisor}). Since $V$ is symplectic, Serre duality gives an identification between $H^1(\Sigma , V\otimes K^{1/2})$ and $H^0(\Sigma , V\otimes K^{1/2})^*$. It thus follows that restricting to the locus of $D_{K^{1/2}}$ consisting of semi-stable symplectic vector bundle $V$ which have $h^0(\Sigma , V\otimes K^{1/2}) = 1$, the conormal bundle gets identified with $\Theta^{-2}$, as explained in \cite[Section 3]{spinors}. The Gaiotto Lagrangian intersects the cotangent bundle $T^*\calU (Sp(2n,\C))$ in the conormal bundle of $D_{K^{1/2}}$.
\part{Further questions}
% Chapter 9

\chapter{Further questions} % Main chapter title

\label{Chapter9} % For referencing the chapter elsewhere, use \ref{Chapter1} 

\lhead{Chapter 9. \emph{Further questions}} % This is for the header on each page - perhaps a shortened title

%----------------------------------------------------------------------------------------

In this final chapter we mention some interesting questions that emerge from the results and ideas in this thesis.  

\begin{enumerate}
\item \textbf{The hyperholomorphic bundle}: Given a complex reductive group $G$, mirror symmetry should transform a BAA-brane on the moduli space $\calM (G)$ of $G$-Higgs bundles to a BBB-brane on the moduli space of Higgs bundles for the Langlands dual group $\lan{G}$. Moreover, this duality should be realized by a Fourier-type transformation. For any real form $G_0$ of $G$ there is a natural associated BAA-brane on $\calM (G)$. In Chapter \ref{Chapter6} we give a proposal for the support of the dual BBB-brane on $\calM (\lan{G})$ associated to the real forms $G_0 = SU^*(2m)$, $SO^*(4m)$ and $Sp(m,m)$ of $G = SL(2m,\C)$, $SO(4m,\C)$ and $Sp(4m,\C)$, respectively. For these real forms, the BAA-branes are supported on the singular locus (the associated spectral curves are non-reduced schemes) and a full Fourier-Mukai transform is not known to exist. Recently, Gaiotto \cite{gaiotto2016s} considered several examples of BAA-branes in the moduli space of Higgs bundles and discussed the associated dual BBB-brane, which always arose from the Dirac-Higgs bundle. In particular, he considers examples of BAA-branes associated to symplectic representations of $G$ (as in Chapter \ref{Chapter7}) and he also remarks that Hitchin's proposal in  \cite{hitchin2013higgs} for the dual brane corresponding to the real form $U(m,m) \subset GL(2m,\C)$ is in accordance with the ideas developed in his paper. Let us recall the construction of the Dirac-Higgs bundle and explain some difficulties that arise for our real forms. A good reference for this material is \cite{blaavand2015dirac}, which we follow closely together with Section 7 from \cite{hitchin2013higgs}. On a manifold with a spin structure we have natural spinor bundles $S^{\pm}$ and the ordinary Dirac-operator can be seen as an operator taking smooth sections of $S^+$ to smooth section of $S^-$. Let $K^\C$ be a semisimple Lie group, where $K$ is compact, and consider a $K^\C$-Higgs bundle $(P,\Phi)$. Also, let $A$ be the $K$-connection on $P$ solving the Higgs bundle equations. The \textbf{Dirac-Higgs operator} is then defined as the ordinary Dirac operator coupled with $(A,\Phi)$. In particular, given a representation $\V$ of $K$ we can consider the operator $D_\V : \Omega^0(S^+ \otimes V) \to \Omega^0(S^-\otimes V)$, where $V$ is the vector bundle associated to $P$ via $\V$. More precisely,
$$D_{\V} = \left( \begin{matrix} \partial_A& -\Phi\\ \Phi^*&-\bar{\partial}_{A} \end{matrix} \right): \Omega^0(V)^{\oplus 2} \to \Omega^{1,0}(V) \oplus \Omega^{0,1}(V)$$  
and considering the $L^2$-norm on $\Omega^{1}(V)$, the adjoint of $D_{\V}$ is the elliptic operator 
$$D^*_{\V} = \left( \begin{matrix} \bar{\partial}_A& \Phi\\ \Phi^*& \partial_{A} \end{matrix} \right): \Omega^{1,0}(V) \oplus \Omega^{0,1}(V) \to \Omega^{1,1}(V).$$  
As remarked by Hitchin in \textit{loc. cit.}, the Higgs bundle equation yields a vanishing theorem for irreducible connections and the dimension of $\ker (D^*_{\V})$ is $2 \rk (V) (g-1)$ (this can also be found in \cite[Proposition 2.1.10]{blaavand2015dirac}). Hodge theory allows one to view the operator in complex structure $I$ as the two-term complex of sheaves $\mathsf{V}$ given by  
$$\Phi : \calO (V) \to \calO (V\otimes K)$$ 
and the kernel of $D^*_{\V}$ is identified with the first hypercohomology $\K^1 (\Sigma , \mathsf{V})$. Moreover, if $\det (\Phi)$ has simple zeros $z_1, \ldots, z_N$, one has the identification 
$$\K^1 (\Sigma , \mathsf{V}) \cong \bigoplus_{i=1}^N \coker (\Phi_{z_i}),$$ 
where $N = 2 \rk (V) (g-1)$ (see e.g. Lemmas 2.4.1 and 2.4.3 in \cite{blaavand2015dirac}). The moduli space of Higgs bundles $\calM = \calM (K^\C)$ is coarse and not fine, thus there is no universal Higgs bundle on $\calM \times \Sigma$. However, one can always find an open covering $\{ U_i \}$ of $\calM$ and local universal Higgs bundles $(\calV_i , \Theta_i)$, where $\calV_i$ is a vector bundle on $U_i \times \Sigma$ and $\calV_i \to \calV\otimes p_1^*K$, where $p_1 : \calM \times \Sigma \to \Sigma$ is the projection onto the first factor. On each intersection $U_{ij} = U_i \cap U_j$ the bundles are related by $\calV_i|_{U_{ij}} = L_{ij}\otimes \calV_j|_{U_{ij}}$, for some flat unitary line bundles $L_{ij}$ satisfying $L_{ij} = L_{ji}^{-1}$ on $U_{ij}$ and $L_{ij}L_{jk}L_{ki}= \calO$ on the triple intersection $U_{ijk} = U_i \cap U_{j} \cap U_k$. These line bundles define a so-called \textit{gerbe} (for more on gerbes, see e.g. \cite{lectslf}). Moreover, if $p_1 : \calM^s \times \Sigma \to \calM^s$ is the projection onto the stable locus of $\calM$, the sheaf $\calD_i = R^1p_{1,*}\calV_i$ is a vector bundle whose fibre at a stable Higgs bundle $(V,\Phi)$ is isomorphic to $\K^1 (\Sigma , \mathsf{V})$. These vector bundles $\calD_i$ are related to each other on the intersections by the line bundles $L_{ij}$. One may embed $\calD_i$ in the trivial bundle $\Omega = \Omega^{1,0}(V)\oplus \Omega^{0,1}(V) \times \calM^s$ on $\calM^s$, where $(V,\Phi) \in \calM^s$ (denote by $A$ the metric solving the Higgs bundle equation). The Hermitian metric on $V$ induces an $L^2$-metric on $\Omega$ and $\ker (\calD_{(A,\Phi)}^*) \subset \Omega^{1,0}(V)\oplus \Omega^{0,1}(V)$ has a natural unitary connection (coming from the orthogonal projection of the trivial connection on $\Omega$). Using the different interpretations of the operator $\calD_{(A,\Phi)}^*$ in different complex structures (e.g. in complex structure $J$ it can be viewed as the de Rham complex for the flat connection $\nabla_A + \Phi + \Phi^*$), one can prove that the unitary connection constructed in each $\calD_i$ is of type $(1,1)$ with respect to all complex structures of $\calM$, thus a hyperholomorphic vector bundle (see e.g. \cite[Theorem 2.6.3]{blaavand2015dirac}).  

We want to find a hyperholomorphic bundle supported on the moduli space of Higgs bundles for the Nadler group $\lan{G_0}$, where $G_0 = SU^*(2m)$, $SO^*(4m)$ and $Sp(m,m)$. First we look at the Dirac-Higgs operator associated to the representations of the Nadler group coming from the embedding $\lan{G_0} \hookrightarrow \lan{G}$. Consider the embedding 
$$g\in SL(2m,\C) \mapsto \diag (g,g) \in SL(4m,\C) .$$
Take a vector bundle $V$ of rank $2m$ with trivial determinant and assume that $(V,\Phi)$ lies in a generic fibre of the $SL(2m,\C)$-Hitchin fibration. Let $S = \zeros (p(\lambda))$ be the (smooth) spectral curve of $(V, \Phi)$, where $p(x) = x^{2m} + a_2 x^{2m-2}+\ldots+a_{2m}$, and denote by $Z$ the distinct zeros of $\det (\Phi) = a_{2m}$. Thus, the first hypercohomology at $(V,\Phi)$ gives         
\begin{align*}
\K^1 (\Sigma , \mathsf{V}) & \cong \bigoplus_{z\in Z} \coker (\Phi\oplus \Phi)_{z}\\
&  \cong \bigoplus_{z\in Z} (L\pi^*K \oplus L\pi^*K)_z ,
\end{align*}
where $\pi : S \to \Sigma$ is the spectral cover and $L \in \Prym$ (with $\pi_*L \cong V$). There is a similar picture for the group $SU^*(2m)$, but now the dual brane should be supported on the strictly semi-stable locus of the moduli space of $PGL(4m,\C)$-Higgs bundles and so, there is no natural candidate for the hyperholomorphic bundle (or sheaf). For the other two cases, one also encounters the difficulty of the dual brane being contained in the strictly semi-stable locus of the Higgs bundle moduli space. For $SO^*(4m)$, the Nadler group $\lan{SO^*(4m)} = Sp(2m,\C)$ sits inside $\lan{SO(4m,\C)} = SO(4m,\C)$ as $\diag (g, \tp{g}^{-1})$. Let $(V,\Phi)$ be a symplectic Higgs bundle (of rank $2m$) associated, via the \textit{BNR} correspondence, to the line bundle $L \in \PS$. If $Z$ is the fixed point set of the involution $\sigma$ on the spectral curve $S $ of $(V,\Phi)$, the hypercohomology at this point gives 
\begin{align*}
\K^1 (\Sigma , \mathsf{V}) & \cong \bigoplus_{z\in Z} (L\pi^*K^{(2m+1)/2} \oplus \sigma^*L\pi^*K^{(2m+1)/2})_z
\end{align*}
and the obstruction to having an universal bundle lies in $H^2(\calM (Sp(2m,\C), \Z_2))$. As in the $U(m,m)$ case treated by Hitchin in loc. cit., the fibre of the integrable system for $SO^*(4m)$ has several connected components and it is not clear which hyperholomorphic bundle should be taken for each component. Finally, for $G_0 = Sp(m,m)$ we consider the embedding 
\begin{align*}
\lan{Sp(m,m)} = Sp(m,m) & \to \lan{Sp(4m,\C)} = SO(4m+1, \C)\\
g & \mapsto \diag (g, \tp(g)^{-1}, 1).
\end{align*}
As remarked by Hitchin in \cite{hitchin2007langlands}, the zero eigenspace (associated to $1$ in the embedding) links $\lie{so}(4m+1, \C)$ and $\lie{sp}(4m,\C)$ in the duality and brings further technical difficulties.
 
\item \textbf{Vector bundles with many sections}: In Chapter \ref{Chapter8} we have studied the Lagrangian inside $\calM (Sp(2m,\C))$ obtained by Higgs bundles of the form $(V,\psi\otimes \psi)$, where $V$ was a symplectic vector bundle such that $V\otimes K^{1/2}$ admitted sections (for a fixed square root $K^{1/2}$ of the canonical bundle of $\Sigma$). In particular, this Lagrangian intersected the cotangent bundle to the moduli space of stable symplectic bundles on the conormal bundle of the determinant divisor. A natural generalization is to look at the locus inside $\calM (Sp(2m,\C))$ consisting of those $V$ such that $h^0(\Sigma , V\otimes K^{1/2}) = k > 1$. This is a natural generalization of Gaiotto Lagrangian and is closely related to lower-dimensional subvarieties contained in the determinant divisor of $\calU (Sp(2m,\C))$ (and conormal bundles of those). Recall that for any symplectic representation of the semisimple complex group $G$, we obtained a quadratic moment map. In particular, $\mu$ corresponds to a bilinear form and if we have a $k$-dimensional space of sections $H^0(\Sigma , V\otimes K^{1/2})$, then we may look at the image of the map 
$$\Sym^2 H^0(\Sigma , V\otimes K^{1/2}) \to H^0(\Sigma , \Sym^2 (V) \otimes K^{1/2})$$
inside the space of Higgs fields. By the same reasons given in Chapter \ref{Chapter7}, we expect to obtain an isotropic subvariety and we may ask if it is a Lagrangian inside the Higgs bundle moduli space and how it intersects the Gaiotto Lagrangian. To get an idea about the dimension of this subvariety $X_k$ inside $\calM (Sp(2m,\C))$, note that the $\bar{\partial}$-operator 
$$\bar{\partial} : \Omega^0 (\Sigma , V\otimes K^{1/2}) \to \Omega^{0,1} (\Sigma , V\otimes K^{1/2})$$ 
is formally symmetric (see \cite{spinors}). The finite-dimensional analog of this is the space of symmetric matrices. In that case, it is known that the locus of points where the kernel has dimension greater than or equal to $k$ is of codimension $k(k+1)/2$. Using Fredholm operators one should expect the same to hold in this infinite-dimensional setting, provided $V$ is generic. Now, from a $k$-dimensional space of spinors we can construct a $k(k+1)/2$-dimensional space of Higgs fields, thus obtaining that $X_k$ is also a Lagrangian. Another way to think of this is by shifting the problem to a problem of Brill-Noether locus for line bundles, where much more is known. To that end we consider the question of abelianisation for $\calU (Sp(2m,\C))$, which has to do with finding an open set of an abelian variety with a dominant rational map to $\calU (Sp(2m, \C))$ (see \cite{bnr, hitchin1987stable, donagi}). Thus, we consider a Prym variety $\PS$ given by a ramified $2$-covering 
$$\rho : S \to \bar{S},$$ 
where $\bar{S}$ is the quotient of $S$ by an involution $\sigma : S \to S$. Denote by $\pi : S \to \Sigma$ the corresponding $2m$-ramified covering and note that $H^0(\Sigma, V\otimes K^{1/2}) = H^0(S, U\pi^*K^m)$, where $V$ is a symplectic vector bundle obtained by the direct image of the line bundle $L= U\pi^*K^{(2m-1)/2}$, with $U \in \PS$. In \cite{MR1429335}, Kanev studies Brill-Noether loci related to Prym varieties which arise in this way and obtains that if the locus 
$$W^r = \{ U \in \PS \ | \ h^0(S, U\pi^*K^m) > r \}$$
is non-empty, every irreducible component of $W^r$ has dimension at least $\dim Sp(2m,\C)(g-1) - (r+2)(r+1)/2$. Also, the Zariski open set of $W^r$ containing line bundles $U$ with  $h^0(S, U\pi^*K^m) = r+1$ and whose kernel of the Petri map is $\Lambda^2 H^0(S, U\pi^*K^m)$ is smooth of dimension $\dim Sp(2m,\C)(g-1) - (r+2)(r+1)/2$, which is what we expect to get a Lagrangian. We finish by mentioning an example when the genus of $\Sigma$ is $3$ and the rank of the bundle is $2$. In that case, the moduli space $\calU_\Sigma (2,K)$ of semi-stable rank $2$ bundles of fixed determinant $K$ (this is isomorphic to $\calU_\Sigma (Sp(2,\C))$) is known to be embedded in $\CP^7$ as a so-called \textit{Coble quartic}. Moreover, the Brill-Noether locus $W= W^0$ is such that $W^1$ is a Veronese cone with vertex $W^2$, a unique point. So there is a unique vector bundle with $3$-dimensional space of sections and it turns out that this is actually a smooth point in this moduli space (see \cite{MR1467474}). Thus, the subvariety $X_3$ inside the moduli space of $SL(2,\C) = Sp(2,\C)$-Higgs bundles in this case is indeed a Lagrangian, which is the closure of the fibre of the cotangent bundle inside the Higgs bundle moduli space at this stable bundle $W^2$. Moreover, the usual Gaiotto Lagrangian is contained in $X_3$ as a quadric cone. 

\item \textbf{An orthogonal analogue}: In the second part of the thesis we have considered a distinguished (Lagrangian) subvariety of the Higgs bundle moduli space obtained from a symplectic representation of the group. In particular, we describe in detail in Chapter \ref{Chapter8} this subvariety for the standard representation of the symplectic group. Let us briefly describe a natural analogue of this construction for the orthogonal group. 
Fix a square root $K^{1/2}$ of the canonical bundle of $\Sigma$. Recall that for any vector bundle $W$ on $\Sigma$ with $O(n,\C)$-structure we can assign 
an analytic mod $2$ index
$$\varphi_\Sigma (W) =  h^{0}(\Sigma , W\otimes K^{1/2}) \ \text{mod }2$$ 
which, by Theorem $1$ in \cite{hitchin2013higgs}, satisfies 
$$w_2 (W) = \varphi_\Sigma (W) + \varphi_\Sigma (\det (W)),$$
where $w_2$ is the second Stiefel-Whitney class. So, let $V$ be an orthogonal vector bundle. If $\varphi_\Sigma (V)$ is one, there are always spinors (i.e., global holomorphic sections of $V\otimes K^{1/2}$). If it is zero, the generic stable orthogonal vector bundle does not admit any non-trivial spinor and the constraint $h^0(\Sigma , V \otimes K^{1/2}) \neq 0$ gives a divisor $D_{K^{1/2}}$ on the moduli space of stable orthogonal bundles. Moreover, the associated $\bar{\partial}$-operator
$$\bar{\partial} : \Omega^0 (\Sigma , V\otimes K^{1/2}) \to \Omega^{0,1} (\Sigma , V\otimes K^{1/2})$$
is formally skew-adjoint and the Quillen determinant line bundle for this family of $\bar{\partial}$-operators is the square of a Pfaffian. In particular, if $V$ is on the divisor and $\psi_1$ and $\psi_2$ are two non-zero spinors, we obtain $\psi_1 \wedge \psi_2 \in H^0(\Sigma , \Lambda^2V \otimes K)$. Since $\Lambda^2V$ is isomorphic to the adjoint bundle of $V$, this is a Higgs field for the orthogonal group. Although this may no longer be maximally isotropic, it provides a distinguished subvariety of the Higgs bundle moduli space for the orthogonal group which appears as an analogue of the construction for the symplectic group. Apart from the interesting task of studying the geometry of this subvariety, one could try and see what is the corresponding locus inside the symplectic group under the duality between Higgs bundles for the odd orthogonal group and Higgs bundles for the symplectic group (for more details on this duality see \cite{hitchin2007langlands}).         
\end{enumerate}

%\input{Chapters/Notations}
%----------------------------------------------------------------------------------------
\chapter*{\huge \bfseries Notational conventions}
\thispagestyle{plain}
\label{notationalconv}
\lhead{\emph{Notational conventions}}
\addcontentsline{toc}{chapter}{Notational conventions}

We gather below a list of selected notation and notational conventions frequently used throughout the thesis. 
\begin{itemize}
\item $\Sigma$ is always a compact Riemann surface (or equivalently a projective non-singular irreducible curve over $\C$) of genus $g\ge 2$ with canonical bundle $K$.
\item $C^{(k)}$ is the $k$-th symmetric product of a Riemann surface $C$.
\item $\textsf{I}, \textsf{J}$ and $\textsf{K}$ are always complex structures.
\item $G$ is a (connected) reductive group over $\C$ (in the algebraic or in the analytic category) with Lie algebra $\lie{g}$.
\item $G_0$ is a real form of $G$ (fixed by some anti-holomorphic involution on $G$) with Lie algebra $\lie{g}_0$. 
\item $\lan{G_0} \subset \lan{G}$ is the Nadler group associated to the real form $G_0$ of $G$, where $\lan{G}$ denotes the Langlands dual group of $G$.  
\item $V$ and $E$ are (holomorphic or algebraic) vector bundles.
\item We usually denote the tensor product $L_1 \otimes L_2$ of line bundles simply by $L_1L_2$. 
\item $\calE$ is a coherent sheaf. 
\item $P$ is a (holomorphic or algebraic) principal bundle.
\item $\calU_\Sigma (n,d)$ (or simply $\calU (n,d)$) is the moduli space of semi-stable vector bundles of rank $n$ and degree $d$ on $\Sigma$. Moreover, we denote by $\calU (n, L) \subset \calU (n,d)$ the locus of vector bundles with fixed determinant $L$, where $L$ is a line bundle on $\Sigma$ of degree $d$.  
\item $\calN^d (G)$ is the moduli space of semi-stable principal $G$-bundles on $\Sigma$ of topological type $d\in \pi_1 (G)$. When seen as a moduli space parametrizing semi-stable vector bundles (with extra structure) $\calN^d (d)$ is denoted by $\calU^d (G)$.  
\item $B$ is a non-degenerate $\Ad$-invariant bilinear form on $\lie{g}$. In particular, we usually denote the Killing form of $\lie{g}$ by $B_\lie{g}$.  
\item In general, when we want to refer to a Higgs bundle on $\Sigma$ as a complex of locally-free sheaves concentrated in degrees $0$ and $1$ we write the underlying bundle in Sans Serif font, e.g. the Higgs bundle $(E,\Phi)$ is denoted as $\mathsf{E} = (E \overset{\varphi}{\to} E\otimes K )$. 
\item $\calM^d (G)$ is the moduli space of polystable $G$-Higgs bundles on $\Sigma$ of type $d\in \pi_1 (G)$. In particular, when $G = GL(n,\C)$, we denote by $\calM (n,d)$ the moduli space of polystable Higgs bundles of rank $n$ and degree $d$. 
\item $\calM^d_{dR}(G)$ (respectively, $\calM^d_{gauge}(G)$) is the moduli space of flat $G$-connections (respectively, solutions to Hitchin equations) on $\Sigma$ of type $d\in \pi_1 (G)$.
\item $h_G : \calM^d(G) \to \calA (G)$ (or simply $h$) is the $G$-Hitchin map, where $\calA (G)$ is the Hitchin base.
\item $X_a$ (respectively, $\pi_a : X_a \to \Sigma$) is the spectral curve (respectively, spectral cover) associated to a point $a\in \calA (G)$ in the Hitchin base.
\item $\Pic^0 (X)$ is the Jacobian of a projective curve $X$ (i.e., the connected component of the identity of the Picard scheme). 
\item $\Nm_\pi : \Pic^0(X_1) \to \Pic^0 (X_2)$ is the Norm map associated to a finite map $\pi : X_1 \to X_2 $ between projective curves over $\C$. Moreover, $\text{P} (X_1, X_2)$ is the associated Prym variety. 
\item $\calM (\calO_X(1), P)$ is the Simpson moduli space parametrizing semi-stable sheaves on a polarized projective $\C$-scheme $(X, \calO_X(1))$ with Hilbert polynomial $P$. In particular, we denote by $\calM (a ; k)$ the Simpson moduli space for semi-stable rank $1$ sheaves on the spectral curve $X_a$ of polarized degree $k$ (with the polarization coming from a polarization on $\Sigma$).
\item $\calS^d (\rho , K^{1/2})$ is the moduli space of semi-stable $(\rho , K^{1/2})$-pairs of type $d\in \pi_1 (G)$ on $\Sigma$.
\item $\Theta(\V)$ is the theta-bundle on $\calN (G)$ associated to a linear representation $\V$. 
\item $D_{L_0} = \{ P \in \calN (G) \ | \ H^0(\Sigma , P(\V)\otimes L_0) \neq 0 \}$ is the support of the generalized theta-divisor associated to the linear representation $\V$ (and an appropriate choice of line bundle $L_0$).
\item $\partial_A$ and $\bar{\partial}_A$ are the $(1,0)$ and $(0,1)$-parts of the covariant derivative $d_A$ (or $\nabla_A$) associated to a connection $A$.

\end{itemize}
%----------------------------------------------------------------------------------------
%	THESIS CONTENT - APPENDICES
%----------------------------------------------------------------------------------------

\addtocontents{toc}{\vspace{2em}} % Add a gap in the Contents, for aesthetics

\appendix % Cue to tell LaTeX that the following 'chapters' are Appendices

% Include the appendices of the thesis as separate files from the Appendices folder
% Uncomment the lines as you write the Appendices

% Appendix A

\chapter{Lie theory} % Main appendix title

\label{Lie theory} % For referencing this appendix elsewhere, use \ref{AppendixA}

\lhead{Appendix A. \emph{Lie theory}} % This is for the header on each page - perhaps a shortened title

In this subsection we recall some basic facts in Lie theory and establish notation used throughout the text. 

Let $\mathfrak{g}$ be a finite-dimensional Lie algebra over the field of real or complex numbers. Consider the algebraic group $\Aut (\lie{g})$ of automorphisms of $\lie{g}$. We denote by $\Int (\lie{g})$ the (normal) subgroup of $\Aut (\lie{g})$ generated by elements of the form $\exp (\ad x)$, $x \in \lie{g}$, and call it the \textbf{group of inner automorphisms} of $\lie{g}$. Also, the quotient $\Out (\lie{g}) = \Aut (\lie{g}) / \Int (\lie{g})$ is called the \textbf{group of outer outer isomorphisms} of $\lie{g}$. In particular, the Lie algebra of $\Int (\lie{g})$ is the \textbf{adjoint algebra} $\ad (\lie{g})$, an ideal of the Lie algebra of $\Aut (\lie{g})$, which is the algebra $\der (\lie{g})$ of derivations of $\lie{g}$. Moreover, the image $\Ad (G)$ of the adjoint representation of $G$ is a normal subgroup of $\Aut (\lie{g})$ called the \textbf{adjoint group} of the Lie group $G$. When $G$ is connected, $\Ad (G) = \Int (\lie{g})$ and $\ker \Ad = Z (G)$ is the center of $G$.

\begin{rmk} If $\lie{g}$ is semisimple, $\der (\lie{g}) = \ad (\lie{g})$ and $\Int (\lie{g})$ is the connected component $\Aut (\lie{g})^\circ$ containing the identity of $\Aut (\lie{g})$. 
\end{rmk}

Let $\lie{g}$ be a Lie algebra over the complex numbers. A real subalgebra $\lie{g}_0 \subseteq \lie{g}$ is called a \textbf{real form} of $\lie{g}$ if the natural homomorphism of complex Lie algebras $ \lie{g}_0\otimes_\R \C \to \lie{g}$ is an isomorphism. Note that the real form $\lie{g}_0$ defines an anti-linear involution on $\lie{g} \cong \lie{g}_0\oplus i\lie{g}_0$ by
$$\sigma : x + iy \to x + -iy.$$
Conversely, given an anti-linear involution $\sigma : \lie{g} \to \lie{g}$, the subalgebra of fixed points $\lie{g}^\sigma$ defines a real form of $\lie{g}$. This correspondence gives a bijection between isomorphism classes of real forms and isomorphism classes of anti-linear involutions, or \textbf{conjugations}, of $\lie{g}$ (where two conjugations $\sigma_1$ and $\sigma_2$ are equivalent if $\sigma_1 = \alpha \sigma_2 \alpha^{-1}$, for some $\alpha \in \Aut (\lie{g})$).

Complex semisimple Lie algebras always have special real forms. A real form $\lie{g}_0$ of $\lie{g}$ is called \textbf{compact} if $\Int (\lie{g}_0)$ is a compact group. For the semisimple case, this is equivalent to $\lie{g}_0$ being a Lie algebra of a compact Lie group, or that its Killing form $B_{\lie{g}_0}(x,y) = \tr (\ad(x) \ad(y))$ is negative definite. There exists a unique compact real form, say $\lie{k}$, up to conjugacy by an inner automorphism, of a complex semisimple Lie algebra $\lie{g}$ and one can use this to get a correspondence\footnote{The correspondence between equivalence classes of anti-linear involutions and equivalence classes of linear involutions restricts to a bijection when the equivalence is considered by inner automorphisms.} between real forms and isomorphism classes of $\C$-linear involutions of $\lie{g}$. The correspondence goes as follows. Given an anti-linear involution $\sigma$, Cartan showed (see, e.g., \cite{onishchik2004lectures}) that one can always find another anti-linear involution $\tau$ corresponding to a compact real form of $\lie{g}$ which commutes with $\sigma$. One then assigns $\sigma$ to $\theta \coloneqq \sigma \circ \tau = \tau \circ \sigma$. 

\begin{defin} Let $\lie{g}_0$ be a real Lie algebra. An involution $\theta : \lie{g}_0\to \lie{g}_0$ is called a \textbf{Cartan involution} if the symmetric bilinear form on $\lie{g}_0$ given by $B_\theta (x, y) \coloneqq - B(x, \theta y) $ is positive definite.
\end{defin}

The involution $\theta$ corresponding to a non-compact real form $\sigma$ of a complex semisimple Lie algebra $\lie{g}$ satisfies this property. 

\begin{claim} $\theta|_{\lie{g}_0}$ is a Cartan involution.
\end{claim}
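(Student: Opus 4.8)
The plan is to unpack exactly what the three involutions $\sigma$, $\tau$, $\theta$ are and how they sit together, and then to compute $B_\theta$ on $\lie{g}_0$ directly. Recall the setup: $\lie{g}_0 = \lie{g}^\sigma$ is the fixed-point set of the anti-linear involution $\sigma$ on the complex semisimple Lie algebra $\lie{g}$, and $\tau$ is the commuting anti-linear involution whose fixed-point set $\lie{k} = \lie{g}^\tau$ is a compact real form; we have set $\theta = \sigma\tau = \tau\sigma$, a $\C$-linear involution of $\lie{g}$. The first step is to observe that since $\sigma$ and $\theta$ commute (both are built from the commuting pair $\sigma,\tau$), $\theta$ restricts to an $\R$-linear involution of $\lie{g}_0 = \lie{g}^\sigma$; indeed if $\sigma x = x$ then $\sigma(\theta x) = \theta(\sigma x) = \theta x$. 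So $\theta|_{\lie{g}_0}$ is a genuine involution of the real Lie algebra $\lie{g}_0$, and the statement $B_{\lie{g}_0}(x,\theta x) < 0$ for $x \neq 0$ makes sense, where here $B$ (resp. $B_{\lie{g}_0}$) is the Killing form of $\lie{g}$ (resp. of $\lie{g}_0$), and these agree on $\lie{g}_0\times\lie{g}_0$ since the Killing form of a real form is the restriction of the Killing form of the complexification.

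The key computational step is the following. On $\lie{g}$ we have the Cartan decomposition for the compact real form $\lie{k}$: write $\lie{g} = \lie{k} \oplus i\lie{k}$, and the Killing form $B$ of $\lie{g}$ is negative definite on $\lie{k}$ and positive definite on $i\lie{k}$ (as real subspaces), because $B|_{\lie{k}}$ is the Killing form of the compact form, hence negative definite, and $B$ is $\C$-bilinear so $B(iy,iy) = -B(y,y) > 0$ for $0\neq y\in\lie{k}$. Moreover $\tau$ is precisely $+1$ on $\lie{k}$ and $-1$ on $i\lie{k}$, so for all $z\in\lie{g}$ one has $-B(z,\tau z) > 0$ for $z\neq 0$: writing $z = y_1 + iy_2$ with $y_j\in\lie{k}$, $\tau z = y_1 - iy_2$, and $-B(z,\tau z) = -B(y_1,y_1) + B(y_2,y_2)$... wait, more carefully $-B(z,\tau z) = -B(y_1+iy_2, y_1-iy_2) = -B(y_1,y_1) - B(y_2,y_2) + (\text{cross terms that vanish since } B(\lie{k},i\lie{k})=0)$, giving $-B(y_1,y_1) - B(y_2,y_2) > 0$. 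So $-B(\cdot,\tau\cdot)$ is a positive-definite real form on $\lie{g}$ viewed as a real vector space. Now restrict to $x\in\lie{g}_0 = \lie{g}^\sigma$: then $\theta x = \sigma\tau x = \sigma(\tau x)$, but actually since $\sigma$ fixes $\lie{g}_0$... the cleanest route is $\tau x = \sigma\theta x = \theta\sigma x$; hmm, I want $B_\theta(x,x) = -B(x,\theta x)$ and I want to relate $\theta x$ to $\tau x$ using $\sigma x = x$. Since $\theta = \tau\sigma$, for $x\in\lie{g}_0$ we get $\theta x = \tau\sigma x = \tau x$. Therefore $B_\theta(x,x) = -B(x,\theta x) = -B(x,\tau x) > 0$ for $x\in\lie{g}_0\setminus\{0\}$, by the positive-definiteness just established (applied to $z = x$). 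That is exactly the claim.

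The remaining loose ends are bookkeeping: (i) confirm that $B_\theta(x,y) := -B(x,\theta y)$ is symmetric, which follows from $\Ad$-invariance/symmetry of $B$ together with $\theta$ being an automorphism ($B(\theta x,\theta y) = B(x,y)$, hence $B(x,\theta y) = B(\theta x, y)$); (ii) note that $B$ restricted to $\lie{g}_0$ is nondegenerate (it is the Killing form of the semisimple $\lie{g}_0$) so the form is honestly positive definite and not merely positive semidefinite — this is subsumed in the strict inequality above. I do not anticipate a serious obstacle: the only subtlety is making sure the identity $\theta|_{\lie{g}_0} = \tau|_{\lie{g}_0}$ is used at the right moment, and that one is allowed to transport the positive-definiteness of $-B(\cdot,\tau\cdot)$ from all of $\lie{g}$ down to the real subspace $\lie{g}_0$, which is immediate since positive-definiteness of a form passes to subspaces. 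So the ``hard part'' is essentially just arranging the three involutions correctly; once that is done the proof is a two-line computation.
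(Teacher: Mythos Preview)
Your proof is correct and takes essentially the same route as the paper: both arguments hinge on the identity $\theta|_{\lie{g}_0} = \tau|_{\lie{g}_0}$ together with the negative-definiteness of $B|_{\lie{k}}$ (the paper phrases this via the eigenspace identification $\lie{k} = \lie{h}\oplus i\lie{m}$, while you compute $-B(z,\tau z)$ directly on $\lie{g} = \lie{k}\oplus i\lie{k}$). One minor correction: the cross terms in $-B(y_1+iy_2,\,y_1-iy_2)$ cancel by the symmetry of $B$, not because $B(\lie{k},i\lie{k})=0$ --- that pairing sits in $i\R$, not in $\{0\}$.
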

\begin{proof} First notice that both $\tau$ and $\sigma$ commute with $\theta$. In particular, this means that $\theta|_{\lie{g}_0} : \lie{g}_0\to \lie{g}_0$ and $\theta|_\mathfrak{k} : \mathfrak{k} \to \mathfrak{k}$. Moreover, the eigenspace decomposition defined by the involution $\theta$ induces eigenspace decompositions
\begin{align*}
\lie{g}_0& = \lie{g}^+ \oplus \lie{g}^- \\
\mathfrak{k} & = \mathfrak{k}^+ \oplus \mathfrak{k}^- 
\end{align*}
where the superscripts $\pm$ indicate the $\pm 1$-eigenspaces with respect to $\theta$. Let $\mathfrak{h} \coloneqq \lie{g}^+$ and $\mathfrak{m} \coloneqq \lie{g}_0^-$. Since $\theta|_{\lie{g}_0} = \sigma|_{\lie{g}_0}$ and $\theta|_{\mathfrak{k}} = \tau|_{\mathfrak{k}}$,
$$\begin{array}{ccccc}
\mathfrak{k}^+ &=& \mathfrak{h} &=& \lie{g} \cap \mathfrak{k} \nonumber \\
\mathfrak{k}^- &=& i \mathfrak{m} &=& i \lie{g} \cap \mathfrak{k}. \nonumber 
\end{array}$$
It is now straightforward to show, from the fact that the Killing form of $\mathfrak{k} = \mathfrak{h} \oplus i \mathfrak{m}$ ($B_\mathfrak{k} = B_{\lie{g}}|_{\mathfrak{k} \times \mathfrak{k}}$) is negative definite, that the restriction of $\theta$ to $\lie{g}_0$ is a Cartan involution.   
\end{proof}
Note that,
\begin{itemize}
\item The decomposition $\lie{g}_0= \mathfrak{h} \oplus  \mathfrak{m}$ is orthogonal with respect to $B$ and $B_\theta$ (this follows directly from the fact that the Killing form is invariant under automorphisms).
\item This decomposition satisfies $$ [\mathfrak{h},\mathfrak{h}] \subseteq \mathfrak{h}, \ \ \ \ [\mathfrak{h},\mathfrak{m}] \subseteq \mathfrak{m}, \ \ \ \ [\mathfrak{m},\mathfrak{m}] \subseteq \mathfrak{h} .$$
\item The Killing form is negative definite on $\mathfrak{h}$ and positive definite on $\mathfrak{m}$. In particular, $\mathfrak{h}$ is a compact Lie algebra. If $G_0$ is a connected semisimple Lie group with Lie algebra $\lie{g}_0$, then the Lie subgroup $H \coloneqq < \exp \mathfrak{h}>$ of $G_0$ generated by the exponential map is a maximal compact subgroup of $G_0$.
\item It can be shown that any two Cartan involutions of $\lie{g}_0$ are conjugate via an inner automorphism (\cite{knapp2002lie}).
\end{itemize}

\begin{defin} A direct sum of the form 
$$\lie{g}_0= \mathfrak{h} \oplus  \mathfrak{m}$$
is called a \textbf{Cartan decomposition} for the Lie algebra $\lie{g}_0$ if
$$ [\mathfrak{h},\mathfrak{h}] \subseteq \mathfrak{h}, \ \ \ \ [\mathfrak{h},\mathfrak{m}] \subseteq \mathfrak{m}, \ \ \ \ [\mathfrak{m},\mathfrak{m}] \subseteq \mathfrak{h}$$
and the Killing form is negative definite on $\mathfrak{h}$ and positive definite on $\mathfrak{m}$. 
\end{defin}

Now, starting with a Cartan decomposition $\lie{g}_0= \mathfrak{h} \oplus  \mathfrak{m}$, one can define the involution $\theta$ whose $\pm$1-eigenspace is $\mathfrak{h}$ and $\mathfrak{m}$, respectively. Extending this to an involution on the complexification of $\lie{g}_0$ (denoted by the same symbol) and considering the anti-involution $\sigma$ corresponding to $\lie{g}$, we find that they commute. Thus, $\tau \coloneqq \sigma \circ \theta$ is an anti-involution which commutes with $\sigma$ and is easily checked to be compact. 

We conclude that any Cartan decomposition of a real semisimple Lie algebra $\lie{g}_0$ can be obtained using a compact real form of the complexification of $\lie{g}_0$, as described previously.

\section{The Nadler group}\label{comoutationnad}

Let $G_0$ be a real form of a connected reductive complex algebraic group $G$. In \cite{nadler2005perverse}, Nadler constructs from $G_0$ a complex subgroup of the connected dual group $\lan{G}$, extending the idea that the geometry of a connected group over $\C$ is reflected in the representation theory of its dual group. This complex subgroup will be denoted by $\lan{G}_0$ and hereinafter called the \textbf{Nadler group} of the real form $G_0 \subset G$. On page 3 loc. cit. one can find a table containing the Lie algebra $\lan{\lie{g}_0}$ of the Nadler group associated to a real form $G_0$ of a simple group $G$ and in Section 10.7 loc. cit. a more self-contained description of the Nadler group is given. In this section we will describe in some detail the Nadler group associated to the real form $SU^*(2n)$ of $SL(2n,\C)$. We also mention how the Nadler group of $SO^*(4m)$ and $Sp(m,m)$ can be realized inside $SO(4m,\C)$ and $SO(4m+1, \C)$, respectively. 

We begin with some structure theory. Let $\lie{g}_0$ be a real Lie algebra with $\lie{g}_0 = \lie{h}\oplus\lie{m}$ a Cartan decomposition. Then $\lie{k} = \lie{h} \oplus i\lie{m}$ is a compact form of the complexification $\lie{g}$ of $\lie{g}_0$. Let $\sigma$ and $\tau$ denote the conjugations of $\lie{g}$ with respect to $\lie{g}_0$ and $\lie{k}$, respectively. Let $\theta = \sigma\tau = \tau\sigma$ be the corresponding automorphism of $\lie{g}$.

One can choose real subalgebras 
\[\lie{g}_0 \supseteq \lie{p}\supseteq \lie{l} \supseteq \lie{t} \supseteq \lie{a} \]
that dictates the structure of $\lie{g}_0$. Here, $\lie{p}$ is minimal parabolic, $\lie{l}$ is its Levi factor, $\lie{t}$ is a maximal toral subalgebra and $\lie{a}$ is a maximally split toral subalgebra. The recipe to find them is as follows. First, from the Cartan decomposition, pick 
\[\lie{a} \subseteq \lie{m} \quad\textup{ any maximal abelian subalgebra.}\]
Another choice $\lie{a}'$ satisfies $\lie{a}' = \Ad(h)\lie{a}$ for a $h\in H (=$ analytic subgroup with Lie algebra $\lie{h}$). Then, define
\[\lie{l} :=\lie{a}\oplus Z_\lie{h}(\lie{a}).\]
Next, there is a root space decomposition $\lie{g}_0 = \lie{l} \oplus\bigoplus_{\lambda\in\Sigma}\lie{g}_\lambda.$
Here $\Sigma =\{\lambda\in\lie{a}^* \ | \ \lie{g}_\lambda\neq 0\}$, where $\lie{g}_\lambda = \{X\in \lie{g}_0 \ | \  [H,X]=\lambda(H)\textup{ for all }H\in\lie{a}\}.$ The set $\Sigma = \Sigma(\lie{g}_0,\lie{a})$ forms a root system (not necessarily reduced) called the \textbf{restricted roots}. Put an order on $\lie{a}^*$ (e.g., the lexicographic order with respect to a fixed basis of $\lie{a}^*$) and denote by $\Sigma^+$ the set of positive elements on $\Sigma$. Then, the algebra $\lie{p}$ is given by \[\lie{p}= \lie{l} \oplus\bigoplus_{\lambda\in\Sigma^+}\lie{g}_\lambda.\]
So far we have $\lie{g}_0 \supseteq \lie{p}\supseteq \lie{l} \supseteq \lie{a}$.

Further, let $\Phi = \Phi(\lie{g},\lie{c}^\C)$ be the root system of the complexified algebra $\lie{g}$ with respect to any Cartan subalgebra $\lie{c}^\C$. We let $(\lie{c}^\C)_\R = \{H\in\lie{c}^\C\mid\alpha(H)\in\R$ for all $\alpha\in\Phi\}$. Now choose 
\[\lie{t}\subseteq \lie{l} \quad\textup{ any maximal abelian subalgebra containing } \lie{a}.\]
Then, $\lie{t}^\C$ is a Cartan subalgebra of $\lie{g}^\C$ \cite[VI, Lemma 3.2]{helgason1979differential} with $(\lie{t}^\C)_\R = \lie{a} + i(\lie{t} \cap \lie{h})$ (as $\lie{t}^\C$ is a Cartan, we could pick any $\lie{t}'\subseteq\lie{g}$ containing $\lie{a}$ and conjugate to $\lie{a}\subseteq\lie{t}\subseteq\lie{l}$).  In this situation, the inclusion $\lie{a}\hookrightarrow (\lie{t}^\C)_\R$ induces a map $\res:(\lie{t}^\C)_\R^* \to \lie{a}^*$ by restriction. Then, $\res$ induces a surjection (of sets) \cite[pp.260--264]{helgason1979differential}
\[\res:\Phi(\lie{g},\lie{t}^\C)\twoheadrightarrow\Sigma(\lie{g}_0,\lie{a})\cup\{0\}\]
and we can consider $\Sigma\subseteq (\lie{t}^\C)_\R^*$. If we denote by $\alpha^\theta$ the functional $\alpha^\theta(H) = \alpha(\theta H)$ of $(\lie{t}^\C)_\R$, one has $\res(\alpha) = \tfrac{1}{2}(\alpha-\alpha^\theta)$ \cite[p.530]{helgason1979differential}. Putting compatible orderings on $\lie{a}^*$ and $(\lie{t}^\C)_\R$ (e.g., lexicographic with respect to a basis of $(\lie{t}^\C)_\R$ in which the first elements form a basis of $\lie{a}$), the map $\res$ sends $\Phi^+\to\Sigma^+\cup\{0\}$. Moreover
\[\dim_\R\lie{g}_\lambda = \#\{\alpha\in\Phi\mid\res(\alpha) = \lambda\}.\] 

Now, let us focus on the Lie algebra $\lie{g}_0 = \lie{su}^*(2n)$. This algebra has an explicit realization as
\[\lie{su}^*(2n) = \left\{
\begin{pmatrix}
\alpha&\beta\\-\bar\beta&\bar\alpha
\end{pmatrix}
\right\}\cap \lie{sl}(2n,\C).\]
With respect to the (real) involution $\tau:X\mapsto -X^*$ of $\lie{g} = \lie{sl}(2n,\C)$ we have $\lie{g}_0 = \lie{h}\oplus\lie{m}$ with
\[\lie{h}=\lie{sp}(n) \quad\textup{and}\quad\lie{m}=\left\{
\begin{pmatrix}
\alpha&\beta\\-\bar\beta&\bar\alpha
\end{pmatrix} \ | \ \alpha=\alpha^*,\tr(\alpha)=0,\beta\in\lie{so}(n,\C)
\right\}.\]
One checks that $\tau$ is the conjugation of $\lie{g}$ with respect to $\lie{k} = \lie{su}(2n)$. Also, the (real) involution $\sigma:X\mapsto J\bar X J^{-1}$, where $J=\left(\begin{smallmatrix} 0 & 1\\-1&0 \end{smallmatrix}\right)$ is the conjugation of $\lie{g}$ with respect to $\lie{g}_0$. Then, $\theta=\sigma\tau = \tau\sigma$ is the involution of $\lie{g}$ given by $X\to -J \ \tp{X}J^{-1}$. 

Pick $\lie{a}\subseteq\lie{m}$ to be the diagonal matrices, so that
\[\lie{a}=\left\{
\begin{pmatrix}
\delta&0\\0&\delta
\end{pmatrix} \ | \ \delta=\diag(d_1,\ldots,d_n)\in\R^n,\tr(\delta)=0
\right\}.\]
One checks that
\begin{align*}
Z_\lie{h}(\lie{a}) &=\left\{ X=
\begin{pmatrix}
\alpha&\beta\\-\bar\beta&\bar\alpha
\end{pmatrix}\in\lie{sp}(n) \ | \  [X,H] =
\begin{pmatrix}
[\alpha,\delta]&[\beta,\delta]\\
[\delta,\bar\beta]&[\bar\alpha,\delta]
\end{pmatrix}=0 \textup{ for all } H\in\lie{a}
\right\} \\
&= \left\{
\begin{pmatrix}
i\alpha&\beta\\-\bar\beta&-i\alpha
\end{pmatrix} \ | \ \alpha=\diag(a_1,\ldots,a_n)\in \R^n,\beta=\diag(b_1,\ldots,b_n)\in\C^n
\right\}.
\end{align*}
Hence,
\[\lie{l} =\lie{a}\oplus Z_\lie{h}(\lie{a}) = \left\{
\begin{pmatrix}
\delta+i\alpha & \beta\\-\bar\beta&\delta-i\alpha
\end{pmatrix} \ | \ \delta,\alpha\in\R^n,\tr(\delta)=0,\beta\in\C^n
\right\}.\]
Note that $\dim_\R\lie{l} = 4n-1$ and that $\lie{l} \cong \lie{su}(2)^n\oplus\R^{n-1}$. Now we look at the roots. Let $\epsilon_i$ be the coordinate functionals in $(\R^{2n})^*$ and let 
\[\lambda_{i,j} :=\tfrac{1}{2}(\epsilon_i - \epsilon_j +\epsilon_{i+n}-\epsilon_{j+n}),\] 
with $1\leq i\neq j \leq n$. It follows that $\lie{a}^* = \R(\lambda_{1,2})\oplus\cdots\oplus\R(\lambda_{n-1,n})\subseteq(\R^n)^*$. Let this induce the ordering of $\lie{a}^*$. We then have
\[\Sigma = \Sigma(\lie{g}_0,\lie{a}) = \{\lambda_{i,j}\mid 1\leq i \neq j\leq n\},\quad\textup{and}\quad\Sigma^+=\{\lambda_{i,j}\mid1\leq i < j\leq n\}.\]
Indeed, note that the space
\[\lie{g}_{\lambda_{i,j}} = 
\R\begin{pmatrix}
E_{i,j}&0\\0&E_{i,j}
\end{pmatrix}
\oplus 
\R\begin{pmatrix}
iE_{i,j}&0\\0&-iE_{i,j}
\end{pmatrix}
\oplus 
\R\begin{pmatrix}
0&E_{i,j}\\-E_{i,j}&0
\end{pmatrix}
\oplus 
\R\begin{pmatrix}
0&iE_{i,j}\\iE_{i,j}&0
\end{pmatrix}
\]
is contained in $\lie{g}_0$ and satisfies $[H,X] = (d_i-d_j)X$ for all $H\in\lie{a}$ and $X\in\lie{g}_{\lambda_{i,j}}$. Hence, certainly,
\[\lie{l}\oplus\bigoplus_{i\neq j}\lie{g}_{\lambda_{i,j}}\subseteq\lie{g}_0.\]
As the left hand side has  dimension $4n - 1 + 4(n(n-1)) = 4n^2 - 1 = \dim_\R\lie{g}$, this must be the root space decomposition of $\lie{g}_0$. With the present ordering, we have $\Sigma^+ = \{\lambda_{i,j}\mid i < j\}$. Hence,
\[\lie{p} = \lie{l} \oplus\bigoplus_{\lambda\in\Sigma^+}\lie{g}_\lambda = 
\left\{
\begin{pmatrix}
A&B\\-\bar B&\bar A
\end{pmatrix}
 \ | \  A,B\in\lie{gl}(n,\C)\textup{ strictly upper triangular }\right\}.
\]
Finally, take $\lie{t}\subseteq\lie{l}$ to be the diagonal matrices. Hence,
\[
\lie{t} = \left\{
\begin{pmatrix}
\delta+i\alpha&0\\0&\delta-i\alpha
\end{pmatrix} \ | \ \delta,\alpha\in\R^n,\tr(\delta)=0\right\},
\]
from which 
\[ \lie{t}^\C = \{\textup{diagonal in }\lie{gl}(2n,\C)\}\cap \lie{sl} (2n,\C) \quad\textup{and}\quad (\lie{t}^\C)_\R = \{\textup{diagonal in }\lie{gl}(2n,\R)\}\cap \lie{sl}(2n,\R ). \]
Hence, $(\lie{t}^\C)_\R^*$ is naturally identified with the subspace of $(\R^{2n})^*$ that vanishes on $\R(1,1,\ldots,1)$.

Recall that the restriction function $\res:(\lie{t}^\C)_\R^*\to\lie{a}^*$ is given by $\alpha\mapsto\frac{1}{2}(\alpha - \alpha^\theta)$, where $\theta: X\mapsto-J \ \tp{X}J^{-1}$.  We note that since $\theta$ and $\sigma$ coincide on $(\lie{t}^\R)_\R^*$, we have $\alpha^\theta = \alpha^\sigma$. We can thus define $\res$ with respect to $\sigma$. The root system 
$$\Phi=\Phi(\lie{g},\lie{t}^\C) = \{\alpha_{i,j}=\epsilon_i-\epsilon_j\mid 1\leq i\neq j\leq 2n\}.$$
If $1\leq i\neq j\leq 2n$ note that
\[(\alpha_{i,j})^\theta = -\alpha_{\overline{i+n},\overline{j+n}},\]
where an index $\overline{k}$ means $k \mod 2n$. Hence, as 
$$\lambda_{i,j} = \tfrac{1}{2}(\alpha_{i,j} + \alpha_{i+n,j+n}) = \tfrac{1}{2}(\alpha_{i+n,j}+\alpha_{i,j+n}),$$
we have
\[\{\alpha\in\Phi\mid\res(\alpha) = \lambda_{i,j}\} = \{\alpha_{i,j},\alpha_{n+i,n+j},\alpha_{i+n,j},\alpha_{i,j+n}\mid 1\leq i\neq j\leq n\}\]
and the roots $\{\alpha_{i,n+i},\alpha_{n+i,i}\mid 1\leq i \leq n\}$ are sent to zero. We note that to have compatible orderings, we take $\Phi^+$ to be induced by the simple roots 
\[\{\alpha_{1,n+1},\alpha_{n+1,2},\alpha_{2,n+2},\alpha_{n+2,3},\ldots,\alpha_{n-1,2n-1},\alpha_{2n-1,n},\alpha_{n,2n}\}.\]

Now that we know the structure of the real Lie algebra $\lie{g}_0$, we realize the Lie algebra $\lan{g}_0$ of the associated Nadler group as in \cite{nadler2005perverse}. We start by noting that we have 
\[\lie{g}\supseteq\lie{p}^\C\supseteq\lie{l}^\C\supseteq\lie{t}^\C\supseteq \lie{a}^\C \]
\begin{align*}
\lie{a}^{\C} &=\left\{
\begin{pmatrix}
A&0\\0& A
\end{pmatrix}
 \ | \  A=\diag(a_1,\ldots,a_n)\in \C^n,\tr(A) =0\right\}\\
\lie{t}^{\C} &=\{ H = \diag(h_1,\ldots h_{2n})\in\C^{2n} \ | \ \tr(H) = 0\}\\
\lie{l}^{\C} &= \lie{a}^\C \oplus \left\{
\begin{pmatrix}
D&B\\C& -D
\end{pmatrix}
 \ | \  D, B, C \in\C^n\right\} \cong \lie{a}^\C\oplus\lie{sl}(2,\C)^n\\
\lie{p}^\C &= \lie{l}^{\C}\oplus \left\{
\begin{pmatrix}
N_1&N_2\\N_3&N_4
\end{pmatrix}
 \ | \  N_j\in\lie{gl}(n,\C)\textup{ strictly upper triangular}\right\}\\
\lie{g} &= \lie{sl}(2n,\C).
\end{align*}
Note that $\lie{a}^\C$ is the center of $\lie{l}^\C$. Hence, $\Phi_{\lie{l}} = \Phi(\lie{l}^\C,\lie{t}^\C) = \{$roots of $\lie{l}\}$ equals $\{\alpha_{i,n+i},\alpha_{n+i,i}\mid 1\leq i\leq n\}$. The positive are $\Phi_{\lie{l}}^+=\{\alpha_{i,n+1}\}$. The dual Lie algebra is $\lan{\lie{g}} = \lie{sl}(2n,\C) $. Identify $\lie{g}$ and $\lan{\lie{g}}$, and, by a slight abuse of notation, we denote by $\lan{\lie{t}}$ the dual algebra associated to $\lie{t}^\C$. Also, let 
$$\Psi = \Psi(\lan{\lie{g}},\lan{\lie{t}})= \{\psi_{i,j}\mid 1\leq i\neq j\leq 2n\}$$ 
be the root system dual to $\Phi$. Everything next will depend on these identifications. 

A root of $\lie{g}$ is viewed as a coroot of $\lan{\lie{g}}$. Hence, $\alpha = \alpha_{i,j}:\lie{t}^\C \to\C$ is viewed as a function $\check\alpha_{i,j}:\C\to \lan{\lie{t}}$. Explicitly, $\check\alpha_{i,j}(z) = z(E_{i,i} - E_{j,j})\in \lan{\lie{t}} \subseteq\lan{\lie{g}}$. Hence, the coroot $2\check\rho_M = \sum_{\Phi_{\lie{l}}^+}\check\alpha$ is the function
\[2\check\rho_{\lie{l}}:z\mapsto\begin{pmatrix}
z1_n&0\\0& -z1_n
\end{pmatrix}, \]
where $1_n$ is the identity. The subalgebra $\lan{\lie{l}}_0$ is defined as $\lan{\lie{l}}_0= Z_{\lan{\lie{g}}}(2\check\rho_M)$. It is easy to check that
\[\lan{\lie{l}}_0 = \left\{
\begin{pmatrix}
A&0\\0& B
\end{pmatrix} 
 \ | \  A,B\in\lie{gl}(n,\C),\tr(A)+\tr(B) = 0\right\}\]
The roots of $\lan{\lie{l}}_0$ are $\Psi_0 = \{\psi \in \Psi \ | \ \inn{\alpha}{2\check\rho_M} = 0\}$ which are \[\Psi_0=\{\psi_{i,j} \ | \ 1\leq i\neq j\leq n\}\cup\{\psi_{i,j}\mid n\leq i\neq j\leq 2n\}.\] 
The conjugation $\sigma$ induces an involution on $\Hom (\C , \lie{t}^\C)$ by sending a coweight $\lambda : \C \to \lie{t}^\C$ to the coweight defined by the composition 
$$\sigma (\lambda) : \C \xrightarrow{\textbf{c}} \C \xrightarrow{\lambda} \lie{t}^\C \xrightarrow{\sigma} \lie{t}^\C,$$
where $\textbf{c}$ is the complex conjugation. This can be extended to the whole algebra, which we also denote by $\sigma$, and equals the dual involution $\lan{\sigma}$ of $\lan{\lie{g}}$. More precisely, 
$$\lan{\sigma} : x \mapsto J x J^{-1}.$$   
Note that the Dynkin diagram of $\lan{\lie{l}}_0$ has two connected components (of type $A_{n-1}$) and $\lan{\sigma}$ acts on the diagram by permuting these components. Thus, $\psi_{i,j} + \sigma (\psi_{i,j})$ is not a root. This means that, for any choice of $\{X_\alpha \ | \ X_\alpha\in \lie{g}\}$, we have $[X_\alpha,X_{\sigma (\alpha)}]=0$. Hence, we have 
\[\lan{\lie{l}}_0 = \lan{\lie{l}}_1 \]
by \cite[Remark 10.7.1]{nadler2005perverse}.  

\begin{rmk}
Nadler says in Section 10.2 loc. cit. that the dual group comes equipped with a distinguished choice of Borel subgroup. For our particular example, however, this will not matter, as we have $ \lan{\lie{l}}_0 =  \lan{\lie{l}}_1$.
\end{rmk} 

Thus, the Lie algebra of the corresponding Nadler group is realized as the fixed points of $\lan{\lie{l}}_0$ with respect to $\lan{\sigma}$. Therefore
\begin{equation}
\lan{\lie{su}}^*(2n) = \left\{
\begin{pmatrix}
A&0\\0& A
\end{pmatrix}
 \ | \  \tr(A) = 0\right\} \cong \lie{sl}(n,\C) \subset \lie{sl}(2n,\C)
 \label{nadlersu*}
\end{equation}
and the Nadler group of $SU^*(2n) \subset SL(2n,\C)$ is $PGL(n,\C)$, which sits inside $PGL(2n,\C)$ as two equal blocks in the diagonal.
Following this recipe we find that the Nadler group of $SO^*(4m) \subset SO(4m, \C)$ and $Sp(m,m) \subset Sp(4m, \C)$ is $Sp (2m,\C)$ (inside $SO(4m,\C)$ and $SO(4m+1, \C)$, respectively). These can be realized as
  \begin{equation}
\lan{\lie{so}}^*(4m) = \left\{
\begin{pmatrix}
A&0\\
0& -\tp{A}
\end{pmatrix}
 \ | \  A\in \lie{sp}(2m,\C) \right\} \cong \lie{sp}(2m,\C) \subset \lie{so}(4m,\C) 
 \label{nadlerso*}
\end{equation}
and 
\begin{equation}
\lan{\lie{sp}}(m,m) = \left\{
\begin{pmatrix}
A&0&\\
0& -\tp{A}&\\
 &        & 0
\end{pmatrix}
 \ | \  A\in \lie{sp}(2m,\C) \right\} \cong \lie{sp}(2m,\C) \subset \lie{so}(4m+1,\C) . 
 \label{nadlerspmm}
 \end{equation}
% Appendix B

\chapter{Spectral sequences and hypercohomology} % Main appendix title

\label{AppendixB} % For referencing this appendix elsewhere, use \ref{AppendixA}

\lhead{Appendix B. \emph{Spectral sequences and hypercohomology}} % This is for the header on each page - perhaps a shortened title

In order to set up notation and state some basic facts that are used throughout the text, we recall the notion of hypercohomology for a complex of sheaves. Also, we introduce spectral sequences obtained from filtrations and explain how one can use these to extract information about the cohomology of the complex. This material is standard and we refer to \cite{loop, lectures, gh} for more details.      
%----------------------------------------------------------------------------------------

\section{Preliminary notions} 

Let $\textbf{Ab}$ be the category of abelian groups\footnote{In this section, there is no harm in substituting $\textbf{Ab}$ for any abelian category, such as the category of $R$-modules, where $R$ is any ring, the category of sheaves of abelian groups, or $\mathcal{O}_X$-modules, on a topological space $X$, the category of (quasi-)coherent sheaves on an algebraic variety, etc.}.  

\begin{defin} A \textbf{spectral sequence} is a sequence $\{ E_r , d_r\}$, $r \geq 0$, of bigraded objects in $\textbf{Ab}$, $E_r = \{ E_r^{p,q} \}$, $p, q \geq 0$, together with morphisms $d_r = \{ d_r^{p,q} \}$, $p, q \geq 0$,
\begin{center}
$d_r^{p,q} : E_r^{p,q} \to E_r^{p + r, q - r +1}, \qquad d_r^{ p + r, q - r +1 } \circ d_r^{p,q} = 0$
\end{center}
such that the cohomology of $E_r$ is $E_{r+1}$, i.e., $E_{r+1}^{p, q} \cong  \dfrac{\ker d_r^{p,q}}{ \im d_r^{ p - r, q + r -1 }}$.
\end{defin}

Although this definition may seem rather arbitrary at first, we will see that it appears naturally. Before that, however, let us show how one can obtain spectral sequences from filtrations of a complex. 

Let $(K^\bullet , d^\bullet)$  
$$K^0 \overset{d}{\to} K^1 \overset{d}{\to} K^2 \cdots$$
be a complex in $\textbf{Ab}$ (very often we will simply write $K^\bullet$). A \textbf{subcomplex} $L^\bullet$ of $K^\bullet$ consists of a family $L^p \subseteq K^p$ of subobjects such that $d(L^p) \subseteq L^{ p+1}$ for every $p$.

\begin{rmk} Given a subcomplex $L^\bullet$ of $K^\bullet$, one can define a new complex $K^\bullet / L^\bullet$, whose $n$-th object is the quotient $(K^\bullet / L^\bullet)^n = K^n / L^n$, and the differential is the induced one.  
\end{rmk}

\begin{defin} A \textbf{filtration}\footnote{Note that we are only considering complexes bounded below and filtrations can be defined in  a more general setting, i.e., without assuming $F^t(K^\bullet) = \{ 0 \}$ and $F^0(K^\bullet) = K^\bullet$. These filtrations are examples of \textit{simple} and \textit{exhaustive} filtrations (see \cite{loop}), and these are the ones which will be important to us.} of a complex $K^\bullet$ consists of a nested family 
$$ F^t(K^\bullet) = \{ 0 \} \subseteq F^{t-1}(K^\bullet) \subseteq \ldots \subseteq  F^p(K^\bullet) \subseteq  \ldots \subseteq F^0(K^\bullet) = K^\bullet$$
of subcomplexes $F^{p}(K^\bullet)$ of $K^\bullet$. We denote the degree $n$ of the complex $F^{p}(K^\bullet)$ by $F^{p}K^n$. 
\end{defin}

\begin{rmk} Let $\{ F^{p}(K^\bullet) \}$ be a filtration of a complex $K^\bullet$. Each inclusion of complexes $F^{p}(K^\bullet) \hookrightarrow K^\bullet$ induces a homomorphism in cohomology
$$H^n(F^{p}(K^\bullet)) \to H^n(K^\bullet)$$
and we denote the image of this homomorphism by $F^p(H^n(K^\bullet))$. This yields a filtration of the cohomology with
\begin{eqnarray}
F^0(H^n(K^\bullet)) &=& H^n(K^\bullet) \nonumber \\
F^p(H^n(K^\bullet)) &=& 0, \ \text{for $p>n$.} \nonumber 
\end{eqnarray}  
\end{rmk}

Note that we start with objects in the first quadrant, i.e., $E_0^{p,q} $, $p, q \geq 0$, and take successive cohomologies from 
$$E_r^{p - r, q + r -1} \to E_r^{p, q} \to E_r^{p + r, q - r +1}.$$
This means that for a large $r$ taking cohomology will have no effect on the object considered. More precisely, there exists $r_0$ such that $E_r = E_{r+1} = \ldots \ $, for every $r \geqslant r_0$. One says that the spectral sequence \textbf{degenerates} at $E_{r_0}$ and calls $E_\infty \coloneqq E_{r_0} $ the \textbf{abutment} of the spectral sequence. It is also common said that the spectral sequence \textbf{converges} to $E_\infty$. The following proposition can be found in \cite[Chapter 3.5]{gh}.

\begin{prop} Let $K^\bullet$ be a filtered complex of abelian groups. Then there exists a spectral sequence $\{E_r \}$ with
\begin{eqnarray}
E_0^{p,q} & = & \dfrac{F^p K^{p+q}}{F^{p+1} K^{p+q}} \nonumber \\
E_\infty^{p,q} & = & \dfrac{F^p (H^{p+q}(K^\bullet))}{F^{p+1} (H^{p+q}(K^\bullet))} \nonumber 
\end{eqnarray} 
\end{prop}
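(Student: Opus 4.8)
The statement is the standard existence theorem for the spectral sequence of a filtered complex, so my plan is to construct the pages $E_r$ explicitly from the filtration and then identify the $0$-th page and the abutment. First I would fix notation: write $K^n$ for the degree-$n$ term of the complex, $d$ for the differential, and $F^pK^\bullet$ for the subcomplexes of the filtration, so that $F^pK^n$ is a nested family of subgroups with $d(F^pK^n)\subseteq F^pK^{n+1}$. The key idea is to introduce, for each $r\geq 0$, the groups of \emph{$r$-almost cocycles} and \emph{$r$-almost coboundaries}
\[
Z_r^{p,q} = F^pK^{p+q}\cap d^{-1}\bigl(F^{p+r}K^{p+q+1}\bigr),\qquad
B_r^{p,q} = F^pK^{p+q}\cap d\bigl(F^{p-r}K^{p+q-1}\bigr),
\]
and then set
\[
E_r^{p,q} = \frac{Z_r^{p,q}}{Z_{r-1}^{p+1,q-1} + B_{r-1}^{p,q}}.
\]
I would check that these are well defined (the denominator sits inside the numerator, using that the filtration is decreasing and $d^2=0$) and that $d$ induces a map $d_r^{p,q}\colon E_r^{p,q}\to E_r^{p+r,q-r+1}$ with $d_r\circ d_r=0$.

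The main computation is the verification that $H(E_r,d_r)\cong E_{r+1}$, i.e.\ that the homology of the $r$-th page is the $(r+1)$-th page. This is the bookkeeping heart of the argument: one computes $\ker d_r^{p,q}$ and $\operatorname{im} d_r^{p-r,q+r-1}$ as subquotients of $F^pK^{p+q}$ and shows that the quotient is exactly $Z_{r+1}^{p,q}/(Z_r^{p+1,q-1}+B_r^{p,q})$. I expect this to be the main obstacle — not because any single step is deep, but because it requires carefully chasing elements through the intersecting subgroups $Z_r$, $B_r$ and keeping track of which representatives lie in which filtration level; the isomorphism theorems for abelian groups do all the real work once the inclusions are set up correctly. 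I would carry this out once at a generic bidegree $(p,q)$ and for generic $r$, since the first-quadrant hypothesis plays no role here.

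Next I would identify the two distinguished pages. For $r=0$ we have $Z_0^{p,q}=B_{-1}^{p,q}$-adjustments aside, simply $F^pK^{p+q}$, and $Z_{-1}^{p+1,q-1}=F^{p+1}K^{p+q}$, so $E_0^{p,q}=F^pK^{p+q}/F^{p+1}K^{p+q}$, the associated graded of the filtered complex, as claimed. For the abutment: since the filtration is simple and exhaustive (i.e.\ $F^tK^\bullet=0$ for $t$ large and $F^0K^\bullet=K^\bullet$), for fixed $(p,q)$ and $r$ sufficiently large the groups $Z_r^{p,q}$ stabilize to $F^pK^{p+q}\cap\ker d$ and $B_r^{p,q}$ stabilizes to $F^pK^{p+q}\cap\operatorname{im}d$. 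I would then recall that $F^p(H^{p+q}(K^\bullet))$ is by definition the image of $H^{p+q}(F^pK^\bullet)\to H^{p+q}(K^\bullet)$, which identifies with $(F^pK^{p+q}\cap\ker d)/(F^pK^{p+q}\cap\operatorname{im}d)$, and a short diagram chase gives
\[
E_\infty^{p,q}\;\cong\;\frac{F^p\bigl(H^{p+q}(K^\bullet)\bigr)}{F^{p+1}\bigl(H^{p+q}(K^\bullet)\bigr)}.
\]
Finally I would remark that degeneration at a finite page is automatic in our situation because the filtration has finite length, so $E_\infty$ is genuinely reached. The only genuinely delicate point worth flagging is the interchange of limits in the stabilization argument — here it is harmless since everything is eventually constant, but in the unbounded case one would need the filtration to be regular, which is why the excerpt restricts to simple exhaustive filtrations on bounded-below complexes.
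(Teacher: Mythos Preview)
The paper does not actually prove this proposition: it simply states it and cites \cite[Chapter 3.5]{gh} (Griffiths--Harris) for the proof. Your outline is the standard construction of the spectral sequence of a filtered complex via the approximate cycle and boundary groups $Z_r^{p,q}$ and $B_r^{p,q}$, which is essentially what one finds in the cited reference, so there is nothing to compare against in the paper itself.
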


Let us now see the relationship between the cohomology of a filtered complex $K^\bullet$ and a special type of spectral sequence which will play an important role for us. Let $\{E_r\}$ be the spectral sequence given by the proposition above and assume further that $E_2^{p,q} = 0$ unless $q=0$ or $q=1$. As mentioned, the cohomology acquires a filtration from the filtration of the complex
$$\{ 0 \} = F^{n+1}(H^n(K^\bullet)) \subseteq F^{n}(H^n(K^\bullet)) \subseteq \ldots \subseteq F^{0}(H^n(K^\bullet)) = H^n(K^\bullet).$$  
From the proposition we have the following short exact sequence
$$0 \to F^{n -q +1}(H^n(K^\bullet)) \to F^{n - q}(H^n(K^\bullet)) \to E_\infty^{n-q, q} \to 0.$$
In particular, $F^{n}(H^n(K^\bullet)) \cong E_\infty^{n, 0}$. Since $E_2^{p,q} = 0$ unless $q=0$ or $q=1$, $F^{n -1}(H^n(K^\bullet)) \cong \ldots \cong F^{0}(H^n(K^\bullet)) = H^n(K^\bullet)$. We can then put together the short exact sequence
\begin{equation}
0 \to E_\infty^{n, 0} \to H^n(K^\bullet) \to E_\infty^{n- 1, 1} \to 0.
\label{sesmid}
\end{equation}
Moreover, the only non-zero terms in the third page are
\begin{eqnarray}
E_3^{p,0} &=& \coker (d_2 : E_2^{p-2,1} \to E_2^{p,0}) \nonumber \\
E_3^{p,1} &=& \ker (d_2 : E_2^{p,1} \to E_2^{p+2,0}). \nonumber 
\end{eqnarray} 
Since $d_r : E_r^{p,q} \to E_r^{p+r,q-r+1}$, the spectral sequence degenerates at $E_3$ and we can put together ($\ref{sesmid}$) for different $n$. This yields a long exact sequence 
\begin{equation*}
\ldots \rightarrow H^{n-1}(K^\bullet) \rightarrow E_2^{n-2,1} \rightarrow E_2^{n,0} \rightarrow H^n(K^\bullet) \rightarrow E_2^{n -1,1} \rightarrow E_2^{n+1,0} \rightarrow H^{n+1}(K^\bullet) \rightarrow \ldots   
\end{equation*}

%----------------------------------------------------------------------------------------

\section{Hypercohomology}

Let $\mathcal{F}$ be a sheaf of abelian groups over a topological space $X$. Recall that one can define sheaf cohomology in two steps. First consider an injective resolution for $\mathcal{F}$  
$$0 \to \mathcal{F} \to \mathcal{I}^0 \to \mathcal{I}^1 \to \ldots $$ 
which always exists since one can always embed $\mathcal{F}$ into an injective sheaf. Then apply the functor of global sections to the complex of sheaves $\mathcal{I}^\bullet$. Sheaf cohomology of $\calF$ is the cohomology of the resulting complex
$$H^n(X, \mathcal{F}) \coloneqq H^n(\mathcal{I}^\bullet(X)).$$  
Moreover, given two injective resolutions, there exists a morphism of resolutions, which induces a morphism between the complexes obtained by considering global sections. This morphism of complexes is unique up to homotopy and the sheaf cohomology groups do not depend on the injective resolution considered. It is a classical result that acyclic resolutions (i.e., given by a complex of sheaves $\mathcal{G}^\bullet$ such that $H^n(X, \mathcal{G}^k) = 0$ for $n \geq 1$, $k \geq 0$) also calculate sheaf cohomology, i.e., $H^n(X, \mathcal{F}) = H^n(\mathcal{G}^\bullet(X))$ (for more details see, e.g. \cite{loop}).  

Now consider a complex $\mathcal{G}^\bullet$ of sheaves of abelian groups over $X$ 
\begin{equation*}
\mathcal{G}^0 \to \mathcal{G}^1 \to \ldots \to \mathcal{G}^n \to \ldots
\end{equation*}
We want to construct an injective resolution for this complex. First a remark. 
\begin{itemize}
\item Let $0 \to \mathcal{A} \to \mathcal{B} \to \mathcal{C} \to 0 $ be a short exact sequence of sheaves, $\mathcal{A} \rightarrow \mathcal{I_A}^\bullet$ an injective resolution of $\mathcal{A}$ and $\mathcal{C} \rightarrow \mathcal{I_C}^\bullet$ an injective resolution of $\mathcal{C}$. Then we have the commutative diagram 
\[\xymatrix@M=0.13in{
  & 0\ar[d] & 0\ar[d] & 0\ar[d] & \\
0 \ar[r] & \mathcal{A} \ar[r] \ar[d] & \mathcal{B}\ar[r]\ar[d] & \mathcal{C}\ar[r] \ar[d] & 0\\
0 \ar[r] & \mathcal{I_A}^0\ar[r] \ar[d]& \mathcal{I_A}^0 \oplus \mathcal{I_C}^0\ar[r]\ar[d] & \mathcal{I_C}^0\ar[r] \ar[d] & 0\\
0 \ar[r] & \mathcal{I_A}^1\ar[r] \ar[d]& \mathcal{I_A}^1 \oplus \mathcal{I_C}^1\ar[r]\ar[d] & \mathcal{I_C}^1\ar[r] \ar[d] & 0.\\
& \vdots & \vdots & \vdots & }\] 
Here the horizontal morphisms are the obvious ones and the injectivity of the sheaves allows the construction of an injective resolution of $\mathcal{B}$ such that each square in the diagram commutes. In particular, such construction of an injective resolution of the middle sheaf enables one to prove easily that there is a natural long exact sequence of sheaf cohomology groups.  
\end{itemize}

Let us return to our complex $\mathcal{G}^\bullet$ and denote the sheaf given by the $n$-th cohomology of this complex by $H^n(\mathcal{G}^\bullet) = \mathcal{Z}(\mathcal{G}^n)/\mathcal{B}(\mathcal{G}^n) = \ker (\mathcal{G}^n \rightarrow \mathcal{G}^{n+1})/ \im (\mathcal{G}^{n-1} \rightarrow \mathcal{G}^n)$ (i.e., the sheafification of the presheaf $U \mapsto \ker (\mathcal{G}^n(U) \rightarrow \mathcal{G}^{n+1}(U))/ \im (\mathcal{G}^{n-1}(U) \rightarrow \mathcal{G}^n(U))$). We construct an injective resolution of $\mathcal{G}^\bullet$ using the following short exact sequences  
\begin{center}
$0 \to \mathcal{B}(\mathcal{G}^n) \to \mathcal{Z}(\mathcal{G}^n) \to  H^n(\mathcal{G}^\bullet) \to 0, $\\ [\baselineskip]
$0 \to \mathcal{Z}(\mathcal{G}^n) \to \mathcal{G}^n \to \mathcal{B}(\mathcal{G}^{n+1}) \to 0.$
\end{center}
For this we choose injective resolutions $\mathcal{B}(\mathcal{G}^n) \rightarrow \mathcal{I}_B^{n, \bullet}$ and $H^n(\mathcal{G}^\bullet) \rightarrow \mathcal{I}_H^{n, \bullet}$. We can find an injective resolution of $\mathcal{G}^0$, since $\mathcal{Z}(\mathcal{G}^0) = H^0(\mathcal{G}^\bullet)$. Then we use the second short exact sequence to find an injective resolution of $\mathcal{G}^1$ using the one from $\mathcal{Z}(\mathcal{G}^1)$ (obtained by the first short exact sequence) and so on. As a result we get an injective resolution of the complex $\mathcal{G}^\bullet$ 
\[\xymatrix@M=0.13in{
  & 0\ar[d] & & 0\ar[d] & 0\ar[d] & \\
0 \ar[r] & \mathcal{G}^0 \ar[r] \ar[d] & \ldots \ar[r] & \mathcal{G}^n \ar[r] \ar[d] & \mathcal{G}^{n+1} \ar[r] \ar[d] & \ldots \\
0 \ar[r] & \mathcal{I}^{0,0} \ar[r] \ar[d] & \ldots \ar[r] & \mathcal{I}^{n,0} \ar[r] \ar[d] & \mathcal{I}^{n+1,0} \ar[r] \ar[d] & \ldots\\
0 \ar[r] & \mathcal{I}^{0,1} \ar[r] \ar[d] & \ldots \ar[r] & \mathcal{I}^{n,1} \ar[r] \ar[d] & \mathcal{I}^{n+1,1} \ar[r] \ar[d] & \ldots\\
& \vdots & & \vdots & \vdots & }\] 

\begin{rmk} From the construction above, $\mathcal{I}^{p,q} = \mathcal{I}_B^{p,q} \oplus \mathcal{I}_H^{p,q} \oplus \mathcal{I}_B^{p+1,q}$.
\end{rmk}

Apply the functor global sections and denote the differentials by 
\begin{eqnarray}
^\prime d^{p,q} & : & \mathcal{I}^{p,q}(X) \to \mathcal{I}^{p + 1,q}(X) \nonumber \\
^{\prime \prime} d^{p,q} & : & \mathcal{I}^{p,q}(X) \to \mathcal{I}^{p, q + 1}(X) \nonumber 
\end{eqnarray} 
Since they commute, $(\mathcal{I}^{\bullet , \bullet}(X),^\prime d,^{\prime \prime} d  )$ is a double complex. From this double complex we get a complex of abelian groups $(\mathcal{I}^\bullet(X), d)$ defined by 
\begin{eqnarray}
\mathcal{I}^n(X) & \coloneqq & \bigoplus_{p+q = n} \mathcal{I}^{p,q}(X) \nonumber \\
d^n & \coloneqq & \sum_{p+q = n}^{} \  ^\prime d^{p,q} + (-1)^p \ ^{\prime \prime} d^{p,q}.   \nonumber 
\end{eqnarray}

\begin{defin} Let $\mathcal{G}^\bullet$ be a bounded below complex of sheaves over a space $X$. The \textbf{$n$-th hypercohomology group} $\mathbb{H}^n (X, \mathcal{G}^\bullet)$ (or simply $\mathbb{H}^n (\mathcal{G}^\bullet)$) is the $n$-th cohomology of the complex of abelian groups $(\mathcal{I}^\bullet(X), d)$, where $\mathcal{I}^{\bullet , \bullet}$ is an injective resolution of $\mathcal{G}^\bullet$.  
\end{defin}
A similar argument to the sheaf cohomology case shows that this does not depend on the injective resolution of the complex and therefore is well defined (see e.g. \cite{loop}). Furthermore, given a short exact sequence of bounded below complexes of sheaves, we can construct an injective resolution for the complex in the middle using given injective resolutions of the other two complexes (as we have done previously). Therefore we have the following. 

\begin{prop} Let $0 \to \mathcal{G}^\bullet \to ^\prime\mathcal{G}^\bullet \to ^{\prime   \prime}\mathcal{G}^\bullet \to 0 $ be a short exact sequence of bounded below complexes of sheaves. Then, there exists a long exact sequence 
$$\ldots \to \mathbb{H}^n (\mathcal{G}^\bullet) \to \mathbb{H}^n (^\prime \mathcal{G}^\bullet) \to \mathbb{H}^n (^{\prime \prime} \mathcal{G}^\bullet) \to \mathbb{H}^{n+1} (\mathcal{G}^\bullet) \to \ldots$$ 
\end{prop}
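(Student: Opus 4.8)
The statement to prove is the long exact sequence in hypercohomology associated to a short exact sequence of bounded below complexes of sheaves $0 \to \mathcal{G}^\bullet \to {}'\mathcal{G}^\bullet \to {}''\mathcal{G}^\bullet \to 0$. The plan is to mimic the classical derivation of the long exact sequence in sheaf cohomology, but at the level of the total complexes of injective resolutions. First I would choose injective resolutions $\mathcal{I}^{\bullet,\bullet}$ of $\mathcal{G}^\bullet$ and ${}''\mathcal{I}^{\bullet,\bullet}$ of ${}''\mathcal{G}^\bullet$, and then — exactly as in the bullet point recalled just before the Hypercohomology subsection — build a compatible injective resolution ${}'\mathcal{I}^{\bullet,\bullet}$ of the middle complex ${}'\mathcal{G}^\bullet$ fitting into a short exact sequence of double complexes $0 \to \mathcal{I}^{\bullet,\bullet} \to {}'\mathcal{I}^{\bullet,\bullet} \to {}''\mathcal{I}^{\bullet,\bullet} \to 0$ which is termwise split (each term is a direct sum, since injective objects are injective). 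This uses that we may take ${}'\mathcal{I}^{p,q} = \mathcal{I}^{p,q} \oplus {}''\mathcal{I}^{p,q}$ with a suitably twisted differential, so that the sequence is degreewise exact.

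Next I would pass to global sections. Since the sequence of double complexes is termwise split, applying the global sections functor preserves exactness, giving a short exact sequence of total complexes $0 \to \mathcal{I}^\bullet(X) \to {}'\mathcal{I}^\bullet(X) \to {}''\mathcal{I}^\bullet(X) \to 0$, where the total complex and its differential $d^n$ are formed as in the definition of hypercohomology given in the excerpt. Then I would simply invoke the standard homological algebra fact — the zig-zag / snake lemma — that a short exact sequence of cochain complexes of abelian groups induces a long exact sequence in cohomology, with connecting homomorphisms $\delta : \mathbb{H}^n({}''\mathcal{G}^\bullet) \to \mathbb{H}^{n+1}(\mathcal{G}^\bullet)$. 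By definition of hypercohomology as the cohomology of these total complexes, this is precisely the asserted long exact sequence.

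Finally I would note naturality: the connecting maps do not depend on the choices made, because any two injective resolutions of a complex are homotopy equivalent in a way compatible with the maps of the short exact sequence (this is the same argument that shows $\mathbb{H}^n$ is well defined, already cited in the excerpt), so the induced isomorphisms on cohomology intertwine the long exact sequences.

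The main obstacle, and the only step requiring genuine care rather than citation, is the construction of the compatible injective resolution ${}'\mathcal{I}^{\bullet,\bullet}$ of the middle complex: one must resolve ${}'\mathcal{G}^\bullet$ by choosing, in each column, injective resolutions of the relevant kernel, image and cohomology sheaves so that the horizontal differentials of the double complex are respected, and one must verify that the resulting horizontal short exact sequences of double complexes remain exact after totalization and after taking global sections. Here the termwise-split structure (available precisely because the objects are injective and hence the pointwise sequences split) is what makes exactness survive the global sections functor; once this is in place, the rest is the formal machinery of the snake lemma applied to complexes.
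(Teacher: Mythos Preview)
Your proposal is correct and follows essentially the same approach as the paper: the paper simply remarks that one can construct an injective resolution of the middle complex from given resolutions of the outer two (referring back to the bullet-point construction for a short exact sequence of sheaves), and then states the proposition without further argument. You have spelled out the same idea in more detail, including the termwise-split observation and the invocation of the snake lemma, which the paper leaves implicit.
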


\begin{ex} Let $\mathcal{G}$ be a sheaf over $X$ and consider it as a complex of sheaves (i.e., $\mathcal{G} \rightarrow 0 \rightarrow \ldots$). Then, hypercohomology recovers the sheaf cohomology of $\mathcal{G}$. Now, denote by $\mathcal{G}[-k]$ the complex which is $\mathcal{G}$ in degree $k$ and $0$ otherwise. Then, an injective resolution of this complex will give a complex, which in degree $n$ is $\mathcal{I}^n(X) = \mathcal{I}^{k,n}(X)$. Thus, $\mathbb{H}^n (\mathcal{G}[-k]) = H^{n-k}(X, \mathcal{G})$.    
\end{ex}

Consider again the complex of sheaves $\mathcal{G}^\bullet$ and let $\mathcal{I}^{\bullet , \bullet}$ be the injective resolution of this complex constructed previously. There are two natural filtrations of the complex $(\mathcal{I}^\bullet(X), d)$ 
\begin{eqnarray}
^\prime F^p(\mathcal{I}^\bullet(X)) &=& \mathop{\bigoplus_{r \geqslant p}}_{s \geqslant 0} \mathcal{I}^{r,s}(X), \nonumber \\
^{\prime \prime} F^p(\mathcal{I}^\bullet(X)) &=& \mathop{\bigoplus_{s \geqslant p}}_{r \geqslant 0} \mathcal{I}^{r,s}(X). \nonumber 
\end{eqnarray} 
From proposition 15, each of these filtrations gives rise to a spectral sequence whose initial term is $E_0^{p,q}  =  \dfrac{F^p \mathcal{I}^{p+q}(X)}{F^{p+1} \mathcal{I}^{p+q}(X)}$. Let us describe the first and second page of each spectral sequence.
\begin{enumerate}
\item $F^p(\mathcal{I}^\bullet(X)) = \ ^{\prime}F^p(\mathcal{I}^\bullet(X))$: \\ [\baselineskip]
We have $E_0^{p,q} = \dfrac{\mathcal{I}^{p,q}(X) \oplus \mathcal{I}^{p + 1,q - 1}(X) \oplus \ldots}{\mathcal{I}^{p + 1,q - 1}(X) \oplus \ldots} \cong \mathcal{I}^{p,q}(X)$. To find the $E_1^{p,q}$ term we have to calculate cohomology of the piece $\mathcal{I}^{p, q -1}(X) \to \mathcal{I}^{p, q}(X) \to \mathcal{I}^{p, q+1}(X)$. Since $\mathcal{I}^{p, \bullet}$ gives an injective resolution of $\mathcal{G}^p$ we have $E_1^{p,q} = H^q(X, \mathcal{G}^p)$. Now, $d_1 : E_1^{p,q} \to E_1^{p + 1,q}$, so we are looking at the $p$-th term of the complex $H^q(X, \mathcal{G}^\bullet)$
$$H^q(X, \mathcal{G}^{p-1}) \to H^q(X, \mathcal{G}^p) \to H^q(X, \mathcal{G}^{p+1}).$$
In other words, $E_2^{p,q} = H^p(H^q(X, \mathcal{G}^\bullet))$.
\item $F^p(\mathcal{I}^\bullet(X)) = \ ^{\prime \prime}F^p(\mathcal{I}^\bullet(X))$: \\ [\baselineskip]
Here, $E_0^{p,q} = \dfrac{\mathcal{I}^{q,p}(X) \oplus \mathcal{I}^{q - 1, p + 1}(X) \oplus \ldots}{\mathcal{I}^{q - 1, p + 1}(X) \oplus \ldots} \cong \mathcal{I}^{q, p}(X)$ and $d_0 : \mathcal{I}^{q,p}(X) \to \mathcal{I}^{q + 1,p}(X)$ is the map
\begin{center}
$\mathcal{I}_B^{q,p}(X) \oplus \mathcal{I}_H^{q,p}(X) \oplus \mathcal{I}_B^{q+1,p}(X) \to \mathcal{I}_B^{q+1,p}(X) \oplus \mathcal{I}_H^{q+1,p}(X) \oplus \mathcal{I}_B^{q+2,p}(X)$\\
$(x, y, z) \mapsto (z, 0, 0).$
\end{center}
Therefore, $E_1^{p,q} \cong \dfrac{\mathcal{I}_B^{q,p}(X) \oplus \mathcal{I}_H^{q,p}(X)}{\mathcal{I}_B^{q,p}(X)} \cong \mathcal{I}_H^{q,p}(X)$
and we obtain $E_2^{p,q}$ by considering the cohomology from the piece
$$E_1^{p-1,q} \to E_1^{p,q} \to E_1^{p+1,q},$$
which in this case is 
$$\mathcal{I}_H^{q,p-1}(X) \to \mathcal{I}_H^{q,p}(X) \to \mathcal{I}_H^{q,p+1}(X).$$
Since $H^q(\mathcal{G}^\bullet) \rightarrow \mathcal{I}_H^{q, \bullet}$ is an injective resolution, we find $E_2^{p,q} = H^p(X, H^q(\mathcal{G}^\bullet))$.
\end{enumerate}

Throughout the text we will refer to these as the \textbf{first} and \textbf{second} spectral sequences associated to the corresponding double complex. One could also define hypercohomology of a bounded below complex of sheaves over $X$ using the double complex obtained by considering the groups of \v{C}ech cochains. For reasonable spaces (e.g. $X$ paracompact) this turns out to be isomorphic to our definition via resolutions. Since we will be working with sheaves over a Riemann surface $\Sigma$, there is no harm in exchanging the notions (see \cite{loop} for more details). 
\chapter{Quaternionic vector spaces} % Main appendix title

\label{AppendixC} % Change X to a consecutive letter; for referencing this appendix elsewhere, use \ref{AppendixX}

\lhead{Appendix C. \emph{Quaternionic vector spaces}} % Change X to a consecutive letter; this is for the header on each page - perhaps a shortened title

Let $\K $ be the (real non-commutative) algebra of quaternions, whose elements are of the form $q = a_0 + a_1i + a_2j + a_3k$, where $a_l \in \R$, $ l = 0, 1,2 , 3 $, and $i, j , k$ satisfy the usual quaternion identities $$i^2 = j^2 = k^2 = ijk = -1.$$
The prototypical quaternionic vector space is $\K^n$, which will be regarded as a left $\K$-module (by left multiplication). Using the decomposition $\K \cong \C \oplus \C j$, consider the isomorphism of left $\C$-vector spaces $\K^n \to \C^{2n}$ given by 
$$(q_1, \ldots , q_n) \mapsto (u_1, \ldots , u_n, u_1^\prime , \ldots , u_n^\prime),$$
where $q_l = u_l + u_l^\prime j \in \C \oplus \C j = \K$. Note that under this isomorphism, multiplication on the left by $j$ in $\K^n$ is identified with an anti-linear map\footnote{This follows immediately from the relation $jz = \bar{z}j$ in $\K$, where $z \in \C$.\label{relation}}, which we also denote by $j$, satisfying $j^2=-\Id$. Concretely,
\begin{align*}
j : \C^{2n} & \to \C^{2n}\\
u & \mapsto -J \bar{u},
\end{align*}
where $u$ is seen as a column vector and $J = \left( \begin{matrix} 0& 1\\ -1&0 \end{matrix} \right)$ is the standard skew-symmetric form on $\C^{2n}$. 

We define the standard quaternionic Hermitian product on $\K^n$ by
$$\tilde{h}(p,q) = \sum p_l \bar{q_l},$$
where $p = (p_1, \ldots, p_n) \in \K^n$ and $q = (q_1, \ldots, q_n) \in \K^n$. Using the relation described in footnote\footref{relation}, we have the following decomposition 
\begin{equation}
\tilde{h} = g - i\omega_1 - j\omega_2 - k\omega_3 = (g - i\omega_1) - (w_2 + i\omega_3)j = h - \omega j,
\label{decomp}
\end{equation}
where $g$ is the flat metric, $\omega_1, \omega_2, \omega_3$ are the K\"ahler forms (associated to the complex structures $i,j,k$, respectively), $h$ is the standard Hermitian form on $\C^{2n}$ and $\omega$ the standard (complex) symplectic form on $\C^{2n}$. The quaternionic unitary group $Sp(n)$ is defined as the subgroup of $GL(n, \K)$ preserving the standard quaternionic Hermitian form\footnote{If we view $q\in \K^n$ as a row vector, $g \in GL(n,\K)$ acts on $q$ as $qg^{-1}$. Thus, $Sp(n)$ consists of elements $g\in GL(n,\K)$ satisfying $gg^\ddagger = \Id$.}, thus, by (\ref{decomp}), we have the usual identification 
$$Sp(n) \cong Sp(2n,\C) \cap U(2n).$$ 

More generally we can define a quaternionic Hermitian form on any left $\K$-module $\V$. That is, an $\R$-bilinear map $\tilde{h} : \V \otimes_\R \V \to \K$, which is $\K$-linear in the first coordinate, the conjugate of $\tilde{h}(p,q)$ equals $\tilde{h}(q, p)$, and is such that $\tilde{h}(p,p) \in \R_{\geqslant 0}$ with equality if and only if $p=0$. 

Now, let $\V$ be a complex vector space equipped with a (complex) symplectic form $\omega$ and a Hermitian form $h$. A \textbf{quaternionic structure} on (any complex vector space) $\V$ is an anti-linear map $j: \V \to \V$ such that $j^2 = -\Id$. Given a quaternionic structure $j$ on $\V$, we can regard it as a left $\K$-module in the obvious way. Note that, if $\omega$ and $h$ satisfy the compatibility condition $$\omega (u, j(v)) = h(u,v),$$
$\tilde{h} \coloneqq h - \omega j$ is a quaternionic Hermitian form on $\V$.

\chapter{Principal bundles} % Main appendix title

\label{AppendixD} % Change X to a consecutive letter; for referencing this appendix elsewhere, use \ref{AppendixX}

\lhead{Appendix D. \emph{Principal bundles}} % Change X to a consecutive letter; this is for the header on each page - perhaps a shortened title

In this appendix we define principal bundles and recall some necessary background related to these objects which will be needed in the text. A classic reference for principal bundles and their moduli space is  \cite{serre}. In the case of $G$-bundles on a curve, which will be our main focus, we refer to \cite{ramana, ramana1, sorger}.

\section{General notions and conventions}\label{principalbdls}

Let $G$ be an affine algebraic group over $\C$ and $X$ a smooth variety (or more generally a $\C$-scheme). 

\begin{defin} A \textbf{principal $G$-bundle} on $X$ is a variety (or more generally a $\C$-scheme) $P$ with a right $G$-action and a $G$-invariant morphism $P \to X$, which is locally trivial in the \'{e}tale topology. A \textbf{morphism} between two principal $G$-bundles $\pi : P \to X$ and $\pi^\prime : P^\prime \to X$ is a morphism of varieties $\varphi : P \to P^\prime$ (or more generally of $\C$-schemes) such that $\pi = \pi^\prime \circ \varphi$.   
\end{defin}  

Being locally trivial in the \'{e}tale topology means that there is an open cover $\{ h_i : U_i \to X \}$ by \'{e}tale maps $h_i$ and $G$-equivariant isomorphisms 
$$\varphi_i : h_i^*U \to U_i \times G$$  
to the trivial $G$-bundle on $U_i$. Then, on $U_{ij} = U_i \cap U_j$ the isomorphism $\varphi_i\circ \varphi_j^{-1}$ defines transition functions $p_{ij}$, whose class in the pointed set $H^1_{\acute{e}t}(X, G)$ corresponds to the isomorphism class of the $G$-bundle $P$. The base point of the \'{e}tale cohomology set $H^1_{\acute{e}t}(X, G)$, which we will denote simply by $H^1(X, G)$, is the principal trivial $G$-bundle $X\times G$. 
  
\begin{rmks}

\noindent 1. In general, being locally trivial in the Zariski sense turns out to be too restrictive (e.g., when $G$ is a finite group). As observed by Serre, locally triviality in the Zariski sense is stronger than locally triviality in the \'{e}tale sense. A group for which the notions are equivalent is called \textbf{special} (e.g., the general linear group is special \cite{milne}). When $G$ is semisimple, by the work of Grothendieck, special groups are exactly direct products of special linear groups and symplectic groups.  

\noindent 2. We will be mostly interested in the case $X$ is a connected smooth projective curve and in that case locally triviality on the Zariski topology is the same as in the \'{e}tale topology \cite[Theorem 1.9]{stein}. The study of principal $G$-bundles on $X$, however, usually involves considering $G$-bundles on families. Thus the \'{e}tale topology is more appropriate.

\noindent 3. If $X$ is a smooth projective curve, there is an equivalence of categories of principal $G$-bundles on $X$ and analytic principal $G$-bundles on $X$ (seen as a compact Riemann surface). We will frequently pass from one point of view to the other without further mention. \label{rmk3}   
\end{rmks}
Let $P$ be a $G$-bundle on a variety $X$ and suppose $Y$ is a quasi-projective variety acted on the left by $G$. We form the \textbf{associated bundle} to $P$ with fibre $Y$ by taking the quotient of $P\times Y$ by the $G$-action defined by
$$g \cdot (p,y) = (p\cdot g,  g^{-1}\cdot y).$$
This quotient is well-defined since $Y$ is a quasi-projective variety and will be denoted by $P(Y)$ (or $P\times_G Y$).  
    
\begin{exs}\label{gaga}

\noindent 1. Let $\rho : G \to GL(\V)$ be a linear (finite-dimensional) representation. Then $P(\V) = P\times_G \V$ is a vector bundle with fibre $\V$ called the \textbf{vector bundle associated to $P$} via $\rho$. If we want to emphasize the particular representation the associated bundle is obtained from we write $P\times_{\rho} \V$. In terms of transition functions, $P(\V)$ is given by $\{ \rho \circ p_{ij} \}$, where $\{ p_{ij} \}$ are transition functions of $P$, i.e., the isomorphism class of the vector bundle $P(\V)$ correspond to the image of $\{ p_{ij} \} $ via the natural map of pointed sets  
$$H^1(X,G) \to H^1(X,GL(\V))$$  
associated to $\rho$.

\noindent 2. More generally, let $\rho : G \to G^\prime$ be a homomorphism of groups. Then $G$ acts on $G^\prime$ by $g\cdot g^\prime = \rho (g)g^{\prime}$ and $P(G^\prime)$ is a principal $G^\prime$-bundle called \textbf{extension of structure group} of $P$ from $G$ to $G^\prime$.
\end{exs}

Note that giving a section $s : X \to P(Y)$ of an associated bundle to $P$, we have a morphism $s^\prime : P \to Y$ such that $s^\prime (p\cdot g ) = g^{-1}\cdot s^\prime (p)$. Conversely, given such a morphism $s^\prime$, we may define a section of $P(Y)$ by setting $s(x) = (p, s^\prime (p))$, where $p \in P$ is any point in the fibre of $x \in X$. Moreover, if $Y$ and $Y^\prime$ are two quasi-projective varieties acted on the left by $G$ and $f : Y \to Y^\prime$ is a $G$-equivariant morphism, there is a natural morphism $P(f): P(Y) \to P(Y^\prime)$ between the two associated bundles.

Let $H \subset G$ be a subgroup and $P$ a principal $G$-bundle on $X$. A \textbf{reduction of structure group} of $P$ to $H$ is a section 
$$\sigma : X \to P/H \cong P(G/H).$$ 
In particular, if $H$ is a maximal compact subgroup, a reduction of structure group to $H$ is called a \textbf{metric} of $P$. Now, given any reduction of structure group $\sigma : X \to P/H$, we call the pullback bundle $\sigma^*P$ a \textbf{restriction of structure group} of $P$ to $H$ and denote it by $P_\sigma$ (or $P_H$). Note that $P_\sigma \subset P$ is a principal $H$-bundle and its extension of structure group $P_\sigma(G)$ (via the inclusion) is naturally isomorphic to the principal $G$-bundle $P$.

\section{Principal bundles on a curve}

We now focus on principal $G$-bundles on a compact Riemann surface $\Sigma$ (see the third item in Remark \ref{gaga}), where $G$ is a connected complex reductive group. Recall that, topologically, principal $G$-bundle on $\Sigma$, are classified by a discrete invariant in $\pi_1(G)$ (\cite[Proposition 5.1]{ramana}). This is usually called the \textbf{topological type} of the bundle and can be defined as follows. Let $\tilde{G}$ be the universal covering group of $G$. The exact sequence 
$$1\to \pi_1(G) \to \tilde{G} \to G \to 1$$   
determines a long exact sequence in cohomology. In particular we have a map 
$$H^1(\Sigma , G) \to H^2(\Sigma , \pi_1(G)) \cong \pi_1(G).$$
The image under this map of the transition functions of a principal $G$-bundle gives its topological type. 

Since an affine algebraic group $G$ can always be realized as a (Zariski) closed subgroup of a general linear group, the theory of principal $G$-bundles is a natural generalization of the classical theory of vector bundles on a curve. Let us briefly recall the basic definitions for $GL(n,\C)$-principal bundles, i.e., holomorphic (or equivalently algebraic) vector bundles of rank $n$ on $\Sigma$. Fix the topological type $d\in \pi_1(GL(n,\C)) = \Z$, i.e. the degree of the vector bundle. Then, a vector bundle $V$ is said to be \textbf{semi-stable} if for any non-zero subbundle $W \subset V$, 
\begin{equation}
\mu (W) \leq \mu (V),
\label{slope}
\end{equation}
where the \textbf{slope} $\mu $ of a vector bundle on $\Sigma$ is defined as the quotient of the degree by the rank of the bundle. Also, $V$ is said to be \textbf{stable} if the inequality (\ref{slope}) is strict for all proper and non-zero subbundles $W$ of $V$. To obtain a good moduli space, one introduces an equivalence condition slightly weaker than isomorphism called $S$-equivalence. Let $V$ be a strictly semi-stable vector bundle of slope $\mu (V) = \mu$ (in our case, $\mu = n/d$). Then, there exists a subbundle $W \subseteq V$ with $\mu (W) = \mu$. Among those, choose the one of least rank and denote it by $W_1$. It follows that any proper subbundle of $W_1$ must have slope less than $\mu (W_1)$, i.e., $W_1$ is stable. Now consider the vector bundle $V/W_1$. One has $\mu (V/W_1) = \mu$ and $V/W_1$ is semi-stable. If $V/W_1$ is strictly semi-stable, one repeats the process to get a subbundle $W_2/W_1 \subseteq V/W_1$, which is stable and of same slope. By induction one obtains a filtration
$$0  = W_0 \subseteq W_1 \subseteq \ldots \subseteq W_k = V$$
by holomorphic subbundles such that:
\begin{enumerate}
\item $\mu (W_j/W_{j-1}) = \mu$, for $1 \leq j \leq k$.
\item The bundles $W_j/W_{j-1}$ are stable. 
\end{enumerate}

This is called the \textbf{Jordan-H\"{o}lder decomposition} of $V$. Although it is clearly not unique, the graded object 
$$Gr(V)  \coloneqq \bigoplus_{j = 1}^k W_j/W_{j-1}$$ 
is unique up to isomorphism. Two semi-stable vector bundles $V$ and $V^\prime$ of rank $n$ and degree $d$ are said to be \textbf{$S$-equivalent} if $Gr(V) \cong Gr(V^\prime)$. Finally, a vector bundle $V$ is \textbf{polystable} if $V \cong Gr(V) $. For the theorem below the reader is referred to \cite{pot, ses}.

\begin{thm}\label{gl} Fix $(n,d)\in \Z_{+}\times \Z$. There exists a coarse moduli space $\calU_\Sigma (n,d)$ for semi-stable vector bundles on $\Sigma$ of rank $n$ and topological type $d$. It is an irreducible projective variety whose points correspond to $S$-equivalence classes of semi-stable vector bundles of rank $n$ and degree $d$ on $\Sigma$.
\end{thm}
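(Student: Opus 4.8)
\textbf{Proof proposal for Theorem \ref{gl}.}

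The plan is to obtain $\calU_\Sigma(n,d)$ as a GIT quotient, following the classical constructions of Mumford, Seshadri and Newstead, which is exactly the route laid out in the references \cite{pot, ses}. First I would reduce to a bounded family: by the Mumford boundedness theorem, the set of semi-stable vector bundles of rank $n$ and degree $d$ on $\Sigma$ is bounded, so after twisting by a sufficiently high power $\calO_\Sigma(m)$ of an ample line bundle we may assume every such bundle $V(m)$ is globally generated with $H^1(\Sigma, V(m)) = 0$ and a fixed Euler characteristic $N = \chi(V(m))$. This gives a surjection $\calO_\Sigma^N \twoheadrightarrow V(m)$ and hence a point in a Quot scheme $\mathcal{Q} = \mathrm{Quot}(\calO_\Sigma^N, P)$ parametrizing quotients with the appropriate Hilbert polynomial $P$; the locus $\mathcal{Q}^{ss}$ of those quotients that are semi-stable bundles with $H^0$ equal to the full $\C^N$ is a locally closed $\mathrm{SL}(N)$-invariant subscheme. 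The moduli space will be the quotient $\mathcal{Q}^{ss}/\!\!/\mathrm{SL}(N)$.

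Next I would set up the linearization. Embedding $\mathcal{Q}$ into a Grassmannian (of quotients of $H^0(\calO_\Sigma^N(k))$ for $k\gg 0$) via the standard Grothendieck embedding gives an $\mathrm{SL}(N)$-equivariant ample line bundle, and the core analytic input is the matching of GIT (semi)stability of a point of $\mathcal{Q}$ with respect to this linearization and slope (semi)stability of the corresponding sheaf — this is the Hilbert--Mumford numerical criterion computation. One shows a point is GIT-semistable (resp. stable) iff the sheaf it represents is slope-semistable (resp. stable and the quotient map realizes $H^0$), and that two GIT-semistable points lie in the same closed orbit in $\mathcal{Q}^{ss}$ precisely when the corresponding bundles are $S$-equivalent, i.e. have isomorphic associated graded $Gr(V)$ — here the Jordan--H\"older filtration recalled in the excerpt is exactly what produces, via a one-parameter degeneration, the polystable bundle in the closed orbit. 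Mumford's GIT then yields a projective good quotient $\calU_\Sigma(n,d) = \mathcal{Q}^{ss}/\!\!/\mathrm{SL}(N)$ whose closed points are $S$-equivalence classes, with the stable locus an open subset that is a geometric quotient; projectivity follows because $\mathcal{Q}^{ss}$ is shown to equal the GIT-semistable locus of the projective scheme $\overline{\mathcal{Q}}$ (the closure in the Grassmannian), so the quotient is proper. The coarse moduli property is checked by exhibiting a natural transformation from the moduli functor and using the good quotient's universal property for invariant maps.

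Finally, irreducibility: I would argue that $\mathcal{Q}^{ss}$ is irreducible — the Quot scheme fibers over $\mathrm{Pic}^d(\Sigma)$ (taking determinants), the generic fiber is irreducible since a generic bundle of fixed determinant is stable and such bundles form an irreducible family (e.g. by realizing stable bundles as extensions, or by the irreducibility of the space of framed quotients over the irreducible $\mathrm{Pic}^d(\Sigma)$), and the image of an irreducible variety under the quotient morphism is irreducible; dimension and smoothness of the stable locus give a dense open irreducible subset whose closure is all of $\calU_\Sigma(n,d)$. The main obstacle is the GIT (semi)stability versus slope (semi)stability equivalence via the Hilbert--Mumford criterion: this is the technically heaviest step, requiring the careful estimate that a destabilizing subsheaf produces a destabilizing one-parameter subgroup and conversely, together with the boundedness input needed to make the linearization work uniformly; everything else (construction of the Quot scheme, good quotient formalism, coarse moduli verification) is comparatively formal once that correspondence is in hand.
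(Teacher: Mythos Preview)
Your sketch is a faithful outline of the classical GIT construction of $\calU_\Sigma(n,d)$ via the Quot scheme and the Hilbert--Mumford criterion, and there is no gap in the strategy you describe. Note, however, that the paper does not give its own proof of this theorem at all: it is stated in an appendix as a background result with the sentence ``For the theorem below the reader is referred to \cite{pot, ses}'', so there is nothing to compare beyond observing that your outline is precisely the approach developed in those cited references.
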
     

If no confusion arises, we will drop the subscript $\Sigma$ and simply denote the moduli space of semi-stable vector bundles of rank $n$ and degree $d$ by $\calU (n,d)$.

\subsection{The moduli space of principal $G$-bundles}

Now, let $G$ be a connected reductive group over $\C$. A natural way to generalize the semi-stability condition for principal $G$-bundles on $\Sigma$ from the previous notion is to consider certain characters of parabolic subgroups and reductions of the structure group to these subgroups. 

Given a proper parabolic subgroup $Q$ of $G$, then $G \to G/Q$ is naturally a principal $Q$-bundle on the projective variety $G/Q$. Let $\chi : Q \to \C^\times$ be a character of $Q$ and consider the line bundle
$$L_\chi = G\times_\chi \C \to G/Q$$    
on $G/Q$ associated to $G \to G/Q$ via $\chi$. The character $\chi$ is called \textbf{anti-dominant} if it is trivial on the center $Z(Q)$ and the line bundle $L_\chi$ is ample\footnote{It is also possible to define the notion of anti-dominant character in terms of fundamental weights (see, e.g., \cite{hk, ramana} for more details).}. Now, if $P \to \Sigma$ is a principal $G$-bundle on the curve, $\chi : Q \to \C^\times$ is an anti-dominant character of a proper parabolic subgroup $Q \subset G$ and $\sigma : \Sigma \to P/Q$ is a reduction to $Q$, we define the line bundle 
$$L_{\sigma , \chi} = P_\sigma \times_\chi \C \to \Sigma$$
to be the associated bundle to the reduction of structure group $P_\sigma$ via $\chi$. The degree of $L_{\sigma , \chi}$ is called the \textbf{degree of $P$ with respect to $\sigma$ and $\chi$} and we denote it by $\deg (P) (\sigma , \chi)$. We are now ready to state the (semi-)condition for $P$ (see Ramanathan's paper \cite{ramana1} for a further discussion and alternative characterizations).

\begin{defin} A $G$-bundle $P$ on $\Sigma$ is \textbf{stable} (respectively, \textbf{semi-stable}) if for every parabolic subgroup $Q \subset G$, every non-trivial anti-dominant character $\chi : Q \to \C^\times$ of $Q$ and every reduction $\sigma : \Sigma \to P/Q$ of structure group to $Q$, we have $$\deg(P)(\sigma , \chi) \ge 0.$$ 
\end{defin} 

Let us indicate why this seemingly technical condition is a natural generalization of the vector bundle case.

\begin{ex} Let $P$ be a $GL(n,\C)$-principal bundle on $\Sigma$ and denote its associated vector bundle by $V$. Consider the parabolic subgroup $Q \subset GL(n, \C)$ consisting of matrices of the form $g = \left( \begin{matrix} g_1& g_3\\ 0&g_2 \end{matrix} \right)$, where $ g_1 \in GL(k,\C)$ and $ g_2 \in GL(n-k, \C)$, for some $0 < k < n$. The quotient $GL(n,\C)/Q$ is then the Grassmannian variety of $(n-k)$-dimensional quotients of $\C^n$. Thus, a reduction $\sigma : \Sigma \to P/Q$ defines a subbundle $W \subset V$ of rank $k$. The parabolic subgroup $Q$ is maximal and it follows from the general theory that any anti-dominant character $\chi$ of $Q$ is a positive multiple of 
\begin{align*}
\chi_0 : Q& \to \C^\times \\
g & \mapsto (\det g_1)^{k-n}(\det g_2)^k.
\end{align*} 
The associated line bundle is then a positive power of $L_{\sigma, \chi_0} = (\det W)^{k-n}(\det V/W)^k $, whose degree is 
$$\deg (P)(\sigma , \chi_0) = (k-n)\deg (W) + k\deg (V/W).$$
Thus, the condition $\deg (P)(\sigma , \chi) \ge 0$ is equivalent to 
$$\mu (W) \leq \mu (V).$$

\end{ex}
         
Just as one can associate a graded object $Gr(V)$ to a semi-stable vector bundle, Ramanathan showed \cite{ramana1} that there is a corresponding notion for semi-stable principal $G$-bundles. He proved that every semi-stable principal $G$-bundle $P$ on $\Sigma$ admits a special reduction of structure group $P_Q$ to a parabolic $Q \subset G$ such that the associated bundle $P_Q(M)$ is a stable principal $M$-bundle, where $M \subset Q$ is the Levi factor of $Q$. Such special reductions, called \textbf{admissible}, are characterized by the property that the degree of $L_\chi$ is zero for every character $\chi$ of $Q$ which is trivial on the center $Z(Q)$. Then, the $G$-bundle $Gr(P) \coloneqq (P_Q(M))(G)$ obtained by extension of structure group via the inclusion $M \subset G$ is called the \textbf{associated graded bundle of $P$}. Two semi-stable principal $G$-bundles $P$ and $P^\prime$ are said to be $S$-equivalent if $Gr(P) \cong Gr(P^\prime)$.     
\begin{thm}[\cite{ramana1, ramana2}] Let $G$ be a complex connected reductive group and $d\in \pi_1(G)$. 
\begin{enumerate}[label=(\roman*)]
\item There exists a coarse moduli scheme $\calN_\Sigma^d (G)$, or simply $\calN^d (G)$, for semi-stable principal $G$-bundles on $\Sigma$ of topological type $d$.
\item It is an irreducible normal variety of dimension 
$$\dim \calN^d (G) = \dim G (g-1) + \dim Z(G),$$
whose points correspond to $S$-equivalence classes of semi-stable principal $G$-bundles. 
\item If $P \in \calN^d (G)$ is stable, then, near $P$, the moduli space $\calN^d (G)$ is isomorphic to a neighborhood of $0$ in 
$$\dfrac{H^1(\Sigma , \Ad (P))}{\Aut (P)/Z(G)}.$$
In particular, for a generic stable $G$-bundle $P$, we have 
$$T_P (\calN^d (G)) \cong H^1(\Sigma , \Ad (P)).$$ 
\end{enumerate}   
\end{thm}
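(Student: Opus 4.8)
The statement to prove is the classical description of the moduli space of semistable principal $G$-bundles due to Ramanathan: existence and basic properties of $\calN^d(G)$, its dimension, the identification of points with $S$-equivalence classes, and the local structure near a stable point. My plan is to assemble this from the pieces already available, treating the three items in the order listed, since each builds on the previous one.

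First I would establish existence of the coarse moduli scheme $\calN^d(G)$. The cleanest route, given that the excerpt already invokes Schmitt's GIT construction for Higgs bundles and $(\rho, K^{1/2})$-pairs, is to apply the same machinery with trivial Higgs field (i.e.\ $\Phi = 0$), or equivalently to cite Ramanathan's original GIT construction directly: embed $G \hookrightarrow GL(n,\C)$, use boundedness of the family of semistable $G$-bundles of fixed topological type to get a Quot-scheme parameter space with a $G$-action, and take the GIT quotient; semistability of the $G$-bundle translates into GIT-semistability by the Hilbert–Mumford criterion, which is exactly what the parabolic-reduction definition of semistability encodes. Irreducibility and normality follow from the corresponding properties of the parameter space (a smooth open subscheme of a Quot scheme) together with the fact that GIT quotients of irreducible normal varieties are irreducible and normal; connectedness of the fibre over $d$ is the theorem of Ramanathan (also cited earlier in the excerpt for the Higgs bundle case via \cite{MR1206154, garcia2014connectedness}). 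The identification of closed points with $S$-equivalence classes is then the standard fact that closed orbits in the GIT quotient correspond to polystable objects, and two semistable bundles lie in the same fibre of the quotient map iff the closures of their orbits meet, which is precisely the condition $Gr(P) \cong Gr(P')$ via the admissible reduction described in the excerpt.

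For item (ii), the dimension count: I would use deformation theory. The tangent space to the deformation functor of a principal $G$-bundle $P$ is $H^1(\Sigma, \Ad(P))$ and the obstruction space is $H^2(\Sigma, \Ad(P)) = 0$ since $\Sigma$ is a curve, so the deformation space is smooth of dimension $h^1(\Sigma, \Ad(P))$. By Riemann–Roch on the curve, $h^1(\Sigma, \Ad(P)) - h^0(\Sigma, \Ad(P)) = -\deg(\Ad(P)) + (\dim G)(g-1) = (\dim G)(g-1)$, using that $\Ad(P)$ has degree zero (the adjoint representation lands in $SL$, so the associated bundle has trivial determinant). For a generic stable $P$ one has $\Aut(P) = Z(G)$, hence $h^0(\Sigma, \Ad(P)) = \dim Z(G)$ (the centre contributes the $\mathfrak{z}(\mathfrak g)$-valued constant sections, and stability rules out any others), giving $h^1 = (\dim G)(g-1) + \dim Z(G)$. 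Since the stable locus is open and dense in $\calN^d(G)$ and the moduli space has, near such points, the structure described in (iii), this is the dimension of $\calN^d(G)$.

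Item (iii) — the local structure — is where I expect the only real subtlety, and it is the step I would flag as the main obstacle. The point is that $\calN^d(G)$ is only a coarse moduli space: a stable $P$ has automorphism group $\Aut(P)$, which contains $Z(G)$ acting trivially, so the effective automorphism group is $\Aut(P)/Z(G)$, a finite group when $P$ is stable (stability forces reductivity of $\Aut(P)$ and, for $G$-bundles, finiteness of $\Aut(P)/Z(G)$). The local model of the moduli space at $P$ is then the Luna slice / Kuranishi model: a neighbourhood of $0$ in $H^1(\Sigma, \Ad(P))$ (smooth, by vanishing of $H^2$) modulo the linear action of the finite group $\Aut(P)/Z(G)$. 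Carefully here one must: (a) check the deformation space is unobstructed — immediate from $H^2(\Sigma,\Ad(P))=0$; (b) identify the universal deformation with an open neighbourhood of $0$ in $H^1$; (c) invoke the local structure theorem for GIT quotients (Luna's étale slice theorem) to conclude the quotient is étale-locally $H^1(\Sigma,\Ad(P)) /\!\!/ (\Aut(P)/Z(G))$. For a \emph{generic} stable $P$ the action is free (no nontrivial automorphisms mod centre), so the quotient is just a neighbourhood of $0$ in $H^1$ and $T_P \calN^d(G) \cong H^1(\Sigma, \Ad(P))$, which is the last assertion. I would cite \cite{ramana1, ramana2} for the precise statement and restrict the write-up to verifying the cohomological inputs, since the slice-theorem argument is standard and lengthy.
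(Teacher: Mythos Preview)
The paper does not prove this theorem at all: it is stated in Appendix~D as a background result and attributed directly to Ramanathan via the citation \cite{ramana1, ramana2}, with no proof or sketch given. Your proposal is a reasonable outline of the ideas that go into Ramanathan's original argument (GIT construction via an embedding into $GL(n,\C)$, deformation theory plus Riemann--Roch for the dimension count, Luna slice for the local model), but there is nothing in the paper to compare it against---the author is simply quoting the result.
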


\subsection{The symplectic group}\label{assoc}

Given a homomorphism $G_1 \to G_2$ between reductive groups, it is natural to consider the induced map between the moduli spaces. A particularly important case is when one considers the standard representation of a classical group, which turns the study of principal bundles into the study of vector bundles. It follows from the work of Ramanathan (loc. cit.) that the moduli space $\calN^d(GL(n,\C))$ for principal $GL(n,\C)$-bundles of topological type $d$ is naturally isomorphic to the moduli space $\calU (n,d)$ for vector bundles of rank $n$ and degree $d$. Moreover, under this identification, $\calN (SL(n,\C))$ corresponds precisely to the locus $\calU (n, \calO_\Sigma)$ of $\calU (n,0)$ consisting of semi-stable vector bundles $V$ with trivial determinant line bundle. Note in particular that $SL(n,\C)$ is simply connected, so the only topological type (in this case the degree) is zero, which we have omitted. 

Assume $G$ is semisimple and $\rho : G \to GL(\V)$ is a faithful representation. Then a principal $G$-bundle $P$ on $\Sigma$ is semi-stable if and only if the associate vector bundle $P(\V)$ is semi-stable \cite[Proposition 3.17]{ramana1}. Actually, as shown in \cite{rr}, $P$ is semi-stable if and only if $P(\V)$ is a semi-stable vector bundle for any finite-dimensional representation $\V$. The same phenomenon is not true for stability though. Let us give as an example the moduli space of symplectic principal bundles on $\Sigma$, which plays an important role in this thesis. 

\begin{thm}[{\cite[Section 2.1]{hitching2005moduli}}] A principal $Sp(2n,\C)$-bundle $P \in \calN (Sp(2n,\C))$ is stable if and only if the symplectic vector bundle $(V = P(\C^{2n}), \omega )$ associated to the standard representation is an orthogonal direct sum
$$(V,\omega ) = \bigoplus (W_r, \omega_r),$$
where $W_r$ are stable vector bundles and $(W_r, \omega_r)$ are mutually non-isomorphic symplectic bundles on $\Sigma$. Moreover, $P$ is a singular point of the stable locus of $\calN (Sp(2n,\C))$ if and only if $r>1$ (i.e., $W = \bigoplus W_r$ is a strictly semi-stable vector bundle). This characterizes the smooth locus of $\calN (Sp(2n,\C))$ as the open set of stable symplectic $Sp(2n,\C)$-bundles $P$ whose automorphism group $\Aut (P)$ equals the center $Z (Sp(2n\C)) = \{\pm \Id  \}$.  
\end{thm}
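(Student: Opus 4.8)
The plan is to prove the characterization of stability for symplectic bundles by a descent argument from the known characterization of stability for $SL(2n,\C)$-bundles (equivalently, vector bundles with trivial determinant), combined with the structure theory of semisimple reductions. First I would recall, via the standard representation $\rho : Sp(2n,\C) \hookrightarrow SL(2n,\C)$, that a principal $Sp(2n,\C)$-bundle $P$ is semi-stable if and only if the associated symplectic vector bundle $(V,\omega)$, with $V = P(\C^{2n})$, is a semi-stable vector bundle (this is \cite[Proposition 3.17]{ramana1}, quoted in Section \ref{assoc}). The content to establish is then: among semi-stable $(V,\omega)$, which correspond to \emph{stable} $Sp(2n,\C)$-bundles. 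The key observation is that reductions of structure group of $P$ to parabolic subgroups $Q \subset Sp(2n,\C)$ correspond exactly to isotropic subbundles $U \subset V$ (and flags of such), and the relevant anti-dominant character computes a positive multiple of $\deg U$. Hence $P$ is stable if and only if $\deg U < 0$ for every nonzero isotropic subbundle $U \subsetneq V$, while $\deg U \leq 0$ for all such $U$ is equivalent to semi-stability of $(V,\omega)$ and also to semi-stability of $V$ as a vector bundle (since any subbundle's slope is controlled by the maximal isotropic subbundle it generates together with its symplectic orthogonal).

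Next I would analyze the borderline case: suppose $(V,\omega)$ is semi-stable but \emph{not} stable as a symplectic bundle, i.e.\ there is a nonzero proper isotropic subbundle $U$ with $\deg U = 0$. The polystability/admissible-reduction theory of Ramanathan then forces $U$ to be a direct summand in a way compatible with $\omega$: one gets an orthogonal decomposition $(V,\omega) = (U \oplus U', \omega)$ where $U'$ contains a complementary isotropic piece and $\omega$ pairs $U$ with a sub of $U'$. Iterating, a polystable symplectic bundle decomposes as an orthogonal direct sum $(V,\omega) = \bigoplus_r (W_r,\omega_r)$ with each $(W_r,\omega_r)$ a polystable symplectic bundle of strictly smaller rank; pushing the decomposition to its finest form and using that each indecomposable symplectic summand of degree zero and slope zero must, by minimality of rank among degree-zero subbundles, have $W_r$ \emph{stable} as a vector bundle, and distinct summands must be mutually non-isomorphic symplectic bundles (otherwise one could produce an isotropic degree-zero subbundle not arising as a summand, contradicting polystability). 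Conversely, given such an orthogonal sum of mutually non-isomorphic symplectic bundles with stable underlying $W_r$, one checks directly that every nonzero proper isotropic subbundle has negative degree: a degree-zero isotropic subbundle would project nontrivially and degree-zero to some $W_r$, hence (by stability of $W_r$) equal a full $W_r$ summand, which is not isotropic — contradiction.

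For the smoothness statement, I would use part (iii) of Ramanathan's theorem: near a stable $P$ the moduli space looks like $H^1(\Sigma,\Ad P)/(\Aut(P)/Z)$, so $P$ is a smooth point precisely when $\Aut(P)/Z(Sp(2n,\C))$ is trivial, i.e.\ $\Aut(P) = \{\pm\Id\}$. Then I would compute $\Aut(P)$ in terms of the orthogonal decomposition: an automorphism must preserve each isotypic symplectic summand (by Schur, using mutual non-isomorphism of the $(W_r,\omega_r)$), and restricted to a single stable symplectic summand $(W_r,\omega_r)$ it is a scalar $\lambda_r$ with $\lambda_r^2 = 1$ by compatibility with $\omega_r$; hence $\Aut(P) \cong (\Z/2)^s$ where $s$ is the number of summands, which is $\{\pm\Id\}$ exactly when $s=1$, i.e.\ when $V = W$ is stable as a vector bundle. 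This shows $P$ is a singular point of the stable locus iff $r>1$. The main obstacle I anticipate is making the descent of parabolic reductions to isotropic flags fully precise — in particular matching the anti-dominant characters of the symplectic parabolics to the degree functional on the corresponding isotropic subbundle, and verifying that the admissible (Jordan–Hölder–type) reduction for a strictly polystable symplectic bundle is genuinely an \emph{orthogonal} direct sum rather than merely a filtration; this is where one must invoke the reductive-group Jordan–Hölder theory carefully and use the non-degeneracy of $\omega$ to split the filtration symplectically. Everything else is either a direct linear-algebra computation over the function field or a citation to the quoted structure theorems.
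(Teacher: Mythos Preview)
The paper does not actually prove this statement --- it is quoted as background from Hitching's thesis in Appendix~D, with no argument supplied --- so there is no ``paper's own proof'' to compare against, and I can only assess your proposal on its own terms.

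Your translation of symplectic (semi-)stability into a condition on isotropic subbundles is correct, and the automorphism/smoothness computation at the end is fine. But the core of the argument has a logical slip. In your ``borderline case'' paragraph you \emph{assume} $(V,\omega)$ is semi-stable but not stable (an isotropic degree-zero $U$ exists) and then manufacture the orthogonal decomposition into stable, mutually non-isomorphic symplectic summands. That decomposition is exactly the theorem's characterisation of the \emph{stable} case, so you have argued in the wrong direction: combined with your converse you would be concluding that strictly semi-stable implies stable. The forward implication should instead start from a stable $(V,\omega)$ and an arbitrary (not necessarily isotropic) degree-zero subbundle $W\subset V$; the key step you are missing is that the radical $W\cap W^{\perp_\omega}$ is isotropic, and a short exact-sequence degree count (using that $W$ and $W^*$ are semi-stable of degree zero) gives $\deg(W\cap W^{\perp_\omega})\ge 0$, so stability forces $W\cap W^{\perp_\omega}=0$. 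Hence $\omega|_W$ is non-degenerate and $V=W\oplus W^{\perp_\omega}$ orthogonally; iterating yields stable symplectic summands, and mutual non-isomorphism follows because an isomorphism $\phi:W_i\to W_j$ (rescaled so that $\phi^*\omega_j=-\omega_i$) has an isotropic degree-zero graph.

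Your converse is also too quick: a degree-zero isotropic $U\subset\bigoplus_r W_r$ need not ``equal a full $W_r$ summand'' after projection --- it can project isomorphically onto several $W_r$ simultaneously without being contained in any one of them. The clean argument takes a stable degree-zero subbundle $U_0\subset U$: each projection $U_0\to W_r$ is zero or an isomorphism (both sides stable of slope zero), it cannot be an isomorphism for exactly one $r$ (then $U_0=W_r$, which is not isotropic), and it cannot be an isomorphism for two different $r$ by the mutual non-isomorphism hypothesis. This is precisely where that hypothesis is used.
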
          

The moduli space $ \calN (SL(2n,\C))$ can be thought of as the moduli space of symplectic vector bundles, which we denote by $\calU (Sp(2n,\C))$, where such an object $(V,\omega)$ is called \textbf{semi-stable} (respectively, \textbf{stable}) if for all isotropic subbundles $W \subset V$, its slope $\mu (W)$ is non-positive (respectively, negative). Moreover, $(V,\omega )$ is \textbf{polystable} if it is semi-stable and for all for any strict isotropic (respectively, coisotropic) subbundle $0 \neq W \subset V$ of degree $0$, there is a coisotropic (respectively, isotropic) subbundle $W^\prime \subset V$ such that $V = W \oplus W^\prime$. It follows from the work of Serman \cite{serman} that the forgetful map 
%$$\calN (Sp(2n,\C)) \cong \calU (Sp(2n,\C)) \to \calN (SL(2n,\C)) \cong \calU (2n, \calO_\Sigma)$$
\[\xymatrix@M=0.05in{
\calN (Sp(2n,\C)) \ar[d]_{\cong} \ar[r] & \calN (SL(2n,\C)) \ar[d]^{\cong}  \\
\calU (Sp(2n,\C)) \ar[r] & \calU (2n, \calO_\Sigma) }\]
associated to the extension of structure group $Sp(2n,\C) \subset SL(2n,\C)$ is an embedding, a phenomenon which does not hold in general for arbitrary reductive groups. Note that we may identify the image of this map as the locus of $\calU (2n, \calO_\Sigma)$ consisting of vector bundles which admit a symplectic form. 

\begin{rmk} For classical groups we will usually identify the moduli space $\calN (G)$ with its vector bundle counterpart (with the correct notion of stability, as in $\calU (Sp(2n,\C))$), which will be denoted by $\calU (G)$.
\end{rmk}  

\subsection{Generalized theta-divisor}\label{thetadivisor}

In order to generalize the canonically defined theta divisor on $\Pic^{g-1}(\Sigma) = \calU (1, g-1)$ to the moduli space of vector bundles of higher rank, and ultimately to the moduli space of principal $G$-bundles on $\Sigma$, we recall the determinant of the cohomology of a family of vector bundles. The idea is that there exists a natural line bundle on $\calU (n,d)$, whose fibre over a vector bundle $V$ can be naturally identified with $\Lambda^{top} H^0(\Sigma , V)^* \otimes \Lambda^{top} H^1(\Sigma , V)$. Moreover, when $d = n(g-1)$, one can construct a section of this line bundle and the corresponding divisor generalizes the classical theta divisor. We will closely follow \cite{kumar}. For more details, the reader is also referred to \cite{sorger, mum, dn} (or, from a more analytic point of view, to Quillen's determinant construction \cite{Quillen1985}).  
 
Let $T$ be a variety and $\calV$ be a vector bundle on $\Sigma_T = \Sigma \times T$, which we think of as a family of vector bundles $\{ \calV_t \}_{t\in T}$ parametrized by $T$. There exists a two-term complex $\calV^\bullet$ of vector bundles on $T$ 
$$ \calV_0 \to \calV_1 $$
such that for any base change $f: Z \to T$, we have 
$$R^ip_{2,*}((\Id \times f)^*\calV) = H^1(f^*\calV^\bullet),$$
where $p_2 : \Sigma \times Z \to Z$ is the projection onto the second factor and $f^*\calV^\bullet$ is the complex obtained by pulling-back $\calV^\bullet$ to $Z$. We define the \textbf{determinant of cohomology of $\calV$} as the line bundle on $ T$ given by
$$\calD (\calV) = \Lambda^{top} (\calV_0^*)\otimes \Lambda^{top} (\calV_1).$$
In particular, its fibre on $t\in T$ is canonically isomorphic to 
$$\Lambda^{top} H^0(\Sigma , \calV_t)^* \otimes \Lambda^{top} H^1(\Sigma , \calV_t), $$
where $\calV_t = \calV|_{\Sigma \times \{ t \}}$. From the base change property, it follows that $$\calD ((\Id \times f)^*\calV) \cong f^*(\calD (\calV)),$$
for any morphism $f: Z \to T$. Moreover, if $L$ is a line bundle on a connected variety $T$ and $p : \Sigma \times T \to T$ is the projection onto the second factor, it follows from the projection formula that 
$$\calD (\calV \otimes p^*L) \cong \calD (\calV) \otimes L^{-\chi (\calV_t)},$$  
where $\chi (\calV_t) = h^0(\Sigma , \calV_t) - h^1(\Sigma , \calV_t)$ is the Euler characteristic for any $\calV_t$ with $t\in T$. Since $T$ is connected, $\chi (\calV_t)$ does not depend on the choice of $t\in T$.  

\begin{rmks} 

\noindent 1. Take a universal line bundle $\calL$ on $\Sigma \times \Pic^{g-1}(\Sigma)$, for example. Then, fix a divisor $D$ on $\Sigma$ of degree large enough and consider the exact sequence 
$$0 \to \calL (-D) \to \calL \to \calL|_D \to 0.$$
If $p : \Sigma \times \Pic^{g-1}(\Sigma) \to \Pic^{g-1}(\Sigma)$ is the projection, by pushing forward the short exact sequence above to $\Pic^{g-1}(\Sigma)$ we obtain the complex 
$$p_*(\calL|_D)\to R^1p_*(\calL (-D)))$$
which can be shown to satisfy the properties required. 

\noindent 2. In general, there is no universal bundle on moduli spaces of higher dimensional vector bundles. It follows, however, from Grothendieck \cite{grot3}, that the complex $Rp_{2,*}(\calV)$ is perfect and induces such a complex $\calV^\bullet$. More concretely, there exists an exact sequence 
$$0 \to p_*\calV \to \calV_0 \overset{\gamma}{\to} \calV_1 \to R^1p_*\calV \to 0,$$
where $p: \Sigma \times T \to T$ is the natural projection onto the second factor and $\calV_0$ and $\calV_1$ are vector bundles whose ranks satisfy $\rk (\calV_0) = \rk (\calV_1) + \chi (\calV_t)$ (see, e.g., \cite{arb}). Here $\chi (\calV_t)$ is the Euler characteristic of $\calV_t$ for some closed point $t\in T$. The complex $\calV_0 \overset{\gamma}{\to} \calV_1$ satisfies the desired properties.       
\end{rmks} 
      
Once a point of the curve is fixed, we have canonical divisors on $\Pic^d(\Sigma)$ for every $d\in \Z$. Analogously to the case of the Picard variety, fix a point $p \in \Sigma$. Given a family of rank $n$ and degree $0$ vector bundles $\calV \to \Sigma \times T$ parametrized by a variety $T$, we define the \textbf{theta-bundle} by 
$$\Theta (\calV) = \calD (\calV) \otimes \det (\calV_p)^{\chi (\calV_t)/n},$$
where $\det (\calV_p)$ is the usual determinant line bundle of $\calV_p = \calV|_{\{p\} \times T}$. It follows from the properties of the determinant of cohomology that 
$$\Theta ((\Id \times f)^*\calV) \cong f^*(\Theta (\calV)),$$
for any morphism $f: Z \to T$. Also,
$$\Theta (\calV) = \Theta (\calV \otimes p^*L)$$
for any line bundle $L$ on $T$, where $p: \Sigma \times T \to T$ is the projection. 

\begin{thm}[\cite{dn}] There exists a line bundle $\Theta$ on $\calU (n, 0)$ such that for any family $\calV$ of rank $n$ and degree $0$ semi-stable vector bundles on $\Sigma$ parametrized by $T$, 
$$f^* (\Theta) \cong \Theta (\calV),$$
where $f: T \to \calU (n, 0)$ is the morphism given by the coarse moduli property of $\calU (n, 0)$.
\end{thm}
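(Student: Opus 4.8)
The plan is to construct the line bundle $\Theta$ on $\calU(n,0)$ by descent from a suitable parameter space, using the theta-bundle construction $\Theta(\calV)$ recalled above together with its invariance properties. First I would fix, as in the Drezet--Narasimhan approach, a Quot scheme (or an appropriate open subscheme $R$ of it) parametrizing quotients $\calO_\Sigma^N \twoheadrightarrow V$ with $V$ semi-stable of rank $n$ and degree $0$, for $N$ large enough so that every such $V$ is a quotient after twisting and $H^1$ vanishes; then $R$ carries a tautological family $\calV_R$, the group $PGL(N)$ acts on $R$, and $\calU(n,0)$ is the good GIT quotient $R /\!\!/ PGL(N)$. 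The theta-bundle $\Theta(\calV_R)$ is a $PGL(N)$-equivariant (equivalently, $GL(N)$-linearized with trivial action of the center) line bundle on $R$: this is where the normalization by $\det(\calV_p)^{\chi(\calV_t)/n}$ pays off, since it exactly cancels the weight of the scalars coming from $\calD(\calV_R)$, using $\calD(\calV\otimes p^*L)\cong \calD(\calV)\otimes L^{-\chi}$ and the degree-$0$ hypothesis. Hence $\Theta(\calV_R)$ descends, by Kempf's descent lemma, to a line bundle $\Theta$ on $\calU(n,0)$, provided one checks that the stabilizers act trivially on the fibres — which for the closed orbits (polystable bundles) reduces to the center of the automorphism group acting by scalars, again killed by the normalization.

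Next I would verify the universal property. Given an arbitrary family $\calV$ of semi-stable rank $n$ degree $0$ bundles over a variety $T$, with classifying morphism $f: T\to\calU(n,0)$, I would locally over $T$ lift $f$ to a morphism $\tilde f: T'\to R$ (after an \'etale or fppf base change $T'\to T$ trivializing the relevant frame bundle) such that $(\Id\times \tilde f)^*\calV_R \cong \calV_{T'}\otimes p^*L'$ for some line bundle $L'$ on $T'$. Then the invariances $\Theta((\Id\times g)^*\calW)\cong g^*\Theta(\calW)$ and $\Theta(\calW\otimes p^*L)\cong\Theta(\calW)$ give $\tilde f^*\Theta(\calV_R)\cong \Theta(\calV_{T'})$, i.e. (using the descent) $(f\circ(T'\to T))^*\Theta\cong \Theta(\calV)|_{T'}$; a standard descent argument along $T'\to T$ then yields $f^*\Theta\cong\Theta(\calV)$ on $T$ itself. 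Uniqueness of $\Theta$ is immediate from the universal property applied to the tautological family on $R$ together with surjectivity of $R\to\calU(n,0)$ (a line bundle on the coarse space pulled back to $R$ and descending determines it).

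The main obstacle I anticipate is the descent step: ensuring that $\Theta(\calV_R)$ genuinely descends requires controlling the action of \emph{all} stabilizers, not just those of polystable points, and in the strictly semi-stable locus the orbit structure is delicate. The clean way around this is to invoke Kempf's descent criterion in the form used by Drezet--Narasimhan: it suffices that for every point of $R$ with closed orbit, the (reductive) stabilizer acts trivially on the fibre of $\Theta(\calV_R)$; the normalization by $\det(\calV_p)^{\chi/n}$ is engineered precisely so that the one-parameter subgroups $\lambda\cdot\Id$ act with weight zero, and for a polystable $V=\bigoplus V_i$ the stabilizer is $\prod GL(m_i)$ embedded block-diagonally, on which one computes the weight of $\Theta$ to be zero using additivity of $\calD$ and $\det$ over direct sums. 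Once this local weight computation is done, everything else is formal manipulation with the determinant-of-cohomology functor and its base-change and projection-formula properties already recorded above, so I would not expect further serious difficulties; a secondary technical point worth care is that $R$ may need to be replaced by the open locus where the quotient is semi-stable and the frame is "good," and one should check this locus is $PGL(N)$-saturated so that the GIT quotient is indeed $\calU(n,0)$.
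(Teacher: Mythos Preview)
The paper does not prove this statement: it is cited as a result of Dr\'ezet--Narasimhan \cite{dn} and stated without proof in Appendix~\ref{thetadivisor}. Your proposal is a reasonable outline of the original Dr\'ezet--Narasimhan argument (Quot-scheme parametrization, invariance of $\Theta(\calV_R)$ under twists, Kempf's descent lemma with the stabilizer-weight check at polystable points), but there is nothing in the present paper to compare it against.
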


Now suppose $G$ is a connected semisimple group and $\V$ is a finite-dimensional representation of dimension $n$. As discussed in Appendix \ref{assoc}, from a family of semi-stable principal $G$-bundles on $\Sigma$ parametrized by $T$, we can associate a family of semi-stable vector bundles of rank $n$ and degree $0$ (since $G$ is semisimple). This, in turn gives a canonical morphism $f: T \to \calU (n,0)$. Now, using the coarse moduli property of $\calN (G) $ we have a canonical morphism $\phi_\V : \calN (G) \to \calU (n,0)$. The \textbf{theta-bundle on $\calN (G)$ associated to $\V$} is defined as the line bundle  
$$\Theta (\V) = \phi_\V^*(\Theta)$$ 
on $\calN (G)$. In particular, if $\calP$ is a family of semi-stable principal $G$-bundles parametrized by $T$ and $\calP (\V)$ is the induced family of semi-stable vector bundles, it follows that 
$$f^* (\Theta (\V)) \cong \Theta (\calP (\V)),$$
where we denote by $f : T \to \calN (G)$ the morphism given by the coarse moduli property of $\calN (G)$.
 
Given $L_0 \in \Pic^{g-1}(\Sigma)$, we can identify $\calU (n, 0)$ with $\calU (n,n(g-1))$. Thus, given a family of semi-stable principal $G$-bundles parametrized by a variety $T$, we can consider the associated family $\calV$ of semi-stable vector bundles (defined by $L_0$ and $\V$) of rank $n = \dim \V$ and degree $n(g-1)$ parametrized by $T$. Taking a complex $\calV^\bullet$, as before, given by 
$$\calV_0 \overset{\gamma}{\to} \calV_1,$$ 
note that $\rk (\calV_0) = \rk (\calV_1)$, since the Euler characteristic is $\chi (\calV_t) = 0$ by Riemann-Roch, and so the determinant of the map $\gamma$ defines a section of $\calD (\calV) = \Theta (\calV)$ (the equality coming from the fact that the Euler characteristic is zero). These patch together to give a a canonical global section of the theta-bundle on $\calU (n, n(g-1))$, and so of the theta-bundle $\Theta (\V)$ on $\calN (G)$. By \cite{dn}, the zeros of this section define a Cartier divisor on $\calN (G)$ whose support is 
$$D_{L_0} = \{ P \in \calN (G) \ | \ H^0(\Sigma , P(\V)\otimes L_0 ) \neq 0 \}.$$
Moreover, this is independent of the choice of $L_0$ and the divisor is called the \textbf{generalized theta-divisor}, or \textbf{determinant divisor}, in $\calN (G)$ associated to the linear representation $\V$. Moreover, the line bundle associated to $D_{L_0}$ corresponds to the theta-bundle $\Theta (\V)$. The study of the Picard group of $\calN (G)$ has been carried out by many authors. Generalizing the results of Dr\'{e}zet and Narasimhan for $SL(n,\C)$, the case where $G$ is a simply-connected complex affine algebraic group was considered in \cite{las1, pic} and the semisimple case was treated in \cite{las2}. When $G$ is simply-connected, the Picard group is an infinite cyclic group. It turns out that for classical groups, or when $G$ is of type $G_2$, the moduli space $\calN (G)$ is locally factorial precisely when the group is special (cf. Remark \ref{rmk3}). The groups $Sp(2n,C) $ and $SL(2n,\C)$ are particularly important for us and we state the results for these cases. 
\begin{thm}[\cite{dn, pic, las1}]\label{irred} Let $G$ be $SL(2n,\C)$ or $Sp(2n,C)$. Then, the moduli space of principal $G$-bundles on $\Sigma$ is locally factorial and its Picard group $\Pic (\calN (G)) $ is an infinite cyclic group generated by the theta-bundle corresponding to the standard representation of $G$. In particular, the support of the generalized theta-divisor is irreducible.  
\end{thm}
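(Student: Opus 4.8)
The statement to prove is Theorem~\ref{irred}: for $G = SL(2n,\C)$ or $Sp(2n,\C)$, the moduli space $\calN(G)$ is locally factorial, $\Pic(\calN(G))$ is infinite cyclic generated by the theta-bundle $\Theta(\V)$ of the standard representation, and the support $D_{L_0}$ of the generalized theta-divisor is irreducible.

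\medskip

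The plan is to treat the three assertions essentially in sequence, leaning on the structure theory of $\calN(G)$ recalled in Appendix~\ref{AppendixD}. For $SL(2n,\C)$ the result is exactly the theorem of Dr\'ezet--Narasimhan \cite{dn}: I would first invoke their description of $\Pic(\calU(2n,\calO_\Sigma)) = \Pic(\calN(SL(2n,\C)))$ as $\Z$, generated by the determinant line bundle, which in the normalization used here (degree-zero bundles, with the twist by $\det(\calV_p)$) is precisely the theta-bundle $\Theta(\V)$ attached to the standard representation. Local factoriality of $\calN(SL(2n,\C))$ is part of the same package in \cite{dn}. For $Sp(2n,\C)$ I would cite the work of Laszlo--Sorger \cite{las1} (and \cite{pic}): since the symplectic group is simply connected and \emph{special} in the sense of Serre--Grothendieck (Remark in Appendix~\ref{AppendixD}.1 notes that for classical groups $\calN(G)$ is locally factorial exactly when $G$ is special, and $Sp$ and $SL$ are the special classical groups), the moduli space $\calN(Sp(2n,\C))$ is locally factorial with $\Pic(\calN(Sp(2n,\C))) \cong \Z$ generated by the ample generator; and one identifies this generator with $\Theta(\C^{2n})$ by computing the Dynkin index of the standard representation of $Sp(2n,\C)$, which is $1$, so that no nontrivial root of the Dr\'ezet--Narasimhan generator is needed.

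\medskip

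For the irreducibility of the support of the generalized theta-divisor I would argue as follows. By the discussion preceding Theorem~\ref{irred}, the generalized theta-divisor $D_{L_0}$ is a Cartier divisor whose associated line bundle is the \emph{generator} $\Theta(\V)$ of $\Pic(\calN(G)) \cong \Z$. Suppose $D_{L_0}$ were reducible, say $D_{L_0} = D_1 \cup D_2$ with $D_1, D_2$ effective divisors; since $\calN(G)$ is locally factorial (the step just established), both $D_1$ and $D_2$ are Cartier, hence define nonzero effective classes $[D_1], [D_2] \in \Pic(\calN(G)) \cong \Z$ with $[D_1] + [D_2] = [\Theta(\V)]$, i.e. $[D_i]$ corresponds to positive integers summing to $1$, which is impossible. (One must also rule out the generator being a nontrivial multiple of the class of a prime divisor; but the section of $\Theta(\V)$ built from $\det(\gamma)$ vanishes to order exactly $1$ along $D_{L_0}$ — this is part of the Dr\'ezet--Narasimhan analysis — so $\Theta(\V) = \calO(D_{L_0})$ with $D_{L_0}$ reduced.) Hence $D_{L_0}$ is irreducible. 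I would also recall the alternative, more geometric route: $D_{L_0}$ is the pullback under $\phi_\V: \calN(G) \to \calU(n, n(g-1))$ of the theta-divisor on $\calU(n,n(g-1))$, whose irreducibility for vector bundles is classical (\cite{dn, beauville}), and then deduce irreducibility of $D_{L_0}$ from irreducibility of $\calN(G)$ together with the fact that $\phi_\V$ is an embedding for $G = SL, Sp$ by Serman \cite{serman}.

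\medskip

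The main obstacle I anticipate is not any single hard calculation but the bookkeeping of normalizations: making sure that the theta-bundle $\Theta(\V)$ as defined via the determinant of cohomology twisted by $\det(\calV_p)^{\chi/n}$ really coincides with the ample generator of $\Pic$ and not a positive power of it, and correspondingly that the canonical section built from $\det(\gamma)$ cuts out a \emph{reduced} divisor. For $SL(2n,\C)$ this is in \cite{dn} verbatim; for $Sp(2n,\C)$ it reduces to the statement that the Dynkin index of the defining representation is $1$, together with the fact (local factoriality again) that $\Pic$ has no unexpected torsion or finer generators. Once that identification is pinned down, the reducibility argument above is immediate, so I would spend most of the write-up on the index computation and the precise citation of \cite{las1, pic} for the symplectic Picard group, and only a line or two on deducing irreducibility.
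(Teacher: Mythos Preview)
The paper does not give its own proof of this theorem: it is stated as a result quoted from \cite{dn, pic, las1} and no argument is supplied beyond the surrounding discussion of the theta-bundle and determinant of cohomology. Your proposal is therefore not competing with any in-text proof, and what you have written is an accurate sketch of how the cited references establish the statement --- Dr\'ezet--Narasimhan for $SL$, Laszlo--Sorger (via Dynkin index $1$ for the standard representation of $Sp$) for the symplectic case, and the deduction of irreducibility from local factoriality plus $\Pic \cong \Z$ with $\Theta(\V)$ as generator.
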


\addtocontents{toc}{\vspace{2em}} % Add a gap in the Contents, for aesthetics

\backmatter

%----------------------------------------------------------------------------------------
%	BIBLIOGRAPHY
%----------------------------------------------------------------------------------------

\label{Bibliography}

\lhead{\emph{Bibliography}} % Change the page header to say "Bibliography"

\bibliographystyle{alpha} % Use the "unsrtnat" BibTeX style for formatting the Bibliography

\bibliography{Bibliography} % The references (bibliography) information are stored in the file named "Bibliography.bib"

\end{document}